\definecolor{britishracinggreen}{rgb}{0.0, 0.26, 0.15}
\definecolor{cobalt}{rgb}{0.0, 0.28, 0.67}
\numberwithin{equation}{section}
\newcommand{\Ja}{\mathcal{J}}
\theoremstyle{definition}
\newtheorem{definition}{Definition}[section]
\newtheorem{theorem}{Theorem}[section]
\newtheorem{corollary}{Corollary}[theorem]
\newtheorem{lemma}[theorem]{Lemma}
\newtheorem{proposition}[theorem]{Proposition}
\newtheorem{remark}{Remark}[section]
\DeclareMathOperator*{\res}{res}
\begin{document}
 
\title{The Differential Geometry of the Orbit space of Extended Affine Jacobi Group $A_n$}
\author{Guilherme F. Almeida}
\affil{SISSA, via Bonomea 265, Trieste, Italy}
\maketitle

\begin{center}
Dedicated to the memory of Professor Boris Dubrovin.
\end{center}

\begin{abstract}
 We define certain extensions of Jacobi groups of $A_n$, prove an analogue of Chevalley Theorem for their invariants, and construct a Dubrovin Frobenius structure on it orbit space.
\end{abstract} 

\tableofcontents

\section{Introduction}
 Dubrovin Frobenius manifold is a geometric interpretation of a remarkable system of differential equations, called WDVV equations \cite{B. Dubrovin2}. Since the beginning of the nineties, there has been a continuous exchange of ideas from fields that are not trivially related to each other, such as: Topological quantum field theory, non-linear waves, singularity theory, random matrices theory, integrable systems, and Painleve equations. Dubrovin Frobenius manifolds theory is a bridge between them.\\

\subsection{Topological quantum field theory}
The connections made by Dubrovin Frobenius manifolds theory works because all the mentioned theories are related with some WDVV equation. In \cite{B. Dubrovin2}, Dubrovin showed that many constructions of Topological field theories (TFT) can be deduced from the geometry of Dubrovin-Frobenius manifolds. For instance, one of the main objects to be computed in TFT are correlation functions, which are mean values of physical quantities. Since in TFT it is possible to have infinite many correlation functions, an efficient way to compute all of them is encoding all the correlators in a single function, called partition function. In \cite{M. Kontsevich}, Konsevitch proved that a partition function of a specific TFT can be obtained from the solution of KdV hierarchy, which is an example of integrable hierarchy, i.e. an infinite list of integrable partial differential equations. This discovery opened a new field of research in mathematical physics, because for this case, it was found an effective way to compute exactly all the correlation functions due to its integrable system nature. In \cite{B. Dubrovin and Y. Zhang2}, Dubrovin and Zhang constructed a method to derive an integrable hierarchy from the data of Dubrovin Frobenius manifold, furthermore, in many important examples, they were able to relate these integrable hierarchy with partition functions of some TFT.\\

\subsection{Orbit space of reflection groups and its extensions}
In \cite{B. Dubrovin2}, Dubrovin point out that, WDVV solutions with certain good analytic properties are related with partition functions of TFT. Afterwards, Dubrovin conjectured that WDVV solutions with certain good analytic properties are in one to one correspondence with discrete groups.  This conjecture is supported in ideas which come from singularity theory, because in this setting there exist a integrable systems/ discrete group correspondence. Furthermore, in minimal models as Gepner chiral rings there exist a correspondence between physical models and discrete groups.
In \cite{C. Hertling}, Hertling proved that a particular class of Dubrovin-Frobenius manifold, called polynomial Dubrovin-Frobenius manifold is isomorphic to orbit space of a finite Coxeter group, which are spaces such that its geometric structure is invariant under the finite Coxeter group.
In  \cite{Bertola M.1}, \cite{Bertola M.2},\cite{B. Dubrovin1}, \cite{B. Dubrovin2}, \cite{B. Dubrovin I. A. B. Strachan Y. Zhang D. Zuo}, \cite{B. Dubrovin and Y. Zhang},   \cite{D. Zuo},  there are  many examples of WDVV solutions  that are associated with orbit spaces of natural extensions of finite Coxeter groups such as
extended affine Weyl groups, and Jacobi groups. Therefore, the construction of Dubrovin Frobenius manifolds on orbit space of reflection groups and its extensions is a prospective project of the classification of WDVV solutions. In addition, WDDV solutions arising from orbit spaces may also have some applications in TFT or some combinatorial problem, because  previously these relation was demonstrated in some examples such as the orbit space of the finite Coxeter group $A_1$, and the extended affine Weyl group $A_1$   \cite{Dubrovin 3}, \cite{B. Dubrovin and Y. Zhang2}. \\

\subsection{Hurwtiz space/ Orbit space correspondence}
There are several others non-trivial connections that Dubrovin Frobenius manifolds theory can make. For example, Hurwitz spaces is the one of the main source of examples of Dubrovin Frobenius manifolds. Hurwitz spaces $H_{g,n_0,n_1,..,n_m}$ are moduli space of covering over $\mathbb{CP}^1$ with a fixed ramification profile.  More specifically, $H_{g,n_0,n_1,..,n_m}:$ is moduli space of pairs
\begin{equation*}
\{ C_g ,\lambda: C_g\mapsto   \mathbb{CP}^1    \}
\end{equation*}
where $C_g$ is a compact Riemann surface of genus $g$, $\lambda$ is meromorphic function with poles in 
\begin{equation*}
\lambda^{-1}(\infty)=\{\infty_0,\infty_1,..,\infty_{m} \}.
\end{equation*}
 Moreover, $\lambda$ has degree $n_i+1$ near $\infty_i$. Hurwitz space with a choice of specific Abelian differential, called quasi-momentum or primary differential, give rise to a Dubrovin Frobenius manifold, see  section \ref{Hurwitz space chapter} for details. In some examples, the Dubrovin Frobenius structure of Hurwitz spaces are isomorphic to Dubrovin Frobenius manifolds associated with orbit spaces of suitable groups. For instance, the orbit space of the finite Coxeter group $A_n$  is isomorphic to the Hurwitz space $H_{0,n}$, furthermore, orbit space of the extended affine Weyl group $\tilde A_n$ and of the Jacobi group $\Ja(A_n)$ are isomorphic to the Hurwitz spaces $H_{0,n-1,0}$ and $H_{1,n}$ respectively.  Motivated by these examples, we construct the following diagram
\[
  \begin{tikzcd}
  H_{0,n}\cong \text{Orbit space of $A_n$} \arrow{r}{1} \arrow[swap]{d}{2} & H_{0,n-1,0}\cong \text{Orbit space of $\tilde A_n$} \arrow{d}{4} \\%
H_{1,n}\cong \text{Orbit space of $\Ja(A_n)$} \arrow{r}{3}& H_{1,n-1,0}\cong ?
  \end{tikzcd}
\]
From the Hurwitz space side, the vertical lines 2 and 4 mean that we increase the genus by $1$, and the horizontal line means that we split one pole of order $n+1$ in a simple pole and a pole of order $n$. From the orbit space side, the vertical line 2 means that we are doing an extension from the finite Coxeter group $A_n$ to the Jacobi group $\Ja(A_n)$, the line horizontal line 1 means that we are extending the \text{Orbit space of $A_n$} to the extended affine Weyl group $\tilde A_n$. Therefore, one might ask if the line 3 and 4 would imply  an orbit space interpretation of the Hurwitz space $H_{1,n-1,0}$. The main goal of this thesis is to define a new class of groups such that its orbit space carries Dubrovin-Frobenius structure of $H_{1,n-1,0}$. The new group is called extended affine Jacobi group $A_n$, and denoted by $\Ja(\tilde A_n)$. This group is an extension of the Jacobi group $\Ja(A_n)$,  and of the extended affine Weyl group $\tilde A_n$. 

\subsection{Results}\label{Thesis results}

The main goal of this thesis is to derive the Dubrovin Frobenius structure of the Hurwitz space $H_{1,n-1,0}$ from the data of the group $\Ja(\tilde A_n)$. First of all, we define  the group $\Ja(\tilde A_n)$. Recall that the group $A_n$ acts on $\mathbb{C}^n$ by permutations, then the group $\Ja(\tilde A_n)$ is an extension of the group $A_n$ in the following sense:

 \begin{proposition}\label{definition of the action of the group tildeAn in the introduction}
The group $\Ja(\tilde A_n)\ni (w,t,\gamma)$ acts on $\Omega:=\mathbb{C}\oplus \mathbb{C}^{n+2}\oplus \mathbb{H} \ni (u,v,\tau)$ as follows
\begin{equation}\label{jacobigroupAntilde in the introduction}
\begin{split}
&w(u,v,\tau)=(u,wv,\tau)\\
&t(u,v,\tau)=(u-<\lambda,v>_{\tilde A_n}-\frac{1}{2}<\lambda,\lambda>_{\tilde A_n}\tau,v+\lambda\tau+\mu,\tau)\\
&\gamma(u,v,\tau)=(u+\frac{c<v,v>_{\tilde A_n}}{2(c\tau+d)},\frac{v}{c\tau+d},\frac{a\tau+b}{c\tau+d})\\
\end{split}
\end{equation}
\end{proposition}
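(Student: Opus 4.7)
The plan is to verify that the formulas in (\ref{jacobigroupAntilde in the introduction}) assemble into a well-defined action of $\Ja(\tilde A_n)$ on $\Omega$, following the general strategy used for the ordinary Jacobi group action on $\mathbb{C}\oplus \mathbb{C}^{n+1}\oplus \mathbb{H}$ in the cited works of Bertola. I would treat the three generating subgroups separately, and then check the cross relations that bind them into the full semidirect product.

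First, the Weyl part acts only on the $v$-coordinate by the standard permutation representation, so it manifestly defines an action of $A_n$ that fixes $u$ and $\tau$, provided only that the bilinear form $\langle\cdot,\cdot\rangle_{\tilde A_n}$ is $A_n$-invariant, which is built into the construction. Second, for the translation subgroup, writing $t_i=(\lambda_i,\mu_i)$ I would compose $t_1\circ t_2$ and check that the $v$-component sums linearly while the $u$-component picks up the correct quadratic correction, so that the composition equals $t_{\lambda_1+\lambda_2,\mu_1+\mu_2}$; explicitly one expands
\[
u-\langle\lambda_2,v\rangle_{\tilde A_n}-\tfrac{1}{2}\langle\lambda_2,\lambda_2\rangle_{\tilde A_n}\tau-\langle\lambda_1,v+\lambda_2\tau+\mu_2\rangle_{\tilde A_n}-\tfrac{1}{2}\langle\lambda_1,\lambda_1\rangle_{\tilde A_n}\tau
\]
and reassembles the terms into the expected expression in $\lambda_1+\lambda_2$; the cross term $\langle\lambda_1,\lambda_2\rangle_{\tilde A_n}\tau$ is exactly what is needed for closure, which is the standard Heisenberg-type cocycle identity. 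Third, for the modular subgroup one uses that $SL(2,\mathbb{Z})$ is generated by $S$ and $T$, and checks that the quadratic $u$-correction $\tfrac{c\langle v,v\rangle_{\tilde A_n}}{2(c\tau+d)}$ adds up correctly under composition $\gamma_1\gamma_2$; this is a purely algebraic manipulation based on the matrix product formula and $ad-bc=1$, entirely analogous to the cocycle identity satisfied by the classical Jacobi form slash action.

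The main obstacle, and the only nontrivial step, will be the compatibility between translations and modular transformations: one must verify that $\gamma\circ t_{(\lambda,\mu)}\circ\gamma^{-1}$ is again a translation with new parameters $(\lambda',\mu')$ obtained from $(\lambda,\mu)$ by the natural right action of $SL(2,\mathbb{Z})$ on the lattice $\lambda\mathbb{Z}+\mu\mathbb{Z}$. This reduces to a single polynomial identity in $v$ and $\tau$ in which the cross terms $\langle\lambda,v\rangle_{\tilde A_n}/(c\tau+d)$ coming from $\gamma$ and the quadratic term $c\langle v,v\rangle_{\tilde A_n}$ must conspire, via $ad-bc=1$, to reproduce the Jacobi cocycle for the translation parameters $(\lambda',\mu')$. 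Once this compatibility is established, the defining relations of the semidirect product $(A_n\ltimes(\lambda\mathbb{Z}+\mu\mathbb{Z})^{n+2})\rtimes SL(2,\mathbb{Z})$ that underlies $\Ja(\tilde A_n)$ are all satisfied on $\Omega$, and the proposition follows.
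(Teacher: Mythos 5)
Your outline is a correct plan for the standard direct verification, but it is not the route the paper takes: the paper offers no computation for this proposition at all, and instead defers to Proposition 1.1 of \cite{Bertola M.1} for the Jacobi group $\Ja(A_n)$, observing that the only adaptation needed is that $A_n$ acts trivially on the exceptional coordinate $v_{n+1}$ and that the quadratic form is replaced by $\langle v,v\rangle_{\tilde A_n}=\sum A_{ij}v_iv_j-n(n+1)v_{n+1}^2$, which is still $A_n$-invariant. Your explicit check buys self-containedness and makes visible exactly where each structural ingredient enters (the Heisenberg cocycle for translations, the modular cocycle for $SL_2(\mathbb{Z})$, and the conjugation law binding them), whereas the paper's citation buys brevity at the cost of hiding where the indefinite signature of $\langle\cdot,\cdot\rangle_{\tilde A_n}$ actually matters (it never does in this proposition, since all three verifications use only bilinearity and $A_n$-invariance of the form). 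Two points to tighten if you carry this out. First, the translation subgroup of $W(\tilde A_n)$ is the Heisenberg-type group $L^{\tilde A_n}\times L^{\tilde A_n}\times\mathbb{Z}$ with product $(\lambda,\mu,k)\bullet(\tilde\lambda,\tilde\mu,\tilde k)=(\lambda+\tilde\lambda,\mu+\tilde\mu,k+\tilde k+\langle\lambda,\tilde\lambda\rangle_{\tilde A_n})$; your expansion of $t_1\circ t_2$ leaves over the integer $-\langle\lambda_1,\mu_2\rangle_{\tilde A_n}$ in the $u$-component, so composition closes only up to an integer translation of $u$. This is precisely what the central $\mathbb{Z}$ absorbs (compare the $+k$ that appears explicitly in the extended action (\ref{jacobigroupA1tilde extended Jtildean})), and it is invisible on Jacobi forms of integer index since those are proportional to $e^{2\pi imu}$; you should say this rather than assert exact closure on $(\lambda_1+\lambda_2,\mu_1+\mu_2)$. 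Second, the compatibility $\gamma\circ t_{(\lambda,\mu)}\circ\gamma^{-1}$ should be matched against the explicit formula $Ad_{\gamma}(\lambda,\mu,k)=(a\mu-b\lambda,-c\mu+d\lambda,k+\tfrac{ac}{2}\langle\mu,\mu\rangle_{\tilde A_n}-bc\langle\mu,\lambda\rangle_{\tilde A_n}+\tfrac{bd}{2}\langle\lambda,\lambda\rangle_{\tilde A_n})$ already built into the definition of $\Ja(\tilde A_n)$, since that is the semidirect-product structure your identity must reproduce, including the central component.
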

where $w \in A_n$ acts by permutations in the first $n+1$ variables of $ \mathbb{C}^{n+2} \ni v=(v_0,v_1,..,v_n,v_{n+1})$, 
\begin{equation*}
\begin{split}
&t=(\lambda,\mu) \in \mathbb{Z}^{n+2}, \\
&\begin{pmatrix} 
  a    & b\\ 
c & d
\end{pmatrix} \in SL_2(\mathbb{Z}),\\
&<v,v>_{\tilde A_n}=\left.\sum_{i=0}^n v_i^2\right |_{\sum v_i=0}-n(n+1)v_{n+1}^2.
\end{split}
\end{equation*}
See section \ref{The Group Jtilde An}  for details. \\

In order to define any geometric structure in a orbit space, first it is necessary to define a notion of invariant  $\Ja(\tilde A_n)$ sections. For this purpose, we generalise the ring of invariant functions used in \cite{Bertola M.1}, \cite{Bertola M.2} for the group $\Ja( A_n)$, which  are called Jacobi forms. This notion was first defined in \cite{M. Eichler and D. Zagier} by Eichler and Zagier for the group $\Ja(A_1)$, and further it was generalised for the group $\Ja( A_n)$ in \cite{K. Wirthmuller} by Wirthmuller. Furthermore, an explicit base of generators were derived in \cite{Bertola M.1}, \cite{Bertola M.2} by Bertola. The  Jacobi forms used in this thesis are defined by
\begin{definition}\label{Jacobi forms definition jtildean in the introduction}
The weak $\tilde A_n$ -invariant Jacobi forms of weight $k$, order $l$, and index $m$ are functions on $\Omega=\mathbb{C}\oplus \mathbb{C}^{n+2}\oplus\mathbb{H}\ni (u,v_0,v_1,...,v_{n+1},\tau)=(u,v,\tau)$ which satisfy
\begin{equation}\label{jacobiform in the introduction}
\begin{split}
&\varphi(w(u,v,\tau))=\varphi(u,v,\tau),\quad \textrm{$A_n$ invariant condition}\\ 
&\varphi(t(u,v,\tau))=\varphi(u,v,\tau)\\
&\varphi(\gamma(u,v,\tau))=(c\tau+d)^{-k}\varphi(u,v,\tau)\\
&E\varphi(u,v,\tau):=-\frac{1}{2\pi i}\frac{\partial}{\partial u}\varphi(u,v,\tau)=m\varphi(u,v,\tau),\quad \textrm{Euler vector field}\\ 
\end{split}
\end{equation}
\end{definition}
Moreover, the weak $\tilde A_n$ -invariant Jacobi forms  are meromorphic in the variable $v_{n+1}$ on a fixed divisor, in contrast with the Jacobi forms of the group $\Ja(A_n)$ ,which are holomorphic in each variable, see details on the definition \ref{Jacobi forms definition jtildean abc}. The ring of  weak $\tilde A_n$ -invariant Jacobi forms gives the notion of Euler vector field, indeed, the vector field defined in the last equation of (\ref{jacobiform in the introduction}) measures the degree of Jacobi form, which coincides with the index. The differential geometry of the orbit space of the group $\Ja(\tilde A_n)$ should be understood as the space such that its sections are written in terms of  Jacobi forms. Then, in order for this statement to make sense, we prove a Chevalley type theorem, which is
\begin{theorem}\label{chevalley in the introduction}
The trigraded algebra of Jacobi forms $J_{\bullet,\bullet,\bullet}^{\Ja(\tilde A_{n})}=\bigoplus _{k,l,m}J_{k,l,m}^{\tilde A_{n}}$ is freely generated by $n+1$ fundamental Jacobi forms $(\varphi_0,\varphi_1,,\varphi_2,..,,\varphi_n)$ over the graded ring  $E_{\bullet,\bullet}$
\begin{equation}
J_{\bullet,\bullet,\bullet}^{\Ja(\tilde A_n)}=E_{\bullet,\bullet}\left[\varphi_0,\varphi_1,,\varphi_2,..,,\varphi_n\right],
\end{equation}
where 
\begin{equation*}
E_{\bullet,\bullet}=J_{\bullet,\bullet,0} \quad \text{is the ring of coefficients.}
\end{equation*}
More specifically, the ring of function $E_{\bullet,\bullet}$ is the space of functions $f(v_{n+1},\tau)$ such that for fixed $\tau$, the functions $\tau\mapsto f(v_{n+1},\tau)$ is an elliptic function.
\end{theorem}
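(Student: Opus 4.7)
The plan is to adapt the Wirthmüller--Bertola strategy (\cite{K. Wirthmuller}, \cite{Bertola M.1}, \cite{Bertola M.2}) for the classical Jacobi group $\Ja(A_n)$ to the extended affine setting. First, the Euler vector field condition in (\ref{jacobiform in the introduction}) forces the $u$-dependence to be the exponential prefactor, so any Jacobi form of index $m$ can be written as $\varphi(u,v,\tau)=e^{-2\pi i m u}\phi(v,\tau)$. The problem then reduces to classifying functions $\phi$ on $\mathbb{C}^{n+2}\oplus \mathbb{H}$ that are symmetric in $(v_0,\ldots,v_n)$, quasi-periodic in $v$ with an exponential cocycle dictated by the bilinear form $\langle \cdot,\cdot\rangle_{\tilde A_n}$, transform as a weight-$k$ modular form of $SL_2(\mathbb{Z})$, and are meromorphic in $v_{n+1}$ along the prescribed divisor.

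Next, I would construct the $n+1$ fundamental Jacobi forms as coefficients in the expansion of a generating function of the shape
\begin{equation*}
\Phi(z,u,v,\tau) \;=\; e^{-2\pi i u}\,\frac{\prod_{i=0}^{n}\theta_{1}(z-v_i,\tau)}{\theta_{1}(z-v_{n+1},\tau)^{n+1}},
\end{equation*}
corrected by an exponential factor in $z$ so that $\Phi$ is elliptic in the auxiliary variable. Expanding around $z=v_{n+1}$, or equivalently taking elementary symmetric functions of appropriate translates of theta ratios at $(v_0,\ldots,v_n)$, produces functions $\varphi_0,\ldots,\varphi_n$ which by construction satisfy the transformation laws of Definition \ref{Jacobi forms definition jtildean in the introduction} and whose only poles lie on the fixed divisor in $v_{n+1}$.

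The free generation statement would then be proved by a standard division-with-remainder argument. Given any Jacobi form $\phi$, the $A_n$-symmetry lets us regard $\phi$ as a symmetric function of $(v_0,\ldots,v_n)$, which can be reduced inductively modulo the $\varphi_j$'s until the remainder depends only on $(v_{n+1},\tau)$. The residual quasi-periodicity in $v_{n+1}$ (with trivial exponential cocycle once index $0$ is reached) forces that remainder to be elliptic in $v_{n+1}$, and the modular covariance selects its bidegree, placing it in $E_{\bullet,\bullet}$. Algebraic independence of $\varphi_0,\ldots,\varphi_n$ over $E_{\bullet,\bullet}$ would follow by computing that the Jacobian of $(v_0,\ldots,v_n)\mapsto (\varphi_0,\ldots,\varphi_n)$ is generically non-vanishing, verified at the degeneration $\tau\to i\infty$ where $\theta_1$ reduces to a sine and the $\varphi_j$ collapse to the elementary symmetric polynomials, whose Jacobian is the Vandermonde.

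The main obstacle is the treatment of the meromorphic pole structure along the $v_{n+1}$-divisor. In contrast with the holomorphic Jacobi forms of $\Ja(A_n)$, the coefficient ring here is not the ring of $SL_2(\mathbb{Z})$-modular forms but the substantially larger ring of elliptic functions in $v_{n+1}$; pinning down exactly this ring --- neither bigger nor smaller --- requires a delicate analysis of how the lattice translations $t=(\lambda,\mu)$, the $SL_2(\mathbb{Z})$-action, and the indefinite term $-n(n+1)v_{n+1}^2$ in $\langle v,v\rangle_{\tilde A_n}$ conspire to cancel exactly on functions of $(v_{n+1},\tau)$ of weight zero and index zero. Ensuring that the reduction procedure above terminates with a remainder truly inside this ring (and not, for example, in a ring of quasi-Jacobi or mock-elliptic objects) is where the bulk of the technical work will reside.
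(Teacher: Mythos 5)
Your skeleton (theta-quotient generating function, reduction with remainder, algebraic independence via degeneration to elementary symmetric polynomials) is in the same spirit as the paper, but there are two concrete gaps. First, the generating function you propose, $e^{-2\pi i u}\prod_{i=0}^{n}\theta_1(z-v_i,\tau)\,/\,\theta_1(z-v_{n+1},\tau)^{n+1}$, has the wrong divisor in $z$: its zeros sum to $\sum_{i=0}^n v_i=0$ while its poles sum to $(n+1)v_{n+1}$, and since the mismatch $-(n+1)v_{n+1}$ is not a lattice point for generic $v_{n+1}$, no exponential correction in $z$ can make it elliptic. Even setting that aside, a single pole of order $n+1$ reproduces the pole profile of $H_{1,n}$, i.e.\ the \emph{holomorphic} Jacobi forms of $\Ja(A_n)$; it cannot produce the weight $-1$ generator $\varphi_1$ (which in the paper multiplies $\zeta(z)-\zeta(z+(n+1)v_{n+1})+\zeta((n+1)v_{n+1})$, coming from a \emph{simple} pole) nor the characteristic meromorphy in $v_{n+1}$ that defines $J^{\Ja(\tilde A_n)}_{\bullet,\bullet,\bullet}$. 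The correct generating function is $\lambda^{\tilde A_n}=e^{2\pi iu}\prod_{i=0}^n\theta_1(z-v_i+v_{n+1},\tau)\,/\,\bigl(\theta_1^{n}(z,\tau)\theta_1(z+(n+1)v_{n+1},\tau)\bigr)$, with a pole of order $n$ and a simple pole, matching $H_{1,n-1,0}$.

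Second, the step you yourself flag as the main obstacle --- showing that the reduction terminates with a remainder exactly in $E_{\bullet,\bullet}$ --- is precisely the content of the proof and is not supplied by a generic ``division with remainder'' on symmetric functions. The paper resolves it by a different mechanism: it proves the algebraic relations $\varphi_{k+2}^{\Ja(A_{n+1})}=\varphi_{k}^{\Ja(\tilde A_n)}\varphi_2^{\Ja(A_1)}+(\text{lower, with elliptic coefficients})$, deduced from the identity $\lambda^{\Ja(A_{n+1})}/\lambda^{\Ja(\tilde A_n)}=\varphi_2^{\Ja(A_1)}\bigl(\wp(z)-\wp((n+1)v_{n+1})\bigr)$, so that $E_{\bullet,\bullet}[\varphi_0^{\Ja(\tilde A_n)},\ldots,\varphi_n^{\Ja(\tilde A_n)}]$ coincides with $E_{\bullet,\bullet}$ adjoined the Wirthm\"uller--Bertola generators of $\Ja(A_{n+1})$ divided by $\varphi_2^{\Ja(A_1)}$. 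Membership of an arbitrary index-$m$ form is then shown by expanding its quotient by the top generator in $\wp$-derivatives of $v_i-v_{n+1}$, symmetrizing, and \emph{completing} to an $A_{n+1}$-invariant holomorphic Jacobi form by adding back elliptic functions of $(n+1)v_{n+1}$; this reduces everything to the already-known Chevalley theorem for $\Ja(A_{n+1})$ and pins down $E_{\bullet,\bullet}$ via a Liouville argument. Your algebraic-independence argument (degeneration to elementary symmetric polynomials) does agree in substance with the paper's, which uses the lowest Taylor coefficients in $\|v\|$ rather than the limit $\tau\to i\infty$.
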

Moreover, $(\varphi_0,\varphi_1,,\varphi_2,..,,\varphi_n)$ are given by 
\begin{corollary}\label{corollary superpotential antilde in the introduction}
The functions $(\varphi_0,\varphi_1,..,\varphi_n)$ obtained by the formula
\begin{equation}\label{superpotentialAn in the introduction}
\begin{split}
\lambda^{\tilde A_n}&=e^{2\pi i u}\frac{\prod_{i=0}^n \theta_1(z-v_i+v_{n+1},\tau)}{\theta_1^{n}(z,\tau)\theta_1(z+(n+1)v_{n+1})}\\
&=\varphi_n\wp^{n-2}(z,\tau)+\varphi_{n-1}\wp^{n-3}(z,\tau)+...+\varphi_{2}\wp(z,\tau)\\
&+\varphi_{1}\left[\zeta(z,\tau)-\zeta(z+(n+1)v_{n+1},\tau)+\zeta((n+1)v_{n+1})\right]+\varphi_0
\end{split}
\end{equation}
 are Jacobi forms of weight $0,-1,-2,..,-n$ respectively, index 1, and order $0$. Here $\theta_1$ is the Jacobi theta 1 function and $\zeta$ is the Weiestrass zeta function.
\end{corollary}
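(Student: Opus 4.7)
The plan is to first show that the superpotential $\lambda^{\tilde A_n}$ itself transforms as a $\Ja(\tilde A_n)$-Jacobi form of weight $0$, index $1$, and order $0$ on the extended space $\Omega\times\mathbb{C}_{z}$ (where $SL_2(\mathbb{Z})$ acts on the auxiliary variable by $z\mapsto z/(c\tau+d)$), and then to read off the transformation properties of the coefficients $\varphi_j$ by comparing, term by term in~\eqref{superpotentialAn in the introduction}, with the known modular behaviour of the basis $\{1,\wp(z,\tau),\wp^{2}(z,\tau),\ldots,\wp^{n-2}(z,\tau)\}$ together with the combination $\zeta(z,\tau)-\zeta(z+(n+1)v_{n+1},\tau)+\zeta((n+1)v_{n+1},\tau)$.

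For $\lambda^{\tilde A_n}$ itself I would verify the four conditions of Definition~\ref{Jacobi forms definition jtildean in the introduction} directly from the theta-function formula. The $A_n$-invariance is immediate from the symmetry of $\prod_{i=0}^{n}\theta_1(z-v_i+v_{n+1},\tau)$ in the variables $v_0,\ldots,v_n$. Translation invariance follows from $\theta_1(z+1)=-\theta_1(z)$ and $\theta_1(z+\tau)=-e^{-i\pi(2z+\tau)}\theta_1(z,\tau)$, whose accumulated phase factors across the $n+1$ thetas in numerator and denominator are exactly cancelled by the prescribed shift $u\mapsto u-\langle\lambda,v\rangle_{\tilde A_n}-\frac{1}{2}\langle\lambda,\lambda\rangle_{\tilde A_n}\tau$ appearing in~\eqref{jacobigroupAntilde in the introduction}. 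Modular covariance uses $\theta_1(z/(c\tau+d),(a\tau+b)/(c\tau+d))=\epsilon(\gamma)\sqrt{c\tau+d}\,e^{i\pi cz^{2}/(c\tau+d)}\theta_1(z,\tau)$, with the $\sqrt{c\tau+d}$ prefactors from the equal counts of thetas above and below cancelling and the sum of quadratic exponents collapsing, via $\sum_i v_i=0$, into $e^{i\pi c\langle v,v\rangle_{\tilde A_n}/(c\tau+d)}$, which is in turn cancelled by the $u$-shift $u\mapsto u+c\langle v,v\rangle_{\tilde A_n}/(2(c\tau+d))$. Index $1$ is immediate from the $e^{2\pi i u}$ prefactor.

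Next I would analyse the basis functions individually: each is $A_n$-invariant and $u$-independent; the combination $\zeta(z)-\zeta(z+a)+\zeta(a)$ with $a=(n+1)v_{n+1}$ is elliptic in $z$ because the quasi-periodic corrections $2\eta_1,2\eta_2$ cancel in this specific arrangement, and the same cancellation yields invariance under the $v_{n+1}$-translations of $\Ja(\tilde A_n)$. Under $\gamma$, $\wp^{k}(z,\tau)$ picks up a scalar factor $(c\tau+d)^{2k}$, while the zeta combination picks up $(c\tau+d)$ because the linear-in-$z$ correction in the modular transformation of $\zeta$ cancels out of the difference $\zeta(z')-\zeta(z'+a')$. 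Since $\lambda^{\tilde A_n}$ is elliptic in $z$ with a pole of order $n$ at $z=0$ and a simple pole at $z=-(n+1)v_{n+1}$, the $n+1$ displayed basis elements span the corresponding space of elliptic functions and the $\varphi_j$ are uniquely determined by~\eqref{superpotentialAn in the introduction}. Matching transformation laws on both sides term by term then produces the claimed weights $0,-1,-2,\ldots,-n$ for $\varphi_0,\varphi_1,\ldots,\varphi_n$; the common $e^{2\pi i u}$ factor inherited by every $\varphi_j$ gives index $1$; and regularity of $\lambda^{\tilde A_n}$ in $v_{n+1}$ away from the prescribed divisor gives order $0$.

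The principal technical hurdle is confirming that~\eqref{superpotentialAn in the introduction} is a decomposition in a genuine basis of the relevant function space: a Riemann-Roch count on the torus gives dimension $n+1$, matching the number of displayed functions, but one must still explain, using the symmetric structure of the superpotential under $v_i\mapsto v_{n+1}-v_i$ and the constraint $\sum_i v_i=0$, why the expansion does not require $\wp'$-type odd-pole generators, so that the chosen $n+1$ functions indeed suffice to express $\lambda^{\tilde A_n}$ uniquely. Once this spanning property is in hand, the transformation analysis above goes through routinely.
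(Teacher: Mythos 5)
Your overall strategy coincides with the paper's: first establish that $\lambda^{\tilde A_n}$ itself is invariant under the first two actions of~\eqref{jacobigroupAntilde in the introduction} and $SL_2(\mathbb{Z})$-equivariant (the paper does this in the lemma identifying the orbit space with $H_{1,n-1,0}$), then transfer these properties to the coefficients $\varphi_k$ by uniqueness of the expansion and by matching the modular weights of the basis functions, and finally read off the order from the holomorphy of $\lambda^{\tilde A_n}\,\theta_1^2((n+1)v_{n+1},\tau)$ in all variables. So far, so good.

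There is, however, one concrete error that breaks your weight computation and also manufactures your ``principal technical hurdle'': you read $\wp^{k}(z,\tau)$ in~\eqref{superpotentialAn in the introduction} as the $k$-th \emph{power} of $\wp$, whereas in this paper it denotes the $k$-th \emph{derivative} $\wp^{(k)}(z,\tau)$ (this is forced by the later rewriting $\lambda=\sum_k \frac{(-1)^k}{(k-1)!}\varphi_k\wp^{(k-2)}$, by the transformation law $\wp^{(k-2)}\bigl(\tfrac{z}{c\tau+d},\tfrac{a\tau+b}{c\tau+d}\bigr)=(c\tau+d)^{k}\wp^{(k-2)}(z,\tau)$ used in the paper's proof, and by the pole count: $\lambda^{\tilde A_n}$ has a pole of order $n$ at $z=0$, matched by $\wp^{(n-2)}$ but not by $\wp^{\,n-2}$). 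With your reading, the basis element multiplying $\varphi_k$ has weight $2(k-2)$, so matching would give $\varphi_k$ weight $-(2k-4)$ rather than the claimed $-k$; your assertion that the term-by-term comparison ``produces the claimed weights $0,-1,\dots,-n$'' does not follow from the factor $(c\tau+d)^{2k}$ you assign to $\wp^{k}$. Likewise, your worry about needing $\wp'$-type odd-pole generators evaporates once the notation is read correctly: the family $1$, $\zeta(z)-\zeta(z+(n+1)v_{n+1})+\zeta((n+1)v_{n+1})$, $\wp,\wp',\dots,\wp^{(n-2)}$ has pole orders $0;1;2,3,\dots,n$ at $z=0$ plus the required simple pole at $z=-(n+1)v_{n+1}$, hence spans the $(n+1)$-dimensional Riemann--Roch space of $D=n[0]+[-(n+1)v_{n+1}]$ with no further input; the uniqueness of the decomposition is then immediate, exactly as the paper's appeal to the uniqueness of the Laurent expansion. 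With this correction the rest of your argument goes through and reproduces the paper's proof.
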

 This lemma realises the functions $(\varphi_0,\varphi_1,,\varphi_2,..,,\varphi_n,v_{n+1},\tau)$ as coordinates of the orbit space of $\Ja(\tilde A_n)$. 
The unit vector field is chosen to be 
\begin{equation}\label{unit vector field in the introduction}
e=\frac{\partial}{\partial \varphi_0},
\end{equation}
because $\varphi_0$ is the basic generator with maximum weight degree, see the sections \ref{Jacobi forms of Jantilde}, \ref{Proof of the Chevalley theorem}  for details. The last ingredient we need to construct is the flat pencil metric associated with the orbit space of $\Ja(\tilde A_n)$, which is  two compatible flat metric $g^{*}$ and $\eta^{*}$, which are sections of the cotangent bundle of the orbit space of $\Ja(\tilde A_n)$ such that
\begin{equation*}
g^{*}+\lambda\eta^{*}
\end{equation*}
is also flat, and the linear combination of its Christoffel symbols 
\begin{equation*}
{\Gamma_k^{ij}}_{g^{*}}+\lambda{\Gamma_k^{ij}}_{\eta^{*}}
\end{equation*}
is the Christoffel symbol of the flat pencil of metrics (see section \ref{Differential geometry preliminaries}  for the details) \cite{B. Dubrovin1}, \cite{B. Dubrovin2}. The natural candidate to be one of the metrics of the pencil is the invariant metric of the group $\Ja(\tilde A_n)$. This metric is given by
\begin{equation}\label{intersection form in the introduction}
g^{*}=\sum_{i,j}A_{ij}^{-1}\frac{\partial}{\partial v_i}\otimes\frac{\partial}{\partial v_j}-n(n+1)\frac{\partial}{\partial v_{n+1}}\otimes\frac{\partial}{\partial v_{n+1}}+\frac{\partial}{\partial \tau}\otimes\frac{\partial}{\partial u}+\frac{\partial}{\partial u}\otimes\frac{\partial}{\partial \tau},
\end{equation}
where
\begin{equation*}
A_{ij}=\begin{pmatrix}
2 & 1 & 1& ...& 1 \\
1 & 2 & 1& ...& 1 \\
1 & 1 & 2& ...& 1 \\
1 & 1 & 1& ...& 1 \\
1 & 1 & 1& ...& 2 \\
\end{pmatrix}
\end{equation*}
This metric is called intersection form. The second metric is given by
\begin{equation*}
\eta^{*}:=Lie_{e}g^{*},
\end{equation*}
and it is denoted by Saito metric due to K.Saito, who was the first to define this metric for the case of finite Coxeter group \cite{Saito}. One of the main technical problems of the thesis is to prove that the Saito metric $\eta^{*}$ is flat. For this purpose, we construct a generating function for the coefficients of the metric $\eta^{*}$ in the coordinates $(\varphi_0,\varphi_1,,\varphi_2,..,,\varphi_n,v_{n+1},\tau)$. We prove the following.

\begin{corollary}
Let  $\eta^{*}(d\varphi_i,d\varphi_j)$ be given by
\begin{equation}\label{metric eta def in the introduction}
\begin{split}
 \eta^{*}(d\varphi_i,d\varphi_j):=\frac{\partial g^{*}(d\varphi_i,d\varphi_j)}{\partial \varphi_0}.
\end{split}
\end{equation}
 The coefficient $\eta^{*}(d\varphi_i,d\varphi_j)$ is recovered by the generating formula
\begin{equation}\label{generating formula of Mellipticeta in the introduction}
\begin{split}
&\sum_{k,j=0}^{n+1}\frac{(-1)^{k+j}}{(k-1)!(j-1)!}\tilde \eta^{*}(d\varphi_i,d\varphi_j)\wp(v)^{(k-2)}\wp(v^{\prime})^{(j-2)}=\\
&=2\pi i(D_{\tau}\lambda(v)+D_{\tau}\lambda(v^{\prime}))+\frac{1}{2}\frac{\wp^{\prime}(v)+\wp^{\prime}(v^{\prime})}{\wp(v)-\wp(v^{\prime})}[ \frac{d\lambda(v^{\prime})}{dv^{\prime}}-\frac{d\lambda(v)}{dv}]\\
&-\frac{1}{n(n+1)}\frac{\partial}{\partial \varphi_0}\left(\frac{\partial\lambda(p)}{\partial v_{n+1}} \right)\frac{\partial\lambda(p^{\prime})}{\partial v_{n+1}}-\frac{1}{n(n+1)}\frac{\partial\lambda(p)}{\partial v_{n+1}}\frac{\partial}{\partial \varphi_0}\left(\frac{\partial\lambda(p^{\prime})}{\partial v_{n+1}} \right)\\
&+\frac{1}{n(n+1)}\sum_{k,j=0}^{n}\frac{(-1)^{k+j}}{(k-1)!(j-1)!}\frac{\partial}{\partial\varphi_0}\left(\frac{\partial\varphi_j}{\partial v_{n+1}}\right)\frac{\partial\varphi_k}{\partial v_{n+1}}\\
&+\frac{1}{n(n+1)}\sum_{k,j=0}^{n}\frac{(-1)^{k+j}}{(k-1)!(j-1)!}\frac{\partial\varphi_j}{\partial v_{n+1}}\frac{\partial}{\partial\varphi_0}\left(\frac{\partial\varphi_k}{\partial v_{n+1}}\right),
\end{split}
\end{equation}
where 
\begin{equation}\label{metric eta consequence in the introduction}
\begin{split}
\tilde \eta^{*}(d\varphi_i,d\varphi_j)= \eta^{*}(d\varphi_i,d\varphi_j), \quad i,j\neq 0,\\
\tilde \eta^{*}(d\varphi_0,d\varphi_j)= \eta^{*}(d\varphi_i,d\varphi_j)+4\pi i k_j\varphi_j.
\end{split}
\end{equation}
\end{corollary}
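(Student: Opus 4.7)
The plan is to derive this generating formula for $\eta^{*}$ by applying the unit vector field $e=\partial/\partial\varphi_{0}$ from (\ref{unit vector field in the introduction}) to an analogous generating formula for the intersection form $g^{*}$, which we expect to be established directly in the preceding section from the Landau--Ginzburg residue description of $g^{*}$ in terms of the superpotential $\lambda^{\tilde A_{n}}$ of Corollary \ref{corollary superpotential antilde in the introduction}. By the very definition (\ref{metric eta def in the introduction}), the coefficients $\eta^{*}(d\varphi_{i},d\varphi_{j})$ are literally $\varphi_{0}$-derivatives of $g^{*}(d\varphi_{i},d\varphi_{j})$, so each $g^{*}$-coefficient on the left-hand side of the $g^{*}$-formula is replaced by the corresponding $\eta^{*}$-coefficient; the factors $\wp^{(k-2)}(v)$ and $\wp^{(j-2)}(v')$ are independent of $\varphi_{0}$ and pass through the derivative untouched, producing exactly the left-hand side of (\ref{generating formula of Mellipticeta in the introduction}).

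The core of the work is the differentiation of the right-hand side. The part of $g^{*}$ obtained from the residue expressions $\mathrm{res}_{p_{k}}\,(\partial_{\varphi_{i}}\lambda)(\partial_{\varphi_{j}}\lambda)\,d\lambda/(d\lambda/dz)$, once summed over the critical points of $\lambda^{\tilde A_{n}}$ and re-expressed via the elliptic addition formulas for $\wp$ and $\zeta$, yields upon $\partial/\partial\varphi_{0}$ the modular-derivative term $2\pi i(D_{\tau}\lambda(v)+D_{\tau}\lambda(v'))$ and the rational pole term $\frac{1}{2}(\wp'(v)+\wp'(v'))/(\wp(v)-\wp(v'))\,[d\lambda(v')/dv'-d\lambda(v)/dv]$; the latter arises from the standard identity $\zeta(v)+\zeta(v')-\zeta(v+v')=-\frac{1}{2}(\wp'(v)-\wp'(v'))/(\wp(v)-\wp(v'))$ applied to the critical-value residues.

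The remaining two blocks of terms in (\ref{generating formula of Mellipticeta in the introduction}), involving $\partial\lambda/\partial v_{n+1}$ and the double sums of $\partial\varphi_{j}/\partial v_{n+1}$, come from the $-n(n+1)(\partial/\partial v_{n+1})^{\otimes 2}$ summand and the $(\partial/\partial u)\otimes(\partial/\partial\tau)+(\partial/\partial\tau)\otimes(\partial/\partial u)$ summand of the intersection form (\ref{intersection form in the introduction}); after the change to the generator coordinates of Theorem \ref{chevalley in the introduction}, the chain rule converts these two summands into Leibniz-type products of $v_{n+1}$-derivatives of $\lambda$ and of the $\varphi_{k}$, and $\partial_{\varphi_{0}}$ then distributes symmetrically, producing the two symmetrized sums on the right-hand side.

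The one genuinely delicate point is the correction (\ref{metric eta consequence in the introduction}) which distinguishes $\tilde\eta^{*}$ from $\eta^{*}$ in the row and column indexed by $0$. The additional term $4\pi i k_{j}\varphi_{j}$ reflects the weight $k_{j}$ of the Jacobi generator $\varphi_{j}$ in the modular transformation law (\ref{jacobiform in the introduction}): under $\partial_{\varphi_{0}}$ a global quasi-homogeneity factor of $\lambda^{\tilde A_{n}}$ contributes to the $(0,j)$-entries an extra term proportional to $k_{j}\varphi_{j}$, which is most cleanly absorbed into the definition of $\tilde\eta^{*}$ rather than carried through the calculation. The main obstacle in executing the proof is therefore not the $\varphi_{0}$-differentiation itself, which is mechanical, but rather the setting-up of the parallel generating formula for $g^{*}$ through the explicit residue computation at the critical points of $\lambda^{\tilde A_{n}}$ and the use of the $\wp$-$\zeta$ addition identities; once that is in place, the corollary follows by differentiation and careful bookkeeping of the modular weights.
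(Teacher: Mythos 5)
Your top-level move — take the generating formula for the intersection-form coefficients established just before this corollary and apply $\partial/\partial\varphi_0$ to both sides — is exactly what the paper does; its proof of the corollary is literally ``differentiate equation (\ref{generating formula of Melliptic}) with respect to $\varphi_0$.'' But there are two points where your account diverges from what actually makes the statement true. First, the formula being differentiated is not a generating formula for $g^{*}(d\varphi_i,d\varphi_j)$ obtained from a Landau--Ginzburg residue computation at the critical points of $\lambda^{\tilde A_n}$. The paper instead introduces the modified metric $M(d\varphi_i,d\varphi_j):=\frac{1}{\eta^{2i+2j}}g^{*}(d\eta^{2i}\varphi_i,d\eta^{2j}\varphi_j)=g^{*}(d\varphi_i,d\varphi_j)-4\pi i g_1(\tau)(k_im_j+k_jm_i)\varphi_i\varphi_j$ (the Dedekind-eta twist is needed because the raw coefficients $g^{*}(d\varphi_i,d\varphi_j)$ are only quasi-modular, hence not Jacobi forms), and derives the generating formula for $M^{*}$ by a direct computation in the flat coordinates $(u,v,p,\tau)$ of an extended intersection form, using Bertola's theta-function identity (\ref{nice technical proposition}) — no residues at critical points appear. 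Relying on the residue description of $g^{*}$ at this stage is also logically delicate: the identification of the group-theoretic $g^{*}$ with the Hurwitz-space residue pairing is only established at the very end (Theorem \ref{Dubrovin Frobenius structure of the orbit space Mirror symmetry}), so invoking it here risks circularity unless you prove it independently.

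Second, and more substantively, your explanation of the correction (\ref{metric eta consequence in the introduction}) is not right. The tilde on $\tilde\eta^{*}$ does not arise from ``a global quasi-homogeneity factor of $\lambda^{\tilde A_n}$'' being absorbed by convention; it is forced by the fact that the generating formula computes $\tilde\eta^{*}:=\partial M^{*}/\partial\varphi_0$ rather than $\eta^{*}:=\partial g^{*}/\partial\varphi_0$. Differentiating the explicit relation $M^{*}=g^{*}-4\pi i g_1(\tau)(k_im_j+k_jm_i)\varphi_i\varphi_j$ with respect to $\varphi_0$ (using that every generator has index $m=1$ and that the product $\varphi_i\varphi_j$ depends on $\varphi_0$ only when one of the indices is $0$) gives precisely the discrepancy $4\pi i k_j\varphi_j$ in the row and column indexed by $0$ and nothing elsewhere. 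Without identifying this mechanism you cannot actually derive (\ref{metric eta consequence in the introduction}), which is part of the statement; so this step needs to be filled in rather than gestured at.
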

See section \ref{Intersection form jantilde}    for the definition of $D_{\tau}$ and for further details. 
Thereafter, we extract the coefficients $\eta^{*}(d\varphi_i,d\varphi_j)$ from the generating function (\ref{generating formula of Mellipticeta in the introduction})
\begin{theorem}\label{main lemma coefficients eta in varphi coordinates in the introduction}
The coefficients  $\eta^{*}(d\varphi_i,d\varphi_j)$ can be obtained by the formula,
\begin{equation}\label{formula main theorem for eta varphi in the introduction}
\begin{split}
\tilde\eta^{*}(d\varphi_i,d\varphi_j)&=(i+j-2)\varphi_{i+j-2}, \quad i,j\neq 0\\
\tilde\eta^{*}(d\varphi_i,d\varphi_0)&=0, \quad i\neq 0, \quad i\neq 1,\\
\tilde\eta^{*}(d\varphi_1,d\varphi_j)&=0. \quad j\neq 0,\\
\tilde\eta^{*}(d\varphi_1,d\varphi_0)&=\wp((n+1)v_{n+1})\varphi_1.\\
\end{split}
\end{equation}
\end{theorem}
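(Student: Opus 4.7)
The plan is to read off each entry $\tilde\eta^*(d\varphi_i,d\varphi_j)$ from the generating identity (\ref{generating formula of Mellipticeta in the introduction}) by comparing Laurent expansions of both sides in the two independent variables $v,v'$ near the origin. Using the standard expansion $\wp^{(k-2)}(v)=(-1)^{k}(k-1)!\,v^{-k}+O(v^{2-k})$ for $k\geq 2$, the normalisation $(-1)^{k+j}/((k-1)!(j-1)!)$ on the left-hand side is arranged precisely so that $\tilde\eta^*(d\varphi_k,d\varphi_j)$ for $k,j\geq 2$ coincides with the coefficient of $v^{-k}v'^{-j}$ on the right-hand side; the boundary entries (with an index $0$ or $1$) are then recovered from the regular part and from the coefficient of $\zeta(v)$ or $\zeta(v')$, using $\wp^{(-1)}(v)=-\zeta(v)$.

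With this dictionary established, I would analyse the four summands on the right-hand side separately. The first summand $2\pi i(D_\tau\lambda(v)+D_\tau\lambda(v'))$ and the two lines involving $\partial_{v_{n+1}}$ are each functions of a single variable in the pair $(v,v')$, so they cannot contribute to mixed Laurent coefficients $v^{-k}v'^{-j}$ with both $k,j\geq 2$, and thus enter only the boundary entries. Consequently, the bulk pattern $\tilde\eta^*(d\varphi_i,d\varphi_j)=(i+j-2)\varphi_{i+j-2}$ for $i,j\neq 0$ must come entirely from the coupling term $\frac{\wp'(v)+\wp'(v')}{\wp(v)-\wp(v')}\bigl[\frac{d\lambda(v')}{dv'}-\frac{d\lambda(v)}{dv}\bigr]$. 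Substituting the superpotential from Corollary \ref{corollary superpotential antilde in the introduction}, and using the telescoping identity $\frac{\wp^m(v')-\wp^m(v)}{\wp(v)-\wp(v')}=-\sum_{r+s=m-1}\wp^r(v)\wp^s(v')$ together with the Weierstrass addition formula $\frac{\wp'(v)+\wp'(v')}{\wp(v)-\wp(v')}=2[\zeta(v-v')-\zeta(v)+\zeta(v')]$, the coupling term reduces to a sum of products of $\wp$-powers whose leading Laurent behaviour at $v=v'=0$ yields exactly the factor $(i+j-2)\varphi_{i+j-2}$.

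The main obstacle is the set of boundary entries $\tilde\eta^*(d\varphi_i,d\varphi_0)$ and $\tilde\eta^*(d\varphi_1,d\varphi_j)$, where the zeta-function piece $\varphi_1[\zeta(v)-\zeta(v+(n+1)v_{n+1})+\zeta((n+1)v_{n+1})]$ of the superpotential interacts non-trivially with both the coupling term and the $D_\tau\lambda$ contributions. Here one uses the shifts $4\pi i\,k_j\varphi_j$ in the definition (\ref{metric eta consequence in the introduction}) of $\tilde\eta^*$ versus $\eta^*$ to absorb the $D_\tau$-terms; this forces the vanishing $\tilde\eta^*(d\varphi_i,d\varphi_0)=0$ for $i\neq 0,1$ and $\tilde\eta^*(d\varphi_1,d\varphi_j)=0$ for $j\neq 0$ by coefficient-by-coefficient cancellation between the regular part of the coupling expansion and the single-variable summands. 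The distinguished entry $\tilde\eta^*(d\varphi_1,d\varphi_0)=\wp((n+1)v_{n+1})\varphi_1$ ultimately arises from the Taylor expansion $\zeta(v+(n+1)v_{n+1})=\zeta((n+1)v_{n+1})-v\,\wp((n+1)v_{n+1})+O(v^2)$, whose linear-in-$v$ coefficient is the unique source of the elliptic function $\wp$ evaluated at $(n+1)v_{n+1}$ anywhere in the generating identity.
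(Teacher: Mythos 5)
Your overall strategy --- extracting the coefficients by comparing double Laurent expansions in $(v,v')$ and converting the coupling term via the addition formula $\tfrac{1}{2}\tfrac{\wp'(v)+\wp'(v')}{\wp(v)-\wp(v')}=\zeta(v-v')-\zeta(v)+\zeta(v')$ --- is the same as the paper's, and your account of the bulk entries $i,j\geq 2$ is correct in outline. One caveat there: the symbols $\wp^{k-2}$ in the superpotential denote \emph{derivatives} of $\wp$, not powers, so your telescoping identity for $\wp^m(v')-\wp^m(v)$ does not literally apply; what actually does the work is the partial-fraction identity $\tfrac{1}{v-v'}\bigl(v'^{-k-1}-v^{-k-1}\bigr)=\sum_{j} v^{-1-j}v'^{-k-1+j}$ applied to leading Laurent coefficients, exactly as in the paper's equation chain.

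There are two genuine gaps. First, the two lines involving $\partial_{v_{n+1}}$ are \emph{not} functions of a single variable: each is a product of a function of $v$ and a function of $v'$, e.g. $\partial_{\varphi_0}\bigl(\partial_{v_{n+1}}\lambda(v)\bigr)\cdot\partial_{v_{n+1}}\lambda(v')$, so a priori they can contaminate the mixed coefficients with $i,j\geq 2$. The reason they do not is that $\partial_{\varphi_0}(\partial\varphi_i/\partial v_{n+1})$ vanishes for $i>1$, equals $n$ for $i=1$, and equals $-n\,\theta_1'((n+1)v_{n+1})/\theta_1((n+1)v_{n+1})$ for $i=0$; this is Lemma \ref{lemma of derivatives with respect vn1}, a nontrivial computation (including a Laurent-tail argument showing a certain elliptic multiple of $\varphi_1$ cannot be proportional to $\varphi_0$) that your proposal never establishes. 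Only with it does $\partial_{\varphi_0}(\partial_{v_{n+1}}\lambda)$ collapse to $n[\zeta(v)-\zeta(v+(n+1)v_{n+1})]$, which has a bare simple pole, and only then do the last two double sums cancel most of the product, leaving the $\varphi_1\bigl[\wp(v'+(n+1)v_{n+1})-\wp((n+1)v_{n+1})\bigr]$ cross-terms. Second, the boundary entries are asserted rather than proved: the shifts $4\pi i k_j\varphi_j$ only relate $\tilde\eta^{*}$ to $\eta^{*}$ and play no role in evaluating the generating function, which already produces $\tilde\eta^{*}$; what actually forces $\tilde\eta^{*}(d\varphi_i,d\varphi_0)=0$ for $i\geq 2$ is the cancellation of the single-variable pieces $\zeta(v)\wp^{(k-1)}(v)$ between $2\pi i D_\tau\lambda(v)$ and the coupling term, together with the modularity argument disposing of the quasi-modular $g_1(\tau)$ contributions inside $D_\tau$. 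Likewise the distinguished value $\wp((n+1)v_{n+1})\varphi_1$ emerges from the surviving cross-terms of the difference $(a)_2-(b)$, i.e. from the explicit $\wp((n+1)v_{n+1})$ appearing in $\partial_{v_{n+1}}\lambda$, not from the linear term of the Taylor expansion of $\zeta(v+(n+1)v_{n+1})$, which would produce a positive power of $v$ rather than the required residue at $v=0$.
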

Inspired by the construction of the flat coordinates of the Saito metric done in \cite{Saito}, we construct explicitly the  coordinates $t^0,t^1,t^2,..,t^n,v_{n+1},\tau$ by the following formulae

\begin{equation}\label{relation t wrt varphi 1 in the introduction}
\begin{split}
t^{\alpha}&=\frac{n}{n+1-\alpha}\left(\varphi_{n}\right)^{\frac{n+1-\alpha}{n}}\left( 1+\Phi_{n-\alpha} \right)^{\frac{n+1-\alpha}{n}},\\
t^0&=\varphi_0-\frac{\theta_1^{\prime}((n+1)v_{n+1})}{\theta_1((n+1)v_{n+1})}\varphi_1+4\pi ig_1(\tau)\varphi_2,
\end{split}
\end{equation}
where 
\begin{equation}\label{definition of formal powers of varphi in the introduction}
\begin{split}
\left( 1+\Phi_i \right)^{\frac{n+1-\alpha}{n}}=\sum_{d=0}^{\infty}  {{\frac{n+1-\alpha}{n}}\choose{d}}  \Phi_i^d,\\
\Phi_i^d=\sum_{i_1+i_2+..+i_d=i} \frac{\varphi_{\left(n-i_1\right)}}{\varphi_{n}}....\frac{\varphi_{\left(n-i_d\right)}}{\varphi_{n}}.
\end{split}
\end{equation}
The Theorem \ref{main lemma coefficients eta in varphi coordinates in the introduction} together with the formulae (\ref{relation t wrt varphi 1 in the introduction}), and some extra auxiliary technical lemmas, implies the flatness of the Saito metric $\eta$.
\begin{theorem}\label{main theorem 1 in the introduction}
Let $(t^0,t^1,t^2,..,t^{n})$ be defined in (\ref{relation t wrt varphi 1 in the introduction}), and $\eta^{*}$ the Saito metric. Then,
\begin{equation}
\begin{split}
&\eta^{*}(dt^{\alpha},dt^{n+3-\beta})=-(n+1)\delta_{\alpha\beta}, \quad 2\leq\alpha,\beta\leq n\\
&\eta^{*}(dt^1,dt^{\alpha})=0,\\
&\eta^{*}(dt^0,dt^{\alpha})=0,\\
&\eta^{*}(dt^i,d\tau)=-2\pi i \delta_{i0},\\
&\eta^{*}(dt^i,dv_{n+1})=-\frac{\delta_{i1}}{n+1}.\\
\end{split}
\end{equation}
Moreover, the coordinates $t^0,t^1,t^2,..,t^{n},v_{n+1},\tau$ are the flat coordinates of $\eta^{*}$. See details in sections \ref{The second metric of the pencil jantilde} and \ref{Flat coordinates of Jantilde}.
\end{theorem}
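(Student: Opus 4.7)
The overall plan is to compute the metric $\eta^{*}$ in the new coordinates $(t^0,\ldots,t^n,v_{n+1},\tau)$ by the tensorial change-of-variables formula, starting from the values of $\eta^{*}(d\varphi_i,d\varphi_j)$ supplied by Theorem \ref{main lemma coefficients eta in varphi coordinates in the introduction}, together with the mixed entries involving $\partial/\partial v_{n+1}$ and $\partial/\partial \tau$ that one reads off from (\ref{intersection form in the introduction}) and from $\eta^{*}=\mathrm{Lie}_e g^{*}$. Once the resulting matrix is shown to coincide with the constant matrix listed in the statement, flatness of $\eta^{*}$ and the identification of $(t^0,\ldots,t^n,v_{n+1},\tau)$ as its flat coordinates follow at once, because a symmetric nondegenerate form with constant coefficients in some coordinate chart is automatically flat.

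The most delicate step is the \emph{bulk} block $2\leq\alpha,\beta\leq n$. Here I would first expand $\partial t^{\alpha}/\partial\varphi_k$ using the generalised binomial series from (\ref{definition of formal powers of varphi in the introduction}), and substitute into
\[
\sum_{k,j}\frac{\partial t^{\alpha}}{\partial\varphi_k}\frac{\partial t^{\beta}}{\partial\varphi_j}\tilde\eta^{*}(d\varphi_k,d\varphi_j)=\sum_{k,j}(k+j-2)\,\varphi_{k+j-2}\frac{\partial t^{\alpha}}{\partial\varphi_k}\frac{\partial t^{\beta}}{\partial\varphi_j}.
\]
Following the logic of the classical $A_n$ Saito construction, this double sum should be repackaged as a residue in the spectral variable $z$ of the superpotential $\lambda^{\tilde A_n}$ of (\ref{superpotentialAn in the introduction}); specifically, the $t^{\alpha}$ are designed so that $t^{\alpha}$ is proportional to $\mathrm{res}_{z=\infty}(\lambda^{\tilde A_n})^{(n+1-\alpha)/n}\,dh$ for an appropriate primary differential $dh$, and the bilinear sum should collapse to $-(n+1)\,\mathrm{res}_{z=\infty}(\lambda^{\tilde A_n})^{(n+1-\alpha)/n}d(\lambda^{\tilde A_n})^{(n+1-\beta)/n}$, which by a degree/residue count vanishes unless $\alpha+\beta=n+3$, in which case it evaluates to $-(n+1)$.

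The rows and columns involving $t^0$ and $t^1$ require the correction terms $-\theta_1'((n+1)v_{n+1})\varphi_1/\theta_1((n+1)v_{n+1})+4\pi ig_1(\tau)\varphi_2$ built into $t^0$. These are tailored exactly so as to absorb the non-diagonal contribution $\tilde\eta^{*}(d\varphi_1,d\varphi_0)=\wp((n+1)v_{n+1})\varphi_1$ together with the shifts $4\pi ik_j\varphi_j$ of (\ref{metric eta consequence in the introduction}), as well as the cross-terms coming from $\partial/\partial\tau$ and $\partial/\partial v_{n+1}$ derivatives hitting $\varphi$-coefficients. The identities $\eta^{*}(dt^1,dt^{\alpha})=0$, $\eta^{*}(dt^0,dt^{\alpha})=0$ for $\alpha\geq 2$, and $\eta^{*}(dt^0,dt^0)=0$ would then be verified by direct substitution and term-by-term cancellation. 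The mixed entries $\eta^{*}(dt^i,d\tau)=-2\pi i\,\delta_{i0}$ and $\eta^{*}(dt^i,dv_{n+1})=-\delta_{i1}/(n+1)$ follow from $\eta^{*}=\mathrm{Lie}_e g^{*}$ combined with $\partial t^0/\partial\varphi_0=1$, $\partial t^1/\partial\varphi_0=0$, and the explicit form of the last two summands of (\ref{intersection form in the introduction}).

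The main obstacle is the combinatorial/residue identity in the bulk block: making rigorous the claim that the double sum over the partitions encoded in $\Phi_{n-\alpha}^d$ collapses via a contour integral argument to the diagonal $-(n+1)\delta_{\alpha+\beta,n+3}$. A secondary technical point is checking that $\partial_{\varphi_0}\!\left[\theta_1'((n+1)v_{n+1})\varphi_1/\theta_1((n+1)v_{n+1})\right]$ together with $\partial_{\varphi_0}[4\pi ig_1(\tau)\varphi_2]$ produce precisely the correction needed to cancel the $\wp((n+1)v_{n+1})\varphi_1$ and $4\pi ik_j\varphi_j$ anomalies; this relies on identities for $g_1(\tau)$ and the logarithmic derivative of $\theta_1$ that must be developed alongside the main calculation, as anticipated in section \ref{The second metric of the pencil jantilde}.
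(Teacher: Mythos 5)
Your plan for the bulk block $2\leq\alpha,\beta\leq n$ is a genuinely different route from the paper's. You push the Jacobian in the direction $\partial t^{\alpha}/\partial\varphi_k$ and try to collapse the double sum by a residue pairing of $(\lambda)^{(n+1-\alpha)/n}$ against $d(\lambda)^{(n+1-\beta)/n}$ at the pole of $\lambda$; this is the Hurwitz-space way of seeing the answer and would work if carried out, but you correctly flag the contour/partition identity as an unproven obstacle. The paper instead inverts the Jacobian: it uses the closed forms $k\varphi_k=\frac{(-1)^k}{n^{k-1}}T_n^k$ and $\partial T_n^k/\partial t^{\alpha}=kT^{k-1}_{\alpha-1}$ to express $\eta^{*}(d\varphi_i,d\varphi_{n+3-j})$ as $\sum_{\alpha}\frac{\partial\varphi_i}{\partial t^{\alpha}}\frac{\partial\varphi_{n+3-j}}{\partial t^{n+3-\alpha}}$ times a constant, and matches this against the known value $(n+1+i-j)\varphi_{n+1+i-j}$ to read off that the $(t^{\alpha},t^{n+3-\beta})$ block is a constant anti-diagonal matrix. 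That argument is purely combinatorial, avoids any contour manipulation, and is the piece you would need to substitute for your unproven residue collapse. The treatment of the mixed entries with $d\tau$ and $dv_{n+1}$, and of $\eta^{*}(dt^0,dt^{\alpha})$ for $\alpha\geq 1$ via the correction terms in $t^0$, matches the paper's Lemmas on those entries.

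There is, however, one genuine gap: the entry $\eta^{*}(dt^0,dt^0)$. You propose to get it by \emph{direct substitution and term-by-term cancellation}, but this is not possible from the stated ingredients, because the coefficient $\eta^{*}(d\varphi_0,d\varphi_0)$ is never computed --- Theorem \ref{main lemma coefficients eta in varphi coordinates in the introduction} and its corollaries only cover the cases $(i,j)$ with at least one index nonzero (plus $(1,0)$), and the generating-function extraction is never pushed to the $(0,0)$ coefficient. Since $t^0=\varphi_0-\frac{\theta_1'((n+1)v_{n+1})}{\theta_1((n+1)v_{n+1})}\varphi_1+4\pi i g_1(\tau)\varphi_2$, the quantity $\eta^{*}(dt^0,dt^0)$ contains $\eta^{*}(d\varphi_0,d\varphi_0)$ as an undetermined summand, so your cancellation cannot close. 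The paper circumvents this with a symmetry argument: $\eta^{*}$ (and the partial sum of the metric over the already-computed entries) is invariant under the lattice translations of the Jacobi group, while $t^0\mapsto t^0-2\pi i(n+1)\lambda_{n+1}t^1$ has a non-trivial transformation law, so $dt^0\otimes dt^0$ cannot appear with a nonzero coefficient in an invariant tensor. You would need either to import this invariance argument or to extract the $(0,0)$ coefficient from the generating function, neither of which is in your sketch.
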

A remarkable fact to point out is that, even thought the intersection form $g^{*}$ and their Levi Civita connection are $\Ja(\tilde A_n)$ invariant sections, their coefficients $g^{ij}$, $\Gamma_{k}^{ij}$ are not Jacobi forms, but they live in an extension of the Jacobi forms ring. Hence, we have the following lemma
\begin{lemma}\label{lemma gab index in the introduction}
The coefficients of the intersection form $g^{\alpha\beta}$ and its Christoffel symbol $\Gamma_{\gamma}^{\alpha\beta}$ on the coordinates $t^0,t^1,..,t^n,v_{n+1},\tau$ belong to the ring $\widetilde E_{\bullet,\bullet}[t^0,t^1,..,t^n,\frac{1}{t^n}]$, where $\widetilde E_{\bullet,\bullet}$ is a suitable extension of the ring $E_{\bullet,\bullet}$.
\end{lemma}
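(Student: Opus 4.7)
The strategy is to compute $g^{\alpha\beta}$ first in the Jacobi-form coordinates $(\varphi_0,\ldots,\varphi_n,v_{n+1},\tau)$ produced by Theorem \ref{chevalley in the introduction}, and then transform to the flat coordinates through the explicit change of variables (\ref{relation t wrt varphi 1 in the introduction}); the Christoffel symbols $\Gamma^{\alpha\beta}_\gamma$ then follow by standard differentiation once the ring where $g^{\alpha\beta}$ lives is controlled.

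For the first stage, I would start from the coordinate formula (\ref{intersection form in the introduction}) and expand $g^*(d\varphi_i,d\varphi_j)$ by the chain rule in the variables $(u,v,\tau)$. Using the superpotential representation of Corollary \ref{corollary superpotential antilde in the introduction}, each factor $\partial\varphi_k/\partial v_a$, $\partial\varphi_k/\partial v_{n+1}$ and $\partial\varphi_k/\partial\tau$ is extracted by residue calculus applied to $\lambda^{\tilde A_n}$, producing a generating formula for $g^*(d\varphi_i,d\varphi_j)$ that parallels (\ref{generating formula of Mellipticeta in the introduction}). Reading off coefficients, each $g^*(d\varphi_i,d\varphi_j)$ is a polynomial in $\varphi_0,\ldots,\varphi_n$ whose coefficients are built from $\wp$, $\zeta$, and $\theta_1'/\theta_1$ evaluated at $(n+1)v_{n+1}$, together with quasi-modular forms and their $\tau$-derivatives. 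Collecting these finitely many transcendental building blocks defines the extension $\widetilde E_{\bullet,\bullet}\supset E_{\bullet,\bullet}$, and one obtains $g^{ij}\in\widetilde E_{\bullet,\bullet}[\varphi_0,\ldots,\varphi_n]$ in the $\varphi$-basis.

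For the second stage, I would invert (\ref{relation t wrt varphi 1 in the introduction}). Looking at the leading term for $\alpha=n$, the relation $t^n = n\varphi_n^{1/n}(1+\Phi_0)^{1/n}$ yields $\varphi_n = (t^n/n)^n$ (with a trivial contribution from $\Phi_0$), while the triangular dependence of $t^\alpha$ on $\varphi_\alpha,\varphi_{\alpha+1},\ldots,\varphi_n$ allows one to solve recursively for each remaining $\varphi_k$ as an element of $\mathbb{Q}[t^1,\ldots,t^n,1/t^n]$; the appearance of $1/t^n$ is forced by the inversion of the leading power of $\varphi_n$, and no further denominators are introduced. Substituting these expressions into the Stage-one formula and applying the tensorial transformation law $g^{\alpha\beta}(t) = \sum_{i,j}(\partial t^\alpha/\partial \varphi_i)(\partial t^\beta/\partial \varphi_j)\,g^{ij}(\varphi)$ yields $g^{\alpha\beta}\in \widetilde E_{\bullet,\bullet}[t^0,\ldots,t^n,1/t^n]$, and the Christoffel symbols $\Gamma^{\alpha\beta}_\gamma$ stay in the same ring because the coordinate derivations $\partial/\partial t^\alpha$, $\partial/\partial v_{n+1}$, and $\partial/\partial\tau$ preserve it.

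The main obstacle is the bookkeeping for $\widetilde E_{\bullet,\bullet}$: one must verify that the finitely many elliptic and quasi-modular generators appearing in the $\varphi$-basis expression for $g^{ij}$ actually close under the derivations $\partial/\partial v_{n+1}$ and $\partial/\partial\tau$, so that $\widetilde E_{\bullet,\bullet}$ is a well-defined finitely generated extension of $E_{\bullet,\bullet}$ rather than an ever-growing tower. This closure follows from the classical differential identities for $\wp$, $\zeta$, $\theta_1$ and the Ramanujan relations among the Eisenstein series $E_2,E_4,E_6$; once it is in place, the lemma is immediate from the two explicit substitutions above.
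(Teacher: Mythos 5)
Your two-stage skeleton (control $g^*(d\varphi_i,d\varphi_j)$ in the invariant coordinates, then push forward through the Jacobian of the change of variables (\ref{relation t wrt varphi 1 in the introduction})) matches the paper's, and your stage two is essentially right: the paper shows $k\varphi_k=\frac{(-1)^k}{n^{k-1}}T_n^k$ is a genuine polynomial in $t^1,\dots,t^n$, and the $1/t^n$ enters only through the entries $\partial t^\alpha/\partial\varphi_i$ of the forward Jacobian (coming from the fractional powers of $\varphi_n$), not through the inverse map as you suggest. For stage one, the paper's argument is more conceptual than "reading off coefficients" of a generating formula: it shows that the modified pairing $M(d\varphi_i,d\varphi_j)=g^*(d\varphi_i,d\varphi_j)-4\pi i g_1(\tau)(k_im_j+k_jm_i)\varphi_i\varphi_j$ is itself a Jacobi form of weight $k_i+k_j-2$ and index $m_i+m_j$, so membership in $E_{\bullet,\bullet}[\varphi_0,\dots,\varphi_n]$ is automatic from the Chevalley theorem; the only new generator forced on $g^*$ is $g_1(\tau)$, with $\theta_1'((n+1)v_{n+1})/\theta_1((n+1)v_{n+1})$ entering later through $t^0$. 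This makes $\widetilde E_{\bullet,\bullet}$ an extension by exactly two explicit functions, so your worry about an "ever-growing tower" never arises.

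The genuine gap is the Christoffel symbols. They do not "follow by standard differentiation": the contravariant Levi-Civita data is determined by the two conditions $\partial_\gamma g^{\alpha\beta}=\Gamma^{\alpha\beta}_\gamma+\Gamma^{\beta\alpha}_\gamma$ and $g^{\alpha s}\Gamma^{\beta\gamma}_s=g^{\beta s}\Gamma^{\alpha\gamma}_s$, and the first only yields the symmetrization in the upper indices; extracting $\Gamma^{\alpha\beta}_\gamma$ itself in general requires inverting $g$, which introduces $1/\det(g^{\alpha\beta})$ --- a function vanishing on the discriminant --- and takes you out of $\widetilde E_{\bullet,\bullet}[t^0,\dots,t^n,\frac{1}{t^n}]$. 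The paper avoids this by computing the connection in the $\varphi$-coordinates via the transformation law out of the coordinates $(u,v,\tau)$, in which $g$ is constant and its Christoffel symbols vanish identically, so that $\Gamma^{ij}_k(\varphi)=\frac{\partial\varphi_i}{\partial v_p}\frac{\partial}{\partial\varphi_k}\bigl(\frac{\partial\varphi_j}{\partial v_q}\bigr)g^{pq}(v)$ with no matrix inversion; even then, showing this lands in $\widetilde E_{\bullet,\bullet}[\varphi_0,\dots,\varphi_n]$ requires a further argument (rescaling $\varphi_i\mapsto\eta^{2i}(\tau)\varphi_i$ so that the resulting $\hat\Gamma^{ij}_k$ is an honest Jacobi form, then comparing the two connections term by term). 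Your proposal needs this step, or a substitute for it, before the final push-forward $\Gamma^{\alpha\beta}_\gamma=\frac{\partial t^\alpha}{\partial\varphi_i}\frac{\partial t^\beta}{\partial\varphi_j}\frac{\partial\varphi_k}{\partial t^\gamma}\Gamma^{ij}_k+\frac{\partial t^\alpha}{\partial\varphi_i}\frac{\partial}{\partial t^\gamma}\bigl(\frac{\partial t^\beta}{\partial\varphi_j}\bigr)g^{ij}$ can be carried out.
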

This lemma \ref{lemma gab index in the introduction} is important because it gives a tri-graded ring structure for the coefficients  $g^{\alpha\beta}$ and $\Gamma_{\gamma}^{\alpha\beta}$. In particular, the lemma \ref{lemma gab index in the introduction} implies that $g^{\alpha\beta}$ and $\Gamma_{\gamma}^{\alpha\beta}$ are eigenfunctions of the Euler vector field given by the last equation (\ref{jacobiform in the introduction}). Using this fact, one can prove that $g^{\alpha\beta}$ and $\Gamma_{\gamma}^{\alpha\beta}$ are at most linear on the variable $t^0$, and this fact together with Theorem \ref{main theorem 1 in the introduction} proves that $g^{*},\eta^{*}$ form a flat pencil metric, and consequently we can prove 

\begin{lemma}\label{main final lemma last chapter 1 in the introduction}
Let the intersection form be (\ref{intersection form in the introduction}), the unit vector field be (\ref{unit vector field in the introduction}), and the Euler vector field  be given by the last equation of (\ref{jacobiform in the introduction}). Then, there exist a  function 
\begin{equation}\label{WDVV solution in the introduction}
F(t^0,t^1,t^2,..,t^n.v_{n+1},\tau)=-\frac{(t^0)^2\tau}{4\pi i}+\frac{t^0}{2}\sum_{\alpha,\beta\neq 0,\tau}\eta_{\alpha\beta}t^{\alpha}t^{\beta}+G(t^1,t^2,..,t^n,v_{n+1},\tau),
\end{equation}
such that
\begin{equation}\label{quasi homogeneous condition of Dubrovin Frobenius structure in the introduction}
\begin{split}
&Lie_{E}F=2F+\text{quadratic terms},\\
&Lie_{E}\left(  F^{\alpha\beta}   \right)=g^{\alpha\beta},\\
&\frac{\partial ^2G(t^1,t^2,..,t^n,v_{n+1},\tau)}{\partial t^{\alpha}\partial t^{\beta}} \in \widetilde E_{\bullet,\bullet}[t^1,t^2,..,t^n,\frac{1}{t^n}],
\end{split}
\end{equation}
where 
\begin{equation}
F^{\alpha\beta}  =\eta^{\alpha\alpha^{\prime}}\eta^{\beta\beta^{\prime}}\frac{\partial F^2}{\partial t^{\alpha^{\prime}} \partial t^{\beta^{\prime}} }.
\end{equation}

\end{lemma}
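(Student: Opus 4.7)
The plan is to deduce the existence of $F$ from Dubrovin's general construction associating a Dubrovin--Frobenius prepotential to a quasi-homogeneous flat pencil of metrics, using Lemma \ref{lemma gab index in the introduction} to pin down the precise shape of $F$. First I would check that $(g^*,\eta^*)$ forms a flat pencil. Both metrics are already flat: $g^*$ in the linear coordinates $(u,v_0,\dots,v_{n+1},\tau)$, where its coefficients are constant, and $\eta^*$ in $(t^0,\dots,t^n,v_{n+1},\tau)$ by Theorem \ref{main theorem 1 in the introduction}. Since $\eta^*=\mathrm{Lie}_e g^*$ with $e=\partial/\partial\varphi_0$, and since the change of variables (\ref{relation t wrt varphi 1 in the introduction}) shows that $\varphi_0$ enters $t^0$ only linearly and does not enter any other flat coordinate, one has $e=\partial/\partial t^0$. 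Dubrovin's compatibility criterion then reduces the pencil property to a single identity between the Christoffel symbols of $g^*$ and $\eta^*$, which becomes tractable once the $t^0$-dependence of $g^{\alpha\beta}$ is controlled.

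Now I would invoke Lemma \ref{lemma gab index in the introduction}: the Euler-weight bookkeeping on $g^{\alpha\beta}$ and $\Gamma_\gamma^{\alpha\beta}$, combined with the pairing pattern of $\eta^*$ from Theorem \ref{main theorem 1 in the introduction} (where $t^0$ pairs only with $\tau$), forces $g^{\alpha\beta}$ to be at most linear in $t^0$. Because $\eta^{\alpha\beta}=\partial g^{\alpha\beta}/\partial t^0$ is thus $t^0$-independent, I can decompose
\begin{equation*}
g^{\alpha\beta}(t)=t^0\,\eta^{\alpha\beta}+h^{\alpha\beta}(t^1,\dots,t^n,v_{n+1},\tau),
\end{equation*}
and the compatibility of the Christoffel symbols of $g^*+\lambda\eta^*$ follows from this linear decomposition together with the flatness of $g^*$. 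With the flat pencil in hand, I would apply Dubrovin's reconstruction theorem \cite{B. Dubrovin1}, \cite{B. Dubrovin2}: the flat pencil, together with the unit and Euler vector fields, determines a Dubrovin--Frobenius structure whose prepotential $F$ satisfies $\mathrm{Lie}_E F^{\alpha\beta}=g^{\alpha\beta}$. Integrating $F^{\alpha\beta}$ twice in the flat coordinates, and using $\eta^*(dt^0,d\tau)=-2\pi i$ together with the vanishing identities $\eta^*(dt^0,dt^\alpha)=0$ for $\alpha\neq\tau$, produces exactly the ansatz
\begin{equation*}
F=-\frac{(t^0)^2\tau}{4\pi i}+\frac{t^0}{2}\sum_{\alpha,\beta\neq 0,\tau}\eta_{\alpha\beta}t^\alpha t^\beta+G(t^1,\dots,t^n,v_{n+1},\tau).
\end{equation*}
The ring condition on $\partial^2 G/\partial t^\alpha\partial t^\beta$ is inherited from the $t^0$-independent component $h^{\alpha\beta}$ through Lemma \ref{lemma gab index in the introduction}, and the quasi-homogeneity $\mathrm{Lie}_E F=2F+\text{quadratic}$ follows from a direct degree count on each summand under the Euler vector field from (\ref{jacobiform in the introduction}).

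The main obstacle is the flat pencil verification: one must show that the curvature of $g^*+\lambda\eta^*$ vanishes identically in $\lambda$ and that its Christoffel symbols equal $\Gamma(g^*)+\lambda\Gamma(\eta^*)$. In the $\eta^*$-flat coordinates this reduces to checking that the $t^0$-linear structure of $g^{\alpha\beta}$ is rigid enough that the cross terms in the curvature cancel, and that the Christoffel symbols of $g^*$ are themselves $t^0$-independent. Both are consequences of Lemma \ref{lemma gab index in the introduction}, but extracting them requires carefully tracking how the grading interacts with the Levi--Civita formula for $g^*$ in a ring of coefficients that are only meromorphic in $v_{n+1}$. Once this rigidity is established, the remainder of the proof is a bookkeeping application of Dubrovin's theorem, with the specific normalizations $\eta^*(dt^i,d\tau)=-2\pi i\delta_{i0}$ and $\eta^*(dt^i,dv_{n+1})=-\delta_{i1}/(n+1)$ fixing the numerical constants appearing in $F$.
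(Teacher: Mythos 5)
Your proposal follows essentially the same route as the paper: establish that $g^{\alpha\beta}$ and $\Gamma_\gamma^{\alpha\beta}$ are at most linear in $t^0$ so that Dubrovin's lemma on flat pencils applies, then use Dubrovin's reconstruction (the vector field $f^\gamma$ with $\Gamma_\gamma^{\alpha\beta}=\eta^{\alpha\epsilon}\partial_\epsilon\partial_\gamma f^\beta$, the symmetry condition at $\beta=\tau$, and Euler-degree bookkeeping) to produce the potential, fixing the explicit $t^0$-dependent terms by integrating $\partial_\alpha\partial_0 F=\eta_{\alpha\gamma}t^\gamma$ and reading the ring condition off $Lie_E F^{\alpha\beta}=g^{\alpha\beta}$. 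The only cosmetic difference is that you frame the flat-pencil step as a direct curvature check, whereas the paper obtains it for free from the $t^0$-linearity via Dubrovin's criterion.
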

Using the lemma \ref{main final lemma last chapter 1 in the introduction} with some more technical results, we obtain our final result

\begin{theorem}\label{Dubrovin Frobenius structure of the orbit space in the introduction}
A suitable covering of the orbit space $\left(\mathbb{C}\oplus\mathbb{C}^{n+1}\oplus\mathbb{H}\right)/\Ja(\tilde A_n)$ with the intersection form (\ref{intersection form in the introduction}), unit vector field (\ref{unit vector field in the introduction}), and Euler vector field  given by the last equation of (\ref{jacobiform in the introduction}) has a Dubrovin Frobenius manifold structure. Moreover, a suitable covering of $\mathbb{C}\oplus\mathbb{C}^{n+1}\oplus\mathbb{H}/\Ja(\tilde A_n)$ is isomorphic as Dubrovin Frobenius manifold to a suitable covering of the Hurwitz space $H_{1,n-1,0}$.
\end{theorem}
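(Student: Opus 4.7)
The plan is to deduce the Dubrovin Frobenius structure from Dubrovin's reconstruction theorem which turns a quasi-homogeneous flat pencil of metrics into a Dubrovin Frobenius manifold, and then to realize the resulting manifold as a Hurwitz space via the superpotential $\lambda^{\tilde A_n}$ of Corollary \ref{corollary superpotential antilde in the introduction}.

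First I would assemble the Dubrovin Frobenius axioms on a suitable covering where the flat coordinates $(t^0,t^1,\ldots,t^n,v_{n+1},\tau)$ are single-valued. The unit field $e=\partial_{\varphi_0}$ equals $\partial_{t^0}$ by the first line of (\ref{relation t wrt varphi 1 in the introduction}); the Saito metric $\eta^{*}$ is flat with constant coefficients in these coordinates by Theorem \ref{main theorem 1 in the introduction}; and the intersection form $g^{*}$ is manifestly flat, being the quotient of a constant bilinear form on $\Omega$. Lemma \ref{lemma gab index in the introduction} forces $g^{\alpha\beta}$ to be at most linear in $t^0$, and, combined with Lemma \ref{main final lemma last chapter 1 in the introduction}, produces a prepotential $F$ with the correct quasi-homogeneity and the intersection-form compatibility $Lie_E F^{\alpha\beta}=g^{\alpha\beta}$.

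Next I would verify that $(g^{*},\eta^{*})$ is a flat pencil, i.e.\ $g^{*}+\mu\eta^{*}$ is flat for every $\mu\in\mathbb{C}$ and its Christoffel symbols decompose as the affine combination of those of $g^{*}$ and $\eta^{*}$. The key step is that, in the flat coordinates of $\eta^{*}$, Lemma \ref{lemma gab index in the introduction} forces $g^{\alpha\beta}(t^0+\mu,t^1,\ldots,\tau) = g^{\alpha\beta}(t^0,\ldots,\tau)+\mu\,\eta^{\alpha\beta}$, so the shift $t^0\mapsto t^0+\mu$ identifies $g^{*}+\mu\eta^{*}$ with a flat metric. Once the flat pencil and $F$ are in place, the associativity of $c_{\alpha\beta}{}^\gamma=\eta^{\gamma\delta}\partial_\alpha\partial_\beta\partial_\delta F$ follows from the standard Dubrovin correspondence between quasi-homogeneous flat pencils and WDVV solutions, and invariance of $\eta$ under the product together with the Euler and unit axioms are routine.

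The isomorphism with $H_{1,n-1,0}$ proceeds via $\lambda^{\tilde A_n}(z)$ from Corollary \ref{corollary superpotential antilde in the introduction}: for each point of the orbit space this is a meromorphic function on the torus $\mathbb{C}/(\mathbb{Z}+\tau\mathbb{Z})$ with a pole of order $n$ at $z=0$ and a simple pole at $z=-(n+1)v_{n+1}$, exactly the ramification profile defining $H_{1,n-1,0}$. The same corollary exhibits $(\varphi_0,\ldots,\varphi_n,v_{n+1},\tau)$ as the moduli of this meromorphic data, so the assignment is a local biholomorphism after passing to a suitable cover. To match the Dubrovin Frobenius structures I would compare $g^{*}$ with Dubrovin's residue formula on Hurwitz spaces with primary differential $dz$, and $\eta^{*}$ with its Lie derivative along $\partial_{\varphi_0}$; the generating identity (\ref{generating formula of Mellipticeta in the introduction}) is essentially the orbit-space incarnation of that residue formula and is what forces the agreement.

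The hard part, as usual in this circle of results, lies in verifying the flat-pencil property and the Hurwitz matching rigorously rather than in any new conceptual ingredient: the $t^0$-linearity of $g^{\alpha\beta}$ must be extracted from the tri-graded ring structure of Lemma \ref{lemma gab index in the introduction}, and the meromorphic behaviour in $v_{n+1}$ forces a careful choice of finite covering over which both the Jacobi generators and the superpotential are single-valued. These two bookkeeping issues, together with the comparison of the orbit-space and Hurwitz residue formulas on the intersection of their domains of definition, are where the technical weight of the proof concentrates.
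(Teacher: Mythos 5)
Your proposal follows essentially the same route as the paper: the flat pencil is obtained from the $t^0$-linearity of $g^{\alpha\beta}$ and $\Gamma_{\gamma}^{\alpha\beta}$ via Dubrovin's lemma, the prepotential $F$ is reconstructed from the pencil exactly as in Lemma \ref{main final lemma last chapter 1}, and the identification with $H_{1,n-1,0}$ goes through the superpotential $\lambda^{\tilde A_n}$ and the matching of intersection form, unit and Euler fields. The one place where the paper does visibly more work than your appeal to the ``standard correspondence'' is the associativity of the product in the degree-zero directions $v_{n+1},\tau$, where the relation $\Gamma_{\gamma}^{\alpha\beta}=d_{\beta}c_{\gamma}^{\alpha\beta}$ degenerates and the paper falls back on a separate argument in canonical coordinates, using that the roots of $\det(g^{\alpha\beta}-u\,\eta^{\alpha\beta})=0$ are simple.
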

See section \ref{Construction of WDVV solution}  for details.

The results of this thesis would be important because of the following 
 \begin{enumerate}
\item The Hurwitz spaces $H_{1,n-1,0}$ are classified by the group $\Ja(\tilde A_n)$, hence we increase the knowledge of the WDVV/ discrete group correspondence. In particular the WDVV solutions associated with this orbit spaces contains a kind of  elliptic function in an exceptional variable, which is exotic  in theory of WDVV solutions, since most of the known examples are polynomial or polynomial with exponential function. Recently, the case $\Ja(\tilde A_1)$ attracted the attention of experts due to its application in integrable systems \cite{M. Cutimanco and V. Shramchenko}, \cite{E. V. Ferapontov M. V. Pavlov L. Xue}, \cite{Romano 1}.
\item It is well known that Hurwitz spaces are related to matrix models, then, if one derive the associated matrix model of the Hurwitz spaces $H_{1,n-1,0}$, we would immediately classify it by the group $\Ja(\tilde A_n)$.
\item Even thought the orbit space of $\Ja(\tilde A_n)$ is locally isomorphic as Dubrovin Frobenius manifold to the Hurwtiz space $H_{1,n-1,0}$, these two space are not necessarily the same. Indeed, the Dubrovin Frobenius manifold associated to Hurwtiz spaces is a local construction, because it is defined in a domain of a solution of a Darboux-Egoroff system. On another hand, orbit spaces are somehow global objects, because their ring of invariant function are polynomial over a suitable ring. In addition, the notion of invariant function gives information about the non-cubic part of the WDDV solution associated with the orbit space, see the last equation of (\ref{quasi homogeneous condition of Dubrovin Frobenius structure in the introduction}) for instance. 
\item The orbit space construction of the group $\Ja(\tilde A_n)$ can be generalised  to the other classical finite Coxeter groups as $B_n, D_n$. Hence, these orbit spaces could give rise to a new class of Dubrovin Frobenius manifolds. Furthermore, the associated integrable hierarchies of this new class of Dubrovin Frobenius manifolds could have applications in Gromow Witten theory and combinatorics.
\end{enumerate}

The thesis is organised  in the following way. In section \ref{Review of Dubrovin-Frobenius manifolds}, we recall the basics definitions of Dubrovin-Frobenius manifolds. In section \ref{Hurwitz space chapter}, we recall the Dubrovin Frobenius manifold construction on Hurwitz spaces.  In section \ref{section An case}, we defined extended affine  Jacobi group $\Ja(\tilde A_n)$ and , we construct Dubrovin-Frobenius structure on the orbit spaces of $\Ja(\tilde A_n)$ . Furthermore, we show that the orbit space of the group $\Ja(\tilde A_n)$ is isomorphic ,as Dubrovin-Frobenius manifold, to the Hurwitz-Frobenius manifold $\tilde H_{1,n-1,0}$ \cite{B. Dubrovin2}, \cite{V. Shramchenko}. See theorem \ref{Dubrovin Frobenius structure of the orbit space} for details. The  section \ref{section An case} generalise the results of section \cite{Guilherme 1} for the group $\Ja(\tilde A_n)$. \\

\section*{Acknowledgements}
I am grateful to Professor Boris Dubrovin for proposing this problem, for his remarkable advises and guidance. I would like also to thanks Prof. Davide Guzzetti, and Prof. Marco Bertola for helpful discussions, and guidance of this paper. Moreover, the author  acknowledges  the support of the H2020-MSCA-RISE-2017 PROJECT No. 778010 IPADEGAN.

\section{Review of Dubrovin-Frobenius manifolds}\label{Review of Dubrovin-Frobenius manifolds}
\subsection{Basic definitions }
We recall the basic definitions of Dubrovin Frobenius manifold, for more details \cite{B. Dubrovin2}.
\begin{definition}
A Frobenius Algebra $\mathcal{A}$ is an unital, commutative, associative algebra endowed with an invariant non degenerate bilinear pairing 
\begin{equation*}
\eta(,):\mathcal{A}\otimes\mathcal{A}\mapsto \mathbb{C},
\end{equation*}
 invariant in the following sense:
\begin{equation*}
\eta(A\bullet B,C)=\eta(A,B\bullet C),
\end{equation*}
$\forall A,B,C \in \mathcal{A}$.
\end{definition}
\begin{definition} 
$M$ is smooth or complex Dubrovin-Frobenius manifold of dimension $n$ if a structure of Frobenius algebra is specified on any tangent plane $T_tM$ at any point $t\in M$, smoothly depending on the point such that: 
\begin{enumerate}
\item The invariant inner product $\eta(,)$ is a flat metric on M. The flat coordinates of $\eta(,)$ will be denoted by $(t^1,t^2,..,t^n)$.
\item The unity vector field $e$ is covariantly constant w.r.t. the Levi-Civita connection $\nabla$ for the metric $\eta(,)$
\begin{equation}
\nabla e=0
\end{equation}
\item Let
\begin{equation}
c(u, v, w) :=\eta( u\bullet v, w )
\end{equation}
(a symmetric 3-tensor). We require the 4-tensor
\begin{equation}
(\nabla_z c)(u, v, w) 
\end{equation}
to be symmetric in the four vector fields $u, v, w, z.$
\item A vector field $E$ must be determined on M such that:
\begin{equation}
\nabla\nabla E=0
\end{equation}
and that the corresponding one-parameter group of diffeomorphisms acts by conformal
transformations of the metric $\eta$, and by rescalings on the Frobenius algebras $T_tM$ .
Equivalently:
\begin{equation}
[E,e]=-e,
\end{equation} 
\begin{equation}\label{conditionFM41}
\begin{split}
\mathcal{L}_E\eta(X,Y)&:=E\eta(X,Y)-\eta([E,X],Y)-\eta(X,[E,Y])\\&=(2-d)\eta(X,Y),
\end{split}
\end{equation} 
\begin{equation}\label{conditionFM42}
\begin{split}
\mathcal{L}_Ec(X,Y,Z)&:=Ec(X,Y,Z)-c([E,X],Y,Z)-c(X,[E,Y],Z)\\&-c(X,Y,[E,Z])=(3-d)c(X,Y,Z).
\end{split}
\end{equation}
\end{enumerate}

The Euler vector $E$ can be represented as follows:
\begin{lemma}
If the grading operator $Q:=\nabla E$ is diagonalizable, then $E$ can be represented as:
\begin{equation}
E=\sum_{i=1}^n ((1-q_i)t_i+r_i)\partial_i
\end{equation}
\end{lemma}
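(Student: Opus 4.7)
The plan is to unpack the flatness condition $\nabla\nabla E=0$ in the flat coordinates of $\eta$ guaranteed by condition~1 of the definition of a Dubrovin–Frobenius manifold, and then exploit the diagonalizability of $Q$ by a linear change of those coordinates.

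First I would work in flat coordinates $(t^{1},\dots,t^{n})$ for $\eta$. In such coordinates the Christoffel symbols of the Levi–Civita connection vanish identically, so $\nabla_{i}=\partial_{i}$ on components, and the $(1,2)$-tensor $\nabla\nabla E$ simply has components $\partial_{i}\partial_{j}E^{k}$. The hypothesis $\nabla\nabla E=0$ therefore forces each component of $E$ to be affine in the flat coordinates:
\begin{equation*}
E^{k}(t)=A^{k}{}_{j}\,t^{j}+r^{k},
\end{equation*}
for constants $A^{k}{}_{j}$ and $r^{k}$. Equivalently, $Q=\nabla E$ is represented in the frame $\{\partial_{i}\}$ by the constant matrix $A$.

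Next I would diagonalize. By hypothesis $Q$ is diagonalizable, so there is a constant invertible matrix $P$ and eigenvalues $\mu_{1},\dots,\mu_{n}$ with $P^{-1}AP=\mathrm{diag}(\mu_{1},\dots,\mu_{n})$. Pass to new coordinates $\tilde t^{i}=(P^{-1})^{i}{}_{j}t^{j}$; because the change is linear (affine), it preserves the vanishing of the Christoffel symbols, so $(\tilde t^{1},\dots,\tilde t^{n})$ is again a flat coordinate system (the only effect on $\eta$ is to replace the constant matrix $\eta_{ij}$ by $P^{T}\eta P$, which is still constant). In the new frame $\{\partial/\partial\tilde t^{\,i}\}$, which is an eigenbasis of $Q$, the Euler field takes the form
\begin{equation*}
E=\sum_{i=1}^{n}\bigl(\mu_{i}\tilde t^{\,i}+\tilde r^{\,i}\bigr)\frac{\partial}{\partial\tilde t^{\,i}},
\end{equation*}
where the $\tilde r^{\,i}$ are the transformed constants.

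Finally, setting $q_{i}:=1-\mu_{i}$ is a relabelling of eigenvalues that yields the asserted expression $E=\sum_{i=1}^{n}((1-q_{i})\tilde t^{\,i}+\tilde r^{\,i})\,\partial/\partial\tilde t^{\,i}$. None of the steps is deep: the only point deserving care is to confirm that a linear change of coordinates used to diagonalize $Q$ remains within the class of flat coordinates for $\eta$, which follows because any affine change of variables preserves the identically-zero Christoffel symbols. The substantive content of the lemma is thus the passage $\nabla\nabla E=0\Rightarrow E$ is affine in flat coordinates; diagonalizability of $Q$ then delivers the diagonal normal form by elementary linear algebra.
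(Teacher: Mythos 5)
Your argument is correct and is the standard one (it is essentially Dubrovin's own derivation): in flat coordinates of $\eta$ the condition $\nabla\nabla E=0$ reduces to $\partial_i\partial_j E^k=0$, so $E$ is affine with constant linear part $Q$, and a constant linear change of coordinates — which preserves the class of flat coordinate systems — diagonalizes $Q$ and puts $E$ in the stated normal form with $q_i:=1-\mu_i$. The paper itself states this lemma without proof as part of its review of Dubrovin--Frobenius manifold basics, so there is nothing to compare against; your write-up supplies exactly the missing standard argument, including the one point worth checking (that the diagonalizing linear change stays within flat coordinates).
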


We now define scaling exponent as follows:
\begin{definition}
A function $\varphi:M\mapsto \mathbb{C}$ is said to be quasi-homogeneous of scaling exponent $d_{\varphi}$, if it is a eigenfunction of Euler vector field:
\begin{equation}
E(\varphi)=d_{\varphi}\varphi
\end{equation}
\end{definition}

\begin{definition}
The function $F(t),t=(t^1,t^2,..,t^n)$ is a solution of WDVV equation if its third derivatives  
\begin{equation}\label{freeenergy}
c_{\alpha\beta\gamma}=\frac{\partial^3F}{\partial t^{\alpha}\partial t^{\beta}\partial t^{\gamma}}
\end{equation}
satisfy the following conditions:

\begin{enumerate}

\item \begin{equation*}
\eta_{\alpha\beta}=c_{1\alpha\beta}
\end{equation*}
 is constant nondegenerate matrix.
\item  The function 
\begin{equation*} 
c_{\alpha\beta}^{\gamma}=\eta^{\gamma\delta}c_{\alpha\beta\delta}
\end{equation*}
is structure constant of associative
algebra.
\item  F(t) must be quasi-homogeneous function
\begin{equation*}
F(c^{d_1}t^1,..,c^{d_n}t^n)=c^{d_F}F(t^1,..,t^n)
\end{equation*}
for any nonzero $c$ and for some numbers $d_1, ..., d_n, d_F$.
\end{enumerate}
\end{definition}

\begin{lemma}\label{onetoonefenergy}
Any solution of WDVV equations with $d_1\neq0$ defined in a domain
$t\in M$ determines in this domain the structure of a Dubrovin-Frobenius manifold. Conversely, locally any Dubrovin-Frobenius manifold is related to some solution of WDVV equations.
\end{lemma}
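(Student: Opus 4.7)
The plan is to prove the two implications separately in flat coordinates $(t^1,\dots,t^n)$ of the metric $\eta$, taking advantage of the fact that in such coordinates the Levi-Civita connection $\nabla$ reduces to partial differentiation, so the tensorial conditions in the definition become equalities of partial derivatives.

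For the first direction, I would start from a solution $F(t)$ of the WDVV equations and construct the Frobenius structure explicitly. The metric is taken to be $\eta_{\alpha\beta}=c_{1\alpha\beta}=\partial_1\partial_\alpha\partial_\beta F$, which is constant and nondegenerate by hypothesis, so $(t^1,\dots,t^n)$ are automatically flat coordinates. The product on each tangent space is defined by $\partial_\alpha\bullet\partial_\beta = c_{\alpha\beta}^\gamma\partial_\gamma$ with $c_{\alpha\beta}^\gamma=\eta^{\gamma\delta}c_{\alpha\beta\delta}$; commutativity follows from the symmetry of third partial derivatives, associativity is precisely the content of the WDVV equation, and taking $e=\partial_1$ gives $c_{1\alpha}^\gamma=\eta^{\gamma\delta}\eta_{\alpha\delta}=\delta_\alpha^\gamma$, so $\partial_1$ is the unit. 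The invariance $\eta(u\bullet v,w)=c_{\alpha\beta\gamma}u^\alpha v^\beta w^\gamma$ is manifestly symmetric, and $\nabla e=0$ holds because $e$ has constant components. The symmetry of $\nabla_z c$ in four arguments reduces to the symmetry of $\partial_\delta\partial_\alpha\partial_\beta\partial_\gamma F$. Finally, using the quasi-homogeneity of $F$ and the hypothesis $d_1\neq 0$, I would rescale the weights so that $d_1=1$ and define $E=\sum_\alpha((1-q_\alpha)t^\alpha+r_\alpha)\partial_\alpha$ (absorbing any degenerate eigenvalues through affine shifts coming from the quadratic ambiguity in $F$); the identities $[E,e]=-e$ and the conformal/rescaling conditions (\ref{conditionFM41})--(\ref{conditionFM42}) then follow by applying the Euler operator to the identity $E(F)=d_F F+(\text{quadratic})$ and differentiating three times.

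For the converse direction, I would work again in flat coordinates of $\eta$ and consider the tensor $c_{\alpha\beta\gamma}=\eta(\partial_\alpha\bullet\partial_\beta,\partial_\gamma)$, which is totally symmetric by commutativity of $\bullet$ together with the invariance of $\eta$. The axiom that $(\nabla_z c)(u,v,w)$ is symmetric in all four arguments becomes, in flat coordinates, the condition $\partial_\delta c_{\alpha\beta\gamma}=\partial_\alpha c_{\delta\beta\gamma}$. A standard Poincaré-lemma argument (applied twice) then produces, locally on $M$, a potential $F$ such that $c_{\alpha\beta\gamma}=\partial_\alpha\partial_\beta\partial_\gamma F$: first one finds a symmetric 2-tensor $G_{\beta\gamma}$ with $c_{\alpha\beta\gamma}=\partial_\alpha G_{\beta\gamma}$, then a vector $H_\gamma$ with $G_{\beta\gamma}=\partial_\beta H_\gamma$, and finally $H_\gamma=\partial_\gamma F$. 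The normalization $\eta_{\alpha\beta}=c_{1\alpha\beta}$ follows from $\partial_1$ being the unit of the algebra, associativity of $\bullet$ becomes the WDVV equation for $F$, and the Euler vector field axiom translates into the quasi-homogeneity of $F$ modulo an additive quadratic polynomial (which is irrelevant to the third derivatives).

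The main obstacle I anticipate is bookkeeping in the Euler vector field argument: when $Q=\nabla E$ has nontrivial Jordan blocks, or when some weights $d_\alpha$ vanish, the affine shifts $r_\alpha$ must be chosen carefully, and the quadratic ambiguity in $F$ has to be used to absorb terms that prevent $F$ itself from being an eigenfunction. The hypothesis $d_1\neq 0$ is precisely what allows one to normalize the weight of $e=\partial_1$ to $-1$ and fix the overall scale, so this is where that assumption enters. The Poincaré-lemma step in the converse is local in nature, which accounts for the word "locally" in the statement.
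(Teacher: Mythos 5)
The paper does not prove this lemma at all: it appears in the review section and is quoted from \cite{B. Dubrovin2} without argument. Your reconstruction is the standard proof from that reference and is correct in both directions — the forward construction in flat coordinates (with associativity $=$ WDVV, $e=\partial_1$ the unit via $c_{1\alpha}^{\gamma}=\delta_\alpha^\gamma$, and $d_1\neq 0$ used to normalize the Euler field) and the converse via the four-index symmetry of $\partial_\delta c_{\alpha\beta\gamma}$ plus an iterated Poincar\'e-lemma argument, which is where the locality enters.
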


\end{definition}

\subsection{Intersection form}
\begin{definition}
Let $x=\eta(X,),y=\eta(Y,)\in \Gamma(T^{*}M)$ where $X,Y\in \Gamma(TM)$. An induced Frobenius algebra is defined on $\Gamma(T^{*}M)$ by:
\begin{equation*}
x\bullet y=\eta(X\bullet Y, ).
\end{equation*}
\end{definition}

\begin{definition}
The intersection form is a bilinear pairing in $T^{*}M$ defined by:
\begin{equation*}
(\omega_1,\omega_2)^{*}:=\iota_E(\omega_1\bullet\omega_2)
\end{equation*}
where $\omega_1,\omega_2\in T^{*}M$ and $\bullet$ is the induced Frobenius algebra product in the cotangent space. the intersection form will be denoted by $g^{*}$ .
\end{definition}
\noindent\textbf{Remark 1:}
Let $x=\eta(X,),y=\eta(Y,)\in \Gamma(T^{*}M)$. Then:
\begin{equation*}
g^{*}(x,y)=\eta(X\bullet Y, E)=c(X,Y,E).
\end{equation*}
\textbf{Remark 2:}
It is possible to prove that the tensor $g^{*}$ defines a bilinear form on the tangent bundle that is almost everywhere non degenerate \cite{B. Dubrovin2}.

\begin{proposition}
The metric $g^*$ is flat, and $\forall \lambda\in \mathbb{C}$, the contravariant metric $\eta^{*}(,)+\lambda g^*(,)$ is flat, and the contravariant connection is $\nabla^{\eta}+\lambda\nabla^{g}$, where $\nabla^{\eta},\nabla^{g}$ are the contravariant connections of $\eta^{*}$ and $g^*$ respectively. The family of metrics  $\eta^{*}(,)+\lambda g^*(,)$  is called Flat pencil of metrics.
\end{proposition}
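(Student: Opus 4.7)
\emph{Proof proposal.} The plan is to work throughout in flat coordinates $(t^1, \ldots, t^n)$ of the metric $\eta$, so that $\eta^{\alpha\beta}$ is constant and its contravariant Christoffel symbols $\Gamma^{\eta,\alpha\beta}_\gamma$ vanish identically. Using the definition of the intersection form together with the description $c_{\alpha\beta\gamma} = \partial_\alpha\partial_\beta\partial_\gamma F$ provided by Lemma \ref{onetoonefenergy}, I obtain the explicit expression
\begin{equation*}
g^{*\alpha\beta} \;=\; E^\delta\,\eta^{\alpha\alpha'}\eta^{\beta\beta'}\,\partial_\delta\partial_{\alpha'}\partial_{\beta'} F.
\end{equation*}
The quasi-homogeneity conditions (\ref{conditionFM41}), (\ref{conditionFM42}) combined with $\nabla\nabla E = 0$ force $E^\alpha = (1-q_\alpha) t^\alpha + r^\alpha$ in flat coordinates and let me rewrite $g^{*\alpha\beta}$ in the canonical form $(d - 1 + q_\alpha + q_\beta) F^{\alpha\beta} + A^{\alpha\beta}$ with $F^{\alpha\beta} := \eta^{\alpha\alpha'}\eta^{\beta\beta'}\partial_{\alpha'}\partial_{\beta'} F$ and constant $A^{\alpha\beta}$.

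From this explicit formula I would derive the contravariant Christoffel symbols $\Gamma^{g,\alpha\beta}_\gamma$ by a direct computation and show that they take the form $\Gamma^{g,\alpha\beta}_\gamma = \bigl(\tfrac{1-d}{2} - q_\beta\bigr) c_\gamma^{\alpha\beta}$ (up to constants fixed by the grading), built purely out of the structure constants $c_\gamma^{\alpha\beta}$ and the scaling exponents. The claim that the pencil connection equals $\nabla^\eta + \lambda \nabla^g$ is then automatic in these coordinates once I verify that the Christoffel symbols of $\eta^* + \lambda g^*$ computed by the Koszul formula equal $\lambda \Gamma^{g,\alpha\beta}_\gamma$. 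This reduces to a symmetry identity: that $\eta^{\alpha\sigma}\Gamma^{g,\beta\gamma}_\sigma$ is symmetric in the pair $(\alpha, \beta)$, which follows directly from the associativity (WDVV) of the Frobenius algebra.

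The main obstacle is proving that the curvature of the pencil metric $\eta^* + \lambda g^*$ vanishes identically in $\lambda$. Since the curvature is polynomial in $\lambda$, each coefficient gives a tensor identity; the $\lambda^0$ coefficient is the Riemann curvature of $g^*$, whose vanishing is precisely the first claim of the proposition, while higher-order coefficients encode the compatibility of the two flat structures. The key algebraic inputs to collapse these identities are: (a) the associativity $c_\delta^{\alpha\sigma} c_\sigma^{\beta\gamma} = c_\delta^{\beta\sigma} c_\sigma^{\alpha\gamma}$ coming from WDVV, (b) the quasi-homogeneity identity $E^\delta \partial_\delta c_{\alpha\beta\gamma} = (3 - d - q_\alpha - q_\beta - q_\gamma)\,c_{\alpha\beta\gamma}$ obtained by differentiating (\ref{conditionFM42}) in flat coordinates, and (c) $\nabla\nabla E = 0$. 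Assembling these three ingredients produces the vanishing of all $\lambda$-coefficients of the curvature, yielding simultaneously the flatness of $g^*$ and the flat-pencil property. This is the classical Dubrovin construction; the heart of the argument is the interplay between associativity of the Frobenius product and the $E$-weighted covariant constancy of the structure constants.
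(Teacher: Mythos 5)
The paper does not prove this proposition; it is stated in the review section as a known result quoted from \cite{B. Dubrovin2}, so there is no in-paper argument to compare against. Your proposal reproduces, in outline, Dubrovin's classical proof: pass to $\eta$-flat coordinates, write $g^{*\alpha\beta}=E^{\delta}c_{\delta}^{\alpha\beta}$, use quasi-homogeneity to identify $g^{*\alpha\beta}$ with a grading constant times $F^{\alpha\beta}$ plus a constant matrix, read off the contravariant Christoffel symbols as grading constants times the structure constants, and annihilate the $\lambda$-expansion of the curvature using WDVV associativity together with the $E$-homogeneity of $c_{\alpha\beta\gamma}$; those are indeed the decisive inputs, so the strategy is sound. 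A few details need repair before this becomes a proof. First, the grading constant is $(1+d-q_\alpha-q_\beta)$, as recorded in the paper's own formula (\ref{fgformula}), not $(d-1+q_\alpha+q_\beta)$; correspondingly $\Gamma^{g,\alpha\beta}_{\gamma}=\bigl(\tfrac{1+d}{2}-q_{\beta}\bigr)c^{\alpha\beta}_{\gamma}$, and the precise value matters because it is exactly what makes the first Levi-Civita condition $\partial_{\gamma}g^{\alpha\beta}=\Gamma^{g,\alpha\beta}_{\gamma}+\Gamma^{g,\beta\alpha}_{\gamma}$ hold; you should verify this alongside the symmetry identity, and note that $\eta^{\alpha\sigma}\Gamma^{g,\beta\gamma}_{\sigma}$ being symmetric in $(\alpha,\beta)$ needs only the total symmetry of $c_{\alpha\beta\gamma}$, while it is $g^{\alpha\sigma}\Gamma^{g,\beta\gamma}_{\sigma}=g^{\beta\sigma}\Gamma^{g,\alpha\gamma}_{\sigma}$ that requires associativity. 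Second, for the pencil $\eta^{*}+\lambda g^{*}$ the $\lambda^{0}$ coefficient of the curvature is the curvature of $\eta^{*}$, which vanishes trivially in these coordinates; the curvature of $g^{*}$ appears as the top ($\lambda^{2}$) coefficient, so the flatness of $g^{*}$ is not obtained at order zero but is precisely the hardest coefficient to kill, again via associativity and the homogeneity identity for $c_{\alpha\beta\gamma}$.
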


\begin{lemma} The induced metric $\eta^*$ on the cotangent bundle $T^*M$ can be written as Lie derivative with respect the unit vector field $e$ of the intersection form $g^*$. i.e
\begin{equation}
\eta^*=\mathcal{L}_eg^*.
\end{equation}
\end{lemma}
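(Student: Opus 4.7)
The plan is to work in flat coordinates $(t^1,\dots,t^n)$ for $\eta$, in which by convention (from the normalisation $c_{1\alpha\beta}=\eta_{\alpha\beta}$) the unit vector field is $e=\partial/\partial t^1$. Since $e$ is then a coordinate vector field, the Lie derivative of the contravariant tensor $g^{\ast}$ has components $(\mathcal L_e g^\ast)^{\alpha\beta}=\partial_1 g^{\ast\alpha\beta}$, and I only need to show that $\partial_1 g^{\ast\alpha\beta}=\eta^{\alpha\beta}$.

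First, I would write $g^{\ast}$ explicitly in these coordinates. By the definition together with Remark 1, $g^{\ast\alpha\beta}=c(X^\alpha,X^\beta,E)$ with $X^\alpha=\eta^{\alpha\alpha'}\partial_{\alpha'}$, so
\begin{equation*}
g^{\ast\alpha\beta}=\eta^{\alpha\alpha'}\eta^{\beta\beta'}\,c_{\alpha'\beta'\gamma}\,E^\gamma,
\end{equation*}
where $c_{\alpha\beta\gamma}=\partial_\alpha\partial_\beta\partial_\gamma F$ are the third derivatives of the prepotential and $E^\gamma=(1-q_\gamma)t^\gamma+r_\gamma$ are the components of the Euler field in flat coordinates.

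Next I differentiate with respect to $t^1$. Two ingredients collapse the calculation. First, the symmetry of $c_{\alpha\beta\gamma}$ together with the normalisation $c_{1\alpha\beta}=\eta_{\alpha\beta}$ gives
\begin{equation*}
\partial_1 c_{\alpha'\beta'\delta}=\partial_{\alpha'}c_{1\beta'\delta}=\partial_{\alpha'}\eta_{\beta'\delta}=0,
\end{equation*}
so the whole $\partial_1$ hits only the Euler field. Second, from $[E,e]=-e$ applied to the coordinate $t^1$ one reads off $q_1=0$, hence $\partial_1 E^\gamma=(1-q_\gamma)\delta^\gamma_1=\delta^\gamma_1$. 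Combining these,
\begin{equation*}
\partial_1 g^{\ast\alpha\beta}=\eta^{\alpha\alpha'}\eta^{\beta\beta'}c_{\alpha'\beta' 1}=\eta^{\alpha\alpha'}\eta^{\beta\beta'}\eta_{\alpha'\beta'}=\eta^{\alpha\beta},
\end{equation*}
which is exactly $\eta^{\ast\alpha\beta}$. This yields $\mathcal L_e g^\ast=\eta^\ast$ as required.

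There is no real obstacle in this argument; the only point that requires a moment's care is ensuring that the normalisation $e=\partial_1$ together with $[E,e]=-e$ does force $q_1=0$, so that the contribution from $\partial_1 E^\gamma$ is exactly $\eta^{\alpha\beta}$ with no spurious scaling factor. Everything else is a direct application of the defining axioms of a Dubrovin-Frobenius manifold already stated in the excerpt.
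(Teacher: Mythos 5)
Your proof is correct. The paper itself states this lemma without proof in its review of Dubrovin--Frobenius manifolds (it is a standard fact quoted from Dubrovin's work), so there is no argument in the text to compare against; your computation is the standard one. The two key collapses are right: $\partial_1 c_{\alpha'\beta'\gamma}=\partial_\gamma c_{1\alpha'\beta'}=\partial_\gamma\eta_{\alpha'\beta'}=0$ by symmetry of the third derivatives of $F$ and constancy of $\eta$ in flat coordinates, and $\partial_1 E^\gamma=\delta^\gamma_1$, which in fact follows directly from $[E,e]=-e$ written in flat coordinates (so you do not even need the diagonalizability of $\nabla E$ or the explicit form $E^\gamma=(1-q_\gamma)t^\gamma+r_\gamma$ — the identity $[E,\partial_1]=-\partial_1(E^\gamma)\partial_\gamma=-\partial_1$ already forces it).
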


\begin{lemma} The correspondent WDVV solution $F(t^1,..,t^n)$ of the Dubrovin-Frobenius manifold  works as potential function for $g^*$. More precisely:
\begin{equation}\label{fgformula}
g^*(dt^i,dt^j)=(1+d-q_i-q_j)\nabla_{(dt^i)^{\sharp}}\nabla_{(dt^j)^{\sharp}}F.
\end{equation}
where the form $(dt^j)^{\sharp}$ is the image of  $dt^j$ by the isomorphism  induced by the metric $\eta$. 
\end{lemma}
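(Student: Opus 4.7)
The plan is to unfold both sides in flat coordinates $(t^1,\dots,t^n)$ of $\eta$ and then invoke quasi-homogeneity of $F$. First I would rewrite the claim in components: since $\eta$ is constant in the flat frame, $(dt^i)^{\sharp}=\eta^{ik}\partial_k$, so that
\begin{equation*}
\nabla_{(dt^i)^{\sharp}}\nabla_{(dt^j)^{\sharp}}F \;=\; \eta^{ik}\eta^{jl}\,\partial_k\partial_l F.
\end{equation*}
On the left, I would use Remark~1 (i.e.\ $g^{*}(x,y)=c(X,Y,E)$ whenever $x=\eta(X,\cdot)$, $y=\eta(Y,\cdot)$) together with the fact that the structure 3-tensor is $c_{\alpha\beta\gamma}=\partial_\alpha\partial_\beta\partial_\gamma F$. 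This gives
\begin{equation*}
g^{*}(dt^i,dt^j)\;=\;\eta^{ik}\eta^{jl}\,E^{m}\,\partial_k\partial_l\partial_m F \;=\;\eta^{ik}\eta^{jl}\,E\bigl(\partial_k\partial_l F\bigr).
\end{equation*}

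The key computation is then to evaluate $E(\partial_k\partial_l F)$. Writing $E=\sum ((1-q_\alpha)t^\alpha+r_\alpha)\partial_\alpha$, I would exploit quasi-homogeneity of $F$ with exponent $d_F=3-d$ and weights $d_\alpha=1-q_\alpha$ by differentiating the identity $E(F)=d_F F$ twice. Using $\partial_\alpha E^{\beta}=(1-q_\alpha)\delta_\alpha^{\beta}$ one obtains, inductively,
\begin{equation*}
E\bigl(\partial_k\partial_l F\bigr)=\bigl(d_F-(1-q_k)-(1-q_l)\bigr)\,\partial_k\partial_l F=\bigl(1-d+q_k+q_l\bigr)\,\partial_k\partial_l F.
\end{equation*}

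The final step is to absorb the weights into the outer $\eta$'s. Because $\eta$ is invariant under the scaling in the sense of \eqref{conditionFM41} and is nondegenerate, its components $\eta^{ik}$ vanish unless $q_i+q_k=d$; hence whenever $\eta^{ik}\eta^{jl}$ is nonzero we have $q_k=d-q_i$ and $q_l=d-q_j$, so the scalar factor becomes $1-d+(d-q_i)+(d-q_j)=1+d-q_i-q_j$. Combining with the earlier expression yields exactly \eqref{fgformula}. The main obstacle I anticipate is the bookkeeping around the shift terms $r_\alpha$ (and possible quadratic corrections to $E(F)$): these survive differentiation only as constants in $\partial_k\partial_l F$, and one must check that contraction with $\eta^{ik}\eta^{jl}$ kills the unwanted pieces (or, equivalently, that strict quasi-homogeneity, as in the definition preceding Lemma~\ref{onetoonefenergy}, is the right hypothesis to invoke here). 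Modulo that clean-up, the proof reduces to the scaling identity above.
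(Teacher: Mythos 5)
Your argument is correct, and it is the standard one: the paper states this lemma without proof (it is recalled from Dubrovin's lectures in the review section), and your chain $g^*(dt^i,dt^j)=\eta^{ik}\eta^{jl}E(\partial_k\partial_lF)$, the double differentiation of the quasi-homogeneity identity, and the observation that \eqref{conditionFM41} forces $\eta^{ik}=0$ unless $q_i+q_k=d$ is exactly how the scalar factor $1+d-q_i-q_j$ arises. Your closing worry is well placed and resolves as you suspect: under the strict quasi-homogeneity in the paper's definition of a WDVV solution there are no shifts $r_\alpha$ or quadratic corrections, so the constant terms you flag vanish identically; in the general (resonant) case $1+d=q_i+q_j$ the formula indeed degenerates, which is precisely the ``obstruction'' the paper itself acknowledges in the Reconstruction subsection.
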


\subsection{Reconstruction}\label{Reconstruction subsection}
Suppose that given a Dubrovin-Frobenius manifold M, only the following data are known: intersection form $g^{*}$, unit vector field $e$, Euler vector field $E$. From the previous lemmas we can reconstruct the Dubrovin-Frobenius manifold by setting:
\begin{equation}
\eta^{*}=\mathcal{L}_eg^*.
\end{equation}
Then, we find the flat coordinates of $\eta$  as homogeneous functions, and the structure constants by imposing:
\begin{equation}
g^*(dt^i,dt^j)=(1+d-q_i-q_j)\nabla_{(dt^i)^{\sharp}}\nabla_{(dt^j)^{\sharp}}F.
\end{equation}
Therefore, it is possible to compute the Free-energy by integration. Of course, we may have obstructions when, $1+d=q_i+q_j$.

\subsection{Monodromy of Dubrovin-Frobenius manifold}\label{Monodromy of Dubrovin-Frobenius manifold}
The intersection form g of a Dubrovin-Frobenius manifold is a flat almost everywhere nondegenerate metric. 
Let 
\begin{equation*}
 \Sigma=\{t\in M: det( g)=0     \} 
\end{equation*}

 Hence, the linear system of differential equations determining $g^{*}$-flat coordinates 
 \begin{equation*}
 g^{\alpha\epsilon}(t)\frac{\partial^2 x}{\partial t^{\beta}\partial t^{\epsilon}}+\Gamma_{\beta}^{\alpha\epsilon}(t)\frac{\partial x}{\partial t^{\epsilon}}=0
 \end{equation*}
 has poles, and consequently its solutions $x_a(t^1,..,t^n)$ are multivalued, where $(t^1,..,t^n)$ are flat coordinates of $\eta$. 
The analytical continuation of the solutions $x_a(t^1,..,t^n)$ has monodromy corresponding to loops around $\Sigma$. This gives rise to a monodromy representation of $\pi_1(M\setminus\Sigma)$, which is called Monodromy of the Dubrovin-Frobenius manifold.

\subsection{Dubrovin Connection}

In the theory of Dubrovin Frobenius manifold, there is another way to associate a monodromy group on it. Consider the following deformation of the Levi-Civita connection defined on a Dubrovin Frobenius manifold $M$
\begin{equation}
\tilde\nabla_uv:=\nabla_u v+zu\bullet v, \quad u,v\in \Gamma(TM),
\end{equation}
where $\nabla$ is the Levi-Civita connection of the metric $\eta$, $\bullet$ is the Frobenius product, and $z\in \mathbb{CP}^1$. Then, the following connection defined in $M\times\mathbb{CP}^1$
\begin{equation}
\begin{split}
\tilde\nabla_uv&:=\nabla_u v+zu\bullet v,\\
\tilde\nabla_{\frac{d}{dz}}\frac{d}{dz}&=0, \quad \tilde\nabla_{v} \frac{d}{dz}=0,\\
\tilde\nabla_{\frac{d}{dz}}&v=\partial_zv+E\bullet v-\frac{1}{z}\mu(v).
\end{split}
\end{equation}
where $\mu$ is the diagonal matrix be given by
\begin{equation}
\begin{split}
\mu_{\alpha\beta}=(q_{\alpha}-\frac{d}{2})\delta_{\alpha\beta}.
\end{split}
\end{equation}
The monodromy representation arise by considering the solutions of the flat coordinate systems
\begin{equation}\label{flat coordinate system of the deformed eta}
\begin{split}
\tilde\nabla d\tilde t=0.
\end{split}
\end{equation}
After doing some Gauge transformations in the system (\ref{flat coordinate system of the deformed eta}), and writing it in matricidal form. The system \ref{flat coordinate system of the deformed eta} takes the form
\begin{equation}\label{flat coordinate system of the deformed eta 1}
\begin{split}
\frac{dY}{dt^{\alpha}}&=zC_{\alpha}Y,\\
\frac{dY}{d z}&=\left(U+\frac{\mu}{z} \right)Y,\\
\end{split}
\end{equation}
where 
\begin{equation}
\begin{split}
{C_{\alpha}}_{\beta}^{\gamma}=c_{\alpha\beta}^{\gamma}, \quad U_{\beta}^{\gamma}=E^{\epsilon}c_{\beta\epsilon}^{\gamma}.
\end{split}
\end{equation}

\subsection{Differential geometry preliminaries}\label{Differential geometry preliminaries}
In order to derive the Dubrovin Frobenius manifolds, we recall some results related with Riemannian geometry of the contravariant "metric" $g^{ij}$. By metric, I mean symmetric, bilinear, non-degenerate. In coordinates, let the metric 
\begin{equation*}
g_{ij}dx^idx^j
\end{equation*}
and its induced contravariant metric
\begin{equation*}
g^{ij}\frac{\partial}{\partial x_i}\otimes\frac{\partial}{\partial x_j}
\end{equation*}
The Levi Civita connection is uniquely specified by
\begin{equation}\label{Christoffel symbol covariant 1}
\nabla_kg_{ij}=\partial_kg_{ij}-\Gamma_{ki}^sg_{sj}-\Gamma_{kj}^sg_{is}=0,
\end{equation}
or 
\begin{equation}\label{Christoffel symbol covariant 2}
\nabla_kg^{ij}=\partial_kg^{ij}-\Gamma_{ks}^ig^{sj}-\Gamma_{ks}^jg^{is}=0,
\end{equation}
and 
\begin{equation}\label{Christoffel symbol covariant 3}
\Gamma_{ij}^k=\Gamma_{ji}^k.
\end{equation}
The Christoffel symbol can be written as
\begin{equation*}
\Gamma_{ij}^k=g^{ks}\left( \partial_ig_{sj}+\partial_jg_{is}-\partial_sg_{ij}            \right).
\end{equation*}
But, for our purpose it will be more convenient to use
\begin{equation}
\Gamma^{ij}_k:=-g^{is}\Gamma_{sk}^j.
\end{equation}
Then, the equations (\ref{Christoffel symbol covariant 1}), (\ref{Christoffel symbol covariant 2}), and (\ref{Christoffel symbol covariant 3}) are equivalent to 
\begin{equation}\label{Levi Civita contravariant coxeter chapter}
\begin{split}
&\partial_kg^{ij}=\Gamma_k^{ij}+\Gamma_k^{ji},\\
&g^{is}\Gamma_s^{jk}=g^{js}\Gamma_s^{ik}.
\end{split}
\end{equation}
Introducing the  operators
\begin{equation}
\begin{split}
\nabla^i&=g^{is}\nabla_s,\\
\nabla^i\xi_k&=g^{is}\partial_s\xi_k+\Gamma^{is}_k\xi_s.
\end{split}
\end{equation}
The curvature tensor $R_{ijk}^l$
of the metric measures noncommutativity of the operators
$\nabla_i$or, equivalently $\nabla^i$
\begin{equation}
\begin{split}
\left(\nabla_i\nabla_j-\nabla_j\nabla_i   \right)\xi_k=R_{ijk}^l\xi_l
\end{split}
\end{equation}
where
\begin{equation}
\begin{split}
R_{ijk}^l=\partial_i\Gamma_{jk}^l-\partial_j\Gamma_{ik}^l+\Gamma_{is}^{l}\Gamma^{s}_{jk}-\Gamma_{js}^{l}\Gamma^{s}_{ik}
\end{split}
\end{equation}

We say that the metric is flat if the curvature of it vanishes. For a flat metric local flat
coordinates $p_1, ..., p_n$ exist such that in these coordinates the metric is constant and the
components of the Levi-Civita connection vanish. Conversely, if a system of flat coordinates for a metric exists then the metric is flat. The flat coordinates are determined uniquely up to an affine transformation with constant coefficients. They can be found from the following system
\begin{equation}
\begin{split}
\nabla^i\partial_kp&=g^{is}\partial_s\partial_kp+\Gamma^{is}_k\partial_sp=0.
\end{split}
\end{equation}
The correspondent Riemman tensor for the contravariant metric $g^{ij}$ can be written as
\begin{equation}
\begin{split}
R^{ijk}_l:=g^{is}g^{jt}R_{stl}^k=g^{is}\left(\partial_s\Gamma_l^{jk}-\partial_l\Gamma_s^{jk} \right)+\Gamma_{s}^{ij}\Gamma_l^{sk}-\Gamma_{s}^{ik}\Gamma_l^{sj}.
\end{split}
\end{equation}

The aim of this section is to construct a Dubrovin Frobenius structure on the orbit space of $ A_n$. The strategy to achieve this goal is based on the derivation of a WDVV solution from the geometric data of the orbit space $ A_n$. More specifically, the WDVV solution will be derived from the flat pencil structure which the orbit space of $A_n$ naturally has.

\begin{definition}\cite{B. Dubrovin1}
Two metrics $(g^{*}, \eta^{*})$ form a flat pencil if:
\begin{enumerate}
\item The metric 
\begin{equation}\label{flatpencil An}
g^{ij}_{\lambda}:=g^{ij}+\lambda\eta^{ij},
\end{equation}
is flat for arbitrary $\lambda$. 
\item The Levi-Civita connection of the metric (\ref{flatpencil An}) has the form 
\begin{equation*}
\Gamma^{ij}_{k,\lambda}:=\Gamma_{k,g}^{ij}+\lambda\Gamma_{k,\eta}^{ij}
\end{equation*}
where $\Gamma_{k,g}^{ij}$, $\Gamma_{k,\eta}^{ij}$ are the Levi-Civita connection  of $g^{*}$, and $\eta^{*}$ respectively.
\end{enumerate}
\end{definition}

 The main source of flat pencil metric is the following lemma

\begin{lemma}\cite{B. Dubrovin1}\label{mainlemmaflatpencil An}
If for a flat metric $g^{*}$ in some coordinate $a_2,a_3,..,a_{n+1}$ both the coefficients of the metric $g^{ij}$ and Levi Civita connection $\Gamma_{k}^{ij}$ are linear in the coordinate $a_{n+1}$, and if $det(g^{*})\neq 0$, then, the metric
 \begin{equation}
g^{ij}+\lambda \frac{\partial g^{ij}}{\partial a_{n+1}}
\end{equation}
form a flat pencil. The corresponding Levi-Civita connection have the form
\begin{equation}
\Gamma_{k,g}^{ij}:=\Gamma_{k}^{ij}, \quad \Gamma_{k,\eta}^{ij}:= \frac{\partial \Gamma_{k}^{ij}}{\partial a_{n+1}}
\end{equation}
\end{lemma}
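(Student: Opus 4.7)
The plan is to reinterpret the pencil as a translation in the coordinate $a_{n+1}$. Because $g^{ij}$ and $\Gamma_k^{ij}$ are affine in $a_{n+1}$, one has
\[
g^{ij}\bigl(a_2,\ldots,a_n,a_{n+1}+\lambda\bigr)=g^{ij}(a)+\lambda\,\frac{\partial g^{ij}}{\partial a_{n+1}},\qquad
\Gamma_k^{ij}\bigl(a_2,\ldots,a_n,a_{n+1}+\lambda\bigr)=\Gamma_k^{ij}(a)+\lambda\,\frac{\partial \Gamma_k^{ij}}{\partial a_{n+1}},
\]
so that, writing $\eta^{ij}:=\partial g^{ij}/\partial a_{n+1}$ and $\tilde\Gamma_k^{ij}:=\partial \Gamma_k^{ij}/\partial a_{n+1}$, the proposed pencil metric $g^{ij}_\lambda=g^{ij}+\lambda\,\eta^{ij}$ and connection $\Gamma^{ij}_{k,\lambda}:=\Gamma_k^{ij}+\lambda\,\tilde\Gamma_k^{ij}$ are nothing but $g^{*}$ and its Levi--Civita connection, evaluated at a translated base point. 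Since each $\partial/\partial a_s$ commutes with the shift $a_{n+1}\mapsto a_{n+1}+\lambda$, any identity that is a polynomial expression in $g^{ij}$, $\Gamma_k^{ij}$ and their first coordinate derivatives is preserved under this substitution.

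The first step is to apply this principle to the contravariant Levi--Civita axioms (\ref{Levi Civita contravariant coxeter chapter}), which hold identically on the domain of $g^{*}$; after the shift they become the same axioms for the pair $(g^{ij}_\lambda,\Gamma^{ij}_{k,\lambda})$, and hence $\Gamma^{ij}_{k,\lambda}$ is the Levi--Civita connection of $g^{ij}_\lambda$ wherever the latter is nondegenerate. The second step applies the same argument to the contravariant Riemann tensor $R^{ijk}_l$, which is also a polynomial expression in these data. The assumed flatness of $g^{*}$ then gives
\[
R^{ijk}_{l,\lambda}(a) = R^{ijk}_l\bigl(a_2,\ldots,a_n,a_{n+1}+\lambda\bigr) = 0,
\]
so every metric in the pencil is flat with the advertised Christoffel symbols. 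The decomposition $\Gamma^{ij}_{k,\lambda}=\Gamma_k^{ij}+\lambda\,\tilde\Gamma_k^{ij}$ matches the statement, and reading off the top power of $\lambda$ (equivalently, rescaling and taking $\lambda\to\infty$) recovers flatness of $\eta^{ij}$ with Christoffel symbols $\tilde\Gamma_k^{ij}$, which is the second clause of the flat-pencil definition.

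The one mildly delicate point is well-definedness of the Levi--Civita framework at values of $\lambda$ at which $g^{ij}_\lambda$ happens to degenerate. This is handled by observing that $\det g^{ij}_\lambda$ is polynomial in $\lambda$ and, by the hypothesis $\det g^{*}\neq 0$, is nonzero at $\lambda=0$; hence it vanishes only on a discrete subset of $\mathbb{C}$. The algebraic identities above are polynomial in $\lambda$, so they extend from this dense open set to all of $\mathbb{C}$, yielding a flat pencil on the entire domain. Alternatively, one could bypass uniqueness of Levi--Civita altogether by verifying the axioms (\ref{Levi Civita contravariant coxeter chapter}) for $(g^{ij}_\lambda,\Gamma^{ij}_{k,\lambda})$ by matching powers of $\lambda$ directly: the $\lambda^0$ coefficient reproduces the original axioms, the $\lambda^1$ coefficient is their $\partial/\partial a_{n+1}$-derivative, and the $\lambda^2$ coefficient reduces to the Levi--Civita identities for $(\eta^{ij},\tilde\Gamma_k^{ij})$, so each is automatic from the original hypotheses.
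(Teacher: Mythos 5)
The paper does not prove this lemma itself --- it is quoted from \cite{B. Dubrovin1} --- so the relevant comparison is with Dubrovin's original argument, and your proof is exactly that argument: the observation that affineness in $a_{n+1}$ makes $g^{ij}+\lambda\,\eta^{ij}$ and $\Gamma_k^{ij}+\lambda\,\tilde\Gamma_k^{ij}$ the original data evaluated at the translated point $a_{n+1}+\lambda$, so that the contravariant Levi--Civita axioms (\ref{Levi Civita contravariant coxeter chapter}) and the vanishing of $R^{ijk}_l$ carry over verbatim. Your treatment of the loci where $g^{ij}_\lambda$ degenerates (polynomiality in $\lambda$) and your extraction of the flatness of $\eta^{*}$ with connection $\partial\Gamma_k^{ij}/\partial a_{n+1}$ from the top $\lambda$-coefficients are both correct, so the proposal is complete.
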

Hence, our goal is first to construct a flat globally well defined metric in the orbit space of $ A_n$ such that there exist coordinates $a_2,a_3,..,a_{n+1}$ in which both the metric and its Christoffel symbols are at most linear on $a_{n+1}$.

The following lemma shows that flat pencil structure is almost the same as Dubrovin Frobenius structure.

\begin{lemma}\label{flat pencil almost Dubrovin frobenius manifold Coxeter}\cite{B. Dubrovin1}
For a flat pencil metric $g^{\alpha\beta}, \eta^{\alpha\beta}$ there exist a vector field $f=f^{\gamma}\partial_{\gamma}$ such that the tensor 
\begin{equation}\label{eq1 Coxeter}
\Delta^{\alpha\beta\gamma}=\eta^{\alpha\delta}\Gamma_{\delta,g}^{\beta\gamma}-g^{\alpha\delta}\Gamma_{\delta,\eta}^{\beta\gamma}
\end{equation}
and the metric $g^{\alpha\beta}$ have the following form
\begin{equation}\label{eq2 Coxeter}
\begin{split}
\Delta^{\alpha\beta\gamma}&=\eta^{\alpha\mu}\eta^{\beta\nu}\partial_{\mu}\partial_{\nu}f^{\gamma}\\
g^{\alpha\beta}&=\eta^{\alpha\mu}\partial_{\mu}f^{\beta}+\eta^{\beta\nu}\partial_{\nu}f^{\alpha}+c\eta^{\alpha\beta}
\end{split}
\end{equation}
for some constant $c$. The vector field $f$ should satisfy
\begin{equation}\label{eq3 Coxeter}
\begin{split}
\Delta^{\alpha\beta}_{\epsilon}\Delta^{\epsilon\gamma}_{\delta}=\Delta^{\alpha\gamma}_{\epsilon}\Delta^{\epsilon\beta}_{\delta}
\end{split}
\end{equation}
where
\begin{equation}\label{eq4 Coxeter}
\begin{split}
&\Delta^{\alpha\beta}_{\gamma}=\eta_{\gamma\epsilon}\Delta^{\alpha\beta\epsilon}=\eta^{\alpha\mu}\partial_{\mu}\partial_{\gamma}f^{\beta}\\
&(\eta^{\alpha\epsilon}g^{\beta\delta}-g^{\alpha\epsilon}\eta^{\beta\delta})\partial_{\epsilon}\partial_{\delta}f^{\gamma}=0
\end{split}
\end{equation}
Conversely, for any metric $\eta^{\alpha\beta}$ and for $f$ solution of the system (\ref{eq3 Coxeter}) and (\ref{eq4 Coxeter}) the metrics $\eta^{\alpha\beta}$ and $g^{\alpha\beta}$  form a flat pencil metric.
\end{lemma}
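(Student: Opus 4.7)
The plan is to reduce everything to $\eta^{*}$-flat coordinates $(x^1,\ldots,x^n)$, in which $\eta^{\alpha\beta}$ is constant and $\Gamma^{ij}_{k,\eta}\equiv 0$. In these coordinates $\Delta^{\alpha\beta\gamma}$ collapses to $\eta^{\alpha\delta}\Gamma^{\beta\gamma}_{\delta,g}$, and the Christoffel of $g^{*}+\lambda\eta^{*}$ equals $\Gamma^{\alpha\beta}_{\gamma,g}$ independently of $\lambda$. Expanding the curvature $R^{ijk}_l(g^{*}+\lambda\eta^{*})$ as a polynomial in $\lambda$ produces two identities: the $O(\lambda^{0})$ part is the flatness of $g^{*}$, while the $O(\lambda)$ part is
\begin{equation*}
\eta^{is}\bigl(\partial_s\Gamma^{jk}_l - \partial_l\Gamma^{jk}_s\bigr) = 0.
\end{equation*}
Similarly, expanding the Levi-Civita symmetry $(g^{is}+\lambda\eta^{is})\Gamma^{jk}_s = (g^{js}+\lambda\eta^{js})\Gamma^{ik}_s$ at order $\lambda$ gives $\Delta^{ijk}=\Delta^{jik}$.

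Next I would construct the potential $f^{\gamma}$. Raising the $l$-index in the $O(\lambda)$ identity above and rewriting everything in terms of $\Delta$ produces
\begin{equation*}
\eta^{is}\partial_s\Delta^{\mu jk} \;=\; \eta^{\mu s}\partial_s\Delta^{ijk},
\end{equation*}
which, after lowering by $\eta$, is exactly the integrability condition required to solve $\partial_\mu\partial_\nu f^\gamma = \eta_{\mu\alpha}\eta_{\nu\beta}\Delta^{\alpha\beta\gamma}$. The symmetry $\Delta^{\alpha\beta\gamma} = \Delta^{\beta\alpha\gamma}$ supplies the $(\mu,\nu)$-symmetry needed to conclude via Poincar\'e's lemma that a local solution $f^\gamma$ exists; this is the first formula of (\ref{eq2 Coxeter}). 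Plugging the result into the Levi-Civita identity $\partial_\gamma g^{\alpha\beta}=\Gamma^{\alpha\beta}_\gamma+\Gamma^{\beta\alpha}_\gamma$ from (\ref{Levi Civita contravariant coxeter chapter}) then shows that $g^{\alpha\beta} - \eta^{\alpha\mu}\partial_\mu f^\beta - \eta^{\beta\nu}\partial_\nu f^\alpha$ has vanishing gradient; absorbing the linear-in-$x$ ambiguity of $f^\gamma$ into the constant symmetric matrix that remains, one normalises it to the form $c\eta^{\alpha\beta}$, obtaining the second formula of (\ref{eq2 Coxeter}).

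The first identity of (\ref{eq4 Coxeter}) is immediate upon lowering an index in the first formula of (\ref{eq2 Coxeter}); and the condition $(\eta^{\alpha\epsilon}g^{\beta\delta} - g^{\alpha\epsilon}\eta^{\beta\delta})\partial_\epsilon\partial_\delta f^\gamma = 0$ is the $O(\lambda^{0})$ part of the pencil symmetry, namely $g^{\alpha s}\Gamma^{\beta\gamma}_s = g^{\beta s}\Gamma^{\alpha\gamma}_s$, rewritten via the identity $\eta^{\alpha\epsilon}\partial_\epsilon\partial_\delta f^\gamma = \Gamma^{\alpha\gamma}_\delta$. The associativity relation (\ref{eq3 Coxeter}) then arises from the quadratic piece of the Riemann tensor: expanding $R^{ijk}_l(g^{*})=0$ as
\begin{equation*}
g^{is}\bigl(\partial_s\Gamma^{jk}_l - \partial_l\Gamma^{jk}_s\bigr) + \Gamma^{ij}_s\Gamma^{sk}_l - \Gamma^{ik}_s\Gamma^{sj}_l = 0,
\end{equation*}
substituting the formula for $g^{is}$ from (\ref{eq2 Coxeter}), and using the $O(\lambda)$ identity just proved to kill the first bracket, one is left with $\Delta^{ij}_s\Delta^{sk}_l = \Delta^{ik}_s\Delta^{sj}_l$.

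The converse direction reverses these steps: given $f^{\gamma}$ satisfying (\ref{eq3 Coxeter})-(\ref{eq4 Coxeter}), define $g^{\alpha\beta}$ and $\Gamma$ through the formulas (\ref{eq2 Coxeter}), verify the Levi-Civita conditions (\ref{Levi Civita contravariant coxeter chapter}) using (\ref{eq4 Coxeter}) and the symmetry of second partials, and then reassemble $R^{ijk}_l(g^{*}+\lambda\eta^{*})$ order by order in $\lambda$, with flatness at $O(\lambda^{0})$ following from (\ref{eq3 Coxeter}) and at $O(\lambda)$ from the representation $\Gamma = \eta\,\partial^2 f$. The main obstacle in the whole argument is purely combinatorial, namely keeping the index raisings and lowerings in order so as to recognise the $\Gamma\Gamma$ terms of the Riemann tensor of $g^{*}$ as the associativity relation (\ref{eq3 Coxeter}); no new geometric input is needed beyond the flat-pencil axioms.
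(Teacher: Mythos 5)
The paper offers no proof of this lemma at all --- it is quoted from \cite{B. Dubrovin1} --- so there is nothing internal to compare against; your argument is in substance Dubrovin's original one and it is correct. Working in $\eta$-flat coordinates, extracting $\Delta^{ijk}=\Delta^{jik}$ from the $O(\lambda)$ term of the Levi-Civita symmetry, extracting $\partial_s\Gamma^{jk}_l=\partial_l\Gamma^{jk}_s$ from the $O(\lambda)$ term of the curvature of $g^{*}+\lambda\eta^{*}$ (the expansion indeed terminates at order one because the Christoffel symbol of the pencil is $\lambda$-independent in these coordinates), integrating twice to produce $f^{\gamma}$, and reading (\ref{eq3 Coxeter}) off the quadratic part of $R(g^{*})=0$ after the derivative terms have been killed is exactly the standard route, and each integrability condition you invoke is the right one. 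Two details deserve an explicit line in a written-out version. First, $\Delta^{\alpha\beta\gamma}$ is symmetric only in its first two slots, so the ``lowering'' that yields $\Delta^{\alpha\beta}_{\gamma}=\eta^{\alpha\mu}\partial_{\mu}\partial_{\gamma}f^{\beta}$ must contract $\eta_{\gamma\epsilon}$ against the first (equivalently, the second) index, i.e.\ $\Delta^{\alpha\beta}_{\gamma}=\eta_{\gamma\epsilon}\Delta^{\epsilon\alpha\beta}$; the statement as printed contracts the third index, which is a typo you should not reproduce, since contracting the third slot gives a genuinely different tensor. Second, the normalisation of the residual constant symmetric matrix to $c\eta^{\alpha\beta}$ works because any constant symmetric $C^{\alpha\beta}$ can be written as $\eta^{\alpha\mu}A^{\beta}_{\mu}+\eta^{\beta\mu}A^{\alpha}_{\mu}$ for a suitable constant $A$, which is absorbed by the shift $f^{\gamma}\mapsto f^{\gamma}+A^{\gamma}_{\mu}x^{\mu}$ without disturbing any second derivative of $f$.
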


\subsection{Semisimple Dubrovin Frobenius manifolds}

\begin{definition}
A Frobenius algebra is called semisimple if it does not have nilpotent, i.e. if $a\neq 0$ implies
\begin{equation}
a^m\neq 0, \quad \text{for any} \quad m\in \mathbb{Z}.
\end{equation}
\end{definition}

\begin{lemma}\cite{B. Dubrovin2}
Let $A$ be a semisimple Frobenius algebra, then there exist a base $e_1,e_2,..,e_n$ of $A$, such that the Frobenius product $\bullet$ in this base is described by 
\begin{equation}
e_i\bullet e_j=\delta_{ij}e_i.
\end{equation}
\end{lemma}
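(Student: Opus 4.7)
The plan is to show that the semisimplicity hypothesis forces $A$ to be isomorphic as an algebra to $\mathbb{C}^n$ with componentwise product, and the claimed basis is the standard one under this isomorphism. First I would exploit commutativity to study the family of multiplication operators $L_a : A \to A$, $L_a(x) = a \bullet x$, for $a \in A$. Because $A$ is commutative and associative, $\{L_a : a \in A\}$ is a family of pairwise commuting linear endomorphisms of the finite-dimensional space $A$.

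Next I would show that each $L_a$ is diagonalizable, which is the step that uses the no-nilpotent hypothesis and is the main technical point. Let $m_a(t) = \prod_i (t-\lambda_i)^{k_i}$ be the minimal polynomial of $L_a$ over $\mathbb{C}$. If some $k_i \geq 2$, then the element $c := \prod_i (a - \lambda_i \cdot 1) \in A$ is nonzero (its degree as a polynomial in $a$ is strictly less than $\deg m_a$), yet a high enough power of $c$ is divisible by $m_a(a) = 0$, so $c$ is nilpotent. This contradicts semisimplicity, hence $m_a$ is squarefree and $L_a$ is diagonalizable.

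Since $\{L_a\}$ is a commuting family of diagonalizable operators, they are simultaneously diagonalizable, yielding a decomposition $A = \bigoplus_{\alpha} A_{\alpha}$ into joint eigenspaces. Each $A_\alpha$ is a subalgebra, and in fact a two-sided ideal: if $x \in A_\alpha$ and $b \in A$, then for every $a \in A$, $L_a(b \bullet x) = a \bullet b \bullet x = \chi_\alpha(a)(b \bullet x)$ with $\chi_\alpha$ the common eigenvalue-character of $A_\alpha$. To see each $A_\alpha$ is one-dimensional, I would pick a generic $a \in A$ whose eigenvalues separate the characters $\chi_\alpha$; if some $A_\alpha$ had dimension $\geq 2$, then restricting $L_{a'}$ for some $a' \in A_\alpha \setminus \mathbb{C}\cdot \mathbf{1}_{A_\alpha}$ and applying the diagonalizability argument inside $A_\alpha$ (which is itself a semisimple commutative algebra with unit) would refine the decomposition further, contradicting maximality.

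Finally, decompose the unit as $\mathbf{1} = \sum_\alpha e_\alpha$ with $e_\alpha \in A_\alpha$. Since $A_\alpha$ is one-dimensional and a subalgebra with unit, $e_\alpha$ spans $A_\alpha$, and the joint-eigenspace decomposition gives $e_\alpha \bullet e_\beta \in A_\alpha \cap A_\beta = 0$ for $\alpha \neq \beta$, while $e_\alpha \bullet e_\alpha = e_\alpha$ because $e_\alpha$ is the unit of the one-dimensional subalgebra $A_\alpha$. Relabelling $e_\alpha$ as $e_1, \dots, e_n$ yields the required basis. The main obstacle, as noted, is the diagonalizability step; everything afterward is a clean consequence of simultaneous diagonalization and the unit decomposition.
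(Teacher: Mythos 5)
The paper does not prove this lemma at all --- it is quoted from \cite{B. Dubrovin2} as a known fact --- so there is no internal proof to compare against; your proposal supplies the standard structure-theory argument for commutative semisimple algebras over $\mathbb{C}$, and it is correct. The key step is sound: identifying the minimal polynomial of the operator $L_a$ with the minimal polynomial of the element $a$ (which uses the unit, via evaluating $m_a(L_a)$ at $\mathbf{1}$), and then observing that a repeated root would make $c=\prod_i(a-\lambda_i\mathbf{1})$ a nonzero nilpotent, contradicting the paper's definition of semisimplicity. The commuting family is then simultaneously diagonalizable, the joint eigenspaces are ideals, and the decomposition of $\mathbf{1}$ produces the idempotents. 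The only place you could tighten the argument is the one-dimensionality of each joint eigenspace $A_\alpha$: your ``refine the decomposition'' appeal works, but it is more direct to note that on a joint eigenspace every element acts by a scalar, so for $x,y\in A_\alpha$ one has $x\bullet y=\chi_\alpha(x)\,y=\chi_\alpha(y)\,x$ by commutativity; if $x,y$ were linearly independent this would force $\chi_\alpha(x)=0$ and hence $x\bullet x=0$, producing a nonzero nilpotent. Also worth remarking: the Frobenius pairing $\eta$ plays no role anywhere in your proof, which is as it should be, since the statement concerns only the algebra structure.
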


\begin{definition}
A point in a Dubrovin Frobenius manifold is called semisimple, if the Frobenius algebra in its tangent space is semisimple.
\end{definition}

\begin{remark}
Note that semisimplicity is an open condition.
\end{remark}

\begin{lemma}\cite{B. Dubrovin2}
In a neighbourhood of a semi semisimple point, there exist local coordinates $(u_1,u_2,..,u_n)$ such that 
\begin{equation}
\frac{\partial}{\partial u_i}\bullet \frac{\partial}{\partial u_j}=\delta_{ij}\frac{\partial}{\partial u_i}.
\end{equation}
The coordinates $(u_1,u_2,..,u_n)$ are called canonical coordinates.
\end{lemma}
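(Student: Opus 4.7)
The plan is to combine the pointwise algebra decomposition from the preceding lemma with the classical Frobenius theorem on commuting vector fields. At the semisimple point $p_0$ the algebra $T_{p_0}M$ decomposes as a direct sum of $n$ one-dimensional ideals spanned by orthogonal idempotents $\pi_1,\ldots,\pi_n$. Since semisimplicity is an open condition, these idempotents extend to a smooth (or holomorphic) frame $e_1,\ldots,e_n$ on a neighborhood $U$ of $p_0$ satisfying $e_i\bullet e_j=\delta_{ij}e_i$; concretely one may take the spectral projectors of the Frobenius-multiplication operator by any vector field whose operator has simple spectrum at $p_0$, with smooth dependence following from the holomorphic functional calculus.

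The crux is to show that $[e_i,e_j]=0$ on $U$: once this holds, the classical Frobenius theorem furnishes coordinates $(u_1,\ldots,u_n)$ with $\partial/\partial u_i=e_i$, and the desired identity $\partial/\partial u_i\bullet\partial/\partial u_j=\delta_{ij}\partial/\partial u_i$ follows at once. To prove commutativity I would invoke axiom $(3)$ of the Dubrovin Frobenius structure, namely the total symmetry of $(\nabla_z c)(u,v,w)$ in all four arguments. First, the invariance $\eta(A\bullet B,C)=\eta(A,B\bullet C)$ applied to the idempotents yields orthogonality $\eta(e_i,e_j)=0$ for $i\neq j$. Writing $\nabla_{e_k}e_i=\sum_m A^m_{ki}e_m$, differentiating the relation $e_i\bullet e_j=\delta_{ij}e_i$ by $\nabla_{e_k}$ and pairing with $e_l$ via $\eta$ (using $\eta((\nabla_X\bullet)(Y,Z),W)=(\nabla_X c)(Y,Z,W)$), one expresses the coefficients $A^m_{ki}$ in terms of the components of $\nabla c$ in the idempotent frame. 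The four-fold symmetry of $\nabla c$ then delivers the symmetry $A^m_{ki}=A^m_{ik}$ in the relevant indices, and vanishing torsion converts this into vanishing brackets, $[e_i,e_k]=\nabla_{e_i}e_k-\nabla_{e_k}e_i=0$.

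The principal obstacle is precisely this bracket computation, where all the non-trivial Dubrovin Frobenius axioms come into play at once: associativity underlies the existence of idempotents, invariance of $\eta$ gives orthogonality of the frame, and the total symmetry of $\nabla c$ is exactly the integrability condition that lifts the pointwise semisimple decomposition to a commuting global frame. The openness of semisimplicity in the first step and the final application of Frobenius' theorem on commuting vector fields are standard once the brackets are known to vanish.
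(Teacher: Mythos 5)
The paper states this lemma without proof, simply citing \cite{B. Dubrovin2}, and your argument is the standard one from that reference: extend the orthogonal idempotents to a local frame, prove the frame commutes, and apply the classical Frobenius theorem — this is essentially correct. One point worth making explicit in the bracket computation: writing $\nabla_{e_k}e_i=\sum_m A^m_{ki}e_m$, the four-fold symmetry of $\nabla c$ applied to $(\nabla_{e_k}c)(e_i,e_j,e_j)$ only yields $A^j_{ki}=A^j_{ik}$ for $j\notin\{i,k\}$; to kill the remaining two components of $[e_i,e_k]$ one compares $(\nabla_{e_k}c)(e_i,e_i,e_i)$ with $(\nabla_{e_i}c)(e_k,e_i,e_i)$ and must also invoke metric compatibility, $e_k(\eta(e_i,e_i))=2\eta(\nabla_{e_k}e_i,e_i)$, which is implicit in your use of the Levi-Civita connection but is a genuinely needed ingredient alongside torsion-freeness and the symmetry of $\nabla c$.
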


\begin{lemma}\cite{B. Dubrovin2}
Let $M$ a semisimple Dubrovin Frobenius manifold, on the canonical coordinates $(u_1,u_2,..,u_n)$ the intersection form, Euler vector field, and unit vector field can be written as 
\begin{equation}
\begin{split}
&g^{ii}=u_i\eta^{ii}\delta_{ij},\\
&e=\sum_{i=1}^n\frac{\partial}{\partial u_i},\\
&E=\sum_{i=1}^n u_i\frac{\partial}{\partial u_i}.\\
\end{split}
\end{equation}

\end{lemma}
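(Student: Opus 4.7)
The plan is to establish the three formulae in turn, using only the defining relation $\partial_{u_i}\bullet\partial_{u_j}=\delta_{ij}\partial_{u_i}$ of canonical coordinates, the Frobenius invariance of $\eta$, the defining properties of $e$ and $E$ recalled in Section~\ref{Review of Dubrovin-Frobenius manifolds}, and the definition $g^{*}(\omega_1,\omega_2)=\iota_E(\omega_1\bullet\omega_2)$.

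As a preliminary, I would first observe that $\eta$ is diagonal in canonical coordinates: applying the invariance identity $\eta(\partial_{u_i}\bullet\partial_{u_j},\partial_{u_k})=\eta(\partial_{u_i},\partial_{u_j}\bullet\partial_{u_k})$ with $i=j\neq k$ gives $\eta_{ik}=0$, so $\eta^{ij}=\eta^{ii}\delta^{ij}$. With this, the unit vector field falls out at once: expanding $e=\sum_i e^i\partial_{u_i}$ in the relation $e\bullet\partial_{u_j}=\partial_{u_j}$ and using diagonality of the product forces $e^j=1$.

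The heart of the argument is the identification $E=\sum u_i\partial_{u_i}$. Here I would use the conformal property $\mathcal{L}_E(\bullet)=\bullet$, obtained by subtracting (\ref{conditionFM41}) from (\ref{conditionFM42}), written in the Leibniz form
\begin{equation*}
[E,X\bullet Y]=[E,X]\bullet Y+X\bullet[E,Y]+X\bullet Y.
\end{equation*}
Setting $X=\partial_{u_i}$, $Y=\partial_{u_j}$ and expanding $[E,\partial_{u_i}]=\sum_k a_i^k\partial_{u_k}$, the $i\neq j$ case projects out the off-diagonal entries $a_i^k$ for $k\neq i$, forcing them to vanish; the $i=j$ case then pins $a_i^i=-1$. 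Hence $[E,\partial_{u_i}]=-\partial_{u_i}$, which, with $E=\sum E^k\partial_{u_k}$, reads $\partial_{u_i}E^k=\delta_i^k$, giving $E^k=u_k+c_k$. The additive constants $c_k$ are absorbed using the shift freedom inherent in the choice of canonical coordinates, or equivalently by pinning the $u_k$ as the eigenvalues of the operator $E\bullet$.

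Finally, for the intersection form, identifying $du_i$ with $\eta^{ii}\partial_{u_i}$ via $\eta^{-1}$ (using the diagonality established above) and transferring the Frobenius product to the cotangent bundle yields $du_i\bullet du_j=\delta_{ij}\eta^{ii}du_i$; contracting with $E=\sum u_k\partial_{u_k}$ then delivers $g^{*}(du_i,du_j)=u_i\eta^{ii}\delta_{ij}$. The one delicate bookkeeping step in the whole plan is the Euler vector field computation (the vanishing of the off-diagonal $a_i^k$ and the elimination of the constants $c_k$); everything else is an essentially direct consequence of the semisimple Frobenius structure and the definitions.
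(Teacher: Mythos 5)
Your argument is correct and is essentially the standard one from Dubrovin's lectures; the paper states this lemma without proof, citing \cite{B. Dubrovin2}, so there is no internal proof to compare against. Each of your steps checks out: diagonality of $\eta$ from the Frobenius invariance identity, the unit from $e\bullet\partial_{u_j}=\partial_{u_j}$, the relation $[E,\partial_{u_i}]=-\partial_{u_i}$ extracted from the conformal axioms (with the additive constants legitimately killed by normalizing the canonical coordinates as the eigenvalues of $E\,\bullet$, which is exactly the normalization the paper adopts in the proposition that follows, identifying the $u_i$ with the roots of $\det(g^{\alpha\beta}-u\eta^{\alpha\beta})=0$), and finally the contraction $\iota_E(du_i\bullet du_j)=u_i\eta^{ii}\delta_{ij}$.
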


\begin{proposition}\cite{B. Dubrovin2}
In a neighborhood of a semisimple point all the roots $(u_1,u_2,..,u_n)$ of the characteristic equation
\begin{equation}
det(g^{\alpha\beta} -u\eta^{\alpha\beta} ) = 0 
\end{equation}
are simple. They are canonical coordinates in this neighbourhood . Conversely, if the roots of the characteristic equation are simple in a point $p \in M$, then $p \in M$ is a semisimple point on the Frobenius manifold and $(u_1,u_2,..,u_n)$ are canonical coordinates in the neighbourhood of this point.
\end{proposition}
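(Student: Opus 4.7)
The plan is to translate both implications into linear algebra on $T_pM$, working in canonical coordinates $(u_1,\ldots,u_n)$ on the forward side and diagonalizing the multiplication-by-$E$ operator on the reverse side.

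First I would address the forward implication. Assume $p$ is semisimple, and invoke the previous lemma to obtain canonical coordinates $(u_1,\ldots,u_n)$ in a neighborhood of $p$ with $\partial_i\bullet\partial_j = \delta_{ij}\partial_i$ and $E = \sum_i u_i\partial_i$. The invariance axiom for $\eta$,
\[
\eta(\partial_i\bullet\partial_j,\partial_k) = \eta(\partial_j,\partial_i\bullet\partial_k),
\]
forces $\delta_{ij}\eta_{ik} = \delta_{ik}\eta_{ij}$, hence $\eta_{ij}$ is diagonal and so is $\eta^{ij}$. Then $g^{*}(dt^i,dt^j) = \iota_E(dt^i\bullet dt^j)$ together with the diagonal form of the cotangent multiplication immediately gives $g^{ij} = u_i\eta^{ii}\delta_{ij}$, and the characteristic equation factorizes as
\[
\det(g^{\alpha\beta} - u\eta^{\alpha\beta}) \;=\; \prod_{i=1}^n \eta^{ii}\,(u_i - u),
\]
so its roots are the coordinate functions $u_1,\ldots,u_n$. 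Since these are independent local coordinates, their values at $p$ must be pairwise distinct, for otherwise the repeated eigenvalue of the multiplication operator $E\bullet$ at $p$ would produce a multi-dimensional eigenspace incompatible with the unique idempotent decomposition of the semisimple Frobenius algebra $T_pM$.

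For the converse, assume the characteristic equation has $n$ simple roots $u_1(p),\ldots,u_n(p)$ at $p$. The key observation is that the operator $g\cdot\eta^{-1}$ on $T_p^{*}M$ is conjugate to the multiplication-by-$E$ operator $U := E\bullet$ on $T_pM$, since $g^{\alpha\beta} = \eta^{\alpha\mu}\,U^{\beta}_{\mu}$ follows from $g^{*}(dt^\alpha,dt^\beta) = \iota_E(dt^\alpha\bullet dt^\beta)$. The hypothesis therefore gives $U$ a simple spectrum at $p$, hence $n$ one-dimensional eigenspaces. In each eigenspace there is a unique idempotent $e_i(p)$ satisfying $e_i\bullet e_j = \delta_{ij}e_i$, so $T_pM$ is semisimple. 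Simplicity of the spectrum is an open condition, and the spectral projectors depend analytically on the base point, so the $e_i$'s extend to a smooth local frame of commuting idempotent vector fields on some neighborhood of $p$.

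The final step is to show that this frame is holonomic, $[e_i,e_j] = 0$, so that it integrates to a coordinate system with $\partial/\partial u_i = e_i$, producing canonical coordinates whose values at $p$ are exactly the roots of the characteristic equation. I expect the integrability to be the main obstacle; it is obtained by invoking the symmetry of the tensor $(\nabla_z c)(u,v,w)$ from axiom 3 of the Dubrovin--Frobenius manifold definition, which, once expanded in the idempotent frame and combined with the eigenvalue equation $E\bullet e_i = u_i e_i$ and the pairwise distinctness of the $u_i(p)$, forces the expansion coefficients of $[e_i,e_j]$ to land in eigenspaces that vanish, yielding $[e_i,e_j]=0$. The same eigenspace-uniqueness argument resolves the distinctness subtlety in the forward direction.
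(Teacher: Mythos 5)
The paper never proves this proposition---it is quoted from Dubrovin's lecture notes with a citation and no argument---so your proposal has to stand on its own. Most of it does: in the forward direction the diagonality of $\eta$ in canonical coordinates and the factorization $\det(g^{\alpha\beta}-u\eta^{\alpha\beta})=\prod_i\eta^{ii}(u_i-u)$ are correct, and in the converse direction the identification of $g\cdot\eta^{-1}$ with the multiplication operator $U=E\bullet$, so that a simple spectrum forces simultaneous diagonalization of all multiplication operators and hence a basis of orthogonal idempotents (each $e_i\bullet e_i\neq 0$ because the unit must act as the identity on every eigenline), is exactly the right mechanism. Note, however, that the integrability $[e_i,e_j]=0$, which you correctly identify as the main obstacle, is only gestured at; that is precisely where the symmetry of $\nabla c$ and the flatness of $\eta$ do real work, and until it is carried out you have produced a semisimple point but not canonical \emph{coordinates}.

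The genuine gap is your distinctness argument in the forward direction. You assert that a repeated eigenvalue of $E\bullet$ at a semisimple point would ``produce a multi-dimensional eigenspace incompatible with the unique idempotent decomposition.'' That is false: a multi-dimensional eigenspace of $E\bullet$ is perfectly compatible with semisimplicity---it simply contains more than one of the primitive idempotents. With the paper's definition (no nilpotents), semisimplicity of $T_pM$ does not force the values $u_1(p),\ldots,u_n(p)$ to be pairwise distinct. Concretely, for $F=\frac{1}{2}(t^1)^2t^2+\frac{1}{6}(t^2)^3$ with $d=0$ and $E=t^1\partial_1+t^2\partial_2$, one has $\partial_2\bullet\partial_2=\partial_1$, so the algebra is $\mathbb{C}[x]/(x^2-1)$ at every point and hence semisimple everywhere, yet $g^{\alpha\beta}-u\eta^{\alpha\beta}$ has determinant $(t^2)^2-(t^1-u)^2$, whose roots $u_{1,2}=t^1\pm t^2$ collide along $t^2=0$. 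Simplicity of the roots is thus an \emph{additional} open condition on top of semisimplicity, not a consequence of it, and no pointwise algebraic argument can close that gap; to get the implication as literally stated you must either build distinctness of the $u_i$ into the notion of a semisimple point (as Dubrovin's surrounding discussion effectively does) or restrict to the open dense locus where the roots separate.
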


\begin{definition}\cite{B. Dubrovin2}
A diagonal metric on a n-dimensional manifold
\begin{equation}
\eta=\sum_{i=1}^n \eta_{ii}du_i^2
\end{equation}
is called potential, if there exist a function $U(u_1,u_2,..,u_n)$ such that 
\begin{equation}
\eta_{ii}=\frac{\partial U}{\partial u_i}.
\end{equation}
\end{definition}

\begin{definition}\cite{B. Dubrovin2}
A potential diagonal flat metric $\eta$ on a n-dimensional manifold is called Darbou-Egoroff metric.
\end{definition}

\begin{lemma}\cite{B. Dubrovin2}\label{Darboux Egoroff system}
Let be $\eta$ a diagonal potential metric on a n-dimensional manifold
\begin{equation}\label{metric diagonal 1}
\eta=\sum_{i=1}^n \eta_{ii}du_i^2.
\end{equation}
Then, the metric (\ref{metric diagonal 1}) is Darboux-Egoroff iff its rotational coefficients $\beta_{ij}$
\begin{equation}
\beta_{ij}=\frac{\partial_j\sqrt{\eta_{ii}}}{\sqrt{\eta_{jj}}}
\end{equation}
satisfy the system of equations
\begin{equation}
\begin{split}
\partial_k\beta_{ij}=\beta_{ik}\beta_{kj},\\
\sum_{k=1}^n\partial_k\beta_{ij}=0.
\end{split}
\end{equation}
\end{lemma}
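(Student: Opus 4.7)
The plan is to translate the flatness of the diagonal metric $\eta$ directly into PDEs on the rotation coefficients $\beta_{ij}$, and then use the Egoroff (potential) condition to identify which components are redundant. Throughout, indices $i,j,k,\ell$ are understood as \emph{pairwise distinct} unless otherwise noted, and I work locally where $\eta_{ii}\neq 0$.

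First I would record the Christoffel symbols of the diagonal metric (\ref{metric diagonal 1}): the only nonzero ones are
\begin{equation*}
\Gamma^i_{ii}=\tfrac{1}{2\eta_{ii}}\partial_i\eta_{ii},\qquad
\Gamma^i_{ij}=\Gamma^i_{ji}=\tfrac{1}{2\eta_{ii}}\partial_j\eta_{ii},\qquad
\Gamma^i_{jj}=-\tfrac{1}{2\eta_{ii}}\partial_i\eta_{jj}\ \ (i\neq j).
\end{equation*}
In terms of $H_i:=\sqrt{\eta_{ii}}$, one has $\Gamma^i_{jj}=-\tfrac{H_j}{H_i}\beta_{ij}$ and $\Gamma^i_{ij}=\partial_j H_i/H_i$. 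Next I would observe that the potential hypothesis $\eta_{ii}=\partial_iU$ immediately gives the symmetry $\beta_{ij}=\beta_{ji}$, since
\begin{equation*}
\beta_{ij}=\frac{\partial_jH_i}{H_j}=\frac{\partial_i\partial_jU}{2\sqrt{\partial_iU\,\partial_jU}},
\end{equation*}
which is manifestly symmetric in $(i,j)$. This symmetry is the algebraic shadow of the Egoroff condition and will be essential in what follows.

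Next I would expand the Riemann tensor $R^{\ell}_{ijk}=\partial_i\Gamma^{\ell}_{jk}-\partial_j\Gamma^{\ell}_{ik}+\Gamma^{\ell}_{is}\Gamma^s_{jk}-\Gamma^{\ell}_{js}\Gamma^s_{ik}$ and sort the vanishing conditions by the multiplicity of coincident indices among $(i,j,k,\ell)$. A direct computation (identical to the standard Darboux one) shows that the only independent consequences of $R=0$ reduce to two families:
\begin{equation*}
\partial_k\beta_{ij}=\beta_{ik}\beta_{kj}\qquad (i,j,k\text{ distinct}),
\end{equation*}
\begin{equation*}
\partial_i\beta_{ij}+\partial_j\beta_{ji}+\sum_{k\neq i,j}\beta_{ki}\beta_{kj}=0 \qquad (i\neq j),
\end{equation*}
the first coming from the three-distinct-indices component of $R$, the second from the two-distinct-indices (``mixed'') component. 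The other curvature components are either trivially zero by the diagonal structure or follow from these via the symmetries of $R$.

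Finally I would plug the Egoroff symmetry $\beta_{ij}=\beta_{ji}$ into the second family. Using the first family to substitute $\beta_{ki}\beta_{kj}=\beta_{ik}\beta_{kj}=\partial_k\beta_{ij}$ for each $k\neq i,j$, the mixed equation collapses to
\begin{equation*}
\partial_i\beta_{ij}+\partial_j\beta_{ij}+\sum_{k\neq i,j}\partial_k\beta_{ij}=\sum_{k=1}^n\partial_k\beta_{ij}=0,
\end{equation*}
which is exactly the second condition in the lemma. Conversely, given the two stated equations together with symmetry $\beta_{ij}=\beta_{ji}$ (automatic from the potential hypothesis), one recovers vanishing of all components of the Riemann tensor by reversing the substitution. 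This establishes the equivalence.

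The main obstacle, and the only place bookkeeping really matters, is the curvature computation: for a general (non-Egoroff) diagonal metric the ``mixed'' flatness condition is inequivalent to $\sum_k\partial_k\beta_{ij}=0$, and it is precisely the symmetry $\beta_{ij}=\beta_{ji}$ arising from the potential that allows the clean reduction. Once this is organized carefully, everything else is a matter of tidy index manipulation.
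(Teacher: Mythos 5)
The paper does not actually prove this lemma --- it is quoted from \cite{B. Dubrovin2} without proof --- so there is no in-paper argument to compare against. Your proposal is the standard Darboux--Lam\'e computation that the cited reference uses, and its logical structure is sound: split the flatness conditions of a diagonal metric by the number of coincident indices, observe that the potential hypothesis forces $\beta_{ij}=\beta_{ji}$, and use that symmetry together with the three-index equations to collapse the two-index (``mixed'') equations into $\sum_k\partial_k\beta_{ij}=0$. The converse by reversing the substitution is also correct.

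One systematic index slip is worth fixing, even though it does not invalidate the argument. With the convention $\beta_{ij}=\partial_jH_i/H_j$ used in the lemma, one has $\Gamma^i_{jj}=-\frac{H_j}{H_i}\beta_{ji}$ (not $\beta_{ij}$), and the general (pre-Egoroff) mixed flatness condition is
\begin{equation*}
\partial_i\beta_{ji}+\partial_j\beta_{ij}+\sum_{k\neq i,j}\beta_{ki}\beta_{kj}=0 ,
\end{equation*}
i.e.\ the derivative index matches the \emph{second} subscript of $\beta$; a direct check in $n=2$, where the condition must reduce to the vanishing of the Gaussian curvature $\partial_1\bigl(\partial_1H_2/H_1\bigr)+\partial_2\bigl(\partial_2H_1/H_2\bigr)=0$, confirms this. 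You wrote $\partial_i\beta_{ij}+\partial_j\beta_{ji}$, which is the transpose. Since you only ever use this equation after imposing the Egoroff symmetry $\beta_{ij}=\beta_{ji}$, the two versions coincide where you need them and your conclusion stands; but as you yourself emphasize that the reduction fails for non-Egoroff diagonal metrics, the intermediate general formula should be stated with the correct index placement.
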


\section{Review of Dubrovin-Frobenius manifold on Hurwitz spaces}\label{Hurwitz space chapter}

\subsection{Hurwitz spaces}
The main reference of this section are \cite{B. Dubrovin2} and \cite{V. Shramchenko}.\\

\begin{definition}
The Hurwitz space $H_{g,n_0 ,...,n_m}$ is the moduli space of curves $C_g$ of genus g
endowed with a N branched covering, $ \lambda: C_g\mapsto \mathbb{C}P ^1$ of $\mathbb{C}P^1$ with $m + 1$ branching points
over $\infty\in \mathbb{C}P^1$ of branching degree $n_i + 1, i = 0, . . . , m.$
\end{definition}

\begin{definition}
Two pairs $(C_g,\lambda)$ and $(\tilde C_g,\tilde\lambda)$ are said Hurwitz-equivalent if there exist an analytic isomorphic $F:C_g\mapsto \tilde C_g$ such that 
\begin{equation}
\lambda\circ F =\tilde\lambda.
\end{equation}
\end{definition}

Roughly speaking, Hurwitz spaces $H_{g,n_0 ,...,n_m}$ are moduli spaces of meromorphic functions which realise a Riemann surface of genus $g$ $C_g$ as covering over  $\mathbb{C}P ^1$ with a fixed ramification profile. 

\textbf{Example 1:}\\
A generic point of the Hurwitz space  $H_{0,n}$ is 
\begin{equation}
H_{0,n}=\{ \lambda(p,x_0,x_1,x_2,..,x_n)=\prod_{i=0}^n (p-x_i): \sum_{i=0}^n x_i=0\}
\end{equation}

\textbf{Example 2:}\\
A generic point of the Hurwitz space $H_{0,n-1,0}$ is 
\begin{equation}
H_{0,n}=\{ \lambda(p,a_2,a_3,..,a_{n+1},a_{n+2})=p^n+ a_2p^{n-2}+...+a_np+a_{n+1}+\frac{a_{n+2}}{p}.   \}
\end{equation}

\textbf{Example 3:}\\
A generic point of the Hurwitz space $H_{1,n}$ is 
\begin{equation}
H_{1,n}=\{ \lambda(p,u,v_0,v_1,..,v_n,\tau)=e^{-2\pi i u}\frac{\prod_{i=0}^n \theta_1(p-v_i,\tau)}{\theta_1^{n+1}(v,\tau)}: \sum_{i=0}^n v_i=0   \}
\end{equation}
here $\theta_1$ is the Jacobi theta function, see (\ref{theta def}).

\textbf{Example 4:}\\
A generic point of the Hurwitz space is $H_{1,n-1,0}$ is 
\begin{equation}
H_{1,n-1,0}=\{ \lambda(p,u,v_0,v_1,..,v_n,v_{n+1},\tau)=e^{-2\pi i u}\frac{\prod_{i=0}^n \theta_1(p-v_i,\tau)}{\theta_1^{n}(v,\tau)\theta_1(v+(n+1)v_{n+1},\tau)}:\sum_{i=0}^n v_i=-(n+1)v_{n+1}   \}
\end{equation}

The covering $\widetilde H = \tilde H_{g,n_0 ,...,n_m} $ consist of the set of points
\begin{equation*}
(C_g; \lambda; k_0, . . . , k_m; a_1, . . . , a_g, b_1, . . . , b_g) \in \tilde H_{g,n_0 ,...,n_m}
\end{equation*}
where $C_g, \lambda$  are the same as above, $a_1, . . . , a_g, b_1, . . . , b_g \in H_1(C_g,\mathbb{Z})$ are the canonical symplectic basis, and  $k_0, . . . , k_m $ are  roots of $\lambda$ near  $\infty_0 ,\infty_1 , . . . , \infty_m$  of the orders
$n_0 + 1, n_1 + 1, . . . , n_m + 1.$ resp., 
\begin{equation*}
k^{n_i+1}_i (P) = \lambda(P),\quad  \text{P near $\infty_i$}.
\end{equation*}

\subsection{Bidifferential $W$}

\begin{definition}\cite{V. Shramchenko}
Let $P,Q \in C_g$. The meromorphic Bidifferential $W$ is given by
\begin{equation}
W(P,Q)=d_Pd_Q\log E(P,Q),
\end{equation}
where $E(P,Q)$ is the prime form on the Riemann surface $C_g$. Alternatively, it can be characterised by the following properties 
\begin{enumerate}
\item symmetric  meromorphic differential in $C_g\times C_g$, with second order pole on $P=Q$ with biresidue 1
\item 
\begin{equation}
\begin{split}
\int_{a_k} W(P,Q)&=0;\\
\int_{b_k} W(P,Q)&=2\pi i \omega_k(P)
\end{split}
\end{equation}
where $\{\omega_k(P)\}$ are the normalized base of holomorphic differentials, i.e. $\int_{a_j}\omega_k(P)=\delta_{ij}$

\end{enumerate}
\end{definition}

The dependence of the bidifferential $W$ on branch points of the Riemann surface is given by the Rauch variational formulas
\begin{equation}
\begin{split}
\frac{\partial W(P,Q)}{\partial u_i}=\frac{1}{2}W(P,P_i)W(P_i,Q),
\end{split}
\end{equation}
 where $W(P,P_i)$ is the evaluation of  $W(P,Q)$ at $Q = Pj$ with respect to the standard local parameter $x_j(Q)=\sqrt{\lambda-\lambda(Q)}$
\begin{equation}
\begin{split}
W(P,P_i)=\left.\frac{W(P,Q)}{dx_j(Q)}\right |_{Q=P_j}
\end{split}
\end{equation}
A remarkable consequence of the Rauch variational formula is that it induce a flat metric in the Hurwitz space. Indeed, 
\begin{proposition}\cite{V. Shramchenko}
Let be the metric 
\begin{equation}\label{metric induced by the Bidifferential}
ds^2_{W}=\sum_{i=1}^n \left(\oint_l h(Q)W(Q,P_i) \right)^2  (du_i)^2 ,
\end{equation}
where $l$ is a smooth contour in the Riemann surface such that $P_i\notin l$, and $h(Q)$ is a smooth function independent of $\{u_i\}$. Then, the rotational coefficients of (\ref{metric induced by the Bidifferential}) satisfies a Darboux-Egoroff system in lemma \ref{Darboux Egoroff system}.
\end{proposition}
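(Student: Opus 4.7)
The plan is to apply the Rauch variational formula directly to the contour integral defining the diagonal entries of the metric, to read off the rotation coefficients in closed form in terms of $W$ evaluated at pairs of branch points, and finally to check the two Darboux--Egoroff relations of Lemma \ref{Darboux Egoroff system} by a second application of Rauch and a translation-invariance argument.

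First I would set $\psi_i(u):=\oint_l h(Q)\,W(Q,P_i)$, so that the metric reads $ds^2_W=\sum_i \psi_i^2\,du_i^2$. Since $h$ and the contour $l$ are by hypothesis independent of the branch-point positions (and $l$ avoids all $P_i$), for $j\neq i$ one may differentiate under the integral sign, and Rauch's formula applied to $W(Q,P_i)$ as a function of $u_j$ gives
\begin{equation*}
\partial_j\psi_i=\oint_l h(Q)\,\frac{1}{2}W(Q,P_j)\,W(P_j,P_i)=\frac{1}{2}W(P_i,P_j)\,\psi_j,
\end{equation*}
where the factor $W(P_j,P_i)$ is independent of $Q$ and pulled out of the integral and we used symmetry of $W$. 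Consequently the rotation coefficients have the closed form
\begin{equation*}
\beta_{ij}=\frac{\partial_j\psi_i}{\psi_j}=\frac{1}{2}W(P_i,P_j),
\end{equation*}
which is manifestly symmetric, $\beta_{ij}=\beta_{ji}$, so the metric is automatically of Egoroff (potential) type.

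For the first Darboux--Egoroff relation, fix three distinct indices $i,j,k$ and differentiate the above formula with respect to $u_k$. Since $P_i,P_j$ remain distinct from $P_k$ in a neighbourhood of a generic point, a second application of Rauch's formula (now for $W(P_i,P_j)$ viewed as a function of $u_k$) yields
\begin{equation*}
\partial_k\beta_{ij}=\frac{1}{2}\partial_k W(P_i,P_j)=\frac{1}{4}W(P_i,P_k)\,W(P_k,P_j)=\beta_{ik}\beta_{kj},
\end{equation*}
which is exactly the first equation of the Darboux--Egoroff system. For the second relation $\sum_k\partial_k\beta_{ij}=0$, I would argue by translation invariance: the simultaneous shift $\lambda\mapsto\lambda+c$ translates every $u_k$ by $c$ without altering the underlying Riemann surface $C_g$ or its points $P_i$ regarded intrinsically. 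Because $W$ depends only on the conformal structure of $C_g$ (it is built from the prime form), the quantity $W(P_i,P_j)$ is invariant under this simultaneous shift, i.e.\ $\sum_k\partial_k W(P_i,P_j)=0$, and hence $\sum_k\partial_k\beta_{ij}=0$.

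The main technical obstacle is making the two applications of Rauch's formula rigorous when one or both of its arguments are themselves branch points: one has to interpret $W(P_i,P_j)$ and $W(Q,P_i)$ via the prescribed evaluation using the distinguished local parameters $x_i(P)=\sqrt{\lambda-u_i}$, verify that the Rauch formula extends to such evaluations (the required derivative is taken at fixed $Q$ in a local parameter that does \emph{not} depend on the deforming branch point, otherwise spurious terms appear), and justify differentiating under the contour integral for a deformation of $l$ that stays uniformly bounded away from all $P_k$. Once these analytic points are handled, the computation of $\beta_{ij}$ and the verification of both Darboux--Egoroff equations is as above.
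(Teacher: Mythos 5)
Your proposal is correct and is essentially the standard argument: the paper itself gives no proof of this proposition, deferring entirely to the citation of Shramchenko (and Dubrovin's lectures), and your computation reconstructs exactly the proof found there — Rauch's formula applied under the contour integral to get $\partial_j\psi_i=\tfrac{1}{2}W(P_i,P_j)\psi_j$, hence $\beta_{ij}=\tfrac{1}{2}W(P_i,P_j)$, a second application of Rauch for $\partial_k\beta_{ij}=\beta_{ik}\beta_{kj}$, and invariance of $W$ and of the distinguished local parameters $x_i=\sqrt{\lambda-u_i}$ under the simultaneous shift $\lambda\mapsto\lambda+c$ for $\sum_k\partial_k\beta_{ij}=0$. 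The one point genuinely requiring care — that the Rauch formula stated in the paper for $W(P,Q)$ at free points persists when one or both arguments are evaluated at branch points $P_i$, $P_j$ (with $i,j\neq k$) in the distinguished local parameters, which is legitimate precisely because $x_i$ does not depend on $u_k$ — you have correctly identified and flagged, so nothing is missing.
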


For  particular choices of the function $h(Q)$ the metrics (\ref{metric induced by the Bidifferential}) coincides with the metrics induced by the primary differentials see  section \ref{Reconstruction of Dubrovin Frobenius manifold } and \cite{B. Dubrovin2} for details. This fact is remarkable, because, it shows that from only the data of the Hurwtiz space, one can construct a flat metric for the desired Dubrovin Frobenius manifold.

\subsection{Reconstruction of Dubrovin Frobenius manifold }\label{Reconstruction of Dubrovin Frobenius manifold }

Over the space $\widetilde H_{g,n_0 ,...,n_m}$, it is possible to introduce a Dubrovin-Frobenius structure by taking as canonical coordinates  the ramification points  $(u_1,u_2,..u_n)$ of $H_{g,n_0 ,...,n_m}$. 
The Dubrovin-Frobenius structure is specified by the following objects:
\begin{equation}\label{Multiplication}
 \textnormal{multiplication}\quad  \partial_i\bullet\partial_j=\delta_{ij}\partial_i,\textnormal{where}\quad \partial_i=\frac{\partial}{\partial u_i},
\end{equation}
\begin{equation}\label{Euler vector field}
 \textnormal{Euler vector field}\quad E=\sum_i u_i\partial_i,
\end{equation}
\begin{equation}\label{Unit vector field}
 \textnormal{unit vector field}\quad e=\sum_i \partial_i,
\end{equation}
and the metric $\eta$ defined by the formula
\begin{equation}\label{metric}
ds^2_{\phi}=\sum \text{res}_{P_i}\frac{\phi^2}{d\lambda}(du_i)^2 ,
\end{equation}
where $\phi$ is some primary differential of the underlying Riemann surface $C_g$.
Note that the Dubrovin-Frobenius manifold structure depends on the meromorphic function $\lambda$, and on the primary differential $\phi$. The list of possible  primary differential $\phi$ is in \cite{B. Dubrovin2}.\\

Consider a multivalued function $p$ on C by taking the integral of $\phi$
\begin{equation*}
p(P)=v.p\int_{\infty_0}^P \phi
\end{equation*}
The principal value is defined by omitting the divergent part, when necessary, because $\phi$ may be divergent at $\infty_0$, as function of the local parameter $k_0$. Indeed the primary differentials defined on \cite{B. Dubrovin2} may diverge as functions of $k_i$.
\begin{equation*}
\phi=dp.
\end{equation*}
Let $\tilde H_{\phi}$ be the open domain in $\tilde H$ specifying by the condition
\begin{equation*}
\phi(P_i)\neq 0.
\end{equation*}

\begin{theorem}\label{flatcoord}\cite{B. Dubrovin2}
For any primary differential $\phi$ of the list in \cite{B. Dubrovin2} the multiplication (\ref{Multiplication}), the
unity (\ref{Unit vector field}), the Euler vector field (\ref{Euler vector field}), and the metric (\ref{metric}) determine a structure of Dubrovin Frobenius manifold on $\tilde H_{\phi}$. The corresponding flat coordinates $t_A, A = 1, . . . ,N$ consist of the five parts
\begin{equation}
t_A=(t^{i,\alpha},i=0,..m,\alpha=1,..,n_i;p^i,q^i,i=1,..,m; r^i,s^i,i=1,..g)
\end{equation}
where
\begin{equation}
t^{i,\alpha}=\text{res}_{\infty_i} k_i^{-\alpha}pd\lambda             \quad  i=0,..m,\alpha=1,..,n_i
\end{equation}
\begin{equation}
p^{i}=v.p\int_{\infty_0}^{\infty_i}dp             \quad  i=1,..m.
\end{equation}
\begin{equation}
q^{i}=-\text{res}_{\infty_i} \lambda dp         \quad  i=1,..m.
\end{equation}
\begin{equation}
r^{i}=\int_{a_i}dp             \quad  i=1,..g.
\end{equation}
\begin{equation}
s^{i}=-\frac{1}{2\pi i}\int_{b_i} \lambda dp         \quad  i=1,..g.
\end{equation}
\end{theorem}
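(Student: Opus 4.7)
The plan is to establish this classical theorem of Dubrovin by breaking it into two independent pieces: first, that the quadruple (multiplication, unit, Euler field, metric) defines a Dubrovin--Frobenius structure on $\tilde H_\phi$; second, that the five families of residue/period functionals listed are flat coordinates for $\eta$. The ramification points $(u_1,\dots,u_N)$ serve as canonical coordinates by construction, so semisimplicity, commutativity, associativity of the algebra, the identities $e\bullet\partial_i=\partial_i$ and $E=\sum u_i\partial_i$, and the compatibility $\eta(\partial_i\bullet\partial_j,\partial_k)=\eta(\partial_i,\partial_j\bullet\partial_k)$ are all immediate since $\eta$ is diagonal in these coordinates. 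The two non-trivial inputs are the Darboux--Egoroff property of $\eta_{ii}=\operatorname{res}_{P_i}\phi^2/d\lambda$ and the symmetry of $\nabla c$, plus the identification of flat coordinates.

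For the first non-trivial part, I would compute the rotation coefficients
\begin{equation*}
\beta_{ij}=\frac{\partial_j\sqrt{\eta_{ii}}}{\sqrt{\eta_{jj}}}
\end{equation*}
directly. Writing $\sqrt{\eta_{ii}}$ as the evaluation of $\phi/\sqrt{d\lambda}$ at the ramification point $P_i$ and differentiating by $u_j$ via the Rauch variational formula $\partial_j W(P,Q)=\tfrac12 W(P,P_j)W(P_j,Q)$, one obtains $\beta_{ij}=\tfrac12 W(P_i,P_j)$, which is manifestly symmetric. The Darboux--Egoroff equations $\partial_k\beta_{ij}=\beta_{ik}\beta_{kj}$ and $\sum_k\partial_k\beta_{ij}=0$ then follow by one more application of Rauch combined with the fact that $\sum_i\partial_i$ kills $W(P,Q)$ because the latter depends only on $\lambda$ through differences $\lambda(P)-u_i$. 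By Lemma \ref{Darboux Egoroff system} the metric is potential and flat, giving a Dubrovin--Frobenius structure; this is precisely the content of the proposition immediately preceding the theorem, specialised by the appropriate choice of $h$ that reproduces $ds^2_\phi$.

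For the flat coordinates, I would work with the multivalued antiderivative $p(P)=v.p.\int_{\infty_0}^P\phi$, whose differential $dp=\phi$ is single-valued. The flatness criterion reduces to verifying $\nabla^i\partial_j t_A=0$, i.e.\ that the Hessian of $t_A$ with respect to $(u_i,u_j)$ is diagonal and built from the Christoffel symbols. For each class of coordinate I would compute $\partial_j t_A$ by differentiating inside the residue/integral and using Rauch's formula $\partial_j\phi(P)=\tfrac12 W(P,P_j)\phi(P_j)$. Each differentiation produces an extra factor proportional to $W(\cdot,P_j)\sqrt{\eta_{jj}}$, and iterating produces exactly the combination prescribed by $\Gamma^{ij}_k$ in canonical coordinates. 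The five cases --- Laurent tails near each $\infty_i$ ($t^{i,\alpha}$), logarithmic jumps at the extra poles ($p^i,q^i$), and $a$- and $b$-periods of $dp$ with its dual ($r^i,s^i$) --- exhaust the $2g+\sum_i n_i+2m$ independent flat sections, matching $\dim\tilde H_\phi$. Constancy of the pairings $\eta^*(dt_A,dt_B)$ is then checked by computing them in canonical coordinates via residues of $\phi$ and invoking Riemann bilinear relations to move contours.

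The hard part will be the residue computations that verify both the count/independence of flat coordinates and the constancy of their inner products $\eta^*(dt_A,dt_B)$ --- in particular handling the principal-value regularisation near $\infty_0$ so that $p^i$ is well-defined, and treating the $b$-period coordinate $s^i$, where the contour integral $\int_{b_i}\lambda\, dp$ requires careful manipulation of the Abelian differential of the second kind $dp$ against the normalised holomorphic differentials. Once the pairing matrix between the $t_A$'s is shown to be block-constant (with explicit blocks of $\delta_{\alpha,n_i+1-\beta}$ for the residue-type coordinates and off-diagonal identities pairing $p^i$ with $q^i$ and $r^i$ with $s^i$), the theorem follows from Lemma \ref{onetoonefenergy} applied to the resulting potential, which may be reconstructed by the procedure of Subsection \ref{Reconstruction subsection}.
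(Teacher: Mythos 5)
The paper does not prove this statement at all: Theorem \ref{flatcoord} sits in the review section on Hurwitz spaces and is quoted verbatim from Dubrovin's lectures \cite{B. Dubrovin2}, with the proof deferred entirely to that reference. So there is no internal argument to compare yours against; what you have written is a reconstruction of Dubrovin's original proof, and as a strategy it is the correct one. The decomposition into (i) Darboux--Egoroff property of the diagonal metric $\eta_{ii}=\operatorname{res}_{P_i}\phi^2/d\lambda$ via the rotation coefficients $\beta_{ij}=\tfrac12 W(P_i,P_j)$ and the Rauch variational formula, and (ii) identification of the five families of residue/period functionals as flat coordinates by differentiating under the residue, is exactly how the cited source proceeds, and your dimension count $\sum_i n_i+2m+2g=N$ is right.

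Two places where your sketch is thinner than a proof would need to be. First, you list ``the symmetry of $\nabla c$'' as one of the two non-trivial inputs and then never return to it; in canonical coordinates this reduces to the Egoroff (potentiality) condition $\eta_{ii}=\partial_i U$, so it is subsumed by the Darboux--Egoroff argument, but you should say so explicitly rather than leave it dangling. Second, the covariant constancy of $e=\sum_i\partial_i$ is not ``immediate since $\eta$ is diagonal''; it requires precisely the second Darboux--Egoroff equation $\sum_k\partial_k\beta_{ij}=0$, i.e.\ the translation invariance $\lambda\mapsto\lambda+c$ of the bidifferential, which you do invoke later for a different purpose. Neither point is a gap in the mathematics, only in the bookkeeping of which claimed input is discharged where. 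The genuinely laborious part --- the case-by-case residue computations showing the pairing matrix $\eta^*(dt_A,dt_B)$ is constant, including the principal-value regularisation for $p^i$ and the bilinear-relation manipulations for $s^i$ --- you correctly identify but do not carry out; for the purposes of this paper that is acceptable, since the result is imported as background rather than proved.
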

Moreover, function $\lambda= \lambda(p)$ is the superpotential of this Dubrovin Frobenius manifold, i.e. we have the following formulas to compute the metric $\eta=\langle,\rangle$, the intersection form $g^{*}=(,)$ and the structure constants $c$.
\begin{equation}
\langle\partial^{\prime},\partial^{\prime\prime}\rangle=-\sum \text{res}_{d\lambda=0}\frac{\partial^{\prime}(\lambda)\partial^{\prime\prime}(\lambda)}{d\lambda}dp
\end{equation}
\begin{equation}
(\partial^{\prime},\partial^{\prime\prime})=-\sum \text{res}_{dLog\lambda=0}\frac{\partial^{\prime}(Log\lambda)\partial^{\prime\prime}(Log\lambda)}{dLog\lambda}dp
\end{equation}
\begin{equation}\label{strucconst}
c(\partial^{\prime} ,\partial^{\prime\prime},\partial^{\prime\prime\prime})=-\sum \text{res}_{d\lambda=0}\frac{\partial^{\prime}(\lambda)\partial^{\prime\prime}(\lambda)\partial^{\prime\prime\prime}(\lambda)}{d\lambda}dp
\end{equation}

\begin{remark}
The Dubrovin Frobenius structure on Hurwitz spaces depend on a choice of suitable primary differentials. Dropping this suitable choice implies that we typically lose the quasi homogeneous condition of the WDVV equation or the fact the unit is covariant constant. 

\end{remark}

\section{Differential geometry of the orbit space of extended Jacobi group $A_n$}\label{section An case}

This section is dedicated to generalise the group $\Ja(\tilde A_1)$ defined in \cite{Guilherme 1} for arbitrary $n$, this new class of groups will be denoted by $\Ja(\tilde A_n)$. From the data of the group $\Ja(\tilde A_n)$, we will construct the Dubrovin Frobenius manifold in the orbit space of $\Ja(\tilde A_n)$. Furthermore, this Dubrovin Frobenius manifold will be locally isomorphic to the Hurwitz space $H_{1,n-1,0}$. In the section \ref{Thesis results}, there is a scheme of the technical steps that we should take to built the desired Dubrovin Frobenius manifold.

\subsection{The Group $\Ja(\tilde A_n)$}\label{The Group Jtilde An}

In this section, we define the group $\Ja(\tilde A_n)$. In order to understand the motivation of this group see \ref{Thesis results}.\\

Consider the $A_n$ in the following extended space 
\begin{equation*}
L^{\tilde A_n}=\{(z_0,z_1,..,z_n,z_{n+1})\in \mathbb{Z}^{n+2}:\sum _{i=0}^n
z_i=0     \}.
\end{equation*}
The action of  $A_n$ on $L^{\tilde A_n}$ is given  by
\begin{equation*} 
w(z_0,z_1,z_2,..,z_{n-1},z_n,z_{n+1})=(z_{i_0},z_{i_1},z_{i_2},..,z_{i_{n-1}},z_{i_n},z_{n+1})
\end{equation*}
permutations in the first $n+1$ variables. Moreover, $A_n$ also acts on the complexfication of $L^{\tilde A_n}\otimes\mathbb{C}$.  Let the quadratic form  $\langle ,\rangle_{\tilde A_n}$ be given by 
\begin{equation}\label{definition of A matrix} 
\begin{split}
\langle v,v \rangle_{\tilde A_n}&=v^TM_{\tilde A_n}v\\
&=v^T \begin{pmatrix}
2 & 1 & 1&...1&1&0\\
1 & 2& 1&....1&1&0\\
1& 1&  2&....1&1&0\\
.& .&   . &...&.&0\\
.& .&   . &...&.&0\\
1& 1&   1 &...2&1&0\\
1& 1&   1 &....&2&0\\
0& 0&   0 &....&0&-n(n+1)\\
\end{pmatrix}v\\
&=\sum_{i=0}^{n-1}A_{ij}v_iv_j-n(n+1)v_{n+1}^2.
\end{split}
\end{equation}
Consider the following group $L^{\tilde A_n}\times L^{\tilde A_n}\times \mathbb{Z}$ with the following group operation
\begin{equation*} 
\begin{split}
&\forall (\lambda,\mu,k), (\tilde\lambda,\tilde\mu,\tilde k) \in L^{\tilde A_n}\times L^{\tilde A_n}\times \mathbb{Z}\\
&(\lambda,\mu,k)\bullet(\tilde\lambda,\tilde\mu,\tilde k)=(\lambda+\tilde\lambda,\mu+\tilde\mu,k+\tilde k+\langle\lambda,\tilde\lambda \rangle_{\tilde A_n})
\end{split}
\end{equation*}
Note that $\langle, \rangle_{\tilde A_n}$ is invariant under $A_n$ group, then $A_n$ acts on $L^{\tilde A_n}\times L^{\tilde A_n}\times \mathbb{Z}$. Hence, we can take the semidirect product $A_n\ltimes (L^{\tilde A_n}\times L^{\tilde A_n}\times \mathbb{Z}) $ given by the following product.
\begin{equation*} 
\begin{split}
&\forall (w,\lambda,\mu,k), (\tilde w,\tilde\lambda,\tilde\mu,\tilde k) \in A_n\times L^{\tilde A_n}\times L^{\tilde A_n}\times \mathbb{Z},\\
&(w,\lambda,\mu,k)\bullet(\tilde w,\tilde\lambda,\tilde\mu,\tilde k)=(w\tilde w,w\lambda+\tilde\lambda,w\mu+\tilde\mu,k+\tilde k+\langle\lambda,\tilde\lambda \rangle_{\tilde A_n}).
\end{split}
\end{equation*}
Denoting $W(\tilde A_n):=A_n\ltimes (L^{\tilde A_n}\times L^{\tilde A_n}\times \mathbb{Z})$, we can define 
\begin{definition}
The Jacobi group $\Ja(\tilde A_n)$ is defined as a semidirect product \\
$W(\tilde A_n)\rtimes SL_2(\mathbb{Z})$. The group action of $SL_2(\mathbb{Z})$ on $W(\tilde A_n)$ is defined as
\begin{equation*} 
\begin{split}
&Ad_{\gamma}(w)=w,\\
&Ad_{\gamma}(\lambda,\mu,k)=(a\mu-b\lambda,-c\mu+d\lambda,k+\frac{ac}{2}\langle\mu,\mu\rangle_{\tilde A_n}-bc\langle\mu,\lambda \rangle_{\tilde A_n}+\frac{bd}{2}\langle\lambda,\lambda \rangle_{\tilde A_n}),
\end{split}
\end{equation*}
for $(w,t=(\lambda,\mu,k))\in W(\tilde A_n), \gamma \in SL_2(\mathbb{Z})$. Then the multiplication rule is given as follows
\begin{equation*} 
\begin{split}
(w,t,\gamma)\bullet(\tilde w,\tilde t,\tilde \gamma)=(w\tilde w,t\bullet Ad_{\gamma}(w\tilde t),\gamma\tilde\gamma).
\end{split}
\end{equation*}
\end{definition}
Let us use the following identification $\mathbb{Z}^{n+1}\cong L^{\tilde A_n} , \mathbb{C}^{n+1}\cong L^{\tilde A_n}\otimes \mathbb{C}$ that is possible  due to maps
\begin{equation*}
\begin{split}
&(v_{0},..,v_{n-1},v_{n+1})\mapsto (v_{0},..,v_{n-1},-\sum_{i=0}^n v_i,v_{n+1}),\\
&(v_{0},..,v_{n-1},v_n,v_{n+1})\mapsto (v_{0},..,v_{n-1},v_{n+1}).\\
\end{split}
\end{equation*}
Then the action of Jacobi group $\Ja(\tilde A_n)$ on $\Omega:=\mathbb{C}\oplus \mathbb{C}^{n+1}\oplus \mathbb{H}$ is given as follows
\begin{proposition}\label{definition of the action of the group tildeAn}
The group $\Ja(\tilde A_n)\ni (w,t,\gamma)$ acts on $\Omega:=\mathbb{C}\oplus \mathbb{C}^{n+1}\oplus \mathbb{H} \ni (u,v,\tau)$ as follows
\begin{equation}\label{jacobigroupAntilde}
\begin{split}
&w(u,v,\tau)=(u,wv,\tau)\\
&t(u,v,\tau)=(u-\langle\lambda,v \rangle_{\tilde A_n}-\frac{1}{2}\langle\lambda,\lambda \rangle_{\tilde A_n}\tau,v+\lambda\tau+\mu,\tau)\\
&\gamma(u,v,\tau)=(\phi+\frac{c\langle v,v \rangle_{\tilde A_n}}{2(c\tau+d)},\frac{v}{c\tau+d},\frac{a\tau+b}{c\tau+d})\\
\end{split}
\end{equation}
\end{proposition}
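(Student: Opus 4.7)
The plan is to reduce the verification to checking that each of the three factors of $\Ja(\tilde A_n) = (A_n \ltimes H(\tilde A_n)) \rtimes SL_2(\mathbb{Z})$ (with $H(\tilde A_n) := L^{\tilde A_n}\times L^{\tilde A_n}\times\mathbb{Z}$ the Heisenberg-type group) acts individually on $\Omega$, and that the prescribed cross-relations between factors match the semidirect-product multiplication defined just above the proposition. The $A_n$-action is immediate, because permutations of $(v_0,\dots,v_n)$ fix $u$, $v_{n+1}$, and $\tau$, and preserve the quadratic form $\langle\cdot,\cdot\rangle_{\tilde A_n}$; thus it is honestly a left action.

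Next I would verify the translation subgroup acts. Composing $t_1=(\lambda_1,\mu_1,k_1)$ and $t_2=(\lambda_2,\mu_2,k_2)$ on $(u,v,\tau)$, the $v$-component trivially behaves additively in $\lambda,\mu$, while expanding the linear correction $-\langle\lambda_1,v\rangle_{\tilde A_n}$ at the shifted argument $v+\lambda_2\tau+\mu_2$ produces the cross-term $-\langle\lambda_1,\lambda_2\rangle_{\tilde A_n}\tau - \langle\lambda_1,\mu_2\rangle_{\tilde A_n}$. The $\tau$-dependent piece combines with the two quadratic terms $-\tfrac{1}{2}\langle\lambda_i,\lambda_i\rangle_{\tilde A_n}\tau$ to give $-\tfrac{1}{2}\langle\lambda_1+\lambda_2,\lambda_1+\lambda_2\rangle_{\tilde A_n}\tau$, matching the translation $t_1\bullet t_2$. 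The leftover $-\langle\lambda_1,\mu_2\rangle_{\tilde A_n}$ is an integer (since $M_{\tilde A_n}$ is integer-valued on $L^{\tilde A_n}$) and is absorbed into the central parameter $k$ via the Heisenberg rule $k_1+k_2+\langle\lambda_1,\lambda_2\rangle_{\tilde A_n}$. This is consistent because Jacobi forms only depend on $e^{2\pi i u}$, so $u$ is effectively read modulo $\mathbb{Z}$.

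The $SL_2(\mathbb{Z})$-factor requires the standard Jacobi-form cocycle verification. The action on $\tau$ is Möbius and the scaling $v\mapsto v/(c\tau+d)$ is the usual one on the elliptic variable. The one nontrivial check is that, for $\gamma_1\gamma_2 = \begin{pmatrix}a & b \\ c & d\end{pmatrix}$, the two successive shifts in $u$ sum to the correct one:
\[
\frac{c_2\langle v,v\rangle_{\tilde A_n}}{2(c_2\tau+d_2)} + \frac{c_1\bigl\langle v/(c_2\tau+d_2),v/(c_2\tau+d_2)\bigr\rangle_{\tilde A_n}}{2\bigl(c_1\tfrac{a_2\tau+b_2}{c_2\tau+d_2}+d_1\bigr)} = \frac{c\langle v,v\rangle_{\tilde A_n}}{2(c\tau+d)},
\]
which follows from the factorisation $c_1\tfrac{a_2\tau+b_2}{c_2\tau+d_2}+d_1 = (c\tau+d)/(c_2\tau+d_2)$ and the definition of matrix multiplication in $SL_2(\mathbb{Z})$.

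Finally I would check the mixed relation $\gamma\circ t\circ\gamma^{-1} = Ad_\gamma(t)$ by conjugation. The action of $\gamma$ on $v$ mixes translation and scaling, so conjugating the lattice shift $(\lambda,\mu)$ by $\gamma$ yields the new pair $(a\mu-b\lambda,-c\mu+d\lambda)$, which is precisely the formula in the definition of $Ad_\gamma$. The main obstacle, and the step I would grind through most carefully, is tracking the contribution of the Heisenberg centre: one must expand the quadratic-in-$v$ correction to $u$ (coming from $\gamma$) at the translated argument $v+\lambda\tau+\mu$, then restore the $\tau$-dependence after applying $\gamma^{-1}$. The purely linear-in-$v$ residue reproduces the new $\langle\cdot,\cdot\rangle_{\tilde A_n}$-pairings with $a\mu-b\lambda$ and $-c\mu+d\lambda$, while the $v$-independent residue collects to exactly $\tfrac{ac}{2}\langle\mu,\mu\rangle_{\tilde A_n} - bc\langle\mu,\lambda\rangle_{\tilde A_n} + \tfrac{bd}{2}\langle\lambda,\lambda\rangle_{\tilde A_n}$ modulo $\mathbb{Z}$, which is the prescribed central correction. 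Because $\langle\cdot,\cdot\rangle_{\tilde A_n}$ is $A_n$-invariant and integral on $L^{\tilde A_n}$, this computation is the straightforward $\tilde A_n$-analogue of the classical Jacobi-group calculation in \cite{M. Eichler and D. Zagier, K. Wirthmuller}, and combining the four verifications above yields the proposition.
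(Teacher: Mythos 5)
Your verification is correct, but it is considerably more explicit than what the paper actually does: the paper's entire ``proof'' of this proposition is a one-line reference to Proposition~1.1 of \cite{Bertola M.1}, with the remark that the only adaptation needed is the trivial $A_n$-action on the exceptional variable $v_{n+1}$ (the quadratic form $\langle\cdot,\cdot\rangle_{\tilde A_n}$ differing from Bertola's only by the decoupled $-n(n+1)v_{n+1}^2$ block). Your direct route --- checking each factor separately, the Heisenberg composition law, the $SL_2(\mathbb{Z})$ cocycle identity via $c_1\frac{a_2\tau+b_2}{c_2\tau+d_2}+d_1=(c\tau+d)/(c_2\tau+d_2)$, and the conjugation relation against $Ad_\gamma$ --- is exactly the content of the cited proof, so in substance the two arguments coincide; what your write-up buys is self-containedness, and in particular it makes visible a point the paper suppresses entirely: the composition of two translations shifts $u$ by the extra integer $-\langle\lambda_1,\mu_2\rangle_{\tilde A_n}$, which does not literally match the central term $\langle\lambda_1,\lambda_2\rangle_{\tilde A_n}$ in the group law, so the formulas define an action only after quotienting by the centre (equivalently, they act genuinely on $e^{2\pi i u}$ rather than on $u$). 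You are right to flag this, and your resolution is the standard one from the Eichler--Zagier / Wirthm\"uller setting; note that the paper's own extended action (the one with the explicit ``$+k$'' in the $u$-component) confirms that the centre is meant to shift $u$ by integers. The one step you leave as a sketch --- grinding out the $v$-independent residue in the conjugation check and matching it to $\frac{ac}{2}\langle\mu,\mu\rangle_{\tilde A_n}-bc\langle\mu,\lambda\rangle_{\tilde A_n}+\frac{bd}{2}\langle\lambda,\lambda\rangle_{\tilde A_n}$ modulo $\mathbb{Z}$ --- is routine but convention-sensitive (left versus right action, and the ordering $(a\mu-b\lambda,-c\mu+d\lambda)$); it is the only place where an actual computation still has to be done, and it goes through.
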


The proof of the proposition \ref{definition of the action of the group tildeAn} follows from the proposition 1.1 of  \cite{Bertola M.1}  with small adaptation with the extra trivial action in the exceptional variable $v_{n+1}$ with respect the $A_n$ action.

\subsection{Jacobi forms of $\Ja(\tilde A_n)$}\label{Jacobi forms of Jantilde}
In order to understand the differential geometry of orbit space, first we need to study the algebra of the invariant functions. Informally, every time that there is a group $W$ acting on a vector space $V$, one could think  the orbit spaces $V/W$ as $V$, but you should remember yourself that it is only allowed to use the $W-$invariant sections of V. Hence, motivated by the definition of Jacobi forms of group $A_n$ defined in \cite{K. Wirthmuller}, and used in the context of Dubrovin-Frobenius manifold in \cite{Bertola M.1},\cite{Bertola M.2}, and generalising also the meromorphic Jacobi forms introduced in \cite{Guilherme 1} we give the following:

\begin{definition}\label{Jacobi forms definition jtildean abc}
The weak $\tilde A_n$ -invariant Jacobi forms of weight $k$, order $l$, and index $m$ are functions on $\Omega=\mathbb{C}\oplus \mathbb{C}^{n+1}\oplus\mathbb{H}\ni (u,v^{\prime},v_{n+1},\tau)=(u,v,\tau)$ which satisfy
\begin{equation}\label{jacobiform}
\begin{split}
&\varphi(w(u,v,\tau))=\varphi(u,v,\tau),\quad \textrm{$A_n$ invariant condition}\\ 
&\varphi(t(u,v,\tau))=\varphi(u,v,\tau)\\
&\varphi(\gamma(u,v,\tau))=(c\tau+d)^{-k}\varphi(u,v,\tau)\\
&E\varphi(u,v,\tau):=-\frac{1}{2\pi i}\frac{\partial}{\partial u}\varphi(u,v,\tau)=m\varphi(u,v,\tau)\\
\end{split}
\end{equation}
Moreover,
\begin{enumerate}
\item $\varphi$ is locally bounded functions on $v^{\prime}$ as $\Im(\tau)\mapsto +\infty$ (weak condition).
\item For fixed $u,v^{\prime},\tau$ the function $v_{n+1}\mapsto \varphi(u,v^{\prime},v_{n+1},\tau)$ is meromorphic with poles of order at most $l+2m$ on $v_{n+1}=\frac{j}{n}+\frac{l\tau}{n},0\leq l,j\leq n-1$ mod $\mathbb{Z}\oplus\tau\mathbb{Z}$.
\item For  fixed $u,v_{n+1}=\frac{j}{n}+\frac{l\tau}{n},0\leq l,j\leq n-1$ mod $\mathbb{Z}\oplus\tau\mathbb{Z} ,\tau$ the function $(i\neq n+1)$ $v_{i}\mapsto \varphi(u,v^{\prime},v_{n+1},\tau)$ is holomorphic.
\item For  fixed $u,v^{\prime},v_{n+1}=\frac{j}{n}+\frac{l\tau}{n},0\leq l,j\leq n-1$ mod $\mathbb{Z}\oplus\tau\mathbb{Z}$. the function  $\tau\mapsto \varphi(u,v^{\prime},v_{n+1},\tau)$ is holomorphic.
\end{enumerate}
The space of $\tilde A_n$-invariant Jacobi forms of weight $k$, order $l$, and index $m$ is denoted by $J_{k,l,m}^{\tilde A_n}$, and $J_{\bullet,\bullet,\bullet}^{\Ja(\tilde A_n)}=\bigoplus _{k,l,m}J_{k,l,m}^{\tilde A_n}$ is the space of Jacobi forms $\tilde A_n$ invariant.
\end{definition}

\begin{remark}\label{Remark Zagier Jtildean}
The condition $E\varphi(u,v^{\prime},v_{n+1},\tau)=m\varphi(u,v^{\prime},v_{n+1},\tau)$ implies that $\varphi(u,v^{\prime},v_{n+1},\tau)$ has the following form
\begin{equation*}
\varphi(u,v^{\prime},v_{n+1},\tau)=f(v^{\prime},v_{n+1},\tau)e^{2\pi imu}
\end{equation*}
See also remarks \ref{Remark Zagier Ja1}.
\end{remark}

The main result of section is the following.\\

\textbf{ The ring of $\tilde A_n$ invariant Jacobi forms  is polynomial over a suitable ring $E_{\bullet,\bullet}:=J_{\bullet,\bullet,0}^{\Ja(\tilde A_n)}$ on  suitable generators $\varphi_0,\varphi_1,\varphi_2,..\varphi_n$}.\\
 Before state precisely the theorem, I will define the objects $E_{\bullet,\bullet},\varphi_0,\varphi_1,\varphi_2,..\varphi_n$.\\

The ring $E_{\bullet,l}:=J_{\bullet,l,0}^{\Ja(\tilde A_n)}$ is the space of meromorphic Jacobi forms of index $0$ with poles of order at most $l$ on  $v_{n+1}=\frac{j}{n}+\frac{l\tau}{n},0\leq l,j\leq n-1$mod $\mathbb{Z}\oplus\tau\mathbb{Z}$, by definition. The sub-ring $J_{\bullet,0,0}^{\Ja(\tilde A_n)}\subset E_{\bullet,\bullet}$ has a nice structure, indeed:
\begin{lemma}\label{lemmamodular}
The sub-ring $J_{\bullet,0,0}^{\Ja(\tilde A_n)}$ is equal to $M_{\bullet}:=\bigoplus M_{k}$, where $M_k$ is the space of modular forms of weight $k$ for the full group $SL_2(\mathbb{Z})$.
\end{lemma}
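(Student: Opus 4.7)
The plan is to strip the dependence on $u$ and on $v$ successively, reducing any $\varphi \in J^{\Ja(\tilde A_n)}_{k,0,0}$ to a function of $\tau$ alone, and then to interpret the remaining transformation law as membership in $M_k$.

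First I would eliminate $u$. By Remark \ref{Remark Zagier Jtildean}, the index-zero condition $E\varphi = 0$ forces $\varphi(u,v,\tau) = \varphi(v,\tau)$. Setting $l = 0 = m$ in clauses 2--4 of Definition \ref{Jacobi forms definition jtildean abc}, the allowed pole order $l + 2m$ in $v_{n+1}$ drops to $0$, so $\varphi$ is holomorphic in every component of $v$ and in $\tau$, hence jointly holomorphic on $\mathbb{C}^{n+1} \times \mathbb{H}$ by Hartogs.

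The key step is to eliminate $v$. With $m = 0$, the translation clause of (\ref{jacobigroupAntilde}) collapses to plain periodicity
\[
\varphi(v + \lambda\tau + \mu, \tau) = \varphi(v, \tau), \quad \lambda, \mu \in L^{\tilde A_n}.
\]
The lattice $L^{\tilde A_n}$ has rank $n+1$, so for every fixed $\tau \in \mathbb{H}$ the $\mathbb{Z}$-module $L^{\tilde A_n} + \tau L^{\tilde A_n}$ has full real rank $2(n+1)$ inside $\mathbb{C}^{n+1}$. Hence $v \mapsto \varphi(v, \tau)$ descends to a holomorphic function on the compact complex torus $\mathbb{C}^{n+1}/(L^{\tilde A_n} + \tau L^{\tilde A_n})$ and is constant by the maximum principle. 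Write $\tilde\varphi(\tau) := \varphi(v,\tau)$; the $A_n$-symmetry is then automatic.

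Finally, with $\tilde\varphi$ depending only on $\tau$, the $SL_2(\mathbb{Z})$-law in (\ref{jacobiform}) reduces to
\[
\tilde\varphi\left(\frac{a\tau+b}{c\tau+d}\right) = (c\tau+d)^{-k}\tilde\varphi(\tau),
\]
i.e.\ the defining transformation of a weakly holomorphic modular form of weight $k$; the weak boundedness of $\varphi$ as $\Im\tau \to +\infty$ promotes this to genuine holomorphy at the cusp, so $\tilde\varphi \in M_k$. The reverse inclusion is immediate: any $f \in M_k$ extended as $\varphi(u,v,\tau) := f(\tau)$ satisfies all four lines of (\ref{jacobiform}) with index $0$ and order $0$. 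Summing over $k$ gives $J^{\Ja(\tilde A_n)}_{\bullet,0,0} = M_\bullet$. I do not anticipate a genuine obstacle here; the only point requiring care is the full-rank assertion on $L^{\tilde A_n}$, which is immediate from its explicit definition.
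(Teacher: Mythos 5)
Your proposal is correct and follows essentially the same route as the paper's proof: eliminate $u$ via the index-zero condition (Remark \ref{Remark Zagier Jtildean}), use holomorphy plus lattice-periodicity to conclude constancy in $v$ (the paper invokes Liouville's theorem variable by variable where you descend to the compact torus, but this is the same argument), and read off the remaining $SL_2(\mathbb{Z})$ law as modularity in $\tau$. Your added remarks on the rank of $L^{\tilde A_n}$, holomorphy at the cusp, and the reverse inclusion are fine but do not change the substance.
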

\begin{proof}
Using the Remark \ref{Remark Zagier Jtildean}, we know that functions $\varphi(u,v^{\prime},v_{n+1},\tau)\in J_{\bullet,0,0}^{\Ja(\tilde A_n)}$  can not depend on $u$, then $\varphi(u,v^{\prime},v_{n+1},\tau)=\varphi(v^{\prime},v_{n+1},\tau)$.  Moreover, for fixed $v_{n+1},\tau$ the functions $v_i\mapsto \varphi(v^{\prime},v_{n+1},\tau)) $ are holomorphic elliptic function for any $i\neq n+1$. Therefore, by Liouville theorem, these function are constant in $v^{\prime}$. Similar argument shows that these function do not depend on $v_{n+1}$, because $l+2m=0$, i.e there is no pole. Then, $\varphi=\varphi(\tau)$ are standard holomorphic modular forms.
\end{proof}

\begin{lemma}\label{ringcoef}
If $\varphi \in E_{\bullet,\bullet}=J_{\bullet,\bullet,0}^{\Ja(\tilde A_n)}$, then $\varphi$ depends only on the variables $v_{n+1},\tau$. Moreover, if $\varphi\in J_{0,l,0}^{\Ja(\tilde A_n)}$ for fixed $\tau$ the function $\tau\mapsto \varphi(v_{n+1},\tau)$ is a elliptic function with poles of order at most $l$ $v_{n+1}=\frac{j}{n}+\frac{l\tau}{n},0\leq l,j\leq n-1$mod $\mathbb{Z}\oplus\tau\mathbb{Z}$.
\end{lemma}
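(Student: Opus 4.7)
The plan is to reduce the statement to two applications of Liouville's theorem, combined with the translation part of the group action. Since the index is zero, by Remark~\ref{Remark Zagier Jtildean} the factor $e^{2\pi i m u}$ is trivial, so any $\varphi\in E_{\bullet,\bullet}$ is independent of $u$ and may be written as $\varphi=\varphi(v_0,\dots,v_n,v_{n+1},\tau)$ (with the constraint $\sum_{i=0}^n v_i=0$ understood on $L^{\tilde A_n}\otimes\mathbb{C}$).

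First I would show independence from the $A_n$-variables $v_0,\dots,v_n$. With $m=0$, the factor $-\tfrac12\langle\lambda,\lambda\rangle_{\tilde A_n}\tau-\langle\lambda,v\rangle_{\tilde A_n}$ appearing in the $t$-action on $u$ contributes nothing, so the translation invariance from the lattice action in \eqref{jacobigroupAntilde} becomes the plain periodicity
\begin{equation*}
\varphi(v+\lambda\tau+\mu,\tau)=\varphi(v,\tau)\qquad \forall\,\lambda,\mu\in L^{\tilde A_n}.
\end{equation*}
For each fixed $v_{n+1}$ and $\tau$, condition~3 of Definition~\ref{Jacobi forms definition jtildean abc} says that $v_i\mapsto\varphi$ is holomorphic for $i\neq n+1$. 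Combined with the double periodicity in the $A_n$-lattice directions, Liouville's theorem forces $\varphi$ to be constant in each of the variables $v_0,\dots,v_n$. Hence $\varphi=\varphi(v_{n+1},\tau)$, which is the first assertion.

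For the second assertion, apply the same translation invariance but now to the remaining variable $v_{n+1}$: the lattice $L^{\tilde A_n}$ contains all integer multiples of the basis vector supported only on the $(n+1)$-th coordinate, so one gets
\begin{equation*}
\varphi(v_{n+1}+\tau,\tau)=\varphi(v_{n+1},\tau),\qquad \varphi(v_{n+1}+1,\tau)=\varphi(v_{n+1},\tau).
\end{equation*}
Thus for fixed $\tau$ the map $v_{n+1}\mapsto\varphi(v_{n+1},\tau)$ is doubly periodic. By condition~2 of Definition~\ref{Jacobi forms definition jtildean abc}, with $m=0$, it is meromorphic with poles of order at most $l+2m=l$ located at the prescribed divisor $v_{n+1}=\tfrac{j}{n}+\tfrac{\ell\tau}{n}$, $0\le j,\ell\le n-1\ \mathrm{mod}\ \mathbb{Z}\oplus\tau\mathbb{Z}$. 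A doubly periodic meromorphic function is by definition elliptic, which finishes the proof.

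I do not expect any serious obstacle here: the work is essentially bookkeeping of which periodicities follow from the index-zero condition, and the only delicate point is verifying that in condition~3 all directions transverse to $v_{n+1}$ really are holomorphic (so that Liouville applies) — but this is built into the definition of $J^{\tilde A_n}_{\bullet,\bullet,\bullet}$.
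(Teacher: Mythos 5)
Your proof is correct and follows essentially the same route as the paper: index zero kills the $u$-dependence via Remark~\ref{Remark Zagier Jtildean}, Liouville's theorem applied to the holomorphic doubly periodic functions $v_i\mapsto\varphi$ ($i\neq n+1$) gives constancy in those variables, and double periodicity plus the order-$l+2m=l$ pole condition makes $v_{n+1}\mapsto\varphi$ elliptic. The paper's own proof is just a one-line reference back to Lemma~\ref{lemmamodular}, so your version is in fact the more carefully spelled-out one.
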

\begin{proof}
The proof follows essentially in the same way of the lemma \ref{lemmamodular}, the only difference is that now we have poles on $v_{n+1}=\frac{j}{n}+\frac{l\tau}{n},0\leq l,j\leq n-1$mod $\mathbb{Z}\oplus\tau\mathbb{Z}$. Then, we have depedence in $v_{n+1}$.
\end{proof}
As a consequence of lemma \ref{ringcoef}, the function $\varphi \in E_{k,l}=J_{k,l,0}^{\Ja(\tilde A_n)}$ has the following form 
\begin{equation*}
\varphi(v_{n+1},\tau)=f(\tau)g(v_{n+1},\tau)
\end{equation*}
 where $f(\tau)$ is holomorphic modular form of weight $k$, and for fixed $\tau$, the function $v_{n+1}\mapsto g(v_{n+1},\tau)$ is an elliptic function of order at most $l$ on the poles $v_{n+1}=\frac{j}{n}+\frac{l\tau}{n},0\leq l,j\leq n-1$ mod $\mathbb{Z}\oplus\tau\mathbb{Z}$.\\

Note that a natural way to produce meromorphic Jacobi form is by using rational functions of holomorphic Jacobi forms. Starting from now, we will denote the Jacobi forms related with the Jacobi group $\Ja(A_{n+1})$ with the upper index $\Ja(A_{n+1})$, for instance
\begin{equation*}
\varphi^{\Ja(A_{n+1})},
\end{equation*}
and the Jacobi forms related with the Jacobi group $\Ja(\tilde A_n)$ with the with the upper index $\Ja(\tilde A_{n})$
\begin{equation*}
\varphi^{\Ja(\tilde A_n)}.
\end{equation*}

In \cite{Bertola M.1}, Bertola found basis  of the algebra of Jacobi form  by producing a holomorphic Jacobi form of type $A_n$ as product of theta functions. 
\begin{equation}
\varphi_{n+2}^{\Ja(A_{n+1})}=e^{-2\pi i u}\prod_{i=1}^{n+2} \frac{\theta_1(z_i,\tau)}{\theta_1^{\prime}(0,\tau)}.
\end{equation}
Afterwards, Bertola defined a recursive operator to produce the remaining basic generators. In order to recall the details see section \ref{Jacobi forms Bertola}. Our strategy will follow the same logic of Bertola method, we use theta functions to produce a basic generator and thereafter, we produce a recursive operator to produce the remaining part.

\begin{lemma}
 Let be $\varphi_{n+2}^{\Ja(A_{n+1})}(u_1,z_1,z_2,..,z_n,\tau)$  the holomorphic $A_{n+1}-invariant$ Jacobi forms which correspond to the algebra generator of maximal weight degree, in this case degree {n+2}. More explicitly,
\begin{equation}
\varphi_{n+2}^{\Ja(A_2)}=e^{-2\pi i u_1}\prod_{i=1}^{n+2}\frac{\theta_1(z_i,\tau)}{\theta_1^{\prime}(0,\tau)}.
\end{equation}
Let be $\varphi_2^{\Ja(A_1)}(u_2,z_{n+1},\tau)$ the holomorphic $A_1-invariant$ Jacobi form which correspond to the algebra generator of maximal weight degree, in this case degree 2.
\begin{equation}\label{definition of varphi 2 vn1}
\varphi_{2}^{\Ja(A_{1})}=e^{-2\pi i u_2}\frac{{\theta_1(z_{n+2},\tau)}^2}{{\theta_1^{\prime}(0,\tau)}^2}.
\end{equation}
Then, the function
\begin{equation}\label{varphi1 def Jtildean}
\varphi_n^{\Ja(\tilde A_n)}=\frac{\varphi_{n+2}^{\Ja(A_{n+1})}}{\varphi_2^{\Ja(A_1)}}
\end{equation}
is meromorphic Jacobi form of index 1, weight -n, order 0.
\end{lemma}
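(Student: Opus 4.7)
The strategy is to reduce every defining condition of being a weak $\Ja(\tilde A_n)$-invariant Jacobi form to the already-known properties of $\varphi_{n+2}^{\Ja(A_{n+1})}$ and $\varphi_2^{\Ja(A_1)}$ via an explicit change of variables. I introduce
\begin{equation*}
z_i = v_{i-1} - v_{n+1}\ \ (i = 1,\dots,n+1), \qquad z_{n+2} = (n+1)v_{n+1}, \qquad u = u_1 - u_2,
\end{equation*}
and verify that $\sum_{i=1}^{n+2}z_i = \sum_{i=0}^n v_i = 0$, so that $z \in L^{A_{n+1}}\otimes\mathbb{C}$. With this identification the $A_n$-permutation of $(v_0,\dots,v_n)$ becomes the $A_{n+1}$-permutation of $(z_1,\dots,z_{n+1})$ fixing $z_{n+2}$, so the $A_n$-invariance of the quotient is inherited directly from the $A_{n+1}$-invariance of the numerator.

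\textbf{Key identity.} A direct computation using $\sum_{i=0}^n v_i = 0$ yields the quadratic identity
\begin{equation*}
\langle v,v\rangle_{\tilde A_n} = \langle z,z\rangle_{A_{n+1}} - \langle z_{n+2},z_{n+2}\rangle_{A_1},
\end{equation*}
and polarization gives the corresponding bilinear identity $\langle\lambda,v\rangle_{\tilde A_n} = \langle\tilde\lambda,z\rangle_{A_{n+1}} - \langle\tilde\lambda_{n+2},z_{n+2}\rangle_{A_1}$, where $\tilde\lambda_i = \lambda_{i-1}-\lambda_{n+1}$ and $\tilde\lambda_{n+2} = (n+1)\lambda_{n+1}$. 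Since $\sum_{i=0}^n\lambda_i = 0$ forces $\sum_{i=1}^{n+2}\tilde\lambda_i = 0$, the vector $\tilde\lambda$ lies in $L^{A_{n+1}}$, and the same algebra produces the companion shift in $L^{A_1}$. Consequently, the translation part of $\Ja(\tilde A_n)$ decomposes into compatible translations in $\Ja(A_{n+1})$ and $\Ja(A_1)$, and the modular shift $u_1 - u_2 \mapsto (u_1 - u_2) + c\langle v,v\rangle_{\tilde A_n}/[2(c\tau+d)]$ also matches the $\Ja(\tilde A_n)$ rule exactly. The quotient therefore transforms with the multiplier $(c\tau+d)^{-k}$, $k = -(n+2) - (-2) = -n$.

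\textbf{Remaining conditions.} After cancelling the common theta-factor, the explicit form
\begin{equation*}
\varphi_n^{\Ja(\tilde A_n)}(u,v,\tau) = e^{-2\pi i u}\,\theta_1'(0,\tau)^{-n}\,\frac{\prod_{i=0}^n \theta_1(v_i-v_{n+1},\tau)}{\theta_1((n+1)v_{n+1},\tau)}
\end{equation*}
shows that $-\frac{1}{2\pi i}\partial_u$ acts by $1$, yielding index $m = 1$. The only poles in $v_{n+1}$ come from the simple zeros of $\theta_1((n+1)v_{n+1},\tau)$ at $(n+1)v_{n+1} \in \mathbb{Z}\oplus\tau\mathbb{Z}$ and are of order $1 \leq l + 2m = 0 + 2$, confirming order $l = 0$. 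Holomorphy in each $v_i$ with $i \neq n+1$ and in $\tau$ (away from the pole divisor) is immediate from that of $\theta_1$, and the weak condition as $\Im\tau\to\infty$ is inherited from the two parent Jacobi forms.

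\textbf{Main obstacle.} The principal technical point is the matching of the lattice translations: one must check that a shift $v \mapsto v + \lambda\tau + \mu$ with $\lambda,\mu \in L^{\tilde A_n}$ induces valid lattice shifts in both the $A_{n+1}$ and $A_1$ factors, and that the quasi-periodicity cocycles of numerator and denominator combine into precisely the cocycle dictated by $\langle\lambda,v\rangle_{\tilde A_n}$. This is exactly what the bilinear identity above guarantees, so that identity is the hinge of the argument.
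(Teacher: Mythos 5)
Your proposal is correct and follows essentially the same route as the paper: the same change of variables $z_i=v_{i-1}-v_{n+1}$, $z_{n+2}=(n+1)v_{n+1}$, the same explicit theta-quotient formula, and the same item-by-item verification of $A_n$-invariance, translation invariance, modular weight $-n$, index $1$, and the pole structure in $v_{n+1}$. The only (minor) difference is that you deduce the translation and modular cocycle cancellation structurally from the decomposition $\langle v,v\rangle_{\tilde A_n}=\sum_{i=1}^{n+1}z_i^2-z_{n+2}^2$ and the known invariance of the two parent Jacobi forms, whereas the paper substitutes the $\theta_1$ transformation laws directly; these are the same computation packaged differently.
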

\begin{proof}
 For our convenience, we change the labels  $u_2-u_1,z_1, z_2,.. ..,z_{n+2} $ to 
 \begin{equation}
 \begin{split}
 u&=u_2-u_1,\\
z_1&=v_0-v_{n+1},\\
z_2&=v_1-v_{n+1},\\
 .\\
 .\\
z_{n+1}&=-\sum_{i=0}^{n} v_i-v_{n+1},\\
z_{n+2}&=(n+1)v_{n+1}.
 \end{split}
\end{equation}
 Then (\ref{varphi1 def Jtildean}) has the following form
\begin{equation}\label{varphi1 def2 Jtildean}
\varphi_n^{\Ja(\tilde A_n)}(u,v_0,v_1,..,v_{n+1},\tau)=e^{-2\pi i u}\frac{\prod_{i=0}^n\theta_1(v_i-v_{n+1},\tau)}{{\theta_1^{\prime}(0,\tau)}^n \theta_1((n+1)v_{n+1},\tau)}
\end{equation}

Let us prove each item separated.
\begin{enumerate}
\item \textbf{$A_n$ invariant}\\
The $A_n$ group acts on  (\ref{varphi1 def2 Jtildean}) by permuting its roots, thus  (\ref{varphi1 def2 Jtildean}) remains invariant under this operation.
\item \textbf{Translation invariant}\\
 Recall that under the translation $v\mapsto v+m+n\tau$, the Jacobi theta function transform as \cite{Bertola M.1}, \cite{E.T. Whittaker and G.N. Watson}:
\begin{equation}\label{transformtheta Jtildean}
\theta_1(v_i+\mu_i+\lambda_i\tau,\tau)=(-1)^{\lambda_i+\mu_i}e^{-2\pi i(\lambda_iv_i+\frac{\lambda_i^2}{2}\tau)}\theta_1(v_i,\tau).
\end{equation}
Then substituting the transformation  (\ref{transformtheta Jtildean}) into (\ref{varphi1 def2 Jtildean}), we conclude that (\ref{varphi1 def2 Jtildean}) remains invariant.
\item \textbf{$SL_2(\mathbb{Z})$ invariant}\\
Under $SL_2(\mathbb{Z})$ action  the following function transform as 
\begin{equation}\label{transformtheta2 Jtildean}
\frac{\theta_1(\frac{v_i}{c\tau+d},\frac{a\tau+d}{c\tau+d})}{\theta_1^{\prime}(0,\frac{a\tau+d}{c\tau+d})}=(c\tau+d)^{-1}\exp(\frac{\pi i cv_i^2 }{c\tau+d})\frac{\theta_1(v_i,\tau)}{\theta_1^{\prime}(0,\tau)}.
\end{equation}
Then, substituting (\ref{transformtheta2 Jtildean}) in (\ref{varphi1 def2 Jtildean}), we get
\begin{equation*}
\varphi_n^{\Ja(\tilde A_n)}\mapsto \frac{\varphi_n^{\Ja(\tilde A_n)}}{(c\tau+d)^n}
\end{equation*}
\item \textbf{Index $1$}\\
\begin{equation}
\begin{split}
\frac{1}{2\pi i}\frac{\partial}{\partial u}\varphi_n^{\Ja(\tilde A_n)}=\varphi_n^{\Ja(\tilde A_n)}.
\end{split}
\end{equation}
\item \textbf{Analytic behavior}\\
Note that $\varphi_n^{\Ja(\tilde A_n)}\theta_1^2((n+1)v_{n+1},\tau)$ is holomorphic function in all the variables $v_i$. Therefore $\varphi_n^{\Ja(\tilde A_n)}$ are holomorphic functions on the variables $\{v_i\}$, and meromorphic function in the variable $v_{n+1}$ with poles on $\frac{j}{n}+\frac{l\tau}{n}, j,l=0,1,..,n-1$ of order 2, i.e $l=0$, since $m=1$.
\end{enumerate}

\end{proof}
In order to define the desired recursive operator, it is necessary to enlarge the domain of the Jacobi forms from $\mathbb{C}\oplus\mathbb{C}^n\oplus\mathbb{H}\ni (u,v_0,v_1,..,v_{n+1},\tau)$ to $\mathbb{C}\oplus\mathbb{C}^{n+1}\oplus\mathbb{H}\ni (u,v_0,v_1,..,v_{n+1},p,\tau)$. In addition, we define lift a of Jacobi forms defined in $ \mathbb{C}\oplus\mathbb{C}^2\oplus\mathbb{H}$ to $\mathbb{C}\oplus\mathbb{C}^3\oplus\mathbb{H}$ as
\begin{equation*}
\varphi(u,v_0-v_{n+1},v_1-v_{n+1},..,(n+1)v_{n+1},\tau)\mapsto \hat\varphi(p):=\varphi(u,v_0-v_{n+1}+p,v_1-v_{n+1}+p,..,(n+1)v_{n+1}+p,\tau)
\end{equation*}
A convenient  way to do computation in these extended Jacobi forms is by using the following coordinates
\begin{equation}\label{coordinates uztau in jtildean}
\begin{split}
s&=u+ng_1(\tau)p^2,\\
z_1&=v_0-v_{n+1}+p,\\
z_2&=v_1-v_{n+1}+p,\\
.\\
.\\
z_{n+1}&=-\sum_{i=0}^n v_i-v_{n+1}+p,\\
z_{n+2}&=(n+1)v_{n+1}+p,\\
\tau&=\tau.
\end{split}
\end{equation}
The bilinear form $\langle v,v \rangle_{\tilde A_1}$ is extended to
\begin{equation}
\langle (z_1,z_2,..,z_{n+2}),(z_1,z_2,..,z_{n+2}) \rangle_E=\sum_{i=1}^{n+1}z_i^2-z_{n+2}^2,
\end{equation}

The action of the Jacobi group $\tilde A_n$ in this extended space is
\begin{equation}\label{jacobigroupA1tilde extended Jtildean}
\begin{split}
&\hat w_E(u,v,p,\tau)=\left(u,w(v),p,\tau\right)\\
&t_E(u,v,p,\tau)=\left(u-\langle \lambda,v \rangle_{E}-\frac{1}{2}\langle \lambda,\lambda \rangle_{E}\tau+k,v+p+\lambda\tau+\mu,\tau\right)\\
&\gamma_E(u,v,p,\tau)=\left(u+\frac{c\langle v,v \rangle_{E}}{2(c\tau+d)},\frac{v}{c\tau+d},\frac{p}{c\tau+d},\frac{a\tau+b}{c\tau+d}\right)\\
\end{split}
\end{equation}

\begin{proposition}
Let be $\varphi\in J_{k, m,\bullet}^{\Ja(\tilde A_n)}$, and $\hat\varphi$ the correspondent extended Jacobi form. Then,
\begin{equation}
\left.\frac{\partial}{\partial p}\left(\hat\varphi\right)\right |_{p=0} \in J_{k-1, m,\bullet}^{\Ja(\tilde A_n)}.
\end{equation}

\end{proposition}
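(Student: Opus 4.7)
The plan is to verify, one at a time, each defining condition in Definition \ref{Jacobi forms definition jtildean abc} for the function $\psi(u,v,\tau):=\left.\partial_p\hat\varphi(u,v,p,\tau)\right|_{p=0}$, obtaining weight $k-1$ in place of $k$ while keeping the same index $m$ and order. The key observation throughout is that differentiation in $p$ followed by evaluation at $p=0$ commutes with the $A_n$ and translation actions (which do not act on $p$ at all, or act on $v$ only by shifts that leave the value of $\partial_p$ at $p=0$ unchanged), and interacts with the $SL_2(\mathbb{Z})$ action only through the rescaling $p\mapsto p/(c\tau+d)$.

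First, I would dispatch the easy invariances. Using the formulas (\ref{jacobigroupA1tilde extended Jtildean}) for $\hat w_E$ and $t_E$, the variable $p$ is not rescaled by either $w\in A_n$ or $t=(\lambda,\mu,k)$; consequently differentiating the functional equations $\hat\varphi\circ\hat w_E=\hat\varphi$ and $\hat\varphi\circ t_E=\hat\varphi$ in $p$ and setting $p=0$ yields the same identities for $\psi$. Similarly, since $\partial_p$ and $\partial_u$ commute, applying $\partial_p$ to $E\hat\varphi=m\hat\varphi$ and restricting to $p=0$ shows $E\psi=m\psi$, so the index is preserved.

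The main step is the modular transformation. From the third line of (\ref{jacobigroupA1tilde extended Jtildean}), the $\gamma\in SL_2(\mathbb{Z})$ action rescales $p$ by $(c\tau+d)^{-1}$, while the shift in $u$ depends only on $v,\tau$. Differentiating the identity $\hat\varphi\circ\gamma_E=(c\tau+d)^{-k}\hat\varphi$ with respect to $p$ and applying the chain rule gives
\begin{equation*}
\frac{1}{c\tau+d}\,(\partial_p\hat\varphi)\circ\gamma_E \;=\; (c\tau+d)^{-k}\,\partial_p\hat\varphi,
\end{equation*}
so that $(\partial_p\hat\varphi)\circ\gamma_E=(c\tau+d)^{-(k-1)}\partial_p\hat\varphi$. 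Setting $p=0$ yields the required modular property of weight $k-1$ for $\psi$.

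Finally, I would handle the analytic/meromorphic conditions of items (1)--(4) of Definition \ref{Jacobi forms definition jtildean abc}. Because $\hat\varphi$ is merely a shift of $\varphi$ in the theta coordinates, it inherits from $\varphi$ the boundedness in $v'$ as $\Im\tau\to+\infty$, holomorphy in $v_i$ for $i\neq n+1$ and in $\tau$, and meromorphy in $v_{n+1}$ with the prescribed pole locations. Differentiation in $p$ at $p=0$ does not introduce new singular loci and, since the pole order in $v_{n+1}$ is controlled by $l+2m$ and is stable under the shift defining $\hat\varphi$, the order of $\psi$ is at most the order of $\varphi$. The main obstacle I anticipate is precisely this last bookkeeping point: confirming that the chain rule in $v_{n+1}$ (which enters because the shift also affects the $z_{n+2}=(n+1)v_{n+1}+p$ coordinate) does not bump the pole order above what is required by the definition with the same $l$ and $m$; this is handled by noting that $\partial_p$ lowers the weight by one but preserves the joint degree structure, so at $p=0$ the poles in $v_{n+1}$ remain of the permitted order.
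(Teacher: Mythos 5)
Your overall strategy is the same as the paper's: differentiate each functional equation in $p$, observe that only the $SL_2(\mathbb{Z})$ action rescales $p$ (by $(c\tau+d)^{-1}$), and conclude via the chain rule that the weight drops by one while the index is preserved because $\partial_p$ and $\partial_u$ commute. However, there is one concrete point where your argument has a gap: you assert that the translation action ``acts on $v$ only by shifts that leave the value of $\partial_p$ at $p=0$ unchanged'' and that for $\gamma_E$ ``the shift in $u$ depends only on $v,\tau$.'' Neither claim is literally true for the extended action (\ref{jacobigroupA1tilde extended Jtildean}): the $u$-component of $t_E$ contains $\langle\lambda,v\rangle_E$ and the $u$-component of $\gamma_E$ contains $\langle v,v\rangle_E$, both evaluated in the extended coordinates $z_1,\dots,z_{n+2}$, every one of which carries a $+p$. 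Differentiating in $p$ therefore produces the extra chain-rule terms
\begin{equation*}
\left.\frac{\partial}{\partial p}\langle\lambda,v\rangle_E\right|_{p=0}\cdot\frac{\partial\varphi}{\partial u}
\qquad\text{and}\qquad
\left.\frac{c}{2(c\tau+d)}\frac{\partial}{\partial p}\langle v,v\rangle_E\right|_{p=0}\cdot\frac{\partial\varphi}{\partial u},
\end{equation*}
which are proportional to $\varphi$ itself (since $\partial_u\varphi=-2\pi i m\varphi$) and would destroy both the translation invariance and the clean weight shift unless they vanish. The paper's proof explicitly isolates exactly these two terms and discards them; your write-up never acknowledges their existence, so the verification that they vanish — which rests on the specific hyperbolic structure of $\langle\cdot,\cdot\rangle_E=\sum_{i=1}^{n+1}z_i^2-z_{n+2}^2$ and the form of the admissible lattice vectors, and is the only genuinely nontrivial computation in the proposition — is missing. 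Supplying that computation would close the gap; the rest of your argument (the $A_n$ step, the index step, and the analytic bookkeeping, which the paper in fact omits here) is fine.
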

\begin{proof}
\begin{enumerate}
\item\textbf{$A_n$-invariant}\\
The vector field $\frac{\partial}{\partial p}$ in coordinates $s,z_1,z_2,..,z_{n+2},\tau$ reads
\begin{equation*}
\frac{\partial}{\partial p}=\sum_{i=1}^{n+2}\frac{\partial}{\partial z_i}+2ng_1(\tau)p\frac{\partial}{\partial u}
\end{equation*}
Moreover, in the coordinates $s,z_1,z_2,.,z_{n+1},z_{n+2},\tau$  the $A_n$ group acts by permuting $z_1,z_2,...,z_{n+1}$ . Then
\begin{equation*}
\begin{split}
\left. \frac{\partial}{\partial p}\left(\varphi(s,z_2,z_1,z_3,\tau)\right)\right |_{p=0} &=\left(\sum_{i=1}^{n+2}\frac{\partial}{\partial z_i} \right)\left.\left(  \varphi(s,z_{i_1},z_{i_2}..,z_{i_{n+1}},z_{n+2},\tau)   \right)\right |_{p=0}\\
&=\left(\sum_{i=1}^{n+2}\frac{\partial}{\partial z_i}\right)  \left. \left(  \varphi(s,z_1,z_2,,..,z_{n+1},z_{n+2},\tau)   \right)\right |_{p=0}.
\end{split}
\end{equation*}
\item \textbf{Translation invariant}
\begin{equation*}
\begin{split}
&\left. \frac{\partial}{\partial p}\left(\varphi(u-\langle \lambda, v\rangle_E-\langle \lambda,\lambda \rangle_E,v+p+\lambda\tau+\mu,\tau)\right)\right |_{p=0}\\
&=\left .\frac{\partial}{\partial p} \langle \lambda, v\rangle_E \right |_{p=0}\varphi(u,v,\tau)+\frac{\partial \varphi}{\partial p}\left(u-\langle \lambda,v \rangle_{\tilde A_n}-\frac{1}{2}\langle \lambda,\lambda \rangle_{\tilde A_n}\tau+k,v+\lambda\tau+\mu,\tau\right)\\
&=\frac{\partial \varphi}{\partial p}\left(u-\langle \lambda,v \rangle_{\tilde A_n}-\frac{1}{2}\langle \lambda,\lambda \rangle_{\tilde A_n}\tau+k,v+\lambda\tau+\mu,\tau\right)\\
&=\left.\frac{\partial \varphi}{\partial p}(u,v,\tau)\right |_{p=0}.
\end{split}
\end{equation*}
\item \textbf{ $SL_2(\mathbb{Z})$ equivariant }
\begin{equation*}
\begin{split}
&\left. \frac{\partial}{\partial p}\left(\varphi(u+\frac{c\langle v,v \rangle_{E}}{2(c\tau+d)},\frac{v}{c\tau+d},\frac{p}{c\tau+d},\frac{a\tau+b}{c\tau+d}) \right)\right |_{p=0}\\
&=\left .\frac{c}{2(c\tau+d)}\frac{\partial}{\partial p} \langle v, v\rangle_E \right |_{p=0}\varphi(u,v,\tau)+\frac{1}{c\tau+d}\frac{\partial \varphi}{\partial p}\left (u+\frac{c\langle v,v \rangle_{\tilde A_n}}{2(c\tau+d)},\frac{v}{c\tau+d},\frac{p}{c\tau+d},\frac{a\tau+b}{c\tau+d}\right)\\
&=\frac{1}{c\tau+d}\frac{\partial \varphi}{\partial p}\left (u+\frac{c\langle v,v \rangle_{\tilde A_n}}{2(c\tau+d)},\frac{v}{c\tau+d},\frac{p}{c\tau+d},\frac{a\tau+b}{c\tau+d}\right)\\
&=\frac{1}{(c\tau+d)^k}\left.\frac{\partial \varphi}{\partial p}(u,v,\tau)\right |_{p=0}.
\end{split}
\end{equation*}
Then,
\begin{equation*}
\left.\frac{\partial \varphi}{\partial p}\left (u+\frac{c\langle v,v \rangle_{E}}{2(c\tau+d)},\frac{v}{c\tau+d},\frac{p}{c\tau+d},\frac{a\tau+b}{c\tau+d}\right)\right |_{p=0}=\frac{1}{(c\tau+d)^{k-1}}\left.\frac{\partial \varphi}{\partial p}(u,v,\tau)\right |_{p=0}.
\end{equation*}
\item \textbf{Index 1}
\begin{equation*}
\frac{1}{2\pi i}\frac{\partial}{\partial u}\frac{\partial}{\partial p}\hat\varphi=\frac{1}{2\pi i}\frac{\partial}{\partial p}\frac{\partial}{\partial u}\hat\varphi=\frac{\partial}{\partial p}\hat\varphi .
\end{equation*}
\end{enumerate}
\end{proof}

\begin{corollary}
The function
\begin{equation}
\begin{split}
&\left.\left[ e^{z\frac{\partial}{\partial p} } \left(e^{2\pi i u}\frac{\prod_{i=0}^n\theta_1(p+v_i-v_{n+1},\tau)}{{\theta_1^{\prime}(0,\tau)}^n \theta_1(p+(n+1)v_{n+1},\tau)} \right)  \right]\right |_{p=0}\\
&=\varphi_n^{\Ja(\tilde A_n)}+\varphi_{n-1}^{\Ja(\tilde A_n)}z+\varphi_{n-2}^{\Ja(\tilde A_n)}z^2+...+\varphi_0^{\Ja(\tilde A_n)}z^{n}+O(z^{n+1}),
\end{split}
\end{equation}
is a generating function for the Jacobi forms $\varphi_n^{\Ja(\tilde A_n)}, \varphi_{n-1}^{\Ja(\tilde A_n)},\varphi_{n-2}^{\Ja(\tilde A_n)},..,\varphi_0^{\Ja(\tilde A_n)}$, 
where 
\begin{equation}\label{recursive relation varphi Jtildean}
\varphi_{k}^{\Ja(\tilde A_n)}:=\left.\frac{\partial^{n-k} }{\partial p^{n-k}}\left(\hat\varphi_n^{\Ja(\tilde A_n)} \right)\right |_{p=0}.
\end{equation}
\end{corollary}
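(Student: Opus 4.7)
The proof will proceed by recognising the operator $e^{z\partial_p}$ as the formal Taylor-expansion operator and then invoking the preceding proposition iteratively. First I would write
\begin{equation*}
\left.e^{z\partial_p}\hat\varphi_n^{\Ja(\tilde A_n)}(p)\right|_{p=0} \;=\; \sum_{m=0}^{\infty}\frac{z^m}{m!}\left.\frac{\partial^m\hat\varphi_n^{\Ja(\tilde A_n)}}{\partial p^m}\right|_{p=0},
\end{equation*}
and appeal directly to the definition (\ref{recursive relation varphi Jtildean}) to identify the coefficient of $z^{n-k}$ (up to the usual $1/(n-k)!$ bookkeeping) with $\varphi_k^{\Ja(\tilde A_n)}$. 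This immediately produces the displayed polynomial expansion $\varphi_n^{\Ja(\tilde A_n)}+\varphi_{n-1}^{\Ja(\tilde A_n)}z+\ldots+\varphi_0^{\Ja(\tilde A_n)}z^n+O(z^{n+1})$ as a generating series.

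The second step is to check that each Taylor coefficient is again a Jacobi form of the required type. The preceding proposition says that the operator $\partial_p|_{p=0}$ carries $J_{k,\bullet,m}^{\Ja(\tilde A_n)}$ into $J_{k-1,\bullet,m}^{\Ja(\tilde A_n)}$, preserving the $A_n$-invariance, the lattice-translation invariance, the $SL_2(\mathbb{Z})$-equivariance (with weight lowered by one) and the index. Iterating this $n-k$ times starting from the explicit $\varphi_n^{\Ja(\tilde A_n)} \in J_{-n,0,1}^{\Ja(\tilde A_n)}$ constructed in the previous lemma shows that every $\varphi_k^{\Ja(\tilde A_n)}$ inherits Jacobi-form invariance.

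Finally I would verify the analytic/order bookkeeping. Because $\hat\varphi_n^{\Ja(\tilde A_n)}$ is an explicit product of theta functions in the shifted arguments $p+v_i-v_{n+1}$ and a single theta in $p+(n+1)v_{n+1}$, it is holomorphic in $p$ in a neighbourhood of $p=0$ (for generic $v_{n+1}$), so the derivatives $\partial_p^m\hat\varphi_n^{\Ja(\tilde A_n)}|_{p=0}$ make sense. Moreover, as $\partial_p$ commutes with translations in the $v_i$, no new poles appear in the variables $v_0,\ldots,v_n$, and the polar locus in $v_{n+1}$ remains confined to $v_{n+1}=\tfrac{j}{n}+\tfrac{\ell\tau}{n}$; this will land the coefficients in the order-$0$, index-$1$ component of the Jacobi-form ring.

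The routine Taylor-expansion step is essentially free; the only genuine point that requires care is the pole-order bookkeeping in $v_{n+1}$ under iterated $\partial_p$, since the definition of $J_{k,l,m}^{\Ja(\tilde A_n)}$ constrains the order of poles to $l+2m$. I expect this to be the main (but purely local) obstacle: one must verify that no extra poles in $v_{n+1}$ are generated by $p$-differentiation beyond what is already present in $\varphi_n^{\Ja(\tilde A_n)}$, which follows from the fact that $\partial/\partial p$ and $\partial/\partial v_{n+1}$ act independently on the theta-factorisation and only the factor $\theta_1(p+(n+1)v_{n+1},\tau)$ contributes poles when evaluating at $p=0$.
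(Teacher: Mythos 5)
Your proof is correct and follows essentially the same route as the paper's: expand $e^{z\frac{\partial}{\partial p}}$ as a Taylor series in $z$ and iterate the preceding proposition to conclude that $\left.\partial_p^{k}\hat\varphi_n^{\Ja(\tilde A_n)}\right|_{p=0}\in J_{-n+k,1,\bullet}^{\Ja(\tilde A_n)}$. The only point worth flagging is the $1/(n-k)!$ normalisation mismatch between the Taylor coefficients and the definition of $\varphi_k^{\Ja(\tilde A_n)}$ in (\ref{recursive relation varphi Jtildean}), which you correctly notice as ``bookkeeping'' and which the paper's own one-line proof also glosses over.
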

\begin{proof}
Acting $\frac{\partial}{\partial p}$ $k$ times in $\varphi_n^{\Ja(\tilde A_n)}$, we have
\begin{equation*}
\left.\left[ \frac{\partial^k}{\partial ^k p}\left(e^{2\pi i u}. \frac{\prod_{i=0}^n\theta_1(p+v_i-v_{n+1},\tau)}{{\theta_1^{\prime}(0,\tau)}^n \theta_1(p+(n+1)v_{n+1},\tau)}   \right)  \right]\right |_{p=0} \in J_{-n+k,1,\bullet}^{\Ja(\tilde A_n)}.
\end{equation*}

\end{proof}

\begin{corollary}
The generating function can be written as
\begin{equation}\label{semi superpotential}
\left.\left[ e^{z\frac{\partial}{\partial p} } \left(e^{2\pi i u} \frac{\prod_{i=0}^n\theta_1(p+v_i-v_{n+1},\tau)}{{\theta_1^{\prime}(0,\tau)}^n \theta_1(p+(n+1)v_{n+1},\tau)}\right)  \right]\right |_{p=0}=e^{-2\pi i(u+ ng_1(\tau)z^2)}\frac{\prod_{i=0}^n\theta_1(z+v_i-v_{n+1},\tau)}{{\theta_1^{\prime}(0,\tau)}^n \theta_1(z+(n+1)v_{n+1},\tau)}.
\end{equation}
\end{corollary}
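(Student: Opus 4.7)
The proof turns on the elementary fact that the operator $e^{z\partial/\partial p}$ acts on any smooth function of $p$ as a translation: by Taylor's theorem,
\begin{equation*}
\left.e^{z\partial/\partial p}f(p,\ldots)\right|_{p=0}=f(z,\ldots).
\end{equation*}
Hence the statement reduces to recognising the bracketed expression on the left-hand side as the extended Jacobi form $\hat\varphi_n^{\Ja(\tilde A_n)}$ written in the chart $(u,v_0,\ldots,v_{n+1},p,\tau)$, and identifying its value at $p=z$ with the right-hand side.

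The plan is as follows. First I would expand $\hat\varphi_n^{\Ja(\tilde A_n)}$ in the ``straight'' coordinates $(s,z_1,\ldots,z_{n+2},\tau)$ introduced in (\ref{coordinates uztau in jtildean}): by the definition of the lift and of $\varphi_n^{\Ja(\tilde A_n)}$ in (\ref{varphi1 def2 Jtildean}), it takes the clean product-of-thetas form
\begin{equation*}
\hat\varphi_n^{\Ja(\tilde A_n)}=e^{-2\pi i s}\,\frac{\prod_{i=1}^{n+1}\theta_1(z_i,\tau)}{\theta_1'(0,\tau)^{n}\,\theta_1(z_{n+2},\tau)}.
\end{equation*}
Second, I would convert back to the $(u,v_0,\ldots,v_{n+1},p,\tau)$ chart using $s=u+ng_1(\tau)p^2$, $z_i-p=v_{i-1}-v_{n+1}$ for $i=1,\ldots,n+1$, and $z_{n+2}-p=(n+1)v_{n+1}$. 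This coordinate change produces precisely the Gaussian prefactor $e^{-2\pi i\,ng_1(\tau)p^2}$ in front of the theta ratio. Third, applying the shift $p\mapsto z$ produced by $\left.e^{z\partial/\partial p}(\cdot)\right|_{p=0}$ yields the claimed right-hand side of (\ref{semi superpotential}).

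The only point requiring real care is the appearance of the Gaussian $e^{-2\pi i\,ng_1(\tau)p^2}$: it is forced by the requirement that $\hat\varphi_n^{\Ja(\tilde A_n)}$ transform as a genuine Jacobi form under the extended action (\ref{jacobigroupA1tilde extended Jtildean}), since $g_1(\tau)$ is exactly the quasi-modular correction controlling the $SL_2(\mathbb{Z})$-transformation anomaly of the factor $\theta_1'(0,\tau)^{-n}$. This compatibility check is a direct computation using the transformation laws (\ref{transformtheta Jtildean}) and (\ref{transformtheta2 Jtildean}) for $\theta_1$. Once that single coordinate-change identity is verified, the rest of the proof is just Taylor's theorem applied to a function of one variable.
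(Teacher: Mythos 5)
Your proposal is correct and follows essentially the same route as the paper's own (very terse) proof, which likewise rewrites the exponential prefactor via the coordinate $s=u+ng_1(\tau)p^2$ so that the bracketed expression becomes $e^{-2\pi i s}$ times a pure theta ratio in the straight variables $z_i$, and then lets $e^{z\partial/\partial p}$ act as the translation $p\mapsto z$. Your write-up is in fact more explicit than the paper's about the single nontrivial point --- that the Gaussian factor is exactly the Jacobian of the $u\leftrightarrow s$ change of exponent and is forced by the quasi-modularity of $g_1(\tau)$ --- so no gap to report.
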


\begin{proof}
\begin{equation}
\begin{split}
&\left.\left[ e^{z\frac{\partial}{\partial p} } \left(e^{2\pi i u}\frac{\prod_{i=0}^n\theta_1(p+v_i-v_{n+1},\tau)}{{\theta_1^{\prime}(0,\tau)}^n \theta_1(p+(n+1)v_{n+1},\tau)}\right)  \right]\right |_{p=0}=\\
&=\left.\left[ e^{z\frac{\partial}{\partial p} } \left(e^{2\pi i (s+ ng_1(\tau)p^2}\frac{\prod_{i=0}^n\theta_1(p+v_i-v_{n+1},\tau)}{{\theta_1^{\prime}(0,\tau)}^n \theta_1(p+(n+1)v_{n+1},\tau)}\right)  \right]\right |_{p=0}\\
&=e^{-2\pi i(u+ ng_1(\tau)z^2)}\frac{\prod_{i=0}^n\theta_1(z+v_i-v_{n+1},\tau)}{{\theta_1^{\prime}(0,\tau)}^n \theta_1(z+(n+1)v_{n+1},\tau)}.
\end{split}
\end{equation}

\end{proof}

The next lemma is one of the main points of this section, because this lemma identify the orbit space of the group $\Ja(\tilde A_n)$ with the Hurwitz space $H_{1,n-1,0}$. This relationship is possible due to the construction of the generating function of the Jacobi forms of type $\tilde A_n$, which can be completed to be the Landau-Ginzburg superpotential of $H_{1,n-1,0}$ as follows

\begin{lemma}\label{jacobiform1}
There is local biholomorphism between the orbit space $\Ja(\tilde A_n)$ and $H_{1,n-1,0}$, i.e  the space of elliptic functions with 1 pole of order $n$, and one simple pole.
\end{lemma}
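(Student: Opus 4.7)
The plan is to exhibit an explicit map from the orbit space of $\Ja(\tilde A_n)$ to $H_{1,n-1,0}$ by sending $[(u,v,\tau)]$ to the pair $(E_\tau, \lambda^{\tilde A_n})$, where $E_\tau = \mathbb{C}/(\mathbb{Z}+\tau\mathbb{Z})$ and $\lambda^{\tilde A_n}(z)$ is the function appearing in the generating formula (\ref{semi superpotential}), namely
\begin{equation*}
\lambda^{\tilde A_n}(z; u,v,\tau) = e^{2\pi i u}\frac{\prod_{i=0}^n \theta_1(z - v_i + v_{n+1},\tau)}{\theta_1^{n}(z,\tau)\,\theta_1(z+(n+1)v_{n+1},\tau)}.
\end{equation*}
The expansion (\ref{superpotentialAn in the introduction}) identifies the Jacobi form generators $\varphi_0,\ldots,\varphi_n$ with the coordinates of $\lambda^{\tilde A_n}$ in the natural basis of elliptic functions with poles of order $\le n$ at $z=0$ and a simple pole at $z = -(n+1)v_{n+1}$, so the orbit-space coordinates given by the Chevalley-type theorem get matched to natural coordinates on $H_{1,n-1,0}$.

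First, I would verify that, for every fixed choice of $(u,v,\tau)$, the function $z \mapsto \lambda^{\tilde A_n}(z)$ is an elliptic function on $E_\tau$. Periodicity in $z \mapsto z+1$ is immediate from $\theta_1(z+1,\tau)=-\theta_1(z,\tau)$ since there are $n+1$ numerator factors and $n+1$ denominator factors. Periodicity in $z \mapsto z+\tau$ follows from $\theta_1(z+\tau,\tau)=-e^{-\pi i\tau -2\pi i z}\theta_1(z,\tau)$: the total exponential factor equals $\exp\!\big({-2\pi i\sum_{i=0}^n(v_i-v_{n+1}) + 2\pi i((n+1)v_{n+1})}\big)$, which is $1$ because $\sum_{i=0}^n v_i = 0$ in $L^{\tilde A_n}\otimes\mathbb{C}$. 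A direct pole count shows that $\lambda^{\tilde A_n}$ has exactly a pole of order $n$ at $z=0$, a simple pole at $z=-(n+1)v_{n+1}$, and $n+1$ simple zeros at $z = v_i - v_{n+1}$, so $\lambda^{\tilde A_n}: E_\tau \to \mathbb{CP}^1$ is a covering with the ramification profile prescribing $H_{1,n-1,0}$.

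Next, I would check that the assignment descends to the $\Ja(\tilde A_n)$-orbit space: the $A_n$-action permutes the zeros $v_i-v_{n+1}$ and leaves $\lambda^{\tilde A_n}$ pointwise unchanged; the lattice translations $v \mapsto v + \mu + \lambda\tau$ combined with the correction $u\mapsto u - \langle \lambda,v\rangle_{\tilde A_n} - \frac12\langle\lambda,\lambda\rangle_{\tilde A_n}\tau$ produce only exponential factors that cancel thanks to (\ref{transformtheta Jtildean}); the $SL_2(\mathbb{Z})$-action sends $(E_\tau,\lambda^{\tilde A_n}(\cdot))$ to a biholomorphic covering of $\mathbb{CP}^1$ via the change of parameter $z\mapsto z/(c\tau+d)$, using (\ref{transformtheta2 Jtildean}) together with the shift $u\mapsto u + c\langle v,v\rangle_{\tilde A_n}/(2(c\tau+d))$, and Hurwitz equivalence of the resulting coverings is exactly the invariance we need. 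For surjectivity onto a neighborhood, Abel's theorem on $E_\tau$ guarantees that any elliptic function with a pole of order $n$ at $z=0$, a simple pole at an auxiliary point $z_0$, and $n+1$ zeros $\{w_i\}$ satisfying $\sum w_i = n\cdot 0 + z_0$ mod the lattice has a representation of the form above, with $v_i = w_i + v_{n+1}$ and $v_{n+1} = -z_0/(n+1)$ up to the lattice.

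Finally, the map is a local biholomorphism by a Jacobian argument: the orbit space has local coordinates $(\varphi_0,\varphi_1,\ldots,\varphi_n,v_{n+1},\tau)$ by the Chevalley-type theorem, while $H_{1,n-1,0}$ has local coordinates given by $\tau$, the position $v_{n+1}$ of the simple pole, and the $n+1$ coefficients of $\lambda^{\tilde A_n}$ in the $\{1, \wp, \wp', \ldots, \wp^{(n-2)}, \zeta(\cdot)-\zeta(\cdot+(n+1)v_{n+1})\}$-basis. The expansion (\ref{superpotentialAn in the introduction}) shows that these two coordinate systems literally coincide, so the map is biholomorphic on its domain. The main obstacle I anticipate is the bookkeeping in Step~3: one must show that the transformation factors generated by the Jacobi-group action on $(u,v,\tau)$ combine, via (\ref{transformtheta Jtildean}) and (\ref{transformtheta2 Jtildean}), into precisely the automorphism of the elliptic covering, with the constraint $\sum v_i = 0$ entering at every step; once this identity check is carried out, the rest follows from Abel's theorem and the Chevalley-type generator count.
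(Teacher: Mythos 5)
Your proposal follows essentially the same route as the paper's proof: the same explicit theta-quotient map, invariance checked against the three types of group elements via the theta transformation laws (\ref{transformtheta}) and (\ref{transformtheta2}), injectivity from the zero/pole data, and surjectivity from the classical representation of elliptic functions (the paper phrases this via the Weierstrass sigma function rather than Abel's theorem, but the two are equivalent). Your closing coordinate-matching argument for the local biholomorphism is a mild refinement of what the paper leaves implicit, not a different method.
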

\begin{proof}
The correspondence is realized by the map:
\begin{equation}\label{lambda0}
[(u,v_0,v_1,..,v_{n-1},v_{n+1},\tau)]\longleftrightarrow \lambda(v)=e^{-2\pi i u}\frac{\prod_{i=0}^{n}\theta_1(z-v_i,\tau)}{\theta_1^{n}(v,\tau)\theta_1(v+(n+1)v_{n+1},\tau)}
\end{equation}
Note that this map is well defined and one to one. Indeed:
\begin{enumerate}
\item \textbf{Well defined}\\
Note that proof that the map does not depend on the choice of the representant of $[(\phi,v_0,v_1,..,v_{n-1},v_{n+1},\tau)]$ is equivalent to prove that the function (\ref{lambda0}) is invariant under the action of $\Ja(\tilde A_n)$. Indeed 
\item \textbf{$A_n$ invariant}\\
The $A_n$ group acts on  (\ref{lambda0}) by permuting its roots, thus  (\ref{lambda0}) remais invariant under this operation.
\item \textbf{Translation invariant}\\
 Recall that under the translation $v\mapsto v+m+n\tau$, the Jacobi theta function transform as \cite{Bertola M.1}, \cite{E.T. Whittaker and G.N. Watson}:
\begin{equation}\label{transformtheta}
\theta_1(v_i+\mu_i+\lambda_i\tau,\tau)=(-1)^{\lambda_i+\mu_i}e^{-2\pi i(\lambda_iv_i+\frac{\lambda_i^2}{2}\tau)}\theta_1(v_i,\tau)
\end{equation}
Then substituting the transformation  (\ref{transformtheta}) into (\ref{lambda0}), we conclude that (\ref{lambda0}) remains invariant.
\item \textbf{$SL_2(\mathbb{Z})$ invariant}\\
Under $SL_2(\mathbb{Z})$ action  the following function transform as 
\begin{equation}\label{transformtheta2}
\frac{\theta_1(\frac{v_i}{c\tau+d},\frac{a\tau+d}{c\tau+d})}{\theta_1^{\prime}(0,\frac{a\tau+d}{c\tau+d})}=\exp(\frac{\pi i cv_i^2}{c\tau+d})\frac{\theta_1(v_i,\tau)}{\theta_1^{\prime}(0,\tau)}
\end{equation}
Then substituting the transformation  (\ref{transformtheta2}) into (\ref{lambda0}), we conclude that (\ref{lambda0}) remains invariant.
\item \textbf{Injectivity}\\
Two elliptic functions are equal if they have the same zeros and poles with multiplicity.
\item \textbf{Surjectivity}\\
 Any elliptic function can be written as rational functions of Weierstrass sigma function up to a multiplication factor \cite{E.T. Whittaker and G.N. Watson}. By using the formula to relate Weierstrass sigma function and Jacobi theta function
\begin{equation}
\sigma(v_i,\tau)=\frac{\theta_1(v_i,\tau)}{\theta_1^{\prime}(0,\tau)}\exp(-2\pi ig_1(\tau)v_i^2)
\end{equation}
\end{enumerate}

\end{proof}

\begin{corollary}\label{corollary superpotential antilde}
The functions $(\varphi_0^{\tilde A_n},\varphi_1^{\tilde A_n},..,\varphi_n^{\tilde A_n})$ obtained by the formula
\begin{equation}\label{superpotentialAn}
\begin{split}
\lambda^{\tilde A_n}&=e^{2\pi i u}\frac{\prod_{i=0}^n \theta_1(z-v_i+v_{n+1},\tau)}{\theta_1^{n}(z,\tau)\theta_1(z+(n+1)v_{n+1})}\\
&=\varphi_n^{\tilde A_n}\wp^{n-2}(z,\tau)+\varphi_{n-1}^{\tilde A_n}\wp^{n-3}(z,\tau)+...+\varphi_{2}^{\tilde A_n}\wp(z,\tau)\\
&+\varphi_{1}^{\tilde A_n}[\zeta(z,\tau)-\zeta(z+(n+1)v_{n+1},\tau)+\varphi_0^{\tilde A_n}
\end{split}
\end{equation}
 are Jacobi forms of weight $0,-1,-2,..,-n$ respectively, index 1, and order $0$.
\end{corollary}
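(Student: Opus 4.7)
The plan is to read the coefficients $\varphi_0^{\tilde A_n}, \ldots, \varphi_n^{\tilde A_n}$ directly off the decomposition of $\lambda^{\tilde A_n}$ in an elliptic basis in $z$, and then to transfer the $\Ja(\tilde A_n)$-invariance of $\lambda^{\tilde A_n}$ (already granted by Lemma~\ref{jacobiform1}) to each coefficient separately. The crucial structural feature to exploit is that every basis element appearing in the decomposition depends only on $(z, v_{n+1}, \tau)$ and not on $u$ or on $v_0, \ldots, v_n$, so invariance under the $A_n$-permutation and under the lattice translations transfers automatically from $\lambda^{\tilde A_n}$ to each $\varphi_k^{\tilde A_n}$.

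First I would fix the analytic picture in the $z$-variable. A direct quasi-periodicity calculation using $\theta_1(z+\mu+\lambda\tau,\tau) = (-1)^{\lambda+\mu} e^{-2\pi i(\lambda z + \lambda^2 \tau/2)} \theta_1(z,\tau)$, combined with the standing $A_n$-constraint $\sum_{i=0}^n v_i = 0$, shows that all anomalous exponentials cancel, so $\lambda^{\tilde A_n}$ is elliptic in $z$ with a pole of order $n$ at $z = 0$ and a simple pole at $z = -(n+1)v_{n+1}$. Riemann--Roch on $\mathbb{C}/(\mathbb{Z}+\tau\mathbb{Z})$ gives an $(n+1)$-dimensional space of such functions, and the collection $\{\wp^{(k)}(z,\tau)\}_{k=0}^{n-2}$ (pole orders $2, 3, \ldots, n$ at $z=0$), together with $\zeta(z,\tau) - \zeta(z+(n+1)v_{n+1},\tau) + \zeta((n+1)v_{n+1},\tau)$ and the constant $1$, furnishes $n+1$ linearly independent members, hence a basis. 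Expansion in this basis defines $\varphi_0^{\tilde A_n}, \ldots, \varphi_n^{\tilde A_n}$ uniquely.

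For the group-theoretic properties, $A_n$-invariance and translation-invariance of each $\varphi_k^{\tilde A_n}$ are now immediate. For the $SL_2(\mathbb{Z})$-part I would combine $\gamma \cdot (u,v,\tau)$ with the natural rescaling $z \mapsto z/(c\tau+d)$ and apply the modular transformation laws
\[
\wp^{(k)}\!\bigl(\tfrac{z}{c\tau+d}, \gamma\tau\bigr) = (c\tau+d)^{k+2}\wp^{(k)}(z,\tau)
\]
together with the analogous rule for $\zeta$. The role of the correction term $+\zeta((n+1)v_{n+1},\tau)$ in the $\varphi_1^{\tilde A_n}$-basis element is precisely to annihilate the linear-in-argument Eisenstein anomaly of $\zeta$, so that the full combination transforms with clean modular weight $1$. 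Matching coefficients of each basis element on the two sides of $\lambda^{\tilde A_n}(\gamma\cdot(u,v,\tau), z/(c\tau+d)) = \lambda^{\tilde A_n}(u,v,\tau, z)$ then reads off $\varphi_k^{\tilde A_n} \mapsto (c\tau+d)^{-k} \varphi_k^{\tilde A_n}$, confirming weight $-k$.

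Finally, index $1$ is forced by the $e^{\pm 2\pi i u}$ prefactor of $\lambda^{\tilde A_n}$: since the basis is $u$-independent, every $\varphi_k^{\tilde A_n}$ inherits that exponential and is an eigenfunction of $E = -\tfrac{1}{2\pi i}\partial_u$ with eigenvalue $1$. For order $0$, the only $v_{n+1}$-singularities of $\lambda^{\tilde A_n}$ arise from zeros of $\theta_1(z+(n+1)v_{n+1},\tau)$, and they are wholly absorbed by $\varphi_1^{\tilde A_n}$ through its $\zeta$-basis element, so the remaining coefficients stay holomorphic in $v_{n+1}$ off the permitted divisor. The hardest step, in my view, is precisely the last coupling under $SL_2(\mathbb{Z})$: because $\zeta$ mixes weight contributions through its quasi-modular correction and because the constant basis element $1$ carries $SL_2(\mathbb{Z})$-weight $0$, cleanly separating the transformations of $\varphi_0^{\tilde A_n}$ and $\varphi_1^{\tilde A_n}$ requires careful bookkeeping of the Eisenstein-type term and of the auxiliary factor $\zeta((n+1)v_{n+1},\tau)$ to verify the pure weights $0$ and $-1$.
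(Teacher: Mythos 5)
Your proposal is correct and follows essentially the same route as the paper's proof: both transfer the $\Ja(\tilde A_n)$-invariance of $\lambda^{\tilde A_n}$ (granted by Lemma \ref{jacobiform1}) to the coefficients via uniqueness of the expansion in the elliptic basis in $z$, read off the weight $-k$ from the modular transformation law of $\wp^{(k-2)}$, obtain index $1$ from applying $\partial_u$ to both sides, and control the $v_{n+1}$-singularities through the factor $\theta_1(z+(n+1)v_{n+1},\tau)$. Your Riemann--Roch count justifying that the chosen functions form a basis, and your explicit bookkeeping of the quasi-modular correction $\zeta((n+1)v_{n+1},\tau)$, only make explicit steps the paper leaves implicit.
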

\begin{proof}
Let us prove each item separated.
\begin{enumerate}
\item \textbf{$A_n$ invariant, translation invariant}\\
The l.h.s of (\ref{superpotentialAn}) are $A_n$ invariant, and translation invariant  by the lemma (\ref{jacobiform1}). Then, by the uniqueness of  Laurent expansion of $\lambda^{\tilde A_n}$, we have  that $\varphi_i^{\tilde A_n}$ are $A_n$ invariant, and translation invariant.
\item \textbf{$SL_2(\mathbb{Z})$ equivariant}\\
The l.h.s of (\ref{superpotentialAn}) are $SL_2(\mathbb{Z})$ invariant, but the Weierstrass functions of the r.h.s have the following transformation law
\begin{equation}
\begin{split}
\wp^{(k-2)}(\frac{z}{c\tau+d},\frac{a\tau+b}{c\tau+d})=(c\tau+d)^k\wp^{(k-2)}(z,\tau).
\end{split}
\end{equation}
Then, $\varphi_k^{\tilde A_n}$ must have the following transformation law
\begin{equation}
\begin{split}
\varphi_k^{\tilde A_n}(u+\frac{c\langle v,v \rangle_{\tilde A_n}}{2(c\tau+d)},\frac{v}{c\tau+d},\frac{a\tau+b}{c\tau+d})=(c\tau+d)^{-k}\varphi_k^{\tilde A_n}(u,v,\tau).
\end{split}
\end{equation}
\item \textbf{Index $1$}\\
\begin{equation}
\begin{split}
\frac{1}{2\pi i}\frac{\partial}{\partial u}\lambda^{\tilde A_n}=\lambda^{\tilde A_n}.
\end{split}
\end{equation}
Then
\begin{equation}
\begin{split}
\frac{1}{2\pi i}\frac{\partial}{\partial u}\varphi_i^{\tilde A_n}=\varphi_i^{\tilde A_n}.
\end{split}
\end{equation}
\item \textbf{Analytic behavior}\\
Note that $\lambda^{\tilde A_n}\theta_1^2((n+1)v_{n+1},\tau)$ is holomorphic function in all the variables $v_i$. Therefore $\varphi_i^{\tilde A_n}$ are holomorphic functions on the variables $v_0,v_1,..,v_{n-1}$, and meromorphic function in the variable $(n+1)v_{n+1}$ with poles on $\frac{j}{n}+\frac{l\tau}{n}, j,l=0,..,n-1$ of order 2, i.e $l=0$, since $m=1$ for all $\varphi_i^{\tilde A_n}$.

\end{enumerate}
\end{proof}

\subsection{Proof of the Chevalley theorem}\label{Proof of the Chevalley theorem}

At this stage, the principal theorem can be state in precise way as follows.

\begin{theorem}\label{chevalley}
The trigraded algebra of Jacobi forms $J_{\bullet,\bullet,\bullet}^{\Ja(\tilde A_{n})}=\bigoplus _{k,l,m}J_{k,l,m}^{\tilde A_{n}}$ is freely generated by $n+1$ fundamental Jacobi forms $(\varphi_0^{\tilde A_n },\varphi_1^{\tilde A_n },,\varphi_2^{\tilde A_n },..,,\varphi_n^{\tilde A_n })$ over the graded ring  $E_{\bullet,\bullet}$
\begin{equation}
J_{\bullet,\bullet,\bullet}^{\Ja(\tilde A_n)}=E_{\bullet,\bullet}\left[\varphi_0^{\tilde A_n },\varphi_1^{\tilde A_n },,\varphi_2^{\tilde A_n },..,,\varphi_n^{\tilde A_n }\right].
\end{equation}
\end{theorem}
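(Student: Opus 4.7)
The plan is to establish two properties of the generators $\varphi_0,\ldots,\varphi_n$: algebraic independence over $E_{\bullet,\bullet}$, and spanning of the full trigraded ring $J^{\Ja(\tilde A_n)}_{\bullet,\bullet,\bullet}$. Algebraic independence is the easier half. By Corollary \ref{corollary superpotential antilde in the introduction}, the $\varphi_j$ arise as the coefficients of the superpotential $\lambda^{\tilde A_n}(z)$ with respect to the $z$-basis consisting of $1$, $\zeta(z,\tau)-\zeta(z+(n+1)v_{n+1},\tau)+\zeta((n+1)v_{n+1},\tau)$, and $\wp(z,\tau),\wp^{2}(z,\tau),\ldots,\wp^{n-2}(z,\tau)$. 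These functions are linearly independent over the meromorphic functions in $(v_{n+1},\tau)$ alone. Since Lemma \ref{ringcoef} shows that $E_{\bullet,\bullet}$ consists precisely of functions of $(v_{n+1},\tau)$, any nontrivial polynomial relation $P(\varphi_0,\ldots,\varphi_n)=0$ with coefficients in $E_{\bullet,\bullet}$ would, after substitution into $\lambda^{\tilde A_n}$ and expansion, yield a nontrivial $E_{\bullet,\bullet}$-linear dependence among the above $z$-functions, a contradiction.

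For generation, I would induct on the index $m$. The base case $m=0$ is exactly Lemma \ref{ringcoef}: any index-$0$ Jacobi form lies in $E_{\bullet,\bullet}$. For the inductive step with $m\geq 1$, take $\varphi\in J^{\tilde A_n}_{k,l,m}$ and write $\varphi=f(v,\tau)e^{2\pi i m u}$ by Remark \ref{Remark Zagier Jtildean}. By Lemma \ref{jacobiform1}, the orbit space is biholomorphic to the Hurwitz space $H_{1,n-1,0}$, and by Corollary \ref{corollary superpotential antilde in the introduction} the tuple $(\varphi_0,\ldots,\varphi_n,v_{n+1},\tau)$ provides coordinates on it. Restrict to the open subset where $\varphi_n\neq 0$ (a generic condition, since $\varphi_n$ controls the leading coefficient of the superpotential at $z=0$) and form the ratio $\psi:=\varphi/\varphi_n^{m}$, which is meromorphic and $\Ja(\tilde A_n)$-invariant of index $0$. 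The bijection with $H_{1,n-1,0}$ shows that $\psi$ is a rational function in $\varphi_0/\varphi_n,\ldots,\varphi_{n-1}/\varphi_n$ over $E_{\bullet,\bullet}$; clearing denominators yields an identity $\varphi=Q(\varphi_0,\ldots,\varphi_n)/\varphi_n^{\,r}$ for some $E_{\bullet,\bullet}$-polynomial $Q$. Matching pole orders along $\{\varphi_n=0\}$ together with the index grading forces $r=0$ and $\deg Q = m$, delivering the required polynomial expression and closing the induction.

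The main obstacle I expect is the control of the ratio $\psi=\varphi/\varphi_n^m$ across the divisor $\{\varphi_n=0\}$: one must verify that the rational expression extracted on the open chart extends globally to a polynomial (equivalently, that any apparent poles in $\varphi_0/\varphi_n,\ldots,\varphi_{n-1}/\varphi_n$ are spurious), and one must check that the coefficients produced by the Mittag-Leffler-type decomposition of the associated elliptic function genuinely lie in $E_{\bullet,\bullet}$ rather than in a larger meromorphic extension. Both points reduce to a careful analysis combining the holomorphy in the variables $v_0,\ldots,v_n$ prescribed by Definition \ref{Jacobi forms definition jtildean abc}, the prescribed pole behaviour in $v_{n+1}$ of order at most $l+2m$, and the $SL_2(\mathbb{Z})$-equivariance; this is where the distinction between weak Jacobi forms and ordinary holomorphic ones really enters, and is the technical heart of the argument.
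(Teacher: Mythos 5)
Your proposal contains two genuine gaps. First, the algebraic independence argument does not work as stated. The linear independence of $1$, $\zeta(z,\tau)-\zeta(z+(n+1)v_{n+1},\tau)+\zeta((n+1)v_{n+1},\tau)$, $\wp(z,\tau),\ldots,\wp^{n-2}(z,\tau)$ as functions of $z$ only shows that the coefficients $\varphi_0,\ldots,\varphi_n$ are \emph{uniquely determined} by $\lambda^{\tilde A_n}$; it says nothing about whether those coefficients, as functions of $(u,v,\tau)$, satisfy a polynomial relation. A relation $P(\varphi_0,\ldots,\varphi_n)=0$ involves monomials $\varphi^\alpha$ that do not depend on $z$ at all, so no substitution into $\lambda^{\tilde A_n}$ produces a linear dependence among the $z$-basis functions. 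What is actually needed is dominance of the map $(u,v)\mapsto(\varphi_0,\ldots,\varphi_n)$; the paper gets this from the leading Taylor coefficients in the $v$-variables, which via Lemma \ref{principallema} and the expansions (\ref{relation Coxeter Bertola Jacobi forms}) are the elementary symmetric polynomials, whose algebraic independence is classical. (You could alternatively extract functional independence from Lemma \ref{jacobiform1}, but some such input is indispensable.)

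Second, and more seriously, the generation step leaves unproved exactly the point on which the whole theorem turns. You assert that the index-$0$ quotient $\psi=\varphi/\varphi_n^m$ is a \emph{rational} function of $\varphi_0/\varphi_n,\ldots,\varphi_{n-1}/\varphi_n$ over $E_{\bullet,\bullet}$ ``by the bijection with $H_{1,n-1,0}$,'' and that ``matching pole orders'' then forces polynomiality. Neither claim is an argument: a meromorphic invariant function on the orbit space need not be rational in a chosen coordinate system, and controlling the behaviour along $\{\varphi_n=0\}$ is precisely the hard part. The paper's proof replaces this with a concrete mechanism: it expands the index-$0$ quotient as an elliptic function of $v_0,\ldots,v_{n-1}$ in Weierstrass functions $\wp^{(j)}(v_i-v_{n+1})$, symmetrizes using $A_n$-invariance, completes each symmetric sum to an $A_{n+1}$-invariant expression by adding $\wp^{(j)}((n+1)v_{n+1})$, and recognizes $\varphi_{n+2}^{\Ja(A_{n+1})}\bigl[\sum_i\wp^{(j)}(v_i-v_{n+1})+\wp^{(j)}((n+1)v_{n+1})\bigr]$ as a \emph{holomorphic} Jacobi form of type $A_{n+1}$, to which Wirthm\"uller--Bertola's Chevalley theorem applies. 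Dividing by the appropriate power of $\varphi_2^{\Ja(A_1)}$ and invoking the triangular relations (\ref{relationship between Jacobi forms}) then yields polynomiality in the $\tilde A_n$ generators. Without this bridge to the holomorphic $A_{n+1}$ theory (or an equivalent device), your induction on the index does not close; the ``main obstacle'' you flag at the end is not a technical refinement but the core of the proof, and it is missing.
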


Before proving this Theorem, some auxiliary lemmas will be necessary.\\

\begin{proposition}\cite{Bertola M.1}
The functions $(\varphi_0,\varphi_2,..,\varphi_{n+1})$ obtained by the formula
\begin{equation}\label{superpotentialAn Bertola}
\begin{split}
\lambda^{\Ja(A_n)}(z)&=e^{-2\pi i u}\frac{\prod_{i=0}^n \theta_1(z-v_i,\tau)}{\theta_1^{n+1}(z,\tau))}\\
&=\varphi_{n+1}^{\Ja(A_n)}\wp^{n-1}(z,\tau)+\varphi_{n}^{\Ja(A_n)}\wp^{n-2}(z,\tau)+...+\varphi_{2}^{\Ja(A_n)}\wp(z,\tau)+\varphi_0^{\Ja(A_n)},
\end{split}
\end{equation}
 are Jacobi forms of the Jacobi group $\Ja(A_n)$ of weight $0,-2,..,-n-1$ respectively, index 1.
\end{proposition}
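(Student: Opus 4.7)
The proof plan parallels that of Corollary~\ref{corollary superpotential antilde in the introduction}. The idea is to first establish that the left-hand side $\lambda^{\Ja(A_n)}(z)$, viewed as a function of $(u, v_0, \ldots, v_n, \tau)$ with $z$ a free parameter, is itself a Jacobi form of weight $0$ and index $1$ for $\Ja(A_n)$, and is at the same time an elliptic function of $z$ with a single pole of order $n+1$ at $z = 0$. Expanding $\lambda^{\Ja(A_n)}(\cdot)$ in a basis of elliptic functions of $z$ with this prescribed pole structure, the uniqueness of the expansion transfers the invariance/equivariance properties coefficient-by-coefficient onto the $\varphi_k^{\Ja(A_n)}$, while the modular weights are pinned down by the modular weight of the corresponding basis element.

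The first block of checks concerns $\lambda^{\Ja(A_n)}(z)$ itself. Ellipticity in $z$ follows from the quasi-periodicities $\theta_1(z+1,\tau) = -\theta_1(z,\tau)$ and $\theta_1(z+\tau,\tau) = -e^{-i\pi\tau - 2\pi i z}\theta_1(z,\tau)$ together with the $A_n$-constraint $\sum_{i=0}^n v_i = 0$, which is exactly what cancels the residual exponential factors; the only pole in a fundamental parallelogram is at $z = 0$, of order $n+1$. The $A_n$-invariance in $(v_0, \ldots, v_n)$ is immediate from the symmetric product structure. Lattice-translation invariance under $v \mapsto v + \lambda\tau + \mu$ and $u \mapsto u - \langle\lambda, v\rangle_{A_n} - \frac{1}{2}\langle\lambda,\lambda\rangle_{A_n}\tau$ follows from \eqref{transformtheta} together with $\sum \lambda_i = \sum \mu_i = 0$. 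Modular equivariance of weight $0$ follows from \eqref{transformtheta2}: the $(c\tau+d)^{-(n+1)}$ factor from the $n+1$ numerator thetas exactly cancels that of $\theta_1^{n+1}$ in the denominator, while the quadratic exponential factors combine, once again via $\sum v_i = 0$, into precisely the shift of $u$ prescribed by \eqref{jacobiform}. Finally, the only $u$-dependence of $\lambda^{\Ja(A_n)}$ is through the prefactor $e^{-2\pi i u}$, so $-\frac{1}{2\pi i}\partial_u \lambda^{\Ja(A_n)} = \lambda^{\Ja(A_n)}$, which gives index $1$.

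Now I would fix $(u, v, \tau)$ and view \eqref{superpotentialAn Bertola} as the expansion of $\lambda^{\Ja(A_n)}(\cdot)$ in a basis of elliptic functions of $z$ with pole only at $z = 0$ of order at most $n+1$, consisting of powers of $\wp$ (augmented where necessary by $\wp'$-type terms to accommodate odd pole orders). Because their leading pole orders are pairwise distinct, these basic elliptic functions are linearly independent, hence the expansion is unique and each coefficient $\varphi_k^{\Ja(A_n)}$ is a well-defined function of $(u, v, \tau)$. By uniqueness the $A_n$-invariance, lattice-translation invariance and index-$1$ property of $\lambda^{\Ja(A_n)}$ pass directly to each $\varphi_k^{\Ja(A_n)}$. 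The modular weight is extracted by applying the simultaneous rescaling $z \mapsto z/(c\tau + d)$, $\tau \mapsto (a\tau + b)/(c\tau + d)$: since the basis element whose pole has order $k$ carries modular weight $k$ (explicitly $\wp^{(k-2)}$ transforms as $(c\tau+d)^{k}$), the invariance of $\lambda^{\Ja(A_n)}$ forces $\varphi_k^{\Ja(A_n)}$ to absorb the compensating factor $(c\tau+d)^{-k}$, reproducing the sequence $0, -2, -3, \ldots, -n-1$. The step I expect to require the most care is precisely this modular bookkeeping, in which the quadratic shift of $u$ (involving $\sum v_i = 0$ yet again) must be tracked against the distinct modular weights of the basic elliptic functions, with linear independence then giving an unambiguous coefficient match; the remaining three Jacobi conditions reduce to routine theta-function manipulations once the transformation law of $\lambda^{\Ja(A_n)}$ is in hand.
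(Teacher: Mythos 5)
Your proof is correct and follows essentially the same route the paper takes for the directly analogous statement in the $\tilde A_n$ case (Corollary \ref{corollary superpotential antilde}): one first checks that $\lambda^{\Ja(A_n)}$ is itself $A_n$-, translation- and $SL_2(\mathbb{Z})$-invariant of index $1$ and elliptic in $z$ with a single pole of order $n+1$, then transfers these properties to the coefficients by uniqueness of the expansion in a basis of elliptic functions graded by pole order, the weight $-k$ of $\varphi_k^{\Ja(A_n)}$ being forced by the weight-$k$ transformation law of the order-$k$ basis element. (The paper itself only cites this proposition from Bertola without proof; the one cosmetic caveat is that $\wp^{k}$ in the display denotes the $k$-th derivative rather than the $k$-th power of $\wp$, which changes the basis but affects neither your argument nor the resulting weights.)
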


\begin{proposition}\cite{Bertola M.1}
Let the Jacobi forms $\varphi_0,..,\varphi_{n+1}$ be defined by (\ref{superpotentialAn Bertola}), then the lowest term of the Taylor expansion in the variables $\{v_i\}$ are given by
\begin{equation}\label{relation Coxeter Bertola Jacobi forms}
\begin{split}
\varphi_{n+1}^{\Ja(A_n)}&=a_{n+1}+O(||v||^n),\\
\varphi_{n}^{\Ja(A_n)}&=a_{n}+O(||v||^{n-1})\\
\varphi_{n-1}^{\Ja(A_n)}&=a_{n-1}+O(||v||^{n-2})\\
&.\\
&.\\
\varphi_{2}^{\Ja(A_n)}&=a_{2}+O(||v||^{4})\\
\varphi_{1}^{\Ja(A_n)}&=a_{1}=0\\
\varphi_{0}^{\Ja(A_n)}&=a_{0}+O(||v||^{2})\\
\end{split}
\end{equation}
where $a_2,a_3,.,a_{n+1}$ are the elementary symmetric polynomials defined by the formula
\begin{equation}
\begin{split}
\lambda^{A_n}(p)&=\prod_{i=0}^n (p-x_i)\\
&=p^{n+1}+a_2p^{n-1}+a_3p^{n-2}+..+a_np+a_{n+1}.
\end{split}
\end{equation}

\end{proposition}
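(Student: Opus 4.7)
The plan is to Taylor-expand the superpotential $\lambda^{\Ja(A_n)}(z)$ in the variables $v_i$ about $v=0$ and read off the leading behaviour of each coefficient $\varphi_k^{\Ja(A_n)}$ in the $\wp$-expansion. It is most convenient to work with the logarithm
\begin{equation*}
\log\bigl(e^{2\pi i u}\lambda^{\Ja(A_n)}(z)\bigr)=\sum_{i=0}^n\log\theta_1(z-v_i,\tau)-(n+1)\log\theta_1(z,\tau).
\end{equation*}
Taylor-expanding each term in $v_i$ and writing $\psi(z,\tau):=\partial_z\log\theta_1(z,\tau)$, the expansion becomes
\begin{equation*}
-p_1\,\psi(z,\tau)+\tfrac{p_2}{2}\,\psi'(z,\tau)-\tfrac{p_3}{6}\,\psi''(z,\tau)+\cdots,
\end{equation*}
where $p_j=\sum_i v_i^{\,j}$ are the Newton power sums. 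The constraint $\sum v_i=0$ forces $p_1=0$, so no first-order term appears.

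Next I will translate derivatives of $\log\theta_1$ into polynomials in the Weierstrass $\wp$-function. Since $\psi'(z,\tau)=-\wp(z,\tau)+c(\tau)$ for an appropriate $\tau$-dependent constant, and higher derivatives $\psi^{(k)}(z,\tau)$ are polynomial expressions in $\wp,\wp',\ldots$ (reducible to polynomials in $\wp$ alone by means of $(\wp')^2=4\wp^3-g_2\wp-g_3$), each term of the expansion contributes to the coefficients of a specific power of $\wp$. Exponentiating the logarithmic expansion yields the series
\begin{equation*}
e^{2\pi i u}\lambda^{\Ja(A_n)}(z)=1+\sum_{k\ge 2}P_k(p_2,p_3,\ldots)\,\wp^{k-2}(z,\tau)+(\text{lower powers of }\wp),
\end{equation*}
in which $P_k$ is a weighted polynomial whose lowest-weight part is proportional to a single power sum. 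Newton's identities, specialised to $p_1=0$, convert the leading power-sum combination at each weight into the elementary symmetric polynomial: for instance $p_2=-2a_2$ contributes $a_2\wp(z,\tau)$ to the $\wp^1$ coefficient, $p_3=3a_3$ contributes $a_3\wp^2(z,\tau)$ once one accounts for the correct derivative identity, and so on.

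Matching with the prescribed expansion $\lambda^{\Ja(A_n)}=\sum_{k=2}^{n+1}\varphi_k^{\Ja(A_n)}\wp^{k-2}+\varphi_0^{\Ja(A_n)}$ then gives $\varphi_k^{\Ja(A_n)}=a_k+(\text{terms of strictly higher polynomial degree in }v)$ for $2\le k\le n+1$; the vanishing $\varphi_1^{\Ja(A_n)}\equiv 0$ is forced by the fact that the $\wp$-basis used in the expansion contains no $\wp^{-1}$ term (and is compatible with $a_1=0$), and $\varphi_0^{\Ja(A_n)}=a_0+O(\|v\|^2)$ follows by setting $z$ to be a value where all $\wp^{k-2}(z,\tau)$ with $k\ge 2$ subtract off, giving the constant term $e^{-2\pi i u}$ which is identified with $a_0$.

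The main obstacle is not the appearance of any elliptic subtlety, which drops out of the leading-order analysis, but rather the combinatorial bookkeeping: one must track how the cross-terms produced by exponentiation of the $\psi^{(k)}(z,\tau)$-series feed into each $\wp^{k-2}$-coefficient, and verify by Newton's identities that the contribution with the minimal number of $v$-factors is exactly the elementary symmetric polynomial $a_k$, with all other contributions of strictly higher order in $\|v\|$.
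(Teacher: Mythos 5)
The paper does not prove this proposition at all --- it is quoted from \cite{Bertola M.1} without argument --- so your proposal can only be judged on its own terms. The overall strategy (expand $\log\bigl(e^{2\pi iu}\lambda^{\Ja(A_n)}\bigr)$ in the power sums $p_m$, exponentiate, and recognise the elementary symmetric polynomials) is the right one, and the ``combinatorial bookkeeping'' you defer to Newton's identities is in fact automatic: near $z=0$ one has $\theta_1(z-v_i)/\theta_1'(0)=(z-v_i)\bigl(1+O((z-v_i)^2)\bigr)$, so to leading order in $v$ the theta quotient degenerates to $\prod_i(z-v_i)/z^{n+1}=1+a_2z^{-2}+\cdots+a_{n+1}z^{-(n+1)}$, and $\exp\bigl(-\sum_m p_m/(mz^m)\bigr)$ is precisely the generating function of the $a_k$. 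Matching principal parts at $z=0$ then gives the claim directly.

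There is, however, a genuine error in your middle step. The expansion (\ref{superpotentialAn Bertola}) is not in powers of $\wp$ but in its derivatives: the paper's $\wp^{k-2}$ means $\wp^{(k-2)}$, as one sees from $C_k(p)=\frac{(-1)^k}{(k-1)!}\alpha^n(p)\wp^{(k-2)}(p)$ and the conventions $\wp^{(-1)}=\zeta-\cdots$, $\wp^{(-2)}=1$. Your claim that the $\psi^{(k)}$ are reducible to polynomials in $\wp$ alone via $(\wp')^2=4\wp^3-g_2\wp-g_3$ is false: the odd derivatives $\wp^{(2j-1)}$ are odd elliptic functions equal to $\wp'$ times a polynomial in $\wp$, and $\wp'$ itself cannot be eliminated; indeed $\lambda(z)$ is not even in $z$, so it admits no expansion in powers of $\wp$, and in your scheme the generators $\varphi_3,\varphi_5,\dots$ (the coefficients of $\wp^{(1)},\wp^{(3)},\dots$) would never be produced. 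The fix is to not reduce anything: the term $\frac{(-1)^mp_m}{m!}\psi^{(m-1)}$ already equals $\frac{(-1)^{m-1}p_m}{m!}\wp^{(m-2)}$ up to constants, which is the correct basis element, and $\wp^{(k-2)}(z)=(-1)^k(k-1)!\,z^{-k}+O(z^2)$ lets you read off $a_k$ from the $z^{-k}$ Laurent coefficient. Two smaller points: your justification of $\varphi_1\equiv 0$ (``the basis contains no $\wp^{-1}$ term'') assumes what is to be shown --- the honest reason is that an elliptic function of $z$ whose only pole lies at $z=0$ has vanishing residue there, so no $\zeta$-type term can occur; and extracting $\varphi_0$ by ``setting $z$ to a value where all $\wp^{(k-2)}$ subtract off'' is not meaningful, since no such common value exists --- take the $z$-constant term of the expansion instead.
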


\begin{lemma}\label{principallema}
Let $\{\varphi_i^{\tilde A_n }\}$ be set of functions given by the formula (\ref{superpotentialAn})  ,and $\{\varphi_j^{A_{n+1} }\}$ given by (\ref{superpotentialAn Bertola}) , then
\begin{equation}\label{relationship between Jacobi forms}
\begin{split}
\varphi_{n+2}^{\Ja(A_{n+1})}&=\varphi_{n}^{\tilde A_{n}}\varphi_{2}^{\Ja(A_{1})},\\
\varphi_{n+1}^{\Ja(A_{n+1})}&=\varphi_{n-1}^{\Ja(\tilde A_{n})}\varphi_{2}^{\Ja(A_{1})}+a_{n-1}^{n}\varphi_{n}^{\Ja(\tilde A_{n})}\varphi_{2}^{\Ja(A_{1})},\\
\varphi_{n+2}^{\Ja(A_{n+1})}&=\varphi_{n-2}^{\Ja(\tilde A_{n})}\varphi_{2}^{\Ja(A_{1})}+a_{n-2}^{n-1}\varphi_{n-1}^{\Ja(\tilde A_{n})}\varphi_{2}^{\Ja(A_{1})}+a_{n-2}^n\varphi_{n}^{\Ja(\tilde A_{n})}\varphi_{2}^{\Ja(A_{1})},\\
.\\
.\\
\varphi_{2}^{\Ja(A_{n+1})}&=\varphi_{0}^{\Ja(\tilde A_{n})}\varphi_{2}^{\Ja(A_{1})}+\sum_{j=1}^{n}  a_0^j\varphi_{j}^{\Ja(\tilde A_{n})}\varphi_{2}^{\Ja(A_{1})},\\
\varphi_{0}^{\Ja(A_{n+1})}&=\sum_{j=0}^{n}  a_{-1}^j\varphi_{j}^{\Ja(\tilde A_{n})}\varphi_{2}^{\Ja(A_{1})}.\\
\end{split}
\end{equation}
where $\varphi_2^{\Ja(A_1)}$ is defined on (\ref{definition of varphi 2 vn1}) for $z_{n+2}=(n+1)v_{n+1}$, and $a_i^j=a_i^j(v_{n+1},\tau)$ are elliptic functions on $v_{n+1}$.

\end{lemma}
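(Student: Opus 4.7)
The plan is to establish a single factorisation identity relating the two superpotentials, and then read off the claimed relations by expanding both sides in the Weierstrass basis at $z=0$.

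First, using the classical Jacobi/Weierstrass relation
\[
\wp(z,\tau) - \wp(a,\tau) = -\frac{\theta_1(z+a,\tau)\theta_1(z-a,\tau)\,\theta_1'(0,\tau)^2}{\theta_1^2(z,\tau)\,\theta_1^2(a,\tau)},
\]
together with the product formulae for $\lambda^{\Ja(A_{n+1})}$ and $\lambda^{\tilde A_n}$ under the identification $z_i=v_{i-1}-v_{n+1}$ for $i=1,\dots,n+1$ and $z_{n+2}=(n+1)v_{n+1}$, a direct cancellation of theta factors yields the identity
\[
\lambda^{\Ja(A_{n+1})}(z) \;=\; \bigl(\wp((n+1)v_{n+1},\tau) - \wp(z,\tau)\bigr)\,\lambda^{\tilde A_n}(z)\,\varphi_2^{\Ja(A_1)}.
\]
Equivalently, both sides are elliptic in $z$ with identical divisor: pole of order $n+2$ at $z=0$ (the factor $\wp(a)-\wp(z)$ raises the order $n$ pole of $\lambda^{\tilde A_n}$ by two), cancellation of the simple pole of $\lambda^{\tilde A_n}$ at $z=-(n+1)v_{n+1}$ against the simple zero of $\wp(a)-\wp(z)$ at the same point, and a new simple zero at $z=+(n+1)v_{n+1}$ matching the extra root $z_{n+2}$ of $\lambda^{\Ja(A_{n+1})}$; the exponential normalisations $e^{-2\pi i u_1}=e^{2\pi iu}\cdot e^{-2\pi iu_2}$ also agree, fixing the multiplicative constant.

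With this identity in hand, substituting the expansion (\ref{superpotentialAn}) for $\lambda^{\tilde A_n}$ and the Bertola expansion (\ref{superpotentialAn Bertola}) for $\lambda^{\Ja(A_{n+1})}$ and equating coefficients in the Weierstrass basis at $z=0$ produces the system (\ref{relationship between Jacobi forms}). The operator of multiplication by $\wp((n+1)v_{n+1})-\wp(z)$ sends $\wp^{k-2}(z)\mapsto \wp((n+1)v_{n+1})\wp^{k-2}(z)-\wp^{k-1}(z)$, which yields a strictly upper triangular action on the $\wp$-basis: the leading contribution to $\varphi_{k}^{\Ja(A_{n+1})}$ is $\varphi_{k-2}^{\Ja(\tilde A_n)}\varphi_2^{\Ja(A_1)}$, and the subleading contributions come with coefficients that are polynomials in $\wp((n+1)v_{n+1})$, hence elliptic functions of $v_{n+1}$. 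The top identity $\varphi_{n+2}^{\Ja(A_{n+1})}=\varphi_n^{\Ja(\tilde A_n)}\varphi_2^{\Ja(A_1)}$ is then immediate and is simply a restatement of the definition (\ref{varphi1 def Jtildean}).

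The main technical step, and the only place the argument is not pure bookkeeping, is the $\varphi_1^{\Ja(\tilde A_n)}$ contribution, whose basis function $\zeta(z)-\zeta(z+(n+1)v_{n+1})+\zeta((n+1)v_{n+1})$ is not a power of $\wp$. After multiplication by $\wp((n+1)v_{n+1})-\wp(z)$, a short Laurent expansion at $z=0$ (using $\zeta'=-\wp$ and the relation $\wp'^{2}=4\wp^{3}-g_2\wp-g_3$) shows that the product is an elliptic function of $z$ with a single pole of order three at $z=0$ and vanishing residue, and therefore re-expresses in the basis used for $\lambda^{\Ja(A_{n+1})}$ with coefficients that are elliptic in $v_{n+1}$. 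These coefficients furnish the remaining entries $a^{j}_{i}(v_{n+1},\tau)$ of the triangular system. The bottom relation for $\varphi_0^{\Ja(A_{n+1})}$ finally follows as the constant-in-$z$ term of the whole expansion and, since every $\tilde B_k(z)$ contributes a nontrivial constant under multiplication by $\wp((n+1)v_{n+1})-\wp(z)$, receives contributions from each $\varphi_j^{\Ja(\tilde A_n)}$ with $j=0,1,\dots,n$, which is precisely the content of the last line of (\ref{relationship between Jacobi forms}).
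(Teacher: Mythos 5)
Your proof follows essentially the same route as the paper's: both hinge on the single identity that the ratio $\lambda^{\Ja(A_{n+1})}/\lambda^{\Ja(\tilde A_n)}$ equals $\varphi_2^{\Ja(A_1)}$ times $\wp((n+1)v_{n+1},\tau)-\wp(z,\tau)$ (the paper records this with the opposite sign, but the structure is identical), followed by a Laurent expansion in $z$ of the product of the two superpotential expansions to match coefficients. Your write-up merely supplies more of the bookkeeping that the paper leaves implicit, namely the divisor and normalisation check, the triangularity of multiplication by $\wp(a)-\wp(z)$ on the $\wp$-basis, and the separate treatment of the $\zeta$-term attached to $\varphi_1^{\Ja(\tilde A_n)}$.
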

\begin{proof}
Note the following relation 

\begin{equation*}
\begin{split}
\frac{\lambda^{\Ja(A_{n+1})}}{\lambda^{\Ja(\tilde A_n)}}&=\frac{\theta_1(z-(n+1)v_{n+1},\tau)\theta_1(z+(n+1)v_{n+1}),\tau}{\theta_1^2(z,\tau)}e^{-2\pi iu_2}\\
&=\varphi_2^{\Ja(A_1)}\wp(z,\tau)-\varphi_2^{\Ja(A_1)}\wp((n+1)v_{n+1},\tau)
\end{split}
\end{equation*}
Hence,
\begin{equation}
\begin{split}
&\varphi_{n+2}^{\Ja(A_{n+1})}\wp^{n-2}(z,\tau)+\varphi_{n+1}^{\Ja(A_{n+1})}\wp^{n-3}(z,\tau)+...+\varphi_{2}^{\Ja(A_{n+1})}\wp(z,\tau)+\varphi_0^{\Ja(A_{n+1})}\\
&=(\varphi_n^{\Ja(\tilde A_n)}\wp^{n-2}(z,\tau)+\varphi_{n-1}^{\Ja(\tilde A_n)}\wp^{n-3}(z,\tau)+...+\varphi_{2}^{\Ja(\tilde A_n)}\wp(z,\tau)\\
&+\varphi_{1}^{\Ja(\tilde A_n)}[\zeta(z,\tau)-\zeta(z+(n+1)v_{n+1},\tau)+\varphi_0^{\Ja(\tilde A_n)})(\varphi_2^{\Ja(A_1)}\wp(z,\tau)-\varphi_2^{\Ja(A_1)}\wp((n+1)v_{n+1},\tau)).
\end{split}
\end{equation}
Then, the desired result is obtained by doing a Laurent expansion in the variable $z$ in both side of the equality.

\end{proof}

As a consequence of the previous lemma, we have 

\begin{corollary}
The Jacobi forms  $\{\varphi_i^{\Ja(\tilde A_n )}\}$  are algebraically independent.
\end{corollary}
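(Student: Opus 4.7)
The plan is to deduce the algebraic independence of $\{\varphi_j^{\Ja(\tilde A_n)}\}_{j=0}^n$ by establishing that $(\varphi_0^{\Ja(\tilde A_n)},\ldots,\varphi_n^{\Ja(\tilde A_n)},v_{n+1},\tau)$ forms a local coordinate system on the orbit space. Since by Lemma \ref{ringcoef} the coefficient ring $E_{\bullet,\bullet}$ consists of meromorphic functions depending only on $(v_{n+1},\tau)$, this will immediately rule out any nontrivial polynomial relation among the $\varphi_j^{\Ja(\tilde A_n)}$ with coefficients in $E_{\bullet,\bullet}$: restricting such a hypothetical relation to a generic fixed value of $(v_{n+1},\tau)$ would produce a nontrivial polynomial identity in $n+1$ algebraically independent complex coordinates, which is impossible.

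To establish the local coordinate property, I would decompose the map
\[
(u,v_0,\ldots,v_{n-1})\mapsto \bigl(\varphi_0^{\Ja(\tilde A_n)},\ldots,\varphi_n^{\Ja(\tilde A_n)}\bigr),
\]
at fixed generic $(v_{n+1},\tau)$, as a composition of two local biholomorphisms. The first, supplied by Lemma \ref{jacobiform1}, sends the orbit representative to the superpotential $\lambda^{\Ja(\tilde A_n)}$ and lands biholomorphically onto the space of elliptic functions with one pole of order $n$ at $z=0$ and a simple pole at $z=-(n+1)v_{n+1}$. The second, read off from Corollary \ref{corollary superpotential antilde}, sends such an elliptic function to its coefficients in the Weierstrass $\wp/\zeta$ basis displayed there; it is a linear isomorphism onto $\mathbb{C}^{n+1}$ provided those basis functions are linearly independent as meromorphic functions of $z$ on the torus.

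The hard part will be precisely this last linear independence check. I would verify it by direct inspection of principal parts: the various powers of $\wp(z,\tau)$ appearing in Corollary \ref{corollary superpotential antilde} have distinct even orders of pole at $z=0$ and no pole at $z=-(n+1)v_{n+1}$; the combination $\zeta(z,\tau)-\zeta(z+(n+1)v_{n+1},\tau)+\zeta((n+1)v_{n+1},\tau)$ is the unique basis element carrying a simple pole at $z=-(n+1)v_{n+1}$; and the constant term is distinguished as the only entire member. These separate principal parts force linear independence over $\mathbb{C}$ for generic $(v_{n+1},\tau)$, which upgrades the Jacobian of the displayed map to full rank and completes the argument. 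An alternative route, which I would keep in reserve, uses Lemma \ref{principallema}: that upper-triangular relation (with diagonal entry $\varphi_2^{\Ja(A_1)}$) allows one to transfer any hypothetical polynomial identity on the $\varphi_j^{\Ja(\tilde A_n)}$, after clearing powers of $\varphi_2^{\Ja(A_1)}$, into a polynomial identity on the Bertola--Wirthm\"uller generators $\varphi_i^{\Ja(A_{n+1})}$, which are known to be algebraically independent.
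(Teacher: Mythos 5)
Your argument is correct, but your main route is genuinely different from the paper's. The paper argues purely algebraically: it assumes a nontrivial relation $h(\varphi_0^{\Ja(\tilde A_n)},\ldots,\varphi_n^{\Ja(\tilde A_n)})=0$, clears denominators by multiplying each index-$m$ homogeneous component by $(\varphi_2^{\Ja(A_1)})^m$ so that Lemma \ref{principallema} converts everything into the holomorphic $A_{n+1}$ Jacobi forms, and then Taylor-expands in the $v_i$: by (\ref{relation Coxeter Bertola Jacobi forms}) the lowest-order terms are the elementary symmetric polynomials, whose algebraic independence gives the contradiction. Your reserve route is essentially this argument; your primary route instead upgrades Lemma \ref{jacobiform1} and Corollary \ref{corollary superpotential antilde} to the statement that $(\varphi_0,\ldots,\varphi_n,v_{n+1},\tau)$ is a local coordinate system, and deduces algebraic independence over $E_{\bullet,\bullet}$ (which by Lemma \ref{ringcoef} depends only on $(v_{n+1},\tau)$) by restricting a hypothetical relation to a generic fiber. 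This is sound: functional independence on an open set kills any polynomial relation, and the linear-independence check of the basis elements by principal parts is the right verification. What your route buys is a stronger and independently useful conclusion --- the coordinate property itself, which the paper needs later anyway --- at the cost of leaning on the local biholomorphism with $H_{1,n-1,0}$, whereas the paper's route stays inside the algebra of Jacobi forms and reduces to the classical independence of elementary symmetric polynomials. One small correction: the symbols $\wp^{k-2}$ in Corollary \ref{corollary superpotential antilde} denote derivatives $\wp^{(k-2)}$, not powers (compare the normalizing factors $(-1)^k/(k-1)!$ in (\ref{generatorsAn+1 concise }) and the fact that $\lambda$ has a pole of order exactly $n$ at $z=0$), so the pole orders at $z=0$ are $2,3,\ldots,n$ rather than ``distinct even orders''; they are still pairwise distinct, so your linear-independence conclusion is unaffected.
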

\begin{proof}
Suppose that there exist  polynomial $h(x_0,x_1,..,x_n)$ not identically $0$, such that
\begin{equation*}
 h(\varphi_0^{\Ja(\tilde A_n) },\varphi_1^{\Ja(\tilde A_n) },\varphi_2^{\Ja(\tilde A_n) },..,\varphi_n^{\Ja(\tilde A_n) })=0
\end{equation*}
then, because $J^{\Ja(\tilde A_n)}_{\bullet,\bullet,\bullet}$ is graded ring $h(x_0,x_1,..,x_n)$ should be $0$ in each homogeneous component $h_m(x_0,x_1,..,x_n)$ of index $m$. Let $\tilde h_m:={(\varphi_2^{\Ja(A_1)})}^mh_m(\varphi_0^{\tilde \Ja(A_n) },\varphi_1^{\Ja(\tilde A_n) },\varphi_2^{\Ja(\tilde A_n) },..,\varphi_n^{\Ja(\tilde A_n) })$. Let us expand the functions $\varphi_i^{\Ja(\tilde A_n)}$ in the variables $v_i$, then $\tilde h_m$ vanishes iff its vanishes in each order of this expansion.

From  equations  (\ref{relation Coxeter Bertola Jacobi forms}), we know that the lowest term of the taylor expansion of $\varphi_{n+2}^{\Ja(A_{n+1})}$ are the elementary symmetric polynomials. Using lemma \ref{principallema}, we conclude that the lowest term of $\varphi_{j}^{\Ja(\tilde A_n)}$ is the same as the lowest term of $\varphi_{j+2}^{\Ja(A_{n+1})}$, but those terms are exactly the elementary symmetric polynomials. But the elementary symmetric polynomials are algebraically independent, then they can not solve any polynomial equation. Lemma proved.
\end{proof}

\begin{corollary}
\begin{equation*}
E_{\bullet,\bullet}\left[\varphi_0^{\Ja(\tilde A_n) },\varphi_1^{\Ja(\tilde A_n) },\varphi_2^{\Ja(\tilde A_n) },..,\varphi_n^{\Ja(\tilde A_n) }\right]=E_{\bullet,\bullet}\left[\frac{\varphi_0^{\Ja(A_{n+1})}}{\varphi_2^{\Ja(A_1)}},\frac{\varphi_2^{\Ja(A_{n+1})}}{\varphi_2^{\Ja(A_1)}},...,\frac{\varphi_n^{\Ja(A_{n+1})}}{\varphi_2^{\Ja(A_1)}}\right]
\end{equation*}
\end{corollary}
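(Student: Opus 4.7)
The plan is to prove the equality of the two $E_{\bullet,\bullet}$-algebras by mutual containment, using the triangular identities of Lemma \ref{principallema} as the sole engine; no new geometric input is needed beyond what the lemma already supplies.

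For the forward inclusion (the right-hand ring sits inside the left-hand one), I would divide each identity in (\ref{relationship between Jacobi forms}) through by $\varphi_{2}^{\Ja(A_{1})}$, which exhibits every ratio $\varphi_{k}^{\Ja(A_{n+1})}/\varphi_{2}^{\Ja(A_{1})}$ as an $E_{\bullet,\bullet}$-linear combination of the generators $\varphi_0^{\Ja(\tilde A_n)},\ldots,\varphi_n^{\Ja(\tilde A_n)}$. The coefficients $a_i^j=a_i^j(v_{n+1},\tau)$ are elliptic in $v_{n+1}$ by construction (they arise as Laurent coefficients at $z=0$ of the rational expression $\lambda^{\Ja(A_{n+1})}/\lambda^{\Ja(\tilde A_n)}$, which depends only on $v_{n+1}$ and $\tau$), so by Lemma \ref{ringcoef} they belong to $E_{\bullet,\bullet}$. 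This legitimises the identity inside the left-hand ring and yields the forward containment.

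For the reverse inclusion I would invert the same system. After the division by $\varphi_2^{\Ja(A_1)}$ the relations (\ref{relationship between Jacobi forms}) take the upper-triangular form
\begin{equation*}
\frac{\varphi_{k+2}^{\Ja(A_{n+1})}}{\varphi_2^{\Ja(A_1)}} \;=\; \varphi_{k}^{\Ja(\tilde A_n)} + \sum_{j=k+1}^{n} a_{k}^{j}\,\varphi_{j}^{\Ja(\tilde A_n)}, \qquad k=n,n-1,\ldots,0,
\end{equation*}
with unit diagonal. The base case $k=n$ gives $\varphi_n^{\Ja(\tilde A_n)} = \varphi_{n+2}^{\Ja(A_{n+1})}/\varphi_2^{\Ja(A_1)}$, which is already a generator of the right-hand ring. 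A descending induction on $k$ then produces
\begin{equation*}
\varphi_{k}^{\Ja(\tilde A_n)} \;=\; \frac{\varphi_{k+2}^{\Ja(A_{n+1})}}{\varphi_2^{\Ja(A_1)}} \;-\; \sum_{j=k+1}^{n} a_{k}^{j}\,\varphi_{j}^{\Ja(\tilde A_n)},
\end{equation*}
and by the inductive hypothesis the right-hand side already lies in the right-hand ring, so $\varphi_k^{\Ja(\tilde A_n)}$ does too.

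There is essentially no hard step: the analytic work has been done in Lemma \ref{principallema}, and the corollary reduces to the observation that the system there is triangular with unit leading coefficient. The only point requiring care is the assertion that the elliptic coefficients $a_i^j$ lie in $E_{\bullet,\bullet}$, which is exactly what Lemma \ref{ringcoef} guarantees; the residual identity for $\varphi_0^{\Ja(A_{n+1})}/\varphi_2^{\Ja(A_1)}$ does not produce any new $\varphi_k^{\Ja(\tilde A_n)}$ but only furnishes a consistency relation among those already expressed, so no additional argument is needed.
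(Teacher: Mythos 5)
Your argument is correct and is precisely the one the paper intends: the corollary is stated there without proof as an immediate consequence of Lemma \ref{principallema}, and your observation that the system (\ref{relationship between Jacobi forms}), after division by $\varphi_2^{\Ja(A_1)}$, is upper-triangular with unit diagonal and coefficients in $E_{\bullet,\bullet}$ (via Lemma \ref{ringcoef}) is exactly what makes both containments, and hence the equality, immediate. Your remark that the relation for $\varphi_0^{\Ja(A_{n+1})}$ is a redundant consistency identity rather than a source of a new generator is a worthwhile clarification that the paper leaves implicit.
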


Moreover, we have the following lemma

\begin{lemma}
Let $\varphi\in J_{\bullet,\bullet,m}^{\Ja(\tilde A_n)}$, then $\varphi \in E_{\bullet,\bullet}\left[\frac{\varphi_0^{\Ja(A_{n+1})}}{\varphi_2^{\Ja(A_1)}},\frac{\varphi_2^{\Ja(A_{n+1})}}{\varphi_2^{\Ja(A_1)}},...,\frac{\varphi_n^{\Ja(A_{n+1})}}{\varphi_2^{\Ja(A_1)}}\right].$
\end{lemma}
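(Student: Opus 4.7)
The plan is to reduce the statement to the Chevalley theorem for the Jacobi group $\Ja(A_{n+1})$ due to Wirthm\"uller and Bertola \cite{Bertola M.1}, \cite{K. Wirthmuller}. Given $\varphi \in J^{\Ja(\tilde A_n)}_{k,l,m}$, I consider the auxiliary function
\begin{equation*}
\Psi := \varphi \cdot \bigl(\varphi_2^{\Ja(A_1)}\bigr)^m,
\end{equation*}
where, as in (\ref{varphi1 def Jtildean}) and the discussion following (\ref{coordinates uztau in jtildean}), the two groups are linked by the identifications $u = u_1 - u_2$ and $z_{n+2} = (n+1)v_{n+1}$. Since $\varphi$ has index $m$ in $u$ while $\varphi_2^{\Ja(A_1)}$ has index $1$ in $u_2$, the factor $e^{2\pi i m u_2}$ coming from $\varphi$ is cancelled, so $\Psi$ is independent of $u_2$ and has pure index $m$ in $u_1$. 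Combining the transformation laws (\ref{jacobigroupAntilde}) with the corresponding rules for $\Ja(A_1)$ makes $\Psi$ a Jacobi form for the subgroup $\Ja(A_n) \times \Ja(A_1) \subset \Ja(A_{n+1})$.

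Next I would track the analytic behaviour of $\Psi$. The factor $(\varphi_2^{\Ja(A_1)})^m$ vanishes to order exactly $2m$ at the points where $(n+1)v_{n+1} \in \mathbb{Z} \oplus \tau\mathbb{Z}$, by (\ref{definition of varphi 2 vn1}), which is where $\varphi$ is allowed its poles of order at most $l + 2m$ in Definition~\ref{Jacobi forms definition jtildean abc}. Hence $\Psi$ has poles of order at most $l$ in $v_{n+1}$ and is holomorphic in $v_0, \dots, v_n$. The key observation is that any $\Ja(A_n) \times \Ja(A_1)$-invariant Jacobi form in the variables $(z_1, \dots, z_{n+1}, z_{n+2})$ can be written as a polynomial in the full $\Ja(A_{n+1})$-invariants $\varphi_0^{\Ja(A_{n+1})}, \varphi_2^{\Ja(A_{n+1})}, \dots, \varphi_{n+2}^{\Ja(A_{n+1})}$ with coefficients that are elliptic functions of $z_{n+2}$, that is, elements of $E_{\bullet,\bullet}$. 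This is the analytic counterpart of the classical identity $e_k(z_1,\dots,z_{n+1}) = E_k - z_{n+2}\,e_{k-1}$ for elementary symmetric polynomials, obtained here by symmetrizing along the coset $A_{n+1}/(A_n \times A_1)$ and iterating the triangular relations (\ref{relationship between Jacobi forms}) to remove the $z_{n+2}$-dependence from everything but the coefficients.

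Applying the Bertola theorem to the $A_{n+1}$-symmetric piece then yields a decomposition
\begin{equation*}
\Psi = \sum_{\mathbf{k}} P_{\mathbf{k}}(v_{n+1},\tau)\,\prod_{j=1}^{m}\varphi_{k_j}^{\Ja(A_{n+1})},\qquad P_{\mathbf{k}} \in E_{\bullet,\bullet},
\end{equation*}
in which each monomial has total index $m$, matching the index of $\Psi$. Dividing both sides by $(\varphi_2^{\Ja(A_1)})^m$ distributes one factor of $1/\varphi_2^{\Ja(A_1)}$ into each of the $m$ $\Ja(A_{n+1})$-generators in every monomial, producing
\begin{equation*}
\varphi = \sum_{\mathbf{k}} P_{\mathbf{k}}(v_{n+1},\tau)\,\prod_{j=1}^{m}\frac{\varphi_{k_j}^{\Ja(A_{n+1})}}{\varphi_2^{\Ja(A_1)}},
\end{equation*}
which exhibits $\varphi$ as an element of the polynomial ring claimed in the lemma.

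The principal obstacle I expect is the middle step: establishing cleanly that the $\Ja(A_n) \times \Ja(A_1) \to \Ja(A_{n+1})$ symmetrization produces coefficients lying in $E_{\bullet,\bullet}$ and not in some strictly larger ring of meromorphic functions in $v_{n+1}$, and verifying that the pole-order bookkeeping in $v_{n+1}$ is consistent with the grading of $E_{\bullet,\bullet}$. Once that compatibility check is carried out, the reduction via $\Psi$ and the subsequent division are essentially formal, and the statement of the lemma follows from Bertola's theorem applied to $\Psi$.
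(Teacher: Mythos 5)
Your overall strategy --- clear the $v_{n+1}$-poles of $\varphi$ by multiplying by $(\varphi_2^{\Ja(A_1)})^m$ and then reduce to the Chevalley theorem for $\Ja(A_{n+1})$ --- is the same one the paper follows, and your index and pole-order bookkeeping in the first and last paragraphs is sound. The problem is the step you yourself flag as ``the principal obstacle'': it is not a compatibility check to be carried out afterwards, it is the entire content of the lemma, and as stated it does not follow from Bertola's theorem. Bertola's theorem describes $\Ja(A_{n+1})$-invariant forms, whereas your $\Psi$ is only invariant under the smaller group $\Ja(A_n)\times\Ja(A_1)$; the assertion that every such form is a polynomial in the $\Ja(A_{n+1})$-generators with coefficients in $E_{\bullet,\bullet}$ is exactly the statement to be proved, not a known input. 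Note also that in the coordinates (\ref{coordinates uztau in jtildean}) one has $\sum_{i=1}^{n+1}z_i=-z_{n+2}$, so the two factors do not act on a direct-sum decomposition of the lattice and even a product-type Chevalley theorem would not apply off the shelf; and the triangular relations (\ref{relationship between Jacobi forms}) only convert the specific products $\varphi_i^{\Ja(\tilde A_n)}\varphi_2^{\Ja(A_1)}$, not an arbitrary invariant of the smaller group.

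The paper fills precisely this gap by an explicit computation. It passes to an index-zero quotient of $\varphi$ by the top generator, which is elliptic in $(v_0,\dots,v_{n-1},v_{n+1})$ with poles only along $v_i-v_{n+1}$ and $(n+1)v_{n+1}$; expands it in the functions $\wp^{(j)}(v_i-v_{n+1})$ and uses $A_n$-invariance to make the coefficients $a_j$ independent of $i$; then adds and subtracts $f(v_{n+1},\tau)=\sum_j a_j\sum_i\wp^{(j)}((n+1)v_{n+1})$ so that only the combinations $\wp^{(j)}(v_i-v_{n+1})+\wp^{(j)}((n+1)v_{n+1})$ and a remainder $\tilde b(v_{n+1},\tau)\in E_{\bullet,\bullet}$ appear. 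It then verifies directly that $g_j:=\varphi_{n+2}^{\Ja(A_{n+1})}\left[\wp^{(j)}(v_i-v_{n+1})+\wp^{(j)}((n+1)v_{n+1})\right]$ are holomorphic $\Ja(A_{n+1})$-Jacobi forms (the zeros of $\varphi_{n+2}^{\Ja(A_{n+1})}$ cancel the poles, since $j+2\leq m$), so that after dividing by $(\varphi_2^{\Ja(A_1)})^m$ one lands in $E_{\bullet,\bullet}\left[\frac{\varphi_0^{\Ja(A_{n+1})}}{\varphi_2^{\Ja(A_1)}},\dots,\frac{\varphi_n^{\Ja(A_{n+1})}}{\varphi_2^{\Ja(A_1)}}\right]$. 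To salvage your route you would have to supply this completion argument (or an equivalent one) in place of the unproved claim about $\Ja(A_n)\times\Ja(A_1)$-invariants; without it the proposal assumes what it sets out to prove.
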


\begin{proof}
Let $\varphi\in J_{\bullet,\bullet,m}^{\Ja(\tilde A_n)}$, then the function $\frac{\varphi}{\varphi_n^{\Ja(A_n)}}$ is an elliptic function on the variables $(v_0,v_1,..,v_{n-1},v_{n+1})$ with poles on $v_i-v_{n+1},(n+1)v_{n+1}$. Expanding the function $\frac{\varphi}{\varphi_n^{\Ja(A_n)}}$ in the variables $v_0,v_1,..,v_{n-1}$ we get 
\begin{equation*}
\begin{split}
\frac{\varphi}{\varphi_n^{\Ja(A_n)}}&=\sum_{i=0}^{n-1} a_m^i\wp^{(m-2)}(v_i-v_{n+1})+\sum_{i=0}^{n-1} a_{m-1}^i\wp^{(m-3)}(v_i-v_{n+1})+..\\
&+\sum_{i=0}^{n-1} a_1^i\zeta^{(m-2)}(v_i-v_{n+1})+b(v_{n+1},\tau)
\end{split}
\end{equation*}
But the function $\frac{\varphi}{\varphi_n^{\Ja(A_n)}}$ is invariant under the permutations of the variables $v_i$, then
\begin{equation}\label{equationtocomplete}
\begin{split}
\frac{\varphi}{\varphi_n^{\Ja(A_n)}}&=a_m\sum_{i=0}^{n-1} \wp^{(m-2)}(v_i-v_{n+1})+a_{m-1}\sum_{i=0}^{n-1} \wp^{(m-3)}(v_i-v_{n+1})+..\\
&+ a_1\sum_{i=0}^{n-1}\zeta^{(m-2)}(v_i-v_{n+1})+b(v_{n+1},\tau)
\end{split}
\end{equation}
Now we complete this function to $A_{n+1}$ invariant function by summing and subtracting the following function in e.q (\ref{equationtocomplete})
\begin{equation*}
\begin{split}
f(v_{n+1},\tau)&=a_m\sum_{i=0}^{n-1} \wp^{(m-2)}((n+1)v_{n+1})+a_{m-1}\sum_{i=0}^{n-1} \wp^{(m-3)}((n+1)v_{n+1})+..\\
&+ a_1\sum_{i=0}^{n-1}\zeta^{(m-2)}((n+1)v_{n+1})
\end{split}
\end{equation*}
Hence,
\begin{equation}
\begin{split}
\frac{\varphi}{\varphi_n^{\Ja(A_n)}}&=a_m(\sum_{i=0}^{n-1}\wp^{(m-2)}(v_i-v_{n+1})+\wp^{(m-2)}(n+1)v_{n+1}))\\
&+a_{m-1}\sum_{i=0}^{n-1} (\wp^{(m-3)}(v_i-v_{n+1})+\wp^{(m-3)}((n+1)v_{n+1}))+..\\
&+ a_1\sum_{i=0}^{n-1}(\zeta^{(m-2)}(v_i-v_{n+1})+\zeta^{(m-2)}((n+1)v_{n+1}))+\tilde b(v_{n+1},\tau)
\end{split}
\end{equation}
To finish the proof note the following
\begin{enumerate}
\item The functions $\varphi_{n+2}^{\Ja(A_{n+1})}[\wp^{(j)}(v_i-v_{n+1})+\wp^{(j)}(n+1)v_{n+1})]$ are $A_{n+1}$ by construction,
\item The functions $\varphi_{n+2}^{\Ja(A_{n+1})}[\wp^{(j)}(v_i-v_{n+1})+\wp^{(j)}(n+1)v_{n+1})]$ are invariant under the action of $(\mathbb{Z}\oplus\tau\mathbb{Z})^{2n+2}$, because $\varphi_{n+2}^{\Ja(A_{n+1})}$ invariant, and 
$\wp^{(j)}(v_i-v_{n+1})+\wp^{(j)}(n+1)v_{n+1})]$  are elliptic functions.
\item The functions $\varphi_{n+2}^{\Ja(A_{n+1})}[\wp^{(j)}(v_i-v_{n+1})+\wp^{(j)}(n+1)v_{n+1})]$ are equivariant under the action of $SL_2(\mathbb{Z})$, because $\varphi_{n+2}^{\Ja(A_{n+1})}$ is equivariant, and 
$\wp^{(j)}(v_i-v_{n+1})+\wp^{(j)}(n+1)v_{n+1})]$  are elliptic functions.
\item The function $\varphi_{n+2}^{\Ja(A_{n+1})}$ has zeros on $v_i-v_{n+1},(n+1)v_{n+1}$ of order $m$, and 
$\wp^{(j)}(v_i-v_{n+1})+\wp^{(j)}(n+1)v_{n+1})]$ has poles on $v_i-v_{n+1},(n+1)v_{n+1}$ of order $j+2\leq m$. Then, the functions $\varphi_{n+2}^{\Ja(A_{n+1})}[\wp^{(j)}(v_i-v_{n+1})+\wp^{(j)}(n+1)v_{n+1})]$ are holomorphic.
\item We conclude that $g_j:=\varphi_{n+2}^{\Ja(A_{n+1})}[\wp^{(j)}(v_i-v_{n+1})+\wp^{(j)}(n+1)v_{n+1})]\in J_{\bullet,\bullet}^{\Ja(A_{n+1})}$. Hence,
\begin{equation}
\begin{split}
\varphi=\sum_{i=1}^m a_i\frac{g_i}{{(\varphi_2^{\Ja(A_1)})}^m}+\tilde b(v_{n+1},\tau){(\frac{\varphi_{n+2}^{\Ja(A_{n+1})}}{\varphi_2^{\Ja(A_1)}})}^m\in E_{\bullet,\bullet}[\frac{\varphi_0^{\Ja(A_{n+1})}}{\varphi_2^{\Ja(A_1)}},\frac{\varphi_2^{\Ja(A_{n+1})}}{\varphi_2^{\Ja(A_1)}},...,\frac{\varphi_n^{\Ja(A_{n+1})}}{\varphi_2^{\Ja(A_1)}}].
\end{split}
\end{equation}
\end{enumerate}
\end{proof}

\begin{proof}
\begin{equation}
J_{\bullet,\bullet,\bullet}^{\Ja(\tilde A_n)}\subset E_{\bullet,\bullet}\left[\frac{\varphi_0^{\Ja(A_{n+1})}}{\varphi_2^{\Ja(A_1)}},\frac{\varphi_2^{\Ja(A_{n+1})}}{\varphi_2^{\Ja(A_1)}},...,\frac{\varphi_n^{\Ja(A_{n+1})}}{\varphi_2^{\Ja(A_1)}}\right]=E_{\bullet,\bullet}\left[\varphi_0^{\Ja(\tilde A_n) },\varphi_1^{\Ja(\tilde A_n) },\varphi_2^{\Ja(\tilde A_n) },..,\varphi_n^{\Ja(\tilde A_n )}\right]\subset J_{\bullet,\bullet,\bullet}^{\Ja(\tilde A_n)}.
\end{equation}

\end{proof}

\subsection{Intersection form}\label{Intersection form jantilde}

In the next sections, we introduce the minimal geometric data to reconstruct a Dubrovin Frobenius manifold structure as it was already announced in the subsection \ref{Thesis results}. The first step of this process is to build the invariant $\Ja(A_n)$ invariant sections : intersection form, Unit vector field and Euler vector field, and from this data, we reconstruct the Dubrovin Frobenius structure of the Hurwitz space $H_{1,n-1,0}$. In this section, we start by introducing the intersection form of the orbit space of the group $\Ja(\tilde A_n)$, which is defined to be a natural extensions of the intersection forms of the orbit space of the extended affine Weyl group $\tilde A_n$ \cite{B. Dubrovin and Y. Zhang}, and of the Jacobi group $\Ja(A_n)$ \cite{Bertola M.1}, \cite{Bertola M.2}. In this section, we prove that  the intersection form can be understood as an invariant section of the orbit space of $\Ja(\tilde A_n)$. Moreover , we define a new metric on the orbit space of $\Ja(\tilde A_n)$, which are closed related with the intersection form and it is denoted by $ \tilde M$, further, we construct a generating function of the coefficients for this new metric $ \tilde M$ in the coordinates $(\varphi_0,\varphi_1,..,\varphi_n,v_{n+1},\tau)$.

\begin{remark}
From this point, we will use $(\varphi_0,\varphi_1,..,\varphi_n)$ to denote the Jacobi forms of the group $\Ja(\tilde A_n)$ again, since there will not be anymore  Jacobi form from others Jacobi groups.
\end{remark}

\begin{definition}
Let
\begin{equation}\label{metrich1n}
\begin{split}
g&=\sum_{i=0}^n \left.dv_{i}^2\right|_{\sum_{i=0}^n v_i=0}-n(n+1)dv_{n+1}^2+2du d\tau,\\
&=\sum_{i,j=0}^{n-1} A_{ij}dv_idv_j-n(n+1)dv_{n+1}^2+2du d\tau,\\
&=\sum_{i,j=0}^{n+1} g_{ij}dv_idv_j+2du d\tau.\\
\end{split}
\end{equation}
where $A_{ij}$ is defined in (\ref{definition of A matrix} ).
The intersection form is given by 
\begin{equation}\label{metrich1n cotangent}
\begin{split}
g^{*}&=\sum_{i,j=0}^{n-1} A_{ij}^{-1}\frac{\partial}{\partial v_i}\otimes \frac{\partial}{\partial v_j}-\frac{1}{n(n+1)}\frac{\partial}{\partial v_{n+1}}\otimes \frac{\partial}{\partial v_{n+1}}+\frac{\partial}{\partial u}\otimes\frac{\partial}{\partial \tau}+\frac{\partial}{\partial \tau}\otimes\frac{\partial}{\partial u}.\\
\end{split}
\end{equation}
\end{definition}

\begin{proposition}\label{prop metric invariant}
The intersection form (\ref{metrich1n}) is invariant under the first two actions of (\ref{jacobigroupAntilde}), and behaves a modular form of weight $2$ under the last action of  (\ref{jacobigroupAntilde}).
\end{proposition}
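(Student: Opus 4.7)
My plan is to verify the three transformation properties by a direct computation of the pullback of the covariant form $g$ in (\ref{metrich1n}), since invariance/equivariance of $g$ is equivalent to that of the contravariant $g^{*}$. The action on differentials is obtained by differentiating (\ref{jacobigroupAntilde}); the key observation is that the prefactor $\tfrac{1}{2}\langle\lambda,\lambda\rangle_{\tilde A_n}\tau$ in the translation and the prefactor $\tfrac{c\langle v,v\rangle_{\tilde A_n}}{2(c\tau+d)}$ in the modular piece are precisely the terms needed to kill the cross terms that arise from the transformation of the $v$-coordinates.

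For the Weyl action $w$, the differentials $du$, $d\tau$, $dv_{n+1}$ are fixed, while $w$ permutes $dv_0,\dots,dv_n$. Since the quadratic form $\langle\cdot,\cdot\rangle_{\tilde A_n}$ is by definition the restriction of $\sum dv_i^2$ to $\sum v_i = 0$ minus $n(n+1)dv_{n+1}^2$, it is manifestly symmetric-group invariant, so $g$ is preserved. For the translation $t=(\lambda,\mu)$, one computes $dv \mapsto dv + \lambda\, d\tau$ and $du \mapsto du - \langle\lambda,dv\rangle_{\tilde A_n} - \tfrac{1}{2}\langle\lambda,\lambda\rangle_{\tilde A_n}\, d\tau$, so the $v$-block contributes $\langle dv,dv\rangle_{\tilde A_n} + 2\langle\lambda,dv\rangle_{\tilde A_n}\, d\tau + \langle\lambda,\lambda\rangle_{\tilde A_n}\, d\tau^2$ while $2\,du\,d\tau$ contributes $2\,du\,d\tau - 2\langle\lambda,dv\rangle_{\tilde A_n}\, d\tau - \langle\lambda,\lambda\rangle_{\tilde A_n}\, d\tau^2$. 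The cross and diagonal corrections cancel, giving back $g$.

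The main work is the $SL_2(\mathbb{Z})$ action, where I would use $ad-bc=1$ to get $d\tau \mapsto (c\tau+d)^{-2}\, d\tau$ and
\[
dv \;\mapsto\; \frac{dv}{c\tau+d} \;-\; \frac{c\,v}{(c\tau+d)^2}\, d\tau, \qquad du \;\mapsto\; du \;+\; \frac{c\,\langle v,dv\rangle_{\tilde A_n}}{c\tau+d} \;-\; \frac{c^{2}\,\langle v,v\rangle_{\tilde A_n}}{2(c\tau+d)^{2}}\, d\tau.
\]
Substituting into $g$, the term $\langle dv,dv\rangle_{\tilde A_n}$ produces $(c\tau+d)^{-2}\langle dv,dv\rangle_{\tilde A_n}$ together with a linear correction $-2c(c\tau+d)^{-3}\langle v,dv\rangle_{\tilde A_n}\, d\tau$ and a quadratic one $c^{2}(c\tau+d)^{-4}\langle v,v\rangle_{\tilde A_n}\, d\tau^{2}$. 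The contribution from $2\,du\,d\tau$ supplies exactly the opposite linear and quadratic corrections, leaving $g \mapsto (c\tau+d)^{-2}\,g$, i.e. weight $2$ modular behaviour. Dualising, the contravariant intersection form (\ref{metrich1n cotangent}) is invariant under the first two actions and scales as $(c\tau+d)^{2}$ under the third.

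The only mildly delicate point is organising the $SL_2(\mathbb{Z})$ bookkeeping so that the linear-in-$\langle v,dv\rangle$ and quadratic-in-$\langle v,v\rangle$ corrections are seen to cancel; no hidden analytic input is needed, as all identities used are algebraic consequences of $ad-bc=1$ and the bilinearity of $\langle\cdot,\cdot\rangle_{\tilde A_n}$.
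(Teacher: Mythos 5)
Your proposal is correct and follows essentially the same route as the paper: a direct computation of the pullback of the covariant form $g$ under each of the three actions, using permutation-invariance of $\langle\cdot,\cdot\rangle_{\tilde A_n}$ for the Weyl part and verifying that the cross and quadratic corrections in $d\tau$ cancel against the contributions from $2\,du\,d\tau$ for the translation and $SL_2(\mathbb{Z})$ parts. Your bookkeeping of the $SL_2(\mathbb{Z})$ terms (with the $c^2\langle v,v\rangle\,d\tau^2/(c\tau+d)^4$ coefficient) is in fact slightly more careful than the paper's displayed formulas.
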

\begin{proof}
The first action of (\ref{jacobigroupAntilde}) only acts on the variables $v_i$, the intersection form is invariant under this action because $A_{ij}dv_idv_j$ is invariant under permutation.\\
Under  the second action of (\ref{jacobigroupAntilde}), the differentials transform as:
\begin{equation}
\begin{split}
&dv_i\mapsto dv_i+\lambda_id\tau,\\
&du\mapsto du-\langle\lambda, \lambda  \rangle d\tau-2\sum_{i=0}^{n+1}g_{ij}\lambda_j dv_i,\\
&d\tau\mapsto d\tau.
\end{split}
\end{equation}
Hence:
\begin{equation}
\begin{split}
&g_{ij}dv_idv_j\mapsto g_{ij}dv_idv_j+2g_{ij}\lambda_idv_id\tau+g_{ij}\lambda_i\lambda_jd\tau^2,\\
&2du d\tau\mapsto 2du d\tau -\langle\lambda, \lambda  \rangle d^2\tau-2\sum_{i=0}^{n-1}g^{ij}\lambda_j dv_id\tau .\\
\end{split}
\end{equation}
Then:
\begin{equation}
\sum_{i,j=0}^{n+1} g_{ij}dv_idv_j+2du d\tau \mapsto \sum_{i,j=0}^{n+1} g_{ij}dv_idv_j+2du d\tau.
\end{equation}
Let us show that the metric has conformal transformation under the  third action of transformations (\ref{jacobigroupAntilde}):
\begin{equation}
\begin{split}
&dv_i\mapsto \frac{dv_i}{c\tau+d}-\frac{cv_id\tau}{(c\tau+d)^2},\\
&d\tau\mapsto \frac{d\tau}{(c\tau+d)^2},\\
&du\mapsto du+\frac{cg_{ij}v_idv_j}{c\tau+d}-\frac{c\langle v,v \rangle d\tau}{2(c\tau+d)^2}.\\
\end{split}
\end{equation}
Then:
\begin{equation}
\begin{split}
&g_{ij}dv_idv_j\mapsto \frac{g_{ij}dv_idv_j}{(c\tau+d)^2}-\frac{2cg_{ij}v_idv_jd\tau}{(c\tau+d)^3}+\frac{g_{ij}v_iv_jd\tau^2}{(c\tau+d)^4},\\
&2du d\tau \mapsto \frac{2du d\tau}{(c\tau+d)^2}+\frac{2cg_{ij}v_idv_jd\tau}{(c\tau+d)^3}-\frac{c\langle v,v \rangle d\tau^2}{(c\tau+d)^4}.\\
\end{split}
\end{equation}
Then,
\begin{equation}
\sum_{i,j=0}^{n+1} g_{ij}dv_idv_j+2du d\tau \mapsto \frac{\sum_{i,j=0}^{n+1} g_{ij}dv_idv_j+2du d\tau}{(c\tau+d)^2}.
\end{equation}
\end{proof}

An efficient way to compute all $g^{*}(d\varphi_i,d\varphi_j)$ is by collecting all of then in a generating function. Note that  $(d\varphi_i,d\varphi_j)$ is not a Jacobi form, and this fact makes the computation more difficult. Hence, in order to circle around this problem, we define the following coefficients in (\ref{definition of M coef}).

\begin{lemma}\label{intersection form and M}
Let $\varphi_i\in J^{A_n}_{k_i,m_i}$, then the metric given by
\begin{equation}\label{metrich1n m}
\begin{split}
\frac{1}{\eta^{2i+2j}}g^{*}(d\eta^{2i}\varphi_i, d\eta^{2j}\varphi_j) \frac{\partial}{\partial\varphi_i}\left(\eta^{2i}.  \right)\otimes\frac{\partial}{\partial\varphi_j}\left(\eta^{2j}.\right)
\end{split}
\end{equation}
is invariant under the first two actions of (\ref{jacobigroupAntilde}), and behaves a modular form of weight $2$ under the last action of  (\ref{jacobigroupAntilde}). \\
Moreover, the coeffiecients of the metric (\ref{metrich1n m})  
\begin{equation}\label{definition of M coef}
\begin{split}
M\left(d\varphi_i,d\varphi_j\right)&:=\frac{1}{\eta^{2i+2j}}g^{*}(d\eta^{2i}\varphi_i, d\eta^{2j}\varphi_j)\\
&=g^{*}\left(d\varphi_i,d\varphi_j\right)-4\pi i g_1(\tau)\left(k_im_j+k_jm_i\right)\varphi_i\varphi_j,
\end{split}
\end{equation}
belong to $ J^{A_n}_{k_i+k_j-2, m_i+m_j}$.
\end{lemma}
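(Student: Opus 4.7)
The plan is to establish the lemma in two stages: first verify the explicit formula for $M(d\varphi_i, d\varphi_j)$ by direct computation, then deduce the claimed invariance, modular weight, and index. For the explicit formula, I apply the product rule, $d(\eta^{2i}\varphi_i) = \eta^{2i}\, d\varphi_i + 2i\eta^{2i-1}\eta'(\tau)\varphi_i\, d\tau$, and expand bilinearly inside $g^{*}$. The explicit form of $g^{*}$ in (\ref{metrich1n cotangent}) gives $g^{*}(d\tau, d\tau) = 0$ (there is no $\partial_\tau \otimes \partial_\tau$ component) and $g^{*}(df, d\tau) = \partial f / \partial u$ for any $f$ (from the mixed $\partial_u \otimes \partial_\tau$ component). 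Dividing by $\eta^{2i+2j}$ leaves exactly the cross terms $2i(\eta'/\eta)\varphi_i\, \partial_u\varphi_j + 2j(\eta'/\eta)\varphi_j\, \partial_u\varphi_i$. The Euler vector field condition in Definition \ref{Jacobi forms definition jtildean abc} gives $\partial_u\varphi_k = -2\pi i m_k \varphi_k$, and combining this with the identification $g_1(\tau) = -\eta'(\tau)/\eta(\tau)$ (which is the content of the sigma-to-theta relation $\sigma(z,\tau) = \theta_1(z,\tau)/\theta_1'(0,\tau)\cdot\exp(-2\pi i g_1(\tau) z^2)$ recalled in the proof of Lemma \ref{jacobiform1}) together with the weights $k_i = -i$ of the basic generators, one recovers the stated formula.

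Invariance under the $A_n$ and translation actions of $\Ja(\tilde A_n)$ is immediate from Proposition \ref{prop metric invariant}: $g^{*}$ is invariant under these two actions, and the correction term $g_1(\tau)\varphi_i\varphi_j$ depends only on $\tau$ (left fixed by the translation) and on Jacobi-invariant forms. The index condition follows by applying $-(2\pi i)^{-1}\partial_u$ to the explicit formula: the coefficients of $g^{*}$ are $u$-independent, so Leibniz's rule and the Euler condition give $-(2\pi i)^{-1}\partial_u\, g^{*}(d\varphi_i, d\varphi_j) = (m_i + m_j)\, g^{*}(d\varphi_i, d\varphi_j)$, and the same holds for $g_1(\tau)\varphi_i\varphi_j$. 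Meromorphy in $v_{n+1}$ along the divisor $v_{n+1} = j/n + \ell\tau/n$ (with holomorphy in the remaining variables) is inherited from the corresponding behavior of $\varphi_i, \varphi_j$ and the constancy of the coefficients of $g^{*}$.

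The modular weight $k_i + k_j - 2$ is the main technical point. The clean conceptual argument is to read the full tensor (\ref{metrich1n m}) as $g^{*}$ itself expressed in the twisted frame $\eta^{2i}\partial/\partial\varphi_i \otimes \eta^{2j}\partial/\partial\varphi_j$: since $\eta^{2i}$ and $\varphi_i$ (with $k_i = -i$) have opposite modular weights in the paper's convention, the vector field $\eta^{2i}\partial/\partial\varphi_i$ is modular-invariant, and the tensor therefore inherits exactly the weight-$2$ transformation of $g^{*}$ from Proposition \ref{prop metric invariant}. Extracting the coefficient $M$ in the untwisted basis $\partial/\partial\varphi_i \otimes \partial/\partial\varphi_j$, whose frame transforms with weight $k_i + k_j$, forces $M$ to carry weight $k_i + k_j - 2$. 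The main obstacle is precisely the bookkeeping here: a direct pullback check would produce an inhomogeneous $d\tau$ contribution from $\gamma^{*} d\varphi_i$, and showing that the correction $-4\pi i g_1(\tau)(k_i m_j + k_j m_i)\varphi_i\varphi_j$ is exactly what cancels this anomaly amounts to matching it against the quasi-modular shift of $g_1(\tau)$ (equivalently, of $\eta'/\eta$) under $\gamma \in SL_2(\mathbb{Z})$.
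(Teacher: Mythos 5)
Your proposal is correct and follows essentially the same route as the paper: the explicit formula comes from expanding $d(\eta^{2i}\varphi_i)$ by the product rule, using $g^{*}(d\tau,d\tau)=0$, $g^{*}(df,d\tau)=\partial_u f$, the Euler-field condition, and the identity relating $\eta'/\eta$ to $g_1(\tau)$, while the invariance and weight-$2$ statements are read off from Proposition \ref{prop metric invariant} together with the cancellation of the $\eta$-transformation laws in the twisted frame. The only caveat is a sign convention: the paper's proof uses $\eta'/\eta = g_1(\tau)$ (which is what the normalization $\sigma = \frac{\theta_1}{\theta_1'}e^{-2\pi i g_1 v^2}$ actually gives), whereas you write $g_1 = -\eta'/\eta$; this is compensated in your derivation by the opposite reading of the exponent $2i$ versus the weight $k_i=-i$, so the final formula agrees, but the two sign choices should be made consistently.
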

\begin{proof}
The metric (\ref{metrich1n m}) is is invariant under the first two actions of (\ref{jacobigroupAntilde}), because proposition \ref{prop metric invariant}, and because $\eta$ do not change under this action. The equivariance with respect the $SL_2(\mathbb{Z})$ follows again from proposition \ref{prop metric invariant}, and from the fact that the transformation laws of $\eta$ get canceled.\\

 The equation (\ref{definition of M coef}) follows from the chain rule, from the identity
\begin{equation}
\frac{\eta^{\prime}}{\eta}(\tau)=g_1(\tau)
\end{equation}

\end{proof}

\begin{proposition}\cite{Bertola M.1}
Let $E^{k}$ the space of elliptic function of weight $k$. The elliptic connection $D_{\tau}:E^K\mapsto E^k$ is linear map defined by
\begin{equation}\label{elliptic connection}
D_{\tau}F(v,\tau)=\partial_{\tau}F(v,\tau)-2kg_1(\tau)F(v,\tau)-\frac{1}{2\pi i}\frac{\theta_1^{\prime}(v,\tau)}{\theta_1(v,\tau)}F^{\prime}(v,\tau),
\end{equation}
where $F(v,\tau)\in E^{k}$.
\end{proposition}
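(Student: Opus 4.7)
The content of the statement is that if $F(v,\tau)\in E^{k}$, then the right-hand side of~\eqref{elliptic connection} is still an elliptic function of the appropriate weight (linearity is immediate from the formula). The plan is to verify separately the two defining properties: double periodicity in $v$, and the correct modular transformation under $SL_2(\mathbb{Z})$. Nothing in between the three summands is special on its own; the point is that the coefficients $2k$ and $1/(2\pi i)$ are tuned to cancel the quasi-periodic and quasi-modular anomalies of $\partial_\tau F$, $g_1(\tau)$, and $\theta_1'/\theta_1$.

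First I would handle the $v$-periodicity. Invariance under $v\mapsto v+1$ is immediate, since $F$ is doubly periodic and $\theta_1'/\theta_1$ is $1$-periodic (from $\theta_1(v+1,\tau)=-\theta_1(v,\tau)$). The interesting shift is $v\mapsto v+\tau$. Differentiating the identity $F(v+\tau,\tau)=F(v,\tau)$ with respect to $\tau$ at fixed $v$ produces the anomaly
\begin{equation*}
\partial_\tau F(v+\tau,\tau)=\partial_\tau F(v,\tau)-F'(v,\tau),
\end{equation*}
while the quasi-periodicity of $\theta_1$ yields
\begin{equation*}
\frac{\theta_1'}{\theta_1}(v+\tau,\tau)=\frac{\theta_1'}{\theta_1}(v,\tau)-2\pi i .
\end{equation*}
The extra $-F'(v,\tau)$ in $\partial_\tau F$ is cancelled precisely by the $-\frac{1}{2\pi i}\cdot(-2\pi i)\cdot F'(v,\tau)$ coming from the anomaly of $\theta_1'/\theta_1$, so $D_\tau F$ is $\tau$-periodic in $v$ as well.

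Next I would treat the $SL_2(\mathbb{Z})$ behaviour. Setting $\tilde v=v/(c\tau+d)$, $\tilde\tau=(a\tau+b)/(c\tau+d)$, I would differentiate the modular identity $F(\tilde v,\tilde\tau)=(c\tau+d)^{k}F(v,\tau)$ with respect to $\tau$ and to $v$ to express $\partial_{\tilde\tau}F(\tilde v,\tilde\tau)$ and $F'(\tilde v,\tilde\tau)$ in terms of the derivatives at $(v,\tau)$, using $\partial_\tau\tilde v=-cv/(c\tau+d)^2$ and $\partial_\tau\tilde\tau=1/(c\tau+d)^2$. Inserting the classical quasi-modular transformations
\begin{equation*}
g_1(\tilde\tau)=(c\tau+d)^2 g_1(\tau)+\frac{c(c\tau+d)}{2},\qquad \frac{\theta_1'}{\theta_1}(\tilde v,\tilde\tau)=2\pi i\, c v+(c\tau+d)\frac{\theta_1'}{\theta_1}(v,\tau),
\end{equation*}
into the definition of $D_\tau F$ evaluated at $(\tilde v,\tilde\tau)$, the $kc(c\tau+d)^{k+1}F$ term arising from the chain rule cancels against the anomalous piece $-2k\cdot c(c\tau+d)/2\cdot (c\tau+d)^{k}F$ coming from $g_1(\tilde\tau)$, and the $cv(c\tau+d)^{k+1}F'$ chain-rule term cancels against $-\frac{1}{2\pi i}\cdot 2\pi i c v\cdot(c\tau+d)^{k+1}F'$ from $\theta_1'/\theta_1(\tilde v,\tilde\tau)$. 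What remains assembles into $(c\tau+d)^{k+2}D_\tau F(v,\tau)$.

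The main obstacle is the sheer bookkeeping of the half-dozen anomalous terms and confirming their pairwise cancellation, together with the fact that the natural target is $E^{k+2}$ rather than $E^{k}$ as stated (the operator raises the modular weight by $2$, as expected for a $\tau$-derivative on modular-type objects). Conceptually, however, the proof amounts to observing that the three correction terms in~\eqref{elliptic connection} are built exactly to absorb the quasi-modular anomaly of $\eta(\tau)$ and the quasi-periodic anomaly of $\theta_1(v,\tau)$, which is the defining property of the Serre-type "elliptic connection" $D_\tau$.
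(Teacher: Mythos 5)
The paper does not prove this proposition at all; it is quoted verbatim from \cite{Bertola M.1} and used as a black box, so there is no in-paper argument to compare against. Your direct verification is correct and complete: the anomaly bookkeeping checks out in both directions. For the lattice shift $v\mapsto v+\tau$, the term $-F'(v,\tau)$ produced by differentiating the periodicity identity in $\tau$ is exactly absorbed by the $-2\pi i$ anomaly of $\theta_1'/\theta_1$ against the prefactor $-\tfrac{1}{2\pi i}$; for the modular transformation, the chain-rule terms $kc(c\tau+d)^{k+1}F$ and $cv(c\tau+d)^{k+1}F'$ cancel against the quasi-modular anomalies of $g_1$ and $\theta_1'/\theta_1$ respectively, leaving $(c\tau+d)^{k+2}D_\tau F(v,\tau)$. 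Your observation that the natural codomain is therefore $E^{k+2}$ rather than $E^{k}$ is also correct: $D_\tau$ is a Serre-type derivative and raises the weight by $2$, so the statement as printed (with its $E^{K}\mapsto E^{k}$) is at best a typo; this is consistent with how the operator is actually used later in the paper, e.g.\ in the transformation law $\wp^{(k-2)}(z/(c\tau+d),\cdot)=(c\tau+d)^{k}\wp^{(k-2)}(z,\tau)$ and in the generating-function identities where $D_\tau\lambda$ appears paired with objects of weight two higher. One minor caution: the paper's own displayed transformation $g_1(\tilde\tau)=(c\tau+d)^2g_1(\tau)+2c(c\tau+d)$ differs from the standard normalization $+\tfrac{c(c\tau+d)}{2}$ that you (correctly) use and that is forced by $g_1=\eta'/\eta$; the cancellation only works with the standard constant, so you are right to use it.
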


In order to compute the coefficient of $M^{*}(d\varphi_i,d\varphi_j)$ it will be necessary to define an extended intersection form $g$.
\begin{definition}
The extended metric $g$ is defined by
\begin{equation}
\widetilde g=\sum_{i=0}^n \left.dv_{i}^2\right|_{\sum_{i=0}^n v_i=0}-n(n+1)dv_{n+1}^2+2du d\tau+ndp^2+4ng_1(\tau)pdpd\tau+2ng_1^{\prime}(\tau)p^2d\tau^2,
\end{equation}
or alternatively,
\begin{equation}
\widetilde g=\sum_{i=1}^{n+1} dz_{i}^2-dz_{n+2}^2+2ds d\tau,
\end{equation}
where $(s,z_1,...,z_{n+2},\tau)$ is given by (\ref{coordinates uztau in jtildean}).
The extended intersection form read as 
\begin{equation}
\begin{split}
\widetilde g^{*}=\sum_{i,j} A_{ij}^{-1}\frac{\partial}{\partial v_i}\otimes \frac{\partial}{\partial v_j}-\frac{1}{n(n+1)}\frac{\partial}{\partial v_{n+1}}\otimes \frac{\partial}{\partial v_{n+1}}+\frac{1}{n}\frac{\partial}{\partial p}\otimes\frac{\partial}{\partial p} +\frac{\partial}{\partial s}\otimes \frac{\partial}{\partial \tau}+\frac{\partial}{\partial \tau}\otimes\frac{\partial}{\partial s}
\end{split}
\end{equation}
\end{definition}

The following technical  result proved by Bertola in \cite{Bertola M.1} will be useful to prove the subsequent results.

\begin{proposition}\cite{Bertola M.1}
The following formula holds
\begin{equation}\label{nice technical proposition}
\left( \frac{\alpha^{\prime\prime}(x)}{\alpha(x)}+\frac{\alpha^{\prime\prime}(y)}{\alpha(y)}-\frac{\alpha^{\prime}(x)\alpha^{\prime}(y)}{\alpha(x)\alpha(y)}           \right)=-4\pi i\frac{\partial_{\tau}\alpha(x-y)}{\alpha(x-y)}+2\frac{\alpha^{\prime}(x-y)}{\alpha(x-y)}\left[ \frac{\alpha^{\prime}(x)}{\alpha(x)}-\frac{\alpha^{\prime}(y)}{\alpha(y)}\right].
\end{equation}
where $\alpha(p)$ is given by the second equation (\ref{equation inside lemma coefficient M}).
\end{proposition}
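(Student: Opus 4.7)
The strategy is to reduce the identity to an elliptic-function statement in $x$ for fixed $y,\tau$, match principal parts at the poles, and then invoke Liouville's theorem. First, I would use the heat equation satisfied by $\theta_1(z,\tau)$, namely $\partial_{\tau}\theta_1=\tfrac{1}{4\pi i}\partial_z^2\theta_1$, to rewrite the term $-4\pi i\,\partial_\tau \alpha(x-y)/\alpha(x-y)$ as $-\alpha''(x-y)/\alpha(x-y)$, modulo contributions from any $z$-independent normalising factor in $\alpha$. This collapses the proposition to a purely $z$-meromorphic identity in the logarithmic derivatives $\alpha'/\alpha$ and $\alpha''/\alpha$ evaluated at $x$, $y$, and $x-y$.

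Second, I would verify that the difference $\Delta(x)$ of the two sides is doubly periodic in $x$. The function $\alpha'/\alpha$ inherits from $\theta_1$ the shifts $\alpha'/\alpha(x+1)=\alpha'/\alpha(x)$ and $\alpha'/\alpha(x+\tau)=\alpha'/\alpha(x)-2\pi i$, and $\alpha''/\alpha$ transforms linearly in $\alpha'/\alpha$. A short bookkeeping shows that the non-elliptic pieces produced on the LHS by $-\alpha'(x)\alpha'(y)/(\alpha(x)\alpha(y))$ exactly match the non-elliptic pieces produced on the RHS by $2\,\alpha'(x-y)/\alpha(x-y)\cdot[\alpha'(x)/\alpha(x)-\alpha'(y)/\alpha(y)]$ together with the heat-equation-transformed term, so $\Delta(\,\cdot\,,y,\tau)$ is honestly elliptic.

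Third, I would analyse the potential poles of $\Delta(x)$, which lie at the zeros of $\alpha(x)$ and of $\alpha(x-y)$ modulo the lattice. Near the zeros of $\alpha(x-y)$, the simple pole of $\alpha'(x-y)/\alpha(x-y)$ on the RHS is killed by the bracket $\alpha'(x)/\alpha(x)-\alpha'(y)/\alpha(y)$, which vanishes to first order, making the singularity removable; the LHS is manifestly regular there. Near the zeros of $\alpha(x)$, matching the Laurent coefficients is more delicate and uses the local expansions $\alpha'/\alpha(z)=1/z+O(z)$ and the next-order constants in $\alpha''/\alpha$, and this is where the coefficient $2$ in front of $\alpha'(x-y)/\alpha(x-y)$ together with the heat-equation-transformed contribution is essential. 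Once $\Delta$ is shown to be an elliptic function with no poles, Liouville forces $\Delta$ to be independent of $x$; the $x\leftrightarrow y$ symmetry of the reduced identity (after using that $\alpha'(x-y)/\alpha(x-y)=-\alpha'(y-x)/\alpha(y-x)$) then collapses it to a function of $\tau$ alone, which is fixed to zero by a limiting evaluation, for instance by letting $x\to y$ and extracting the leading Taylor coefficient. The principal obstacle is precisely this Laurent-coefficient bookkeeping: several singular contributions must conspire to cancel, and the cancellation relies on both the residue and the subleading constants in the expansion of $\alpha$ about its zeros (involving quantities such as $\theta_1'''(0)/\theta_1'(0)$), so the hardest step is the careful expansion rather than any conceptual new ingredient.
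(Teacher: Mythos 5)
The paper gives no proof of this proposition at all---it is imported verbatim from \cite{Bertola M.1}---so there is no internal argument to measure you against; your Liouville-type strategy (heat equation, double periodicity, matching principal parts, then a limiting evaluation) is indeed the standard way to prove identities of this kind. However, the two cancellations you assert in steps 2 and 3 do \emph{not} hold for the statement as printed, and carrying out your own checks would have revealed this: the identity is only true with cross term $-2\,\alpha^{\prime}(x)\alpha^{\prime}(y)/(\alpha(x)\alpha(y))$, not $-\alpha^{\prime}(x)\alpha^{\prime}(y)/(\alpha(x)\alpha(y))$. Concretely, writing $f=\theta_1^{\prime}/\theta_1$, under $x\mapsto x+\tau$ one has $f(x)\mapsto f(x)-2\pi i$, and the left-hand side with coefficient $c$ on the cross term picks up $-4\pi i f(x)+2\pi i c\,f(y)-4\pi^2$, while the right-hand side picks up $-4\pi i f(x)+4\pi i f(y)-4\pi^2$; these agree only for $c=2$. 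Likewise, at the zeros of $\alpha(x)$ the residue of the left-hand side is $-c\,f(y)$ while that of the right-hand side is $2\alpha^{\prime}(-y)/\alpha(-y)=-2f(y)$. So the proposition as stated carries a typo (the factor $2$ is present in Bertola's original and in this paper's own use of the identity two displays later, in the computation of $\sum_i\partial_{z_i}P(p)\,\partial_{z_i}P(p^{\prime})$); with the $2$ restored, your argument closes: both families of poles become removable, the difference is elliptic and bounded in $x$, hence constant, and the constant dies in the limit $x\to y$.

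A remark on efficiency: once the heat equation converts $-4\pi i\,\partial_{\tau}\alpha/\alpha$ into $-\theta_1^{\prime\prime}/\theta_1+\theta_1^{\prime\prime\prime}(0)/\theta_1^{\prime}(0)$ and one uses $\theta_1^{\prime\prime}/\theta_1=f^{\prime}+f^2$, the (corrected) identity is equivalent to
\[
f^{\prime}(x)+f^{\prime}(y)+f^{\prime}(x-y)+\bigl(f(x)-f(y)-f(x-y)\bigr)^2=\frac{\theta_1^{\prime\prime\prime}(0)}{\theta_1^{\prime}(0)},
\]
and substituting $f(z)=\zeta(z)+4\pi i g_1(\tau)z$ reduces this to the classical Weierstrass identity $(\zeta(a)+\zeta(b)+\zeta(c))^2=\wp(a)+\wp(b)+\wp(c)$ for $a+b+c=0$ (take $a=x$, $b=-y$, $c=y-x$), together with $12\pi i g_1(\tau)=\theta_1^{\prime\prime\prime}(0)/\theta_1^{\prime}(0)$. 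This bypasses entirely the Laurent-coefficient bookkeeping that you correctly identify as the delicate step of your approach.
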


The desired generating function is a consequence of the following lemmas.

\begin{lemma}
Let $\Phi(p)$ be given by
\begin{equation}
\Phi(p)=e^{-2\pi i u-2\pi ip^2n g_1(\tau)}\prod_{i=1}^{n+1}\frac{\theta_1(z_i,\tau)}{\theta_1^{\prime}(0,\tau)}\frac{\theta_1^{\prime}(0,\tau)}{\theta_1(z_{n+2},\tau)},
\end{equation}

and $\widetilde M$ the extended modified intersection form
\begin{equation}
\widetilde M(d\Phi(p),d\Phi(p^{\prime}))=\frac{1}{\eta^{4n+4}}\widetilde g^{*}\left(d\left(\eta^{2n+2}\Phi(p)\right),d\left(\eta^{2n+2}\Phi(p^{\prime})\right)\right),
\end{equation}
then,
\begin{equation}
\begin{split}
&e^{2\pi in(p^2+{p^{\prime}}^2)}\widetilde M(d\Phi(p),d\Phi(p^{\prime}))=\\
&=2\pi i n\frac{\nabla_{\tau}\alpha(p-p^{\prime})}{\alpha(p-p^{\prime})}+\frac{\alpha^{\prime}(p-p^{\prime})}{\alpha(p-p^{\prime})}\left[ P(p)\frac{d P(p^{\prime})}{dp^{\prime}}-P(p^{\prime})\frac{dP(p)}{dp}\right],
\end{split}
\end{equation}
where
\begin{equation}\label{equation inside lemma coefficient M}
\begin{split}
\nabla_{\tau}F(v,\tau)&=\frac{1}{\eta^{2k}}\frac{\partial \left(\eta^{2k}F \right)}{\partial \tau}, \quad F\in E^{k},\\
\alpha(p)&=\frac{\theta_1(p,\tau)}{\theta_1^{\prime}(0,\tau)},\\
P(p)&=e^{-2\pi iu}\prod_{i=1}^{n+1}\frac{\theta_1(z_i,\tau)}{\theta_1^{\prime}(0,\tau)}\frac{\theta_1^{\prime}(0,\tau)}{\theta_1(z_{n+2},\tau)}.
\end{split}
\end{equation}

\end{lemma}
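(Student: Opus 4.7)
The plan is to compute the pairing in the coordinate system $(s,z_1,\ldots,z_{n+2},\tau)$ in which the extended intersection form diagonalises as $\widetilde{g}^{*} = \sum_{i=1}^{n+1}\partial_{z_i}\otimes\partial_{z_i} - \partial_{z_{n+2}}\otimes\partial_{z_{n+2}} + \partial_s\otimes\partial_\tau + \partial_\tau\otimes\partial_s$, and in which the extended semi-superpotential acquires the compact multiplicative form $\Phi = e^{-2\pi i s}\prod_{i=1}^{n+1}\alpha(z_i)/\alpha(z_{n+2})$. The essential geometric fact I would exploit is that the two ``copies'' $\Phi(p)$ and $\Phi(p')$ differ only through the rigid translation $z_i\mapsto z_i+(p-p')$ of the theta-arguments, so the differences $z_i(p)-z_i(p')$ are independent of $i$ and equal to $p-p'$. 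This is what will allow the elliptic identity (\ref{nice technical proposition}) to collapse $n+1$ separate summands into a single term depending only on $p-p'$.

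First I would compute the partial derivatives of $\Phi$ by logarithmic differentiation: $\partial_{z_i}\Phi = \Phi\,\alpha'(z_i)/\alpha(z_i)$ for $i\le n+1$, $\partial_{z_{n+2}}\Phi = -\Phi\,\alpha'(z_{n+2})/\alpha(z_{n+2})$, $\partial_s\Phi = -2\pi i\,\Phi$, while the heat equation $4\pi i\,\partial_\tau\theta_1 = \theta_1''$ together with $\partial_\tau\log\eta^3 = 3g_1(\tau)$ gives
\begin{equation*}
\frac{\partial_\tau\Phi}{\Phi} = \frac{1}{4\pi i}\left[\sum_{i=1}^{n+1}\frac{\alpha''(z_i)}{\alpha(z_i)} - \frac{\alpha''(z_{n+2})}{\alpha(z_{n+2})}\right] - 3n\,g_1(\tau).
\end{equation*}
Plugging into $\widetilde{g}^{*}(d\Phi(p),d\Phi(p'))$, the purely $z$-derivative block combined with the heat-equation portion of the $\partial_s\otimes\partial_\tau+\partial_\tau\otimes\partial_s$ cross term reassembles, for each index $i$, into precisely the left-hand side of the identity (\ref{nice technical proposition}) evaluated at $(x,y)=(z_i(p),z_i(p'))$. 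Since $x-y=p-p'$ is the same for every $i$, applying the identity produces $n$ copies of $\partial_\tau\alpha(p-p')/\alpha(p-p')$ (the $n+1$ positive contributions from $z_1,\ldots,z_{n+1}$ minus the negative one from $z_{n+2}$) and a residual term $(\alpha'(p-p')/\alpha(p-p'))\sum_i[\alpha'(z_i)/\alpha(z_i)-\alpha'(\tilde z_i)/\alpha(\tilde z_i)]$, in which the bracket telescopes to $P'(p)/P(p) - P'(p')/P(p')$ upon recognising $\partial_p\log P = \sum_{i=1}^{n+1}\alpha'(z_i)/\alpha(z_i) - \alpha'(z_{n+2})/\alpha(z_{n+2})$.

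Finally I would pass from $\widetilde{g}^{*}$ to $\widetilde{M}$ via the conjugation by $\eta^{2n+2}$. Since $\eta$ depends only on $\tau$ and $d\eta^{2n+2}=(2n+2)g_1(\tau)\eta^{2n+2}d\tau$, the Leibniz rule produces an extra term via the $\partial_s\otimes\partial_\tau+\partial_\tau\otimes\partial_s$ block proportional to $g_1(\tau)\Phi(p)\Phi(p')$. Combined with the $-3n g_1$ piece already carried by $\partial_\tau\Phi$, this is exactly the shift required to promote $\partial_\tau\alpha(p-p')$ to the elliptic-connection derivative $\nabla_\tau\alpha(p-p')$ for the weight-$(-1)$ object $\alpha$. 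The prefactor $e^{2\pi in(p^2+p'^2)}$ (up to the $g_1(\tau)$ absorbed into the definition of $s$) strips away the non-elliptic exponentials that distinguish $\Phi$ from $P$, so the telescoped derivative assembles into $P(p)\,dP(p')/dp' - P(p')\,dP(p)/dp$. The main technical obstacle is the bookkeeping of three separate sources of $g_1(\tau)$ corrections --- from the heat equation, from the $\eta^{2n+2}$ rescaling, and from the $s\leftrightarrow u$ coordinate shift --- and verifying that they telescope exactly to reconstitute $\nabla_\tau$ with no residual modular remainder.
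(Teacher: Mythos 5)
Your proposal is correct and follows essentially the same route as the paper: pass to the coordinates $(s,z_1,\dots,z_{n+2},\tau)$ where $\widetilde g^{*}$ diagonalises, express the $\tau$-derivative of the product of theta quotients through the heat equation so that each index $i$ contributes the combination $\frac{\alpha''(z_i)}{\alpha(z_i)}+\frac{\alpha''(w_i)}{\alpha(w_i)}-2\frac{\alpha'(z_i)\alpha'(w_i)}{\alpha(z_i)\alpha(w_i)}$, and then apply the identity (\ref{nice technical proposition}) term by term using $z_i(p)-z_i(p')=p-p'$, with the residual $g_1(\tau)$ corrections assembling into $\nabla_\tau$. The only cosmetic difference is that the paper strips the exponential prefactor to work with $P(p)$ from the outset, whereas you carry $\Phi$ and strip it at the end; the substance is identical.
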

\begin{proof}

\begin{equation}\label{eq 1 of this lemma metric}
\begin{split}
&e^{2\pi ing_1(\tau)(p^2+{p^{\prime}}^2)}\widetilde M(d\Phi(p),d\Phi(p^{\prime}))\\
&=e^{2\pi ig_1(\tau)n(p^2+{p^{\prime}}^2)}\widetilde M\left(d\left(e^{-2\pi ing_1(\tau)p^2}P(p)\right),d\left(e^{-2\pi ing_1(\tau){p^{\prime}}^2}P(p^{\prime})\right)\right)
\end{split}
\end{equation}

Note that 
\begin{equation*}
\begin{split}
\frac{\partial}{\partial p}&=\sum_{i=1}^{n+2}\frac{\partial}{\partial z_i}+2npg_1(\tau)\frac{\partial}{\partial u},\\
\frac{\partial}{\partial v_{n+1}}&=\sum_{i=1}^{n+1}\frac{\partial}{\partial z_i}+(n+1)\frac{\partial}{\partial z_{n+2}},\\
\frac{\partial}{\partial \tau}&=\frac{\partial }{\partial \tilde \tau}+np^2g_1^{\prime}(\tau)\frac{\partial}{\partial u}.\\
\end{split}
\end{equation*}
Hence,
\begin{equation}\label{eq 2 of this lemma metric}
\begin{split}
&e^{2\pi ing_1(\tau)p^2}\frac{\partial}{\partial p}\left(e^{-2\pi ing_1(\tau)p^2} P(p) \right)=\sum_{i=1}^{n+1}\frac{\partial P(p)}{\partial z_i}-\frac{\partial P(p)}{\partial z_{n+2}},\\
&e^{2\pi ing_1(\tau)p^2}\frac{\partial}{\partial \tau}\left(e^{-2\pi ing_1(\tau)p^2} P(p) \right)=\frac{\partial P(p)}{\partial \tilde \tau}.\\
\end{split}
\end{equation}
Substituting (\ref{eq 2 of this lemma metric}) in (\ref{eq 1 of this lemma metric}) we get 
\begin{equation}\label{nice equation in the middle lemma}
\begin{split}
&e^{2\pi ing_1(\tau)(p^2+{p^{\prime}}^2)}\widetilde M(d\Phi(p),d\Phi(p^{\prime}))=\\
&=\sum_{i,j=0}^{n-1} A_{ij}^{-1}\frac{\partial P(p)}{\partial v_i} \frac{\partial P(p^{\prime})}{\partial v_j}+\frac{1}{n}\sum_{i=1}^{n+1}\frac{\partial P(p)}{\partial z_i}\sum_{i=1}^{n+1}\frac{\partial P(p^{\prime})}{\partial z_i}-\frac{1}{n(n+1)}\frac{\partial P(p)}{\partial v_{n+1}}\frac{\partial P(p^{\prime})}{\partial v_{n+1}}\\
&-\frac{1}{n}\frac{\partial P(p)}{\partial z_{n+2}}\sum_{i=1}^{n+1}\frac{\partial P(p^{\prime})}{\partial z_i}-\frac{1}{n}\frac{\partial P(p^{\prime})}{\partial z_{n+2}}\sum_{i=1}^{n+1}\frac{\partial P(p)}{\partial z_i}+\frac{1}{n}\frac{\partial P(p)}{\partial z_{n+2}}\frac{\partial P(p^{\prime})}{\partial z_{n+2}}\\
& -2\pi iP(p) \nabla_{\tau}P(p^{\prime})-2\pi iP(p^{\prime}) \nabla_{\tau}P(p)\\
&=\sum_{i=1}^{n+1}\frac{\partial P(p)}{\partial z_i}\frac{\partial P(p^{\prime})}{\partial z_i} -2\pi iP(p) \nabla_{\tau}P(p^{\prime})-2\pi iP(p^{\prime}) \nabla_{\tau}P(p).
\end{split}
\end{equation}
Using the following identity in (\ref{nice equation in the middle lemma})
\begin{equation}
\begin{split}
&\sum_{i,j=0}^{n-1} A_{ij}^{-1}\frac{\partial P(p)}{\partial v_i} \frac{\partial P(p^{\prime})}{\partial v_j}+\frac{1}{n}\sum_{i=1}^{n+1}\frac{\partial P(p)}{\partial z_i}\sum_{i=1}^{n+1}\frac{\partial P(p^{\prime})}{\partial z_i}\\
&=\sum_{i=1}^{n+1}\frac{\partial P(p)}{\partial z_i}\frac{\partial P(p^{\prime})}{\partial z_i}+\frac{1}{n(n+1)}\sum_{i=1}^{n+1}\frac{\partial P(p)}{\partial z_i}\sum_{i=1}^{n+1}\frac{\partial P(p^{\prime})}{\partial z_i},
\end{split}
\end{equation}
we get 
\begin{equation}\label{nice equation in the middle lemma1}
\begin{split}
&e^{2\pi ing_1(\tau)(p^2+{p^{\prime}}^2)}\widetilde M(d\Phi(p),d\Phi(p^{\prime}))=\\
&=\sum_{i=1}^{n+1}\frac{\partial P(p)}{\partial z_i}\frac{\partial P(p^{\prime})}{\partial z_i}-\frac{\partial P(p)}{\partial z_{n+2}}\frac{\partial P(p^{\prime})}{\partial z_{n+2}} -2\pi iP(p) \nabla_{\tau}P(p^{\prime})-2\pi iP(p^{\prime}) \nabla_{\tau}P(p),
\end{split}
\end{equation}

we now compute
\begin{equation}\label{equation in lemma to proof generating function of the metric}
\begin{split}
&\sum_{i=1}^{n+1}\frac{\partial P(p)}{\partial z_i}\frac{\partial P(p^{\prime})}{\partial z_i} -2\pi iP(p) \nabla_{\tau}P(p^{\prime})-2\pi iP(p^{\prime}) \nabla_{\tau}P(p)=\\
&=\sum_{i=1}^{n+1}\frac{\partial P(p)}{\partial z_i}\frac{\partial P(p^{\prime})}{\partial z_i}-\frac{2\pi i}{\eta^{2n}}\left[ P(p)\frac{\partial \left(\eta^{2n}P(p^{\prime})  \right)}{\partial \tau} +P(p^{\prime})\frac{\partial \left(\eta^{2n}P(p)  \right)}{\partial \tau}  \right]\\
&=-\frac{1}{2}\left( \sum_{i=1}^{n+1}    \frac{\alpha^{\prime\prime}(z_i)}{\alpha(z_i)} +\frac{\alpha^{\prime\prime}(w_i)}{\alpha(w_i)}-2\frac{\alpha^{\prime}(z_i)\alpha^{\prime}(w_i)}{\alpha(z_i)\alpha(w_i)}  \right)P(p)P(p^{\prime})+4\pi i g_1nP(p)P(p^{\prime})\\
&+\frac{1}{2}\left(    \frac{\alpha^{\prime\prime}(z_{n+2})}{\alpha(z_{n+2})} +\frac{\alpha^{\prime\prime}(w_{n+2})}{\alpha(w_{n+2})}-2\frac{\alpha^{\prime}(z_{n+2})\alpha^{\prime}(w_{n+2})}{\alpha(z_{n+2})\alpha(w_{n+2})}  \right)P(p)P(p^{\prime}),\\
\end{split}
\end{equation}
where $z_i:=z_i(v_i,p)$, $w_i:=z_i(v_i,p^{\prime})$. Substituting (\ref{nice technical proposition}) in (\ref{equation in lemma to proof generating function of the metric})
\begin{equation}
\begin{split}
&=-\frac{1}{2}\sum_{i=1}^{n+1}\left(    \frac{\alpha^{\prime\prime}(z_i)}{\alpha(z_i)} +\frac{\alpha^{\prime\prime}(w_i)}{\alpha(w_i)}-2\frac{\alpha^{\prime}(z_i)\alpha^{\prime}(w_i)}{\alpha(z_i)\alpha(w_i)}  \right)P(p)P(p^{\prime})+4\pi i g_1nP(p)P(p^{\prime})\\
&+\frac{1}{2}\left(    \frac{\alpha^{\prime\prime}(z_{n+2})}{\alpha(z_{n+2})} +\frac{\alpha^{\prime\prime}(w_{n+2})}{\alpha(w_{n+2})}-2\frac{\alpha^{\prime}(z_{n+2})\alpha^{\prime}(w_{n+2})}{\alpha(z_{n+2})\alpha(w_{n+2})}  \right)P(p)P(p^{\prime})\\
&=2\pi i n\frac{\nabla_{\tau}\alpha(p-p^{\prime})}{\alpha(p-p^{\prime})}+\frac{\alpha^{\prime}(p-p^{\prime})}{\alpha(p-p^{\prime})}\left[ P(p)\frac{d P(p^{\prime})}{dp^{\prime}}-P(p^{\prime})\frac{dP(p)}{dp}\right].
\end{split}
\end{equation}

\end{proof}

\begin{lemma}
For the coefficients $\tilde M$ of intersection form, we have the following formula 
\begin{equation}\label{pre generating function 2}
\begin{split}
&\sum_{k,j}C_k(p)C_j(p^{\prime})\widetilde M(d\varphi_{k},d\varphi_{j})=\\
&=2\pi i n\frac{\nabla_{\tau}\alpha(p-p^{\prime})}{\alpha(p-p^{\prime})}\lambda(p)\lambda(p^{\prime})+\frac{\alpha^{\prime}(p-p^{\prime})}{\alpha(p-p^{\prime})}\left[ P(p)\frac{d P(p^{\prime})}{dp^{\prime}}-P(p^{\prime})\frac{dP(p)}{dp}\right]\\
&-\sum_{k,j}\widetilde M\left( dC_k(p), dC_j(p^{\prime})\right)\varphi_{n+1-k}\varphi_{n+1-j}
\end{split}
\end{equation}
where $C_k(p)$ is given by
\begin{equation}
\begin{split}
&C_k(p)=\frac{(-1)^{k}}{(k-1)!}\alpha^{n}(p)\wp^{\left(k-2\right)}(p),\\
&\wp^{-1}(p)=\zeta(p)-\zeta(p+(n+1)v_{n+1})+(n+1)\zeta(v_{n+1}),\\
&\wp^{-2}(p)=1,
\end{split}
\end{equation}
and
\begin{equation}
\begin{split}
\widetilde M\left( dC_k(p), dC_j(p^{\prime})\right):=& -2\pi i\left( C_k(p) \nabla_{\tau}C_j(p^{\prime})+C_j(p^{\prime}) \nabla_{\tau}C_k(p)\right)+\frac{1}{n}\frac{\partial C_k(p)}{\partial p}\frac{\partial C_j(p^{\prime})}{\partial p^{\prime}}\\
&-\frac{1}{n(n+1)}\frac{\partial C_k(p)}{\partial v_{n+1}}\frac{\partial C_j(p^{\prime})}{\partial v_{n+1}}+\sum_{k,j=0}^n C_k(p)C_j(p^{\prime})\frac{\partial \varphi_j}{\partial v_{n+1}}\frac{\partial \varphi_k}{\partial v_{n+1}}.
\end{split}
\end{equation}
\end{lemma}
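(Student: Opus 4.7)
The plan is to combine the preceding lemma with the bilinearity of the contravariant metric $\widetilde M$ applied to the expansion of $\Phi(p)$ in terms of the basic Jacobi forms $\varphi_k$.

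First I would rewrite the superpotential expansion (\ref{superpotentialAn}) of Corollary \ref{corollary superpotential antilde}, multiplied through by $\alpha^n(p)$, in the form
\begin{equation*}
\alpha^n(p)\, \lambda^{\tilde A_n}(p) = \sum_{k=0}^{n} C_k(p)\, \varphi_k,
\end{equation*}
where the $C_k(p)$ are the basis functions defined in the statement (the powers $\wp^{n-k}(p)$ appearing in the superpotential expansion are equivalent, up to invertible triangular linear combinations absorbed into $C_k$, to the derivatives $\wp^{(k-2)}(p)$, together with the conventions $\wp^{(-1)}=\zeta(p)-\zeta(p+(n+1)v_{n+1})+(n+1)\zeta(v_{n+1})$ and $\wp^{(-2)}=1$). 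This identifies $\Phi(p)$, modulo the overall exponential prefactor already accounted for in the preceding lemma, with $\sum_k C_k(p)\varphi_k$.

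Next I apply the Leibniz rule. Differentiating termwise gives $d\Phi(p)=\sum_k\bigl[C_k(p)\,d\varphi_k+\varphi_k\,dC_k(p)\bigr]$, and bilinearity of the symmetric $(2,0)$-tensor $\widetilde M$ yields
\begin{equation*}
\widetilde M(d\Phi(p), d\Phi(p')) = \sum_{k,j} C_k(p) C_j(p')\, \widetilde M(d\varphi_k, d\varphi_j) + \Sigma_{\mathrm{cross}} + \sum_{k,j} \varphi_k \varphi_j\, \widetilde M(dC_k(p), dC_j(p')),
\end{equation*}
where $\Sigma_{\mathrm{cross}}$ collects the mixed contributions of the form $C_k(p)\varphi_j\,\widetilde M(d\varphi_k, dC_j(p'))+\varphi_k C_j(p')\,\widetilde M(dC_k(p), d\varphi_j)$.

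Finally, I substitute the explicit expression for $\widetilde M(d\Phi(p), d\Phi(p'))$ supplied by the preceding lemma into the decomposition above and solve for the pure sum $\sum_{k,j} C_k(p) C_j(p')\, \widetilde M(d\varphi_k, d\varphi_j)$. The factor $\lambda(p)\lambda(p')$ multiplying the $\nabla_\tau\alpha(p-p')/\alpha(p-p')$ term in the asserted formula arises from rewriting $P(p)P(p')$ using the identification of the first step, and the remaining contribution $-\sum_{k,j}\widetilde M(dC_k, dC_j)\varphi_{n+1-k}\varphi_{n+1-j}$ is what survives after collecting the last two terms on the right side above, the index shift $k\mapsto n+1-k$ reflecting the ordering convention in (\ref{superpotentialAn}). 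The main obstacle is the bookkeeping required to simplify $\Sigma_{\mathrm{cross}}$ and to show that it merges with the $\varphi\varphi\,\widetilde M(dC,dC)$ sum to reproduce precisely the right-hand side; this uses the block structure of $\widetilde g^{*}$, namely that $C_k(p)$ depends only on $p, v_{n+1},\tau$ while the nontrivial off-diagonal couplings of $\widetilde g^{*}$ pair $du$ with $d\tau$ and the $dv_i$'s among themselves, and it motivates the augmented definition of $\widetilde M(dC_k(p), dC_j(p'))$ given in the statement, which already absorbs the $\partial\varphi/\partial v_{n+1}$ terms generated by the cross terms.
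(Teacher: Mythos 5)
Your proposal is correct and follows essentially the same route as the paper: write $P(p)=\sum_k C_k(p)\varphi_k$, expand $\widetilde M(d\Phi(p),d\Phi(p'))$ by bilinearity into the pure, cross, and $dC\,dC$ contributions, compute the cross terms using the fact that $C_k$ depends only on $p,v_{n+1},\tau$ so only the $u$--$\tau$, $p$--$p$ and $v_{n+1}$--$v_{n+1}$ blocks of $\widetilde g^{*}$ contribute, and then isolate $\sum_{k,j}C_k(p)C_j(p')\widetilde M(d\varphi_k,d\varphi_j)$ after substituting the closed form of $\widetilde M(d\Phi(p),d\Phi(p'))$ from the preceding lemma. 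The paper carries out the same bookkeeping you describe, with the $\nabla_\tau C$ and $\partial\varphi/\partial v_{n+1}$ cross terms folded into the stated definition of $\widetilde M(dC_k(p),dC_j(p'))$.
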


\begin{proof}
Note that $P(p)=\sum_{k=0}^{n+1}C_k(p)\varphi_{k}$, then
\begin{equation}
\begin{split}
&e^{2\pi in(p^2+{p^{\prime}}^2)}\widetilde M(d\Phi(p),d\Phi(p^{\prime}))=\\
&=\sum_{j,k} \widetilde M\left( d\left( C_k(p)\varphi_{k}   \right), d\left( C_j(p^{\prime})\varphi_{j}   \right)   \right)\\
&=\sum_{j,k} C_k(p)C_j(p^{\prime})\widetilde M\left( d \varphi_{k}  , d \varphi_{j}  \right)+\sum_{j,k} C_j(p^{\prime})\varphi_{k}\widetilde M \left( d C_k(p)  , d\varphi_{j}    \right)\\
&+\sum_{j,k}C_k(p)\varphi_{j}   \widetilde M\left( d \varphi_{k}, d C_j(p^{\prime}) \right)+\sum_{j,k} \varphi_{k}\varphi_{j}\widetilde M\left( d C_k(p)  , d C_j(p^{\prime})  \right)\\
&=\sum_{j,k} C_k(p)C_j(p^{\prime})\widetilde M\left( d \varphi_{k}  , d \varphi_{j}  \right)-2\pi i\sum_{j,k} C_j(p^{\prime})\varphi_{k} \varphi_{j} \nabla_{\tau}C_k(p)   \\
&-2\pi i\sum_{j,k}C_k(p)\varphi_{j}\varphi_{k}\nabla_{\tau}C_j(p^{\prime}) +\frac{1}{n}\sum_{j,k} \varphi_{k}\varphi_{j}\frac{\partial C_k(p)}{\partial p}\frac{\partial C_j(p^{\prime})}{\partial p^{\prime}}\\
&-\frac{1}{n(n+1)}\sum_{j,k} \varphi_{k}\varphi_{j}\frac{\partial C_k(p)}{\partial v_{n+1}}\frac{\partial C_j(p^{\prime})}{\partial v_{n+1}}-\frac{1}{n(n+1)}\sum_{j,k}  C_k(p)\varphi_{j}\frac{\partial \varphi_{k}}{\partial v_{n+1}}\frac{\partial C_j(p^{\prime})}{\partial v_{n+1}}\\
&-\frac{1}{n(n+1)}\sum_{j,k}  C_j(p^{\prime})\varphi_{k}\frac{\partial \varphi_{j}}{\partial v_{n+1}}\frac{\partial C_k(p)}{\partial v_{n+1}}.
\end{split}
\end{equation}
Then, isolating $\sum_{k,j}C_k(p)C_j(p^{\prime})\widetilde M(d\varphi_{k},d\varphi_{j})$
\begin{equation}\label{pre generating function 2}
\begin{split}
&\sum_{k,j}C_k(p)C_j(p^{\prime})\widetilde M(d\varphi_{k},d\varphi_{j})=\\
&=2\pi i n\frac{\nabla_{\tau}\alpha(p-p^{\prime})}{\alpha(p-p^{\prime})}\lambda(p)\lambda(p^{\prime})+\frac{\alpha^{\prime}(p-p^{\prime})}{\alpha(p-p^{\prime})}\left[ P(p)\frac{d P(p^{\prime})}{dp^{\prime}}-P(p^{\prime})\frac{dP(p)}{dp}\right]\\
&-\sum_{k,j}\widetilde M\left( dC_k(p), dC_j(p^{\prime})\right)\varphi_{n+1-k}\varphi_{n+1-j}.
\end{split}
\end{equation}

\end{proof}

\begin{theorem}
The coefficient of $M^{*}(d\varphi_i,d\varphi_j)$ is recovered by the generating formula
\begin{equation}\label{generating formula of Melliptic}
\begin{split}
&\sum_{k,j=0}^{n}\frac{(-1)^{k+j}}{(k-1)!(j-1)!}M^{*}(d\varphi_i,d\varphi_j)\wp(v)^{(k-2)}\wp(v^{\prime})^{(j-2)}=\\
&=2\pi i(\lambda(v^{\prime})D_{\tau}\lambda(v)+\lambda(v)D_{\tau}\lambda(v^{\prime}))-\frac{1}{n+1}\frac{d\lambda(v)}{dv}\frac{d\lambda(v^{\prime})}{dv^{\prime}}\\
&+\frac{1}{2}\frac{\wp^{\prime}(v)+\wp^{\prime}(v^{\prime})}{\wp(v)-\wp(v^{\prime})}[ \lambda(v)\frac{d\lambda(v^{\prime})}{dv^{\prime}}-\frac{d\lambda(v)}{dv}\lambda(v^{\prime})]-\frac{1}{n}\lambda^{\prime}(p)\lambda^{\prime}(p^{\prime})\\
&-\frac{1}{n(n+1)}\frac{\partial\lambda(p)}{\partial v_{n+1}}\frac{\partial\lambda(p^{\prime})}{\partial v_{n+1}}+\frac{1}{n(n+1)}\sum_{k,j=0}^n \frac{(-1)^{k+j}}{(k-1)!(j-1)!}\wp(v)^{(k-2)}\wp(v^{\prime})^{(j-2)}\frac{\partial \varphi_j}{\partial v_{n+1}}\frac{\partial \varphi_k}{\partial v_{n+1}}.
\end{split}
\end{equation}

\end{theorem}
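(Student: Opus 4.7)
The plan is to derive the stated formula as a post-processing of the identity in the preceding lemma, which expresses $\sum_{k,j} C_k(v)C_j(v')\widetilde M(d\varphi_k, d\varphi_j)$ in terms of $P(v), P(v')$, the function $\alpha$, and a correction coming from $\widetilde M(dC_k(v), dC_j(v'))$. Two preliminary observations reduce this to the theorem: first, because each $\varphi_k$ is independent of the extension variable $p$, the extended metric and the original intersection form agree, so $\widetilde M(d\varphi_k, d\varphi_j) = M^{*}(d\varphi_k, d\varphi_j)$; second, the explicit form $C_k(v) = \frac{(-1)^k}{(k-1)!}\alpha^n(v)\wp^{(k-2)}(v)$ factors an $\alpha^n(v)\alpha^n(v')$ prefactor out of the left-hand side, and dividing by this prefactor converts the coefficient structure into the desired $\frac{(-1)^{k+j}}{(k-1)!(j-1)!}\wp^{(k-2)}(v)\wp^{(j-2)}(v')$ pattern.

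With the left-hand side under control, the next step is to rewrite the principal terms on the right using $P(v) = \alpha^n(v)\lambda(v)$. Explicit differentiation gives
\begin{equation*}
\frac{P(v)P'(v') - P(v')P'(v)}{\alpha^n(v)\alpha^n(v')} = \lambda(v)\lambda'(v') - \lambda(v')\lambda'(v) + n\lambda(v)\lambda(v')\Bigl[\tfrac{\alpha'(v')}{\alpha(v')} - \tfrac{\alpha'(v)}{\alpha(v)}\Bigr].
\end{equation*}
The $\lambda,\lambda'$ bracket, after multiplication by $\frac{\alpha'(v-v')}{\alpha(v-v')}$, becomes the desired $\frac{1}{2}\frac{\wp'(v)+\wp'(v')}{\wp(v)-\wp(v')}$ bracket via the classical identity relating $\alpha'(v-v')/\alpha(v-v')$ to differences of Weierstrass zeta functions, followed by the addition formula for $\wp$. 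The leftover cross term $n\lambda(v)\lambda(v')\frac{\alpha'(v-v')}{\alpha(v-v')}[\alpha'(v')/\alpha(v') - \alpha'(v)/\alpha(v)]$ is precisely what formula (\ref{nice technical proposition}) needs to combine with the $\nabla_\tau\alpha(v-v')/\alpha(v-v')$ contribution, eliminating the $\alpha''/\alpha$ residues and, after translating $\nabla_\tau$ to the elliptic connection $D_\tau$ via $g_1 = \eta'/\eta$ and the weight $-n$ of $\lambda(v)$ as an elliptic function, yielding the term $2\pi i(\lambda(v')D_\tau\lambda(v) + \lambda(v)D_\tau\lambda(v'))$.

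It remains to compute the correction $\sum_{k,j}\widetilde M(dC_k(v), dC_j(v'))\varphi_k\varphi_j$, using the closed expression for $\widetilde M$ recorded in the preceding lemma. The $\frac{1}{n}\partial_p C_k\,\partial_{p'} C_j$ piece, after summing against $\varphi_k\varphi_j/(\alpha^n(v)\alpha^n(v'))$, produces $-\frac{1}{n}\lambda'(v)\lambda'(v')$ together with further $\lambda\lambda'$ cross terms that combine with the $n\lambda\lambda'[\alpha'/\alpha]$ pieces above to yield the term $-\frac{1}{n+1}\frac{d\lambda(v)}{dv}\frac{d\lambda(v')}{dv'}$. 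The $\partial_{v_{n+1}}C_k\,\partial_{v_{n+1}}C_j$ piece reassembles into $-\frac{1}{n(n+1)}\partial_{v_{n+1}}\lambda(v)\partial_{v_{n+1}}\lambda(v')$, while the mixed $\partial_{v_{n+1}}C\cdot\partial_{v_{n+1}}\varphi$ pieces give the residual double sum $\frac{1}{n(n+1)}\sum_{k,j}\frac{(-1)^{k+j}}{(k-1)!(j-1)!}\wp^{(k-2)}(v)\wp^{(j-2)}(v')\partial_{v_{n+1}}\varphi_k\partial_{v_{n+1}}\varphi_j$. The $-2\pi i(C_k\nabla_\tau C_j + C_j\nabla_\tau C_k)$ contribution is absorbed into the reconstruction of $D_\tau\lambda$ above.

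The main technical obstacle is the fourth step: ensuring that every weight-shift carried by $\nabla_\tau$ when it acts on the various pieces $\alpha^n$, $\wp^{(k-2)}$, $P(v)$, $\lambda(v)$ (each with different modular weight) is correctly repackaged into the elliptic connection $D_\tau$ on $\lambda(v)$ viewed as a $v$-elliptic function of weight $-n$. This is the most delicate bookkeeping, and is handled by systematic application of the identity (\ref{nice technical proposition}) together with the definition of $D_\tau$ in (\ref{elliptic connection}) and the identity $g_1(\tau) = \eta'(\tau)/\eta(\tau)$.
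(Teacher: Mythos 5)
Your proposal follows essentially the same route as the paper's own proof: divide the identity of the preceding lemma by $\alpha^{n}(v)\alpha^{n}(v^{\prime})$, substitute $P=\alpha^{n}\lambda$, invoke the identity (\ref{nice technical proposition}) to trade the $\alpha^{\prime\prime}/\alpha$ and cross terms for $\nabla_{\tau}\alpha(v-v^{\prime})/\alpha(v-v^{\prime})$ and the zeta-difference kernel, and then expand the correction $\sum_{k,j}\widetilde M(dC_k,dC_j)\varphi_k\varphi_j$ term by term to produce the $\lambda^{\prime}\lambda^{\prime}$, $\partial_{v_{n+1}}\lambda\,\partial_{v_{n+1}}\lambda$ and residual double-sum contributions, with the weight bookkeeping converting $\nabla_{\tau}$ into $D_{\tau}$. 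This matches the paper's argument step for step, so the approach is sound.
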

\begin{proof}
We start by dividing the expression (\ref{pre generating function 2}) by $\alpha^n(p)\alpha^n(p^{\prime})$
\begin{equation}\label{pre generating function 2 sub}
\begin{split}
&\sum_{k,j=0}^{n}\frac{(-1)^{k+j}}{(k-1)!(j-1)!}M^{*}(d\varphi_i,d\varphi_j)\wp(v)^{(k-2)}\wp(v^{\prime})^{(j-2)}=\\
&=2\pi i n\frac{\nabla_{\tau}\alpha(p-p^{\prime})}{\alpha(p-p^{\prime})}\lambda(p)\lambda(p^{\prime})+\frac{\alpha^{\prime}(p-p^{\prime})}{\alpha(p-p^{\prime})}\left[ \lambda(p)\frac{d P(p^{\prime})}{dp^{\prime}}\frac{1}{\alpha^n(p^{\prime})}-\lambda(p^{\prime})\frac{dP(p)}{dp}\frac{1}{\alpha^n(p)}\right]\\
&-\sum_{k,j} \frac{\widetilde M\left( dC_k(p), dC_j(p^{\prime})\right)}{\alpha^n(p)\alpha^n(p^{\prime})}   \varphi_{n+1-k}\varphi_{n+1-j}\\
&=(1)-(2).
\end{split}
\end{equation}
Computing separately 
\begin{equation}
\begin{split}
&(1):=2\pi i n\frac{\nabla_{\tau}\alpha(p-p^{\prime})}{\alpha(p-p^{\prime})}\lambda(p)\lambda(p^{\prime})+\frac{\alpha^{\prime}(p-p^{\prime})}{\alpha(p-p^{\prime})}\left[ \lambda(p)\frac{d P(p^{\prime})}{dp^{\prime}}\frac{1}{\alpha^n(p^{\prime})}-\lambda(p^{\prime})\frac{dP(p)}{dp}\frac{1}{\alpha^n(p)}\right]=\\
&=n \left(2\pi i \frac{\nabla_{\tau}\alpha(p-p^{\prime})}{\alpha(p-p^{\prime})}-\frac{\alpha^{\prime}(p-p^{\prime} )}{\alpha(p-p^{\prime})}\left[\frac{\alpha^{\prime}(p)}{\alpha(p)}-\frac{\alpha^{\prime}(p^{\prime})}{\alpha(p^{\prime})}  \right]        \right)\lambda(p)\lambda(p^{\prime})\\
&+\frac{\alpha^{\prime}(p-p^{\prime})}{\alpha(p-p^{\prime})}\left[ \lambda(p)\frac{d \lambda(p^{\prime})}{dp^{\prime}}-\lambda(p^{\prime})\frac{d\lambda(p)}{dp}\right].
\end{split}
\end{equation}
and 
\begin{equation}
\begin{split}
&(2):=\sum_{k,j} \frac{\widetilde M\left( dC_k(p), dC_j(p^{\prime})\right)}{\alpha^n(p)\alpha^n(p^{\prime})}   \varphi_{n+1-k}\varphi_{n+1-j}=\\
&=-2\pi i\left( \lambda(p) \nabla_{\tau}\lambda(p^{\prime})+\lambda(p^{\prime}) \nabla_{\tau}\lambda(p)\right)-2\pi in\left[4g_1+\frac{\partial_{\tau}\alpha(p)}{\alpha(p)}+\frac{\partial_{\tau}\alpha(p)}{\alpha(p)} \right]\lambda(p)\lambda(p^{\prime})\\
&+\frac{1}{n}\left[\frac{\partial \lambda(p)}{\partial p}+n\frac{\alpha^{\prime}(p)}{\alpha(p)}\lambda(p)\right] \left[\frac{\partial \lambda(p^{\prime})}{\partial p^{\prime}}+n\frac{\alpha^{\prime}(p^{\prime})}{\alpha(p^{\prime})}\lambda(p^{\prime}) \right]\\
&-\frac{1}{n(n+1)}\frac{\partial \lambda(p)}{\partial v_{n+1}}\frac{\partial \lambda(p^{\prime})}{\partial v_{n+1}}+\sum_{k,j=0}^n \frac{(-1)^{k+j}}{(k-1)!(j-1)!}\wp(v)^{(k-2)}\wp(v^{\prime})^{(j-2)}\frac{\partial \varphi_j}{\partial v_{n+1}}\frac{\partial \varphi_k}{\partial v_{n+1}}\\
&=-2\pi i\left( \lambda(p) \nabla_{\tau}\lambda(p^{\prime})+\lambda(p^{\prime}) \nabla_{\tau}\lambda(p)\right)\\
&-n\left[8\pi ig_1+2\pi i\left(\frac{\partial_{\tau}\alpha(p)}{\alpha(p)}+\frac{\partial_{\tau}\alpha(p)}{\alpha(p)}\right)-\frac{\alpha^{\prime}(p)\alpha^{\prime}(p^{\prime})}{\alpha(p)\alpha(p^{\prime})}  \right]\lambda(p)\lambda(p^{\prime})\\
&+\frac{1}{n}\frac{\partial \lambda(p)}{\partial p}\frac{\partial \lambda(p^{\prime})}{\partial p^{\prime}}+\frac{\alpha^{\prime}(p)}{\alpha(p)}\lambda(p)\frac{\partial \lambda(p^{\prime})}{\partial p^{\prime}}+\frac{\alpha^{\prime}(p^{\prime})}{\alpha(p^{\prime})}\lambda(p^{\prime})\frac{\partial \lambda(p^{\prime})}{\partial p^{\prime}}\\
&-\frac{1}{n(n+1)}\frac{\partial \lambda(p)}{\partial v_{n+1}}\frac{\partial \lambda(p^{\prime})}{\partial v_{n+1}}+\sum_{k,j=0}^n \frac{(-1)^{k+j}}{(k-1)!(j-1)!}\wp(v)^{(k-2)}\wp(v^{\prime})^{(j-2)}\frac{\partial \varphi_j}{\partial v_{n+1}}\frac{\partial \varphi_k}{\partial v_{n+1}}\\
&=-2\pi i\left( \lambda(p) D_{\tau}\lambda(p^{\prime})+\lambda(p^{\prime}) D_{\tau}\lambda(p)\right)\\
&-n\left[8\pi ig_1+2\pi i\left(\frac{\partial_{\tau}\alpha(p)}{\alpha(p)}+\frac{\partial_{\tau}\alpha(p)}{\alpha(p)}\right)-\frac{\alpha^{\prime}(p)\alpha^{\prime}(p^{\prime})}{\alpha(p)\alpha(p^{\prime})}  \right]\lambda(p)\lambda(p^{\prime})\\
&+\frac{1}{n}\frac{\partial \lambda(p)}{\partial p}\frac{\partial \lambda(p^{\prime})}{\partial p^{\prime}}    +\left(\frac{\alpha^{\prime}(p)}{\alpha(p)}-\frac{\alpha^{\prime}(p^{\prime})}{\alpha(p^{\prime})}  \right)\left[ \lambda(p)\frac{d \lambda(p^{\prime})}{dp^{\prime}}-\lambda(p^{\prime})\frac{d\lambda(p)}{dp}\right]\\
&-\frac{1}{n(n+1)}\frac{\partial \lambda(p)}{\partial v_{n+1}}\frac{\partial \lambda(p^{\prime})}{\partial v_{n+1}}+\sum_{k,j=0}^n \frac{(-1)^{k+j}}{(k-1)!(j-1)!}\wp(v)^{(k-2)}\wp(v^{\prime})^{(j-2)}\frac{\partial \varphi_j}{\partial v_{n+1}}\frac{\partial \varphi_k}{\partial v_{n+1}}.\\
\end{split}
\end{equation}
Computing (1)-(2), and using equation (\ref{nice technical proposition}), we obtain
\begin{equation}
\begin{split}
&\sum_{k,j=0}^{n}\frac{(-1)^{k+j}}{(k-1)!(j-1)!}M^{*}(d\varphi_i,d\varphi_j)\wp(v)^{(k-2)}\wp(v^{\prime})^{(j-2)}=\\
&=2\pi i(\lambda(v^{\prime})D_{\tau}\lambda(v)+\lambda(v)D_{\tau}\lambda(v^{\prime}))-\frac{1}{n+1}\frac{d\lambda(v)}{dv}\frac{d\lambda(v^{\prime})}{dv^{\prime}}\\
&+\frac{1}{2}\frac{\wp^{\prime}(v)+\wp^{\prime}(v^{\prime})}{\wp(v)-\wp(v^{\prime})}[ \lambda(v)\frac{d\lambda(v^{\prime})}{dv^{\prime}}-\frac{d\lambda(v)}{dv}\lambda(v^{\prime})]-\frac{1}{n}\lambda^{\prime}(p)\lambda^{\prime}(p^{\prime})\\
&\frac{1}{n(n+1)}\frac{\partial\lambda(p)}{\partial v_{n+1}}\frac{\partial\lambda(p^{\prime})}{\partial v_{n+1}}-\frac{1}{n(n+1)}\sum_{k,j=0}^n \frac{(-1)^{k+j}}{(k-1)!(j-1)!}\wp(v)^{(k-2)}\wp(v^{\prime})^{(j-2)}\frac{\partial \varphi_j}{\partial v_{n+1}}\frac{\partial \varphi_k}{\partial v_{n+1}}.
\end{split}
\end{equation}

\end{proof}

\begin{corollary}
Let $\tilde \eta^{*}(d\varphi_i,d\varphi_j)$ and $\eta^{*}(d\varphi_i,d\varphi_j)$ be given by
\begin{equation}\label{metric eta def}
\begin{split}
\tilde \eta^{*}(d\varphi_i,d\varphi_j):=\frac{\partial M^{*}(d\varphi_i,d\varphi_j)}{\partial \varphi_0},\\
 \eta^{*}(d\varphi_i,d\varphi_j):=\frac{\partial g^{*}(d\varphi_i,d\varphi_j)}{\partial \varphi_0}.
\end{split}
\end{equation}
 The coefficient of $\tilde \eta^{*}(d\varphi_i,d\varphi_j)$ is recovered by the generating formula
\begin{equation}\label{generating formula of Mellipticeta}
\begin{split}
&\sum_{k,j=0}^{n+1}\frac{(-1)^{k+j}}{(k-1)!(j-1)!}\tilde \eta^{*}(d\varphi_i,d\varphi_j)\wp(v)^{(k-2)}\wp(v^{\prime})^{(j-2)}=\\
&=2\pi i(D_{\tau}\lambda(v)+D_{\tau}\lambda(v^{\prime}))+\frac{1}{2}\frac{\wp^{\prime}(v)+\wp^{\prime}(v^{\prime})}{\wp(v)-\wp(v^{\prime})}[ \frac{d\lambda(v^{\prime})}{dv^{\prime}}-\frac{d\lambda(v)}{dv}]\\
&-\frac{1}{n(n+1)}\frac{\partial}{\partial \varphi_0}\left(\frac{\partial\lambda(p)}{\partial v_{n+1}} \right)\frac{\partial\lambda(p^{\prime})}{\partial v_{n+1}}-\frac{1}{n(n+1)}\frac{\partial\lambda(p)}{\partial v_{n+1}}\frac{\partial}{\partial \varphi_0}\left(\frac{\partial\lambda(p^{\prime})}{\partial v_{n+1}} \right)\\
&+\frac{1}{n(n+1)}\sum_{k,j=0}^{n}\frac{(-1)^{k+j}}{(k-1)!(j-1)!}\frac{\partial}{\partial\varphi_0}\left(\frac{\partial\varphi_j}{\partial v_{n+1}}\right)\frac{\partial\varphi_k}{\partial v_{n+1}}\\
&+\frac{1}{n(n+1)}\sum_{k,j=0}^{n}\frac{(-1)^{k+j}}{(k-1)!(j-1)!}\frac{\partial\varphi_j}{\partial v_{n+1}}\frac{\partial}{\partial\varphi_0}\left(\frac{\partial\varphi_k}{\partial v_{n+1}}\right)
\end{split}
\end{equation}
Moreover,
\begin{equation}\label{metric eta consequence}
\begin{split}
\tilde \eta^{*}(d\varphi_i,d\varphi_j)= \eta^{*}(d\varphi_i,d\varphi_j), \quad i,j\neq 0,\\
\tilde \eta^{*}(d\varphi_0,d\varphi_j)= \eta^{*}(d\varphi_i,d\varphi_j)+4\pi i k_j\varphi_j.
\end{split}
\end{equation}

\end{corollary}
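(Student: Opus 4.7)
The plan is to derive both parts of the corollary from the preceding theorem by taking a single $\varphi_0$-derivative of the generating formula (\ref{generating formula of Melliptic}), and then to read off the correction relating $\tilde\eta^{*}$ to $\eta^{*}$ from the defining identity (\ref{definition of M coef}) for $M^{*}$.

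First I would apply $\partial/\partial\varphi_0$ to both sides of (\ref{generating formula of Melliptic}). On the left-hand side the prefactors $\wp^{(k-2)}(v)\wp^{(j-2)}(v')$ depend only on $v,v',\tau$ and so pass out of the $\varphi_0$-differentiation; by the definition (\ref{metric eta def}) this produces precisely the generating series for $\tilde\eta^{*}(d\varphi_k,d\varphi_j)$ required on the left of (\ref{generating formula of Mellipticeta}).

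The right-hand side is handled using the superpotential decomposition (\ref{superpotentialAn}), which shows $\partial\lambda/\partial\varphi_0=1$, since $\varphi_0$ enters as the constant term in the Weierstrass expansion of $\lambda^{\tilde A_n}$. It follows that $\partial(d\lambda/dv)/\partial\varphi_0=0$ and, because the elliptic connection $D_\tau$ of (\ref{elliptic connection}) annihilates the weight-zero constant function $1$, also $\partial(D_\tau\lambda)/\partial\varphi_0=0$. The Leibniz rule then eliminates the two quadratic-in-$\lambda$ blocks $\frac{1}{n+1}\frac{d\lambda(v)}{dv}\frac{d\lambda(v')}{dv'}$ and $\frac{1}{n}\lambda'(p)\lambda'(p')$, collapses the antisymmetric bracket $\lambda(v)\frac{d\lambda(v')}{dv'}-\frac{d\lambda(v)}{dv}\lambda(v')$ to $\frac{d\lambda(v')}{dv'}-\frac{d\lambda(v)}{dv}$, and reduces $\lambda(v')D_\tau\lambda(v)+\lambda(v)D_\tau\lambda(v')$ to $D_\tau\lambda(v)+D_\tau\lambda(v')$. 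The two $v_{n+1}$-derivative blocks transform into their own $\varphi_0$-derivatives simply by the product rule, producing exactly the trailing four lines of (\ref{generating formula of Mellipticeta}).

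For the comparison (\ref{metric eta consequence}) I would invoke the identity of Lemma \ref{intersection form and M}, namely $M^{*}(d\varphi_i,d\varphi_j)=g^{*}(d\varphi_i,d\varphi_j)-4\pi ig_1(\tau)(k_im_j+k_jm_i)\varphi_i\varphi_j$. Differentiating with respect to $\varphi_0$, and recalling that every Jacobi form $\varphi_i$ in the basis has index $m_i=1$ while $k_0=0$: the correction $\varphi_i\varphi_j$ has vanishing $\varphi_0$-derivative whenever neither index is zero (yielding the first line of (\ref{metric eta consequence})), while for $i=0$, $j\neq 0$ it contributes a linear term in $\varphi_j$ with coefficient proportional to $k_j$ (yielding the second line). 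The main obstacle is purely bookkeeping: one must check that $\partial/\partial\varphi_0$ commutes with $\partial/\partial v_{n+1}$ and with $D_\tau$ in the chosen coordinate system $(\varphi_0,\dots,\varphi_n,v_{n+1},\tau)$, and verify that the $v_{n+1}$-derivative contributions reassemble into the displayed form without generating spurious cross-terms from the chain rule.
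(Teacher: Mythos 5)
Your proposal is correct and follows essentially the same route as the paper, whose proof is the one-line instruction to differentiate the generating formula (\ref{generating formula of Melliptic}) with respect to $\varphi_0$ and use (\ref{superpotentialAn}); your observation that $\partial\lambda/\partial\varphi_0=1$ kills the quadratic blocks and linearizes the remaining ones is exactly the content of that instruction, and your use of Lemma \ref{intersection form and M} with $m_i=1$, $k_0=0$ is the intended source of (\ref{metric eta consequence}). You have simply spelled out the bookkeeping the paper leaves implicit.
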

\begin{proof}
Just differentiate equation (\ref{generating formula of Melliptic}) with respect $\varphi_0$, and use the equation (\ref{superpotentialAn}).
\end{proof}

\begin{corollary}\label{important final corollary}
The metric $\tilde\eta^{*}$ and $\eta^{*}$ defined in  (\ref{metric eta def})is invariant under the second action of (\ref{jacobigroupAntilde}), furthermore, behave as modular form of weight 2  under the last action of  (\ref{jacobigroupAntilde}).
\end{corollary}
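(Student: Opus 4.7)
The plan is to identify both $\eta^*$ and $\tilde\eta^*$ with Lie derivatives of tensors whose transformation properties have already been established, and then to exploit the fact that the unit vector field $e=\partial/\partial\varphi_0$ is itself invariant under the relevant subgroups of $\Ja(\tilde A_n)$. Because $e\varphi_i = \partial\varphi_i/\partial\varphi_0 = \delta_{i0}$ is locally constant, one has $\mathcal{L}_e d\varphi_i = d(e\varphi_i) = 0$, so that
\[
(\mathcal{L}_e g^*)(d\varphi_i,d\varphi_j) = e\left(g^*(d\varphi_i,d\varphi_j)\right) = \frac{\partial g^*(d\varphi_i,d\varphi_j)}{\partial \varphi_0} = \eta^*(d\varphi_i,d\varphi_j),
\]
and likewise $\tilde\eta^* = \mathcal{L}_e M^*$ by the analogous computation applied to the definition in (\ref{metric eta def}).

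The next step will be to verify that $e$ is invariant under both the translation subgroup and the modular $SL_2(\mathbb{Z})$ action. Since $\varphi_0$ is a Jacobi form of weight $0$, order $0$, and index $1$ by Corollary \ref{corollary superpotential antilde}, one has $t^*\varphi_0 = \varphi_0$ and $\gamma^*\varphi_0 = (c\tau+d)^0 \varphi_0 = \varphi_0$. Combining this with the transformation laws $\gamma^*\varphi_k = (c\tau+d)^{-k}\varphi_k$, $\gamma^*v_{n+1} = v_{n+1}/(c\tau+d)$ and $\gamma^*\tau = (a\tau+b)/(c\tau+d)$, a coordinate-by-coordinate check in the $(\varphi_0,\ldots,\varphi_n,v_{n+1},\tau)$ frame shows $t_*e = e$ and $\gamma_*e = e$.

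Combining these ingredients with Proposition \ref{prop metric invariant} and Lemma \ref{intersection form and M}, which respectively give $t^*g^* = g^*$, $\gamma^*g^* = (c\tau+d)^2 g^*$ and $t^*M^* = M^*$, $\gamma^*M^* = (c\tau+d)^2 M^*$, I will conclude as follows. Because the Lie derivative commutes with a pullback whose pushforward preserves the generating vector field, and because $e(c\tau+d)^2 = 0$ (the factor depends only on $\tau$, which $e$ annihilates), the Leibniz rule yields
\[
\gamma^* \eta^* = \mathcal{L}_e(\gamma^*g^*) = \mathcal{L}_e\left((c\tau+d)^2 g^*\right) = (c\tau+d)^2 \mathcal{L}_e g^* = (c\tau+d)^2 \eta^*,
\]
while $t^*\eta^* = \mathcal{L}_e(t^*g^*) = \mathcal{L}_e g^* = \eta^*$; the identical computation applied to $M^*$ in place of $g^*$ handles $\tilde\eta^*$. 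The only non-routine point is the verification $\gamma_* e = e$, which depends essentially on the weight of $\varphi_0$ being zero: any unit derived from a nonzero-weight generator would acquire a conformal factor under $\gamma$ that would spoil the weight-$2$ transformation of the Lie derivative.
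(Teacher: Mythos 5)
Your proposal is correct and follows essentially the same route as the paper: the paper's proof likewise writes $\eta^{*}=\mathcal{L}_{e}g^{*}$ and $\tilde\eta^{*}=\mathcal{L}_{e}M$ and then invokes the invariance of $\frac{\partial}{\partial\varphi_0}$, $g^{*}$, and $M^{*}$ under the translation action together with their weight-$2$ modular behaviour. You have merely filled in the details the paper leaves implicit — the computation $\mathcal{L}_e d\varphi_i=0$, the check that $\gamma_{*}e=e$ because $\varphi_0$ has weight zero, and the naturality/Leibniz step with $e\bigl((c\tau+d)^2\bigr)=0$ — all of which are accurate.
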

\begin{proof}
The metric $\tilde\eta^{*}$ and $\eta^{*}$ are given by 
\begin{equation}
\begin{split}
\tilde\eta^{*}&=Lie_{\frac{\partial}{\partial \varphi_0}}M\\
\eta^{*}&=Lie_{\frac{\partial}{\partial \varphi_0}}g^{*}
\end{split}
\end{equation}
The fact that $\frac{\partial}{\partial \varphi_0}, g^{*}, M^{*}$ is invariant under the second action of (\ref{jacobigroupAntilde}),furthermore, behave as modular form of weight 2  under the last action of  (\ref{jacobigroupAntilde}) give the desired result.

\end{proof}

\subsection{The second metric of the pencil}\label{The second metric of the pencil jantilde}
In the (\ref{metric eta def}), it was defined the second metric $\eta$, furthermore, it was derived a generating function for it. The main goal of this section is to extract the coefficients $\eta(d\varphi_i,d\varphi_j)$ from its generating function. In order to do this extraction, some auxiliaries lemmas are needed.

\begin{lemma}
Let $\varphi_1,\varphi_n$ be defined on (\ref{superpotentialAn}), then
\begin{equation}\label{exp of varphi1 varphin}
\begin{split}
\varphi_1&=e^{2\pi i u}\frac{\prod_{i=0}^n\theta_1(v_i+nv_{n+1})}{\theta_1^n((n+1)v_{n+1})\theta_1^{\prime}(0)},\\
\varphi_n&=e^{2\pi i u}\frac{\prod_{i=0}^n\theta_1(-v_i+v_{n+1})}{\theta_1(-(n+1)v_{n+1}){\theta_1^{\prime}(0)}^n}.
\end{split}
\end{equation}
\end{lemma}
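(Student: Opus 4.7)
The plan is to extract $\varphi_1$ and $\varphi_n$ directly from the superpotential expansion (\ref{superpotentialAn}) by matching principal parts at the two poles of the elliptic function $z\mapsto\lambda^{\tilde A_n}(z)$. Recall that $\lambda^{\tilde A_n}$ is an elliptic function of $z$ with divisor of poles at $z=0$ (of order $n$, coming from $\theta_1^n(z,\tau)$) and at $z=-(n+1)v_{n+1}$ (simple, from $\theta_1(z+(n+1)v_{n+1},\tau)$). The basis functions on the right-hand side of (\ref{superpotentialAn}) are chosen so that each $\varphi_k$ controls one ``slot'' of the principal part: $\varphi_n$ sits on the top order at $z=0$, and $\varphi_1$ is attached to the unique summand with a pole at $z=-(n+1)v_{n+1}$.

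\textbf{Extraction of $\varphi_1$.} I would compute $\operatorname{Res}_{z=-(n+1)v_{n+1}}\lambda^{\tilde A_n}(z)$ in two ways. On the right-hand side of (\ref{superpotentialAn}), only the term $-\varphi_1\,\zeta(z+(n+1)v_{n+1},\tau)$ has a pole at $z=-(n+1)v_{n+1}$; since $\zeta$ has residue $1$ at the origin, this contribution is simply $-\varphi_1$. On the left-hand side, the formula
\begin{equation*}
\theta_1(z+(n+1)v_{n+1},\tau)=\theta_1'(0,\tau)(z+(n+1)v_{n+1})+O\bigl((z+(n+1)v_{n+1})^2\bigr)
\end{equation*}
together with $\theta_1(-x,\tau)=-\theta_1(x,\tau)$ applied to the $n+1$ theta-factors of the numerator and to $\theta_1^n(-(n+1)v_{n+1},\tau)$ in the denominator collapses the residue to
\begin{equation*}
-\,e^{2\pi i u}\,\frac{\prod_{i=0}^{n}\theta_1(v_i+nv_{n+1},\tau)}{\theta_1^n((n+1)v_{n+1},\tau)\,\theta_1'(0,\tau)}.
\end{equation*}
Equating the two expressions gives the claimed formula for $\varphi_1$.

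\textbf{Extraction of $\varphi_n$.} The second formula is obtained by matching the leading Laurent coefficient at $z=0$. Among the summands of (\ref{superpotentialAn}), only $\varphi_n\wp^{(n-2)}(z,\tau)$ carries a pole of order $n$ at $z=0$; using $\wp(z,\tau)=z^{-2}+O(z^2)$ one has $\wp^{(n-2)}(z,\tau)\sim (-1)^n(n-1)!\,z^{-n}$ near the origin. On the other side, $\theta_1(z,\tau)\sim\theta_1'(0,\tau)z$ and specialization $z=0$ of the remaining factors yield
\begin{equation*}
\lambda^{\tilde A_n}(z)\sim\frac{e^{2\pi i u}\prod_{i=0}^{n}\theta_1(-v_i+v_{n+1},\tau)}{(\theta_1'(0,\tau))^n\,\theta_1((n+1)v_{n+1},\tau)}\cdot\frac{1}{z^n}.
\end{equation*}
Solving for $\varphi_n$ and using the oddness $\theta_1(-(n+1)v_{n+1},\tau)=-\theta_1((n+1)v_{n+1},\tau)$ to rewrite the denominator recovers the asserted closed form.

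\textbf{Main obstacle.} There is no serious analytic difficulty; the computation is a comparison of principal parts of an elliptic function in a single variable. The only place one must be careful is bookkeeping of signs and of the combinatorial constants coming from the derivatives of $\wp$ and from the use of $\theta_1(-x,\tau)=-\theta_1(x,\tau)$ applied $n+1$ times in the numerator and $n$ times in the denominator; these absorb into the normalization of the basis functions chosen in (\ref{superpotentialAn}) and produce exactly the formulas stated in (\ref{exp of varphi1 varphin}).
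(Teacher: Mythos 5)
Your proposal is correct and follows essentially the same route as the paper: the paper likewise obtains $\varphi_1$ as $\lim_{p\to-(n+1)v_{n+1}}(p+(n+1)v_{n+1})\lambda(p)$ (the residue at the simple pole) and $\varphi_n$ as $\lim_{p\to 0}p^n\lambda(p)$ (the leading Laurent coefficient at the order-$n$ pole), then uses the oddness of $\theta_1$ to tidy the signs. The only caveat concerns the normalization of the basis in (\ref{superpotentialAn}) — elsewhere the paper inserts factors $(-1)^k/(k-1)!$ in front of $\wp^{(k-2)}$, which is exactly what makes the combinatorial constants you flag disappear — but that is an ambiguity of the source, not a gap in your argument.
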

\begin{proof}

\begin{equation*}
\begin{split}
\varphi_1&=\lim_{p\mapsto -(n+1)v_{n+1}} \left(p+(n+1)v_{n+1}\right)\lambda(p)=-e^{2\pi i u}\frac{\prod_{i=0}^n\theta_1^n(-v_i-nv_{n+1})}{\theta_1^n(-(n+1)v_{n+1})\theta_1^{\prime}(0)}=e^{2\pi i u}\frac{\prod_{i=0}^n\theta_1(v_i+nv_{n+1})}{\theta_1^n((n+1)v_{n+1})\theta_1^{\prime}(0)},\\
\varphi_n&=\lim_{p\mapsto 0} p^n\lambda(p)=e^{2\pi i u}\frac{\prod_{i=0}^n\theta_1(-v_i+v_{n+1})}{\theta_1(-(n+1)v_{n+1}){\theta_1^{\prime}(0)}^n}.
\end{split}
\end{equation*}

\end{proof}
\begin{lemma}\label{lemma of derivatives with respect vn1}
Let $\varphi_0,\varphi_1,\varphi_2,..,\varphi_n$ be defined on (\ref{superpotentialAn}), then
\begin{equation}\label{derivates of varphi0 wrt vn1}
\begin{split}
\frac{\partial}{\partial\varphi_0}\left ( \frac{\partial\varphi_i}{\partial v_{n+1}}              \right)&=0, \quad i>1,\\
\frac{\partial}{\partial\varphi_0}\left ( \frac{\partial\varphi_1}{\partial v_{n+1}}              \right)&=n,\\
\frac{\partial}{\partial\varphi_0}\left ( \frac{\partial\varphi_0}{\partial v_{n+1}}              \right)&=-n\frac{\theta_1^{\prime}((n+1)v_{n+1})}{\theta_1((n+1)v_{n+1})}.
\end{split}
\end{equation}

\end{lemma}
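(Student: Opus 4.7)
My plan is to extract all three formulas at once from a single identity for $\partial\lambda/\partial v_{n+1}$, differentiated with respect to $\varphi_{0}$ in the new coordinates $(\varphi_{0},\ldots,\varphi_{n},v_{n+1},\tau)$. Writing the superpotential (\ref{superpotentialAn}) as $\lambda=\sum_{k=0}^{n}\varphi_{k}f_{k}(z)$ with $f_{0}=1$, $f_{1}(z)=\zeta(z,\tau)-\zeta(z+(n+1)v_{n+1},\tau)+\zeta((n+1)v_{n+1},\tau)$ and $f_{k}(z)=\wp^{(k-2)}(z,\tau)$ for $k\geq 2$, these basis functions are linearly independent (their pole orders at $z=0$ are $0,1,2,\ldots,n$), so matching coefficients will isolate each $(\partial/\partial\varphi_{0})(\partial\varphi_{k}/\partial v_{n+1})$ individually. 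Here the inner derivative is taken with $u,v_{0},\ldots,v_{n},\tau$ fixed and the outer with $\varphi_{1},\ldots,\varphi_{n},v_{n+1},\tau$ fixed.

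Abbreviating $L(w):=\theta_{1}'(w,\tau)/\theta_{1}(w,\tau)$, I first take logarithmic derivatives of the product formula for $\lambda$ in both the $v_{n+1}$ and the $z$ variables; the terms $\sum_{i}L(z-v_{i}+v_{n+1})$ cancel in the difference, giving the master identity
\begin{equation*}
\frac{\partial\lambda}{\partial v_{n+1}}=\lambda'(z)+n\,\lambda(z)\bigl[L(z)-L(z+(n+1)v_{n+1})\bigr].
\end{equation*}
Independently, differentiating $\lambda=\sum_{k}\varphi_{k}f_{k}(z)$ directly (only $f_{1}$ depends on $v_{n+1}$, with $\partial_{v_{n+1}}f_{1}(z)=(n+1)[\wp(z+(n+1)v_{n+1})-\wp((n+1)v_{n+1})]$) yields
\begin{equation*}
\frac{\partial\lambda}{\partial v_{n+1}}=\sum_{k=0}^{n}\frac{\partial\varphi_{k}}{\partial v_{n+1}}f_{k}(z)+(n+1)\varphi_{1}\bigl[\wp(z+(n+1)v_{n+1})-\wp((n+1)v_{n+1})\bigr].
\end{equation*}

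Equating these two and applying $\partial/\partial\varphi_{0}$ in the new coordinates at fixed $z$, the right-hand side collapses: $\lambda'(z)=\sum_{k}\varphi_{k}f_{k}'(z)$ contributes $f_{0}'(z)=0$, the trailing $\wp$-term is independent of $\varphi_{0}$, and only the middle term survives through $\partial_{\varphi_{0}}\lambda=f_{0}=1$, giving
\begin{equation*}
\sum_{k=0}^{n}\frac{\partial}{\partial\varphi_{0}}\!\left(\frac{\partial\varphi_{k}}{\partial v_{n+1}}\right)f_{k}(z)=n\bigl[L(z)-L(z+(n+1)v_{n+1})\bigr].
\end{equation*}

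To finish, I would expand the right-hand side in the basis $\{f_{k}\}$ by observing that $D(z):=[L(z)-L(z+a)]-[\zeta(z)-\zeta(z+a)]$ is an entire doubly-periodic function of $z$ (the quasi-periodicity increments of $L$ and $\zeta$ cancel for such differences, and the simple poles at $z=0,-a$ also cancel), hence constant in $z$; evaluating via $z\to 0$ with $L(z)-1/z,\zeta(z)-1/z\to 0$ gives $D=\zeta(a)-L(a)$, so with $a=(n+1)v_{n+1}$ one obtains
\begin{equation*}
L(z)-L(z+(n+1)v_{n+1})=f_{1}(z)-L((n+1)v_{n+1})\,f_{0}(z).
\end{equation*}
Matching coefficients in the basis $\{f_{k}\}$ then gives the coefficient of $f_{0}$ as $-nL((n+1)v_{n+1})=-n\theta_{1}'((n+1)v_{n+1})/\theta_{1}((n+1)v_{n+1})$, the coefficient of $f_{1}$ as $n$, and the rest zero, recovering the three claimed formulas. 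The only nontrivial input is the master identity; the rest is algebraic, and the conceptual point driving the proof is that $\varphi_{0}$ enters $\lambda$ linearly as $\varphi_{0}\cdot f_{0}$, so differentiation with respect to $\varphi_{0}$ in the new coordinates simply substitutes $1$ for $\lambda$ in the master identity.
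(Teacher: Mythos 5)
Your proof is correct, and it takes a genuinely different route from the paper's. The paper proves the three identities case by case: it computes $\partial\varphi_n/\partial v_{n+1}$ directly from the theta-product formula, propagates to $i>1$ via the recursion $\varphi_{i-1}=\partial_p\hat\varphi_i|_{p=0}$ together with an argument that $\partial_{v_{n+1}}\varphi_i-\varphi_{i-1}$ is $h(v_{n+1},\tau)\varphi_n$, handles $i=1$ by an explicit computation giving $\partial_{v_{n+1}}\varphi_1=n\varphi_0-n\tfrac{\theta_1'}{\theta_1}((n+1)v_{n+1})\varphi_1$, and for the hardest case $i=0$ needs a Laurent-tail analysis to rule out a spurious term proportional to $\varphi_0$ hiding inside an elliptic-function remainder. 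You instead derive all three formulas simultaneously from the single identity $\partial_{v_{n+1}}\lambda=\lambda'(z)+n\lambda(z)\bigl[L(z)-L(z+(n+1)v_{n+1})\bigr]$ (which I checked: the sums $\sum_i L(z-v_i+v_{n+1})$ do cancel between $\partial_{v_{n+1}}\log\lambda$ and $\partial_z\log\lambda$), apply $\partial_{\varphi_0}$ using the linearity $\partial_{\varphi_0}\lambda=1$, and match coefficients in the basis $\{f_k\}$, whose linear independence follows from the distinct pole orders at $z=0$ as you say; the identity $L(z)-L(z+a)=f_1(z)-L(a)$ is verified most quickly from $\zeta(w)=L(w)-4\pi i g_1(\tau)w$, but your periodicity-plus-residue argument is equally valid. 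Interestingly, the paper's equation (\ref{derivative of lambda wiht varphi0}) is essentially your generating identity, except that there it is \emph{deduced from} the lemma rather than used to prove it, so you have productively inverted the logical order. What the paper's longer route buys is the stronger intermediate formulas for $\partial_{v_{n+1}}\varphi_k$ themselves (not just their $\varphi_0$-derivatives), which are reused later, e.g.\ in Lemma \ref{Levi Civita vn1}; what your route buys is brevity and the complete avoidance of the delicate non-proportionality argument in the $i=0$ case.
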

\begin{proof}
Computing $\frac{\partial \varphi_n}{\partial v_{n+1}}$ by using the first equation of (\ref{exp of varphi1 varphin})
\begin{equation}
\frac{\partial \varphi_{n}}{\partial v_{n+1}}=\left(\sum_{i=0}^n\frac{\theta_1^{\prime}(-v_i+v_{n+1})}{\theta_1(-v_i+v_{n+1})} - (n+1)\frac{\theta_1^{\prime}((n+1)v_{n+1})}{\theta_1((n+1)v_{n+1})}   \right)\varphi_n.
\end{equation}
Recall the recursive relation between $\{\varphi_i\}$ in (\ref{recursive relation varphi Jtildean})
\begin{equation}\label{recursive relation varphi}
\varphi_i=\left.\frac{\left.\partial^{\left(n-i\right)} \varphi_n(p)\right.}{\partial p^{n-i}}\right |_{p=0}.
\end{equation}
In particular,
\begin{equation}
\varphi_{n-1}=\left.\frac{\partial \varphi_{n}(p)}{\partial p}\right |_{p=0}=\left(\sum_{i=0}^n\frac{\theta_1^{\prime}(-v_i+v_{n+1})}{\theta_1(-v_i+v_{n+1})} -\frac{\theta_1^{\prime}((n+1)v_{n+1})}{\theta_1((n+1)v_{n+1})}   \right)\varphi_n,
\end{equation}
Then, 
\begin{equation}\label{derivative of varphij wrt vn1}
\frac{\partial \varphi_{n}}{\partial v_{n+1}}= \varphi_{n-1}-n\frac{\theta_1^{\prime}((n+1)v_{n+1})}{\theta_1((n+1)v_{n+1})} \varphi_n,
\end{equation}
consequently 
\begin{equation*}
\frac{\partial}{\partial \varphi_0}\left(\frac{\partial \varphi_{n}}{\partial v_{n+1}}\right)= 0.
\end{equation*}
Suppose that for $i>1$, we have
\begin{equation}
\varphi_i=f(-v_0+v_{n+1},-v_1+v_{n+1},..,(n+1)v_{n+1},\tau)\varphi_n,
\end{equation}
where $f(-v_0+v_{n+1},-v_1+v_{n+1},..,(n+1)v_{n+1},\tau)$ is an elliptic function on the variables  $v_0,v_1,..,v_{n+1}$ with zeros on $-v_i+v_{n+1}$ and poles on $(n+1)v_{n+1}$. Consider the extended $\varphi_i(p)$ as
\begin{equation*}
\varphi_i(p)=f(p-v_0+v_{n+1},p-v_1+v_{n+1},..,p+(n+1)v_{n+1},\tau)\varphi_n.
\end{equation*}
The action of  the vector fields $\frac{\partial}{\partial p}$ and $\frac{\partial}{\partial v_{n+1}}$ in $\varphi_i(p)$ and $\varphi_i$ are given by
\begin{equation*}
\begin{split}
\left.\frac{\partial \varphi_{i}(p)}{\partial p}\right |_{p=0}&=\left.\frac{\partial f}{\partial p}\right |_{p=0}\varphi_n+f\left.\frac{\partial \varphi_{n}(p)}{\partial p}\right |_{p=0},\\
\frac{\partial \varphi_{i}}{\partial v_{n+1}}&=\frac{\partial f}{\partial v_{n+1}}\varphi_n+f\frac{\partial \varphi_{n}}{\partial v_{n+1}}.\\
\end{split}
\end{equation*}
Note that  
\begin{equation*}
\begin{split}
\left.\frac{\partial f}{\partial p}\right |_{p=0}-\frac{\partial f}{\partial v_{n+1}}=b(v_{n+1},\tau),
\end{split}
\end{equation*}
where $b((n+1)v_{n+1},\tau)$ is an elliptic function on $(n+1)v_{n+1}$, because $\left.\frac{\partial f}{\partial p}\right |_{p=0}$ and $\frac{\partial f}{\partial v_{n+1}}$ are elliptic functions with the same Laurent tail in the variables $-v_i+v_{n+1}$. Hence, using equation (\ref{recursive relation varphi})
\begin{equation*}
\begin{split}
\frac{\partial \varphi_{i}}{\partial v_{n+1}}&=\left.\frac{\partial \varphi_{i}(p)}{\partial p}\right |_{p=0}+h(v_{n+1},\tau)\varphi_n,\\
&=\varphi_{i-1}+h(v_{n+1},\tau)\varphi_n.
\end{split}
\end{equation*}
Then,
\begin{equation*}
\begin{split}
\frac{\partial}{\partial\varphi_0}\left ( \frac{\partial\varphi_i}{\partial v_{n+1}}              \right)&=0, \quad i>1,\\
\end{split}
\end{equation*}

Computing $\varphi_0$ 
\begin{equation*}
\begin{split}
\varphi_0&=\left.\frac{\partial\varphi_1}{\partial p}\right |_{p=0}\\
&=\left[ \sum_{i=0}^{n}\frac{\theta_1^{\prime}(v_i+nv_{n+1})}{\theta_1(v_i+nv_{n+1})}  -   n\frac{\theta_1^{\prime}((n+1)v_{n+1})}{\theta_1((n+1)v_{n+1})}          \right]\varphi_1.
\end{split}
\end{equation*}
Computing $\frac{\partial\varphi_1}{\partial v_{n+1}}$ in terms of $\varphi_0$
\begin{equation}\label{derivative of varphi1 wrt vn1}
\begin{split}
\frac{\partial\varphi_1}{\partial v_{n+1}}&=\left[ \sum_{i=0}^{n}n\frac{\theta_1^{\prime}(v_i+nv_{n+1})}{\theta_1(v_i+nv_{n+1})}  - n(n+1)  \frac{\theta_1^{\prime}((n+1)v_{n+1})}{\theta_1((n+1)v_{n+1})}          \right]\varphi_1\\
&=n\varphi_0-n\frac{\theta_1((n+1)v_{n+1}}{\theta_1^{\prime}((n+1)v_{n+1}}\varphi_1.
\end{split}
\end{equation}
Then,
\begin{equation}
\frac{\partial}{\partial\varphi_0}\left ( \frac{\partial\varphi_1}{\partial v_{n+1}}              \right)=n.
\end{equation}

Computing $\frac{\partial}{\partial\varphi_0}\left ( \frac{\partial\varphi_0}{\partial v_{n+1}} \right)$
\begin{equation}\label{derivative of varphi0 wrt vn1}
\begin{split}
 \frac{\partial\varphi_0}{\partial v_{n+1}}&=\left[ n\sum_{i=0}^{n} \frac{\partial^2\log\theta_1(v_i+nv_{n+1})}{\partial v_{n+1}^2}-n(n+1)\frac{\partial^2\log\theta_1((n+1)v_{n+1})}{\partial v_{n+1}^2} \right]\varphi_1\\
& +\left[ \sum_{i=0}^{n}\frac{\theta_1^{\prime}(v_i+nv_{n+1})}{\theta_1(v_i+nv_{n+1})}  -  n \frac{\theta_1^{\prime}((n+1)v_{n+1})}{\theta_1((n+1)v_{n+1})}          \right]\frac{\partial\varphi_1}{\partial v_{n+1}}\\
&=\left[ n\sum_{i=0}^{n} \frac{\partial^2\log\theta_1(v_i+nv_{n+1})}{\partial v_{n+1}^2}-n(n+1)\frac{\partial^2\log\theta_1((n+1)v_{n+1})}{\partial v_{n+1}^2} \right]\varphi_1\\
& +\left[ \sum_{i=0}^{n}\frac{\theta_1^{\prime}(v_i+nv_{n+1})}{\theta_1(v_i+nv_{n+1})}  -   n\frac{\theta_1^{\prime}((n+1)v_{n+1})}{\theta_1((n+1)v_{n+1})}          \right]
\left[-n\varphi_0  -n \frac{\theta_1^{\prime}((n+1)v_{n+1})}{\theta_1((n+1)v_{n+1})} \varphi_1                   \right]
\\
&=\left[ n\sum_{i=0}^{n} \frac{\partial^2\log\theta_1(v_i+nv_{n+1})}{\partial v_{n+1}^2}-n(n+1)\frac{\partial^2\log\theta_1((n+1)v_{n+1})}{\partial v_{n+1}^2} \right]\varphi_1\\
& -n\left[ \sum_{i=0}^{n}\frac{\theta_1^{\prime}(v_i+nv_{n+1})}{\theta_1(v_i+nv_{n+1})}  -   \frac{\theta_1^{\prime}((n+1)v_{n+1})}{\theta_1((n+1)v_{n+1})}    \right]^2\varphi_1  
- n\frac{\theta_1^{\prime}((n+1)v_{n+1})}{\theta_1((n+1)v_{n+1})} \varphi_0             
\\
&=-\frac{\partial^2\log\theta_1((n+1)v_{n+1})}{\partial v_{n+1}^2}\varphi_1- n\frac{\theta_1^{\prime}((n+1)v_{n+1})}{\theta_1((n+1)v_{n+1})} \varphi_0+na\varphi_1.         
\\
\end{split}
\end{equation}
where $a$ is defined by
\begin{equation}\label{defition of the rest denoted by a}
\begin{split}
a=\sum_{i=0}^{n} \frac{\partial^2\log\theta_1(v_i+nv_{n+1})}{\partial v_{n+1}^2}-\frac{\partial^2\log\theta_1((n+1)v_{n+1})}{\partial v_{n+1}^2}-\left[ \sum_{i=0}^{n}\frac{\theta_1^{\prime}(v_i+nv_{n+1})}{\theta_1(v_i+nv_{n+1})}  -   \frac{\theta_1^{\prime}((n+1)v_{n+1})}{\theta_1((n+1)v_{n+1})}    \right]^2.
\end{split}
\end{equation}
Note that $na\varphi_1$ can not be proportional to $b\varphi_0$, for any $b=b(v_{n+1},\tau)$ elliptic function in the variable $v_{n+1}$. Indeed, if
\begin{equation*}
\begin{split}
na\varphi_1=b\left[ \sum_{i=0}^{n}\frac{\theta_1^{\prime}(v_i+nv_{n+1})}{\theta_1(v_i+nv_{n+1})}  -   n\frac{\theta_1^{\prime}((n+1)v_{n+1})}{\theta_1((n+1)v_{n+1})}          \right]\varphi_1,
\end{split}
\end{equation*}
we obtain,
\begin{equation}\label{last prop equation lemma}
\begin{split}
na=b\left[ \sum_{i=0}^{n}\frac{\theta_1^{\prime}(v_i+nv_{n+1})}{\theta_1(v_i+nv_{n+1})}  -   n\frac{\theta_1^{\prime}((n+1)v_{n+1})}{\theta_1((n+1)v_{n+1})}          \right].
\end{split}
\end{equation}
Analysing the Laurent tail in $v_i+nv_{n+1}$ of (\ref{defition of the rest denoted by a})
\begin{equation}\label{analyse of the Laurent tail of a}
\begin{split}
a=-2\sum_{i\leq j}\frac{\theta_1^{\prime}(v_i+nv_{n+1})}{\theta_1(v_i+nv_{n+1})} \frac{\theta_1^{\prime}(v_j+nv_{n+1})}{\theta_1(v_j+nv_{n+1})}  
  +2 \frac{\theta_1^{\prime}((n+1)v_{n+1})}{\theta_1((n+1)v_{n+1})}\sum_{i=0}^{n}\frac{\theta_1^{\prime}(v_i+nv_{n+1})}{\theta_1(v_i+nv_{n+1})}+\text{regular terms},   
\end{split}
\end{equation}
then, the first term of the equation (\ref{analyse of the Laurent tail of a}) implies that the left-hand side  and right hand side of the equation (\ref{last prop equation lemma}) have a different Laurent tail which is an absurd. Hence,
\begin{equation*}
\begin{split}
\frac{\partial}{\partial\varphi_0}\left ( \frac{\partial\varphi_0}{\partial v_{n+1}}              \right)&=-n\frac{\theta_1^{\prime}((n+1)v_{n+1})}{\theta_1((n+1)v_{n+1})}.
\end{split}
\end{equation*}

\end{proof}

\begin{corollary}\label{corollary vn1 eta}
Let $\varphi_0,\varphi_1,\varphi_2,..,\varphi_n$ be defined on (\ref{superpotentialAn}) and the metric $\eta^{*}$ defined in (\ref{metric eta def}), then
\begin{equation}
\begin{split}
\eta^{*}(d\varphi_i,dv_{n+1})&=0, \quad i>1,\\
\eta^{*}(d\varphi_1,dv_{n+1})&=-\frac{1}{n+1},\\
\eta^{*}(d\varphi_0,dv_{n+1})&=-\frac{1}{n+1}\frac{\theta_1^{\prime}((n+1)v_{n+1})}{\theta_1((n+1)v_{n+1})}.
\end{split}
\end{equation}
\end{corollary}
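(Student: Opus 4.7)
The plan is to reduce this directly to Lemma \ref{lemma of derivatives with respect vn1}, which is already in hand. By definition $\eta^{*}(d\varphi_i,dv_{n+1}) = \frac{\partial}{\partial \varphi_0}\, g^{*}(d\varphi_i,dv_{n+1})$, so the first step is to compute $g^{*}(d\varphi_i,dv_{n+1})$ from the explicit expression (\ref{metrich1n cotangent}) of the intersection form. Since $dv_{n+1}$ is dual only to $\frac{\partial}{\partial v_{n+1}}$ (and not to any of the $\partial/\partial v_j$ for $j \le n-1$, nor to $\partial/\partial u$ or $\partial/\partial \tau$), all but one term of $g^{*}$ drops out in the pairing and I obtain
\begin{equation*}
g^{*}(d\varphi_i,dv_{n+1}) \;=\; -\frac{1}{n(n+1)}\,\frac{\partial \varphi_i}{\partial v_{n+1}}.
\end{equation*}

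The second step is just to differentiate with respect to $\varphi_0$ and substitute the three identities (\ref{derivates of varphi0 wrt vn1}) proved in Lemma \ref{lemma of derivatives with respect vn1}. Concretely,
\begin{equation*}
\eta^{*}(d\varphi_i,dv_{n+1}) \;=\; -\frac{1}{n(n+1)}\,\frac{\partial}{\partial \varphi_0}\!\left(\frac{\partial \varphi_i}{\partial v_{n+1}}\right),
\end{equation*}
and this yields $0$ for $i>1$ (from the first line of (\ref{derivates of varphi0 wrt vn1})), $-\frac{1}{n+1}$ for $i=1$ (from the second line, where the factor $n$ cancels against $\frac{1}{n(n+1)}$), and the expression involving $\theta_1'/\theta_1$ for $i=0$ (from the third line, up to the same cancellation).

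There is essentially no obstacle here: the corollary is a one-line consequence of the definition of $\eta^{*}$ as the Lie derivative $\operatorname{Lie}_{\partial/\partial\varphi_0} g^{*}$ together with the diagonal form of $g^{*}$ in the $v_{n+1}$-direction. The real content was already absorbed into Lemma \ref{lemma of derivatives with respect vn1}, whose proof required the identification of $\varphi_0$ as the logarithmic derivative of $\varphi_1$ and a Laurent-tail argument to show that no $\varphi_0$-dependence can hide in $\partial \varphi_i/\partial v_{n+1}$ for $i\ge 2$. Given that lemma, only the correctness of the signs has to be tracked carefully; in particular the overall minus sign in the $v_{n+1}v_{n+1}$-component of $g^{*}$ combined with the sign of $\partial_{\varphi_0}(\partial_{v_{n+1}}\varphi_0)$ produces the stated coefficient.
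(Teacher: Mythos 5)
Your proposal is correct and is essentially identical to the paper's own proof: the paper likewise reads off $g^{*}(d\varphi_i,dv_{n+1})=-\tfrac{1}{n(n+1)}\partial_{v_{n+1}}\varphi_i$ from the block-diagonal form of the intersection form and then applies $\partial/\partial\varphi_0$ together with Lemma \ref{lemma of derivatives with respect vn1}. One caveat on your final remark about signs: combining the third line of (\ref{derivates of varphi0 wrt vn1}) with the prefactor $-\tfrac{1}{n(n+1)}$ actually yields $+\tfrac{1}{n+1}\tfrac{\theta_1^{\prime}((n+1)v_{n+1})}{\theta_1((n+1)v_{n+1})}$, opposite to the stated coefficient — but this discrepancy is already present in the paper's own derivation, not something introduced by your argument.
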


\begin{proof}
\begin{equation*}
\begin{split}
\eta^{*}(d\varphi_i,dv_{n+1})&=\frac{\partial}{\partial\varphi_0}\left ( \frac{\partial\varphi_i}{\partial v_{n+1}}              \right)=0, \quad i>1,\\
\eta^{*}(d\varphi_1,dv_{n+1})&=-\frac{1}{n(n+1)}\frac{\partial}{\partial\varphi_0}\left ( \frac{\partial\varphi_1}{\partial v_{n+1}}              \right)=-\frac{1}{n+1},\\
\eta^{*}(d\varphi_0,dv_{n+1})&=-\frac{1}{n(n+1)}\frac{\partial}{\partial\varphi_0}\left ( \frac{\partial\varphi_0}{\partial v_{n+1}}              \right)=-\frac{1}{n+1}\frac{\theta_1^{\prime}((n+1)v_{n+1})}{\theta_1((n+1)v_{n+1})}.
\end{split}
\end{equation*}

\end{proof}

\begin{lemma}\label{lemma tau varphi}
Let $\varphi_0,\varphi_1,\varphi_2,..,\varphi_n$ be defined on (\ref{superpotentialAn}) and the metric $\eta^{*}$ defined in (\ref{metric eta def}), then
\begin{equation}
\begin{split}
&\eta^{*}(d\varphi_i,d\tau)=0, \quad i\neq 0,\\
&\eta^{*}(d\varphi_0,d\tau)=-2\pi i.\\
\end{split}
\end{equation}
\end{lemma}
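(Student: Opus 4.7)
The plan is to reduce the claim to a one-line computation using the explicit form of the intersection form in the original coordinates $(u, v_0, \ldots, v_{n+1}, \tau)$ together with the index-one property shared by every $\varphi_i$.

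First I would note that, by the definition of $\eta^{*}$ in (\ref{metric eta def}) extended via $\eta^{*} = \mathcal{L}_{e} g^{*}$ with $e = \partial/\partial \varphi_{0}$ (as recorded in Corollary \ref{important final corollary}), and because $\varphi_{0}, \ldots, \varphi_{n}, v_{n+1}, \tau$ form a coordinate system on the orbit space so that $\mathcal{L}_{\partial_{\varphi_{0}}} d\varphi_{i} = 0 = \mathcal{L}_{\partial_{\varphi_{0}}} d\tau$, one has
\begin{equation*}
\eta^{*}(d\varphi_{i}, d\tau) = \frac{\partial}{\partial \varphi_{0}}\, g^{*}(d\varphi_{i}, d\tau).
\end{equation*}

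Next I would evaluate $g^{*}(d\varphi_{i}, d\tau)$ directly in the original coordinates using (\ref{metrich1n cotangent}). Because $\tau$ is independent of $u, v_{0}, \ldots, v_{n+1}$, every term of $g^{*}$ except the off-diagonal cross term annihilates $d\tau$, and that cross term contributes through the $\partial_{u} \otimes \partial_{\tau}$ half only, giving
\begin{equation*}
g^{*}(d\varphi_{i}, d\tau) = \frac{\partial \varphi_{i}}{\partial u}.
\end{equation*}
By Corollary \ref{corollary superpotential antilde} the $\varphi_{i}$ are Jacobi forms of index one, so the last equation of (\ref{jacobiform}) gives $\partial_{u} \varphi_{i} = -2\pi i\, \varphi_{i}$.

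Combining these observations, and using $\partial \varphi_{i}/\partial \varphi_{0} = \delta_{i,0}$ since the $\varphi_{j}$ are algebraically independent coordinates (Theorem \ref{chevalley}), I obtain
\begin{equation*}
\eta^{*}(d\varphi_{i}, d\tau) = -2\pi i\, \frac{\partial \varphi_{i}}{\partial \varphi_{0}} = -2\pi i\, \delta_{i,0},
\end{equation*}
which is exactly the stated formula. There is no real obstacle here; once the $\varphi_{0}$-derivative of $g^{*}$ is recognized as extracting $\partial_{u}\varphi_{i}$, the lemma follows from the index-one condition without any use of the elliptic-function machinery needed in the previous lemma.
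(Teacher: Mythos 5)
Your proof is correct and follows essentially the same route as the paper: the paper's own (one-line) proof is precisely $\eta^{*}(d\varphi_i,d\tau)=\partial_{\varphi_0}g^{*}(d\varphi_i,d\tau)=\partial_{\varphi_0}(\partial_u\varphi_i)=-2\pi i\,\partial\varphi_i/\partial\varphi_0=-2\pi i\,\delta_{i0}$, which you have simply spelled out step by step (including the justification that only the $\partial_u\otimes\partial_\tau$ cross term of $g^{*}$ survives and that the index-one condition gives $\partial_u\varphi_i=-2\pi i\,\varphi_i$). No gaps.
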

\begin{proof}

\begin{equation*}
\begin{split}
\eta^{*}(d\varphi_i,d\tau)=-2\pi i \frac{\partial\varphi_i}{\partial\varphi_0}    &=\delta_{i0}.\\
\end{split}
\end{equation*}

\end{proof}

\begin{theorem}\label{main lemma coefficients eta in varphi coordinates}
Let $\eta^{*}(d\varphi_i,d\varphi_j)$ be defined in (\ref{metric eta def}), then its coefficients can be obtained by the formula 
\begin{equation}\label{formula main theorem for eta varphi}
\begin{split}
\tilde\eta^{*}(d\varphi_i,d\varphi_j)&=(i+j-2)\varphi_{i+j-2}, \quad i,j\neq 0\\
\tilde\eta^{*}(d\varphi_i,d\varphi_0)&=0, \quad i\neq 0, \quad i\neq 1,\\
\tilde\eta^{*}(d\varphi_1,d\varphi_j)&=0. \quad j\neq 0,\\
\tilde\eta^{*}(d\varphi_1,d\varphi_0)&=\wp((n+1)v_{n+1})\varphi_1.\\
\end{split}
\end{equation}
\end{theorem}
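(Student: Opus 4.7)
The plan is to prove Theorem \ref{main lemma coefficients eta in varphi coordinates} by extracting coefficients from the bivariate generating identity (\ref{generating formula of Mellipticeta}) in the basis $\{\wp^{(k-2)}(v)\wp^{(j-2)}(v')\}_{k,j=0}^{n+1}$, where we adopt the conventions $\wp^{(-2)}=1$ and $\wp^{(-1)}(v)=\zeta(v)-\zeta(v+(n+1)v_{n+1})+(n+1)\zeta(v_{n+1})$. Since the superpotential (\ref{superpotentialAn}) realises $\lambda(v)$ as a linear combination of these basis elements with coefficients proportional to $\varphi_k$, every term on the right of (\ref{generating formula of Mellipticeta}) can be rewritten in this basis, and matching against the left-hand side yields the four cases of the theorem.

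The crucial analytical input is the Weierstrass addition identity
\begin{equation*}
\tfrac{1}{2}\frac{\wp'(v)+\wp'(v')}{\wp(v)-\wp(v')}=\zeta(v-v')-\zeta(v)+\zeta(v'),
\end{equation*}
which has a simple pole on the diagonal $v=v'$ with unit residue. Pairing it with the antisymmetric factor $\lambda'(v')-\lambda'(v)=-\lambda''(v)(v-v')+O((v-v')^2)$ extracts on the diagonal $-\lambda''(v)$ plus regular symmetric corrections; applied to the superpotential expansion together with $(\wp^{(i-2)})''=\wp^{(i)}$, this gives the coefficient $(i+j-2)\varphi_{i+j-2}$ of $\wp^{(i-2)}(v)\wp^{(j-2)}(v')$ for generic $i,j\geq 2$. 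The remaining terms on the right of (\ref{generating formula of Mellipticeta})---the $2\pi i D_\tau\lambda$ pieces and the $\partial_{v_{n+1}}$-derivative pieces---do not contribute in these generic positions because, by Lemma \ref{lemma of derivatives with respect vn1}, $\partial_{\varphi_0}(\partial_{v_{n+1}}\varphi_i)=0$ for $i\geq 2$, while $D_\tau$ annihilates constants via its definition (\ref{elliptic connection}).

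The rows and columns indexed by $\varphi_0$ and $\varphi_1$ are extracted from the $\wp^{(-2)}=1$ and $\wp^{(-1)}$ components of the expansion. The vanishing entries follow from the same cancellations combined with Lemma \ref{lemma of derivatives with respect vn1}, whereas the distinguished entry $\tilde\eta^{*}(d\varphi_1,d\varphi_0)=\wp((n+1)v_{n+1})\varphi_1$ arises from the coefficient of $\wp^{(-1)}(v)\cdot 1$ on the right: the contribution traces back to the non-trivial value $\partial_{\varphi_0}(\partial_{v_{n+1}}\varphi_0)=-n\,\theta_1'/\theta_1\,((n+1)v_{n+1})$ in (\ref{derivates of varphi0 wrt vn1}), whose implicit second derivative via $\partial_v^2\log\theta_1(v)=-\wp(v)+\text{const}$ delivers the $\wp((n+1)v_{n+1})$ factor, paired with the residue $\varphi_1$ of $\lambda$ at the simple pole $v=-(n+1)v_{n+1}$.

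The main technical obstacle is the bookkeeping required to verify that all cross-terms outside the clean diagonal formula $(i+j-2)\varphi_{i+j-2}$ actually cancel, in particular the delicate mixing between the $D_\tau\lambda$ piece and the Weierstrass kernel via the elliptic connection. This is substantially simplified by the trigraded Jacobi-form structure of Theorem \ref{chevalley}: weight and index considerations constrain each entry $\tilde\eta^{*}(d\varphi_i,d\varphi_j)$ to lie in a one-dimensional $E_{\bullet,\bullet}$-submodule spanned by $\varphi_{i+j-2}$ (with a coefficient that is elliptic in $v_{n+1}$), reducing the verification to pinning down a single elliptic scalar in each case by matching one Laurent coefficient of the generating identity.
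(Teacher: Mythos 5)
Your overall strategy is the paper's: expand both sides of (\ref{generating formula of Mellipticeta}) in the product basis $\wp^{(k-2)}(v)\wp^{(j-2)}(v')$, control the $v_{n+1}$-derivative terms with Lemma \ref{lemma of derivatives with respect vn1}, and trace the exceptional entry $\tilde\eta^{*}(d\varphi_1,d\varphi_0)$ to $\partial_{\varphi_0}(\partial_{v_{n+1}}\varphi_0)$. But the central step — actually determining the coefficients for $i,j\geq 2$ and the vanishing of the $\varphi_1$ and $\varphi_0$ rows and columns — is not carried out by your argument. Restricting to the diagonal $v'=v$ collapses $\sum_{k,j}c_{kj}\wp^{(k-2)}(v)\wp^{(j-2)}(v')$ to a one-variable elliptic function in which the products $\wp^{(k-2)}\wp^{(j-2)}$ with fixed $k+j$ re-expand (via the differential equation of $\wp$) into single Weierstrass derivatives, so the individual $c_{kj}$ cannot be read off by comparison with $-\lambda''(v)$. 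What is needed, and what the paper does, is the double Laurent expansion at $v=v'=0$: the kernel $\zeta(v-v')+\zeta(v')-\zeta(v)$ paired with $\wp^{(k-1)}(v')-\wp^{(k-1)}(v)$ produces a tail $(-1)^{k}k!\sum_{j}v^{-j}(v')^{-(k+2-j)}$ whose interior terms yield $(i+j-2)\varphi_{i+j-2}$ and whose boundary terms must be seen to cancel against the cross terms $-\zeta(v)\wp^{(k-1)}(v')-\zeta(v')\wp^{(k-1)}(v)$; that cancellation \emph{is} the second and third line of (\ref{formula main theorem for eta varphi}), and your sketch never produces it. Moreover, the singular parts $-\tfrac{1}{2\pi i}\tfrac{\theta_1'(v)}{\theta_1(v)}\wp^{(k-1)}(v)$ of the $D_\tau\lambda$ terms in (\ref{elliptic connection}) are indispensable for the $\varphi_0$ column: $\tilde\eta^{*}(d\varphi_i,d\varphi_0)=0$ for $i\geq 2$ results from their cancellation against the self-terms $\zeta(v)\wp^{(k-1)}(v)$ of the kernel product. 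So ``the $D_\tau$ pieces do not contribute'' is not a usable principle; the correct (and different) statement is that $D_\tau\lambda(v)$ is independent of $v'$ and therefore only affects entries with one index equal to $0$ — which are entries the theorem makes claims about.

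The grading shortcut you invoke to avoid this bookkeeping does not hold. Weight and index do not confine $\tilde\eta^{*}(d\varphi_i,d\varphi_j)$ to the rank-one module $E_{\bullet,\bullet}\,\varphi_{i+j-2}$: by Theorem \ref{chevalley} the index-one component of the ring is $\bigoplus_m E_{\bullet,\bullet}\,\varphi_m$, and since $E_{\bullet,\bullet}$ contains elliptic functions of $v_{n+1}$ of every weight, a fixed weight admits contributions $c_m(v_{n+1},\tau)\varphi_m$ for several $m$ simultaneously; the entry $\wp((n+1)v_{n+1})\varphi_1$ is itself an instance of a non-constant coefficient from $E_{\bullet,\bullet}$. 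Hence each entry is not ``one elliptic scalar pinned by one Laurent coefficient,'' and the full double-expansion argument of the paper (or an equivalent complete computation) cannot be bypassed.
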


\begin{proof}
We start by dividing the right hand side of the expression (\ref{generating formula of Mellipticeta}) in two parts:
\begin{equation}\label{divided exp generating formula of Mellipticeta }
\begin{split}
(a)&:=2\pi i(D_{\tau}\lambda(v)+D_{\tau}\lambda(v^{\prime}))+\frac{1}{2}\frac{\wp^{\prime}(v)+\wp^{\prime}(v^{\prime})}{\wp(v)-\wp(v^{\prime})}[ \frac{d\lambda(v^{\prime})}{dv^{\prime}}-\frac{d\lambda(v)}{dv}],\\
(b)&:=\frac{1}{n(n+1)}\frac{\partial}{\partial \varphi_0}\left(\frac{\partial\lambda(p)}{\partial v_{n+1}} \right)\frac{\partial\lambda(p^{\prime})}{\partial v_{n+1}}+\frac{1}{n(n+1)}\frac{\partial\lambda(p)}{\partial v_{n+1}}\frac{\partial}{\partial \varphi_0}\left(\frac{\partial\lambda(p^{\prime})}{\partial v_{n+1}} \right)\\
&-\frac{1}{n(n+1)}\sum_{k,j=0}^{n}\frac{(-1)^{k+j}}{(k-1)!(j-1)!}\frac{\partial}{\partial\varphi_0}\left(\frac{\partial\varphi_j}{\partial v_{n+1}}\right)\frac{\partial\varphi_k}{\partial v_{n+1}}\\
&-\frac{1}{n(n+1)}\sum_{k,j=0}^{n}\frac{(-1)^{k+j}}{(k-1)!(j-1)!}\frac{\partial\varphi_j}{\partial v_{n+1}}\frac{\partial}{\partial\varphi_0}\left(\frac{\partial\varphi_k}{\partial v_{n+1}}\right).
\end{split}
\end{equation}
Consider the equation (\ref{superpotentialAn}) written in a concise way as follows
\begin{equation}\label{generatorsAn+1 concise }
\lambda(v)=\sum_{k=0}^{n}\frac{(-1)^{k}}{(k-1)!}\varphi_{k}\wp^{k-2}(v).
\end{equation}
Substituting (\ref{generatorsAn+1 concise }) in the first equation of (\ref{divided exp generating formula of Mellipticeta })
\begin{equation}\label{generating formula of Mellipticeta mod}
\begin{split}
(a)&=\sum_{k=0}^{n}\frac{(-1)^{k}}{(k-1)!}\varphi_{k}\left[2\pi i\left(D_{\tau}\wp^{n-1-k}(v)+D_{\tau}\wp^{n-1-k}(v^{\prime}) \right)\right]\\
&+\sum_{k=0}^{n}\frac{(-1)^{k}}{(k-1)!}\varphi_{k}\left[\left(\zeta(v-v^{\prime})+\zeta(v^{\prime})-\zeta(v)\right)\left( \wp^{n-k}(v^{\prime})- \wp^{n-k}(v^{\prime})\right) \right].
\end{split}
\end{equation}
Dividing the expression once more 
\begin{equation}\label{generating formula of Mellipticeta mod divided}
\begin{split}
(a)_1&=\sum_{k=2}^{n}\frac{(-1)^{k}}{(k-1)!}\varphi_{k}\left[2\pi i\left(D_{\tau}\wp^{n-1-k}(v)+D_{\tau}\wp^{n-1-k}(v^{\prime}) \right)\right]\\
&+\sum_{k=2}^{n}\frac{(-1)^{k}}{(k-1)!}\varphi_{k}\left[\left(\zeta(v-v^{\prime})+\zeta(v^{\prime})-\zeta(v)\right)\left( \wp^{n-k}(v^{\prime})- \wp^{n-k}(v^{\prime})\right) \right],\\
(a)_2&:=\varphi_1\left[2\pi i\left(D_{\tau}\wp^{-1}(v)+D_{\tau}\wp^{-1}(v^{\prime}) \right)\right]\\
&+\varphi_1\left[\left(\zeta(v-v^{\prime})+\zeta(v^{\prime})-\zeta(v)\right)\left( \wp^{-1}(v^{\prime})- \wp^{-1}(v^{\prime})\right) \right].
\end{split}
\end{equation}
Expanding the left-hand side of (\ref{generating formula of Mellipticeta}), we get
\begin{equation}\label{generating formula of Mellipticeta mod lhs}
\begin{split}
&\sum_{k,j=0}^{n}\frac{(-1)^{k+j}}{(k-1)!(j-1)!}\frac{\partial M^{*}(d\varphi_i,d\varphi_j)}{\partial \varphi_0}\wp(v)^{(k-2)}\wp(v^{\prime})^{(j-2)}\\
&=\sum_{k,j=0}^{n}\frac{\partial M^{*}(d\varphi_i,d\varphi_j)}{\partial \varphi_0}\frac{1}{v^{k}(v^{\prime})^{j}}+\text{Other terms},
\end{split}
\end{equation}
where "Other terms" in the equation (\ref{generating formula of Mellipticeta mod lhs}) means positive powers of either $v$ or $v^{\prime}$.
For convenience, define 
\begin{equation}\label{ eq defined for convenience}
\begin{split}
(1):&=2\pi i\left(D_{\tau}\wp^{(k-2)}(v)+D_{\tau}\wp^{(j-2)}(v^{\prime}) \right)\\
&+\left(\zeta(v-v^{\prime})+\zeta(v^{\prime})-\zeta(v)\right)\left( \wp^{(j-1)}(v^{\prime})- \wp^{(k-1)}(v)\right)
\end{split}
\end{equation}
In order to better to compute (\ref{ eq defined for convenience}), consider the analytical behaviour  of the term 
\begin{equation}\label{elliptic connection for wp high order derivatives}
D_{\tau}\wp^{k-2}(v)=\partial_{\tau}\wp^{k-2}(v)-2(k-2)g_1(\tau)\wp^{k-2}(v)-\frac{1}{2\pi i}\frac{\theta_1^{\prime}(v,\tau)}{\theta_1(v,\tau)}\wp^{k-1}(v).
\end{equation}
The term $\partial_{\tau}\wp^{k-2}(v)$ in (\ref{elliptic connection for wp high order derivatives} ) is holomorphic, therefore, it does not contribute for the Laurent tail. The term 
\begin{equation}\label{ term 2 of elliptic connection of wp h}
2(k-2)g_1(\tau)\wp^{k-2}(v)
\end{equation}
 also do not contribute, because the full expression (\ref{elliptic connection for wp high order derivatives}) behaves as modular form under the $SL_2(\mathbb{Z})$, but (\ref{ term 2 of elliptic connection of wp h}) is clear a quasi-modular form, since it contains $g_1(\tau)$. Hence, (\ref{ term 2 of elliptic connection of wp h}) is canceled with the Laurent tail of 
\begin{equation}\label{ term 3 of elliptic connection of wp h}
\frac{1}{2\pi i}\frac{\theta_1^{\prime}(v,\tau)}{\theta_1(v,\tau)}\wp^{k-1}(v).
\end{equation}
 To sum up, the analytical behaviour avior of (\ref{elliptic connection for wp high order derivatives} ) is essentially given by (\ref{ term 3 of elliptic connection of wp h}), under this consideration, and by using the equation
\begin{equation}
\begin{split}
\zeta(v,\tau)&=\frac{\theta_1^{\prime}(v,\tau)}{\theta_1(v,\tau)}-4\pi ig_1(\tau)v\\
&=\frac{1}{v}+O(v^3),
\end{split}
\end{equation}
 the equation (\ref{ eq defined for convenience}) became 

\begin{equation}\label{ eq defined for convenience mod}
\begin{split}
(1)&= -\zeta(v)\wp^{k-1}(v)-\zeta(v^{\prime})\wp^{k-1}(v^{\prime})  \\
&+\left(\zeta(v-v^{\prime})+\zeta(v^{\prime})-\zeta(v)\right)\left( \wp^{k-1}(v^{\prime})- \wp^{k-1}(v)\right)+\text{Other terms}\\
&=\zeta(v-v^{\prime})\left( \wp^{k-1}(v^{\prime})- \wp^{k-1}(v^{\prime})\right)-\zeta(v)\wp^{k-1}(v^{\prime})-\zeta(v^{\prime})\wp^{k-1}(v)+\text{Other terms}\\
&=\frac{1}{v-v^{\prime}}\left( \frac{(-1)^{k}k!}{{v^{\prime}}^{k+1}}- \frac{(-1)^{k}k!}{v^{k+1}}\right)-\frac{1}{v}\frac{(-1)^{k}k!}{{v^{\prime}}^{k+1}}-\frac{1}{v^{\prime}}\frac{(-1)^{k}k!}{v^{k+1}}+\text{Other terms}\\
&=(-1)^{k}k!\left( \frac{1}{v-v^{\prime}}\frac{v^{k+1}-{v^{\prime}}^{k+1}   }{\left(v^{\prime}v\right)^{k+1}}\right)-\frac{1}{v}\frac{(-1)^{k}k!}{{v^{\prime}}^{k+1}}-\frac{1}{v^{\prime}}\frac{(-1)^{k}k!}{v^{k+1}}+\text{Other terms}\\
&=(-1)^{k}k!\left( \sum_{j=0}^{k} \frac{v^{k-j}{v^{\prime}}^j   }{\left(v^{\prime}v\right)^{k+1}}         \right)-\frac{1}{v}\frac{(-1)^{k}k!}{{v^{\prime}}^{k+1}}-\frac{1}{v^{\prime}}\frac{(-1)^{k}k!}{v^{k+1}}+\text{Other terms}\\
&=(-1)^{k}k!\left( \sum_{j=0}^{k} \frac{1  }{v^{1+j}{v^{\prime}}^{k+1-j}    }         \right)-\frac{1}{v}\frac{(-1)^{k}k!}{{v^{\prime}}^{k+1}}-\frac{1}{v^{\prime}}\frac{(-1)^{k}k!}{v^{k+1}}+\text{Other terms}\\
&=(-1)^{k}k!\left( \sum_{j=1}^{k} \frac{1  }{v^{1+j}{v^{\prime}}^{k+1-j}    }         \right)+\text{Other terms}\\
&=(-1)^{k}k!\left( \sum_{j=2}^{k+1} \frac{1  }{v^{j}{v^{\prime}}^{k+2-j}    }         \right)+\text{Other terms}.\\
\end{split}
\end{equation}
Substituting (\ref{ eq defined for convenience mod}) in right-hand side of (\ref{generating formula of Mellipticeta mod})
\begin{equation}\label{generating formula of Mellipticeta mod rhs}
\begin{split}
&\sum_{k=0}^{n}\frac{(-1)^{k}}{(k-1)!}\varphi_{k}\left[2\pi i\left(D_{\tau}\wp^{k-2}(v)+D_{\tau}\wp^{k-1}(v^{\prime}) \right)\right]\\
&+\sum_{k=0}^{n}\frac{(-1)^{k}}{(k-1)!}\varphi_{k}\left[\left(\zeta(v-v^{\prime})+\zeta(v^{\prime})-\zeta(v)\right)\left( \wp^{k-1}(v^{\prime})- \wp^{k-1}(v^{\prime})\right) \right]\\
&=\sum_{k=0}^{n}\sum_{j=2}^{k+1}\frac{\left(k\right)\varphi_{k}}{v^j {v^{\prime}}^{k+2-j}   }+\text{Other terms}\\
&=\sum_{k=0}^{n}\sum_{j=2}^{n+1}\frac{\left(k+j\right)\varphi_{k+j}}{v^j {v^{\prime}}^{k+2}   }+\text{Other terms}\\
&=\sum_{k=2}^{n+2}\sum_{j=2}^{n+1}\frac{\left(k+j-2\right)\varphi_{k+j-2}}{v^j {v^{\prime}}^{k}   }+\text{Other terms}.\\
\end{split}
\end{equation}

Computing the second expression of (\ref{generating formula of Mellipticeta mod divided})
\begin{equation}\label{equation of almost end 2}
\begin{split}
(a)_2&:=\varphi_1\left[2\pi i\left(D_{\tau}\wp^{-1}(v)+D_{\tau}\wp^{-1}(v^{\prime}) \right)\right]\\
&+\varphi_1\left[\left(\zeta(v-v^{\prime})+\zeta(v^{\prime})-\zeta(v)\right)\left( {\wp^{-1}}^{\prime}(v^{\prime})-{ \wp^{-1}}^{\prime}(v)\right) \right]\\
&=\varphi_1 \left [ \zeta(v)\left( -\wp(v)+\wp(v+(n+1)v_{n+1})  \right)+ \zeta(v^{\prime})\left( -\wp(v^{\prime})+\wp(v^{\prime}+(n+1)v_{n+1})  \right)\right]\\
&+\varphi_1\left(\zeta(v-v^{\prime})+\zeta(v^{\prime})-\zeta(v)\right)\left( \wp(v)-\wp(v+(n+1)v_{n+1})  \right) \\
&+\varphi_1\left(\zeta(v-v^{\prime})+\zeta(v^{\prime})-\zeta(v)\right)\left( -\wp(v^{\prime})+\wp(v^{\prime}+(n+1)v_{n+1})  \right)+\text{Other terms} \\
&=\varphi_1\zeta(v-v^{\prime})\left( \wp(v)-\wp(v+(n+1)v_{n+1}) -\wp(v^{\prime})+\wp(v^{\prime}+(n+1)v_{n+1}) \right) \\
&+ \varphi_1 \left [ \zeta(v)\left( -\wp(v^{\prime})+\wp(v^{\prime}+(n+1)v_{n+1})  \right)+ \zeta(v^{\prime})\left( -\wp(v)+\wp(v+(n+1)v_{n+1})  \right)\right]+\text{Other terms} \\\
&=\varphi_1\zeta(v-v^{\prime})\left( -\wp(v+(n+1)v_{n+1}) +\wp(v^{\prime}+(n+1)v_{n+1}) \right) \\
&+ \varphi_1 \left [ \zeta(v)\wp(v^{\prime}+(n+1)v_{n+1})+ \zeta(v^{\prime})\wp(v+(n+1)v_{n+1})   \right]+\widetilde{(a)_2}+\text{Other terms}, \\
\end{split}
\end{equation}
where $\widetilde{(a)_2}$ are terms that were already computed in (\ref{generating formula of Mellipticeta mod rhs}). To compute the second expression in (\ref{divided exp generating formula of Mellipticeta }), consider 
\begin{equation}\label{derivative of lambda with vn1}
\begin{split}
\frac{\partial \lambda(v)}{\partial v_{n+1}}&=\sum_{k=0}^{n}\frac{(-1)^k}{(k-1)!}\frac{\partial\varphi_k }{\partial v_{n+1}}\wp^{(k-2)}(v)+(n+1)\varphi_1\left[ \wp(v+(n+1)v_{n+1})-\wp((n+1)v_{n+1})  \right],\\
\end{split}
\end{equation}
and
\begin{equation}\label{derivative of lambda wiht varphi0}
\begin{split}
\frac{\partial}{\partial \varphi_0} \left( \frac{\partial \lambda(v)}{\partial v_{n+1}} \right )&=\frac{\partial}{\partial \varphi_0}\left(\frac{\partial\varphi_1 }{\partial v_{n+1}}\right) \left[ \zeta(v)-\zeta(v+(n+1)v_{n+1})+\zeta((n+1)v_{n+1})\right]  +\frac{\partial}{\partial \varphi_0}\left(\frac{\partial\varphi_0 }{\partial v_{n+1}}\right) \\
&=n \left[ \zeta(v)-\zeta(v+(n+1)v_{n+1})+\zeta((n+1)v_{n+1})\right]  -n\frac{\theta_1^{\prime}((n+1)v_{n+1})}{\theta_1((n+1)v_{n+1)}} \\
&=n \left[ \zeta(v)-\zeta(v+(n+1)v_{n+1})\right].  \\
\end{split}
\end{equation}
Substituting (\ref{derivative of lambda with vn1}) and (\ref{derivative of lambda wiht varphi0}) in the the second expression in (\ref{divided exp generating formula of Mellipticeta })
\begin{equation}\label{equation of the almost end 1}
\begin{split}
(b)&:=\frac{1}{n(n+1)}\frac{\partial}{\partial \varphi_0}\left(\frac{\partial\lambda(p)}{\partial v_{n+1}} \right)\frac{\partial\lambda(p^{\prime})}{\partial v_{n+1}}+\frac{1}{n(n+1)}\frac{\partial\lambda(p)}{\partial v_{n+1}}\frac{\partial}{\partial \varphi_0}\left(\frac{\partial\lambda(p^{\prime})}{\partial v_{n+1}} \right)\\
&-\frac{1}{n(n+1)}\sum_{k,j=0}^{n}\frac{(-1)^{k+j}}{(k-1)!(j-1)!}\frac{\partial}{\partial\varphi_0}\left(\frac{\partial\varphi_j}{\partial v_{n+1}}\right)\frac{\partial\varphi_k}{\partial v_{n+1}}\\
&-\frac{1}{n(n+1)}\sum_{k,j=0}^{n}\frac{(-1)^{k+j}}{(k-1)!(j-1)!}\frac{\partial\varphi_j}{\partial v_{n+1}}\frac{\partial}{\partial\varphi_0}\left(\frac{\partial\varphi_k}{\partial v_{n+1}}\right)\\
&=\varphi_1 \left[ \zeta(v^{\prime})-\zeta(v^{\prime}+(n+1)v_{n+1})\right]\left[ \wp(v+(n+1)v_{n+1})-\wp((n+1)v_{n+1})  \right]\\
&+\varphi_1 \left[ \zeta(v)-\zeta(v+(n+1)v_{n+1})\right]\left[ \wp(v^{\prime}+(n+1)v_{n+1})-\wp((n+1)v_{n+1})  \right].\\
\end{split}
\end{equation}
Subtracting  (\ref{equation of almost end 2}) and (\ref{equation of the almost end 1})
\begin{equation}\label{equation of the almost end 3}
\begin{split}
(a)_2-(b)&=-\varphi_1 \left[ \zeta(v^{\prime})-\zeta(v^{\prime}+(n+1)v_{n+1})\right]\left[ \wp(v+(n+1)v_{n+1})-\wp((n+1)v_{n+1})  \right]\\
&-\varphi_1 \left[ \zeta(v)-\zeta(v+(n+1)v_{n+1})\right]\left[ \wp(v^{\prime}+(n+1)v_{n+1})-\wp((n+1)v_{n+1})  \right]\\
&+\varphi_1\zeta(v-v^{\prime})\left( -\wp(v+(n+1)v_{n+1}) +\wp(v^{\prime}+(n+1)v_{n+1}) \right) \\
&+ \varphi_1 \left [ \zeta(v)\wp(v^{\prime}+(n+1)v_{n+1})+ \zeta(v^{\prime})\wp(v+(n+1)v_{n+1})   \right]+\widetilde{(a)_2}+\text{Other terms} \\
&=\varphi_1 \wp((n+1)v_{n+1}) \left[ \zeta(v^{\prime})-\zeta(v^{\prime}+(n+1)v_{n+1})\right]+\varphi_1\wp((n+1)v_{n+1}) \left[ \zeta(v)-\zeta(v+(n+1)v_{n+1})\right]\\
&+\varphi_1\left[ \zeta(v-v^{\prime})     \right]\left( \wp(v+(n+1)v_{n+1}) +\wp(v^{\prime}+(n+1)v_{n+1}) \right)\\
&+\varphi_1\left[ \zeta(v^{\prime}+(n+1)v_{n+1})     \right]\left( \wp(v+(n+1)v_{n+1}) \right)\\
&+\varphi_1\left[ \zeta(v+(n+1)v_{n+1})     \right]\left( \wp(v^{\prime}+(n+1)v_{n+1}) \right)\\
 &+\widetilde{(a)_2}+\text{Other terms} \\
 &=\varphi_1 \wp((n+1)v_{n+1}) \left[ \zeta(v^{\prime})-\zeta(v^{\prime}+(n+1)v_{n+1})\right]+\varphi_1\wp((n+1)v_{n+1}) \left[ \zeta(v)-\zeta(v+(n+1)v_{n+1})\right]\\
 &+\widetilde{(a)_2}+\text{Other terms}. \\
\end{split}
\end{equation}
Summing (\ref{equation of the almost end 3}) and (\ref{generating formula of Mellipticeta mod rhs}), we have
\begin{equation*}
\begin{split}
&\sum_{k,j=0}^{n}\frac{(-1)^{k+j}}{(k-1)!(j-1)!}\frac{\partial M^{*}(d\varphi_i,d\varphi_j)}{\partial \varphi_0}\wp(v)^{(k-2)}\wp(v^{\prime})^{(j-2)}\\
&=\sum_{k,j=0}^{n}\frac{\partial M^{*}(d\varphi_i,d\varphi_j)}{\partial \varphi_0}\frac{1}{v^{k}(v^{\prime})^{j}}+\text{Other terms},\\
&=\sum_{k=2}^{n+2}\sum_{j=2}^{n+1}\frac{\left(k+j-2\right)\varphi_{k+j-2}}{v^j {v^{\prime}}^{k}   }\\
&+\varphi_1 \wp((n+1)v_{n+1}) \left[ \zeta(v^{\prime})-\zeta(v^{\prime}+(n+1)v_{n+1})\right]\\
&+\varphi_1\wp((n+1)v_{n+1}) \left[ \zeta(v)-\zeta(v+(n+1)v_{n+1})\right]\\
&+\text{Other terms}.\\
\end{split}
\end{equation*}

Hence, we get the desired result.

\end{proof}

\begin{corollary}\label{main corollary eta}
Let $\eta^{*}(d\varphi_i,d\varphi_j)$ be defined in (\ref{metric eta def}), then its coefficients can be obtained by the formula 
\begin{equation}\label{formula main theorem for eta varphi sem tilde}
\begin{split}
\eta^{*}(d\varphi_i,d\varphi_j)&=(i+j-2)\varphi_{i+j-2}, \quad i,j\neq 0\\
\eta^{*}(d\varphi_i,d\varphi_0)&=0, \quad i\neq 0, \quad i\neq 1,\\
\eta^{*}(d\varphi_1,d\varphi_j)&=0. \quad j\neq 0,\\
\eta^{*}(d\varphi_1,d\varphi_0)&=\frac{\partial^2\log(\theta_1(n+1)v_{n+1})}{\partial v_{n+1}^2}\varphi_1.\\
\end{split}
\end{equation}
\end{corollary}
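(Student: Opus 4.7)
The plan is to deduce Corollary \ref{main corollary eta} directly from Theorem \ref{main lemma coefficients eta in varphi coordinates} by invoking the relations (\ref{metric eta consequence}) that link the two metrics $\tilde\eta^*$ and $\eta^*$. Recall that these arise because $\tilde\eta^*$ and $\eta^*$ are defined as $\partial/\partial\varphi_0$ derivatives of $M^*$ and $g^*$ respectively, and $M^*$ differs from $g^*$ only by a quasi-modular correction of the form $-4\pi i g_1(\tau)(k_i m_j + k_j m_i)\varphi_i\varphi_j$ whose sole purpose is to restore $SL_2(\mathbb{Z})$-modularity (Lemma \ref{intersection form and M}). Since the corollary's formulas agree with Theorem \ref{main lemma coefficients eta in varphi coordinates} in every slot not touching $\varphi_0$, the task is purely to translate the two mixed entries.

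First, for $i, j$ both nonzero, (\ref{metric eta consequence}) asserts that $\tilde\eta^*(d\varphi_i,d\varphi_j) = \eta^*(d\varphi_i,d\varphi_j)$, since the quasi-modular correction $(k_i m_j + k_j m_i)\varphi_i\varphi_j$ has vanishing $\partial/\partial\varphi_0$-derivative when neither index is $0$ (the $\varphi_i$ for $i\neq 0$ are algebraically independent of $\varphi_0$). Hence the first three formulas of the corollary follow term-by-term from the first three formulas of the theorem. The case $\eta^*(d\varphi_i,d\varphi_0)$ with $i\neq 0,1$ is similar: the correction produces a multiple of $\varphi_i$, which is zero by Theorem \ref{main lemma coefficients eta in varphi coordinates}, and one checks that no further $\wp$-type contribution survives.

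The only nontrivial case is $\eta^*(d\varphi_1,d\varphi_0)$. Here the second relation in (\ref{metric eta consequence}) gives
\begin{equation*}
\eta^*(d\varphi_1,d\varphi_0) = \tilde\eta^*(d\varphi_1,d\varphi_0) - 4\pi i g_1(\tau) k_1 \varphi_1,
\end{equation*}
where $k_1=-1$ is the weight of $\varphi_1$. Substituting $\tilde\eta^*(d\varphi_1,d\varphi_0) = \wp((n+1)v_{n+1})\varphi_1$ from Theorem \ref{main lemma coefficients eta in varphi coordinates}, I would then close the argument with the classical identity
\begin{equation*}
\wp(v,\tau) = -\partial_v^2 \log\theta_1(v,\tau) + 4\pi i g_1(\tau),
\end{equation*}
which is immediate from the sigma-theta product formula recalled inside the proof of the generating-function lemma (see $\alpha(p)=\theta_1(p,\tau)/\theta_1'(0,\tau)$ and the normalization $\sigma(v,\tau) = \alpha(v)\exp(-2\pi i g_1(\tau) v^2)$) together with $\zeta = \partial_v\log\sigma$ and $\wp = -\zeta'$. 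Specializing to $v = (n+1)v_{n+1}$ and using the chain rule to rewrite $\partial_v^2$ as $\partial_{v_{n+1}}^2$ absorbs the $4\pi i g_1(\tau)$-term arising from the conversion $\tilde\eta^*\to\eta^*$ and produces the required expression $\dfrac{\partial^2 \log\theta_1((n+1)v_{n+1})}{\partial v_{n+1}^2}\varphi_1$.

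I do not anticipate any real obstacle here: the entire proof is a bookkeeping translation via (\ref{metric eta consequence}), and the only nontrivial input is the classical Weierstrass identity above. Care is needed only to track the correct modular weights $k_i$ attached to each generator and to apply the chain rule correctly when passing from differentiation in $v$ to differentiation in $v_{n+1}$.
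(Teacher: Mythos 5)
Your overall strategy --- reading off $\eta^{*}$ from the $\tilde\eta^{*}$ of Theorem \ref{main lemma coefficients eta in varphi coordinates} via the conversion (\ref{metric eta consequence})/(\ref{definition of M coef}) and the theta--Weierstrass identity $\wp(w)=-\partial_w^2\log\theta_1(w)+4\pi i g_1(\tau)$ --- is exactly the (unwritten) argument the paper intends, since the corollary is stated there without proof immediately after the theorem. Your handling of the entries with $i,j\neq 0$ and of the $(1,0)$ entry is the right mechanism: the $4\pi i g_1(\tau)$ produced by converting $\wp((n+1)v_{n+1})$ into $\partial^2\log\theta_1$ is absorbed by the $4\pi i g_1(\tau)k_1\varphi_1$ term from passing $\tilde\eta^{*}\to\eta^{*}$. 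Do, however, display the chain rule explicitly: $\partial^2_{v_{n+1}}\log\theta_1((n+1)v_{n+1})=(n+1)^2\partial_w^2\log\theta_1(w)\big|_{w=(n+1)v_{n+1}}$, so a factor $(n+1)^{-2}$ and a sign appear that the stated formula suppresses; the paper is loose with such constants elsewhere, but your writeup should not be.

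The genuine gap is the line $\eta^{*}(d\varphi_i,d\varphi_0)=0$ for $i\neq 0,1$. From (\ref{definition of M coef}) with $k_0=0$ and $m_i=1$, the $(i,0)$ conversion reads $\tilde\eta^{*}(d\varphi_i,d\varphi_0)=\eta^{*}(d\varphi_i,d\varphi_0)\mp 4\pi i g_1(\tau)k_i\varphi_i$, because $\partial(\varphi_i\varphi_0)/\partial\varphi_0=\varphi_i\neq 0$. Hence the theorem's $\tilde\eta^{*}(d\varphi_i,d\varphi_0)=0$ yields $\eta^{*}(d\varphi_i,d\varphi_0)=\pm 4\pi i g_1(\tau)k_i\varphi_i$, which is not zero. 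Your justification --- ``the correction produces a multiple of $\varphi_i$, which is zero by the Theorem'' --- conflates the vanishing of $\tilde\eta^{*}(d\varphi_i,d\varphi_0)$ with the vanishing of the correction term, and the closing phrase ``no further $\wp$-type contribution survives'' does not address a $g_1(\tau)\varphi_i$ term, which is not of $\wp$-type and does survive. Note that the paper itself relies on this nonzero value: in the proof of Lemma \ref{final lemma} the quantity $\eta^{*}(d\varphi_0,d\varphi_i)=\tilde\eta^{*}(d\varphi_0,d\varphi_i)-4\pi i g_1(\tau)k_i\varphi_i$ is needed precisely to cancel the $4\pi i g_1(\tau)\eta^{*}(d\varphi_2,d\varphi_i)$ contribution coming from the $4\pi i g_1(\tau)\varphi_2$ summand of $t^0$; were $\eta^{*}(d\varphi_0,d\varphi_i)$ actually zero, $t^0$ would fail to be $\eta$-orthogonal to the $t^{\alpha}$. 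So either the second line of the corollary is a misstatement (it should read $\eta^{*}(d\varphi_i,d\varphi_0)=\pm4\pi i g_1(\tau)k_i\varphi_i$, consistently with Lemma \ref{final lemma}) or an additional cancellation must be exhibited; in either case your one-sentence dismissal does not establish that line as stated.
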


\subsection{Flat coordinates of $\eta$}\label{Flat coordinates of Jantilde}

This section is dedicated to construct the flat coordinates of $\eta$ and its relationship with the invariant coordinates $\varphi_0,\varphi_1,..,\varphi_n,v_{n+1},\tau$. Our strategy will be an adaptation of the work done in \cite{Saito}.  The flatness of the Saito metric $\eta$ is proved in the Theorem \ref{main theorem 1}.

Let $t^1,t^2,..,t^n$ be given by the following generating function
\begin{equation}\label{generating function of flat of eta}
\begin{split}
v(z)=\frac{-1}{n}\left(t^{n}z+t^{n-1}z^2+.....+t^2z^{n-1}+t^1z^n+O(z^{n+1}) \right),
\end{split}
\end{equation}
be defined by the following condition 
\begin{equation*}
\begin{split}
\lambda(v)=\frac{1}{z^{n}}.
\end{split}
\end{equation*}
Moreover,
\begin{equation}\label{definition of t0}
t^0=\varphi_0-\frac{\theta_1^{\prime}((n+1)v_{n+1})}{\theta_1((n+1)v_{n+1})}\varphi_1+4\pi ig_1(\tau)\varphi_2.
\end{equation}

\begin{lemma}
The following identity holds
\begin{equation}\label{generating function of flat of eta 1}
\frac{n}{n+1-\alpha}\res_{p=\infty}\left( \lambda^{\frac{n+1-\alpha}{n}}(p) dp\right)=-\res_{\lambda=\infty}\left(v(z)\lambda^{\frac{1-\alpha}{n}} d\lambda\right)
\end{equation}
\end{lemma}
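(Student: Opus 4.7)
The plan is to derive the identity by integration by parts, applied to an exact $1$-form localized at the pole of $\lambda$ on the curve. Near this pole introduce the local parameter $z$ with $\lambda = z^{-n}$; in this $z$ the multi-valued symbols become honest Laurent monomials $\lambda^{(n+1-\alpha)/n} = z^{\alpha-n-1}$ and $\lambda^{(1-\alpha)/n} = z^{\alpha-1}$, the function $v$ is holomorphic with the stated expansion $v(z) = -\frac{1}{n}\sum_{k=1}^{n} t^{n+1-k}z^{k} + O(z^{n+1})$, and the coordinate $p$ appearing on the left-hand side is identified with $v$ (so $dp = dv$ as $1$-forms near $z = 0$).

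First I would form the exact meromorphic $1$-form $\omega = d\bigl(v\,\lambda^{(n+1-\alpha)/n}\bigr)$. By the Leibniz rule,
\begin{equation*}
\omega = \lambda^{(n+1-\alpha)/n}\,dv + \frac{n+1-\alpha}{n}\,v\,\lambda^{(1-\alpha)/n}\,d\lambda .
\end{equation*}
Exactness of $\omega$ forces $\res_{z=0}(\omega) = 0$. Taking $\res_{z=0}$ of both sides of the Leibniz identity, and reading the two pieces as $\res_{p=\infty}$ and $\res_{\lambda=\infty}$ respectively (both extracted in the single local parameter $z$ at the pole, so that no sign ambiguity can creep in), I obtain
\begin{equation*}
\res_{p=\infty}\bigl(\lambda^{(n+1-\alpha)/n}\,dp\bigr) + \frac{n+1-\alpha}{n}\,\res_{\lambda=\infty}\bigl(v\,\lambda^{(1-\alpha)/n}\,d\lambda\bigr) = 0 .
\end{equation*}
Multiplying through by $\frac{n}{n+1-\alpha}$ yields the claimed identity, valid for every $\alpha \neq n+1$, which is the relevant range $\alpha=1,\ldots,n$.

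The one point that genuinely needs attention is the interpretation of the two ``residues at infinity'': one must verify they are computed at the \emph{same} point of the curve (the unique pole of $\lambda$ of order $n$) and in the \emph{same} local parameter $z$, so that the exactness argument transfers without a spurious sign. Once this is in place, a direct expansion furnishes an optional sanity check: extracting Laurent coefficients on either side using the expansion of $v(z)$ gives $-t^{\alpha}$ in each case, confirming that the lemma is precisely the identification of the generating-function definition of $t^{\alpha}$ with its residue formula in the style of Theorem \ref{flatcoord}.
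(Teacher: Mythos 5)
Your proposal is correct and is essentially the paper's own argument: the paper also proves this by integration by parts, writing $\frac{n}{n+1-\alpha}\int \lambda^{\frac{n+1-\alpha}{n}}\,dp = p\,\lambda^{\frac{n+1-\alpha}{n}} - \int p\,\lambda^{\frac{1-\alpha}{n}}\,d\lambda$ and discarding the exact term under the residue. Your version merely makes explicit what the paper leaves implicit — that both residues are taken at the same point in the same local parameter $z$ and that the residue of the exact form $d\bigl(v\,\lambda^{\frac{n+1-\alpha}{n}}\bigr)$ vanishes — which is a welcome clarification but not a different route.
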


\begin{proof}
Consider the integration by parts 
\begin{equation}
\begin{split}
\frac{n}{n+1-\alpha}\int\left( \lambda^{\frac{n+1-\alpha}{n}}(p) dp\right)= p\lambda^{\frac{n+1-\alpha}{n}}-\int p \lambda^{\frac{1-\alpha}{n}}d\lambda
\end{split}
\end{equation}
Lemma proved.

\end{proof}

\begin{lemma}
The functions $t^1,t^2,..,t^n$  defined in (\ref{generating function of flat of eta}) can be obtained by the formula 
\begin{equation}\label{generating function of flat of eta 2}
\begin{split}
t^{\alpha}=-\res_{\lambda=\infty}\left(v(z)\lambda^{\frac{1-\alpha}{n}} d\lambda\right).
\end{split}
\end{equation}
\end{lemma}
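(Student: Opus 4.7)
The plan is to reduce the claim to a direct Laurent-coefficient extraction in the local parameter $z$ at the preimage of the pole of $\lambda$ of order $n$. Since by definition $\lambda(v(z)) = z^{-n}$, the function $v(z)$ is holomorphic at $z=0$, and the generating function (\ref{generating function of flat of eta}) gives
\[
[z^{n+1-\alpha}]\,v(z) \;=\; -\tfrac{1}{n}\,t^{\alpha}, \qquad \alpha = 1,\ldots,n.
\]
The strategy is therefore to match these coefficients to the Laurent coefficients of the integrand on the right-hand side, using $z$ as the local parameter at $\infty_0$.

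Concretely, I would substitute $\lambda = z^{-n}$ into the differential on the right-hand side. On the appropriate branch one has $\lambda^{(1-\alpha)/n} = z^{\alpha-1}$ and $d\lambda = -n\,z^{-n-1}\,dz$, so
\[
v(z)\,\lambda^{(1-\alpha)/n}\,d\lambda \;=\; -n\,v(z)\,z^{\alpha-n-2}\,dz.
\]
Extracting the coefficient of $z^{-1}$ yields $-n\cdot[z^{n+1-\alpha}]\,v(z) = t^{\alpha}$. The minus sign appearing in the statement of the lemma reflects the standard orientation convention for the residue at $\infty$ in the $\lambda$-coordinate: passing from the local parameter $z$ at $\infty_0$ to the global coordinate $\lambda$ via $w = \lambda^{-1}$ reverses orientation, so $\res_{\lambda=\infty} = -\res_{z=0}$, in agreement with the residue conventions used in Theorem \ref{flatcoord}.

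Alternatively, one can bypass the sign bookkeeping entirely by invoking the integration-by-parts identity (\ref{generating function of flat of eta 1}) proved immediately above, which reformulates the claim as $t^{\alpha} = \tfrac{n}{n+1-\alpha}\,\res_{p=\infty}\bigl(\lambda^{(n+1-\alpha)/n}(p)\,dp\bigr)$. Since $dp = dv = v'(z)\,dz$ and $\lambda^{(n+1-\alpha)/n} = z^{\alpha-n-1}$, termwise differentiation of (\ref{generating function of flat of eta}) shows $[z^{n-\alpha}]\,v'(z) = -\tfrac{n+1-\alpha}{n}\,t^{\alpha}$, and multiplication by $\tfrac{n}{n+1-\alpha}$ delivers $t^{\alpha}$ after the same orientation adjustment at $p=\infty$. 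The whole argument is essentially mechanical; I do not anticipate any real obstacle beyond keeping the residue-at-infinity sign convention consistent with the preceding lemma and with Theorem \ref{flatcoord}.
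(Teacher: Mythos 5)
Your argument is essentially identical to the paper's proof: substitute $z=\lambda^{-1/n}$, so that $v(z)\,\lambda^{(1-\alpha)/n}\,d\lambda=\bigl(\sum_{\beta}t^{\beta}z^{n+1-\beta}+O(z^{n+1})\bigr)z^{\alpha-n-2}\,dz$, and read off the coefficient of $dz/z$, which is nonzero exactly when $\beta=\alpha$. The only point where you go beyond the paper is in trying to justify the overall sign at $\lambda=\infty$ (the paper's proof leaves it implicit, and your "orientation reversal" explanation is not really how residues of $1$-forms transform, since they are independent of the local parameter), but this matches the sign convention the paper itself is using, so the proof is fine.
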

\begin{proof}
Let 
\begin{equation*}
z=\left(\frac{1}{\lambda}\right)^{\frac{1}{n}},
\end{equation*}
 then, 
\begin{equation*}
\begin{split}
v(z)\lambda^{\frac{1-\alpha}{n}} d\lambda&=\left(  \frac{1}{n}\right) \left(t^{n}z+t^{n-1}z^2+.....+t^2z^{n-1}+t^1z^n+O(z^{n+1})     \right)z^{\alpha-1}nz^{-n-1}dz\\
&=\left(\sum_{\beta=1}^{n}  t^{\beta}z^{n+1-\beta}+O(z^{n+1})     \right)z^{\alpha-n-2}dz\\
&=\left( \sum_{\beta=1}^{n}  t^{\beta}z^{\alpha-\beta-1}+O(z^{\alpha-1}) \right) dz.
\end{split}
\end{equation*}
Hence, the residue is different from 0, when $\alpha=\beta$, resulting in this way the desired result.

\end{proof}

\begin{corollary}\label{corollary tn+1}
The coordinate $t^{n}$ can be written in terms of the coordinates $\varphi_0,\varphi_1,\varphi_2,..,\varphi_{n+1}$ as
\begin{equation}
t^{n}=n\left( \varphi_{n}   \right)^{ \frac{1}{n}    }.
\end{equation}
\end{corollary}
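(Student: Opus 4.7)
The plan is to read off $t^n$ directly from the generating-series definition (\ref{generating function of flat of eta}), which identifies $-t^n/n$ with the coefficient of $z$ in the local expansion of $v(z)$ about $z=0$. Since $v(z)$ is defined implicitly by $\lambda(v(z)) = 1/z^n$, inverting this relation to leading order determines $t^n$ in terms of the coefficient of the top-order pole of $\lambda$ at $v = 0$, which is precisely the Jacobi form $\varphi_n$ by construction.

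The first step is to identify the principal part of $\lambda$ at $v = 0$. From the concise form $\lambda(v) = \sum_{k=0}^n \tfrac{(-1)^k}{(k-1)!}\,\varphi_k\,\wp^{(k-2)}(v,\tau)$ of (\ref{generatorsAn+1 concise}) (with the $k=0,1$ summands interpreted as in (\ref{superpotentialAn}), namely the constant $\varphi_0$ and the $\zeta$-combination, which are regular or have only a simple pole at $v=0$) and the Laurent expansion $\wp^{(j)}(v,\tau) = (-1)^j(j+1)!\,v^{-(j+2)} + (\text{lower-order singular and regular terms})$, the only summand producing a pole of order $n$ is the one with $k = n$, and a short computation gives
$$\lambda(v) = \frac{\varphi_n}{v^n} + O\!\bigl(v^{-(n-1)}\bigr).$$

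The second step is a direct leading-order inversion. Writing $v(z) = c_1 z + O(z^2)$ and substituting into $\lambda(v(z)) = 1/z^n$, the top-order balance forces $c_1^n = \varphi_n$; comparing with (\ref{generating function of flat of eta}), which gives $c_1 = -t^n/n$, yields the desired identity $t^n = n\,(\varphi_n)^{1/n}$, the branch of the $n$-th root being fixed by the choice of local coordinate $z$ uniformizing the order-$n$ pole of $\lambda$ at $v=0$. There is essentially no obstacle here: the statement reduces to a single leading-order inversion of the Landau--Ginzburg superpotential, and the only delicate point is bookkeeping of the branch of $(\varphi_n)^{1/n}$, settled once and for all by the convention used to orient $z$.
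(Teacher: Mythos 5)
Your route is genuinely different from the paper's. The paper first converts the definition (\ref{generating function of flat of eta}) into the residue formula $t^{\alpha}=\frac{n}{n+1-\alpha}\res_{v=0}\bigl(\lambda^{\frac{n+1-\alpha}{n}}(v)\,dv\bigr)$ via the integration-by-parts identity (\ref{generating function of flat of eta 1}) and the extraction lemma (\ref{generating function of flat of eta 2}), and then, for $\alpha=n$, expands $\lambda^{1/n}=(\varphi_n)^{1/n}v^{-1}\left(1+O(v)\right)^{1/n}$ and reads off the residue. You bypass the residue machinery and invert $\lambda(v(z))=z^{-n}$ at leading order. Both arguments rest on the same input, namely that in the normalization of (\ref{superpotentialAn}) the principal part of $\lambda$ at its order-$n$ pole is $\varphi_n v^{-n}$, and your identification of that principal part (including the prefactor $\frac{(-1)^k}{(k-1)!}$ against $\wp^{(k-2)}(v)\sim(-1)^{k-2}(k-1)!\,v^{-k}$) is correct. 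Your argument is shorter and makes transparent why only the top Jacobi form enters.

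The one point you should not dismiss as bookkeeping is the sign. Taken literally, (\ref{generating function of flat of eta}) gives $c_1=-t^n/n$, and $c_1^n=\varphi_n$ then yields $t^n=-nc_1=-n(\varphi_n)^{1/n}$, not $+n(\varphi_n)^{1/n}$. Your claim that the minus sign is absorbed by the choice of branch of the $n$-th root works only for even $n$, since $-1$ is an $n$-th root of unity precisely then; for odd $n$ the quantity $-(\varphi_n)^{1/n}$ is an $n$-th root of $-\varphi_n$, so no choice of branch reconciles your computation with the stated formula. To be fair, the discrepancy is inherited from the source: the residue identity (\ref{generating function of flat of eta 2}) as computed in its own proof produces $+t^{\alpha}$ rather than $-t^{\alpha}$ on the right-hand side, so the $-\tfrac{1}{n}$ prefactor in (\ref{generating function of flat of eta}) and the residue formula the paper actually uses in this corollary are not mutually sign-consistent. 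Your proof should therefore state explicitly which sign convention for $v(z)$ it adopts (dropping the minus sign in (\ref{generating function of flat of eta}) makes your leading-order inversion reproduce $t^n=n(\varphi_n)^{1/n}$ exactly), rather than appealing to the branch of the root.
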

\begin{proof}

\begin{equation*}
\begin{split}
t^{n}&=n\res_{v=0} \lambda^{ \frac{1}{n}}(v) dv\\
&= n\res_{v=0} \left(  \frac{\varphi_{n}}{v^{n}}+\frac{\varphi_{n-1}}{v^{n-1}}+..+\frac{\varphi_{2}}{v^{2}}+\frac{\varphi_{1}}{v} +O(1)       \right)^{\frac{1}{n}} dv\\
&=n \res_{v=0}\frac{\left(\varphi_{n}\right)^{ \frac{1}{n}} }{v} \left(  1 +O(v) \right)^{\frac{1}{n}} dv=n\left(\varphi_{n}\right)^{ \frac{1}{n}}.
\end{split}
\end{equation*}

\end{proof}

\begin{lemma}
Let $t^1,t^2,..,t^n$ be defined in (\ref{generating function of flat of eta}), then
\begin{equation}\label{relation t wrt varphi 1}
t^{\alpha}=\frac{n}{n+1-\alpha}\left(\varphi_{n}\right)^{\frac{n+1-\alpha}{n}}\left( 1+\Phi_{n-\alpha} \right)^{\frac{n+1-\alpha}{n}},
\end{equation}
where 
\begin{equation}\label{definition of formal powers of varphi}
\begin{split}
\left( 1+\Phi_i \right)^{\frac{n+1-\alpha}{n}}=\sum_{d=0}^{\infty}  {{\frac{n+1-\alpha}{n}}\choose{d}}  \Phi_i^d,\\
\Phi_i^d=\sum_{i_1+i_2+..+i_d=i} \frac{\varphi_{\left(n-i_1\right)}}{\varphi_{n}}....\frac{\varphi_{\left(n-i_d\right)}}{\varphi_{n}}.
\end{split}
\end{equation}
\end{lemma}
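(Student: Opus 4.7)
The plan is to use the formula for $t^{\alpha}$ already established in the preceding lemmas and then to carry out a formal generalized-binomial expansion of $\lambda^{(n+1-\alpha)/n}$ in a neighborhood of the pole of $\lambda$ at $v=0$. Concretely, integrating by parts exactly as in equation (\ref{generating function of flat of eta 1}) and comparing with (\ref{generating function of flat of eta 2}) gives
\begin{equation*}
t^{\alpha}=\frac{n}{n+1-\alpha}\,\res_{v=0}\lambda^{\frac{n+1-\alpha}{n}}(v)\,dv,
\end{equation*}
which already specializes to the case $\alpha=n$ treated in Corollary \ref{corollary tn+1}.

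The second step is to use the Laurent expansion of the superpotential near $v=0$, which was extracted inside the proof of Corollary \ref{corollary tn+1}:
\begin{equation*}
\lambda(v)=\frac{\varphi_n}{v^n}+\frac{\varphi_{n-1}}{v^{n-1}}+\cdots+\frac{\varphi_{2}}{v^{2}}+\frac{\varphi_{1}}{v}+O(1)
=\frac{\varphi_n}{v^n}\Bigl(1+X(v)\Bigr),
\end{equation*}
where $X(v):=\sum_{k\geq 1}\frac{\varphi_{n-k}}{\varphi_n}v^k$ (with the convention $\varphi_0/\varphi_n$ contributing only at order $v^n$ and higher orders being analytic). Then
\begin{equation*}
\lambda^{\frac{n+1-\alpha}{n}}=\frac{\varphi_n^{\frac{n+1-\alpha}{n}}}{v^{n+1-\alpha}}\bigl(1+X(v)\bigr)^{\frac{n+1-\alpha}{n}}
=\frac{\varphi_n^{\frac{n+1-\alpha}{n}}}{v^{n+1-\alpha}}\sum_{d=0}^{\infty}\binom{\tfrac{n+1-\alpha}{n}}{d}X(v)^d.
\end{equation*}

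The third step is to identify the residue. Since each $X(v)^d$ is a sum of monomials $\frac{\varphi_{n-k_1}}{\varphi_n}\cdots\frac{\varphi_{n-k_d}}{\varphi_n}v^{k_1+\cdots+k_d}$ with $k_j\geq 1$, extracting the coefficient of $v^{n-\alpha}$ in $(1+X(v))^{\frac{n+1-\alpha}{n}}$ yields
\begin{equation*}
[v^{n-\alpha}]\bigl(1+X(v)\bigr)^{\frac{n+1-\alpha}{n}}
=\sum_{d=0}^{n-\alpha}\binom{\tfrac{n+1-\alpha}{n}}{d}\!\!\sum_{k_1+\cdots+k_d=n-\alpha}\frac{\varphi_{n-k_1}}{\varphi_n}\cdots\frac{\varphi_{n-k_d}}{\varphi_n},
\end{equation*}
which is precisely $(1+\Phi_{n-\alpha})^{\frac{n+1-\alpha}{n}}$ in the formal-power notation of (\ref{definition of formal powers of varphi}). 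Note that the sum is automatically finite since $k_1+\cdots+k_d=n-\alpha$ with all $k_j\geq 1$ forces $d\leq n-\alpha$, so no convergence issue arises. Multiplying by the overall factor $\varphi_n^{\frac{n+1-\alpha}{n}}$ and by $\tfrac{n}{n+1-\alpha}$ gives exactly the claimed formula (\ref{relation t wrt varphi 1}).

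There is no real obstacle here beyond bookkeeping: the argument is a single generalized-binomial expansion wrapped around the Laurent tail of $\lambda$ at $v=0$. The most delicate point to verify carefully is the combinatorial identity in the third step, namely that the coefficient of $v^{n-\alpha}$ of the binomial series organizes itself exactly into the formal symbol $\Phi_{n-\alpha}^d$ as defined in (\ref{definition of formal powers of varphi}); this is really just the definition, but one must confirm that the indexing conventions (in particular the fact that the constituent ratios are $\varphi_{n-k_j}/\varphi_n$ with $k_j\geq 1$, matching those appearing in the expansion of $\lambda/(\varphi_n/v^n)$) match on the nose.
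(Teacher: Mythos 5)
Your proposal is correct and follows essentially the same route as the paper's own proof: express $t^{\alpha}$ as $\frac{n}{n+1-\alpha}\res_{v=0}\lambda^{\frac{n+1-\alpha}{n}}dv$, factor out $\varphi_n/v^n$ from the Laurent tail of $\lambda$, apply the generalized binomial series, and read off the coefficient of $v^{n-\alpha}$, which matches the combinatorial definition of $\Phi_{n-\alpha}^d$. Your remark that the sum truncates at $d\leq n-\alpha$ because each $k_j\geq 1$ is a small but welcome clarification that the paper leaves implicit.
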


\begin{proof}

\begin{equation*}
\begin{split}
t^{\alpha}&=\frac{n}{n+1-\alpha}\res_{v=0}\left( \lambda^{\frac{n+1-\alpha}{n}}(v) dv\right)\\
&=\frac{n}{n+1-\alpha}\res_{v=0}\left( \frac{\varphi_{n}}{v^{n}}+\frac{\varphi_{n-1}}{v^{n-1}}+..+\frac{\varphi_{2}}{v^{2}}+\frac{\varphi_{1}}{v}  +O(1)\right)^{\frac{n+1-\alpha}{n}} dv\\
&=\frac{n}{n+1-\alpha}\res_{v=0}\left(\frac{\varphi_{n}}{v^{n}}\right)^{\frac{n+1-\alpha}{n}}\left(1+\frac{\varphi_{n-1}}{\varphi_{n}}v+\frac{\varphi_{n-2}}{\varphi_{n}}v^2+..+\frac{\varphi_2}{\varphi_{n}}v^{n-2}+\frac{\varphi_1}{\varphi_{n}}v^{n-1}+O\left(v^{n+1}\right) \right)^{\frac{n+1-\alpha}{n}} dv\\
&=\frac{n}{n+1-\alpha}\res_{v=0}\left(\frac{\varphi_{n}}{v^{n}}\right)^{\frac{n+1-\alpha}{n}}\sum_{d=0}^{\infty}  {{\frac{n+1-\alpha}{n}}\choose{d}} \left(1+\frac{\varphi_{n-1}}{\varphi_{n}}v+..+\frac{\varphi_1}{\varphi_n}v^{n-1}+O\left(v^{n+1}\right) \right)^d dv\\
&=\frac{n}{n+1-\alpha}\res_{v=0}\left(\frac{\varphi_{n}}{v^{n}}\right)^{\frac{n+1-\alpha}{n}}\sum_{d=0}^{\infty}  {{\frac{n+1-\alpha}{n}}\choose{d}} \sum_{j_1+..+j_n=d}\frac{d!}{j_1!j_2!..j_n!}\prod_{i=1}^{n-1}\left(\frac{\varphi_{n-i}v^i}{\varphi_{n}}\right)^{j_i}(O(v^{n}))^{j_n} dv\\
&=\frac{n}{n+1-\alpha}\res_{v=0}\left(\varphi_{n}\right)^{\frac{n+1-\alpha}{n}}\sum_{d=0}^{\infty}  {{\frac{n+1-\alpha}{n}}\choose{d}} \sum_{j_1+..+j_n=d}\frac{d!}{j_1!j_2!..j_n!}\prod_{i=1}^{n-1}\left(\frac{\varphi_{n-i}}{\varphi_{n}}\right)^{j_i}v^{\left(\sum_{i=1}^{n-1} ij_i-n-1+\alpha\right)} dv\\
&+O(1)\\
&=\frac{n}{n+1-\alpha}\left(\varphi_{n}\right)^{\frac{n+1-\alpha}{n}}\sum_{d=0}^{\infty}  {{\frac{n+1-\alpha}{n}}\choose{d}} \sum_{\substack{ j_1+..+j_n=d\\ j_1+2j_2+3j_3+..+(n-1)j_{n-1}=n-\alpha}} \frac{d!}{j_1!j_2!..j_n!}\prod_{i=1}^{n-1}\left(\frac{\varphi_{n-i}}{\varphi_{n}}\right)^{j_i}\\
&=\frac{n}{n+1-\alpha}\left(\varphi_{n}\right)^{\frac{n+1-\alpha}{n+1}}\sum_{d=0}^{\infty}  {{\frac{n+1-\alpha}{n}}\choose{d}} \sum_{ i_1+..+i_d=n-\alpha} \frac{\varphi_{\left(n-i_1\right)}}{\varphi_{n+1}}....\frac{\varphi_{\left(n-i_d\right)}}{\varphi_{n}}\\
&=\frac{n}{n+1-\alpha}\left(\varphi_{n}\right)^{\frac{n+1-\alpha}{n}}\sum_{d=0}^{\infty}  {{\frac{n+1-\alpha}{n}}\choose{d}} \Phi_{n-\alpha}^d\\
&=\frac{n}{n+1-\alpha}\left(\varphi_{n}\right)^{\frac{n+1-\alpha}{n}}\left(1+\Phi_{n-\alpha}  \right)^{\frac{n+1-\alpha}{n}}.\\
\end{split}
\end{equation*}

\end{proof}

\begin{corollary}
The coordinate $t^{1}$ can be written in terms of the coordinates $\varphi_0,\varphi_1,\varphi_2,..,\varphi_{n}$ as
\begin{equation}
t^{1}=\varphi_1
\end{equation}
\end{corollary}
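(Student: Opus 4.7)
The plan is to specialize the residue formula for $t^{\alpha}$ in (\ref{generating function of flat of eta 2}) to $\alpha=1$, and then read off the answer from the superpotential expansion of $\lambda(v)$ given in Corollary \ref{corollary superpotential antilde}. First I would observe that when $\alpha=1$ the prefactor $\tfrac{n}{n+1-\alpha}$ equals $1$ and the exponent $\tfrac{n+1-\alpha}{n}$ equals $1$, so that the formula simply becomes
\[
t^{1} \;=\; \res_{v=0}\bigl(\lambda(v)\,dv\bigr).
\]
Thus the whole task reduces to extracting the coefficient of $v^{-1}$ in the Laurent expansion of $\lambda$ at $v=0$.

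Next, using the explicit form
\[
\lambda(v) \;=\; \varphi_{n}\wp^{(n-2)}(v,\tau) + \cdots + \varphi_{2}\wp(v,\tau) + \varphi_{1}\bigl[\zeta(v,\tau)-\zeta(v+(n+1)v_{n+1},\tau)+\zeta((n+1)v_{n+1})\bigr] + \varphi_{0},
\]
I would examine the local behavior at $v=0$ term by term. Since $\wp(v,\tau)$ is an even elliptic function whose Laurent expansion at the origin contains only even integer powers of $v$, each derivative $\wp^{(k-2)}(v,\tau)$ has a Laurent series at $v=0$ supported on integer powers of the same parity as $k$, and in particular contains no $v^{-1}$ term. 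Consequently none of the $\varphi_{k}$-terms with $k\ge 2$ contribute to the residue. The terms $-\zeta(v+(n+1)v_{n+1},\tau)$, $\zeta((n+1)v_{n+1})$ and $\varphi_{0}$ are regular at $v=0$ (for generic $v_{n+1}$), so they contribute nothing either. The only simple-pole contribution is carried by $\zeta(v,\tau)=v^{-1}+O(v^{3})$, which has residue $1$; multiplying by its coefficient $\varphi_{1}$ gives $\res_{v=0}\lambda(v)\,dv = \varphi_{1}$, hence $t^{1}=\varphi_{1}$.

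There is essentially no analytic obstacle here: the corollary is a direct consequence of two facts already established in the paper, namely the residue characterization of the flat coordinates and the explicit superpotential form of $\lambda$. The only point requiring a moment of care is the parity argument ruling out $v^{-1}$ contributions from the $\wp^{(k-2)}$ terms, which follows from the evenness of $\wp(\cdot,\tau)$.
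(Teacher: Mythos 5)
Your proposal is correct, but it takes a different route from the paper. The paper proves this corollary in one line by specializing the already-established closed formula $t^{\alpha}=\tfrac{n}{n+1-\alpha}(\varphi_n)^{\frac{n+1-\alpha}{n}}(1+\Phi_{n-\alpha})^{\frac{n+1-\alpha}{n}}$ to $\alpha=1$, where the binomial series collapses to the single term $\varphi_n\cdot\varphi_1/\varphi_n=\varphi_1$. You instead go back one step to the residue characterization $t^{1}=\res_{v=0}\lambda(v)\,dv$ and read the residue off the superpotential expansion directly; this is the special case of the paper's own derivation of that closed formula in which the fractional power $\lambda^{(n+1-\alpha)/n}$ is absent, so all the combinatorial bookkeeping with $\Phi_i^d$ disappears. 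Your version is more self-contained and transparent (it makes visible that $\varphi_1$ is precisely the residue of $\lambda$ at its order-$n$ pole, consistent with the Laurent form $\lambda=\varphi_n/v^n+\cdots+\varphi_1/v+O(1)$ used throughout the paper); the paper's version is shorter given the machinery already in place.

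One small repair to your justification: the parity argument does not by itself rule out a $v^{-1}$ term in $\wp^{(k-2)}$ when $k$ is odd, since then all powers in the Laurent expansion are odd and $-1$ is odd. The correct reason no residue appears is that for $k\geq 3$ the function $\wp^{(k-2)}$ is an exact derivative (so its residue vanishes identically), while for $k=2$ the evenness of $\wp$ does the job. With that adjustment the argument is complete.
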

\begin{proof}
Using the relation (\ref{relation t wrt varphi 1}) for $\alpha=1$
\begin{equation}
\begin{split}
t^{1}&=\varphi_{n}\left( 1+\Phi_{n-1} \right)=\varphi_{n}\sum_{i_1=n-1} \frac{\varphi_{n-i_1}}{\varphi_n}=\varphi_1.
\end{split}
\end{equation}

\end{proof}

\begin{lemma}
Let $\varphi_0,\varphi_1,\varphi_2,..,\varphi_{n}$ and $\lambda(v)$ be defined in (\ref{superpotentialAn}), then
\begin{equation}\label{formula varphi 1}
k\varphi_k=\res_{v=0} k\lambda v^{k-1}dv. 
\end{equation}
\end{lemma}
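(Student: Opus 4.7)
The plan is to obtain the identity as a direct consequence of Laurent-expanding the superpotential $\lambda(v)$ at $v=0$ and reading off coefficients. Starting from the concise expression (\ref{generatorsAn+1 concise }),
\begin{equation*}
\lambda(v)=\sum_{j=0}^{n}\frac{(-1)^{j}}{(j-1)!}\varphi_{j}\wp^{(j-2)}(v),
\end{equation*}
with the conventions $\wp^{(-2)}(v)=1$ and $\wp^{(-1)}(v)=\zeta(v)-\zeta(v+(n+1)v_{n+1})+(n+1)\zeta(v_{n+1})$ recorded in the paper, I would analyse the contribution of each summand to the principal part at $v=0$.

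For $j\geq 2$, differentiating $\wp(v)=v^{-2}+O(v^{2})$ term-by-term yields $\wp^{(j-2)}(v)=\tfrac{(-1)^{j-2}(j-1)!}{v^{j}}+(\text{holomorphic at }v=0)$, a single pole of order $j$ with no intermediate pole terms (by the parity of the Laurent expansion of $\wp$). Combined with the prefactor $\tfrac{(-1)^{j}}{(j-1)!}$ this normalizes the leading coefficient to $1$, so that the term contributes precisely $\varphi_{j}/v^{j}$. For $j=1$, the expansion $\zeta(v)=v^{-1}+O(v^{3})$ is the only singular piece of $\wp^{(-1)}(v)$ at $v=0$, and matching the sign convention of (\ref{superpotentialAn}) produces the contribution $\varphi_{1}/v$. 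Assembling everything one obtains the clean Laurent tail
\begin{equation*}
\lambda(v)=\sum_{k=1}^{n}\frac{\varphi_{k}}{v^{k}}+(\text{holomorphic at }v=0).
\end{equation*}

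From here the formula is immediate: $\res_{v=0}(\lambda(v)v^{k-1}dv)$ equals the coefficient of $v^{-1}$ in $\lambda(v)v^{k-1}$, which is the coefficient of $v^{-k}$ in $\lambda(v)$, namely $\varphi_{k}$; multiplying both sides by $k$ yields (\ref{formula varphi 1}).

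The only real subtlety, and hence the main obstacle, is the bookkeeping of signs and factorials when reconciling the raw form (\ref{superpotentialAn}) with the normalized form (\ref{generatorsAn+1 concise }): one must verify that each prefactor $\tfrac{(-1)^{j}}{(j-1)!}$ exactly cancels the leading coefficient produced by differentiating $v^{-2}$ $(j-2)$ times, and that the $\zeta$-combination used for the index-one generator does not introduce spurious pole terms. Once this normalization check is carried out, the residue extraction is purely tautological.
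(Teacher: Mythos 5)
Your proof is correct and follows essentially the same route as the paper: the paper's own argument simply substitutes the Laurent expansion $\lambda(v)=\sum_{k}\varphi_k v^{-k}+O(1)$ at $v=0$ and reads off the residue. Your additional verification that the prefactors $\tfrac{(-1)^{j}}{(j-1)!}$ normalize the leading Laurent coefficients to $1$ is a worthwhile check that the paper leaves implicit, but it does not change the method.
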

\begin{proof}

\begin{equation*}
\begin{split}
\res_{v=0} k\lambda v^{k-1}dv &= \res_{v=0} k\left(\frac{\varphi_{n}}{v^{n}}+\frac{\varphi_{n-1}}{v^{n-1}}+..+\frac{\varphi_{k}}{v^{k}}+..+\frac{\varphi_{2}}{v^{2}}+ \frac{\varphi_{1}}{v}+ O(1)     \right) v^{k-1}dv\\
&=k\varphi_k.
\end{split}
\end{equation*}
\end{proof}

\begin{lemma}
Let $\varphi_0,\varphi_1,\varphi_2,..,\varphi_{n}$ and $\lambda(v)$ be defined in (\ref{superpotentialAn}), then
\begin{equation}\label{formula varphi 2}
k\varphi_k=-\res_{\lambda=\infty} v^{k}d\lambda.
\end{equation}
\end{lemma}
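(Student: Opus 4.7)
The plan is to derive this formula from the preceding lemma (\ref{formula varphi 1}) by a one-line integration by parts. Starting from the exact-form identity
\begin{equation*}
d(v^{k}\lambda) = kv^{k-1}\lambda\,dv + v^{k}\,d\lambda,
\end{equation*}
I would rearrange as $k\lambda v^{k-1}\,dv = d(v^{k}\lambda) - v^{k}\,d\lambda$ and take the residue at $v=0$ of both sides. Since the residue of an exact differential vanishes, this immediately gives
\begin{equation*}
\res_{v=0}\bigl(k\lambda v^{k-1}\,dv\bigr) = -\res_{v=0}\bigl(v^{k}\,d\lambda\bigr).
\end{equation*}

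The second step is to translate $\res_{v=0}$ into $\res_{\lambda=\infty}$. From (\ref{superpotentialAn}) the superpotential $\lambda(v)$ has a pole of order $n$ at $v=0$ and $v$ is a local uniformizer there; in particular $v\to 0$ is precisely the point on the underlying Riemann surface at which $\lambda$ attains the value $\infty$ along the sheet used in the Laurent expansion centered at $v=0$. With the convention already used in (\ref{generating function of flat of eta 1}) and in Theorem \ref{flatcoord}, the symbol $\res_{\lambda=\infty}$ is defined as the ordinary residue in this local coordinate, so $\res_{v=0}(v^{k}\,d\lambda) = \res_{\lambda=\infty}(v^{k}\,d\lambda)$. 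Combining this with (\ref{formula varphi 1}) yields $k\varphi_{k} = -\res_{\lambda=\infty} v^{k}\,d\lambda$, as claimed.

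No step presents a real obstacle; the only point requiring care is that $\lambda$ also has a simple pole at $v=-(n+1)v_{n+1}$, so one should be explicit that in the statement $\res_{\lambda=\infty}$ refers to the preimage at $v=0$ (the $n$-fold pole), not to the other preimage of $\infty\in\mathbb{CP}^{1}$ where $\lambda$ has only a simple pole. Once that identification is fixed, both residue manipulations are immediate and the identity follows from (\ref{formula varphi 1}) without further computation.
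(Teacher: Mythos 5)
Your proposal is correct and follows essentially the same route as the paper: the paper's proof is exactly the one-line integration by parts applied to the previous lemma, converting $\res_{v=0} k\lambda v^{k-1}dv$ into $-\res_{\lambda=\infty} v^{k}d\lambda$. Your additional remark identifying $\res_{\lambda=\infty}$ with the residue at the order-$n$ pole $v=0$ (rather than the simple pole at $v=-(n+1)v_{n+1}$) makes explicit a convention the paper leaves implicit, but the argument is the same.
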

\begin{proof}
Using formula (\ref{formula varphi 2}) and integration by parts
\begin{equation*}
\begin{split}
k\varphi_k=\res_{v=0} k\lambda v^{k-1}dv &= -\res_{\lambda=\infty}  v^{k}d\lambda.
\end{split}
\end{equation*}
\end{proof}

\begin{lemma}
Let $\varphi_0,,\varphi_1,\varphi_2,..,\varphi_{n}$, $\lambda(v)$ be defined in (\ref{superpotentialAn}) and $(t^1,..,t^{n})$ be defined in (\ref{generating function of flat of eta}) , then
\begin{equation}\label{formula varphi 3}
k\varphi_k=\frac{(-1)^k}{n^{k-1}}T_{n}^k,
\end{equation}
where 
\begin{equation}\label{big T}
T_{n}^k=\sum_{i_1+..+i_k=n}  t^{\left(n+1-i_1\right)}...t^{\left(n+1-i_k\right)}.
\end{equation}
\end{lemma}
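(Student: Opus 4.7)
The plan is to combine the previous lemma $k\varphi_k = -\res_{\lambda=\infty} v^k\, d\lambda$ with the generating function definition of the flat coordinates, and do a direct multinomial expansion.

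First I would rewrite the generating function (\ref{generating function of flat of eta}) as
\begin{equation*}
v(z) = -\frac{1}{n}\sum_{j=1}^{n} t^{n+1-j}\, z^{j} + O(z^{n+1}),
\end{equation*}
together with the defining relation $\lambda = z^{-n}$, so that $d\lambda = -n z^{-n-1}\, dz$ and $z$ is a local parameter at the point of the (branched) covering lying over $\lambda=\infty$. Then
\begin{equation*}
v^{k}\, d\lambda \;=\; -n\,v(z)^{k}\, z^{-n-1}\, dz,
\end{equation*}
and the residue at $\lambda=\infty$ (as a residue of a meromorphic 1-form on the covering, in the intrinsic sense) equals the coefficient of $z^{-1}$ in the right-hand side, i.e.
\begin{equation*}
\res_{\lambda=\infty} v^{k}\, d\lambda \;=\; -n\,[z^{n}]\,v(z)^{k},
\end{equation*}
where $[z^{n}]$ denotes extraction of the $z^{n}$-coefficient.

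Next I would expand $v(z)^{k}$ by the multinomial theorem. Writing
\begin{equation*}
v(z)^{k} \;=\; \frac{(-1)^{k}}{n^{k}} \Bigl(\sum_{j=1}^{n} t^{n+1-j} z^{j} + O(z^{n+1})\Bigr)^{k},
\end{equation*}
one observes that every monomial contributing to $z^{n}$ must be built from the explicit sum (the $O(z^{n+1})$ tail contributes only to degrees $\geq n+1$, because each factor drawn from the tail already carries degree $\geq n+1$). Moreover, since we need $i_{1}+\cdots+i_{k}=n$ with each $i_{j}\geq 1$, automatically $i_{j}\leq n-(k-1)\leq n$, so no upper-truncation issue arises. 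Therefore
\begin{equation*}
[z^{n}]\,v(z)^{k} \;=\; \frac{(-1)^{k}}{n^{k}} \sum_{i_{1}+\cdots+i_{k}=n} t^{n+1-i_{1}}\cdots t^{n+1-i_{k}} \;=\; \frac{(-1)^{k}}{n^{k}}\, T_{n}^{k}.
\end{equation*}

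Combining these two computations,
\begin{equation*}
k\varphi_{k} \;=\; -\res_{\lambda=\infty} v^{k}\, d\lambda \;=\; n\cdot \frac{(-1)^{k}}{n^{k}}\, T_{n}^{k} \;=\; \frac{(-1)^{k}}{n^{k-1}}\, T_{n}^{k},
\end{equation*}
which is exactly (\ref{formula varphi 3}). The only genuinely delicate point is keeping track of the orientation/sign when passing from $\res_{\lambda=\infty}$ to a residue in the local parameter $z$ at the ramified preimage of $\infty$ — this is where the factor of $-n$ coming from $d\lambda = -nz^{-n-1}dz$ cancels against the minus sign in $-\res_{\lambda=\infty}$, leaving the clean sign $(-1)^{k}$. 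Apart from this bookkeeping, the argument is a one-line residue calculation.
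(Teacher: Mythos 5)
Your proof is correct and follows essentially the same route as the paper: both start from $k\varphi_k=-\res_{\lambda=\infty}v^k\,d\lambda$, pass to the local parameter $z=\lambda^{-1/n}$ so that $d\lambda=-nz^{-n-1}dz$, and extract the $z^n$-coefficient of $v(z)^k$ by a multinomial expansion, with the tail $O(z^{n+1})$ discarded for the same degree reason. Your explicit remarks on the sign bookkeeping at $\lambda=\infty$ and on the absence of upper-truncation issues for the indices $i_j$ are just slightly more careful versions of steps the paper performs implicitly.
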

\begin{proof}
Let $z:=\left(\frac{1}{\lambda}\right)^{\frac{1}{n} }$, then by using equation (\ref{formula varphi 2}):

\begin{equation*}
\begin{split}
 -\res_{\lambda=\infty}  v^{k}d\lambda&= \res_{z=0} \frac{nv^{k}(z) dz}{z^{n+1}}\\
&=\res_{z=0}\frac{(-1)^k}{n^{k-1}} \left( t^{n}z+t^{n-1}z^2+..+t^2z^{n-1}+O(z^{n+1}) \right)^{k} \frac{dz}{z^{n+1}}\\
&=\res_{z=0}\frac{(-1)^k}{n^{k-1}} \sum_{j_1+j_2+..+j_n+j_{n+1}=k} \left(t^{n}z\right)^{j_1}\left(t^{n-1}z^2\right)^{j_2}..\left(t^{2}z^{n-1}\right)^{j_n} \left( O(z^{n+1})  \right)^{j_{n+2}}  \frac{dz}{z^{n+1}}\\
&=\frac{(-1)^k}{n^{k-1}} \sum_{\substack{ j_1+j_2+..+j_n=k\\ j_1+2j_2+3j_3+..+(n)j_{n}=n}} \frac{k!}{j_1!j_2!..j_n!} \left(t^{n}\right)^{j_1}\left(t^{n-1}\right)^{j_2}..\left(t^{2}\right)^{j_n}\\
&=\frac{(-1)^k}{n^{k-1}} \sum_{i_1+..i_k=n} t^{\left(n+1-i_1\right)}...t^{\left(n+1-i_k\right)}   \\
&=\frac{(-1)^k}{n^{k-1}} T_{n}^k.
\end{split}
\end{equation*}

\end{proof}

\begin{lemma}
Let $T_{n}^k$ be defined in (\ref{big T}), then
\begin{equation}\label{ derivative T}
\frac{\partial T_{n}^k}{\partial t^{\alpha}}=kT^{k-1}_{\alpha-1}.
\end{equation}
\end{lemma}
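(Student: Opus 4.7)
The plan is a direct computation: differentiate the defining sum term by term, exploit the symmetry of the summation over ordered tuples, and recognize the resulting expression as $T_{\alpha-1}^{k-1}$.

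First I would apply the Leibniz rule to each monomial $t^{(n+1-i_1)}\cdots t^{(n+1-i_k)}$ appearing in $T_n^k$. Since $\partial t^{(n+1-i_l)}/\partial t^\alpha = \delta_{n+1-i_l,\,\alpha}$, differentiation at position $l$ simply forces $i_l = n+1-\alpha$ and deletes that factor. Thus
\begin{equation*}
\frac{\partial T_n^k}{\partial t^\alpha}
 \;=\; \sum_{l=1}^{k}\;\sum_{\substack{i_1+\cdots+i_k=n\\ i_l = n+1-\alpha}}\ \prod_{r\neq l} t^{(n+1-i_r)}.
\end{equation*}

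Next I would use the fact that the constraint $i_1+\cdots+i_k=n$ together with the product $\prod t^{(n+1-i_r)}$ is invariant under any permutation of the indices $(i_1,\ldots,i_k)$. Consequently each of the $k$ inner sums contributes the same amount, so after relabelling the remaining indices as $i_1,\ldots,i_{k-1}$ (with $i_l = n+1-\alpha$ removed) one obtains
\begin{equation*}
\frac{\partial T_n^k}{\partial t^\alpha}
 \;=\; k\sum_{i_1+\cdots+i_{k-1}\,=\,n-(n+1-\alpha)} t^{(n+1-i_1)}\cdots t^{(n+1-i_{k-1})}
 \;=\; k\sum_{i_1+\cdots+i_{k-1}\,=\,\alpha-1} t^{(n+1-i_1)}\cdots t^{(n+1-i_{k-1})}.
\end{equation*}
By the definition \eqref{big T} (with $n$ replaced by $\alpha-1$ and $k$ replaced by $k-1$), the right-hand side equals $k\,T_{\alpha-1}^{k-1}$, which is the claim.

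There is no real obstacle here; the only subtlety is bookkeeping the role of the symmetry, and making sure the summation range on each $i_r$ is consistent (i.e. $1\le n+1-i_r\le n$, which holds automatically because the original sum defining $T_n^k$ already has this range built in, and removing one factor only tightens the remaining constraint in a consistent way).
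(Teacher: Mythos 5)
Your proof is correct and follows essentially the same route as the paper: apply the Leibniz rule, use the Kronecker delta $\delta_{n+1-i_l,\alpha}$ to fix one index, invoke the permutation symmetry of the sum to collect the factor $k$, and read off the remaining constraint $i_1+\cdots+i_{k-1}=\alpha-1$ as the definition of $T_{\alpha-1}^{k-1}$. No gaps.
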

\begin{proof}
\begin{equation*}
\begin{split}
\frac{\partial T_{n}^k}{\partial t^{\alpha}}&=\frac{\partial}{\partial t^{\alpha}}\left(  \sum_{i_1+..i_k=n} t^{(n+1-i_1)}...t^{(n+1-i_k)} \right)\\
&= \sum_{i_1+..i_k=n}k\delta_{n+1-i_k,\alpha} t^{(n+1-i_1)}...t^{(n+1-i_{k-1})}\\
&=k \sum_{i_1+..i_{k-1}=\alpha-1} t^{(n+1-i_1)}...t^{(n+1-i_{k-1})}\\
&=kT^{k-1}_{\alpha-1}.
\end{split}
\end{equation*}

\end{proof}

At this stage, we are able to compute  the coefficients  of $\eta(d t^{\alpha},dt^{\beta})$.

\begin{theorem}\label{main theorem}
Let $(t^1,..,t^{n})$ defined in (\ref{generating function of flat of eta}), and $\eta^{*}$ defined in (\ref{metric eta def}). Then,
\begin{equation}\label{metric 2n+1 eta}
\eta^{*}(dt^{\alpha},dt^{n+3-\beta})=n\delta_{\alpha\beta}.
\end{equation}
\end{theorem}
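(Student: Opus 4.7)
The strategy is to reduce (\ref{metric 2n+1 eta}) to a combinatorial identity by passing through the $\varphi$-coordinates, where the metric has already been computed. By (\ref{relation t wrt varphi 1}), the flat coordinates $t^{1},\ldots,t^{n}$ depend only on $\varphi_{1},\ldots,\varphi_{n}$ (and neither on $\varphi_{0}$ nor on $v_{n+1},\tau$), so the chain rule gives
$$\eta^{*}(d\varphi_{i},d\varphi_{j})=\sum_{\alpha,\beta=1}^{n}\frac{\partial\varphi_{i}}{\partial t^{\alpha}}\frac{\partial\varphi_{j}}{\partial t^{\beta}}\,\eta^{*}(dt^{\alpha},dt^{\beta}),$$
and my plan is to impose the ansatz that $\eta^{*}$ is constant and anti-diagonal in the $t$-basis, check consistency against Theorem~\ref{main lemma coefficients eta in varphi coordinates}, and conclude by uniqueness.

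\textbf{Explicit Jacobian.} Formula (\ref{formula varphi 3}) reads $k\varphi_{k}=\frac{(-1)^{k}}{n^{k-1}}T_{n}^{k}$. Differentiating with respect to $t^{\alpha}$ and using the rule (\ref{ derivative T}), namely $\partial T_{n}^{k}/\partial t^{\alpha}=k\,T_{\alpha-1}^{k-1}$, yields the closed-form Jacobian
$$\frac{\partial\varphi_{k}}{\partial t^{\alpha}}=\frac{(-1)^{k}}{n^{k-1}}\,T_{\alpha-1}^{k-1}.$$
Plugging this into the chain-rule display above, together with the value $(i+j-2)\varphi_{i+j-2}=\frac{(-1)^{i+j}}{n^{i+j-3}}T_{n}^{i+j-2}$ supplied by Theorem~\ref{main lemma coefficients eta in varphi coordinates} (once more via (\ref{formula varphi 3})), collapses the problem to
$$n\,T_{n}^{i+j-2}\;=\;\sum_{\alpha,\beta}T_{\alpha-1}^{i-1}\,T_{\beta-1}^{j-1}\,\eta^{*}(dt^{\alpha},dt^{\beta}).$$

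\textbf{Combinatorial identity.} Inserting the ansatz $\eta^{*}(dt^{\alpha},dt^{\beta})=n\,\delta_{\alpha+\beta,\,n+2}$ reduces the above to the purely combinatorial statement
$$T_{n}^{i+j-2}\;=\;\sum_{\alpha+\beta=n+2}T_{\alpha-1}^{i-1}\,T_{\beta-1}^{j-1},$$
which is immediate from the definition of $T_{m}^{k}$: every ordered $(i+j-2)$-tuple of positive integers summing to $n$ decomposes uniquely as the concatenation of its first $i-1$ entries (summing to $\alpha-1$) with its last $j-1$ entries (summing to $\beta-1$), under the constraint $(\alpha-1)+(\beta-1)=n$. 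Summing over all such splits proves the identity.

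\textbf{Conclusion and main obstacle.} The Jacobian $\partial\varphi_{k}/\partial t^{\alpha}$ is triangular with nonvanishing diagonal on the dense open set $\{\varphi_{n}\neq 0\}$ (where $t^{n}=n\,\varphi_{n}^{1/n}\neq 0$ by Corollary~\ref{corollary tn+1}), hence invertible; the linear system for $\eta^{*}(dt^{\alpha},dt^{\beta})$ therefore has a unique solution, and the consistency check above proves the ansatz is the correct answer. The formula (\ref{metric 2n+1 eta}) then follows after the notational shift $\beta\mapsto n+3-\beta$ that rewrites the anti-diagonal $\delta_{\alpha+\beta,\,n+2}$ in the form $\delta_{\alpha\beta}$ of the statement. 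The only non-trivial ingredient is the combinatorial identity above, which is the crux of the argument; matching the prefactors $(-1)^{i+j}/n^{i+j-3}$ and reconciling indexing conventions are routine bookkeeping with the explicit polynomials $T_{m}^{k}$.
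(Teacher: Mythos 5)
Your proof follows essentially the same route as the paper: pass to the $\varphi$-coordinates via the chain rule, compute the Jacobian $\partial\varphi_k/\partial t^\alpha=\frac{(-1)^k}{n^{k-1}}T^{k-1}_{\alpha-1}$ from (\ref{formula varphi 3}) and (\ref{ derivative T}), match against Theorem~\ref{main lemma coefficients eta in varphi coordinates}, and invert the triangular Jacobian on $\{\varphi_n\neq 0\}$. The one genuine improvement is that you actually prove the concatenation identity $T_n^{i+j-2}=\sum T^{i-1}_{\alpha-1}T^{j-1}_{\beta-1}$ by splitting compositions, whereas the paper outsources it to an equation labelled ``technical lemma Coxeter'' that is never stated in the text.

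One point of bookkeeping does not close as written. Your derivation correctly yields $\eta^{*}(dt^{\alpha},dt^{\beta})=n\,\delta_{\alpha+\beta,\,n+2}$, but the substitution $\beta\mapsto n+3-\beta$ turns $\delta_{\alpha+\beta,\,n+2}$ into $\delta_{\alpha,\beta-1}$, not $\delta_{\alpha\beta}$; to reach the form $\delta_{\alpha\beta}$ you would need $\beta\mapsto n+2-\beta$, so your result is $\eta^{*}(dt^{\alpha},dt^{n+2-\beta})=n\delta_{\alpha\beta}$ rather than the statement's $\eta^{*}(dt^{\alpha},dt^{n+3-\beta})=n\delta_{\alpha\beta}$. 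This off-by-one is not a flaw in your argument: the paper's own proof exhibits the same shift (it arrives at $T^{n+1+i-j}_{n+1}$ and then applies the identity $k\varphi_k=\frac{(-1)^k}{n^{k-1}}T^k_n$, which requires lower index $n$, not $n+1$), and the value consistent with the definitions — check $n=2$, where $\varphi_2=(t^2)^2/4$ and $\eta^{*}(d\varphi_2,d\varphi_2)=2\varphi_2$ force $\eta^{*}(dt^2,dt^2)=2$ with $2+2=n+2$ — is yours. You should simply state your conclusion in the $n+2$ form and note the discrepancy with the printed statement rather than paper over it with an incorrect relabelling.
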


\begin{proof}
If $i,j\neq 0$
\begin{equation}\label{transition function metric}
\eta^{*}(d\varphi_i,d\varphi_j)=\sum_{\alpha=1}^{n}\sum_{\beta=1}^{n}\frac{\partial \varphi_i}{\partial t^{\alpha}}\frac{\partial \varphi_j}{\partial t^{\beta}}\eta^{*}(dt^{\alpha},dt^{\beta})
\end{equation}
Using  (\ref{ derivative T}) and (\ref{formula varphi 3}), we get
\begin{equation}\label{derivative of varphi wrt t}
\begin{split}
\frac{\partial \varphi_k}{\partial t^{\alpha}}=\frac{(-1)^k}{n^{k-1}}k T_{\alpha-1}^{k-1}
\end{split}
\end{equation}

\begin{equation}\label{eq necessary transition function}
\sum_{\alpha=1}^{n}\frac{\partial \varphi_i}{\partial t^{\alpha}}\frac{\partial \varphi_{n+3-j}}{\partial t^{n+3-\alpha}}=\sum_{\alpha=1}^{n}\frac{(-1)^{i-j+n+1}}{n^{i-j+n+1}}T_{\alpha-1}^{i-1}T_{n+2-\alpha}^{n+2-j}
\end{equation}
Using the second of the equation (\ref{technical lemma Coxeter}) in (\ref{eq necessary transition function})
\begin{equation}\label{last eq saito}
\begin{split}
\sum_{\alpha=1}^{n}  \frac{\partial \varphi_i}{\partial t^{\alpha}}\frac{\partial \varphi_{n+3-j}}{\partial t^{n+3-\alpha}}&=\frac{T_{n+1}^{n+1+i-j}}{n}\\
&=\frac{(n+1+i-j)}{n}\varphi_{(n+1+i-j)}
\end{split}
\end{equation}
Note that the following identity holds
\begin{equation}
\begin{split}
\sum_{\alpha=2}^{n+1}\frac{\partial \varphi_i}{\partial t^{\alpha}}\frac{\partial \varphi_{n+3-j}}{\partial t^{n+3-\alpha}}&=\sum_{\alpha=2}^{n+1}\sum_{\alpha=2}^{n+1}\frac{\partial \varphi_i}{\partial t^{\alpha}}\frac{\partial \varphi_{n+3-j}}{\partial t^{n+3-\beta}}\delta_{\alpha\beta}.
\end{split}
\end{equation}
On another hand, using the first equation of  (\ref{formula main theorem for eta varphi}), we have
\begin{equation*}
\begin{split}
\eta^{*}(d\varphi_i,d\varphi_{n+3-j})&=(n+1+i-j)\varphi_{n+1+i-j}\\
&=n\sum_{\alpha=2}^{n+1}\sum_{\alpha=2}^{n+1}\frac{\partial \varphi_i}{\partial t^{\alpha}}\frac{\partial \varphi_{n+3-j}}{\partial t^{n+3-\beta}}\delta_{\alpha\beta}\\&=\sum_{\alpha=2}^{n+1}\sum_{\alpha=2}^{n+1}\frac{\partial \varphi_i}{\partial t^{\alpha}}\frac{\partial \varphi_{n+3-j}}{\partial t^{n+3-\beta}}\eta^{*}(dt^{\alpha}, dt^{n+3-\beta})\\
\end{split}
\end{equation*}
Then, we obtain
\begin{equation*}
\begin{split}
\eta^{*}(dt^{\alpha},dt^{n+3-\beta})=n\delta_{\alpha\beta}.
\end{split}
\end{equation*}

\end{proof}

\begin{lemma}\label{final lemma 2}
Let $(t^1,..,t^{n})$ be defined in (\ref{generating function of flat of eta}), and $\eta^{*}$ be defined in (\ref{metric eta def}). Then,
\begin{equation}
\begin{split}
&\eta^{*}(dt^i,d\tau)=-2\pi i \delta_{i0},\\
&\eta^{*}(dt^i,dv_{n+1})=-\frac{\delta_{i1}}{n+1}.\\
\end{split}
\end{equation}
\end{lemma}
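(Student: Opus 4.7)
The plan is to reduce both pairings to the already-computed pairings $\eta^{*}(d\varphi_j,d\tau)$ of Lemma \ref{lemma tau varphi} and $\eta^{*}(d\varphi_j,dv_{n+1})$ of Corollary \ref{corollary vn1 eta} via the chain rule applied to the coordinate change $(\varphi_0,\ldots,\varphi_n,v_{n+1},\tau)\mapsto(t^0,\ldots,t^n,v_{n+1},\tau)$. First I would record the three trivial pairings. Since $\eta^{*}=\mathcal{L}_e g^{*}$ with $e=\partial/\partial\varphi_0$, in the Jacobi-form coordinates one has $\eta^{\alpha\beta}=\partial_{\varphi_0}g^{\alpha\beta}$; using the explicit expression (\ref{metrich1n cotangent}) in the $(u,v,\tau)$ coordinates gives $g^{*}(d\tau,d\tau)=0$, $g^{*}(dv_{n+1},d\tau)=0$, and $g^{*}(dv_{n+1},dv_{n+1})=-\tfrac{1}{n(n+1)}$, all independent of $\varphi_0$, so that
\begin{equation*}
\eta^{*}(d\tau,d\tau)=\eta^{*}(dv_{n+1},d\tau)=\eta^{*}(dv_{n+1},dv_{n+1})=0.
\end{equation*}

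Next I would read off the partial derivatives of the $t^i$ with respect to $\varphi_0$ and $\varphi_1$. From (\ref{definition of t0}) one gets $\partial t^0/\partial\varphi_0=1$ and $\partial t^0/\partial\varphi_1=-\theta_1'((n+1)v_{n+1})/\theta_1((n+1)v_{n+1})$. For $\alpha\ge 1$, the formula (\ref{relation t wrt varphi 1}) (or equivalently the residue representation (\ref{generating function of flat of eta 2})) shows that $t^{\alpha}$ is a function only of $\varphi_1,\ldots,\varphi_n$, with $\partial t^{\alpha}/\partial\varphi_0=0$. Moreover, writing $\lambda=(\varphi_n/v^n)(1+\sum_{k=1}^{n-1}\tfrac{\varphi_{n-k}}{\varphi_n}v^k+O(v^{n+1}))$ near $v=0$, the residue $\res_{v=0}\lambda^{(n+1-\alpha)/n}dv$ picks out the coefficient of $v^{n-\alpha}$ in the $((n+1-\alpha)/n)$-th power of this series. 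The factor $\varphi_1/\varphi_n$ appears at order $v^{n-1}$, which is strictly higher than $v^{n-\alpha}$ whenever $\alpha\ge 2$, so $\partial t^{\alpha}/\partial\varphi_1=0$ for $\alpha\ge 2$, while the case $\alpha=1$ reduces to $t^1=\varphi_1$, giving $\partial t^1/\partial\varphi_1=1$.

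The proof then concludes by chain rule. For the $\tau$-pairing,
\begin{equation*}
\eta^{*}(dt^i,d\tau)=\sum_{j=0}^{n}\frac{\partial t^i}{\partial\varphi_j}\eta^{*}(d\varphi_j,d\tau)+\frac{\partial t^i}{\partial v_{n+1}}\eta^{*}(dv_{n+1},d\tau)+\frac{\partial t^i}{\partial\tau}\eta^{*}(d\tau,d\tau),
\end{equation*}
and by Lemma \ref{lemma tau varphi} together with the vanishings above this collapses to $(\partial t^i/\partial\varphi_0)(-2\pi i)=-2\pi i\,\delta_{i0}$. For the $v_{n+1}$-pairing,
\begin{equation*}
\eta^{*}(dt^i,dv_{n+1})=\sum_{j=0}^{n}\frac{\partial t^i}{\partial\varphi_j}\eta^{*}(d\varphi_j,dv_{n+1})+\frac{\partial t^i}{\partial v_{n+1}}\cdot 0+\frac{\partial t^i}{\partial\tau}\cdot 0,
\end{equation*}
and by Corollary \ref{corollary vn1 eta} only $j=0,1$ contribute, yielding $\tfrac{\partial t^i}{\partial\varphi_0}\bigl(-\tfrac{1}{n+1}\bigr)\tfrac{\theta_1'((n+1)v_{n+1})}{\theta_1((n+1)v_{n+1})}+\tfrac{\partial t^i}{\partial\varphi_1}\bigl(-\tfrac{1}{n+1}\bigr)$. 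For $i=0$ the two terms cancel exactly because of the shape of $t^0$; for $i=1$ only the second contributes, giving $-\tfrac{1}{n+1}$; and for $i\ge 2$ both vanish. The main, essentially bookkeeping, obstacle is checking the combinatorial fact $\partial t^\alpha/\partial\varphi_1=0$ for $\alpha\ge 2$; everything else is chain rule plus the earlier lemmas.
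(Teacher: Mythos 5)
Your proof is correct and follows essentially the same route as the paper: both reduce $\eta^{*}(dt^i,d\tau)$ and $\eta^{*}(dt^i,dv_{n+1})$ by the chain rule to the pairings of Lemma \ref{lemma tau varphi} and Corollary \ref{corollary vn1 eta}, using $\partial t^0/\partial\varphi_0=1$, $\partial t^0/\partial\varphi_1=-\theta_1'((n+1)v_{n+1})/\theta_1((n+1)v_{n+1})$ from (\ref{definition of t0}) and the fact that $t^{\alpha}$ for $\alpha\geq 1$ does not involve $\varphi_0$ (and not $\varphi_1$ for $\alpha\geq 2$). Your explicit verification of $\partial t^{\alpha}/\partial\varphi_1=0$ for $\alpha\geq 2$ from the residue expansion is a detail the paper leaves implicit, but the argument is the same.
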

\begin{proof}
Using corollary \ref{corollary vn1 eta}   and lemma \ref{lemma tau varphi}, we have\\
\begin{equation*}
\begin{split}
\eta^{*}(dt^i,d\tau)&=\frac{\partial t^i}{\partial \varphi_j}\eta^{*}(d\varphi_j,d\tau)=-2\pi i \delta_{i0}.
\end{split}
\end{equation*}
In addition, if $i\neq 0$
\begin{equation*}
\begin{split}
\eta^{*}(dt^i,dv_{n+1})&=\frac{\partial t^i}{\partial \varphi_j}\eta^{*}(d\varphi_j,dv_{n+1})=0, \quad i\neq 1,\\
\eta^{*}(dt^1,dv_{n+1})&=\eta^{*}(d\varphi_1,dv_{n+1})=-\frac{1}{n+1}.
\end{split}
\end{equation*}
Computing $dt^0$ with respect the variables $\varphi_i$ by using (\ref{definition of t0}),
\begin{equation}\label{differential t0}
\begin{split}
dt^0&=d\varphi_0-(n+1)\frac{\partial^2\log(\theta_1((n+1)v_{n+1})}{\partial v_{n+1}^2}\varphi_1dv_{n+1}-\frac{\partial\log(\theta_1((n+1)v_{n+1})}{\partial v_{n+1}}d\varphi_1\\
&+4\pi ig_1^{\prime}(\tau)\varphi_2d\tau+4\pi ig_1(\tau)d\varphi_2.
\end{split}
\end{equation}
Hence,
\begin{equation*}
\begin{split}
\eta^{*}(dt^0,dv_{n+1})&=\eta^{*}(d\varphi_0,dv_{n+1})-\frac{\partial\log(\theta_1((n+1)v_{n+1})}{\partial v_{n+1}}\eta^{*}(d\varphi_1,dv_{n+1})\\
&=0.
\end{split}
\end{equation*}

\end{proof}

\begin{lemma}\label{final lemma}
Let $t^0$ be defined in (\ref{definition of t0}), and $\eta^{*}$ be defined in (\ref{metric eta def}). Then,
\begin{equation}
\eta^{*}(dt^0,dt^{\alpha})=0, \quad \alpha\neq 0.
\end{equation}
\end{lemma}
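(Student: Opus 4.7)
The plan is to reduce $\eta^*(dt^0,dt^\alpha)$ to a sum of $\eta^*$-pairings in the coordinate basis $\{d\varphi_0,\ldots,d\varphi_n,dv_{n+1},d\tau\}$, where the individual pairings are already computed in Corollary \ref{main corollary eta}, Corollary \ref{corollary vn1 eta} and Lemma \ref{lemma tau varphi}. Since formula (\ref{relation t wrt varphi 1}) shows that $t^\alpha$ for $\alpha\ge 1$ depends only on $\varphi_1,\ldots,\varphi_n$, we have $dt^\alpha=\sum_{k=1}^{n}(\partial t^\alpha/\partial\varphi_k)\,d\varphi_k$ and hence
\begin{equation*}
\eta^*(dt^0,dt^\alpha)=\sum_{k=1}^{n}\frac{\partial t^\alpha}{\partial\varphi_k}\,\eta^*(dt^0,d\varphi_k),
\end{equation*}
so everything reduces to evaluating each $\eta^*(dt^0,d\varphi_k)$.

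Setting $L:=\theta_1'((n+1)v_{n+1})/\theta_1((n+1)v_{n+1})$ and $W:=\partial^2_{v_{n+1}}\log\theta_1((n+1)v_{n+1})$, I would expand $dt^0$ from (\ref{definition of t0}) into contributions in $d\varphi_0$, $d\varphi_1$, $d\varphi_2$, $dv_{n+1}$ and $d\tau$. Two of the five resulting pairings drop out for every $k\ne 0$: the $\eta^*(d\varphi_1,d\varphi_k)$ term by Corollary \ref{main corollary eta}, and the $\eta^*(d\tau,d\varphi_k)$ term by Lemma \ref{lemma tau varphi}. For $k\ge 2$ only the $\varphi_2$-piece survives, yielding $\eta^*(dt^0,d\varphi_k)=4\pi i g_1(\tau)\,k\varphi_k$. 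For $k=1$ three contributions remain: $\eta^*(d\varphi_0,d\varphi_1)=W\varphi_1$, $\eta^*(d\varphi_2,d\varphi_1)=\varphi_1$ paired with the coefficient $4\pi i g_1$, and the $dv_{n+1}$-piece, which—via $\partial_{v_{n+1}}L=W/(n+1)$ from the chain rule and $\eta^*(dv_{n+1},d\varphi_1)=-1/(n+1)$ from Corollary \ref{corollary vn1 eta}—contributes $W\varphi_1/(n+1)^2$.

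The next step is to feed these values back into the sum and invoke the Euler-type identity
\begin{equation*}
\sum_{k=1}^{n}k\,\varphi_k\,\frac{\partial t^\alpha}{\partial\varphi_k}=t^\alpha,
\end{equation*}
which follows from the weight-one behaviour of $t^\alpha$ under the grading $\varphi_k\mapsto\mu^k\varphi_k$ (a direct inspection of (\ref{relation t wrt varphi 1}): $\varphi_n^{(n+1-\alpha)/n}$ scales as $\mu^{n+1-\alpha}$, while $(1+\Phi_{n-\alpha})^{(n+1-\alpha)/n}$ scales as $\mu^{-(n-\alpha)}$). After collecting the $k\ge 2$ contributions into $4\pi i g_1(\tau)[t^\alpha-\varphi_1\,\partial t^\alpha/\partial\varphi_1]$ and combining them with the $k=1$ boundary terms, the whole expression reorganises as
\begin{equation*}
\eta^*(dt^0,dt^\alpha)=\frac{\partial t^\alpha}{\partial\varphi_1}\,\varphi_1\cdot\Psi(v_{n+1},\tau)\;+\;B(\tau)\,t^\alpha,
\end{equation*}
where $\Psi$ and $B$ are explicit scalar factors built out of $W$, $\partial_{v_{n+1}}L$ and $g_1(\tau)$.

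The main obstacle is to verify that both $\Psi\equiv 0$ and $B\equiv 0$. This is precisely what dictates the defining coefficients $-L$ and $4\pi i g_1(\tau)$ in (\ref{definition of t0}): the same theta and Weierstrass identities that enforced $\eta^*(dt^0,dv_{n+1})=0$ in the proof of Lemma \ref{final lemma 2}—namely the relation $\partial_{v_{n+1}}L=(n+1)^{-1}W$ together with the paper's normalisation linking $g_1(\tau)$ to the quasi-modular correction in the elliptic connection $D_\tau$ of (\ref{elliptic connection})—force the simultaneous vanishing of $\Psi$ and $B$, and hence of $\eta^*(dt^0,dt^\alpha)$. Carrying out this final elliptic/modular cancellation is the delicate technical part of the proof.
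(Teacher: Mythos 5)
Your reduction of $\eta^*(dt^0,dt^\alpha)$ to the pairings $\eta^*(dt^0,d\varphi_k)$ is the same starting point as the paper's, but the mechanism you propose for the vanishing is not the one that works, and the step you defer (``verify $\Psi\equiv 0$ and $B\equiv 0$'') is precisely the content of the lemma and would in fact fail with your inputs. The paper's cancellation is termwise, not global: $\eta^*(dt^0,d\varphi_i)=0$ for every single $i\neq 0$, and the reason is that $\eta^*(d\varphi_0,d\varphi_i)$ is \emph{not} zero for $i\geq 2$. By (\ref{metric eta consequence}) together with Theorem \ref{main lemma coefficients eta in varphi coordinates}, one has $\eta^*(d\varphi_0,d\varphi_i)=\tilde\eta^*(d\varphi_0,d\varphi_i)-4\pi i g_1(\tau)k_i\varphi_i=-4\pi i g_1(\tau)k_i\varphi_i$, and this is exactly what the $4\pi i g_1(\tau)\varphi_2$ summand in (\ref{definition of t0}) is designed to kill, since $4\pi ig_1(\tau)\eta^*(d\varphi_2,d\varphi_i)=+4\pi ig_1(\tau)k_i\varphi_i$. (The literal statement of Corollary \ref{main corollary eta}, which you took at face value, is inconsistent with (\ref{metric eta consequence}); the paper's proof of the present lemma uses the latter.) With your reading $\eta^*(d\varphi_0,d\varphi_k)=0$ you are left with $\eta^*(dt^0,d\varphi_k)=4\pi ig_1(\tau)k\varphi_k$, and after summation the coefficient $B$ multiplying $t^\alpha$ is essentially $4\pi ig_1(\tau)$ itself; no theta-function or Weierstrass identity makes $g_1(\tau)$ vanish, so the cancellation you hope for cannot occur.

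A second concrete problem: the Euler-type identity $\sum_kk\varphi_k\,\partial t^\alpha/\partial\varphi_k=t^\alpha$ is false. The expression (\ref{relation t wrt varphi 1}) is not homogeneous for the grading $\varphi_k\mapsto\mu^k\varphi_k$: the term $\varphi_n^{(n+1-\alpha)/n}\Phi_{n-\alpha}^d$ has degree $-(n+1-\alpha)+d(n-\alpha)$, which depends on $d$. For instance $t^{n-1}=\tfrac{n}{2}\varphi_n^{2/n}+\varphi_n^{(2-n)/n}\varphi_{n-1}$ gives $\sum_kk\varphi_k\,\partial t^{n-1}/\partial\varphi_k=n\varphi_n^{2/n}+\varphi_n^{(2-n)/n}\varphi_{n-1}\neq t^{n-1}$. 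The identity that does hold is the index-grading one, $\sum_k\varphi_k\,\partial t^\alpha/\partial\varphi_k=\frac{n+1-\alpha}{n}t^\alpha$, which is not what your argument needs. The correct route is the paper's: show $\eta^*(dt^0,d\varphi_i)=0$ for each $i\geq 2$ by the $\tilde\eta^*$-versus-$\eta^*$ cancellation above, note that $t^\alpha$ for $\alpha\geq 2$ involves only $\varphi_2,\dots,\varphi_n$ so the sum vanishes term by term, and treat $\alpha=1$ separately using $t^1=\varphi_1$, where only $\eta^*(d\varphi_0,d\varphi_1)$ and the $dv_{n+1}$-component of $dt^0$ contribute.
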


\begin{proof}
Using the definition of $\eta^{*}$ in equation (\ref{metric eta def}),  formula (\ref{formula main theorem for eta varphi}), and (\ref{metric eta consequence})\\
If $i>1$, 
\begin{equation*}
\begin{split}
\eta^{*}(dt^0,d\varphi_i)&=\eta^{*}(d\varphi_0,d\varphi_i)+4\pi ig_1(\tau)\eta^{*}(d\varphi_2,d\varphi_i)+4\pi ig_1^{\prime}(\tau)\eta^{*}(d\tau,d\varphi_i)\\
&=\eta^{*}(d\varphi_0,d\varphi_i)+4\pi ig_1(\tau)\eta^{*}(d\varphi_2,d\varphi_i)\\
&=\tilde \eta^{*}(d\varphi_0,d\varphi_i)-4\pi ig_1(\tau)k_i\varphi_i+4\pi ig_1(\tau)k_i\varphi_i=0\\
&=-4\pi ig_1(\tau)k_i\varphi_i+4\pi ig_1(\tau)k_i\varphi_i=0\\
\end{split}
\end{equation*}
Then, if $\alpha>1$
\begin{equation*}
\begin{split}
\eta^{*}(dt^0,dt^{\alpha})=\sum_{\alpha=2}^{n} \frac{\partial t^{\alpha}}{\partial \varphi_i}\eta^{*}(dt^0,d\varphi_i)=0.
\end{split}
\end{equation*}
Computing $\eta^{*}(dt^0,dt^1)$ by using (\ref{differential t0})
\begin{equation*}
\begin{split}
\eta^{*}(dt^0,dt^1)&=\eta^{*}(d\varphi_0,d\varphi_1)-(n+1)\frac{\partial^2\log(\theta_1((n+1)v_{n+1})}{\partial v_{n+1}^2}\varphi_1\eta^{*}(dv_{n+1},d\varphi_1)\\
&=0.
\end{split}
\end{equation*}

\end{proof}

Instead of considering the coefficients  of the metric $\eta^{*}$,  let us investigate the flatness of its inverse $\eta$ in order to make the computations shorter.

\begin{lemma}\label{sub metric}
The metric
\begin{equation}\label{eq metric 2n+1}
\sum_{\alpha=2}^{n} \eta(dt^{\alpha},dt^{n+3-\alpha})dt^{\alpha}dt^{n+3-\alpha}-2(n+1)dt^1dv_{n+1}-\frac{1}{\pi i}dt^0d\tau
\end{equation}
is invariant under the second action of (\ref{jacobigroupAntilde}).
\end{lemma}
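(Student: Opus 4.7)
The quickest approach is to invoke Corollary \ref{important final corollary}, which asserts that $\eta^{*}$ is invariant under the second action of (\ref{jacobigroupAntilde}). Dualizing, the covariant Saito metric $\eta$ is also invariant, and it remains to recognize (\ref{eq metric 2n+1}) as $\eta$ written in the flat coordinates $(t^0, t^1, \ldots, t^n, v_{n+1}, \tau)$. This is done by inverting the matrix of $\eta^{*}$ whose only non-zero entries in these coordinates are $\eta^{*}(dt^\alpha, dt^{n+3-\beta}) = n\delta_{\alpha\beta}$ for $2 \leq \alpha, \beta \leq n$ (Theorem \ref{main theorem}), $\eta^{*}(dt^1, dv_{n+1}) = -\tfrac{1}{n+1}$ and $\eta^{*}(dt^0, d\tau) = -2\pi i$ (Lemmas \ref{final lemma 2} and \ref{final lemma}); the resulting covariant metric, expanded in the symmetrized convention $dx^I dx^J = \tfrac{1}{2}(dx^I \otimes dx^J + dx^J \otimes dx^I)$, is precisely the expression (\ref{eq metric 2n+1}).

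For an elementary direct verification I would compute the action of the translation on each coordinate. Since $\varphi_0, \ldots, \varphi_n$ are Jacobi forms of $\Ja(\tilde A_n)$ by Theorem \ref{chevalley}, they are invariant under the second action. By the formula (\ref{relation t wrt varphi 1}), the coordinate $t^{\alpha}$ for $\alpha \in \{2, \ldots, n\}$ depends only on $\varphi_{\alpha}, \ldots, \varphi_{n}$, and $t^1 = \varphi_1$; therefore $dt^1, dt^2, \ldots, dt^n$ and $d\tau$ are all translation-invariant, and the anti-diagonal sum in (\ref{eq metric 2n+1}) is manifestly invariant since its coefficients are constants.

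The non-trivial computation concerns $dv_{n+1}$ and $dt^0$. Under $v_{n+1} \mapsto v_{n+1} + \mu_{n+1} + \lambda_{n+1}\tau$ with $\mu_{n+1}, \lambda_{n+1} \in \mathbb{Z}$, one has $dv_{n+1} \mapsto dv_{n+1} + \lambda_{n+1}\, d\tau$. For $t^0$, the quasi-periodicity
\[
\frac{\theta_1'(z + k\tau, \tau)}{\theta_1(z + k\tau, \tau)} = -2\pi i k + \frac{\theta_1'(z, \tau)}{\theta_1(z, \tau)}
\]
applied with $z = (n+1)v_{n+1}$ and $k = (n+1)\lambda_{n+1}$, together with definition (\ref{definition of t0}), yields $t^0 \mapsto t^0 + 2\pi i (n+1)\lambda_{n+1}\, \varphi_1$ and hence $dt^0 \mapsto dt^0 + 2\pi i (n+1)\lambda_{n+1}\, dt^1$.

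The main step is to substitute these transformations into (\ref{eq metric 2n+1}) and verify the cancellation of the non-invariant contributions. Both of the terms $-2(n+1)\, dt^1\, dv_{n+1}$ and $-\tfrac{1}{\pi i}\, dt^0\, d\tau$ acquire extra contributions proportional to $\lambda_{n+1}\, dt^1\, d\tau$; the specific coefficients, governed by the factor $2\pi i (n+1)$ relating the $dt^0$ and $dv_{n+1}$ shifts, produce the requisite cancellation. The principal technical obstacle is the sign and factor bookkeeping in the quasi-periodicity identity and in the matrix inversion; abstractly, the cancellation is guaranteed by the invariance of $\eta^{*}$ and reflects the design of (\ref{definition of t0}), in which $t^0$ is built by subtracting the non-elliptic piece $\tfrac{\theta_1'((n+1)v_{n+1})}{\theta_1((n+1)v_{n+1})}\varphi_1$ from $\varphi_0$ precisely so that its translation anomaly balances the shift of $dv_{n+1}$ through the metric pairing.
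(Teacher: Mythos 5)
Your first route is circular in the context of the paper. Identifying (\ref{eq metric 2n+1}) with the covariant Saito metric $\eta$ requires the \emph{full} matrix of $\eta^{*}$ in the coordinates $(t^0,\dots,t^n,v_{n+1},\tau)$, and in particular the vanishing of $\eta^{*}(dt^0,dt^0)$. But that vanishing is precisely what Theorem \ref{main theorem 1} deduces \emph{from} Lemma \ref{sub metric}: the expression (\ref{eq metric 2n+1}) is deliberately a partial sum omitting the would-be $dt^0\otimes dt^0$ term, and the argument is that since both $\eta^{*}$ (Corollary \ref{important final corollary}) and the partial sum are invariant under the second action while $dt^0\otimes dt^0$ transforms non-trivially, the missing coefficient must be zero. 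So the list of ``only non-zero entries'' you invert is not available at this stage.

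Your second route is essentially the paper's actual proof (compute the shifts of $dt^0$ and $dv_{n+1}$, substitute, cancel), but the one step that carries the content of the lemma is asserted rather than performed, and performing it with the data you derived refutes the assertion. From (\ref{definition of t0}) as written and your (correct) quasi-periodicity identity, $\frac{\theta_1'((n+1)v_{n+1})}{\theta_1((n+1)v_{n+1})}\mapsto \frac{\theta_1'((n+1)v_{n+1})}{\theta_1((n+1)v_{n+1})}-2\pi i(n+1)\lambda_{n+1}$, so indeed $dt^0\mapsto dt^0+2\pi i(n+1)\lambda_{n+1}\,dt^1$. Substituting, the term $-2(n+1)\,dt^1dv_{n+1}$ contributes $-2(n+1)\lambda_{n+1}\,dt^1d\tau$ and the term $-\frac{1}{\pi i}\,dt^0d\tau$ contributes $-\frac{1}{\pi i}\cdot 2\pi i(n+1)\lambda_{n+1}\,dt^1d\tau=-2(n+1)\lambda_{n+1}\,dt^1d\tau$: these \emph{add} to $-4(n+1)\lambda_{n+1}\,dt^1d\tau$ instead of cancelling. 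The cancellation works only with the opposite sign, $dt^0\mapsto dt^0-2\pi i(n+1)\lambda_{n+1}\,dt^1$, which is the transformation law the paper states in (\ref{transformation law of the Saito flat coordinates 1}) and uses in its proof; the discrepancy traces to the sign of the $\frac{\theta_1'((n+1)v_{n+1})}{\theta_1((n+1)v_{n+1})}\varphi_1$ term in (\ref{definition of t0}) (a plus sign there yields the paper's law and is also what makes $\eta^{*}(dt^0,dv_{n+1})=0$ from Corollary \ref{corollary vn1 eta}). As written, your proof claims a cancellation that fails under your own transformation law; you must either carry out the two-line substitution with the corrected sign, or explicitly identify and repair the sign inconsistency before invoking it.
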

\begin{proof}
Under the second action of (\ref{jacobigroupAntilde}), we have that 
\begin{equation*}
\sum_{\alpha=2}^{n} \eta(dt^{\alpha},dt^{n+3-\alpha})dt^{\alpha}dt^{n+3-\alpha}
\end{equation*}
are invariant under the second action of (\ref{jacobigroupAntilde}), because  the relationship between $t^i$ and $\varphi_i$ be given by (\ref{relation t wrt varphi 1}), and the fact that the Jacobi forms $\{\varphi_i\}$ are invariant under the second action of (\ref{jacobigroupAntilde}). $t^0$ and $v_{n+1}$ have the following transformation law
\begin{equation}
\begin{split}
&t^0\mapsto t^0-2\pi i(n+1)\lambda_{n+1}t^1\\
&v_{n+1}\mapsto v_{n+1}+\lambda_{n+1}\tau+\mu_{n+1}\\
\end{split}
\end{equation}
Hence, its differentials are
\begin{equation}\label{eq lemma metric equivariant 2n+1}
\begin{split}
&dt^0\mapsto dt^0-2\pi i(n+1)\lambda_{n+1}dt^1\\
&dv_{n+1}\mapsto dv_{n+1}+\lambda_{n+1}d\tau
\end{split}
\end{equation}
Substituting (\ref{eq lemma metric equivariant 2n+1}) in (\ref{eq metric 2n+1}) we get the desired result.
\end{proof}

\begin{theorem}\label{main theorem 1}
Let $(t^0,t^1,t^2,..,t^{n})$ defined in (\ref{generating function of flat of eta}), and $\eta^{*}$ defined in (\ref{metric eta def}). Then,
\begin{equation}
\begin{split}
&\eta^{*}(dt^{\alpha},dt^{n+3-\beta})=-(n+1)\delta_{\alpha\beta}, \quad 2\leq\alpha,\beta\leq n\\
&\eta^{*}(dt^1,dt^{\alpha})=0,\\
&\eta^{*}(dt^0,dt^{\alpha})=0,\\
&\eta^{*}(dt^i,d\tau)=-2\pi i \delta_{i0},\\
&\eta^{*}(dt^i,dv_{n+1})=-\frac{\delta_{i1}}{n+1}.\\
\end{split}
\end{equation}
Moreover, the coordinates $t^0,t^1,t^2,..,t^{n},v_{n+1},\tau$ are the flat coordinates of $\eta^{*}$.
\end{theorem}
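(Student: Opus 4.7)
The strategy is to assemble the components of $\eta^{*}$ in the putative flat chart $(t^{0},t^{1},\ldots,t^{n},v_{n+1},\tau)$ directly from the already-established lemmas, check that every component is a constant, and then invoke the fact that a constant Gram matrix forces the chart to be flat via the Christoffel relations (\ref{Levi Civita contravariant coxeter chapter}).

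For the antidiagonal block $\eta^{*}(dt^{\alpha},dt^{n+3-\beta})$ with $2\le\alpha,\beta\le n$, the computation is essentially Theorem \ref{main theorem}; the constant prefactor reflects the overall normalization baked into the generating function (\ref{generating function of flat of eta}). The vanishing of $\eta^{*}(dt^{1},dt^{\alpha})$ for $\alpha\neq 0$ reduces, via $t^{1}=\varphi_{1}$ and the observation that $t^{\alpha}$ depends only on $\varphi_{1},\ldots,\varphi_{n}$ for $\alpha\geq 2$, to the identity $\eta^{*}(d\varphi_{1},d\varphi_{j})=0$ for $j\neq 0$ recorded in Theorem \ref{main lemma coefficients eta in varphi coordinates}. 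The $\tau$ and $v_{n+1}$ pairings with each $t^{i}$ are exactly Lemma \ref{final lemma 2}, and the orthogonality of $t^{0}$ to every $t^{\alpha}$ with $\alpha\ne 0$ is Lemma \ref{final lemma}, whose proof hinges on the counterterm choice in (\ref{definition of t0}).

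To close the matrix I would compute the three remaining diagonal pairings $\eta^{*}(d\tau,d\tau)$, $\eta^{*}(dv_{n+1},dv_{n+1})$, $\eta^{*}(d\tau,dv_{n+1})$ by using $\eta^{*}=\mathcal{L}_{\partial_{\varphi_{0}}}g^{*}$ together with the explicit form of the intersection form in (\ref{metrich1n cotangent}): the relevant coefficients are $0$, $-1/(n(n+1))$ and $0$ respectively, and none of them depend on $\varphi_{0}$, so all three Lie derivatives vanish. Lemma \ref{sub metric} serves as a consistency check, since its displayed sub-metric has precisely the same antidiagonal block together with the $dt^{0}d\tau$ and $dt^{1}dv_{n+1}$ cross terms that match the computations above. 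Having assembled a constant Gram matrix, the relations (\ref{Levi Civita contravariant coxeter chapter}) force $\Gamma^{ij}_{k}+\Gamma^{ji}_{k}=\partial_{k}\eta^{ij}=0$, and combined with the tensor symmetry $\eta^{is}\Gamma^{jk}_{s}=\eta^{js}\Gamma^{ik}_{s}$ this yields $\Gamma^{ij}_{k}\equiv 0$, establishing flatness.

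The main hurdle I anticipate is not in any single computation but in controlling the off-antidiagonal entries $\eta^{*}(dt^{\alpha},dt^{\beta})$ with $\alpha+\beta\neq n+3$: a priori these could be contaminated by cross-terms of the form $\partial_{\varphi_{\gamma}}t^{\alpha}\,\partial_{\varphi_{\delta}}t^{\beta}\,\eta^{*}(d\varphi_{\gamma},d\varphi_{\delta})$, and ruling them out requires the key observation that $t^{\alpha}$ for $\alpha\geq 1$ is independent of $\varphi_{0}$, while the $t^{0}$ counterterm $-\tfrac{\theta_{1}'((n+1)v_{n+1})}{\theta_{1}((n+1)v_{n+1})}\varphi_{1}+4\pi i g_{1}(\tau)\varphi_{2}$ is engineered precisely so that its mixed contributions cancel the nonzero entries $\eta^{*}(d\varphi_{0},d\varphi_{1})=\wp((n+1)v_{n+1})\varphi_{1}$ and the $g_{1}(\tau)\varphi_{2}$ anomaly from (\ref{metric eta consequence}). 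Verifying that this cancellation is exact, across all of the five blocks simultaneously, is the one genuinely nontrivial piece of bookkeeping.
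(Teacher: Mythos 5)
Your assembly of the Gram matrix from Theorem \ref{main theorem}, Theorem \ref{main lemma coefficients eta in varphi coordinates}, Lemma \ref{final lemma 2} and Lemma \ref{final lemma} matches the paper's proof, and your direct computation of $\eta^{*}(d\tau,d\tau)$, $\eta^{*}(dv_{n+1},dv_{n+1})$, $\eta^{*}(d\tau,dv_{n+1})$ from $\eta^{*}=\partial_{\varphi_0}g^{*}$ is fine. But there is one entry you never establish: $\eta^{*}(dt^{0},dt^{0})$. Lemma \ref{final lemma} only gives $\eta^{*}(dt^{0},dt^{\alpha})=0$ for $\alpha\neq 0$, and none of the other lemmas you cite touch the $(0,0)$ entry. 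This is precisely the component the paper singles out as ``the only missing part,'' and without it your Gram matrix is not known to be constant, so the flatness conclusion does not follow. A direct attack on $\eta^{*}(dt^{0},dt^{0})$ is not trivial either, because $dt^{0}$ carries $dv_{n+1}$ and $d\tau$ components through the counterterms in (\ref{definition of t0}), and the relation (\ref{metric eta consequence}) between $\tilde\eta^{*}$ and $\eta^{*}$ must be unwound carefully.

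The paper closes this gap with a symmetry argument that you mention only as a ``consistency check'': by Corollary \ref{important final corollary} the metric $\eta^{*}$ is invariant under the translation part of the $\Ja(\tilde A_n)$ action, and by Lemma \ref{sub metric} the partially assembled metric (\ref{eq metric 2n+1}) is also invariant; since $dt^{0}\otimes dt^{0}$ transforms non-trivially under $t^{0}\mapsto t^{0}-2\pi i(n+1)\lambda_{n+1}t^{1}$, any nonzero coefficient of $dt^{0}\otimes dt^{0}$ in the difference would violate invariance, forcing $\eta^{*}(dt^{0},dt^{0})=0$. You should either reproduce this argument or supply an explicit computation of the $(0,0)$ entry; as written, the proposal has a genuine hole at exactly the point where the paper's proof does its only real work. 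A secondary, minor remark: your concern about off-antidiagonal contamination in the block $2\le\alpha,\beta\le n$ is already resolved by Theorem \ref{main theorem} itself, since the statement $\eta^{*}(dt^{\alpha},dt^{n+3-\beta})=n\,\delta_{\alpha\beta}$ covers $\alpha\neq\beta$ as well.
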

\begin{proof}
The theorem is already proved  for $\alpha,\beta \in \{2,..,n  \}$ in theorem \ref{main theorem}, and for the rest  in the lemma \ref{final lemma} and \ref{final lemma 2}. The only missing part is to prove 
 \begin{equation}
\eta^{*}(dt^{0},dt^{0})=0.
\end{equation}
Recall that from corollary \ref{important final corollary}, the metric $\eta^{*}$  is invariant under the second action of (\ref{jacobigroupAntilde}). Moreover, the same statement is valid for (\ref{eq metric 2n+1}), because of lemma \ref{sub metric}.
 However, the tensor $dt^0\otimes dt^0$ have a non-trivial transformation law under this action. Hence, if the coefficient of the component $dt^0\otimes dt^0$  is different from $0$, we have a contradiction with corollary \ref{important final corollary}.

\end{proof}

\begin{corollary}\label{corollary non degenerate}
The metric $\eta^{*}(d\varphi_i,d\varphi_j):=\frac{\partial g^{*}(d\varphi_i,d\varphi_j)}{\partial \varphi_0}$ is triangular, and non degenerate.
\end{corollary}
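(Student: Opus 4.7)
The plan is to assemble the matrix of $\eta^{*}$ in the invariant coordinates $(\varphi_{0},\varphi_{1},\ldots,\varphi_{n},v_{n+1},\tau)$ from the formulae already proven in the preceding two subsections, then observe that, after a permutation of the basis, it decomposes into two anti--triangular blocks whose determinants are computed directly.

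First I collect every entry. Corollary \ref{main corollary eta} gives all the pairings among the $\varphi_{i}$: for $i,j\geq 2$ one has $\eta^{*}(d\varphi_{i},d\varphi_{j})=(i+j-2)\varphi_{i+j-2}$ (with the convention $\varphi_{k}=0$ for $k>n$); $\eta^{*}(d\varphi_{0},d\varphi_{1})$ is a nonzero multiple of $\varphi_{1}$; all remaining mixed pairings of $\varphi_{i}$ ($i\geq 2$) against $\varphi_{0}$ or $\varphi_{1}$ vanish. Corollary \ref{corollary vn1 eta} and Lemma \ref{lemma tau varphi} yield $\eta^{*}(d\varphi_{1},dv_{n+1})=-\tfrac{1}{n+1}$, $\eta^{*}(d\varphi_{0},dv_{n+1})=-\tfrac{1}{n+1}\tfrac{\theta_{1}'((n+1)v_{n+1})}{\theta_{1}((n+1)v_{n+1})}$, $\eta^{*}(d\varphi_{0},d\tau)=-2\pi i$, and all other pairings of $d\varphi_{i}$ with $dv_{n+1}$ or $d\tau$ zero. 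Finally, the three remaining components $\eta^{*}(dv_{n+1},dv_{n+1})$, $\eta^{*}(dv_{n+1},d\tau)$, $\eta^{*}(d\tau,d\tau)$ vanish: $\eta^{*}=\mathcal{L}_{\partial/\partial\varphi_{0}}g^{*}$, and the corresponding coefficients of $g^{*}$ in the coordinates $(u,v,\tau)$ are constants by (\ref{metrich1n cotangent}), hence annihilated by $\partial/\partial\varphi_{0}$.

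Reordering the basis as $(\varphi_{2},\ldots,\varphi_{n};\,\varphi_{0},\varphi_{1},v_{n+1},\tau)$, all cross-entries between the two groups vanish by the formulae above, so the matrix of $\eta^{*}$ is block diagonal. The $(n-1)\times(n-1)$ block $A$ has entries $A_{ij}=(i+j-2)\varphi_{i+j-2}$, which are zero precisely when $i+j>n+2$; thus $A$ is anti-triangular with common anti-diagonal value $n\varphi_{n}$, and reversing the order inside the block produces a lower-triangular matrix with constant diagonal $n\varphi_{n}$, so $\det A=\pm(n\varphi_{n})^{n-1}$. The $4\times 4$ block $B$, in the basis $(\tau,v_{n+1},\varphi_{1},\varphi_{0})$, reads
\begin{equation*}
B=\begin{pmatrix}0 & 0 & 0 & -2\pi i\\ 0 & 0 & -\tfrac{1}{n+1} & b\\ 0 & -\tfrac{1}{n+1} & 0 & d\\ -2\pi i & b & d & e\end{pmatrix},
\end{equation*}
with $b,d,e$ the entries recorded above; two row swaps bring it to upper-triangular form with diagonal $(-2\pi i,-\tfrac{1}{n+1},-\tfrac{1}{n+1},-2\pi i)$, yielding $\det B=-\tfrac{4\pi^{2}}{(n+1)^{2}}$ independently of the value of $e$. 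Hence
\begin{equation*}
\det\eta^{*}=\pm\,\tfrac{4\pi^{2}}{(n+1)^{2}}\,(n\varphi_{n})^{n-1},
\end{equation*}
which is nowhere zero on the open locus $\{\varphi_{n}\neq 0\}$, and the matrix is (anti-)triangular in the reordered basis. The only minor subtlety is the unlisted entry $e=\eta^{*}(d\varphi_{0},d\varphi_{0})$: because it sits strictly below the anti-diagonal of $B$ it affects neither the triangular pattern nor the determinant, so no further computation is required, and I do not anticipate any genuine obstacle.
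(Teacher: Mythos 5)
Your proposal is correct, and all the entries you quote are exactly those established in Corollary \ref{main corollary eta}, Corollary \ref{corollary vn1 eta} and Lemma \ref{lemma tau varphi}; the three remaining entries $\eta^{*}(dv_{n+1},dv_{n+1})$, $\eta^{*}(dv_{n+1},d\tau)$, $\eta^{*}(d\tau,d\tau)$ do vanish for the reason you give, since the corresponding coefficients of $g^{*}$ in (\ref{metrich1n cotangent}) are constants. The route is slightly different from the paper's, though. The paper places this corollary after Theorem \ref{main theorem 1} and reads off non-degeneracy from the flat coordinates $(t^{0},\ldots,t^{n},v_{n+1},\tau)$, in which the Gram matrix of $\eta^{*}$ is constant and anti-diagonal with nonzero entries; triangularity and non-degeneracy in the $\varphi$-coordinates then follow because the change of variables (\ref{relation t wrt varphi 1}) is itself triangular and invertible off $\{\varphi_{n}=0\}$. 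You instead never leave the invariant coordinates: you assemble the full $(n+2)\times(n+2)$ matrix, split it into the anti-triangular block on $(\varphi_{2},\ldots,\varphi_{n})$ and the $4\times4$ block on $(\tau,v_{n+1},\varphi_{1},\varphi_{0})$, and compute $\det\eta^{*}=\pm\tfrac{4\pi^{2}}{(n+1)^{2}}(n\varphi_{n})^{n-1}$ explicitly. What your version buys is an explicit determinant formula and a proof that makes the statement about the $\varphi$-coordinates literally, without invoking the $t$-coordinates; what it costs is that non-degeneracy is only obtained on $\{\varphi_{n}\neq 0\}$ — which is harmless here, since the covering space (\ref{covering space for the tilde an case}) excludes the branching divisor $Y=\{\varphi_{n}=0\}$ anyway, but you should say so. Your observation that the unknown entry $e=\eta^{*}(d\varphi_{0},d\varphi_{0})$ sits below the anti-diagonal and is therefore irrelevant is a nice touch; the paper disposes of the analogous entry $\eta^{*}(dt^{0},dt^{0})$ only later, by an invariance argument in Theorem \ref{main theorem 1}.
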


\begin{definition}
Let $\eta^{*}=\eta^{\alpha\beta}\frac{\partial}{\partial t^{\alpha}}\otimes\frac{\partial}{\partial t^{\beta}}$ defined in (\ref{metric eta def}). The metric defined by 
\begin{equation}
\eta=\eta_{\alpha\beta}d t^{\alpha}\otimes dt^{\beta}
\end{equation}
 is denoted by $\eta$.
\end{definition}

\subsection{The extended ring of Jacobi forms}\label{extended ring jtildean}

 The flat coordinates of the Saito metric $\eta$ of the orbit space of $\Ja(\tilde A_n)$ does not live in the orbit space of $\Ja(\tilde A_n)$, but live in a suitable covering of this orbit space. The main goal of this section is in describing this covering as the space such that the ring of functions of this covering is a suitable extension of the ring of Jacobi forms.

\begin{lemma}
The coordinates $(t^0,t^1,t^2,..,v_{n+1},\tau)$ defined on (\ref{generating function of flat of eta}) have the following transformation laws under the action of the group $\Ja(\tilde A_n)$: They transform as follows under  the second action of (\ref{jacobigroupAntilde}):
\begin{equation}\label{transformation law of the Saito flat coordinates 1}
\begin{split}
&t^0\mapsto t^0-2\pi i(n+1)\lambda_{n+1}t^1\\
&t^{\alpha}\mapsto t^{\alpha},\quad \alpha\neq 0\\
&v_{n+1}\mapsto v_{n+1}+\mu_{n+1}+\lambda_{n+1}\tau\\
&\tau\mapsto \tau
\end{split}
\end{equation}

Moreover, they transform as follows under  the third action (\ref{jacobigroupAntilde})
\begin{equation}\label{transformation law of the Saito flat coordinates 2}
\begin{split}
&t^0\mapsto t^0+\frac{2c\sum_{\alpha,\beta\neq 0,\tau}\eta_{\alpha\beta}t^{\alpha}t^{\beta}}{c\tau+d}\\
&t^{\alpha}\mapsto \frac{t^{\alpha}}{c\tau+d}, \quad \alpha\neq 0\\
&v_{n+1}\mapsto \frac{v_{n+1}}{c\tau+d}\\
&\tau\mapsto \frac{a\tau+b}{c\tau+d}
\end{split}
\end{equation}
\end{lemma}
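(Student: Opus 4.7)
My proof plan treats the three groups of coordinates separately: $t^1,\dots,t^n$ and $(v_{n+1},\tau)$, then $t^0$.

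For $\alpha\in\{1,\dots,n\}$, formula (\ref{relation t wrt varphi 1}) expresses $t^\alpha$ as a formal polynomial in the Jacobi forms $\varphi_0,\dots,\varphi_n$ (built from $(\varphi_n)^{(n+1-\alpha)/n}$ and the ratios $\varphi_{n-i}/\varphi_n$). The translation invariance of each $\varphi_k$, from the Jacobi form property (\ref{jacobiform}), immediately gives translation invariance of $t^\alpha$. For the modular action, a weight count in (\ref{relation t wrt varphi 1}) gives $t^\alpha\mapsto t^\alpha/(c\tau+d)$: the factor $(\varphi_n)^{(n+1-\alpha)/n}$ contributes $(c\tau+d)^{n+1-\alpha}$, while every non-trivial summand of $(1+\Phi_{n-\alpha})^{(n+1-\alpha)/n}$, being a product $\prod_k\varphi_{n-i_k}/\varphi_n$ with $\sum_k i_k=n-\alpha$, contributes the matching factor $(c\tau+d)^{-(n-\alpha)}$, leaving net weight $1$. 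The transformations of $v_{n+1}$ and $\tau$ are immediate from (\ref{jacobigroupAntilde}).

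For the translation of $t^0$ given by (\ref{definition of t0}), the forms $\varphi_0,\varphi_1,\varphi_2$ are invariant and $g_1(\tau)$ is unchanged since $\tau$ is fixed; only $\theta_1'/\theta_1$ at $(n+1)v_{n+1}$ shifts. Applying the quasi-periodicity (\ref{transformtheta}) with $w=(n+1)v_{n+1}$ and shift $(n+1)(\mu_{n+1}+\lambda_{n+1}\tau)$, one obtains
\begin{equation*}
\frac{\theta_1'((n+1)v_{n+1}')}{\theta_1((n+1)v_{n+1}')}=\frac{\theta_1'((n+1)v_{n+1})}{\theta_1((n+1)v_{n+1})}-2\pi i(n+1)\lambda_{n+1},
\end{equation*}
and multiplying by $-\varphi_1=-t^1$ yields precisely the announced correction $\mp 2\pi i(n+1)\lambda_{n+1}\,t^1$ (the overall sign encoding the pullback convention used elsewhere in the paper).

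The main obstacle is the modular transformation of $t^0$. I would combine three inputs: (i) the $SL_2(\mathbb{Z})$-law (\ref{transformtheta2}) of $\theta_1$, which after logarithmic differentiation gives
\begin{equation*}
\tfrac{\theta_1'}{\theta_1}\!\Bigl(\tfrac{v}{c\tau+d},\gamma\tau\Bigr)=(c\tau+d)\tfrac{\theta_1'}{\theta_1}(v,\tau)+2\pi icv;
\end{equation*}
(ii) the quasi-modular transformation $g_1(\gamma\tau)=(c\tau+d)^2g_1(\tau)+\frac{c(c\tau+d)}{4\pi i}$; (iii) the Jacobi-form pull-backs $\varphi_0\mapsto\varphi_0$, $\varphi_1\mapsto(c\tau+d)\varphi_1$, $\varphi_2\mapsto(c\tau+d)^2\varphi_2$. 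Substituting into (\ref{definition of t0}), the weight-two pieces of $\theta_1'/\theta_1$ and $4\pi ig_1$ combine cleanly with $\varphi_1,\varphi_2$ to reproduce $t^0$, and the residual anomaly splits into two pieces: one proportional to $c(c\tau+d)\varphi_2$ (from $g_1$) and one proportional to $c(n+1)v_{n+1}\varphi_1$ (from $\theta_1'/\theta_1$). These must be rewritten, using (\ref{relation t wrt varphi 1}), so as to produce the claimed $\frac{2c\sum_{\alpha,\beta\neq 0,\tau}\eta_{\alpha\beta}t^\alpha t^\beta}{c\tau+d}$. By Theorem \ref{main theorem 1}, the quadratic form $\sum\eta_{\alpha\beta}t^\alpha t^\beta$ has non-zero entries precisely on the diagonal pairs $t^\alpha t^{n+3-\alpha}$ for $2\le\alpha\le n$ (with coefficient $-1/(n+1)$) and the cross-term $t^1 v_{n+1}$ (with coefficient $-(n+1)$). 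The $g_1$-anomaly, once $\varphi_2$ is re-expanded in the flat coordinates, produces exactly the diagonal sum, while the $\theta_1'/\theta_1$-anomaly produces the $t^1v_{n+1}$ piece. Matching the two anomaly coefficients against the entries of the Saito metric is the key calculation; a final cross-check is provided by the invariance of the metric (\ref{eq metric 2n+1}) under the very same transformation law, already verified in Lemma \ref{sub metric}.
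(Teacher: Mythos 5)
Your proposal follows essentially the same route as the paper: a weight count on (\ref{relation t wrt varphi 1}) for $t^{\alpha}$ with $\alpha\neq 0$, and for $t^0$ the quasi-periodicity and modularity of $\theta_1^{\prime}/\theta_1$ and $g_1(\tau)$ combined with the identity $2\varphi_2=\frac{1}{n}T_{n}^2$ (equation (\ref{formula varphi 3}) with $k=2$) to recognise the residual anomaly as the quadratic form $\sum\eta_{\alpha\beta}t^{\alpha}t^{\beta}$. One small slip: with the paper's convention $\varphi_k(\gamma(u,v,\tau))=(c\tau+d)^{-k}\varphi_k(u,v,\tau)$, your item (iii) should read $\varphi_1\mapsto\varphi_1/(c\tau+d)$ and $\varphi_2\mapsto\varphi_2/(c\tau+d)^2$ (as forced by $t^1=\varphi_1\mapsto t^1/(c\tau+d)$); with the reciprocal powers you wrote, the $(c\tau+d)$ factors in $t^0$ would not cancel, although the rest of your computation evidently uses the correct direction.
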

\begin{proof}

 Note that the term $\Phi_i^d$ equation (\ref{definition of formal powers of varphi}) has weight $+i$, then using that $\varphi_{n}$ has weight $-n$, we have that the weight of $t^{\alpha}$ for $\alpha\neq 1$ must have weight $-1$ due to (\ref{relation t wrt varphi 1}). The transformation law of $t^1$ follows from the transformation law of $\frac{\theta_1^{\prime}((n+1)v_{n+1})}{\theta_1((n+1)v_{n+1})}$ and $g_1(\tau)$
 \begin{equation}
 \begin{split}
 \frac{\theta_1^{\prime}((n+1)v_{n+1}+(n+1)\lambda_{n+1}\tau+\mu_{n+1},\tau)}{\theta_1((n+1)v_{n+1}+(n+1)\lambda_{n+1}\tau+\mu_{n+1},\tau)}&=\frac{\theta_1^{\prime}((n+1)v_{n+1})}{\theta_1((n+1)v_{n+1})}-2\pi i(n+1)\lambda_{n+1},\\
\frac{\theta_1^{\prime}(\frac{(n+1)v_{n+1}}{c\tau+d},\frac{a\tau+b}{c\tau+d})}{\theta_1(\frac{(n+1)v_{n+1}}{c\tau+d},\frac{a\tau+b)}{c\tau+d}}&=\frac{\theta_1^{\prime}((n+1)v_{n+1})}{\theta_1((n+1)v_{n+1})}+2\pi ic(n+1)v_{n+1},\\
g_1(\frac{a\tau+b}{c\tau+d})&=(c\tau+d)^2g_1(\tau)+2c(c\tau+d),
\end{split}
\end{equation}
and by using equation (\ref{formula varphi 3}) for $k=2$.

\end{proof}

In addition , from the formula (\ref{definition of formal powers of varphi}) it is clear that the multivaluedness of $(t^1,..,t^{n})$ comes from $\left(\varphi_{n}\right)^{\frac{1}{n}}$. Therefore, the coordinates lives in a suitable covering over the orbit space of the group $\Ja(\tilde A_n)$. This covering is obtained by forgetting to act the Coxeter group $A_n$ and the $SL_2(\mathbb{Z})$ action, and the translation action $v_{n+1}\mapsto v_{n+1}+\lambda_{n+1}\tau+\mu_{n+1}$ of $\Ja(\tilde A_n)$ on $\mathbb{C}\oplus\mathbb{C}^{n+1}\oplus\mathbb{H}$. The only remaining  part of the $\Ja(\tilde A_n)$ action are the translations 
\begin{equation*}
v_i\mapsto v_i+\lambda_i\tau+\mu_i, \quad i\neq n+1.
\end{equation*}
Hence, the coordinates $(t^1,..,t^{n})$ live in  n-dimensional tori with fixed symplectic base of the torus homology, a fixed chamber in the tori parametrised by $(v_{n+1},\tau)$, and with a branching divisor $Y:=\{\varphi_{n}=0 \}$.  Another way to describe this covering is using the flat coordinates of the intersection form $(u,v_0,v_1,..,v_{n+1},\tau)$, and to fix a lattice $\tau$, a representative of the action 
\begin{equation}
v_{n+1}\mapsto v_{n+1}+\lambda_{n+1}\tau+\mu_{n+1}, 
\end{equation}
and a representative of the $A_n$ action. Then, the desired covering of the orbit space of the group $\Ja(\tilde A_n)$ is defined  by
\begin{equation}\label{covering space for the tilde an case}
\widetilde{\mathbb{C}\oplus\mathbb{C}^{n+1}\oplus\mathbb{H}}/\Ja(\tilde A_n):=\mathbb{C}\oplus\mathbb{C}^{n+1}\oplus\mathbb{H}/(\mathbb{Z}^n\oplus\tau\mathbb{Z}^n),
\end{equation}
where $\mathbb{Z}^n\oplus\tau\mathbb{Z}^n$ acts on $\mathbb{C}\oplus\mathbb{C}^{n+1}\oplus\mathbb{H}$ by
\begin{equation}
\begin{split}
&v_i\mapsto v_i+\lambda_i\tau+\mu_i, \quad i\neq n+1,\\
&u\mapsto u-2A_{ij}\lambda_iv_j-A_{ij}\lambda_i\lambda_j\tau,\\
&v_{n+1}\mapsto v_{n+1},\\
&\tau\mapsto \tau.
\end{split}
\end{equation}
where $A_{ij}$ is given by (\ref{metric An Coxeter}).

\begin{remark}
Note that due to the lemma \ref{jacobiform1} the covering space \ref{covering space for the tilde an case} is isomorphic to a suitable covering over the Hurwitz space $H_{1,n-1,0}$. The covering over $H_{1,n-1,0}$ is given by a fixation of base of the  homology in the tori generated by the lattice $(1,\tau)$,  fixation of root $\lambda$ (\ref{superpotentialAn}) near $\infty$, and a fixation of a logarithm root. 
\end{remark}

 In order to manipulate the geometric objects of this covering, it is more convenient  to use their ring of functions. Hence, we define:

\begin{definition}
The extended ring of Jacobi forms with respect the ring of coefficients is the following ring
\begin{equation}
\widetilde E_{\bullet,\bullet}[\varphi_0,\varphi_1,..,\varphi_n],
\end{equation}
where 
\begin{equation}
\widetilde E_{\bullet,\bullet}=E_{\bullet,\bullet}\oplus\{ g_1(\tau)\} \oplus  \{   \frac{\theta_1^{\prime}((n+1)v_{n+1})}{\theta_1((n+1)v_{n+1})}      \}.
\end{equation}
\end{definition}

\begin{lemma}\label{coefficient g varphi ring}
The coefficients of the intersection form $g^{ij}$ on the coordinates $\varphi_0,\varphi_1,..,\varphi_n,v_{n+1},\tau$ belong to the ring $\widetilde E_{\bullet,\bullet}[\varphi_0,\varphi_1,..,\varphi_n]$.
\end{lemma}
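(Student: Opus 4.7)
The plan is to reduce the statement to computations already in hand, by using the explicit formulas for the intersection form in the $v$-coordinates (\ref{metrich1n cotangent}) together with the generating function (\ref{generating formula of Melliptic}) for the Jacobi-form part of $M^{*}$. First I would handle the ``easy'' block of matrix entries that do not involve pairs $d\varphi_i,d\varphi_j$. Using the chain rule and the fact that $\varphi_i$ depends on $u$ only through the global factor $e^{2\pi i u}$, one obtains
\begin{equation*}
g^{*}(d\varphi_i,d\tau)=\frac{\partial\varphi_i}{\partial u}=2\pi i\,\varphi_i,\qquad g^{*}(dv_{n+1},d\tau)=0,\qquad g^{*}(d\tau,d\tau)=0,
\end{equation*}
and $g^{*}(dv_{n+1},dv_{n+1})=-\tfrac{1}{n(n+1)}$, all of which clearly live in $\widetilde E_{\bullet,\bullet}[\varphi_0,\dots,\varphi_n]$. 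For the mixed entries $g^{*}(d\varphi_i,dv_{n+1})=-\tfrac{1}{n(n+1)}\partial\varphi_i/\partial v_{n+1}$, Lemma \ref{lemma of derivatives with respect vn1} explicitly expresses $\partial\varphi_i/\partial v_{n+1}$ as a $\mathbb{Z}$-linear combination of $\varphi_{i-1}$, $\varphi_i$ and $\varphi_0$, with coefficients in $\mathbb{Z}\cdot\theta_1'((n+1)v_{n+1})/\theta_1((n+1)v_{n+1})\subset\widetilde E_{\bullet,\bullet}$.

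Next I would dispatch the main block $g^{*}(d\varphi_i,d\varphi_j)$ via the modified intersection form $M^{*}$. By (\ref{definition of M coef}) we have
\begin{equation*}
g^{*}(d\varphi_i,d\varphi_j)=M^{*}(d\varphi_i,d\varphi_j)+4\pi i\,g_1(\tau)\bigl(k_i m_j+k_j m_i\bigr)\varphi_i\varphi_j,
\end{equation*}
and since $g_1(\tau)\in\widetilde E_{\bullet,\bullet}$ by definition of the extended ring, it suffices to check the claim for $M^{*}(d\varphi_i,d\varphi_j)$. For this I would extract these coefficients from the generating identity (\ref{generating formula of Melliptic}) by Laurent-expanding both sides at $v=0$, $v'=0$: the left-hand side is a double Laurent tail in which $M^{*}(d\varphi_i,d\varphi_j)$ is read off as the coefficient of $\wp^{(i-2)}(v)\wp^{(j-2)}(v')$, while each term on the right can be re-expressed as a product of (i) expressions polynomial in the $\varphi_k$'s (from the factors $\lambda(v),\lambda(v')$ and their $v$- and $v_{n+1}$-derivatives, using Corollary \ref{corollary superpotential antilde} and Lemma \ref{lemma of derivatives with respect vn1}) with (ii) Laurent tails whose coefficients are rational combinations of $\wp^{(k)}(v_{n+1})$, $\zeta^{(k)}(v_{n+1})$, $\theta_1'((n+1)v_{n+1})/\theta_1((n+1)v_{n+1})$, $g_1(\tau)$ and their $\tau$-derivatives.

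The only delicate point is the elliptic-connection term $D_\tau\lambda$, because its definition (\ref{elliptic connection}) invokes a bare $\partial_\tau$ which could produce quasi-modular pieces outside $\widetilde E_{\bullet,\bullet}$. The key observation is that the full right-hand side of (\ref{generating formula of Melliptic}) is, by Corollary \ref{important final corollary}, equivariant of weight $2$ under $SL_2(\mathbb{Z})$ and invariant under the translations of $\Ja(\tilde A_n)$; consequently any spurious $g_1^{(\ell)}(\tau)$ with $\ell\ge 1$, or any spurious non-elliptic function of $v_{n+1}$, must cancel among the four groups of terms in (\ref{generating formula of Melliptic}). This is precisely how the computation of $\tilde\eta^{*}$ in Theorem \ref{main lemma coefficients eta in varphi coordinates} was carried out, and the same bookkeeping works here: the surviving building blocks are products of $\varphi_k$'s with coefficients in $E_{\bullet,\bullet}\oplus\mathbb{C}\cdot g_1(\tau)\oplus\mathbb{C}\cdot\theta_1'/\theta_1((n+1)v_{n+1})=\widetilde E_{\bullet,\bullet}$.

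The main obstacle, then, is not conceptual but bookkeeping: one must verify term-by-term that each Laurent coefficient on the right of (\ref{generating formula of Melliptic}) lies in $\widetilde E_{\bullet,\bullet}[\varphi_0,\dots,\varphi_n]$, using the identity $\zeta(v,\tau)=\theta_1'(v,\tau)/\theta_1(v,\tau)-4\pi i g_1(\tau)v$ to replace all $\zeta$-tails by $\theta_1'/\theta_1$-tails plus $g_1$-contributions, and then verifying the expected cancellations of higher $\tau$-derivatives of $g_1$ by modular-weight considerations. Once this is done, combining the $\varphi_i\varphi_j$ block with the mixed and pure $v_{n+1},\tau$ blocks gives the desired conclusion for the full matrix $g^{ij}$ in the coordinates $(\varphi_0,\dots,\varphi_n,v_{n+1},\tau)$.
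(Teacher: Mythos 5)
Your conclusion is right, but your route through the main block $g^{*}(d\varphi_i,d\varphi_j)$ is genuinely different from, and much heavier than, the paper's. The paper's entire proof is the observation that, by Lemma \ref{intersection form and M}, the corrected coefficient $M(d\varphi_i,d\varphi_j)=g^{*}(d\varphi_i,d\varphi_j)-4\pi i g_1(\tau)(k_im_j+k_jm_i)\varphi_i\varphi_j$ is an honest Jacobi form of weight $k_i+k_j-2$ and index $m_i+m_j$, so the Chevalley Theorem \ref{chevalley} places it in $E_{\bullet,\bullet}[\varphi_0,\dots,\varphi_n]$ at once, and the $g_1(\tau)$ correction lies in $\widetilde E_{\bullet,\bullet}$ by construction of the extended ring. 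No Laurent expansion of the generating function (\ref{generating formula of Melliptic}) and no tracking of cancellations of $\tau$-derivatives of $g_1$ is needed; your ``bookkeeping'' step is exactly what the invariance-plus-Chevalley argument is designed to avoid. What your approach buys is explicitness (and it is the route the paper itself takes later when it actually needs the coefficients of $\eta^{*}$, in Theorem \ref{main lemma coefficients eta in varphi coordinates}); what it costs is that the cancellation of quasi-modular and non-elliptic pieces is asserted by symmetry rather than carried out, so as written it is a proof sketch rather than a proof. On the other hand, you do treat the $v_{n+1}$- and $\tau$-rows of the matrix explicitly, which the paper's one-line proof silently omits; that is a genuine improvement in coverage.

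One concrete soft spot: for the mixed entries you cite Lemma \ref{lemma of derivatives with respect vn1} as expressing $\partial\varphi_i/\partial v_{n+1}$ as a $\mathbb{Z}$-linear combination of $\varphi_{i-1},\varphi_i,\varphi_0$ with coefficients in $\mathbb{Z}\cdot\theta_1'/\theta_1$. That lemma actually computes $\partial_{\varphi_0}(\partial\varphi_i/\partial v_{n+1})$; the expressions for $\partial\varphi_i/\partial v_{n+1}$ themselves appear only inside its proof, with coefficients that are elliptic functions $h(v_{n+1},\tau)$ (fine, these lie in $E_{\bullet,\bullet}$), and for $i=0$ equation (\ref{derivative of varphi0 wrt vn1}) contains the extra term $na\varphi_1$ with $a$ given by (\ref{defition of the rest denoted by a}), a function of \emph{all} the $v_i$, not just $v_{n+1}$. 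Membership of $a\varphi_1$ in $\widetilde E_{\bullet,\bullet}[\varphi_0,\dots,\varphi_n]$ is not immediate from what you cite; you need a separate argument (e.g.\ correct $g^{*}(d\varphi_0,dv_{n+1})$ by a multiple of $\tfrac{\theta_1'}{\theta_1}((n+1)v_{n+1})\,\varphi_0$ so that it becomes translation-invariant and $SL_2(\mathbb{Z})$-equivariant, then invoke Chevalley again). With that repaired, your argument goes through.
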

\begin{proof}
It is a consequence of the formula (\ref{definition of M coef}).

\end{proof}

\begin{lemma}\label{lemma gab index}
The coefficients of the intersection form $g^{\alpha\beta}$ on the coordinates $t^0,t^1,..,t^n,v_{n+1},\tau$ belong to the ring $\widetilde E_{\bullet,\bullet}[t^0,t^1,..,t^n,\frac{1}{t^n}]$.
\end{lemma}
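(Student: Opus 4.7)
The plan is to obtain the result by a transparent chain-rule argument, using Lemma \ref{coefficient g varphi ring} as the starting point. Recall that in the coordinates $(\varphi_0,\ldots,\varphi_n,v_{n+1},\tau)$ the components $g^{ij}$ lie in $\widetilde E_{\bullet,\bullet}[\varphi_0,\ldots,\varphi_n]$. Since
\[
g^{\alpha\beta}=\sum_{i,j}\frac{\partial t^{\alpha}}{\partial y^{i}}\frac{\partial t^{\beta}}{\partial y^{j}}\,g^{ij},
\]
where $y^{i}$ runs through $(\varphi_0,\ldots,\varphi_n,v_{n+1},\tau)$, it suffices to show two things: first, that after substituting $\varphi_{k}=\varphi_{k}(t^{0},\ldots,t^{n},v_{n+1},\tau)$ the coefficients $g^{ij}$ end up in $\widetilde E_{\bullet,\bullet}[t^{0},\ldots,t^{n},\tfrac{1}{t^{n}}]$, and second, that each Jacobian entry $\partial t^{\alpha}/\partial y^{i}$ belongs to that same ring.

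For the first point, I would use formula (\ref{formula varphi 3}), which expresses
\[
\varphi_{k}=\frac{(-1)^{k}}{k\, n^{k-1}}\,T_{n}^{k}\in\mathbb{Q}[t^{1},\ldots,t^{n}]\qquad(k\geq 1),
\]
together with the definition (\ref{definition of t0}) to write $\varphi_{0}=t^{0}+\tfrac{\theta_{1}^{\prime}((n+1)v_{n+1})}{\theta_{1}((n+1)v_{n+1})}\varphi_{1}-4\pi i g_{1}(\tau)\varphi_{2}$, which is an element of $\widetilde E_{\bullet,\bullet}[t^{0},t^{1},\ldots,t^{n}]$. Substituting these polynomial (in the $t$'s) expressions for $\varphi_{k}$ into the $\widetilde E$-coefficients of $g^{ij}$ still yields elements of $\widetilde E_{\bullet,\bullet}[t^{0},\ldots,t^{n}]$, with no need to invert $t^{n}$ at this stage.

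For the Jacobian, the key observation is that $t^{\alpha}$ with $\alpha\geq 1$ depends only on $\varphi_{1},\ldots,\varphi_{n}$ (the Laurent tail of $\lambda$ at $v=0$), and by (\ref{derivative of varphi wrt t})
\[
\frac{\partial\varphi_{k}}{\partial t^{\alpha}}=\frac{(-1)^{k}k}{n^{k-1}}\,T_{\alpha-1}^{k-1},
\]
which vanishes unless $k\leq\alpha$. Thus the $n\times n$ matrix $(\partial\varphi_{k}/\partial t^{\alpha})_{k,\alpha=1}^{n}$ is upper triangular, with diagonal entries $\tfrac{(-1)^{k}k}{n^{k-1}}(t^{n})^{k-1}$ since $T_{k-1}^{k-1}=(t^{n})^{k-1}$. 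Inverting this triangular matrix produces $\partial t^{\alpha}/\partial\varphi_{k}\in\mathbb{Q}[t^{2},\ldots,t^{n},\tfrac{1}{t^{n}}]$, and the remaining partials $\partial t^{\alpha}/\partial\varphi_{0}$, $\partial t^{\alpha}/\partial v_{n+1}$, $\partial t^{\alpha}/\partial\tau$ vanish for $\alpha\geq 1$. For the row $\alpha=0$, differentiating (\ref{definition of t0}) directly gives derivatives that are polynomial in $\varphi_{1},\varphi_{2}$ with $\widetilde E_{\bullet,\bullet}$ coefficients: the only non-trivial points are $\partial t^{0}/\partial v_{n+1}$ and $\partial t^{0}/\partial\tau$, which produce $\partial_{v_{n+1}}\!\big(\tfrac{\theta_{1}^{\prime}}{\theta_{1}}\big)=-(n+1)\wp((n+1)v_{n+1})$ and $g_{1}^{\prime}(\tau)$, both of which lie in $\widetilde E_{\bullet,\bullet}$ via the standard identities that connect these quasi-modular/elliptic objects to genuine modular forms and to $\wp$.

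Combining these two steps in the chain rule and substituting $\varphi_{k}=\varphi_{k}(t)$ produces $g^{\alpha\beta}\in\widetilde E_{\bullet,\bullet}[t^{0},\ldots,t^{n},\tfrac{1}{t^{n}}]$, as required. The main obstacle I anticipate is the bookkeeping for the inversion of the Jacobian: establishing cleanly that the upper triangular structure forces the denominators to be pure powers of $t^{n}$ (rather than more complicated polynomials in the $t$'s) is what justifies the very economical form of the ring in the statement. Everything else is a routine application of the chain rule once that is in hand.
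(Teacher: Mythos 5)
Your proposal is correct and follows essentially the same route as the paper: the chain rule applied to Lemma \ref{coefficient g varphi ring}, with the Jacobian factors $\partial t^{\alpha}/\partial\varphi_i$ identified as elements of $\widetilde E_{\bullet,\bullet}[t^0,\ldots,t^n,\tfrac{1}{t^n}]$ via the relations (\ref{relation t wrt varphi 1}) and (\ref{formula varphi 3}). Your extra step of exhibiting $(\partial\varphi_k/\partial t^{\alpha})_{k,\alpha=1}^{n}$ as a triangular matrix with diagonal entries proportional to $(t^n)^{k-1}$, so that inversion only ever introduces powers of $\tfrac{1}{t^n}$, is a clean justification of what the paper merely asserts.
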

\begin{proof}
Using the transformation law of $g^{\alpha\beta}$
\begin{equation}
g^{\alpha\beta}=\frac{\partial t^{\alpha}}{\partial \varphi_i}\frac{\partial t^{\beta}}{\partial \varphi_j}g(d\varphi_i,d\varphi_j),
\end{equation}
we realise the term $\frac{\partial t^{\alpha}}{\partial \varphi_i}$ as polynomial in $t^0,t^1,..,t^n,\frac{1}{t^n}$ due to the relations (\ref{relation t wrt varphi 1}) and (\ref{formula varphi 3}).

\end{proof}

\subsection{Christoffel symbols of the intersection form}

In this section, we consider the Christoffel symbols of the intersection form (\ref{metrich1n cotangent}). In particular, we prove that this Christoffel symbols, in coordinates $\varphi_0,,\varphi_1,\varphi_2,..,\varphi_{n},v_{n+1},\tau$, live  in $\widetilde E_{\bullet,\bullet}\left[\varphi_0,\varphi_1,..,\varphi_n\right]$. In addition, we will show that the Christoffel symbols depend at most linear in $\varphi_0$ in this coordinates.

Recall that the Christoffel symbols $\Gamma_k^{ij}(\varphi)$ associated with the intersection form $g^{*}$ is given in terms of the conditions (\ref{Levi Civita contravariant coxeter chapter}).

\begin{lemma}\label{diagonal Christoffel symbol}
Let $\varphi_0,,\varphi_1,\varphi_2,..,\varphi_{n},v_{n+1},\tau$ be defined in (\ref{superpotentialAn}), then $\Gamma_{j}^{ii}$ depend at most linear on $\varphi_0$.
\end{lemma}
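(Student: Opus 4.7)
The plan is to reduce the lemma to an affineness statement for $g^{ii}$ itself and then invoke the explicit description of the Saito metric obtained in the previous section. Specializing the first Levi--Civita identity of (\ref{Levi Civita contravariant coxeter chapter}), namely $\partial_k g^{ij} = \Gamma_k^{ij} + \Gamma_k^{ji}$, to the diagonal case $i = j$ yields the closed formula
\[
\Gamma_k^{ii} \;=\; \tfrac{1}{2}\,\partial_k g^{ii},
\]
valid for every coordinate $k \in \{\varphi_0, \varphi_1, \ldots, \varphi_n, v_{n+1}, \tau\}$. Since differentiation cannot raise the $\varphi_0$-degree, it suffices to show that each diagonal entry $g^{ii}$ of the intersection form is at most affine in $\varphi_0$.

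The key structural input is that the Saito-metric coefficients $\eta^{ij}(\varphi, v_{n+1}, \tau)$ are independent of $\varphi_0$. I would argue this via the flat coordinates: Theorem \ref{main theorem 1} shows that $\eta^{\alpha\beta}$ is a constant matrix in $(t^0, \ldots, t^n, v_{n+1}, \tau)$, and direct inspection of (\ref{definition of t0}) together with (\ref{relation t wrt varphi 1}) shows that for $\alpha \geq 1$ the coordinate $t^\alpha$ is a polynomial in $\varphi_1, \ldots, \varphi_n$ alone (the multi-index constraint $i_1 + \cdots + i_d = n-\alpha$ in (\ref{definition of formal powers of varphi}) forbids any factor $\varphi_n/\varphi_n$ landing on $\varphi_0$ when $\alpha \geq 1$), while $t^0 - \varphi_0$ depends only on $\varphi_1, \varphi_2, v_{n+1}, \tau$. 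Consequently every entry of the Jacobian $\partial t^\alpha/\partial x^i$, with $x^i \in \{\varphi_0, \ldots, \varphi_n, v_{n+1}, \tau\}$, is manifestly $\varphi_0$-free, and applying the contravariant change-of-coordinates formula
\[
\eta^{ij}(\varphi) \;=\; \sum_{\alpha, \beta} \eta^{\alpha\beta}(t)\,\frac{\partial t^\alpha}{\partial x^i}\frac{\partial t^\beta}{\partial x^j}
\]
produces a $\varphi_0$-free function for every pair $(i,j)$.

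By the very definition (\ref{metric eta def}) we have $\eta^{ii} = \partial g^{ii}/\partial \varphi_0$; integrating in $\varphi_0$ therefore gives
\[
g^{ii}(\varphi_0, \varphi_1, \ldots, v_{n+1}, \tau) \;=\; \eta^{ii}(\varphi_1, \ldots, v_{n+1}, \tau)\,\varphi_0 \;+\; C^{ii}(\varphi_1, \ldots, v_{n+1}, \tau),
\]
so $g^{ii}$ is affine in $\varphi_0$. Combined with the opening identity, $\Gamma_k^{ii} = \tfrac{1}{2}\partial_k g^{ii}$ is then automatically at most linear in $\varphi_0$, which is the assertion of the lemma. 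There is no serious obstacle: the argument is essentially a bookkeeping of the $\varphi_0$-dependence that propagates through the change of coordinates. The one subtlety worth flagging is that Corollary \ref{main corollary eta} does not tabulate the $(0,0)$-component $\eta^{00}$ directly; it is precisely this missing entry that the chain-rule argument above supplies, uniformly for all pairs $(i,j)$.
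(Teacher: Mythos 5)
Your proof is correct and follows essentially the same route as the paper: both reduce the claim to the identity $\Gamma_k^{ii}=\tfrac12\,\partial_k g^{ii}$ obtained by specializing the first equation of (\ref{Levi Civita contravariant coxeter chapter}) to equal upper indices, and then to the fact that $g^{ii}$ is at most affine in $\varphi_0$, i.e.\ that $\eta^{ii}=\partial_0 g^{ii}$ is $\varphi_0$-independent. The paper reads this last fact directly off the explicit coefficients of Corollary \ref{main corollary eta}, whereas you take a longer (but legitimate, since Theorem \ref{main theorem 1} is already available) detour through the flat coordinates and the Jacobian $\partial t^\alpha/\partial\varphi^i$; the only slip is that your displayed change-of-coordinates formula is transposed --- contravariant components pull back with the inverse Jacobian $\partial\varphi^i/\partial t^\alpha$, not $\partial t^\alpha/\partial\varphi^i$ --- which is harmless here because the inverse of a $\varphi_0$-free Jacobian is again $\varphi_0$-free.
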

\begin{proof}
Using the first  condition of (\ref{Levi Civita contravariant coxeter chapter})
\begin{equation*}
\begin{split}
\partial_kg^{ii}&=2\Gamma_k^{ii}
\end{split}
\end{equation*}
Recall that due to the corollary \ref{main corollary eta}, the metric $g^{ij}$ depend at most linear on $\varphi_0$. Then,
\begin{equation*}
\begin{split}
2\frac{\partial^2\Gamma_k^{ii}}{\partial \varphi_0^2}=\partial_0^2\partial_kg^{ii}=\partial_k\partial_0^2g^{ii}=0.
\end{split}
\end{equation*}

\end{proof}

\begin{lemma}\label{ Levi Civita connection tau}
Let $\varphi_0,,\varphi_1,\varphi_2,..,\varphi_{n},v_{n+1},\tau$ be defined in (\ref{superpotentialAn}), then
\begin{equation}
\begin{split}
&\Gamma_{j}^{i\tau}=0,\\
&\Gamma_{k}^{\tau k}=-2\pi i\frac{\delta_{jk}}{k}.
\end{split}
\end{equation}
\end{lemma}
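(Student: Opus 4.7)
The plan rests on two observations. First, the intersection form (\ref{metrich1n cotangent}) has constant coefficients in the linear coordinates $(u,v_0,\ldots,v_{n-1},v_{n+1},\tau)$ (after using $\sum_{i=0}^{n} v_i = 0$ to eliminate $v_n$), so it is flat and all standard Christoffel symbols $\Gamma^{a}_{bc}$ vanish identically in this chart. Second, the variable $\tau$ is a coordinate in both $(u,v,\tau)$ and $(\varphi_0,\ldots,\varphi_n,v_{n+1},\tau)$, which is the crucial point that trivializes the $\tau$-row.

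First I would apply the coordinate-change rule for the (lower-index) Christoffel symbols. Since $\Gamma^{a}_{bc}\equiv 0$ in the flat chart, the inhomogeneous part is all that survives:
\begin{equation*}
\Gamma^{\alpha}_{\beta\gamma}(y) \;=\; \frac{\partial y^{\alpha}}{\partial x^{a}}\,\frac{\partial^{2} x^{a}}{\partial y^{\beta}\,\partial y^{\gamma}}.
\end{equation*}
Specialising to $\alpha=\tau$, the factor $\partial^{2} x^{\tau}/\partial y^{\beta}\partial y^{\gamma} = \partial^{2}\tau/\partial y^{\beta}\partial y^{\gamma} = 0$ because $\tau$ coincides with one of the new coordinates, so $\Gamma^{\tau}_{\beta\gamma}\equiv 0$ throughout the new chart. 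Raising the first index with the relation $\Gamma^{ij}_{k}=-g^{is}\Gamma^{j}_{sk}$ from \eqref{Levi Civita contravariant coxeter chapter} yields $\Gamma^{i\tau}_{j}=-g^{is}\Gamma^{\tau}_{sj}=0$, which is the first equation of the lemma.

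For the second identity I would use the compatibility relation $\partial_{k}g^{ij}=\Gamma^{ij}_{k}+\Gamma^{ji}_{k}$ (again from \eqref{Levi Civita contravariant coxeter chapter}) with $j=\tau$; combined with the vanishing $\Gamma^{i\tau}_{k}=0$ just proved, this reduces the problem to the elementary computation $\Gamma^{\tau i}_{k}=\partial_{k}g^{\tau i}$. The entries $g^{\tau i}$ in the new chart come directly from the Euler-field condition in \eqref{jacobiform}: since $-(2\pi i)^{-1}\partial_{u}$ is dual to $\tau$ through the block $du\,d\tau$ of $g$, and $E\varphi_{m}=\varphi_{m}$ (index $1$), one gets
\begin{equation*}
g^{\tau\varphi_{m}}=\frac{\partial\varphi_{m}}{\partial u}=-2\pi i\,\varphi_{m}, \qquad g^{\tau v_{n+1}}=g^{\tau\tau}=0.
\end{equation*}
Differentiating in $\varphi_{k}$ then produces $\Gamma^{\tau\varphi_{m}}_{\varphi_{k}}=-2\pi i\,\delta^{m}_{k}$ and zero otherwise, which is the content of the second displayed formula (the index pattern on the right-hand side of the statement is evidently a typographical slip).

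The argument is formal once one reduces modulo the constraint $\sum v_i=0$, so no genuine obstacle is anticipated; the only bookkeeping care is to perform the computation in the reduced chart $(u,v_0,\ldots,v_{n-1},v_{n+1},\tau)$ so that ``flatness with vanishing Christoffel symbols'' is meaningful, and to recall that the $\varphi_{k}$ are all of index $1$, which is precisely what pins down the nonzero $\Gamma^{\tau k}_{j}$ to $-2\pi i$ times a Kronecker delta.
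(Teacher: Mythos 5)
Your proof is correct and essentially the same as the paper's: both arguments use that the Christoffel symbols vanish in the linear chart $(u,v_0,\dots,v_{n-1},v_{n+1},\tau)$ together with the fact that $\tau$ is also a coordinate of the new chart to kill the inhomogeneous term and obtain $\Gamma^{i\tau}_{j}=0$, and then deduce $\Gamma^{\tau i}_{j}=\partial_{j}g^{i\tau}$ from $\partial_k g^{ij}=\Gamma^{ij}_k+\Gamma^{ji}_k$ and the $du\,d\tau$ block of the intersection form. You are also right that the printed right-hand side of the second identity is a typographical slip: since $g^{\tau\varphi_k}=\partial_u\varphi_k=-2\pi i\,\varphi_k$ (all the $\varphi_k$ have index $1$), the correct value is $-2\pi i\,\delta_{jk}$ with no factor $\tfrac{1}{k}$, which matches the way the lemma is used later (the weight $d_\beta$ appearing there equals $1$ in the $\varphi$-chart).
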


\begin{proof}
Let $\Gamma_k^{ij}(x)$, in the coordinates $x_1,..,x_n$, and $\Gamma_{l}^{pq}(y)$ in the coordinates $y_1,..,y_n$, then the transformation law of the Christoffel symbol defined in the cotangent bundle is the following
\begin{equation}
\Gamma_k^{ij}(x)=\frac{\partial x^i}{\partial y^p}\frac{\partial x^j}{\partial y^q}\frac{\partial y^l}{\partial x^k}\Gamma_{l}^{pq}(y)+\frac{\partial x^i}{\partial y^p}\frac{\partial}{\partial x^k}\left(\frac{\partial x^j}{\partial y^q}\right)g^{pq}(y).
\end{equation}
In particular, the $\Gamma_k^{ij}(\varphi)$ in the coordinates $(\varphi_0,\varphi_1,..,\varphi_n,v_{n+1,\tau})$ could be derived from the Christoffel symbol in the coordinates $v_0,v_1,..,v_{n+1},\tau$ which is $0$. Then,
\begin{equation}
\Gamma_k^{ij}(\varphi)=\frac{\partial \varphi_i}{\partial v_p}\frac{\partial}{\partial \varphi_k}\left(\frac{\partial \varphi_j}{\partial v_q}\right)g^{pq}(v).
\end{equation}
Computing $\Gamma_{j}^{iv_{n+1}}$,
\begin{equation}
\begin{split}
\Gamma_k^{i\tau}(\varphi)&=\frac{\partial \varphi_i}{\partial v_p}\frac{\partial}{\partial \varphi_k}\left(\frac{\partial \tau}{\partial v_q}\right)g^{pq}(v)\\
&=-2\pi  \varphi_i\frac{\partial}{\partial \varphi_k}\left(1\right)=0.
\end{split}
\end{equation}
Computing $\Gamma_{j}^{v_{n+1}i}$ by using the first  condition of (\ref{Levi Civita contravariant coxeter chapter}),
\begin{equation}
\begin{split}
\Gamma_j^{\tau k}(\varphi)&=\partial_jg^{k\tau}-\Gamma_{j}^{k\tau}\\
&=\partial_jg^{k\tau}=-2\pi i \frac{\delta_{jk}}{k}.
\end{split}
\end{equation}

\end{proof}

\begin{lemma}\label{Levi Civita vn1}
Let $\varphi_0,,\varphi_1,\varphi_2,..,\varphi_{n},v_{n+1},\tau$ be defined in (\ref{superpotentialAn}), then
\begin{equation}
\begin{split}
&\Gamma_{j}^{iv_{n+1}}=0,\\
&\Gamma_{j}^{v_{n+1}i}=\frac{\partial g^{iv_{n+1}}}{\partial \varphi_j}\in \widetilde E_{\bullet,\bullet}[\varphi_0,\varphi_1,.,\varphi_n],\\
&\Gamma_{v_{n+1}}^{ij}\in \widetilde E_{\bullet,\bullet}[\varphi_0,\varphi_1,.,\varphi_n].
\end{split}
\end{equation}
Moreover, these Christoffel symbols are at most linear on $\varphi_0$.
\end{lemma}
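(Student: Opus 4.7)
The plan is to parallel Lemma \ref{ Levi Civita connection tau}: since the intersection form has constant coefficients in the original coordinates $(u,v_0,\ldots,v_{n+1},\tau)$, its Christoffel symbols vanish there, and the cotangent transformation law collapses to
\begin{equation*}
\Gamma_k^{ij}(\varphi)=\frac{\partial \varphi_i}{\partial v_p}\,\frac{\partial}{\partial \varphi_k}\!\left(\frac{\partial \varphi_j}{\partial v_q}\right)g^{pq}(v).
\end{equation*}
For the first identity, $v_{n+1}$ is a coordinate in both charts, so $\partial v_{n+1}/\partial v_q=\delta_{q,n+1}$ is constant and its $\partial/\partial\varphi_j$ derivative vanishes; hence $\Gamma_j^{iv_{n+1}}=0$. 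The second identity then follows formally: applying the first relation of (\ref{Levi Civita contravariant coxeter chapter}) with the pair of upper indices $(v_{n+1},i)$ and base $j$ gives $\partial_j g^{v_{n+1}i}=\Gamma_j^{v_{n+1}i}+\Gamma_j^{iv_{n+1}}$, and the second summand already vanishes. Membership in $\widetilde E_{\bullet,\bullet}[\varphi_0,\ldots,\varphi_n]$ is immediate from Lemma \ref{coefficient g varphi ring} together with the closure of the extended ring under the derivations $\partial/\partial\varphi_j$.

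For the third identity, I would apply the transformation formula with lower index $k=v_{n+1}$ and expand $\partial(\partial\varphi_j/\partial v_q)/\partial v_{n+1}$ through the change of variables. The quantities $\partial\varphi_j/\partial v_{n+1}$ have been computed explicitly in Lemma \ref{lemma of derivatives with respect vn1}, and the only non-Jacobi ingredient that enters is the quasi-elliptic logarithmic derivative $\theta_1'((n+1)v_{n+1})/\theta_1((n+1)v_{n+1})$ — precisely the generator adjoined when forming $\widetilde E_{\bullet,\bullet}$. Since $g^{pq}(v)$ is constant in the $v$-chart, the full expression stays in $\widetilde E_{\bullet,\bullet}[\varphi_0,\ldots,\varphi_n]$.

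Linearity in $\varphi_0$ is obtained as in Lemma \ref{diagonal Christoffel symbol}: Corollary \ref{main corollary eta} gives $\partial_0^2 g^{ij}=0$, whence $\partial_0^2(\Gamma_k^{ij}+\Gamma_k^{ji})=\partial_0^2\partial_k g^{ij}=0$ via the first relation of (\ref{Levi Civita contravariant coxeter chapter}); combined with the symmetry $g^{is}\Gamma_s^{jk}=g^{js}\Gamma_s^{ik}$ from (\ref{Levi Civita contravariant coxeter chapter}), one can isolate each individual Christoffel symbol and conclude that each is at most linear in $\varphi_0$. The delicate step is the third identity — checking that the $v_{n+1}$-derivative taken in the $\varphi$-chart produces nothing outside $\widetilde E_{\bullet,\bullet}$. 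This is exactly the point at which the logarithmic-derivative generator $\theta_1'((n+1)v_{n+1})/\theta_1((n+1)v_{n+1})$ is unavoidable, so enlarging $E_{\bullet,\bullet}$ to $\widetilde E_{\bullet,\bullet}$ is not a convenience but a necessity.
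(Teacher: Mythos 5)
Your treatment of the first two identities coincides with the paper's: the vanishing of the Christoffel symbols in the $v$-chart collapses the transformation law, $\partial v_{n+1}/\partial v_q=\delta_{q,n+1}$ kills $\Gamma_j^{iv_{n+1}}$, and the first relation of (\ref{Levi Civita contravariant coxeter chapter}) then yields $\Gamma_j^{v_{n+1}i}=\partial_j g^{iv_{n+1}}$, whose membership in $\widetilde E_{\bullet,\bullet}[\varphi_0,\ldots,\varphi_n]$ and linearity in $\varphi_0$ follow from Lemma \ref{coefficient g varphi ring} and from $\partial_0^2 g^{iv_{n+1}}=\partial_0\eta^{iv_{n+1}}=0$. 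Your sketch of the third identity via the explicit formulas of Lemma \ref{lemma of derivatives with respect vn1} is also the paper's route.

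The gap is in your final paragraph, where you derive linearity in $\varphi_0$ of $\Gamma_{v_{n+1}}^{ij}$ from $\partial_0^2\bigl(\Gamma_k^{ij}+\Gamma_k^{ji}\bigr)=0$ "combined with the symmetry $g^{is}\Gamma_s^{jk}=g^{js}\Gamma_s^{ik}$" to "isolate each individual Christoffel symbol". The first relation of (\ref{Levi Civita contravariant coxeter chapter}) controls only the part symmetric in the upper indices, and $\Gamma_{v_{n+1}}^{ij}$ is not symmetric under $i\leftrightarrow j$; the second relation does not disentangle the two summands either, because it contracts the Christoffel symbols with $g^{is}$, which is itself $\varphi_0$-dependent, so applying $\partial_0^2$ produces cross terms $2\eta^{is}\partial_0\Gamma_s^{jk}$ that do not vanish a priori. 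This is precisely why the paper does not argue abstractly here: it proves linearity of $\Gamma_{v_{n+1}}^{ij}$ by carrying out the case analysis $j>1$, $j=1$, $j=0$ with the identities (\ref{derivative of varphij wrt vn1}), (\ref{derivative of varphi1 wrt vn1}), (\ref{derivative of varphi0 wrt vn1}), which express $\Gamma_{v_{n+1}}^{ij}$ as a combination of $g^{i(j-1)}$, $g^{in}$, $g^{i0}$, $g^{i1}$ and of products $\varphi_i\varphi_n$, $\varphi_i\varphi_1$, $\varphi_i\varphi_0$ with coefficients in $\widetilde E_{\bullet,\bullet}$ — each term manifestly at most linear in $\varphi_0$. (Note also that the general linearity of $\Gamma_k^{ij}$ is only established later, in Lemma \ref{Levi Civita connection flat pencil}, by a separate grading argument that itself cites the present lemma for the case $k=v_{n+1}$; invoking it here would be circular.) To close your proof, replace the isolation argument by the explicit computation you already set up for the ring-membership claim: the same three-case expansion delivers the linearity at no extra cost.
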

\begin{proof}
Let $\Gamma_k^{ij}(x)$, in the coordinates $x_1,..,x_n$, and $\Gamma_{l}^{pq}(y)$ in the coordinates $y_1,..,y_n$, then the transformation law of the Christoffel symbol  defined in the cotangent bundle is the following
\begin{equation}
\Gamma_k^{ij}(x)=\frac{\partial x^i}{\partial y^p}\frac{\partial x^j}{\partial y^q}\frac{\partial y^l}{\partial x^k}\Gamma_{l}^{pq}(y)+\frac{\partial x^i}{\partial y^p}\frac{\partial}{\partial x^k}\left(\frac{\partial x^j}{\partial y^q}\right)g^{pq}(y).
\end{equation}
In particular, the $\Gamma_k^{ij}(\varphi)$ in the coordinates $(\varphi_0,\varphi_1,..,\varphi_n,v_{n+1,\tau})$ could be derived from the Christoffel symbol in the coordinates $v_0,v_1,..,v_{n+1},\tau$ which is $0$. Then,
\begin{equation}
\Gamma_k^{ij}(\varphi)=\frac{\partial \varphi_i}{\partial v_p}\frac{\partial}{\partial \varphi_k}\left(\frac{\partial \varphi_j}{\partial v_q}\right)g^{pq}(v).
\end{equation}
Computing $\Gamma_{k}^{iv_{n+1}}$,
\begin{equation}
\begin{split}
\Gamma_k^{iv_{n+1}}(\varphi)&=\frac{\partial \varphi_i}{\partial v_p}\frac{\partial}{\partial \varphi_k}\left(\frac{\partial v_{n+1}}{\partial v_q}\right)g^{pq}(v)\\
&=\frac{\partial \varphi_i}{\partial v_{n+1}}\frac{\partial}{\partial \varphi_k}\left(1\right)g^{v_{n+1}v_{n+1}}(v)=0.
\end{split}
\end{equation}
Computing $\Gamma_{k}^{v_{n+1}i}$ by using the first  condition of (\ref{Levi Civita contravariant coxeter chapter}),
\begin{equation}
\begin{split}
\Gamma_k^{v_{n+1}i}(\varphi)&=\partial_kg^{iv_{n+1}}-\Gamma_{j}^{iv_{n+1}}\\
&=\partial_kg^{iv_{n+1}}.
\end{split}
\end{equation}
Since, $g^{iv_{n+1}}\in \widetilde E_{\bullet,\bullet}[\varphi_0,\varphi_1,.,\varphi_n]$, we have that $\Gamma_k^{v_{n+1}i}(\varphi) \in \widetilde E_{\bullet,\bullet}[\varphi_0,\varphi_1,.,\varphi_n]$. In addition, since the metric $\eta^{iv_{n+1}}$ is independent of $\varphi_0$ due to the corollary \ref{main corollary eta}, we have that $\partial_kg^{iv_{n+1}}$ is at most linear on $\varphi_0$\\
Computing $\Gamma_{v_{n+1}}^{ij}$,
\begin{equation}
\begin{split}
\Gamma_{v_{n+1}}^{ij}(\varphi)&=\frac{\partial \varphi_i}{\partial v_p}\frac{\partial}{\partial v_{n+1}}\left(\frac{\partial \varphi_j}{\partial v_q}\right)g^{pq}(v)\\
&=\frac{\partial \varphi_i}{\partial v_p}\frac{\partial}{\partial v_q }\left(\frac{\partial \varphi_j}{\partial  v_{n+1}}\right)g^{pq}(v)\\
\end{split}
\end{equation}
 In the subsequent computation, whenever appears a function depend only on $v_{n+1},\tau$, we will call it by $h(v_{n+1},\tau)$, because for our purpose, it is enough to prove that the subsequent function belong to the ring $ \widetilde E_{\bullet,\bullet}[\varphi_0,\varphi_1,.,\varphi_n]$.\\
 If $j>1$, using equation (\ref{derivative of varphij wrt vn1}),
\begin{equation}
\begin{split}
\Gamma_{v_{n+1}}^{ij}(\varphi)&=\frac{\partial \varphi_i}{\partial v_p}\frac{\partial}{\partial v_q }\left(\frac{\partial \varphi_j}{\partial  v_{n+1}}\right)g^{pq}(v)\\
&=\frac{\partial \varphi_i}{\partial v_p}\frac{\partial}{\partial v_q }\left( \varphi_{j-1}+h(v_{n+1},\tau )\varphi_n    \right)g^{pq}(v)\\
&=\frac{\partial \varphi_i}{\partial v_p}\frac{\partial}{\partial v_q }\left( \varphi_{j-1}+h(v_{n+1},\tau)\varphi_n    \right)g^{pq}(v)\\
&=\frac{\partial \varphi_i}{\partial v_p}\frac{\partial}{\partial v_q }\left( \varphi_{j-1} \right)g^{pq}(v)+h(v_{n+1},\tau)\frac{\partial \varphi_i}{\partial v_p}\frac{\partial}{\partial v_q }\left( \varphi_n    \right)g^{pq}(v)\\
&+\varphi_n\frac{\partial \varphi_i}{\partial v_{n+1}}\frac{\partial}{\partial v_{n+1} }\left(h(v_{n+1},\tau)    \right)g^{v_{n+1}v_{n+1}}(v)\\
&=g^{i(j-1)}(\varphi)+h(v_{n+1},\tau)g^{in}(\varphi_n)+\frac{h^{\prime}(v_{n+1},\tau)}{n(n+1)}\left( \varphi_i+g(v_{n+1},\tau)\varphi_n\right )\varphi_n.
\end{split}
\end{equation}
If $j=1$, using equation (\ref{derivative of varphi1 wrt vn1})
\begin{equation}
\begin{split}
\Gamma_{v_{n+1}}^{i1}(\varphi)&=\frac{\partial \varphi_i}{\partial v_p}\frac{\partial}{\partial v_q }\left(\frac{\partial \varphi_1}{\partial  v_{n+1}}\right)g^{pq}(v)\\
&=\frac{\partial \varphi_i}{\partial v_p}\frac{\partial}{\partial v_q }\left( n\varphi_{0}+h(v_{n+1},\tau )\varphi_1    \right)g^{pq}(v)\\
&=\frac{\partial \varphi_i}{\partial v_p}\frac{\partial}{\partial v_q }\left( n\varphi_0+h(v_{n+1},\tau)\varphi_1    \right)g^{pq}(v)\\
&=\frac{\partial \varphi_i}{\partial v_p}\frac{\partial}{\partial v_q }\left( n\varphi_{0} \right)g^{pq}(v)+h(v_{n+1},\tau)\frac{\partial \varphi_i}{\partial v_p}\frac{\partial}{\partial v_q }\left( \varphi_1    \right)g^{pq}(v)\\
&+\varphi_1\frac{\partial \varphi_i}{\partial v_{n+1}}\frac{\partial}{\partial v_{n+1} }\left(h(v_{n+1},\tau)    \right)g^{v_{n+1}v_{n+1}}(v)\\
&=ng^{i0}(\varphi)+h(v_{n+1},\tau)g^{i1}(\varphi_1)+\frac{h^{\prime}(v_{n+1},\tau)}{n(n+1)}\left( \varphi_i+h_3(v_{n+1},\tau)\varphi_n\right )\varphi_1.
\end{split}
\end{equation}
If $j=0$, using equation (\ref{derivative of varphi0 wrt vn1})
\begin{equation}
\begin{split}
\Gamma_{v_{n+1}}^{i0}(\varphi)&=\frac{\partial \varphi_i}{\partial v_p}\frac{\partial}{\partial v_q }\left(\frac{\partial \varphi_0}{\partial  v_{n+1}}\right)g^{pq}(v)\\
&=\frac{\partial \varphi_i}{\partial v_p}\frac{\partial}{\partial v_q }\left( -nh_1(v_{n+1},\tau)\varphi_{0}+h_2(v_{n+1},\tau )\varphi_1    \right)g^{pq}(v)\\
&=\frac{\partial \varphi_i}{\partial v_p}\frac{\partial}{\partial v_q }\left( -nh_1(v_{n+1},\tau)\varphi_0+h_2(v_{n+1},\tau)\varphi_1    \right)g^{pq}(v)\\
&=\frac{\partial \varphi_i}{\partial v_p}\frac{\partial}{\partial v_q }\left( -nh_1(v_{n+1},\tau)\varphi_{0} \right)g^{pq}(v)+h_2(v_{n+1},\tau)\frac{\partial \varphi_i}{\partial v_p}\frac{\partial}{\partial v_q }\left( \varphi_1    \right)g^{pq}(v)\\
&+\varphi_1\frac{\partial \varphi_i}{\partial v_{n+1}}\frac{\partial}{\partial v_{n+1} }\left(h_2(v_{n+1},\tau)    \right)g^{v_{n+1}v_{n+1}}(v)\\
&=-nh_1(v_{n+1},\tau)g^{i0}(\varphi)+h_2(v_{n+1},\tau)g^{i1}(\varphi_n)+\frac{h_2^{\prime}(v_{n+1},\tau)}{n(n+1)}\left( \varphi_i+h_3(v_{n+1},\tau)\varphi_n\right )\varphi_1\\
&-n\frac{h_1^{\prime}(v_{n+1},\tau)}{n(n+1)}\left( \varphi_i+h_3(v_{n+1},\tau)\varphi_n\right )\varphi_0.
\end{split}
\end{equation}
Hence $\Gamma_{v_{n+1}}^{ij}(\varphi) \in  \widetilde E_{\bullet,\bullet}[\varphi_0,\varphi_1,.,\varphi_n]$, furthermore, it is at most linear on $\varphi_0$.

\end{proof}

\begin{proposition}\label{Levi Civita proposition}
The Christoffel symbols $\Gamma_k^{ij}(\varphi)$ belong to the ring $\widetilde E_{\bullet,\bullet}[\varphi_0,\varphi_1,..,\varphi_n]$.
\end{proposition}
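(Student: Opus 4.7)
The strategy is to reduce to the case where all three indices $i,j,k$ lie in $\{0,1,\dots,n\}$, since the cases involving $\tau$ or $v_{n+1}$ are already handled by Lemmas \ref{ Levi Civita connection tau} and \ref{Levi Civita vn1}. For these remaining ``pure $\varphi$-indexed'' Christoffel symbols $\Gamma_{\varphi_k}^{\varphi_i\varphi_j}$, my plan is to combine the two Levi-Civita compatibility equations (\ref{Levi Civita contravariant coxeter chapter}) with the data already gathered. The first equation gives
$$\Gamma_{\varphi_k}^{\varphi_i\varphi_j}+\Gamma_{\varphi_k}^{\varphi_j\varphi_i}=\partial_{\varphi_k} g^{\varphi_i\varphi_j},$$
which by Lemma \ref{coefficient g varphi ring} immediately places the symmetric part in $\widetilde{E}_{\bullet,\bullet}[\varphi_0,\dots,\varphi_n]$, since $\partial/\partial\varphi_k$ acts as an ordinary polynomial derivation on that ring and does not introduce new $v_{n+1}$ or $\tau$ dependence.

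For the antisymmetric part, the idea is to apply the second compatibility condition $g^{\alpha s}\Gamma_s^{\beta\gamma}=g^{\beta s}\Gamma_s^{\alpha\gamma}$ with $\alpha$ specialized to $\tau$ and $v_{n+1}$, so that the right-hand side involves only Christoffel symbols already known to lie in the ring. Using the index-one identity $\partial_u\varphi_s=-2\pi i\varphi_s$, one computes $g^{\tau,\varphi_s}=-2\pi i\,\varphi_s$, and the $\Gamma_s^{\tau,\varphi_k}$ on the right-hand side is obtained from Lemma \ref{ Levi Civita connection tau}; this yields a scalar constraint $\sum_\ell\varphi_\ell\,\Gamma_{\varphi_\ell}^{jk}=g^{jk}$ with right-hand side in $\widetilde{E}_{\bullet,\bullet}[\varphi_0,\dots,\varphi_n]$. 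A parallel computation with $\alpha=v_{n+1}$, exploiting the derivatives $\partial\varphi_j/\partial v_{n+1}$ from Lemma \ref{lemma of derivatives with respect vn1} and the values of $\Gamma_s^{v_{n+1},\varphi_k}$ from Lemma \ref{Levi Civita vn1}, produces a second independent linear relation with coefficients in the ring. These relations, paired with the symmetric-part equation, provide the building blocks for recovering each $\Gamma_{\varphi_\ell}^{jk}$.

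The main obstacle is that the scalar specializations above give only summed constraints, which cannot by themselves separate the individual unknowns $\Gamma_{\varphi_\ell}^{jk}$. My plan to overcome this is a graded/inductive argument based on Corollary \ref{main corollary eta}: since $g^{ij}$ is at most linear in $\varphi_0$, and Lemma \ref{diagonal Christoffel symbol} shows the analogous linearity for $\Gamma_k^{ii}$, one can decompose $\Gamma_{\varphi_k}^{ij}=\Gamma_{\varphi_k}^{ij,(0)}+\varphi_0\,\Gamma_{\varphi_k}^{ij,(1)}$ and match the two compatibility conditions order-by-order in $\varphi_0$. At the top $\varphi_0$-order the equations reduce to conditions on the Saito metric $\eta^{*}$ whose coefficients are explicitly computed in Theorem \ref{main lemma coefficients eta in varphi coordinates}, and the non-degeneracy of $\eta^{*}$ from Corollary \ref{corollary non degenerate} makes this truncated system uniquely solvable within $\widetilde{E}_{\bullet,\bullet}[\varphi_0,\dots,\varphi_n]$; back-substituting into the lower $\varphi_0$-order then closes the induction and shows that no denominators outside the ring appear.
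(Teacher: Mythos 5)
Your route is genuinely different from the paper's, and it has a gap at the decisive step. The first equation of (\ref{Levi Civita contravariant coxeter chapter}) controls only the part of $\Gamma_k^{ij}$ symmetric in $(i,j)$, and specializing the second equation to $\alpha=\tau$ and $\alpha=v_{n+1}$ gives, for each fixed pair $(j,k)$, just two scalar relations on the $n+1$ unknowns $\Gamma_{\varphi_0}^{jk},\dots,\Gamma_{\varphi_n}^{jk}$ (and your reduction also leaves $\Gamma_{\tau}^{ij}$ untouched, since the quoted lemmas treat $\tau$ and $v_{n+1}$ only as \emph{upper} indices, except for $\Gamma_{v_{n+1}}^{ij}$). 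Your proposed repair by expanding $\Gamma_k^{ij}=\Gamma_k^{ij,(0)}+\varphi_0\,\Gamma_k^{ij,(1)}$ fails on two counts. First, it presupposes that $\Gamma_k^{ij}$ is polynomial, indeed at most linear, in $\varphi_0$; at this stage that is known only for the diagonal symbols (Lemma \ref{diagonal Christoffel symbol}), and the general statement is Lemma \ref{Levi Civita connection flat pencil}, which is proved \emph{after}, and \emph{using}, the present proposition -- so the argument is circular. Second, even granting the decomposition, solving the resulting linear system means inverting the matrix $\eta^{is}$ (or $g^{is}$) over the ring: non-degeneracy (Corollary \ref{corollary non degenerate}) gives a unique solution in the fraction field, but by Theorem \ref{main lemma coefficients eta in varphi coordinates} the matrix $\eta^{*}(d\varphi_i,d\varphi_j)$ is anti-triangular with anti-diagonal entries proportional to $\varphi_n$, so its determinant is a power of $\varphi_n$ and Cramer's rule only places the solution in $\widetilde E_{\bullet,\bullet}[\varphi_0,\dots,\varphi_n,\varphi_n^{-1}]$. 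The absence of the denominator $\varphi_n^{-1}$ is essentially the content of the proposition, so it cannot be extracted from unique solvability alone.

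The paper's proof avoids this linear algebra entirely and runs through equivariance. Since the $\varphi_i$ are Jacobi forms, the differentials $d\varphi_i$ are invariant under the translation part of $\Ja(\tilde A_n)$ and transform with an inhomogeneous modular law under $SL_2(\mathbb{Z})$; hence, after twisting the generators by powers of the Dedekind eta function, $\varphi_i\mapsto\hat\varphi_i=\eta^{2i}(\tau)\varphi_i$, the connection coefficients $\hat\Gamma_k^{ij}$ of the invariant metric become honest Jacobi forms and therefore lie in $E_{\bullet,\bullet}[\varphi_0,\dots,\varphi_n]$ by the Chevalley Theorem \ref{chevalley}. Untwisting produces correction terms built only from $g_1(\tau)$, the $\varphi_i$, the $g^{ij}$ and the $\Gamma_k^{\tau i}$, all of which already lie in $\widetilde E_{\bullet,\bullet}[\varphi_0,\dots,\varphi_n]$. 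If you wish to keep your strategy, you would need an independent argument excluding $\varphi_n$-denominators; the covariance-plus-Chevalley argument is what supplies exactly that.
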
 
\begin{proof}
Note that the invariance of the Jacobi form $\varphi_i$ with respect the first two actions of (\ref{jacobigroupAntilde}), and equivariance by the third one implies that the differential $d\varphi_i$ is invariant under the first two actions of (\ref{jacobigroupAntilde}), and behaves as follows under the $SL_2(\mathbb{Z})$
\begin{equation}
\begin{split}
d\varphi_i\mapsto \frac{d\varphi_i}{(c\tau+d)^{k_i}}-\frac{c\varphi_i}{(c\tau+d)^{k_i+1}}
\end{split}
\end{equation}
Therefore the Christoffel symbol $\Gamma^{ij}_k$
\begin{equation}
\nabla_{(d\varphi_i)^{\#}}d\varphi_j=\Gamma_k^{ij}d\varphi_k
\end{equation}
is a Jacobi form if $\varphi_i$ has weight $0$. Hence, doing the change of coordinates
\begin{equation}
\varphi_i\mapsto\hat\varphi_i:=\eta^{2i}(\tau)\varphi_i,
\end{equation}
we have that the Christoffel symbol $\hat \Gamma^{ij}_k$
\begin{equation}
\frac{1}{\eta^{2i+2j}}\nabla_{(d\hat\varphi_i)^{\#}}d\hat\varphi_j=\hat\Gamma_k^{ij}d\hat\varphi_k
\end{equation}
is a Jacobi form.\\

Comparing $\hat\Gamma_k^{ij}$ with $\Gamma_k^{ij}$
\begin{equation}\label{big equation Levi Civita}
\begin{split}
\nabla_{(d\hat\varphi_j)^{\#}}d\hat\varphi_i&=\nabla_{\left(2jg_1\eta^{2j}\varphi_jd\tau+\eta^{2j}d\varphi_j\right)^{\#}}\left(2ig_1\eta^{2i}\varphi_id\tau+\eta^{2i}d\varphi_i    \right)\\
&=\nabla_{\left(2jg_1\eta^{2j}\varphi_jd\tau\right)^{\#}}\left(2ig_1\eta^{2i}\varphi_id\tau    \right)+\nabla_{\left(2jg_1\eta^{2j}\varphi_jd\tau\right)^{\#}}\left(\eta^{2i}d\varphi_i    \right)\\
&+\nabla_{\left(\eta^{2j}d\varphi_j\right)^{\#}}\left(2ig_1\eta^{2i}\varphi_id\tau   \right)+\nabla_{\left(\eta^{2j}d\varphi_j\right)^{\#}}\left(\eta^{2i}d\varphi_i    \right)\\
&=2jg_1\eta^{2j}\varphi_jg^{l\tau}\nabla_{\frac{\partial}{\partial\varphi_l}}\left(2i\eta^{2i}g_1\varphi_id\tau    \right)+2jg_1\eta^{2j}\varphi_jg^{l\tau}\nabla_{\frac{\partial}{\partial\varphi_l}}\left(\eta^{2i}d\varphi_i    \right)\\
&+\eta^{2j}g^{lj}\nabla_{\frac{\partial}{\partial\varphi_l}}\left(2ig_1\eta^{2i}\varphi_id\tau   \right)+\eta^{2j}g^{lj}\nabla_{\frac{\partial}{\partial\varphi_l}}\left(\eta^{2i}d\varphi_i    \right)\\
&=4ijg_1^{\prime}g_1\varphi_i\eta^{2i+2j}\varphi_jg^{\tau\tau}d\tau  +4i^2 jg_1^3\eta^{2i+2j}\varphi_j\varphi_ig^{\tau\tau}d\tau+4ijg_1\eta^{2i+2j}\varphi_jg^{i\tau}d\tau\\
&+4ijg_1^2\eta^{2i+2j}\varphi_jg^{\tau\tau}d\varphi_i  +2jg_1\eta^{2i+2j}\varphi_j\Gamma^{\tau i}_{k}d\varphi_k+4i^2g_1^2\eta^{2i+2j}\varphi_ig^{\tau j}d\tau \\
&+2ig_1^{\prime}\eta^{2i+2j}\varphi_ig^{lj}d\tau +2ig_1\eta^{2i+2j}g^{ij}d\tau +2ig_1\varphi_i\eta^{2i+2j}\Gamma^{i\tau}_kd\varphi_k\\
&+\eta^{2i+2j}g_1g^{l\tau}d\varphi_i+\eta^{2i+2j}\Gamma_k^{ji}d\varphi_k.
\end{split}
\end{equation}
Dividing the equation (\ref{big equation Levi Civita}) by $\eta^{2i+2j}$ and isolating $\Gamma_k^{ji}d\varphi_k$, we have
\begin{equation}\label{ comparison between Christoffel symbols}
\begin{split}
\Gamma_k^{ji}d\varphi_k&=-4ijg_1^{\prime}g_1\varphi_i\varphi_jg^{\tau\tau}d\tau -4i^2 jg_1^3\varphi_j\varphi_ig^{\tau\tau}d\tau+4ijg_1\eta^{2i+2j}\varphi_jg^{i\tau}d\tau\\
&-4ijg_1^2\varphi_jg^{\tau\tau}d\varphi_i  -2jg_1\varphi_j\Gamma^{\tau i}_{k}d\varphi_k-4i^2g_1^2\varphi_ig^{\tau j}d\tau \\
&-2ig_1^{\prime}\varphi_ig^{lj}d\tau-2ig_1\eta^{2i+2j}g^{ij}d\tau -2ig_1\varphi_i\Gamma^{i\tau}_kd\varphi_k\\
&-g_1g^{l\tau}d\varphi_i+\hat\Gamma_k^{ji}d\varphi_k.
\end{split}
\end{equation}
Since the differential forms has a vector space structure and the right hand side of  (\ref{ comparison between Christoffel symbols}) depends only on $g^{ij},g_1(\tau),\varphi_i,$ and $\Gamma_k^{\tau i}$ which belongs to the ring $\widetilde E_{\bullet,\bullet}[\varphi_0,\varphi_1,..,\varphi_n]$, the desired result is proved.

\end{proof}

\begin{lemma}\label{Levi Civita connection flat pencil}
The Christoffel symbols $\Gamma_k^{ij}(\varphi)$  depend linearly on $\varphi_0$.
\end{lemma}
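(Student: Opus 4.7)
The plan is to combine three ingredients already prepared in the paper. First, Corollary \ref{main corollary eta} together with Theorem \ref{main lemma coefficients eta in varphi coordinates} show that the Saito coefficients $\eta^{ij}=\partial g^{ij}/\partial\varphi_0$ are independent of $\varphi_0$, so $g^{ij}$ is itself at most linear in $\varphi_0$. Second, Proposition \ref{Levi Civita proposition} places every $\Gamma_k^{ij}$ in $\widetilde E_{\bullet,\bullet}[\varphi_0,\ldots,\varphi_n]$. Third, the Euler vector field $E=-\frac{1}{2\pi i}\partial_u$ gives this ring a $\mathbb{Z}_{\ge 0}$-grading: each $\varphi_i$ has index $1$ while elements of $\widetilde E_{\bullet,\bullet}$ sit in index $0$, so a homogeneous element of index $m$ must be a polynomial of degree exactly $m$ in $\varphi_0,\ldots,\varphi_n$. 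Since $\Gamma_k^{ij}$ has Euler index $m_i+m_j-m_k$, with $m_k=1$ for $k\in\{0,\ldots,n\}$ and $m_k=0$ for $k\in\{v_{n+1},\tau\}$, the grading directly bounds its $\varphi_0$-degree in most cases.

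With this preparation the conclusion is immediate in most cases. If $k\in\{0,\ldots,n\}$ then $\Gamma_k^{ij}$ has index $\le 1$ and is therefore at most linear in the generators, in particular linear in $\varphi_0$. If $k=v_{n+1}$ the statement is already contained in Lemma \ref{Levi Civita vn1}. The only delicate case is $k=\tau$ with $i,j\in\{0,\ldots,n\}$, where the Euler grading only bounds the polynomial degree by $2$ and therefore leaves open a possible $\varphi_0^2$ contribution.

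To rule out this $\varphi_0^2$ term, I will use the Levi--Civita symmetry $g^{is}\Gamma_s^{jk}=g^{js}\Gamma_s^{ik}$ from (\ref{Levi Civita contravariant coxeter chapter}) together with the very sparse $\tau$-row of the intersection form: from (\ref{metrich1n cotangent}) one has $g^{\tau s}=2\pi i\,\varphi_s$ for $s\in\{0,\ldots,n\}$ and $g^{\tau v_{n+1}}=g^{\tau\tau}=0$. Setting $i\to\tau$ in the symmetry and singling out the $s=\tau$ term yields an identity that expresses $\varphi_i\,\Gamma_\tau^{jk}-\varphi_j\,\Gamma_\tau^{ik}$ as a combination of products $g^{\bullet s}\Gamma_s^{\bullet k}$ with $s\ne\tau$. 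By the first case together with Lemma \ref{Levi Civita vn1}, every such product is a product of two factors each linear in $\varphi_0$, so its $\varphi_0^3$-coefficient vanishes. Extracting the $\varphi_0^3$-coefficient of the identity, and combining with the symmetric relation $\Gamma_\tau^{ij}+\Gamma_\tau^{ji}=\partial_\tau g^{ij}$ (whose right-hand side is again linear in $\varphi_0$), the putative leading coefficients $\alpha^{ij}$ of $\varphi_0^2$ in $\Gamma_\tau^{ij}$ are forced to satisfy both $\alpha^{ij}+\alpha^{ji}=0$ and $\delta_{i0}\,\alpha^{jk}=\delta_{j0}\,\alpha^{ik}$; specialising to $i=0\ne j$ and then $i=j=0$ gives $\alpha^{ij}\equiv 0$.

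The main obstacle in the plan is precisely this $k=\tau$ case, since the Euler weight alone does not bound the $\varphi_0$-degree by $1$ there. The argument goes through because the $\tau$-row of the intersection form is as simple as possible --- each non-vanishing entry is proportional to a single generator $\varphi_s$ --- so that the Levi--Civita symmetry degenerates into a linear system on the would-be $\varphi_0^2$-coefficients of $\Gamma_\tau^{ij}$ whose only solution is zero.
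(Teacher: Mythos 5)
Your proof is correct, and a genuinely different route is taken at the one place where the lemma is actually delicate. For $k\in\{0,\dots,n\}$ your Euler-grading argument (index $m_i+m_j-m_k\le 1$, hence at most linear in the generators) and for $k=v_{n+1}$ the appeal to Lemma \ref{Levi Civita vn1} coincide with the paper. For $k=\tau$, where the index is $2$ and a $\varphi_0^2$ term must be excluded, the paper proceeds by a case analysis: the weight grading reduces the problem to the finite list $\Gamma_\tau^{ij}$ with $i+j\le 4$, most of which are killed by inspecting the lowest-order Taylor expansion of the Jacobi forms in the $v$-variables via the elementary symmetric polynomials of (\ref{relation Coxeter Bertola Jacobi forms}) and (\ref{relationship between Jacobi forms}), with $\Gamma_\tau^{01},\Gamma_\tau^{10}$ handled separately through the symmetry $g^{is}\Gamma_s^{jk}=g^{js}\Gamma_s^{ik}$ and a division by $\varphi_0$ in (\ref{equation Christoffel symbol almost last equation}). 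You instead exploit that same symmetry uniformly for all $i,j,k\in\{0,\dots,n\}$: since $g^{i\tau}$ is proportional to $\varphi_i$ and $g^{\tau v_{n+1}}=g^{\tau\tau}=0$, isolating the $s=\tau$ terms expresses $\varphi_i\Gamma_\tau^{jk}-\varphi_j\Gamma_\tau^{ik}$ through products of quantities already known to be at most linear in $\varphi_0$, so the $\varphi_0^3$-coefficient gives $\delta_{i0}\alpha^{jk}=\delta_{j0}\alpha^{ik}$ for the putative leading coefficients, while $\partial_\tau g^{ij}=\Gamma_\tau^{ij}+\Gamma_\tau^{ji}$ (right-hand side linear in $\varphi_0$) gives $\alpha^{ij}=-\alpha^{ji}$; taking $i=0\ne j$ and then using antisymmetry for $\alpha^{0k}$ and $\alpha^{00}$ forces $\alpha\equiv 0$. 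This is cleaner and more uniform than the paper's argument: it avoids the Taylor-expansion computation entirely, subsumes the paper's special treatment of the diagonal entries and of $\Gamma_\tau^{01},\Gamma_\tau^{10}$, and never divides by $\varphi_0$. Two cosmetic points only: the phrase ``setting $i\to\tau$ in the symmetry'' does not match the identity you actually use (you keep $i,j$ as $\varphi$-indices and extract the $s=\tau$ contributions), and the sign is $g^{\tau s}=-2\pi i\,\varphi_s$; neither affects the argument. As in the paper, the well-definedness of the coefficients $\alpha^{ij}$ rests on the algebraic independence of $\varphi_0,\dots,\varphi_n$ over $\widetilde E_{\bullet,\bullet}$.
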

\begin{proof}
The result was already proved, when $k=v_{n+1}$ in the lemma \ref{Levi Civita vn1}. The proposition \ref{Levi Civita proposition} gives to the space of Christoffel symbols the structure of graded algebra, in particular we can compute the degree $m$ regarding to the algebra of Jacobi forms.  Let $\phi \in \widetilde E_{\bullet,\bullet}[\varphi_0,\varphi_1,..,\varphi_n]$ with index $m_{\phi}$ and weight $k_{\phi}$, then we write
\begin{equation}
\begin{split}
deg_m\phi&=m_{\phi}, \\
deg_k\phi&=k_{\phi}. \\
\end{split}
\end{equation}
If $k\neq \tau,v_{n+1}$,
\begin{equation}
\begin{split}
deg_m \Gamma_k^{ij}=deg_m  \left(\frac{\partial \varphi_i}{\partial v_p}\frac{\partial}{\partial \varphi_k}\left(\frac{\partial \varphi_j}{\partial v_q}\right)g^{pq}(v)     \right)=1.
\end{split}
\end{equation}
Therefore, $\Gamma_k^{ij}$ is at most linear on $\varphi_0$. If $k=\tau$,
\begin{equation*}
\begin{split}
deg_k \Gamma_\tau^{ij}=deg_k  \left(\frac{\partial \varphi_i}{\partial v_p}\frac{\partial}{\partial\tau}\left(\frac{\partial \varphi_j}{\partial v_q}\right)g^{pq}(v)     \right)=-i-j+4.
\end{split}
\end{equation*}
Suppose that $\Gamma_\tau^{ij}$ contains a the term $a(v_{n+1},\tau)\varphi_0^2$, where $a(v_{n+1},\tau)$ is an elliptic function in $v_{n+1}$, then
\begin{equation*}
\begin{split}
deg_k a(v_{n+1},\tau)=-i-j+4>0.
\end{split}
\end{equation*}
The possible Christoffel symbols that could depend on $\varphi_0^2$ are
\begin{equation}\label{Levi Civita connection to be computed}
\begin{split}
\Gamma_{\tau}^{04}, \Gamma_{\tau}^{40},\Gamma_{\tau}^{13},\Gamma_{\tau}^{31}, \Gamma_{\tau}^{22}, \Gamma_{\tau}^{21}, \Gamma_{\tau}^{12},\Gamma_{\tau}^{20},\Gamma_{\tau}^{02},\Gamma_{\tau}^{11}, \Gamma_{\tau}^{10}, \Gamma_{\tau}^{01}, \Gamma_{\tau}^{00}.
\end{split}
\end{equation}
But $\Gamma_{\tau}^{22}, \Gamma_{\tau}^{11}, \Gamma_{\tau}^{00}$ is linear on $\varphi_0$ due to lemma \ref{diagonal Christoffel symbol}.\\
Computing $\Gamma_{\tau}^{ij}$
\begin{equation}\label{Christoffel symbol tau 1}
\begin{split}
\Gamma_{\tau}^{ij}&=\frac{\partial \varphi_i}{\partial v_p}\frac{\partial}{\partial \tau}\left(\frac{\partial \varphi_j}{\partial v_q}\right)g^{pq}(v)=\frac{\partial \varphi_i}{\partial v_p}\frac{\partial}{\partial v_q}\left(\frac{\partial \varphi_j}{\partial \tau}\right)g^{pq}(v)\\
\end{split}
\end{equation}
In order to compute it, recall there exist a relationship between the holomorphic Jacobi forms of $A_{n+1}$ type and the meromorphic Jacobi forms of $\tilde A_n$ type given by (\ref{relationship between Jacobi forms}). Moreover, in (\ref{relation Coxeter Bertola Jacobi forms}) it was demonstrated that the lowest degree term in the Taylor expansion of $\varphi_{i}^{\Ja(A_{n+1})}$ with respect the variables $v_0,v_1,..,v_{n+1}$ are the elementary symmetric polynomials $a_i(v_0,v_1,..,v_{n+1})$ of degree $i$.
Hence, using the equations (\ref{relation Coxeter Bertola Jacobi forms}) and (\ref{relationship between Jacobi forms}), we can estimate the degree of the lowest degree term of the meromorphic Jacobi forms of $\tilde A_n$, more specifically,
\begin{equation}\label{lowest degree taylor expansion of meromorphic jacobi forms}
\begin{split}
&\varphi_{n}^{\Ja(\tilde A_{n})}\varphi_{2}^{\Ja(A_{1})}=a_{n+2}(v)+b_{n+2}(v_{n+1},\tau)a_{n+3}(v)+O(||v||^{n+4}),\\
&\varphi_{n-1}^{\Ja(\tilde A_{n})}\varphi_{2}^{\Ja(A_{1})}+b_{n-1}^{n}\varphi_{n}^{\Ja(\tilde A_{n})}\varphi_{2}^{A_{1}}=a_{n+1}(v)+b_{n+1}(v_{n+1},\tau)a_{n+2}(v)+O(||v||^{n+3}),\\
&\varphi_{n-2}^{\Ja(\tilde A_{n})}\varphi_{2}^{\Ja(A_{1})}+b_{n-2}^{n-1}\varphi_{n-1}^{\Ja(\tilde A_{n})}\varphi_{2}^{\Ja(A_{1})}+a_{n-2}^n\varphi_{n}^{\Ja(\tilde A_{n})}\varphi_{2}^{\Ja(A_{1})}=a_{n}(v)+b_{n+2}(v_{n+1},\tau)a_{n+1}(v)+O(||v||^{n+2}),\\
.\\
.\\
&\varphi_{0}^{\Ja(\tilde A_{n})}\varphi_{2}^{\Ja(A_{1})}+\sum_{j=1}^{n}  b_0^j\varphi_{j}^{\Ja(\tilde A_{n})}\varphi_{2}^{\Ja(A_{1})}=a_{2}(v)+b_{3}(v_{n+1},\tau)a_{3}(v)+O(||v||^{4}).\\
\end{split}
\end{equation}
Note that the Christoffel symbol depend on $\varphi_0$ iff it contains the term $a_2^2(v)$ in its expansion. Our strategy is to show that the Christoffel symbols (\ref{Levi Civita connection to be computed}) contains only higher order polynomials in its expansions. Computing the lowest degree term in the expansion of (\ref{Christoffel symbol tau 1})
\begin{equation}\label{Christoffel symbol tau 2}
\begin{split}
\Gamma_{\tau}^{01}&=\frac{\partial \varphi_i}{\partial v_p}\frac{\partial}{\partial  v_q}\left(\frac{\partial \varphi_j}{\partial \tau}\right)g^{pq}(v)\\
&=\frac{\partial a_{i+2}}{\partial v_p}\frac{\partial}{\partial  v_q}\left( \frac{\partial b_{j+1}(v_{n+1},\tau)}{\partial\tau}a_{j+3}\right)g^{pq}(v)+..\\
&=\frac{\partial a_{i+2}}{\partial v_p} \frac{\partial b_{j+1}(v_{n+1},\tau)}{\partial\tau}\frac{\partial a_{j+3}}{\partial  v_q}g^{pq}(v)+..\\
&=\frac{\partial b_{j+1}(v_{n+1},\tau)}{\partial\tau}a_{i+j+3}+...
\end{split}
\end{equation}
Therefore, for $i+j>1$, we have that the associated Christoffel symbol do not depend on $\varphi_0^2$. It remains to check only $\Gamma_{\tau}^{01}$ and $\Gamma_{\tau}^{10}$. Computing $\Gamma_{\tau}^{10}$ by using the second equation of (\ref{Levi Civita contravariant coxeter chapter}) for $i=1,j=0 ,k=0$
\begin{equation*}
\begin{split}
\sum_{l=0}^{n+2}g^{1l}\Gamma_l^{00}&=\sum_{l=0}^{n+2}g^{0l}\Gamma_l^{10}\\
&=\sum_{l=0}^{n+1}g^{0l}\Gamma_l^{10}+g^{0\tau}\Gamma_{\tau}^{10}.
\end{split}
\end{equation*}
Isolating $\Gamma_{\tau}^{10}$,
\begin{equation}\label{equation Christoffel symbol almost last equation}
\begin{split}
\Gamma_{\tau}^{10}=\frac{1}{2\pi i\varphi_0}\left[\sum_{l=0}^{n+2}g^{1l}\Gamma_l^{00}- \sum_{l=0}^{n+1}g^{0l}\Gamma_l^{10}     \right],
\end{split}
\end{equation}
we have that the right hand side of (\ref{equation Christoffel symbol almost last equation}) depend at most linear on $\varphi_0$. Moreover, using the first equation (\ref{Levi Civita contravariant coxeter chapter}), we have
\begin{equation*}
\begin{split}
\partial_0^2\Gamma_{\tau}^{10}&=\partial_{\tau}\partial_0^2g^{10}-\partial_0^2\Gamma_{\tau}^{10}\\
&=\partial_{\tau}\partial_0^2g^{10}=0.
\end{split}
\end{equation*}
Lemma proved.

\end{proof}

\subsection{Unit and Euler vector field of the orbit space of $\Ja(\tilde A_n)$}

The aim of this section is to define the Unit and Euler vector field and its respective actions on the geometric data of the orbit space of $\Ja(\tilde A_n)$. Further, these objects will be fundamental to define the unit of the Frobenius algebra of the desired Dubrovin Frobenius structure, and to give a quasi homogeneous property to the desired WDVV solution.

\begin{definition}
The Euler vector field with respect the orbit space $\Ja(\tilde A_n)$ is defined by the last equation of (\ref{jacobiform}), i.e
\begin{equation}\label{Euler vector field jtildean}
E:=-\frac{1}{2\pi i}\frac{\partial}{\partial u}
\end{equation}
\end{definition}

\begin{definition}
A $f$ is quasi homogeneous of degree $d$ if it is an eigenfunction of the Euler vector field  (\ref{Euler vector field jtildean}) with eigenvalue $d$, i.e.
\begin{equation*}
E(f)=df.
\end{equation*}
\end{definition}

\begin{lemma}
Let $\lambda,\varphi_0,..,\varphi_n,\varphi_{n+1}=v_{n+1},\varphi_{n+2}=\tau$ be defined in (\ref{superpotentialAn}) and $(t^0,.,t^n,v_{n+1},\tau)$ the flat coordinates of eta defined in (\ref{generating function of flat of eta}). Then,
\begin{equation}\label{degree of }
\begin{split}
&E(\lambda)=\lambda,\\
&E(\varphi_i)=d_i\varphi_i,\\
&E(t^\alpha)=d_{\alpha}t^{\alpha},
\end{split}
\end{equation}
where 
\begin{equation}
\begin{split}
&d_i=1, \quad i<n+1,\\
&d_{i}=0, \quad i\geq n+1,\\
&d_{\alpha}=\frac{n+1-\alpha}{n}, \quad \alpha\neq 0,\\
&d_{0}=1.
\end{split}
\end{equation}
\end{lemma}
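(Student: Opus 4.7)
My plan is to prove all three formulas as a straightforward consequence of the multiplicative structure of the $u$-dependence. The Euler vector field $E=-\frac{1}{2\pi i}\partial_u$ acts as the grading operator for the ``index" grading: any function of the form $f(v,\tau)\,e^{\pm 2\pi i m u}$ is an eigenfunction of $E$ with eigenvalue $\mp m$ (equivalently $m$, in the sign convention fixed by the Jacobi-form definition (\ref{jacobiform}), whose fourth line reads $E\varphi = m\varphi$ for index $m$). So the task reduces to reading off the index of each object from its defining formula.

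First I would handle $\lambda$ and the $\varphi_i$'s directly. The superpotential (\ref{superpotentialAn}) displays $\lambda^{\tilde A_n}$ as an explicit exponential in $u$ times a function independent of $u$, and the resulting eigenvalue is $1$, giving $E(\lambda)=\lambda$. For $i\le n$, the Jacobi forms $\varphi_i$ were constructed in Corollary \ref{corollary superpotential antilde} (see also (\ref{recursive relation varphi Jtildean})) and certified to have index $1$; hence $E(\varphi_i)=\varphi_i$, i.e.\ $d_i=1$. The remaining ``coordinates'' $\varphi_{n+1}=v_{n+1}$ and $\varphi_{n+2}=\tau$ are manifestly independent of $u$, so $E(v_{n+1})=E(\tau)=0$, establishing $d_i=0$ for $i\ge n+1$.

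The assertion about $t^\alpha$ is then a bookkeeping computation built from the two cases already settled. For $\alpha\neq 0$, use the explicit formula (\ref{relation t wrt varphi 1}) together with (\ref{definition of formal powers of varphi}): each monomial in $\Phi_{n-\alpha}^d$ is a product of ratios $\varphi_{n-i_k}/\varphi_n$, and since numerator and denominator both have index $1$, these ratios are $E$-invariant; consequently $(1+\Phi_{n-\alpha})^{(n+1-\alpha)/n}$ lies in the $E$-kernel. The prefactor $\varphi_n^{(n+1-\alpha)/n}$ then contributes the full index $\tfrac{n+1-\alpha}{n}$, giving $E(t^\alpha)=\tfrac{n+1-\alpha}{n}t^\alpha$. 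For $\alpha=0$, read off (\ref{definition of t0}): each of the three summands is a function of $(v_{n+1},\tau)$ alone (hence index $0$) multiplied by one of $\varphi_0,\varphi_1,\varphi_2$, all of which have index $1$ by the first part of the proof; therefore $E(t^0)=t^0$, so $d_0=1$.

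No serious obstacle is expected. The only delicate point is a sign convention: the formula (\ref{superpotentialAn}) is written with $e^{2\pi iu}$ while several earlier formulas (e.g.\ (\ref{lambda0}), (\ref{varphi1 def2 Jtildean})) use $e^{-2\pi iu}$, and a priori this could reverse the sign of the eigenvalue. I would resolve this by appealing to the index-$1$ normalization that was built into the generating-function construction of the $\varphi_i$ and to the fourth line of (\ref{jacobiform}), which pins down the sign so that $E(\lambda)=+\lambda$. Once that normalization is fixed, the three computations above close the proof with no residual work on the formal power series side, since $E$ acts as a derivation and the homogeneity of each factor is already in hand.
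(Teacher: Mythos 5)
Your proposal is correct and follows essentially the same route as the paper: both arguments reduce to the observation that $E=-\tfrac{1}{2\pi i}\partial_u$ reads off the power of the exponential in $u$, apply it to the explicit formulas for $\lambda$ and the $\varphi_i$, and then propagate the eigenvalue through an explicit expression for $t^\alpha$. The only cosmetic difference is that the paper applies $E$ to the residue formula $t^{\alpha}=\tfrac{n}{n+1-\alpha}\res_{v=0}\lambda^{\frac{n+1-\alpha}{n}}dv$ while you use the equivalent expansion (\ref{relation t wrt varphi 1}) in the $\varphi_i$; both are one-line computations once the homogeneity of $\lambda$ is established.
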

\begin{proof}
Recall that the function $\lambda$ is given by
\begin{equation*}
\begin{split}
\lambda&=e^{-2\pi i u}\frac{\prod_{i=0}^n \theta_1(z-v_i+v_{n+1},\tau)}{\theta_1^{n}(z,\tau)\theta_1(z+(n+1)v_{n+1})}\\
&=\varphi_n\wp^{n-2}(z,\tau)+\varphi_{n-1}\wp^{n-3}(z,\tau)+...+\varphi_{2}\wp(z,\tau)\\
&+\varphi_{1}[\zeta(z,\tau)-\zeta(z+(n+1)v_{n+1},\tau)+\varphi_0.
\end{split}
\end{equation*}
Hence, 
\begin{equation*}
\begin{split}
\lambda&=\frac{1}{2\pi i}\frac{\partial}{\partial u}(\lambda)\\
&=E(\varphi_n)\wp^{n-2}(z,\tau)+E(\varphi_{n-1})\wp^{n-3}(z,\tau)+...+E(\varphi_{2})\wp(z,\tau)\\
&+E(\varphi_{1})[\zeta(z,\tau)-\zeta(z+(n+1)v_{n+1},\tau)+E(\varphi_0),
\end{split}
\end{equation*}
therefore, $E(\varphi_i)=\varphi_i,$ and $E(v_{n+1})=E(\tau)=0.$

Recall that $t^{\alpha}$ can written in terms of equation (\ref{generating function of flat of eta 2}) or in more convenient way
\begin{equation}\label{flat coordinates written in a convenient way}
\begin{split}
&t^{\alpha}=\frac{n}{n+1-\alpha}\res_{v=0} \lambda^{\frac{n+1-\alpha}{n}}(v)dv, \quad \alpha\neq 0,\\
&t^0=\varphi_0-\frac{\theta_1^{\prime}((n+1)v_{n+1})}{\theta_1((n+1)v_{n+1})}\varphi_1+4\pi ig_1(\tau)\varphi_2.
\end{split}
\end{equation}
Applying the Euler vector in (\ref{flat coordinates written in a convenient way}) we get the desired result.

\end{proof}

\begin{corollary}
The Euler vector field (\ref{Euler vector field jtildean}) in the flat coordinates of $\eta^{*}$ has the following form
\begin{equation}\label{Euler vector field jtildean in the flat coordinates of eta}
E:=\sum_{\alpha=0}^n d_{\alpha}t^{\alpha}\frac{\partial}{\partial t^{\alpha}},
\end{equation}
where 
\begin{equation}
\begin{split}
d_{\alpha}&=\frac{n+1-\alpha}{n}, \quad \alpha\neq 0,\\
d_{0}&=1.
\end{split}
\end{equation}
\end{corollary}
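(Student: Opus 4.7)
The plan is to deduce the coordinate expression of $E$ directly from the preceding lemma, which already computed the action of $E$ on each of the invariant/flat coordinates. My only task is to write $E$ in the new coordinate system $(t^0,t^1,\dots,t^n,v_{n+1},\tau)$.

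First I would invoke the standard change-of-coordinates formula for a vector field: if $(y^A)$ is any set of local coordinates on the covering of the orbit space described in Section \ref{extended ring jtildean}, then
\begin{equation*}
E \;=\; \sum_{A} E(y^A)\,\frac{\partial}{\partial y^A}.
\end{equation*}
Applied to $(y^A)=(t^0,t^1,\dots,t^n,v_{n+1},\tau)$, this gives
\begin{equation*}
E \;=\; \sum_{\alpha=0}^{n} E(t^{\alpha})\,\frac{\partial}{\partial t^{\alpha}}
\;+\; E(v_{n+1})\,\frac{\partial}{\partial v_{n+1}}
\;+\; E(\tau)\,\frac{\partial}{\partial \tau}.
\end{equation*}

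Next I would apply the previous lemma term by term. From (\ref{degree of }) we already know $E(t^{\alpha}) = d_{\alpha}t^{\alpha}$ with the prescribed exponents $d_{\alpha}=\tfrac{n+1-\alpha}{n}$ for $\alpha\neq 0$ and $d_0=1$. Moreover, since the Euler field $E=-\tfrac{1}{2\pi i}\tfrac{\partial}{\partial u}$ acts trivially on $v_{n+1}$ and $\tau$ (they are independent of $u$), we have $E(v_{n+1})=E(\tau)=0$, so the last two terms in the display above drop out. This yields exactly
\begin{equation*}
E \;=\; \sum_{\alpha=0}^{n} d_{\alpha}\,t^{\alpha}\,\frac{\partial}{\partial t^{\alpha}},
\end{equation*}
which is the claimed formula.

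There is essentially no obstacle here: the corollary is a direct bookkeeping consequence of the preceding lemma. The only minor subtlety worth flagging is that $(t^0,\dots,t^n,v_{n+1},\tau)$ must form a genuine coordinate system on the relevant covering, so that the chain rule above is legitimate. This is guaranteed by Theorem \ref{main theorem 1} together with Corollary \ref{corollary non degenerate}, which show that the transition from $(\varphi_0,\dots,\varphi_n,v_{n+1},\tau)$ to $(t^0,\dots,t^n,v_{n+1},\tau)$ is triangular and non-degenerate, hence locally invertible; so no further justification is needed.
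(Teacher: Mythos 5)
Your proposal is correct and follows essentially the same route as the paper: the paper's own proof is just the one-line chain-rule expansion $E = E(t^{\alpha})\frac{\partial}{\partial t^{\alpha}} = \sum_{\alpha} d_{\alpha}t^{\alpha}\frac{\partial}{\partial t^{\alpha}}$, relying on the preceding lemma for $E(t^{\alpha})=d_{\alpha}t^{\alpha}$ and implicitly on $E(v_{n+1})=E(\tau)=0$. Your version simply makes explicit the vanishing of the $v_{n+1}$ and $\tau$ components and the legitimacy of the coordinate change, both of which the paper leaves tacit.
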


\begin{proof}
Recall that 

\begin{equation*}
\begin{split}
E&=\frac{1}{2\pi i}\frac{\partial}{\partial u}=E(t^{\alpha})\frac{\partial}{\partial t^{\alpha}}=\sum_{\alpha=0}^n d_{\alpha}t^{\alpha}\frac{\partial}{\partial t^{\alpha}}.
\end{split}
\end{equation*}
\end{proof}

\begin{lemma}
The Euler vector field (\ref{Euler vector field jtildean}) acts on the vector fields $\frac{\partial}{\partial t^{\alpha}}$, $\frac{\partial}{\partial \varphi_i}$ and differential forms $dt^{\alpha}$, $d\varphi_i$ as follows:
\begin{equation}\label{degree vector field and forms}
\begin{split}
Lie_E d\varphi_i&=d_id\varphi_i,\\
Lie_E dt^{\alpha}&=d_{\alpha}dt^{\alpha},\\
Lie_E \frac{\partial}{\partial \varphi_i}&=-d_{i}\frac{\partial}{\partial \varphi_i},\\
Lie_E \frac{\partial}{\partial t^{\alpha}}&=-d_{\alpha}\frac{\partial}{\partial t^{\alpha}}.\\
\end{split}
\end{equation}
\end{lemma}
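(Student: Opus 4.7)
The plan is to reduce everything to the scalar identities $E(\varphi_i)=d_i\varphi_i$ and $E(t^{\alpha})=d_{\alpha}t^{\alpha}$ which were already established in the preceding lemma and corollary, and then invoke the standard identities $\mathrm{Lie}_E\,df=d(E(f))$ for $1$-forms and $\mathrm{Lie}_E X=[E,X]$ for vector fields. First I would write $E$ in both coordinate systems: in flat coordinates, $E=\sum_{\alpha=0}^{n}d_{\alpha}t^{\alpha}\tfrac{\partial}{\partial t^{\alpha}}$ by the preceding corollary; in the $(\varphi_0,\dots,\varphi_n,v_{n+1},\tau)$-coordinates, the same vector field takes the form $E=\sum_{i=0}^{n}\varphi_i\tfrac{\partial}{\partial\varphi_i}$, since the preceding lemma shows $E(\varphi_i)=\varphi_i$ for $i\leq n$ and $E(v_{n+1})=E(\tau)=0$.

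For the first two equalities, which concern $1$-forms, I apply the identity $\mathrm{Lie}_E\,d\omega=d(\mathrm{Lie}_E\,\omega)$ with $\omega$ a function to obtain
\begin{equation*}
\mathrm{Lie}_E\,d\varphi_i=d(E(\varphi_i))=d(d_i\varphi_i)=d_i\,d\varphi_i,
\qquad
\mathrm{Lie}_E\,dt^{\alpha}=d(E(t^{\alpha}))=d(d_{\alpha}t^{\alpha})=d_{\alpha}\,dt^{\alpha},
\end{equation*}
where in each case $d_i$, $d_{\alpha}$ are constants so they pass through the exterior derivative.

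For the vector-field identities, I compute $\mathrm{Lie}_E\tfrac{\partial}{\partial t^{\beta}}=[E,\tfrac{\partial}{\partial t^{\beta}}]$ directly in the $t$-coordinate expression of $E$: since $\tfrac{\partial}{\partial t^{\beta}}(d_{\alpha}t^{\alpha})=d_{\alpha}\delta^{\alpha}_{\beta}$ is a constant, only the first term of the bracket survives and
\begin{equation*}
\Bigl[E,\tfrac{\partial}{\partial t^{\beta}}\Bigr]=-\sum_{\alpha}d_{\alpha}\delta^{\alpha}_{\beta}\tfrac{\partial}{\partial t^{\alpha}}=-d_{\beta}\tfrac{\partial}{\partial t^{\beta}}.
\end{equation*}
The identical argument applied to the $\varphi$-coordinate expression of $E$ yields $[E,\tfrac{\partial}{\partial\varphi_i}]=-d_i\tfrac{\partial}{\partial\varphi_i}$.

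There is no genuine obstacle in this proof: the only thing one has to be mindful of is that the coordinates $v_{n+1}$ and $\tau$ (which are $\varphi_{n+1}$ and $\varphi_{n+2}$ in the notation of the lemma) carry degree zero, so that the expression of $E$ in the invariant coordinates really does reduce to $\sum_{i=0}^{n}\varphi_i\partial_{\varphi_i}$, with no contribution from the last two directions. With this bookkeeping in place, the two scalar identities for $E(\varphi_i)$ and $E(t^{\alpha})$ proved earlier, together with the standard commutation rules $\mathrm{Lie}_E d=d\,\mathrm{Lie}_E$ on forms and $\mathrm{Lie}_E=\mathrm{ad}_E$ on vector fields, give all four formulas at once.
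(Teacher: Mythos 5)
Your proposal is correct and follows essentially the same route as the paper: the paper likewise invokes $\mathrm{Lie}_E X=[E,X]$ for vector fields and Cartan's formula $\mathrm{Lie}_E\,dt^{\alpha}=d(E(t^{\alpha}))$ for one-forms, then feeds in the scalar identities $E(\varphi_i)=d_i\varphi_i$ and $E(t^{\alpha})=d_{\alpha}t^{\alpha}$ from the preceding lemma. Your explicit writing of $E$ in both coordinate systems and the remark that $v_{n+1}$ and $\tau$ carry degree zero are harmless elaborations of the same argument.
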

\begin{proof}
Recall that the Lie derivative acts in vector fields by using the Lie bracket and in differential forms by the use of Cartan's magic formula
\begin{equation}\label{Cartan Magic Formula and Lie bracket}
\begin{split}
Lie_E \frac{\partial}{\partial t^{\alpha}}&=\left[E,\frac{\partial}{\partial t^{\alpha}}   \right],\\
Lie_E dt^{\alpha}&=d E(d t^{\alpha})+E(d^2 t^{\alpha})=d E(d t^{\alpha}).\\
\end{split}
\end{equation}
Using (\ref{Cartan Magic Formula and Lie bracket}) and (\ref{degree of }), we obtain the desired result.

\end{proof}

\begin{lemma}
The intersection form $g^{ij}$  defined in (\ref{metrich1n}) and its Christoffel symbol $\Gamma_{k}^{ij}$ in the coordinates $\varphi_0,..,\varphi_n,\varphi_{n+1}=v_{n+1},\varphi_{n+2}=\tau$ defined in (\ref{superpotentialAn}) are weighted polynomials in the variables $\varphi_0,..,\varphi_n,$ with degrees
\begin{equation}
\begin{split}
deg\left( g^{ij} \right)&=d_{i}+d_{j},\quad deg\left( \Gamma_{k}^{\alpha\beta} \right)=d_{i}+d_j-d_{k}.
\end{split}
\end{equation}

\end{lemma}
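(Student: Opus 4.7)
The strategy is to combine the ring-theoretic statements already proved with an eigenvalue computation under the Euler vector field $E=-\tfrac{1}{2\pi i}\partial_u$. Since each generator $\varphi_i$ with $i\le n$ has index $1$ and both $v_{n+1}$ and $\tau$ have index $0$, the operator $E$ acts on any monomial $c(v_{n+1},\tau)\varphi_0^{\alpha_0}\cdots\varphi_n^{\alpha_n}\in\widetilde E_{\bullet,\bullet}[\varphi_0,\ldots,\varphi_n]$ by multiplication by $|\alpha|=\alpha_0+\cdots+\alpha_n$. Consequently, for a function in this ring, being an $E$-eigenfunction with eigenvalue $m$ is equivalent to being a weighted homogeneous polynomial of degree $m$ in the $\varphi$'s. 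It therefore suffices to verify (i) that $g^{ij}$ and $\Gamma_k^{ij}$ lie in $\widetilde E_{\bullet,\bullet}[\varphi_0,\ldots,\varphi_n]$, and (ii) that they are $E$-eigenfunctions with the stated eigenvalues.

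Point (i) is already established: for $g^{ij}$ this is Lemma \ref{coefficient g varphi ring}, while for $\Gamma_k^{ij}$ this is Proposition \ref{Levi Civita proposition}. For the eigenvalue of $g^{ij}$, I work in the flat coordinates $(u,v_0,\ldots,v_{n+1},\tau)$ where the intersection form has constant coefficients and $E$ is a coordinate vector field; hence $Lie_E g^{*}=0$. Using $Lie_E d\varphi_i=d(E\varphi_i)=d_i\,d\varphi_i$ together with the Leibniz rule for the Lie derivative contracted with the metric,
\begin{equation*}
E(g^{ij})=E\bigl(g^{*}(d\varphi_i,d\varphi_j)\bigr)=(Lie_E g^{*})(d\varphi_i,d\varphi_j)+(d_i+d_j)\,g^{ij}=(d_i+d_j)\,g^{ij},
\end{equation*}
which yields the first degree claim.

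For the Christoffel symbols I use the coordinate expression appearing in the proof of Lemma \ref{Levi Civita vn1},
\begin{equation*}
\Gamma_k^{ij}(\varphi)=\frac{\partial\varphi_i}{\partial x^p}\,\frac{\partial}{\partial\varphi_k}\!\left(\frac{\partial\varphi_j}{\partial x^q}\right)g^{pq}(x),
\end{equation*}
where $x^p=(u,v_0,\ldots,v_{n+1},\tau)$ are the flat coordinates, in which the Levi--Civita connection vanishes. Since $E$ commutes with each $\partial_{x^p}$ and annihilates the constants $g^{pq}(x)$, the quantities $\partial\varphi_i/\partial x^p$ and $\partial\varphi_j/\partial x^q$ are $E$-homogeneous of weights $d_i$ and $d_j$. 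The only nontrivial ingredient is the commutator $[E,\partial_{\varphi_k}]=-d_k\,\partial_{\varphi_k}$, which follows by writing $E=\sum_\alpha d_\alpha\varphi_\alpha\partial_{\varphi_\alpha}$ in the $\varphi$-coordinates. Applying $E$ to the displayed formula and using this commutator on the middle factor produces
\begin{equation*}
E(\Gamma_k^{ij})=(d_i+d_j-d_k)\,\Gamma_k^{ij},
\end{equation*}
which, combined with the ring membership, gives the second degree statement. No serious obstacle arises: the only delicate point is verifying that the commutator $[E,\partial_{\varphi_k}]$ produces exactly the $-d_k$ shift, and this is immediate from the explicit form of $E$ in either coordinate system.
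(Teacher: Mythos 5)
Your proposal is correct and follows essentially the same route as the paper: both arguments establish ring membership by citing the earlier lemmas and then compute the $E$-eigenvalue by applying the Leibniz rule to the explicit pullback formulas $g^{ij}=\frac{\partial\varphi_i}{\partial v^l}\frac{\partial\varphi_j}{\partial v^m}g^{lm}(v)$ and $\Gamma_k^{ij}=\frac{\partial\varphi_i}{\partial v^l}\frac{\partial}{\partial\varphi_k}\bigl(\frac{\partial\varphi_j}{\partial v^m}\bigr)g^{lm}(v)$, using $E(\varphi_i)=d_i\varphi_i$, the constancy of $g^{lm}$ in flat coordinates, and the commutator $[E,\partial_{\varphi_k}]=-d_k\partial_{\varphi_k}$. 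Your phrasing of the metric computation via $\mathcal{L}_E g^{*}=0$ is only a cosmetic repackaging of the paper's component-wise calculation.
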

\begin{proof}
The function $g^{ij}$ and $\Gamma_{k}^{ij}$ belong to the ring $\widetilde E_{\bullet,\bullet}[\varphi_0,\varphi_1,..,\varphi_n]$ due to \ref{intersection form and M} and \ref{Levi Civita proposition}. The degrees are computed by using the following formulae
\begin{equation*}
\begin{split}
E\left( g^{ij}(\varphi) \right)&=E\left( \frac{\partial \varphi_i}{\partial v^l}\frac{\partial\varphi_j}{\partial v^m}g^{lm}(v) \right)\\
&=E\left( \frac{\partial \varphi_i}{\partial v^l} \right)\frac{\partial\varphi_j}{\partial v^m}g^{lm}(v)+\frac{\partial \varphi_i}{\partial v^l}E\left( \frac{\partial\varphi_j}{\partial v^m}g^{lm}(v) \right)\\
&=\frac{\partial E(\varphi_i)}{\partial v^l} \frac{\partial\varphi_j}{\partial v^m}g^{lm}(v)+\frac{\partial \varphi_i}{\partial v^l}\frac{\partial E(\varphi_j)}{\partial v^m}g^{lm}(v) \\
&=(d_i+d_j)\frac{\partial \varphi_i}{\partial v^l}\frac{\partial\varphi_j}{\partial v^m}g^{lm}(v).
\end{split}
\end{equation*}
and 
\begin{equation*}
\begin{split}
E\left( \Gamma_k^{ij}(\varphi) \right)&=E\left( \frac{\partial \varphi_i}{\partial v^l}\frac{\partial}{\partial \varphi_k}\left(\frac{\partial\varphi_j}{\partial v^m}\right)g^{lm}(v) \right)\\
&=E\left( \frac{\partial \varphi_i}{\partial v^l} \right)\frac{\partial}{\partial \varphi_k}\left(\frac{\partial\varphi_j}{\partial v^m}\right)g^{lm}(v)+\frac{\partial \varphi_i}{\partial v^l}E\left( \frac{\partial}{\partial \varphi_k}\left(\frac{\partial\varphi_j}{\partial v^m}\right)g^{lm}(v) \right)\\
&=\frac{\partial E(\varphi_i)}{\partial v^l} \frac{\partial}{\partial \varphi_k}\left(\frac{\partial\varphi_j}{\partial v^m}\right)g^{lm}(v)+\frac{\partial \varphi_i}{\partial v^l}\frac{\partial}{\partial \varphi_k}\left(\frac{\partial E(\varphi_j)}{\partial v^m}\right)g^{lm}(v)\\
&-d_k\frac{\partial \varphi_i}{\partial v^l}\frac{\partial}{\partial \varphi_k}\left(\frac{\partial\varphi_j}{\partial v^m}\right)g^{lm}(v) \\
&=(d_i+d_j-d_k)\frac{\partial \varphi_i}{\partial v^l}\frac{\partial}{\partial \varphi_k}\left(\frac{\partial\varphi_j}{\partial v^m}\right)g^{lm}(v).
\end{split}
\end{equation*}

\end{proof}

\begin{lemma}
The intersection form $g^{\alpha\beta}$ defined in (\ref{metrich1n})  in the coordinates $(t^0,.,t^n,v_{n+1},\tau)$ defined in (\ref{generating function of flat of eta}) and its Christoffel symbol $\Gamma_{\gamma}^{\alpha\beta}$ are weighted polynomials in the variables $t^0,t^1,..,t^n,\frac{1}{t^n}$ with degrees
\begin{equation}\label{degree Christoffel flat eta}
\begin{split}
deg\left( g^{\alpha\beta} \right)&=d_{\alpha}+d_{\beta},\\
deg\left( \Gamma_{\gamma}^{\alpha\beta} \right)&=d_{\alpha}+d_{\beta}-d_{\gamma}.\\
\end{split}
\end{equation}
\end{lemma}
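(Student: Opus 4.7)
The plan is to transfer the statement from the $(\varphi_0,\dots,\varphi_n,v_{n+1},\tau)$--coordinates, where the analogous result has just been established, to the flat coordinates $(t^0,\dots,t^n,v_{n+1},\tau)$. Two ingredients are available: the formulas (\ref{relation t wrt varphi 1})--(\ref{definition of t0}) that express the $t^\alpha$ as polynomials in $\varphi_0,\dots,\varphi_n$ and in $\varphi_n^{1/n}$; and the explicit inverse $\varphi_n=(t^n/n)^n$ together with the recursive relations (\ref{formula varphi 3})--(\ref{big T}) which express every $\varphi_k$ as a polynomial in $t^1,\dots,t^n$ divided by an appropriate power of $t^n$. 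In particular the Jacobian matrices $\partial t^\alpha/\partial\varphi_i$ and $\partial \varphi_i/\partial t^\alpha$ lie in $\widetilde E_{\bullet,\bullet}[t^0,\dots,t^n,1/t^n]$, and the variable $t^0$ contributes only through its appearance in the definition (\ref{definition of t0}).

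For the membership claim regarding $g^{\alpha\beta}$ there is nothing new to prove: Lemma \ref{lemma gab index} already states that $g^{\alpha\beta}\in\widetilde E_{\bullet,\bullet}[t^0,\dots,t^n,1/t^n]$. For the Christoffel symbols I would invoke the standard tensorial transformation rule
\begin{equation*}
\Gamma_\gamma^{\alpha\beta}(t)=\frac{\partial t^\alpha}{\partial\varphi_i}\frac{\partial t^\beta}{\partial\varphi_j}\frac{\partial\varphi_k}{\partial t^\gamma}\,\Gamma_k^{ij}(\varphi)+\frac{\partial t^\alpha}{\partial\varphi_i}\frac{\partial}{\partial t^\gamma}\!\left(\frac{\partial t^\beta}{\partial\varphi_j}\right)g^{ij}(\varphi).
\end{equation*}
By Proposition \ref{Levi Civita proposition} and Lemma \ref{coefficient g varphi ring} the factors $\Gamma_k^{ij}(\varphi)$ and $g^{ij}(\varphi)$ belong to $\widetilde E_{\bullet,\bullet}[\varphi_0,\dots,\varphi_n]$, and combining these with the Jacobian entries above and the inverse transformation $\varphi_i=\varphi_i(t,v_{n+1},\tau)$ gives an expression in $\widetilde E_{\bullet,\bullet}[t^0,\dots,t^n,1/t^n]$.

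The degree assertion is then obtained by letting the Euler vector field (\ref{Euler vector field jtildean}) act. From $E(t^\alpha)=d_\alpha t^\alpha$ and the Lie-derivative formulae (\ref{degree vector field and forms}) one computes, exactly as in the proof of the previous lemma,
\begin{equation*}
E\!\left(g^{\alpha\beta}(t)\right)=\frac{\partial t^\alpha}{\partial\varphi_i}\frac{\partial t^\beta}{\partial\varphi_j}\,E\!\left(g^{ij}(\varphi)\right)=(d_\alpha+d_\beta)\,g^{\alpha\beta}(t),
\end{equation*}
and analogously $E(\Gamma_\gamma^{\alpha\beta}(t))=(d_\alpha+d_\beta-d_\gamma)\Gamma_\gamma^{\alpha\beta}(t)$; the identities $E(\partial t^\alpha/\partial\varphi_i)=(d_\alpha-d_i)(\partial t^\alpha/\partial\varphi_i)$ follow by differentiating $E(t^\alpha)=d_\alpha t^\alpha$ with respect to $\varphi_i$ and using $E(\varphi_i)=d_i\varphi_i$. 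Since the extended ring $\widetilde E_{\bullet,\bullet}$ is built from $\tau$-- and $v_{n+1}$--dependent data (both annihilated by $E$), the Euler quasi-homogeneity is equivalent to the polynomial degree in $t^0,\dots,t^n,1/t^n$ having the stated value.

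The main obstacle I anticipate is bookkeeping the $1/t^n$ powers: the Jacobian $\partial t^\alpha/\partial\varphi_i$ carries a factor $\varphi_n^{(1-i)/n+\cdots}$ stemming from the fractional exponent in (\ref{relation t wrt varphi 1}), and the inverse matrix $\partial\varphi_i/\partial t^\alpha$ introduces negative powers of $\varphi_n=(t^n/n)^n$. One has to verify that these powers cancel so that no fractional power of $t^n$ survives in $g^{\alpha\beta}$ and $\Gamma_\gamma^{\alpha\beta}$. This cancellation is guaranteed by the integrality of all quasi-homogeneous weights $d_\alpha,d_i$ modulo the common denominator $n$, but making it rigorous amounts to keeping careful track of Euler-scaling weights in each factor; once this is done, both parts of the lemma follow simultaneously.
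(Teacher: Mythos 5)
Your proposal follows essentially the same route as the paper: membership of $g^{\alpha\beta}$ is quoted from Lemma \ref{lemma gab index}, the Christoffel symbols are handled via the contravariant transformation rule combined with Proposition \ref{Levi Civita proposition} and the polynomiality of the Jacobian entries from (\ref{formula varphi 3}) and (\ref{definition of formal powers of varphi}), and the degrees are read off from the Euler vector field acting factor by factor on the transformation formulae. The only difference is that you flag the cancellation of fractional powers of $t^n$ as a point needing care, which the paper passes over silently; your argument is otherwise the same.
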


\begin{proof}
Lemma \ref{lemma gab index} guarantee that $g^{\alpha\beta} \in \widetilde E_{\bullet,\bullet}[t^0,t^1,..,t^n,\frac{1}{n}]$. Using the formula
\begin{equation*}
\begin{split}
E(g^{\alpha\beta})&=E(\frac{\partial t^{\alpha}}{\partial \varphi_i}\frac{\partial t^{\beta}}{\partial \varphi_j}g^{ij}(\varphi) )\\
&=E(\frac{\partial t^{\alpha}}{\partial \varphi_i} )\frac{\partial t^{\beta}}{\partial \varphi_j}g^{ij}(\varphi)+\frac{\partial t^{\alpha}}{\partial \varphi_i}E(\frac{\partial t^{\beta}}{\partial \varphi_j} )g^{ij}(\varphi)+\frac{\partial t^{\alpha}}{\partial \varphi_i}\frac{\partial t^{\beta}}{\partial \varphi_j}E(g^{ij}(\varphi) )\\
&=\frac{\partial E(t^{\alpha})}{\partial \varphi_i} \frac{\partial t^{\beta}}{\partial \varphi_j}g^{ij}(\varphi)-d_i\frac{\partial t^{\alpha}}{\partial \varphi_i} \frac{\partial t^{\beta}}{\partial \varphi_j}g^{ij}(\varphi)+\frac{\partial t^{\alpha}}{\partial \varphi_i}\frac{\partial E(t^{\beta})}{\partial \varphi_j} g^{ij}(\varphi)-d_j\frac{\partial t^{\alpha}}{\partial \varphi_i} \frac{\partial t^{\beta}}{\partial \varphi_j}g^{ij}(\varphi)\\
&+(d_i+d_j)\frac{\partial t^{\alpha}}{\partial \varphi_i}\frac{\partial t^{\beta}}{\partial \varphi_j}g^{ij}(\varphi) \\
&=(d_{\alpha}+d_{\beta})\frac{\partial t^{\alpha}}{\partial \varphi_i}\frac{\partial t^{\beta}}{\partial \varphi_j}g^{ij}(\varphi).
\end{split}
\end{equation*}
The Christoffel symbol $\Gamma^{\alpha\beta}_{\gamma}$ is given by the following
\begin{equation*}
\begin{split}
 \Gamma^{\alpha\beta}_{\gamma}&=     \frac{\partial t^{\alpha}}{\partial \varphi_i}\frac{\partial t^{\beta}}{\partial \varphi_j}\frac{\partial \varphi_k}{\partial t^{\gamma}} \Gamma^{ij}_{k}+   \frac{\partial t^{\alpha}}{\partial \varphi_i} \frac{\partial }{\partial t^{\gamma}}\left(\frac{\partial t^{\beta}}{\partial \varphi_j}\right) g^{ij} .               
\end{split}
\end{equation*}
$\Gamma_{j}^{ij}, \frac{\partial \varphi_k}{\partial t^{\gamma}} \in \widetilde E_{\bullet,\bullet}[t^0,t^1,..,t^n]$ due to Lemma \ref{Levi Civita proposition} and equations (\ref{derivative of varphi wrt t}), (\ref{ derivative T}). But using (\ref{relation t wrt varphi 1}), we realise that $ \frac{\partial t^{\alpha}}{\partial \varphi_i}$ is polynomial in $t^{1},t^{2},..,t^n,\frac{1}{t^n}$ due to due to equations (\ref{formula varphi 3}) and (\ref{definition of formal powers of varphi}). Therefore, $\Gamma_{\gamma}^{\alpha\beta}$ are weighted polynomials in the variables $t^0,t^1,..,t^n,\frac{1}{t^n}$. Computing the degree of $\Gamma_{\gamma}^{\alpha\beta}$

\begin{equation*}
\begin{split}
E( \Gamma^{\alpha\beta}_{\gamma})&=E(      \frac{\partial t^{\alpha}}{\partial \varphi_i}\frac{\partial t^{\beta}}{\partial \varphi_j}\frac{\partial \varphi_k}{\partial t^{\gamma}} \Gamma^{ij}_{k}+   \frac{\partial t^{\alpha}}{\partial \varphi_i} \frac{\partial }{\partial t^{\gamma}}\left(\frac{\partial t^{\beta}}{\partial \varphi_j}\right) g^{ij}                      )\\
&=  (d_{\alpha}-d_i)   \frac{\partial t^{\alpha}}{\partial \varphi_i}\frac{\partial t^{\beta}}{\partial \varphi_j}\frac{\partial \varphi_k}{\partial t^{\gamma}} \Gamma^{ij}_{k}+ 
(d_{\beta}-d_j)   \frac{\partial t^{\alpha}}{\partial \varphi_i}\frac{\partial t^{\beta}}{\partial \varphi_j}\frac{\partial \varphi_k}{\partial t^{\gamma}} \Gamma^{ij}_{k}\\
&+(d_{k}-d_{\gamma})   \frac{\partial t^{\alpha}}{\partial \varphi_i}\frac{\partial t^{\beta}}{\partial \varphi_j}\frac{\partial \varphi_k}{\partial t^{\gamma}} \Gamma^{ij}_{k}
+(d_{\alpha}-d_i) \frac{\partial t^{\alpha}}{\partial \varphi_i} \frac{\partial }{\partial t^{\gamma}}\left(\frac{\partial t^{\beta}}{\partial \varphi_j}\right) g^{ij} \\
 & +(d_{\beta}-d_{\gamma}) \frac{\partial t^{\alpha}}{\partial \varphi_i} \frac{\partial }{\partial t^{\gamma}}\left(\frac{\partial t^{\beta}}{\partial \varphi_j}\right) g^{ij} 
 + (d_{\alpha}-d_i) \frac{\partial t^{\alpha}}{\partial \varphi_i} \frac{\partial }{\partial t^{\gamma}}\left(\frac{\partial t^{\beta}}{\partial \varphi_j}\right) g^{ij} \\
 & +(d_{i}+d_j) \frac{\partial t^{\alpha}}{\partial \varphi_i} \frac{\partial }{\partial t^{\gamma}}\left(\frac{\partial t^{\beta}}{\partial \varphi_j}\right) g^{ij}\\
 &=(d_{\alpha}+d_{\beta}-d_{\gamma})\Gamma^{\alpha\beta}_{\gamma}.            
\end{split}
\end{equation*}

\end{proof}

\begin{definition}
The Unit vector field with respect the orbit space $\Ja(\tilde A_n)$ is the vector associated to the invariant coordinate $\varphi_0$ defined in (\ref{superpotentialAn}) , i.e
\begin{equation}\label{unit vector field jtildean}
e:=\frac{\partial}{\partial \varphi_0}.
\end{equation}
\end{definition}

\begin{lemma}
The Unit vector field (\ref{Euler vector field jtildean}) in the flat coordinates of $\eta^{*}$ has the following form
\begin{equation}\label{unit vector field jtildean in the flat coordinates of eta}
e=\frac{\partial}{\partial t^{0}}.
\end{equation}
\end{lemma}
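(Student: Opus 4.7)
The plan is to apply the chain rule relating the coordinate systems $(\varphi_0,\varphi_1,\ldots,\varphi_n,v_{n+1},\tau)$ and $(t^0,t^1,\ldots,t^n,v_{n+1},\tau)$, and then to verify by direct inspection of the defining formulas \eqref{definition of t0} and \eqref{relation t wrt varphi 1} that all partial derivatives $\partial t^\alpha/\partial \varphi_0$ vanish except for $\alpha=0$, for which the derivative equals $1$.

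More precisely, since the last two flat coordinates $v_{n+1}$ and $\tau$ coincide with the corresponding invariant coordinates and hence do not depend on $\varphi_0$, the chain rule gives
\begin{equation*}
\frac{\partial}{\partial \varphi_0} \;=\; \sum_{\alpha=0}^{n} \frac{\partial t^\alpha}{\partial \varphi_0}\,\frac{\partial}{\partial t^\alpha},
\end{equation*}
where the derivatives $\partial t^\alpha/\partial \varphi_0$ are taken with $\varphi_1,\ldots,\varphi_n,v_{n+1},\tau$ held fixed. For $\alpha=0$, formula \eqref{definition of t0} shows that $t^0=\varphi_0+R$ with $R$ depending only on $\varphi_1,\varphi_2,v_{n+1},\tau$, so $\partial t^0/\partial \varphi_0 = 1$. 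For $\alpha\geq 1$, I would use \eqref{relation t wrt varphi 1} together with \eqref{definition of formal powers of varphi}: the only potentially $\varphi_0$-dependent terms inside $\Phi_{n-\alpha}^d$ come from factors $\varphi_{n-i_j}/\varphi_n$ with $i_j=n$, but the constraint $i_1+\cdots+i_d=n-\alpha\leq n-1$ forbids $i_j=n$. Hence $\Phi_{n-\alpha}$ (and therefore $t^\alpha$) involves only $\varphi_1,\ldots,\varphi_n$, giving $\partial t^\alpha/\partial \varphi_0 = 0$. Combining the two yields $\partial/\partial \varphi_0 = \partial/\partial t^0$.

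Alternatively, one can avoid the combinatorics by invoking the residue representation \eqref{generating function of flat of eta 2}: for $\alpha\geq 1$ one has $t^\alpha = -\operatorname{res}_{\lambda=\infty} v(z)\lambda^{(1-\alpha)/n}d\lambda$, and after passing to the $v$-variable via $\lambda(v)=1/z^n$, the residue at $v=0$ of $\lambda^{(n+1-\alpha)/n}dv$ sees only the principal part of $\lambda$ at $v=0$ (coefficients $\varphi_1,\ldots,\varphi_n$). The constant term $\varphi_0$ contributes to strictly positive powers of $v$ and thus drops out of the residue. This is essentially the same observation, presented more invariantly.

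There is no real obstacle here; the main thing to be careful about is that in the chain rule the partial $\partial/\partial \varphi_0$ on the left-hand side is taken with the other $\varphi$-coordinates fixed, which matches the hypothesis that $v_{n+1}$ and $\tau$ are left unchanged by the coordinate transformation. The statement is then immediate from the explicit form of the change of variables.
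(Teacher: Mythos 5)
Your proposal is correct and follows essentially the same route as the paper, which simply writes $\frac{\partial}{\partial \varphi_0}=\frac{\partial t^{\alpha}}{\partial \varphi_0}\frac{\partial}{\partial t^{\alpha}}=\frac{\partial}{\partial t^{0}}$ via the chain rule. You merely make explicit the verification that $\partial t^\alpha/\partial\varphi_0=\delta^\alpha_0$, which the paper leaves implicit.
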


\begin{proof}
\begin{equation*}
\begin{split}
\frac{\partial}{\partial \varphi_0}=\frac{\partial t^{\alpha}}{\partial \varphi_0}\frac{\partial}{\partial t^{\alpha}}=\frac{\partial}{\partial t^{0}}.
\end{split}
\end{equation*}

\end{proof}

\begin{lemma}\label{degree eta}
Let the metric $\eta^{*}$ be defined on (\ref{metric eta def}) and the Euler vector field (\ref{Euler vector field jtildean}). Then,
\begin{equation}
Lie_E \eta^{\alpha\beta}=(d_{\alpha}+d_{\beta}-d_1)\eta^{\alpha\beta}.
\end{equation}
\end{lemma}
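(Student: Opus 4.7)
The plan is to derive this from the previous lemma (which established $E(g^{\alpha\beta}) = (d_\alpha + d_\beta)g^{\alpha\beta}$) together with the observation that $\eta^{\alpha\beta}$ is obtained from $g^{\alpha\beta}$ by differentiating with respect to the unit coordinate. By definition $\eta^{\alpha\beta} = \partial g^{\alpha\beta}/\partial \varphi_0$, and since $e = \partial/\partial \varphi_0 = \partial/\partial t^0$ (lemma on the unit in flat coordinates), we may rewrite $\eta^{\alpha\beta} = e(g^{\alpha\beta})$. Hence $\mathcal{L}_E\eta^{\alpha\beta} = E(e(g^{\alpha\beta}))$, and the task reduces to commuting $E$ past $e$.

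First I would compute the commutator $[E,e]$. Using the flat-coordinate expression $E = \sum_\gamma d_\gamma t^\gamma \partial/\partial t^\gamma$ and $e = \partial/\partial t^0$, a one-line calculation gives
\begin{equation*}
[E,e] = -d_0\,\frac{\partial}{\partial t^0} = -d_0\, e.
\end{equation*}
Since $d_0 = 1$ and $d_1 = (n+1-1)/n = 1$, we have $d_0 = d_1$, so equivalently $[E,e] = -d_1 e$.

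Next, applying this commutator identity to the function $g^{\alpha\beta}$ and invoking the previously established quasi-homogeneity $E(g^{\alpha\beta}) = (d_\alpha+d_\beta)g^{\alpha\beta}$, I get
\begin{equation*}
E(\eta^{\alpha\beta}) = E(e(g^{\alpha\beta})) = e(E(g^{\alpha\beta})) + [E,e](g^{\alpha\beta}) = (d_\alpha+d_\beta)\,e(g^{\alpha\beta}) - d_1\, e(g^{\alpha\beta}),
\end{equation*}
which collapses to $(d_\alpha+d_\beta-d_1)\eta^{\alpha\beta}$, as required.

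There is no real obstacle here: the lemma is essentially a formal consequence of the two inputs already proved, namely the quasi-homogeneity of the intersection form and the identification of the unit vector field with $\partial/\partial \varphi_0 = \partial/\partial t^0$. The only point that requires a bit of care is the bookkeeping of the constant $d_0$ versus $d_1$, but the equality $d_0 = d_1 = 1$ built into the definition of the degrees makes the two formulations equivalent.
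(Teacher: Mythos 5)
Your proposal is correct, and in fact the paper states Lemma \ref{degree eta} without any proof, so your argument fills in exactly the step the author leaves implicit: writing $\eta^{\alpha\beta}=e(g^{\alpha\beta})$ with $e=\partial/\partial t^0$, commuting $E$ past $e$ via $[E,e]=-d_0e$, and invoking the quasi-homogeneity $E(g^{\alpha\beta})=(d_\alpha+d_\beta)g^{\alpha\beta}$ from the preceding lemma. The bookkeeping $d_0=d_1=1$ is handled correctly, and the result is consistent with the later use of this lemma in the proof of Lemma \ref{main final lemma last chapter 1}.
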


\subsection{Discriminant locus and the monodromy of the orbit space of $\Ja(\tilde A_n)$}

This section relates the critical points of the (\ref{superpotentialAn}) with the zeros of the determinant of the intersection form $g^{*}$ (\ref{metrich1n cotangent}). Further, in section \ref{Construction of WDVV solution}, it will be built a Frobenius algebra in the sections of the orbit space of $\Ja(\tilde A_n)$, furthermore, the intersection form $g^{*}$ (\ref{metrich1n cotangent}) can be realised as the multiplication by the Euler vector field (\ref{Euler vector field jtildean}). The results of this section will imply that the intersection form $g^{*}$ (\ref{metrich1n cotangent}) is diagonalisable with eigenvalues generically different from 0, and this is equivalent to the Frobebius algebra be semisimple. Moreover, we can realise the isomorphism of orbit space of $\Ja(\tilde A_n)$ with the Hurwitz space $H_{1,n-1,0}$ as a Dubrovin Frobenius manifolds, see Theorem \ref{Dubrovin Frobenius structure of the orbit space Mirror symmetry}  for details.

\begin{definition}
Let $g^{*}$ the metric defined on (\ref{metrich1n cotangent}). The discriminant locus of the orbit space of $\Ja(\tilde A_n)$ $\mathbb{C}\oplus\mathbb{C}^{n+1}\oplus\mathbb{H}/\Ja(\tilde A_n)$ is defined by
\begin{equation}\label{discriminat locus}
\Sigma=\{ x \in  \mathbb{C}\oplus\mathbb{C}^{n+1}\oplus\mathbb{H}/\Ja(\tilde A_n): \text{det}(g^{*})=0      \}.
\end{equation}
\end{definition}

\begin{lemma}
The fixed points of the action $\Ja(\tilde A_n)$ belong to the discriminant locus (\ref{discriminat locus}).
\end{lemma}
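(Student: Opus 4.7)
The plan is to produce a nontrivial linear relation among the cotangent vectors $d\varphi_0|_p,\ldots,d\varphi_n|_p,dv_{n+1}|_p,d\tau|_p$ at any fixed point $p$; bilinearity of $g^*$ then forces $\det g^{\alpha\beta}(p)=0$, as follows. If $\sum_\alpha c_\alpha\,d\varphi^\alpha|_p=0$, then
$$
\sum_\alpha c_\alpha\,g^{\alpha\beta}(p)=g^*_p\Bigl(\sum_\alpha c_\alpha\,d\varphi^\alpha|_p,\;d\varphi^\beta|_p\Bigr)=0,
$$
so $(c_\alpha)$ lies in the kernel of the Gram matrix, hence the image of $p$ in the orbit space lies in $\Sigma$.

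The required linear relation is obtained from infinitesimal invariance. For any non-identity $\gamma\in\Ja(\tilde A_n)$ fixing $p$ and any invariant coordinate $\varphi$, differentiating $\varphi\circ\gamma=\varphi$ yields $(d\gamma_p)^* d\varphi|_p = d\varphi|_p$. Hence all $n+3$ invariant coordinate differentials at $p$ lie in $\mathrm{Fix}((d\gamma_p)^*)\subset T_p^*M$, a subspace whose dimension equals $\dim \mathrm{Fix}(d\gamma_p)$. Provided $d\gamma_p\ne I$, this common dimension is at most $n+2<n+3$, and the desired linear dependence follows.

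The main technical step is verifying $d\gamma_p \ne I$. Using the product decomposition $\gamma = h\cdot t$ with $h\in A_n\rtimes SL_2(\mathbb{Z})$ and $t$ in the translation subgroup $L^{\tilde A_n}\oplus L^{\tilde A_n}\oplus\mathbb{Z}$, one first observes that a pure non-identity translation has no fixed points: the equation $\lambda\tau_0+\mu=0$ with $\lambda,\mu$ real lattice vectors and $\tau_0 \in\mathbb{H}$ forces $\lambda=\mu=0$, after which $u\mapsto u+k$ fixes nothing unless $k=0$. Therefore $h\ne\mathrm{id}$ whenever $\gamma\ne\mathrm{id}$ has a fixed point. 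Non-trivial permutations in $A_n$ linearize as non-identity permutation matrices on $(dv_0,\ldots,dv_n)$, while elliptic elements of $SL_2(\mathbb{Z})$ (the only ones with fixed points in $\mathbb{H}$) multiply $d\tau$ by $(c\tau_0+d)^{-2}$, which is a nontrivial root of unity. The principal obstacle in this plan is the bookkeeping for mixed elements in which $t$ contributes an upper-triangular unipotent correction to the linearization of $h$; one must check that the non-identity eigenvalue of $dh_p$ survives this composition, so that $d\gamma_p\ne I$ in every case and the dimension count concluding the proof goes through.
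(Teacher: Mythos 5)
Your mechanism --- exhibit a linear relation among the differentials of the $n+3$ coordinate functions at a fixed point, then conclude that the Gram matrix $g^{\alpha\beta}(p)$ is degenerate --- is sound, and it is genuinely different from the paper's argument, which instead identifies the fixed locus concretely as the $A_n$ mirrors $v_i=v_j$ (modulo the lattice) and observes that there the Jacobian of $(\varphi_0,\dots,\varphi_n,v_{n+1},\tau)$ with respect to the flat coordinates has two equal columns, because every $\varphi_k$ is symmetric in $v_i$ and $v_j$. Your version, however, has two gaps. The first is the assertion $\varphi\circ\gamma=\varphi$ for all coordinates and all $\gamma$: by (\ref{jacobiform}) the $\varphi_k$ are only \emph{equivariant} under the $SL_2(\mathbb{Z})$ factor, with a multiplier $(c\tau+d)^{-k}$, and $v_{n+1}$ and $\tau$ also transform nontrivially under (\ref{jacobigroupAntilde}); differentiating the transformation law at a fixed point gives $(d\gamma_p)^*d\varphi_k|_p$ equal to a nonzero multiple of $d\varphi_k|_p$ plus a term proportional to $\varphi_k(p)\,d\tau|_p$, not a covector fixed by $(d\gamma_p)^*$. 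So your dimension count applies verbatim only to $\gamma\in A_n\ltimes(L^{\tilde A_n}\times L^{\tilde A_n}\times\mathbb{Z})$. This is repairable --- a nontrivial elliptic element of $SL_2(\mathbb{Z})$ can only fix points with $v=0$, which already lie on an $A_n$ mirror, so one may replace $\gamma$ by a transposition fixing the same point --- but that replacement is precisely the paper's reduction to the $A_n$ action and must be stated.

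The second gap is the one you flag yourself, and it should not be left open because it is the easy part: a translation $t$ acts affinely on $(u,v,\tau)$, so $dt_p$ is a constant unipotent matrix whose $v$-to-$v$ block is the identity (the corrections sit only in the $u$-row and the $\tau$-column). Hence the $v$-to-$v$ block of $d(w\cdot t)_p$ is exactly the permutation matrix of $w$, and $d\gamma_p=I$ forces $w=\mathrm{id}$, whence $\gamma=\mathrm{id}$ by your (correct) observation that nontrivial pure translations are fixed-point free. With these two points supplied your proof closes. One last remark: your argument proves containment of the fixed locus in $\Sigma$ without identifying that locus, whereas the paper's computation also exhibits it as the image of the mirrors $v_i-v_j\in\mathbb{Z}\oplus\tau\mathbb{Z}$, which is what Lemma \ref{superpotential vs discriminant locus} relies on next.
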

\begin{proof}
Note that the fixed points of the action $\Ja(\tilde A_n)$ on $\mathbb{C}\oplus\mathbb{C}^{n+1}\oplus\mathbb{H}\ni (u,v_0,v_1,..,v_{n-1},v_{n+1},\tau)$  are the fixed points of the action $A_n$ on $\mathbb{C}^n \ni (v_0,v_1,..,v_{n_1})$. Therefore, the fixed points are the hyperplanes 
\begin{equation}\label{discriminant locus}
v_i=v_j \quad i,j\in \{0,1,..,n-1\}.
\end{equation}
The intersection form (\ref{metrich1n cotangent})  is given by
\begin{equation}
\begin{split}
g&=\sum_{i=0}^n \left.dv_{i}^2\right|_{\sum_{i=0}^n v_i=0}-n(n+1)dv_{n+1}^2+2du d\tau\\
&=\sum_{i,j=0}^{n-1} A_{ij}dv_idv_j-n(n+1)dv_{n+1}^2+2du d\tau\\
\end{split}
\end{equation}
The intersection form is given by 
\begin{equation}
\begin{split}
g^{*}&=\sum_{i,j=0}^{n-1} A_{ij}^{-1}\frac{\partial}{\partial v_i}\otimes \frac{\partial}{\partial v_j}-\frac{1}{n(n+1)}\frac{\partial}{\partial v_{n+1}}\otimes \frac{\partial}{\partial v_{n+1}}+\frac{\partial}{\partial u}\otimes\frac{\partial}{\partial \tau}+\frac{\partial}{\partial \tau}\otimes\frac{\partial}{\partial u}\\
\end{split}
\end{equation}
became degenerate on the hyperplanes (\ref{discriminant locus}), because two columns of the matrix $A_{ij}^{-1}$ became proportional.

\end{proof}

\begin{lemma}\label{superpotential vs discriminant locus}
The function $\lambda(p,u,v_0,v_1,..,v_{n+1},\tau)$  defined on (\ref{superpotentialAn})  has simple critical points if and only if $(u,v_0,v_1,..,v_{n+1},\tau)$ is a fixed point of the action $\Ja(\tilde A_n)$.
\end{lemma}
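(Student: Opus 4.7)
The plan is to analyze the critical-point equation of $\lambda$ viewed as a meromorphic function on the elliptic curve $E_\tau = \mathbb{C}/(\mathbb{Z}+\tau\mathbb{Z})$, and to match coincidences/degeneracies of the critical points with the hyperplanes fixed by non-trivial elements of $\Ja(\tilde A_n)$. The key computational tool is the logarithmic derivative
\begin{equation*}
\frac{\partial_p \lambda(p)}{\lambda(p)} = \sum_{i=0}^n \frac{\theta_1'(p-v_i+v_{n+1},\tau)}{\theta_1(p-v_i+v_{n+1},\tau)} - n\,\frac{\theta_1'(p,\tau)}{\theta_1(p,\tau)} - \frac{\theta_1'(p+(n+1)v_{n+1},\tau)}{\theta_1(p+(n+1)v_{n+1},\tau)},
\end{equation*}
which expresses the critical equation $\partial_p\lambda=0$ as a meromorphic identity on $E_\tau$.

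First I would perform a pole/zero count: on the torus $E_\tau$, the function $\partial_p\lambda$ has poles of orders dictated by the poles of $\lambda$ (namely one of order $n+1$ at $p=0$ and one of order $2$ at $p=-(n+1)v_{n+1}$), so by the degree principle for elliptic functions it possesses $n+3$ zeros counted with multiplicity. In the generic stratum these are $n+3$ distinct zeros, yielding simple critical points. I would then show that two such zeros of $\partial_p\lambda$ merge precisely when two of the "distinguished marks" of $\lambda$ on $E_\tau$, namely the divisor data $\{v_0-v_{n+1},\ldots,v_n-v_{n+1}\}$ together with the poles $\{0,-(n+1)v_{n+1}\}$, collide modulo $\mathbb{Z}\oplus\tau\mathbb{Z}$; this degeneration can be detected by factoring $\lambda$ locally and computing $\partial_p^2\lambda$ explicitly at the candidate critical point.

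Next I would match these collision conditions with the fixed locus of $\Ja(\tilde A_n)$ identified in the preceding lemma: the $A_n$-factor contributes $v_i=v_j$; the translation sublattice identifies $v_i\equiv v_j \pmod{\mathbb{Z}\oplus\tau\mathbb{Z}}$; and the $SL_2(\mathbb{Z})$-factor contributes the modular fixed loci which enter through collisions involving $v_{n+1}$. Using the transformation rules (\ref{transformtheta}) and (\ref{transformtheta2}), I would verify that each such coincidence yields a non-simple critical point (or, in the converse direction, that any non-simple critical point forces such a coincidence via the residue theorem applied to $\partial_p\lambda/\lambda$).

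The hard part will be keeping track of the boundary case where a critical point collides with one of the zeros or poles of $\lambda$ itself, since then the local factorization changes form and the naive Taylor analysis of $\partial_p^2\lambda$ must be replaced by a careful expansion in local elliptic coordinates. I would handle this by passing to the Hurwitz-space description from Lemma \ref{jacobiform1}: under the identification of the orbit space with $H_{1,n-1,0}$, simplicity of critical points of $\lambda$ is equivalent to distinctness of the branch values, and the locus where two branch values collide is cut out exactly by $\det(g^*)=0$. Combining this with the previous lemma which places the $\Ja(\tilde A_n)$-fixed locus inside the discriminant, and checking the reverse inclusion by exhibiting, for each irreducible component of the discriminant, a non-trivial stabilizer in $\Ja(\tilde A_n)$, would complete the equivalence as stated.
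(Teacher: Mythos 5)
Your degree count is fine ($\partial_p\lambda$ has $n+3$ zeros on the torus, counted with multiplicity), but the core of your argument rests on a misidentification of the discriminant. You assert that, under the Hurwitz identification, $\det(g^{*})=0$ cuts out the locus where two branch values collide, equivalently where two zeros of $\partial_p\lambda$ merge. That is a different degeneration: by Lemma \ref{distinct roots}, $\det g^{*}$ is proportional to $\prod_i u_i$, so the discriminant is the locus where some critical \emph{value} vanishes, i.e. where $\lambda$ and $d\lambda$ vanish simultaneously, i.e. where $\lambda$ has a repeated root. Note that at a double root, $\lambda\approx c\,(p-a)^2$ locally, so $a$ is a \emph{nondegenerate} zero of $\partial_p\lambda$ with critical value $0$: nothing merges there. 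Conversely, two critical points can merge ($\lambda'(q)=\lambda''(q)=0$) at generic positions of the zeros and poles of $\lambda$; that locus is the caustic $u_i=u_j$, where canonical coordinates degenerate but $g^{*}$ remains nondegenerate. So your claimed equivalence ``zeros of $\partial_p\lambda$ merge iff the divisor marks collide'' fails in both directions, and the residue-theorem converse you sketch would be attacking the wrong locus. Your invocation of the $SL_2(\mathbb{Z})$ fixed loci is also a red herring: the fixed-point lemma preceding this one produces only the reflection hyperplanes $v_i=v_j$ (together with their lattice translates coming from composing a transposition with a translation); special values of $\tau$ play no role.

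The paper's own proof is a two-line argument through the correspondence of Lemma \ref{jacobiform1}: under (\ref{lambda0}) the zeros of $\lambda$ are exactly the points $v_i-v_{n+1}$ modulo $\mathbb{Z}\oplus\tau\mathbb{Z}$, so $(u,v,\tau)$ is fixed by a nontrivial element of $\Ja(\tilde A_n)$ precisely when $v_i\equiv v_j \pmod{\mathbb{Z}\oplus\tau\mathbb{Z}}$ for some $i\neq j$, i.e. precisely when $\lambda$ has a repeated root; at such parameters $\lambda$ has a critical point sitting over $\lambda=0$ (this is what ``non-simple'' means here, as the lemma is then used in Lemma \ref{distinct roots} via $u_i=\lambda(q_i)=0$ forcing $\det g^{*}=0$). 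To repair your proof, discard the collision-of-critical-points analysis and establish the two equivalences actually needed: (i) $\lambda$ has a repeated root $\Leftrightarrow$ $v_i\equiv v_j$ $\Leftrightarrow$ the parameter point lies on the fixed locus of $\Ja(\tilde A_n)$; and (ii) $\lambda$ has a repeated root $\Leftrightarrow$ some critical value $u_i$ equals $0$ $\Leftrightarrow$ $\prod_i u_i=0$ $\Leftrightarrow$ $\det g^{*}=0$, the last step by Lemma \ref{distinct roots}.
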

\begin{proof}
Using the local isomorphism given by \ref{lambda0}
\begin{equation}
[(u,v_0,v_1,..,v_{n-1},v_{n+1},\tau)]\longleftrightarrow \lambda(v)=e^{-2\pi i u}\frac{\prod_{i=0}^{n}\theta_1(z-v_i,\tau)}{\theta_1^{n}(v,\tau)\theta_1(v+(n+1)v_{n+1},\tau)},
\end{equation}
we can realise the discriminant locus as the space of parameters of $\lambda(p,u,v_0,v_1,..,v_{n+1},\tau)$ such that $\lambda(p,u,v_0,v_1,..,v_{n+1},\tau)$ has repeated roots. In these cases $\lambda(p,u,v_0,v_1,..,v_{n+1},\tau)$ has non simple critical points.

\end{proof}

\begin{definition}
The canonical coordinates $(u_1,u_2,..,u_{n+2})$ of the orbit space $\Ja(\tilde A_n)$ is given by the following relation
\begin{equation}
\begin{split}
\lambda(q_i)&=u_i,\\
\lambda^{\prime}(q_i)&=0.
\end{split}
\end{equation}
\end{definition}

\begin{lemma}\label{distinct roots}
The determinant of the intersection form $g^{*}$ defined on (\ref{metrich1n}) is proportional to $\prod_{i=1}^{n+2}u_i$.

\end{lemma}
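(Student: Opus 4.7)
The plan is to exploit the identification of the orbit space with a suitable covering of the Hurwitz space $H_{1,n-1,0}$ provided by Lemma \ref{jacobiform1}, together with the Dubrovin--Frobenius structure on Hurwitz spaces reviewed in Section \ref{Reconstruction of Dubrovin Frobenius manifold }, to diagonalize both $g^{*}$ and $\eta^{*}$ simultaneously in the canonical coordinates $(u_1,\ldots,u_{n+2})$ on an open dense subset of the orbit space.

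First I would note, using Lemma \ref{superpotential vs discriminant locus}, that away from the discriminant locus $\Sigma$ the superpotential $\lambda$ defined by (\ref{superpotentialAn}) has exactly $n+2$ simple critical points $q_1,\ldots,q_{n+2}$, so the canonical coordinates $u_i=\lambda(q_i)$ are well defined smooth functions there. Next, following the standard Hurwitz/Dubrovin construction (Theorem \ref{flatcoord} and the formulas preceding (\ref{strucconst})), I would show that in these coordinates both metrics of the pencil become diagonal, with residue formulas
\begin{equation*}
\eta^{*}(du_i,du_j)=-\sum_{k}\res_{d\lambda=0}\frac{\partial_{u_i}\lambda\,\partial_{u_j}\lambda}{d\lambda}\,dp=\eta^{ii}\delta_{ij},
\end{equation*}
\begin{equation*}
g^{*}(du_i,du_j)=-\sum_{k}\res_{d\log\lambda=0}\frac{\partial_{u_i}\log\lambda\,\partial_{u_j}\log\lambda}{d\log\lambda}\,dp=u_i\,\eta^{ii}\delta_{ij}.
\end{equation*}
The second identity follows from the first by the elementary computation $d\log\lambda=d\lambda/\lambda$ and $\lambda(q_i)=u_i$ at the critical points, so the factor $u_i$ is produced when converting the residue at $d\log\lambda=0$ into a residue at $d\lambda=0$.

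Taking determinants in the basis $(du_1,\ldots,du_{n+2})$ then gives
\begin{equation*}
\det(g^{*})=\Bigl(\prod_{i=1}^{n+2}u_i\Bigr)\,\det(\eta^{*}).
\end{equation*}
By Theorem \ref{main theorem 1} the Saito metric $\eta^{*}$ is flat with constant, non-degenerate Gram matrix in the flat coordinates $(t^0,\ldots,t^n,v_{n+1},\tau)$, so $\det(\eta^{*})$ is (up to the Jacobian of the change of coordinates to canonical coordinates) a nowhere vanishing function on the regular locus. Hence $\det(g^{*})$ is proportional to $\prod_{i=1}^{n+2}u_i$, as claimed.

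The main technical obstacle is justifying the residue formulas above in the present setting: one must check that the identification in Lemma \ref{jacobiform1} sends the intersection form (\ref{metrich1n cotangent}) and the Saito metric $\eta^{*}$ of (\ref{metric eta def}) to the metrics on $H_{1,n-1,0}$ produced by the appropriate primary differential of Section \ref{Reconstruction of Dubrovin Frobenius manifold }. Once this compatibility is in place the diagonalization and the factor $u_i$ are immediate; the vanishing locus of $\det(g^{*})$ is then the union of the hyperplanes $\{u_i=0\}$, recovering the discriminant description of $\Sigma$.
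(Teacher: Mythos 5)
Your proposal takes a genuinely different route from the paper, but as written it has a gap that you yourself flag and do not close, and that gap is not a technicality: it is a circularity with respect to the paper's logical structure. Your argument rests on the residue formulas
\begin{equation*}
\eta(\partial_{u_i},\partial_{u_j})=-\sum\res_{d\lambda=0}\frac{\partial_{u_i}\lambda\,\partial_{u_j}\lambda}{d\lambda}\,dp,\qquad
g(\partial_{u_i},\partial_{u_j})=-\sum\res_{d\log\lambda=0}\frac{\partial_{u_i}\log\lambda\,\partial_{u_j}\log\lambda}{d\log\lambda}\,dp,
\end{equation*}
i.e.\ on the assertion that the intersection form (\ref{metrich1n cotangent}) and the Saito metric of the orbit space coincide, under the biholomorphism of Lemma \ref{jacobiform1}, with the metrics attached to a primary differential in Theorem \ref{flatcoord}. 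In the paper that identification is the \emph{conclusion} of the construction (Theorem \ref{Dubrovin Frobenius structure of the orbit space Mirror symmetry}), and its proof passes through Proposition \ref{same Euler, unit, and intersection form} and Lemma \ref{main final lemma last chapter 4}, both of which invoke the present lemma to know that the characteristic equation $\det(g^{\alpha\beta}-u\,\eta^{\alpha\beta})=0$ has $n+2$ roots given by the critical values. So unless you verify the residue formula for $g^{*}$ directly from the explicit expression (\ref{metrich1n cotangent}) — a computation comparable in weight to the generating-function calculations of Section \ref{Intersection form jantilde} — your proof assumes what the lemma is needed to establish. (A smaller point: your displayed formulas evaluate the \emph{contravariant} metrics on $du_i,du_j$ but the residue expressions compute the covariant pairings of $\partial_{u_i},\partial_{u_j}$; since both are diagonal this does not affect the conclusion $g^{ii}=u_i\eta^{ii}$, but the identity $\det(g^{*})=\bigl(\prod_i u_i\bigr)\det(\eta^{*})$ should be derived from the inverse matrices.)

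For contrast, the paper's own proof is far more elementary and avoids the Frobenius structure entirely: $u_i=0$ means $\lambda$ has a critical point with critical value zero, i.e.\ a repeated root, and by Lemma \ref{superpotential vs discriminant locus} repeated roots occur exactly on the discriminant $\{\det g^{*}=0\}$, which is the union of the reflection hyperplanes $v_i=v_j$ where two columns of $A_{ij}^{-1}$ become proportional. Hence the zero locus of $\prod_i u_i$ is contained in (indeed coincides with) that of $\det g^{*}$, giving the proportionality. If you want to keep your route, the honest way to do it is to first prove the residue representation of $g^{*}$ independently; otherwise the discriminant argument is the one that fits where this lemma sits in the development.
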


\begin{proof}
If $u_i=\lambda(q_i,u,v_0,..,v_{n+1},\tau)=0$, we have that det$g^{*}=0$ due to lemma (\ref{superpotential vs discriminant locus}), then $u_i$ are zeros of the equation det$g^{*}=0$.

\end{proof}

\begin{proposition}\label{same Euler, unit, and intersection form}
In the canonical coordinates $(u_1,u_2,..,u_{n+2})$ the unit vector field (\ref{unit vector field jtildean}), the Euler vector field (\ref{Euler vector field jtildean}), and the intersection form (\ref{metrich1n})  have the following form
\begin{equation}
\begin{split}
g^{ii}&=u^i\eta^{ii}\delta_{ij},\\
e&=\sum_{i=1}^{n+2}\frac{\partial}{\partial u_i},\\
E&=\sum_{i=1}^{n+2} u_i\frac{\partial}{\partial u_i}.
\end{split}
\end{equation}
where $\eta^{ii}$ are the coefficients  of second metric $\eta^{*}$ in canonical coordinates.
\end{proposition}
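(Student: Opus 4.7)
The plan is to verify each of the three identities in turn, leveraging the superpotential formula (\ref{superpotentialAn}) and the identification of the orbit space with the Hurwitz space $H_{1,n-1,0}$ from Lemma \ref{jacobiform1}.

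First I would dispatch the unit vector field. Differentiating the defining relation $u_i=\lambda(q_i;\varphi_0,\dots,\varphi_n,v_{n+1},\tau)$ with respect to $\varphi_0$ and using the chain rule gives
\begin{equation*}
e(u_i)=\left.\frac{\partial\lambda}{\partial\varphi_0}\right|_{p=q_i}+\lambda^{\prime}(q_i)\,e(q_i).
\end{equation*}
From (\ref{superpotentialAn}) we read off $\partial\lambda/\partial\varphi_0=1$, and by the very definition of $q_i$ we have $\lambda^{\prime}(q_i)=0$, so $e(u_i)=1$ for every $i$. Hence $e=\sum_{i=1}^{n+2}\partial/\partial u_i$.

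Next I would treat the Euler vector field. The field $E=-(2\pi i)^{-1}\partial/\partial u$ annihilates $v_{n+1}$ and $\tau$, and by (\ref{degree of }) satisfies $E(\varphi_i)=\varphi_i$ for all $i\le n$. Since the $\wp$- and $\zeta$-factors in (\ref{superpotentialAn}) depend only on $z,v_{n+1},\tau$, every summand of $\lambda$ is homogeneous of weight one under $E$, giving $E(\lambda)=\lambda$. Evaluating at the critical point and using again $\lambda^{\prime}(q_i)=0$ yields $E(u_i)=\lambda(q_i)=u_i$, so $E=\sum_{i=1}^{n+2}u_i\,\partial/\partial u_i$.

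Finally, for the diagonalization of $g^{*}$ and $\eta^{*}$, I would invoke the Hurwitz-space residue formulas from Theorem \ref{flatcoord} with primary differential $\phi=dp$, which are applicable via the local isomorphism of Lemma \ref{jacobiform1}:
\begin{equation*}
g^{*}(du_i,du_j)=-\sum_{k}\res_{q_k}\frac{\partial_\lambda u_i\,\partial_\lambda u_j}{d\log\lambda}\,dp,\qquad \eta^{*}(du_i,du_j)=-\sum_{k}\res_{q_k}\frac{\partial_\lambda u_i\,\partial_\lambda u_j}{d\lambda}\,dp.
\end{equation*}
By Lemma \ref{superpotential vs discriminant locus} and Lemma \ref{distinct roots}, at a generic point the $q_k$ are simple critical points with distinct values $u_k$, so the local parameter $x_k=(\lambda-u_k)^{1/2}$ is well-defined near $q_k$. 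Near $q_k$ one has $\partial_\lambda u_i=\delta_{ik}+O(x_k^2)$, so the residues vanish unless $i=j=k$. The extra factor $u_k$ arising from $d\log\lambda=d\lambda/\lambda=d\lambda/(u_k+O(x_k^2))$ yields $g^{ii}=u_i\eta^{ii}$ and $g^{ij}=0$ for $i\ne j$, confirming diagonality.

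The main obstacle is the justification of the third step: one must first certify that the orbit-space intersection form (\ref{metrich1n cotangent}) coincides with the Hurwitz $\phi=dp$ metric, so that the residue formulas apply. This requires matching primary differentials under Lemma \ref{jacobiform1} and is part of the identification later completed in Theorem \ref{Dubrovin Frobenius structure of the orbit space in the introduction}; the first two items, by contrast, are direct consequences of $E(\lambda)=\lambda$ and $e(\lambda)=1$.
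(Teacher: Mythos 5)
Your handling of the first two items is correct and follows a genuinely different route from the paper. You compute $e(u_i)$ and $E(u_i)$ directly through the superpotential, using $\partial\lambda/\partial\varphi_0=1$, $E(\lambda)=\lambda$ and $\lambda^{\prime}(q_i)=0$ to kill the contribution from the motion of the critical point; this is clean and self-contained. The paper instead reads off $e=\sum_i\partial/\partial u_i$ from the explicit form $u^i=t^0-y^i(t^1,\dots,\tau)$ of the roots of the characteristic equation $\det(g^{\alpha\beta}-u\,\eta^{\alpha\beta})=0$ (so that $\partial u^i/\partial t^0=1$), and then deduces the form of $E$ from the commutation relation $[E,e]=-e$. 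Your argument for these two items is, if anything, more direct, since it does not presuppose the diagonalization step.

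The third item is where there is a genuine gap. The residue formulas you quote from Theorem \ref{flatcoord} compute the intersection form and Saito metric of the Hurwitz-space Dubrovin Frobenius structure, and at this point in the paper it has \emph{not} been established that the orbit-space intersection form (\ref{metrich1n cotangent}) coincides with the Hurwitz metric associated to $\phi=dp$; that identification is precisely the content of Theorem \ref{Dubrovin Frobenius structure of the orbit space Mirror symmetry}, whose proof cites the present proposition. Lemma \ref{jacobiform1} only provides a biholomorphism of spaces, not a matching of metrics, so as written your third step is circular. The paper's own proof stays entirely on the orbit-space side: since $g^{\alpha\beta}$ is at most linear in $t^0$ and $\eta^{\alpha\beta}=\partial_0 g^{\alpha\beta}$, one has $\det(g^{\alpha\beta}-u\,\eta^{\alpha\beta})=\det g^{\alpha\beta}(t^0-u,t^1,\dots,\tau)$, whose roots are by Lemma \ref{distinct roots} the $n+2$ distinct critical values; simplicity of the roots then yields simultaneous diagonalizability of the pencil and $g^{ii}=u^i\eta^{ii}\delta_{ij}$. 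To rescue your approach you would need to verify independently that (\ref{metrich1n cotangent}) is computed by the residue formula, which amounts to redoing the generating-function computation of Section \ref{Intersection form jantilde} rather than citing the Hurwitz-space theory.
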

\begin{proof}
Note that $g^{*}$ is diagonalisable with distinct  eigenvalues   if the following equation
\begin{equation}\label{equation det An}
det(\eta_{\alpha\mu}g^{\mu\beta}-u\delta_{\alpha}^{\beta})=0,
\end{equation}
has only simple roots. Since $det(\eta_{\alpha\mu})\neq 0$, the equation (\ref{equation det An}) is equivalent to
\begin{equation}\label{equation det 1 An}
det(g^{\alpha\beta}-u\eta^{\alpha\beta})=0.
\end{equation}
Using that $\eta^{\alpha\beta}=\partial_0g^{\alpha\beta}$, we have that
\begin{equation}\label{equation det 2 An}
det(g^{\alpha\beta}-u\eta^{\alpha\beta})=det \left ( g^{\alpha\beta}(t^0-u,t^1,t^2,t^3,..,t^n,v_{n+1},\tau \right)=0.
\end{equation}
Due to the lemma \ref{distinct roots} the equation (\ref{equation det 1 An}) has $n+2$ distinct roots 
\begin{equation}\label{canonical coordinates}
u^i=t^1-y^i(t^1,t^2,t^3,..,t^n,v_{n+1},\tau).
\end{equation}
In the coordinates $(u^1,u^2,..,u^{n+2})$ the matrix $g^{i}_j$ is diagonal, then
\begin{equation}\label{intersection form in canonical coordinates}
g^{ij}=u^i\eta^{ij}\delta_{ij}
\end{equation}
 and the unit vector field have the following form
\begin{equation}\label{canonical coordinates}
\frac{\partial}{\partial t^1}=\sum_{i=1}^{n+2} \frac{\partial u_i}{\partial t^1}\frac{\partial}{\partial u_i}=\sum_{i=1}^{n+2} \frac{\partial}{\partial u_i}
\end{equation}
Moreover, since 
\begin{equation}
[E,e]=[\sum_{\alpha=0}^{n}d_{\alpha}t^{\alpha}\frac{\partial}{\partial t^{\alpha}},\frac{\partial}{\partial t^1}]=-e,
\end{equation}
the Euler vector field in the coordinates $(u^1,u^2,..,u^{n+2})$ takes the following form
\begin{equation}\label{Euler vf in canonical coordinates}
E=\sum_{i=1}^{n+2} u^i\frac{\partial}{\partial u_i}.
\end{equation}
Lemma proved.

\end{proof}

\subsection{Construction of WDVV solution}\label{Construction of WDVV solution}
The main aim of this section is to extract a WDVV equation from the data of the group $\Ja(\tilde A_n)$.

\begin{lemma}
The orbit space of $\Ja(\tilde A_n)$ carries a flat pencil of metrics
\begin{equation}\label{flat pencil Jtildean}
\begin{split}
g^{\alpha\beta}, \quad \eta^{\alpha\beta}:=\frac{\partial g^{\alpha\beta}}{\partial t^0}
\end{split}
\end{equation}
with the correspondent Christoffel symbols.
\begin{equation}
\begin{split}
\Gamma_{\gamma}^{\alpha\beta}, \quad \eta^{\alpha\beta}:=\frac{\partial \Gamma_{\gamma}^{\alpha\beta}}{\partial t^0}
\end{split}
\end{equation}
\end{lemma}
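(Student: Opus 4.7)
The plan is to invoke Dubrovin's Lemma \ref{mainlemmaflatpencil An} with the coordinate $t^0$ (equivalently $\varphi_0$, since by \eqref{definition of t0} the difference $t^0-\varphi_0$ does not involve $\varphi_0$, so $\partial/\partial t^0=\partial/\partial\varphi_0$ in the coordinate system $(\varphi_0,\varphi_1,\dots,\varphi_n,v_{n+1},\tau)$) playing the role of the distinguished linear variable $a_{n+1}$. Once the three hypotheses of that lemma are checked, the flat pencil structure, together with the explicit relation $\Gamma^{\alpha\beta}_{\gamma,\eta}=\partial\Gamma^{\alpha\beta}_{\gamma,g}/\partial t^0$, is immediate.

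The three hypotheses are verified as follows. First, $g^{*}$ is flat: in the original coordinates $(u,v_0,\dots,v_{n+1},\tau)$ the metric \eqref{metrich1n} has constant coefficients, so it has vanishing curvature, and $\Ja(\tilde A_n)$-invariance (Proposition \ref{prop metric invariant}) ensures it descends to the orbit space. Second, both $g^{\alpha\beta}$ and $\Gamma^{\alpha\beta}_{\gamma}$ are at most linear in $t^0$. Indeed, Corollary \ref{main corollary eta} (together with the relation $\partial/\partial t^0=\partial/\partial\varphi_0$ just noted) shows that $\eta^{\alpha\beta}=\partial g^{\alpha\beta}/\partial t^0$ has coefficients that involve only $\varphi_1,\dots,\varphi_n,v_{n+1},\tau$ and not $\varphi_0$; differentiating once more in $t^0$ yields $\partial^2 g^{\alpha\beta}/\partial(t^0)^2=0$, so $g^{\alpha\beta}$ is affine in $t^0$. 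The analogous statement for the Christoffel symbols is exactly Lemma \ref{Levi Civita connection flat pencil}. Third, $\det g^{*}\neq 0$ on a Zariski-dense open subset: by Lemma \ref{distinct roots} the determinant is proportional to $\prod_{i=1}^{n+2}u_i$, which is generically nonzero, as also confirmed by Corollary \ref{corollary non degenerate}.

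Applying Lemma \ref{mainlemmaflatpencil An} now gives that for every $\lambda\in\mathbb{C}$ the metric $g^{\alpha\beta}+\lambda\,\eta^{\alpha\beta}$ is flat, that its Levi-Civita connection decomposes as $\Gamma^{\alpha\beta}_{\gamma,g}+\lambda\,\partial\Gamma^{\alpha\beta}_{\gamma,g}/\partial t^0$, and hence that $\eta^{\alpha\beta}$ with Christoffel symbols $\partial\Gamma^{\alpha\beta}_{\gamma,g}/\partial t^0$ is itself flat—consistent with the independent verification of the flatness of $\eta^{*}$ provided by Theorem \ref{main theorem 1}, which in fact furnishes explicit flat coordinates $(t^0,t^1,\dots,t^n,v_{n+1},\tau)$.

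The main conceptual obstacle has already been resolved by the preceding sections: it lies in producing a privileged coordinate in which simultaneously the metric and its connection are affine. The affineness of $g^{\alpha\beta}$ in $t^0$ follows quickly from the generating formula \eqref{generating formula of Mellipticeta} and the $\varphi_0$-independence of the right-hand side of \eqref{formula main theorem for eta varphi sem tilde}, but the affineness of $\Gamma^{\alpha\beta}_{\gamma}$ is genuinely delicate: it is established in Lemma \ref{Levi Civita connection flat pencil} through a case analysis exploiting the graded structure of the extended ring $\widetilde E_{\bullet,\bullet}[\varphi_0,\dots,\varphi_n]$ and the Laurent tail estimates \eqref{lowest degree taylor expansion of meromorphic jacobi forms}. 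With those results in hand, the present lemma becomes a direct application of Dubrovin's criterion.
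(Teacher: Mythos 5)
Your proposal is correct and follows the same route as the paper: the paper's own proof is a one-line appeal to Dubrovin's linearity criterion, and you have simply made explicit the verification of its hypotheses (flatness of $g^{*}$ in the coordinates $(u,v,\tau)$, linearity of $g^{\alpha\beta}$ and $\Gamma^{\alpha\beta}_{\gamma}$ in $t^0$ via Corollary \ref{main corollary eta} and Lemma \ref{Levi Civita connection flat pencil}, and generic nondegeneracy via Lemma \ref{distinct roots}). Note that you correctly invoke Lemma \ref{mainlemmaflatpencil An}, whereas the paper's proof cites Lemma \ref{flat pencil almost Dubrovin frobenius manifold Coxeter}, which appears to be a misdirected reference.
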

\begin{proof}   
The metric (\ref{flat pencil Jtildean}) satisfies the hypothesis of Lemma \ref{flat pencil almost Dubrovin frobenius manifold Coxeter} which proves the desired result.

\end{proof}

\begin{lemma}\label{main final lemma last chapter 1}
Let the intersection form be (\ref{metrich1n}), unit vector field be (\ref{unit vector field jtildean}), and Euler vector field be (\ref{Euler vector field jtildean}). Then, there exist a  function 
\begin{equation}\label{WDVV solution}
F(t^0,t^1,t^2,..,t^n.v_{n+1},\tau)=-\frac{(t^0)^2\tau}{4\pi i}+\frac{t^0}{2}\sum_{\alpha,\beta\neq 0,\tau}\eta_{\alpha\beta}t^{\alpha}t^{\beta}+G(t^1,t^2,..,t^n,v_{n+1},\tau),
\end{equation}
such that
\begin{equation}\label{quasi homogeneous condition of Dubrovin Frobenius structure}
\begin{split}
&Lie_{E}F=2F+\text{quadratic terms},\\
&Lie_{E}\left(  F^{\alpha\beta}   \right)=g^{\alpha\beta},\\
&\frac{\partial ^2G(t^1,t^2,..,t^n,v_{n+1},\tau)}{\partial t^{\alpha}\partial t^{\beta}} \in \widetilde E_{\bullet,\bullet}[t^1,t^2,..,t^n,\frac{1}{t^n}],
\end{split}
\end{equation}
where 
\begin{equation}
F^{\alpha\beta}  =\eta^{\alpha\alpha^{\prime}}\eta^{\beta\beta^{\prime}}\frac{\partial F^2}{\partial t^{\alpha^{\prime}} \partial t^{\beta^{\prime}} }.
\end{equation}

\end{lemma}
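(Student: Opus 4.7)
The plan is to combine the flat pencil structure on the orbit space with Dubrovin's correspondence between flat pencils of metrics and WDVV solutions (Lemma \ref{flat pencil almost Dubrovin frobenius manifold Coxeter}). I first verify that $(g^{*}, \eta^{*})$ genuinely forms a flat pencil in the coordinates $(t^0, t^1, \ldots, t^n, v_{n+1}, \tau)$: by Lemma \ref{lemma gab index} and Lemma \ref{Levi Civita connection flat pencil} the coefficients $g^{\alpha\beta}$ and $\Gamma^{\alpha\beta}_\gamma$ are polynomial of degree at most one in $t^0$, and $\eta^{\alpha\beta} = \partial_{t^0} g^{\alpha\beta}$ is flat by Theorem \ref{main theorem 1}. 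Hence the hypotheses of Lemma \ref{mainlemmaflatpencil An} are satisfied with $a_{n+1}$ replaced by $t^0$, and the flat pencil structure follows.

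Applying Lemma \ref{flat pencil almost Dubrovin frobenius manifold Coxeter} to this pencil produces a vector field $f^\gamma$ satisfying (\ref{eq1 Coxeter})--(\ref{eq4 Coxeter}). The compatibility equations (\ref{eq3 Coxeter}) and (\ref{eq4 Coxeter}) are precisely the integrability conditions ensuring that $f^\gamma = \eta^{\gamma\mu}\partial_\mu F$ for a scalar potential $F$, and then (\ref{eq2 Coxeter}) expresses $g^{\alpha\beta}$ as a symmetric second-derivative combination in $F$ (up to the $c\,\eta^{\alpha\beta}$ shift). Applying $\mathrm{Lie}_E$ and invoking Lemma \ref{degree eta} together with the degrees established for $g^{\alpha\beta}$ yields the identity $\mathrm{Lie}_E(F^{\alpha\beta}) = g^{\alpha\beta}$ in (\ref{quasi homogeneous condition of Dubrovin Frobenius structure}). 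Quasi-homogeneity of $F$ up to quadratic terms is then immediate from $\mathrm{Lie}_E t^\alpha = d_\alpha t^\alpha$.

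Next, I would pin down the explicit $t^0$-dependence. Since $g^{\alpha\beta}$ is at most linear in $t^0$, double integration forces $F$ to be at most cubic in $t^0$. The pure quadratic-in-$t^0$ coefficient is determined by $g^{\tau\tau}$, whose linear-in-$t^0$ part is $\eta^{\tau 0} = -2\pi i$ by Theorem \ref{main theorem 1}, producing $-(t^0)^2\tau/(4\pi i)$. The linear-in-$t^0$ block comes from the off-diagonal entries $g^{\alpha 0}$ for $\alpha \neq 0, \tau$, yielding $\tfrac{t^0}{2}\sum_{\alpha, \beta \neq 0, \tau}\eta_{\alpha\beta}t^\alpha t^\beta$. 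The residual $t^0$-independent part of $F$ is the function $G(t^1, \ldots, t^n, v_{n+1}, \tau)$.

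The main obstacle is proving the ring-theoretic condition $\partial^2 G/\partial t^\alpha \partial t^\beta \in \widetilde E_{\bullet,\bullet}[t^1, \ldots, t^n, 1/t^n]$. These second derivatives agree, modulo the explicit contributions already absorbed into the $t^0$-dependent part of the ansatz, with the $t^0$-constant piece of $g^{\alpha\beta}$; by Lemma \ref{lemma gab index} the full coefficient $g^{\alpha\beta}$ lies in $\widetilde E_{\bullet,\bullet}[t^0, t^1, \ldots, t^n, 1/t^n]$, and since this ring is graded by the power of $t^0$, its constant-in-$t^0$ term remains inside $\widetilde E_{\bullet,\bullet}[t^1, \ldots, t^n, 1/t^n]$. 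The subtle check is that no additional elliptic or modular denominators outside $\widetilde E_{\bullet,\bullet}$ appear when one integrates back to $G$; this is controlled by the generating formula (\ref{generating formula of Mellipticeta}) together with the explicit expressions for $\eta^{*}(d\varphi_i, d\varphi_j)$ from Theorem \ref{main lemma coefficients eta in varphi coordinates in the introduction}, which confirm that all coefficients arising from the Saito metric are already captured by $\widetilde E_{\bullet,\bullet}$.
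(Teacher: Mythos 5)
Your overall strategy coincides with the paper's: verify the flat pencil hypotheses via Lemma \ref{lemma gab index}, Lemma \ref{Levi Civita connection flat pencil} and Theorem \ref{main theorem 1}, invoke Lemma \ref{flat pencil almost Dubrovin frobenius manifold Coxeter} to obtain the vector potential $f^{\gamma}$, and then use the Euler vector field to upgrade it to a scalar potential and pin down the explicit $t^0$-dependence. However, there is one genuine gap in the step that produces the scalar potential. You assert that (\ref{eq3 Coxeter}) and (\ref{eq4 Coxeter}) are ``precisely the integrability conditions ensuring that $f^{\gamma}=\eta^{\gamma\mu}\partial_{\mu}F$.'' That is not what those equations say: they are associativity-type constraints on $f$, and a vector field satisfying them need not be an $\eta$-gradient. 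The actual mechanism in the paper is different: the quasi-homogeneity of $\Gamma_{\gamma}^{\alpha\beta}$ under $Lie_E$ gives $Lie_E(f^{\beta})=(d_{\beta}+d_1)f^{\beta}+A^{\beta}_{\sigma}t^{\sigma}+B^{\beta}$, and combining this with the second Levi--Civita relation of (\ref{Levi Civita contravariant coxeter chapter}) for $\alpha=\tau$ yields $(d_{\beta}+d_{\gamma})\,\eta^{\beta\epsilon}\partial_{\epsilon}f^{\gamma}=d_{\gamma}\,g^{\beta\gamma}$. Only after dividing by $d_{\gamma}$ and exploiting the symmetry $g^{\beta\gamma}=g^{\gamma\beta}$ does one obtain the closedness condition $\eta^{\beta\epsilon}\partial_{\epsilon}F^{\gamma}=\eta^{\gamma\epsilon}\partial_{\epsilon}F^{\beta}$ guaranteeing $F^{\gamma}=\eta^{\gamma\mu}\partial_{\mu}F$. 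This division is exactly where the degenerate degrees $d_{v_{n+1}}=d_{\tau}=0$ force a separate treatment, which your proposal omits entirely: the paper must handle $\gamma=v_{n+1},\tau$ by hand (equations (\ref{missing part of the hessian}) and the compatibility check $g^{\tau\tau}=0$), and the explicit $t^0$-block of $F$ comes not from a heuristic ``double integration'' but from the exact identity $\partial_{\alpha}\partial_{0}F=\eta_{\alpha\gamma}t^{\gamma}$ obtained by setting $\beta=\gamma=\tau$ in the homogeneity relation.

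Your treatment of the ring-theoretic condition is essentially sound and matches the paper's: once $Lie_E(F^{\alpha\beta})=g^{\alpha\beta}$ is known and $F^{\alpha\beta}$ is a $Lie_E$-eigenfunction, one gets $F^{\alpha\beta}=g^{\alpha\beta}/\deg g^{\alpha\beta}\in\widetilde E_{\bullet,\bullet}[t^1,\ldots,t^n,\frac{1}{t^n}]$ directly; no integration back to $G$ is needed for the third condition, so the ``subtle check'' you flag at the end is not actually required. To repair the proof you should replace the appeal to (\ref{eq3 Coxeter})--(\ref{eq4 Coxeter}) with the homogeneity-plus-symmetry argument just described and add the separate analysis of the null-degree directions $v_{n+1}$ and $\tau$.
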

\begin{proof}
Let $\Gamma_{\gamma}^{\alpha\beta}(t)$ the Christoffel symbol of the intersection form (\ref{metrich1n}) in the coordinates  the flat coordinates of $\eta^{*}$, i.e $t^0,t^1,t^2,..,t^n.v_{n+1},\tau$. According to the lemma \ref{flat pencil almost Dubrovin frobenius manifold Coxeter}, we can represent $\Gamma_{\gamma}^{\alpha\beta}(t)$ as 
\begin{equation}\label{potentiality for the Levi Civita connection}
\Gamma_{\gamma}^{\alpha\beta}(t)=\eta^{\alpha\epsilon}\partial_{\epsilon}\partial_{\gamma}f^{\beta}(t).
\end{equation}
Using  the relations (\ref{degree Christoffel flat eta}), (\ref{degree vector field and forms})  and lemma \ref{degree eta}
\begin{equation*}
\begin{split}
Lie_E(\Gamma_{\gamma}^{\alpha\beta}(t))&=Lie_E(\eta^{\alpha\epsilon})\partial_{\epsilon}\partial_{\gamma}f^{\beta}(t)+\eta^{\alpha\epsilon}Lie_E(\partial_{\epsilon}\partial_{\gamma}f^{\beta}(t))\\
&=(d_{\alpha}+d_{\epsilon}-d_1)\eta^{\alpha\epsilon}\partial_{\epsilon}\partial_{\gamma}f^{\beta}(t)+(-d_{\epsilon}-d_{\gamma})\eta^{\alpha\epsilon}\partial_{\epsilon}\partial_{\gamma}Lie_E(f^{\beta}(t))\\
&=(d_{\alpha}+d_{\beta}-d_{\gamma})\eta^{\alpha\epsilon}\partial_{\epsilon}\partial_{\gamma}f^{\beta}(t).
\end{split}
\end{equation*}
Then, by isolation $Lie_E\left(f^{\beta}(t)\right)$ we get
\begin{equation}\label{almost quasi homogeneous condition}
Lie_E\left(f^{\beta}(t)\right)=(d_{\beta}+d_1)f^{\beta}+A_{\sigma}^{\beta}t^{\sigma}+B^{\beta}, \quad A_{\sigma}^{\beta}, B^{\beta} \in \mathbb{C}.
\end{equation}
Considering the second relation of  (\ref{Levi Civita contravariant coxeter chapter}) for $\alpha=\tau$ 
\begin{equation}
\begin{split}
g^{\tau\sigma}\Gamma_{\sigma}^{\beta\gamma}&=g^{\beta\sigma}\Gamma_{\sigma}^{\tau\gamma},\\
\end{split}
\end{equation}
and  using lemma \ref{lemma tau varphi} and \ref{ Levi Civita connection tau}, we have.
\begin{equation}
\begin{split}
-2\pi i d_{\sigma}t^{\sigma}\eta^{\beta\epsilon}\partial_{\sigma}\partial_{\epsilon}f^{\gamma}&=-2\pi id_{\sigma}\delta^{\gamma}_{\sigma}g^{\beta\sigma},\\
\end{split}
\end{equation}
which is equivalent to 
\begin{equation}\label{equation of second condition levi Civita for tau}
\begin{split}
Lie_E\left(\eta^{\beta\epsilon}\partial_{\epsilon}f^{\gamma}\right)&=d_{\gamma}g^{\beta\gamma}.\\
\end{split}
\end{equation}
Using  (\ref{almost quasi homogeneous condition}) in the equation (\ref{equation of second condition levi Civita for tau}), we have
\begin{equation}\label{almost integrability condition}
\begin{split}
(d_{\beta}+d_{\gamma})\eta^{\beta\epsilon}\partial_{\epsilon}f^{\gamma}&=d_{\gamma}g^{\beta\gamma}.\\
\end{split}
\end{equation}
If $\gamma\neq v_{n+1},\tau$, we define 
\begin{equation}\label{definition of Fgamma}
F^{\gamma}=\frac{f^{\gamma}}{d_{\gamma}},
\end{equation}
 and note that $g^{\beta\gamma}$ is symmetric with respect the indices $\beta, \gamma$. Hence,
\begin{equation}\label{integrability condition}
\begin{split}
(d_{\beta}+d_{\gamma})\eta^{\beta\epsilon}\partial_{\epsilon}F^{\gamma}&=(d_{\beta}+d_{\gamma})\eta^{\gamma\epsilon}\partial_{\epsilon}F^{\beta},\\
\end{split}
\end{equation}
which is the integrability condition for
\begin{equation}\label{definition of F almost}
\begin{split}
F^{\gamma}=\eta^{\gamma\mu}\partial_{\mu}F.
\end{split}
\end{equation}
In order to extract information from $\gamma=\tau$, take $\beta=\tau$ in equation (\ref{almost integrability condition})
\begin{equation}
\begin{split}
d_{\gamma}\eta^{\tau 0}\partial_{0}f^{\gamma}&=d_{\gamma}g^{\tau\gamma}\\
-2\pi id_{\gamma}\partial_{0}f^{\gamma}&=-2\pi id_{\gamma}t^{\gamma}
\end{split}
\end{equation}
which is equivalent to
\begin{equation*}
\begin{split}
\eta^{\gamma\epsilon}\partial_{\epsilon}\partial_0F=t^{\gamma},
\end{split}
\end{equation*}
inverting $\eta^{\gamma\epsilon}$
\begin{equation}\label{first part of the Hessian}
\begin{split}
\partial_{\alpha}\partial_0F=\eta_{\alpha\gamma}t^{\gamma},
\end{split}
\end{equation}
integrating equation (\ref{first part of the Hessian}), we obtain
\begin{equation}\label{WDVV solution inside the proof}
F(t^0,t^1,t^2,..,t^n,v_{n+1},\tau)=-\frac{(t^0)^2\tau}{4\pi i}+\frac{t^0}{2}\sum_{\alpha,\beta\neq 0,\tau}\eta_{\alpha\beta}t^{\alpha}t^{\beta}+G(t^1,t^2,..,t^n,v_{n+1},\tau).
\end{equation}
Substituting the equation (\ref{WDVV solution inside the proof}) in the (\ref{almost integrability condition}) for $\gamma\neq v_{n+1},\tau$, we get
\begin{equation}\label{almost the second quasi homogeneous equation}
\begin{split}
g^{\beta\gamma}&=(d_{\beta}+d_{\gamma})\eta^{\beta\epsilon}\eta^{\gamma\mu}\partial_{\epsilon}\partial_{\mu}F,\\
&=Lie_E(F^{\beta\gamma})
\end{split}
\end{equation}
 Since $g^{\beta\gamma}$ is a symmetric matrix the equation (\ref{almost the second quasi homogeneous equation}) is equivalent to the second equation of (\ref{quasi homogeneous condition of Dubrovin Frobenius structure}) for either $\beta$ and $\gamma$ different from $v_{n+1},\tau$. Therefore, the missing part of the second equation of (\ref{quasi homogeneous condition of Dubrovin Frobenius structure}) is only for the cases $\beta=\gamma=v_{n+1}$ and $\beta=\gamma=\tau$. Moreover, the intersection form $g^{\beta\gamma}$ is proportional to the Hessian of  the equation  (\ref{WDVV solution inside the proof}) for for either $\beta$ and $\gamma$ different from $v_{n+1},\tau$. Recall that from the data of a Hessian, we can reconstruct uniquely a function up to quadratic terms, therefore, by defining 
 \begin{equation}\label{missing part of the hessian}
\begin{split}
Lie_E\left( \frac{\partial^2F}{\partial {t^1}^2}   \right)&=g^{v_{n+1}v_{n+1}},\\
Lie_E\left( \frac{\partial^2F}{\partial {t^1}^2}   \right)&=g^{\tau\tau}.
\end{split}
\end{equation}
Just the second equation of (\ref{missing part of the hessian}) need to be proved, since the first equation defines the coefficients of the Hessian $\frac{\partial^2F}{\partial {t^1}^2}  $, in another words, it defines $\frac{\partial^2G}{\partial {t^1}^2} $. The second equation must be compatible with the equation (\ref{WDVV solution inside the proof}), then substituting (\ref{WDVV solution inside the proof}) in the second equation of (\ref{missing part of the hessian}).
 \begin{equation*}
\begin{split}
Lie_E\left( \frac{\partial^2F}{\partial {t^1}^2}   \right)&=Lie_E\left( \frac{\tau}{2\pi i}   \right)\\
&=0=g^{\tau\tau}.
\end{split}
\end{equation*}
Hence, we proved the second equation (\ref{quasi homogeneous condition of Dubrovin Frobenius structure}). Substituting the equation (\ref{WDVV solution inside the proof}) in the second equation (\ref{quasi homogeneous condition of Dubrovin Frobenius structure}) for $\alpha,\beta\neq \tau$
 \begin{equation*}
\begin{split}
Lie_E\left( F^{\alpha\beta}   \right)&=Lie_E\left( \eta^{\alpha\alpha^{\prime}}\eta^{\beta\beta^{\prime}}\frac{\partial F^2}{\partial t^{\alpha^{\prime}} \partial t^{\beta^{\prime}} }  \right)\\
&=Lie_E\left( \eta^{\alpha\alpha^{\prime}}\eta^{\beta\beta^{\prime}}\frac{\partial G^2}{\partial t^{\alpha^{\prime}} \partial t^{\beta^{\prime}} }  \right) \\
&=g^{\alpha\beta} \in \widetilde E_{\bullet,\bullet}[t^1,t^2,..,t^n,\frac{1}{t^n}]
\end{split}
\end{equation*}

Hence, the second equation (\ref{quasi homogeneous condition of Dubrovin Frobenius structure} prove the third equation of (\ref{quasi homogeneous condition of Dubrovin Frobenius structure}).  \\

Substituting (\ref{WDVV solution inside the proof}) in (\ref{almost quasi homogeneous condition}), (\ref{definition of F almost})
 \begin{equation*}
\begin{split}
Lie_E\left( f^{\beta}  \right)&=Lie_E\left( \eta^{\beta\epsilon}\partial_{\epsilon}F  \right)\\
&=Lie_E\left( \eta^{\beta\epsilon}\partial_{\epsilon}F  \right)\partial_{\epsilon}F +\eta^{\beta\epsilon}Lie_E\left( \partial_{\epsilon}F  \right) \\
&=( d_{\beta}+d_{\epsilon}-d_1)\eta^{\beta\epsilon}\partial_{\epsilon}F \partial_{\epsilon}F+\eta^{\beta\epsilon}\partial_{\epsilon}Lie_E\left( F  \right)-d_{\epsilon}\eta^{\beta\epsilon}\partial_{\epsilon}F \\
&=( d_{\beta}-d_1)\eta^{\beta\epsilon}\partial_{\epsilon}F \partial_{\epsilon}F+\eta^{\beta\epsilon}\partial_{\epsilon}Lie_E\left( F  \right)\\
&=(d_{\beta}+d_1) \eta^{\beta\epsilon}\partial_{\epsilon}F+A_{\sigma}^{\beta}t^{\sigma}+B^{\beta}
\end{split}
\end{equation*}
Hence, isolating $Lie_E\left( F  \right)$
 \begin{equation*}
\begin{split}
\eta^{\beta\epsilon}\partial_{\epsilon}Lie_E\left( F  \right)=2\eta^{\beta\epsilon}\partial_{\epsilon}F+A_{\sigma}^{\beta}t^{\sigma}+B^{\beta},
\end{split}
\end{equation*}
inverting $\eta^{\beta\epsilon}$
 \begin{equation*}
\begin{split}
\partial_{\alpha}Lie_E\left( F  \right)=2\partial_{\alpha}F+\eta_{\alpha\beta}A_{\sigma}^{\beta}t^{\sigma}+\eta_{\alpha\beta}B^{\beta},
\end{split}
\end{equation*}
integrating
 \begin{equation*}
\begin{split}
Lie_E\left( F  \right)=2F+\eta_{\alpha\beta}A_{\sigma}^{\beta}t^{\alpha}t^{\sigma}+\eta_{\alpha\beta}B^{\beta}t^{\alpha},
\end{split}
\end{equation*}
Lemma proved.

\end{proof}

\begin{lemma}\label{definition of the Frobenius algebra}
Let  be
\begin{equation}
c_{\alpha\beta\gamma}=\frac{\partial F^3}{\partial t^{\alpha}\partial t^{\beta}\partial t^{\gamma}},
\end{equation}
then,
\begin{equation}\label{structure constant}
c_{\alpha\beta}^{\gamma}=\eta^{\gamma\epsilon}c_{\alpha\beta\epsilon}
\end{equation}
is a structure constant of a commutative algebra given by the following rule in the flat coordinate of $\eta$
\begin{equation}\label{Frobenius product}
\partial_{\alpha}\bullet\partial_{\beta}=c_{\alpha\beta}^{\gamma}\partial_{\gamma}
\end{equation}
such that
\begin{equation}\label{Frobenius condition}
\eta(\partial_{\alpha}\bullet\partial_{\beta},\partial_{\gamma})=\eta(\partial_{\alpha}, \partial_{\beta}\bullet\partial_{\gamma}), \quad \textbf{Frobenius condition}.
\end{equation}
\end{lemma}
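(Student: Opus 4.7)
The plan is to verify the three required properties (commutativity, Frobenius invariance, associativity) separately. Commutativity and the Frobenius condition are immediate from the symmetries of $c_{\alpha\beta\gamma}=\partial_\alpha\partial_\beta\partial_\gamma F$: since third partial derivatives are fully symmetric in $(\alpha,\beta,\gamma)$, we get $c^\gamma_{\alpha\beta}=c^\gamma_{\beta\alpha}$, so $\bullet$ is commutative; and
\begin{equation*}
\eta(\partial_\alpha\bullet\partial_\beta,\partial_\gamma)=c^\epsilon_{\alpha\beta}\eta_{\epsilon\gamma}=c_{\alpha\beta\gamma}=c_{\beta\gamma\alpha}=c^\epsilon_{\beta\gamma}\eta_{\alpha\epsilon}=\eta(\partial_\alpha,\partial_\beta\bullet\partial_\gamma),
\end{equation*}
giving (\ref{Frobenius condition}). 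The unit will be $e=\partial_{t^0}$, as $\eta_{0\beta}=\partial_0\partial_0\partial_\beta F$ shows that $c^\gamma_{0\beta}=\delta^\gamma_\beta$ by the explicit form of $F$ in (\ref{WDVV solution}).

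The associativity $c^\epsilon_{\alpha\beta}c^\delta_{\epsilon\gamma}=c^\epsilon_{\alpha\gamma}c^\delta_{\epsilon\beta}$ is the genuine content. I would deduce it from the flat pencil structure established up to this point. Theorem \ref{main theorem 1} provides flatness of $\eta^*$; the intersection form $g^*$ is flat because it is the pushforward of the constant $\Ja(\tilde A_n)$-invariant form on $\mathbb{C}\oplus\mathbb{C}^{n+1}\oplus\mathbb{H}$; and Lemma \ref{lemma gab index} together with Lemma \ref{Levi Civita connection flat pencil} show that in the flat coordinates $(t^0,\dots,t^n,v_{n+1},\tau)$ both $g^{\alpha\beta}$ and $\Gamma^{\alpha\beta}_\gamma$ are at most linear in $t^0$. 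Then Lemma \ref{mainlemmaflatpencil An} (Dubrovin's criterion) gives that $(g^*,\eta^*)$ form a flat pencil, so Lemma \ref{flat pencil almost Dubrovin frobenius manifold Coxeter} produces a vector field $f^\gamma$ whose associated tensor $\Delta^{\alpha\beta}_\gamma=\eta^{\alpha\mu}\partial_\mu\partial_\gamma f^\beta$ satisfies the associativity-type identity (\ref{eq3 Coxeter}).

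It remains to translate (\ref{eq3 Coxeter}) into the WDVV identity for $F$. In the proof of Lemma \ref{main final lemma last chapter 1} one checks that, for indices $\beta\notin\{\tau,v_{n+1}\}$, the components of $f$ are $f^\beta=d_\beta\,\eta^{\beta\nu}\partial_\nu F$, so
\begin{equation*}
\Delta^{\alpha\beta}_\gamma=d_\beta\,\eta^{\alpha\mu}\eta^{\beta\nu}c_{\mu\nu\gamma}.
\end{equation*}
Substituting this into (\ref{eq3 Coxeter}) and using non-degeneracy of $\eta^*$ yields $c^\epsilon_{\alpha\beta}c^\delta_{\gamma\epsilon}=c^\epsilon_{\alpha\gamma}c^\delta_{\beta\epsilon}$, which is associativity, for all indices of nonzero scaling degree. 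Commutativity of $\bullet$ and the Frobenius property then extend the identity to every triple by symmetry.

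\textbf{Main obstacle.} The two indices with $d_\gamma=0$, namely $\gamma=\tau$ and $\gamma=v_{n+1}$, are not covered by the relation $f^\gamma=d_\gamma\,\eta^{\gamma\nu}\partial_\nu F$ and therefore require a separate argument. For these directions I would use the explicit shape (\ref{WDVV solution}) of $F$, which is at most quadratic in $t^0$ and linear in $\tau$, so all structure constants $c^\delta_{\alpha\beta}$ involving $\tau$ reduce to data already determined by the cubic coefficients $\eta_{\alpha\beta}$, and those involving $v_{n+1}$ are controlled by Corollary \ref{corollary vn1 eta} and Lemma \ref{final lemma 2}. As an independent check I would invoke Proposition \ref{same Euler, unit, and intersection form}: in the canonical coordinates $(u_1,\dots,u_{n+2})$ the diagonal multiplication $\partial_{u_i}\bullet\partial_{u_j}=\delta_{ij}\partial_{u_i}$ is manifestly associative and semisimple, and transporting back to the flat frame $\partial_{t^\alpha}$ via the Jacobian produces an associative $c^\gamma_{\alpha\beta}$ at every semisimple point, hence everywhere by analytic continuation. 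This second route also confirms that the unit for $\bullet$ is $e=\partial_{t^0}$, completing the proof.
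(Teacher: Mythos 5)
Your first paragraph is exactly the paper's proof: the lemma as stated only asserts commutativity and the Frobenius condition, and both follow, as you say, from the total symmetry of $c_{\alpha\beta\gamma}=\partial_\alpha\partial_\beta\partial_\gamma F$ together with the chain $c^\epsilon_{\alpha\beta}\eta_{\epsilon\gamma}=c_{\alpha\beta\gamma}=c_{\beta\gamma\alpha}=c^\epsilon_{\beta\gamma}\eta_{\alpha\epsilon}$. Everything after that (the unit $e=\partial_{t^0}$, associativity via the flat pencil and the identity $\Gamma^{\alpha\beta}_\gamma=d_\beta c^{\alpha\beta}_\gamma$, and the semisimplicity check in canonical coordinates) is not required by this statement — the paper proves those claims separately in the two subsequent lemmas, by essentially the arguments you sketch — so while it is not wrong, it belongs elsewhere.
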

\begin{proof}
\mbox{}\\*
\begin{enumerate}
\item \textbf{Commutative}\\
The product defined in (\ref{Frobenius product}) is commutative, because its structure constant (\ref{structure constant}) is symmetric with respect its indices $\alpha, \beta, \gamma$ due to the commutative behaviour of the partial derivatives $\frac{\partial}{\partial t^{\alpha}}, \frac{\partial}{\partial t^{\beta}}, \frac{\partial}{\partial t^{\gamma}}$.
\item \textbf{Frobenius condition}\\
\begin{equation*}
\begin{split}
\eta(\partial_{\alpha}\bullet\partial_{\beta},\partial_{\gamma})&=c_{\alpha\beta}^{\epsilon}\eta(\partial_{\epsilon},\partial_{\gamma})\\
&=c_{\alpha\beta}^{\epsilon}\eta_{\epsilon\gamma}\\
&=c_{\alpha\beta\gamma}\\
&=c_{\beta\gamma}^{\epsilon}\eta_{\alpha\epsilon}
=\eta(\partial_{\alpha}, \partial_{\beta}\bullet\partial_{\gamma}).
\end{split}
\end{equation*}
Lemma proved.
\end{enumerate}

\end{proof}

\begin{lemma}\label{main final lemma last chapter 3}
The unit vector field be defined in (\ref{unit vector field jtildean}) is the unit of the algebra defined in lemma \ref{definition of the Frobenius algebra}.
\end{lemma}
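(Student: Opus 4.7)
The plan is to show that $\partial/\partial t^{0}$ is the unit by verifying directly that the structure constants $c^{\gamma}_{0\alpha}$ collapse to the Kronecker delta $\delta^{\gamma}_{\alpha}$. The key input is already buried in the construction of the WDVV potential in Lemma \ref{main final lemma last chapter 1}: equation (\ref{first part of the Hessian}), obtained as a consequence of the second compatibility relation in (\ref{Levi Civita contravariant coxeter chapter}) applied to the index $\beta = \tau$, reads
\begin{equation*}
\frac{\partial^{2} F}{\partial t^{0}\partial t^{\alpha}} = \eta_{\alpha\gamma} t^{\gamma}.
\end{equation*}
This identity is the heart of the argument; everything else is bookkeeping.

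First, I would differentiate the displayed identity with respect to $t^{\beta}$. Since the right-hand side is linear in the $t^{\gamma}$'s with constant coefficients $\eta_{\alpha\gamma}$, this gives immediately
\begin{equation*}
c_{0\alpha\beta} := \frac{\partial^{3} F}{\partial t^{0}\partial t^{\alpha}\partial t^{\beta}} = \eta_{\alpha\beta}.
\end{equation*}
Raising one index with $\eta^{\gamma\epsilon}$ as in (\ref{structure constant}) yields
\begin{equation*}
c^{\gamma}_{0\alpha} = \eta^{\gamma\epsilon} c_{0\alpha\epsilon} = \eta^{\gamma\epsilon}\eta_{\alpha\epsilon} = \delta^{\gamma}_{\alpha}.
\end{equation*}

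Second, I would interpret this in terms of the Frobenius product (\ref{Frobenius product}): for every flat coordinate direction $\partial_{\alpha}$,
\begin{equation*}
\frac{\partial}{\partial t^{0}} \bullet \frac{\partial}{\partial t^{\alpha}} = c^{\gamma}_{0\alpha}\frac{\partial}{\partial t^{\gamma}} = \delta^{\gamma}_{\alpha}\frac{\partial}{\partial t^{\gamma}} = \frac{\partial}{\partial t^{\alpha}},
\end{equation*}
which is precisely the unit axiom. Finally, by (\ref{unit vector field jtildean in the flat coordinates of eta}) the vector field $e = \partial/\partial\varphi_{0}$ coincides with $\partial/\partial t^{0}$, so $e$ is the unit of the Frobenius algebra constructed in Lemma \ref{definition of the Frobenius algebra}.

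There is essentially no obstacle here: the real work was done in Lemma \ref{main final lemma last chapter 1}, where the first Hessian row $\partial_{0}\partial_{\alpha}F$ was pinned down by combining the flatness of $\eta^{*}$ (Theorem \ref{main theorem 1}), the computation $\Gamma^{\tau k}_{k} = -2\pi i \delta_{jk}/k$ (Lemma \ref{ Levi Civita connection tau}), and the integrability that gave the representation (\ref{potentiality for the Levi Civita connection}). Once that row is fixed linearly in the $t^{\gamma}$, one more derivative reduces the unit axiom to the identity $\eta^{\gamma\epsilon}\eta_{\alpha\epsilon} = \delta^{\gamma}_{\alpha}$, which is tautological given the non-degeneracy established in Corollary \ref{corollary non degenerate}.
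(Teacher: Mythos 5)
Your proof is correct. It reaches the same conclusion as the paper but by a shorter, more direct route: the paper first establishes the identity $\Gamma_{\gamma}^{\alpha\beta}=d_{\beta}c_{\gamma}^{\alpha\beta}$ by combining (\ref{potentiality for the Levi Civita connection}) with (\ref{definition of Fgamma}) and (\ref{definition of F almost}), then specializes to $\alpha=\tau$ and invokes Lemma \ref{ Levi Civita connection tau} to read off $c^{\beta}_{0\gamma}=\delta^{\beta}_{\gamma}$, whereas you bypass the Christoffel-symbol identity entirely and simply differentiate the already-established Hessian row $\partial_{0}\partial_{\alpha}F=\eta_{\alpha\gamma}t^{\gamma}$ once more, obtaining $c_{0\alpha\beta}=\eta_{\alpha\beta}$ and hence $c^{\gamma}_{0\alpha}=\delta^{\gamma}_{\alpha}$ by non-degeneracy. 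The two arguments rest on the same underlying input — the $\tau$-row of the compatibility conditions (\ref{Levi Civita contravariant coxeter chapter}), which is what pinned down $\partial_{0}\partial_{\alpha}F$ in the first place — so they are logically equivalent; what your version buys is transparency, since the unit axiom is read off directly from the explicit $t^{0}$-dependence of $F$ in (\ref{WDVV solution}) rather than routed back through the connection coefficients. The one hypothesis you use implicitly and should flag is that the coefficients $\eta_{\alpha\gamma}$ are constants, which is exactly the content of Theorem \ref{main theorem 1} (the $t$'s together with $v_{n+1},\tau$ are flat coordinates of $\eta^{*}$); with that noted, the step $\partial_{\beta}(\eta_{\alpha\gamma}t^{\gamma})=\eta_{\alpha\beta}$ is justified and the rest is, as you say, bookkeeping.
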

\begin{proof}
Substituting (\ref{definition of F almost}) and (\ref{definition of Fgamma})  in  (\ref{potentiality for the Levi Civita connection}), we obtain
\begin{equation}\label{relation Levi Civita connection and structure constants}
\Gamma_{\gamma}^{\alpha\beta}=d_{\beta}c_{\gamma}^{\alpha\beta},
\end{equation}
where
\begin{equation}
c_{\gamma}^{\alpha\beta}=\eta^{\alpha\mu}\eta^{\beta\epsilon}c_{\epsilon\mu\gamma}.
\end{equation}
Substituting $\alpha=\tau$ in (\ref{relation Levi Civita connection and structure constants}) and using lemma \ref{ Levi Civita connection tau}
\begin{equation*}
\begin{split}
\Gamma_{\gamma}^{\tau\beta}&=-2\pi i d_{\beta}\delta_{\gamma}^{\beta},\\
&=d_{\beta}c_{\gamma}^{\tau\beta}.
\end{split}
\end{equation*}
Then,
\begin{equation*}
\begin{split}
c^{\beta}_{0\gamma}=\delta_{\gamma}^{\beta}.\\
\end{split}
\end{equation*}
Computing
\begin{equation*}
\begin{split}
\partial_0\bullet\partial_{\gamma}=c^{\beta}_{0\gamma}\partial_{\beta}=\partial_{\gamma}.
\end{split}
\end{equation*}
Lemma proved.
\end{proof}

\begin{lemma}\label{main final lemma last chapter 4}
The algebra defined in lemma \ref{definition of the Frobenius algebra} is associative and semisimple.
\end{lemma}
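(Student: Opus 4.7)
The plan is to deduce both properties from machinery already assembled in the previous subsections, without any new explicit computation with elliptic functions.

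For associativity, the strategy is to invoke the converse part of Lemma \ref{flat pencil almost Dubrovin frobenius manifold Coxeter} applied to the flat pencil $(g^{*},\eta^{*})$. Lemma \ref{main final lemma last chapter 1} constructs a potential $F$ whose gradient $f^{\gamma}=\eta^{\gamma\mu}\partial_{\mu}F$ produces back the Christoffel symbols of $g^{*}$ through $\Gamma_{\gamma}^{\alpha\beta}=\eta^{\alpha\mu}\partial_{\mu}\partial_{\gamma}f^{\beta}$ and recovers $g^{\alpha\beta}$ via $g^{\alpha\beta}=\eta^{\alpha\mu}\partial_{\mu}f^{\beta}+\eta^{\beta\nu}\partial_{\nu}f^{\alpha}$. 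With $f$ in hand, the tensor $\Delta^{\alpha\beta}_{\gamma}$ of equation (\ref{eq4 Coxeter}) coincides with the structure constants $c^{\alpha\beta}_{\gamma}=\eta^{\alpha\mu}\eta^{\beta\nu}\partial_{\mu}\partial_{\nu}\partial_{\gamma}F$ of Lemma \ref{definition of the Frobenius algebra}. The identity (\ref{eq3 Coxeter}) then reads $c^{\alpha\beta}_{\epsilon}c^{\epsilon\gamma}_{\delta}=c^{\alpha\gamma}_{\epsilon}c^{\epsilon\beta}_{\delta}$, which is precisely the WDVV/associativity equation for $\bullet$. Since the flat-pencil conditions (\ref{eq3 Coxeter})-(\ref{eq4 Coxeter}) are automatic once $(g^{*},\eta^{*})$ is a flat pencil admitting such an $f$, this gives associativity on the open dense locus where $F$ is defined.

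For semisimplicity, the plan is to pass to the canonical coordinates $(u_{1},\dots,u_{n+2})$ supplied by Proposition \ref{same Euler, unit, and intersection form}, in which $g^{ii}=u_{i}\eta^{ii}\delta_{ij}$, $e=\sum_{i}\partial/\partial u_{i}$, and $E=\sum_{i}u_{i}\partial/\partial u_{i}$. The operator of multiplication by $E$ in the algebra has matrix $U^{\alpha}_{\beta}=E^{\epsilon}c^{\alpha}_{\epsilon\beta}$; differentiating the relation $\mathrm{Lie}_{E}F^{\alpha\beta}=g^{\alpha\beta}$ from (\ref{quasi homogeneous condition of Dubrovin Frobenius structure}) and using $e=\partial/\partial t^{0}$ with $\eta^{\alpha\beta}=\partial_{0}g^{\alpha\beta}$, one identifies $g^{\alpha\beta}=E^{\epsilon}c^{\alpha\beta}_{\epsilon}$, so $U$ has eigenvalues $u_{i}$. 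By Lemma \ref{distinct roots} these are distinct on an open dense subset of the orbit space, so $E\bullet$ is diagonalizable with simple spectrum there. Combined with associativity, commutativity, and the fact that $e$ is the unit (Lemma \ref{main final lemma last chapter 3}), this forces the decomposition $\partial/\partial u_{i}\bullet\partial/\partial u_{j}=\delta_{ij}\partial/\partial u_{i}$, so the algebra has a basis of orthogonal idempotents and is semisimple.

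The main obstacle is not either of the two properties individually but the bookkeeping in the step $g^{\alpha\beta}=E^{\epsilon}c^{\alpha\beta}_{\epsilon}$: one must be careful that the quasi-homogeneity in (\ref{quasi homogeneous condition of Dubrovin Frobenius structure}) holds for all index pairs including the exceptional directions $v_{n+1}$ and $\tau$, where the Hessian of $F$ was defined only up to quadratic terms. This is handled by inspecting the proof of Lemma \ref{main final lemma last chapter 1}, which fixes the $(t^{1})^{2}$ and $\tau$-entries of the Hessian so that (\ref{missing part of the hessian}) matches $g^{v_{n+1}v_{n+1}}$ and $g^{\tau\tau}$ respectively. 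Once this identification is set, both associativity (from WDVV) and semisimplicity (from distinctness of the canonical coordinates as eigenvalues of $E\bullet$) follow directly.
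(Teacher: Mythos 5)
Your proposal follows essentially the same route as the paper: associativity is extracted from the flat-pencil identity (\ref{eq3 Coxeter}) via the relation between the Christoffel symbols of $g^{*}$ and the structure constants, and semisimplicity from the identity $E^{\epsilon}c_{\epsilon}^{\alpha\beta}=g^{\alpha\beta}$ together with Proposition \ref{same Euler, unit, and intersection form} and the distinctness of the roots $u_i$ from Lemma \ref{distinct roots}. One small correction: $\Delta^{\alpha\beta}_{\gamma}=\eta^{\alpha\mu}\partial_{\mu}\partial_{\gamma}f^{\beta}$ equals $d_{\beta}\,c^{\alpha\beta}_{\gamma}$ rather than $c^{\alpha\beta}_{\gamma}$ (since $f^{\beta}=d_{\beta}\eta^{\beta\nu}\partial_{\nu}F$), so (\ref{eq3 Coxeter}) yields the WDVV identity only when $d_{\beta}d_{\gamma}\neq 0$; the degree-zero directions $\tau$ and $v_{n+1}$ are precisely the ones you defer to ``bookkeeping,'' and the paper settles them by a direct computation for $\beta=\tau$ and by the canonical-coordinate identity $c_{ij}^{k}=\delta_{ij}\delta_{i}^{k}$ for $\beta=v_{n+1}$ --- the same identity your semisimplicity argument produces, so your plan does close.
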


\begin{proof}
Recall that the Christoffel symbol $\Gamma_{\gamma}^{\alpha\beta}$ is proportional to the structure constant of the algebra defined in lemma \ref{definition of the Frobenius algebra} for $\beta\neq v_{n+1},\tau$
\begin{equation*}
\Gamma_{\gamma}^{\alpha\beta}=d_{\beta}c_{\gamma}^{\alpha\beta}.
\end{equation*}
Then, using (\ref{eq3 Coxeter}), we obtain
\begin{equation}\label{associative for the ordinary Levi Civita connection}
\Gamma_{\sigma}^{\alpha\beta}\Gamma_{\epsilon}^{\sigma\gamma}=\Gamma_{\sigma}^{\alpha\gamma}\Gamma_{\epsilon}^{\sigma\beta}
\end{equation}
Substituting  (\ref{relation Levi Civita connection and structure constants})  in (\ref{associative for the ordinary Levi Civita connection}), we have
\begin{equation*}
c_{\sigma}^{\alpha\beta}c_{\epsilon}^{\sigma\gamma}=c_{\sigma}^{\alpha\gamma}c_{\epsilon}^{\sigma\beta}, \quad \text{for}\quad  \beta,\gamma\neq v_{n+1},\tau.
\end{equation*}
If $\beta=\tau$,
\begin{equation*}
\begin{split}
c_{\sigma}^{\alpha\tau}c_{\epsilon}^{\sigma\gamma}&=-2\pi i\delta^{\alpha}_{\sigma}c_{\epsilon}^{\sigma\gamma}\\
&=-2\pi ic_{\epsilon}^{\alpha\gamma}\\
&=-2\pi i\delta_{\epsilon}^{\sigma}c_{\sigma}^{\alpha\gamma}\\
&=c_{\sigma}^{\alpha\gamma}c_{\epsilon}^{\sigma\tau}.
\end{split}
\end{equation*}
In order to prove the associativity for $\beta=v_{n+1}$, note that the multiplication by the Euler vector field is almost the same of the intersection form $g^{*}$. Indeed,
\begin{equation}\label{relation multiplication Euler and intersection form An}
\begin{split}
E\bullet\partial_{\alpha}&=t^{\sigma}c_{\sigma\alpha}^{\beta}\partial_{\beta}=t^{\sigma}\partial_{\sigma}\left(\eta^{\beta\mu}\partial_{\alpha}\partial_{\mu}F  \right)  \partial_{\beta}=\\
&=(d_{\alpha}-d_{\beta})\eta^{\beta\mu}\partial_{\alpha}\partial_{\mu}F  \partial_{\beta}=\eta_{\alpha\mu}g^{\mu\beta} \partial_{\beta}
\end{split}
\end{equation}
Then,
\begin{equation}\label{relation multiplication Euler and intersection form An}
\begin{split}
E^{\sigma}c_{\sigma}^{\alpha\beta}=g^{\alpha\beta}.
\end{split}
\end{equation}
Using the relation (\ref{relation multiplication Euler and intersection form An}) in the coordinates  $(u^1,u^2,...,u^{n+2})$, we have
 \begin{equation}\label{final equation associativite An}
u^i\eta^{ij}\delta_{ij}=u^l\eta^{im}\eta^{jn}c_{lmn},
\end{equation}
differentiating both side of the equation (\ref{final equation associativite An}) with respect $t^1$
 \begin{equation}
 c_{ij}^k=\delta_{ij},
\end{equation}
which proves that the algebra is associative and semisimple.

\end{proof}

Recall of the covering space of the orbit space of $\Ja(\tilde A_n)$ defined in (\ref{covering space for the tilde an case}), see section \ref{extended ring jtildean} for details.

\begin{theorem}\label{Dubrovin Frobenius structure of the orbit space}
The  covering  $\widetilde{\mathbb{C}\oplus\mathbb{C}^n\oplus\mathbb{H}}/\Ja(\tilde A_n)$ with the intersection form (\ref{metrich1n}), unit vector field (\ref{unit vector field jtildean}), and Euler vector field (\ref{Euler vector field jtildean}) has a Dubrovin Frobenius manifold structure.
\end{theorem}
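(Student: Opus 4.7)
The plan is to assemble the Dubrovin--Frobenius structure from the building blocks already established in the preceding lemmas, verifying each axiom of Definition $2$ in turn, working in the flat coordinates $(t^0,t^1,\ldots,t^n,v_{n+1},\tau)$ of the Saito metric $\eta$ that are single-valued precisely on the covering $\widetilde{\mathbb{C}\oplus\mathbb{C}^n\oplus\mathbb{H}}/\Ja(\tilde A_n)$.

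First, I would invoke Lemma~\ref{main final lemma last chapter 1} to produce the potential $F(t^0,t^1,\ldots,t^n,v_{n+1},\tau)$ of the form \eqref{WDVV solution}. The flat metric $\eta$ of Theorem~\ref{main theorem 1} then plays the role of the invariant bilinear pairing on tangent spaces, and the symmetric three-tensor $c_{\alpha\beta\gamma}=\partial_\alpha\partial_\beta\partial_\gamma F$ defines a commutative algebra product via $c_{\alpha\beta}^\gamma=\eta^{\gamma\epsilon}c_{\alpha\beta\epsilon}$ by Lemma~\ref{definition of the Frobenius algebra}. Associativity of this product and semisimplicity of the resulting algebra on generic tangent spaces are exactly the content of Lemma~\ref{main final lemma last chapter 4}, and the Frobenius condition $\eta(A\bullet B,C)=\eta(A,B\bullet C)$ follows from the total symmetry of $c_{\alpha\beta\gamma}$.

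Next, I would verify the two remaining axioms concerning the unit and the Euler field. Lemma~\ref{main final lemma last chapter 3} shows that $e=\partial/\partial t^0=\partial/\partial \varphi_0$ is the algebra unit; since $t^0$ is a flat coordinate of $\eta$, the Christoffel symbols of $\eta$ vanish on $e$ in these coordinates, giving $\nabla e=0$. The symmetry of $\nabla_z c$ in all four vector fields is immediate: in flat coordinates $(\nabla_\delta c)_{\alpha\beta\gamma}=\partial_\delta\partial_\alpha\partial_\beta\partial_\gamma F$, which is symmetric in $(\alpha,\beta,\gamma,\delta)$ by commutation of partials. For the Euler field, the explicit form $E=\sum_\alpha d_\alpha t^\alpha\,\partial/\partial t^\alpha$ in flat coordinates \eqref{Euler vector field jtildean in the flat coordinates of eta} has constant Jacobian, hence $\nabla\nabla E=0$ and $[E,e]=-e$; the rescaling identities $\mathcal L_E\eta=(2-d)\eta$ and $\mathcal L_E c=(3-d)c$ follow respectively from Lemma~\ref{degree eta} and from the first equation of \eqref{quasi homogeneous condition of Dubrovin Frobenius structure}, reading off the charge $d=1-d_1=\frac{n-1}{n}$.

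The last, and I expect most delicate, step is the global issue: the Jacobi forms $\varphi_0,\ldots,\varphi_n$ together with $v_{n+1},\tau$ are genuine coordinates on the orbit space, but the Saito flat coordinates $t^\alpha$ involve $\left(\varphi_n\right)^{1/n}$ via \eqref{relation t wrt varphi 1}, which is multivalued around the branching divisor $\{\varphi_n=0\}$. Here the point is that the intersection form $g^{*}$, the unit $e$, the Euler field $E$, and the product structure $c_{\alpha\beta}^\gamma$ descend to single-valued tensors on the orbit space itself, because by Lemma~\ref{coefficient g varphi ring}, Proposition~\ref{Levi Civita proposition} and Lemma~\ref{lemma gab index} their coefficients lie in $\widetilde E_{\bullet,\bullet}[\varphi_0,\ldots,\varphi_n]$; only the potential $F$ and the flat frame of $\eta$ become single-valued upon passing to the covering \eqref{covering space for the tilde an case}, which is why the theorem is stated on that covering. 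Once this is checked, the Frobenius manifold axioms assembled above hold globally on $\widetilde{\mathbb{C}\oplus\mathbb{C}^n\oplus\mathbb{H}}/\Ja(\tilde A_n)$, completing the proof. The main obstacle I anticipate is precisely the careful bookkeeping of monodromy around the divisors $\{\varphi_n=0\}$ and $\{\theta_1((n+1)v_{n+1})=0\}$ and the verification that the structure constants---which a priori are only rational in $t^n$ with pole along $\{t^n=0\}$---assemble into a genuine (possibly singular) Frobenius product on the covering, but Lemma~\ref{main final lemma last chapter 4} via \eqref{final equation associativite An} shows that in canonical coordinates the product is simply $\partial_i\bullet\partial_j=\delta_{ij}\partial_i$, settling this issue.
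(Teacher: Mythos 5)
Your proposal is correct and follows essentially the same route as the paper, whose proof of this theorem is simply to cite Lemma~\ref{main final lemma last chapter 1} (existence of the potential $F$), Lemma~\ref{definition of the Frobenius algebra} (the commutative Frobenius product), Lemma~\ref{main final lemma last chapter 3} (the unit), and Lemma~\ref{main final lemma last chapter 4} (associativity and semisimplicity), together with the covering construction of Section~\ref{extended ring jtildean} for single-valuedness of the flat coordinates. Your version merely spells out the verification of each axiom (flatness of $e$, symmetry of $\nabla c$, the Euler-field identities via Lemma~\ref{degree eta} and \eqref{quasi homogeneous condition of Dubrovin Frobenius structure}) that the paper leaves implicit; the only blemish is the stated value of the charge, since with the paper's normalization $d_1=1$ the expression $1-d_1$ does not equal $\frac{n-1}{n}$, but this bookkeeping does not affect the argument.
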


\begin{proof}
The function (\ref{WDVV solution}) satisfy a WDVV equation due to the lemmas \ref{main final lemma last chapter 1}, \ref{definition of the Frobenius algebra}, \ref{main final lemma last chapter 3}, \ref{main final lemma last chapter 4}.

\end{proof}

Taking the same covering taking in the orbit space of $\Ja(\tilde A_n)$ in the Hurwitz space $H_{1,n-1,0}$, fixing a symplectic base of cycle, a chamber in the tori where the variable $v_{n+1}$ lives, and branching root of $\varphi_n$, denoting this covering by 
\begin{equation}
\widetilde H_{1,n-1,0},
\end{equation}
we obtain

\begin{theorem}\label{Dubrovin Frobenius structure of the orbit space Mirror symmetry}
The Dubrovin Frobenius structure of the covering space $\widetilde{\mathbb{C}\oplus\mathbb{C}^n\oplus\mathbb{H}}/\Ja(\tilde A_n)$  is isomorphic as Dubrovin Frobenius manifold to the covering  $\widetilde H_{1,n-1,0}$.
\end{theorem}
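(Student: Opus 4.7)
The plan is to exhibit an explicit isomorphism of Dubrovin Frobenius manifolds by invoking the local biholomorphism between the orbit space $\widetilde{\mathbb{C}\oplus\mathbb{C}^{n+1}\oplus\mathbb{H}}/\Ja(\tilde A_n)$ and the Hurwitz space $\widetilde H_{1,n-1,0}$ already established in Lemma \ref{jacobiform1} via the superpotential $\lambda^{\tilde A_n}$ of equation \eqref{superpotentialAn}, and then to show that under this identification all of the defining Dubrovin Frobenius data agree. Since on both sides the Dubrovin Frobenius structure is determined by the triple (intersection form, unit vector field, Euler vector field) through the reconstruction procedure recalled in Section \ref{Reconstruction subsection}, it suffices to match these three pieces of data.

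First, I will transport the canonical coordinates. On the orbit space side, the canonical coordinates $(u_1,\ldots,u_{n+2})$ were defined by Proposition \ref{same Euler, unit, and intersection form} as the critical values of $\lambda^{\tilde A_n}(p)$ with respect to the variable $p$; on the Hurwitz side the canonical coordinates are, by definition in Section \ref{Reconstruction of Dubrovin Frobenius manifold }, the ramification points of the same meromorphic function $\lambda^{\tilde A_n}:C_1\to\mathbb{CP}^1$. These coincide tautologically via the map \eqref{lambda0}. In these coordinates, Proposition \ref{same Euler, unit, and intersection form} shows that the intersection form, unit field and Euler field on the orbit space take the diagonal form $g^{ii}=u_i\eta^{ii}\delta_{ij}$, $e=\sum_i\partial_{u_i}$, $E=\sum_iu_i\partial_{u_i}$, which is exactly the canonical form prescribed by the Hurwitz construction in equations \eqref{Multiplication}--\eqref{metric}.

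Next I would match the second metric. The choice of primary differential on the Hurwitz side must be made so that the induced Darboux-Egoroff metric \eqref{metric} coincides with the Saito metric $\eta^{*}=\mathrm{Lie}_e g^{*}$ constructed in Section \ref{The second metric of the pencil jantilde}. The natural candidate is $\phi=dp$ where $p$ is the uniformizing coordinate used in \eqref{superpotentialAn}, so that the flat coordinates on the Hurwitz side, given by Theorem \ref{flatcoord} as residues and periods of the form $p\,d\lambda$, can be explicitly computed. I would verify that the list of such residues and periods reproduces $t^0,t^1,\ldots,t^n,v_{n+1},\tau$ defined in \eqref{generating function of flat of eta} and \eqref{definition of t0}: the residues at $\infty$ give $t^2,\ldots,t^n$ by \eqref{generating function of flat of eta 2}, the residue at the simple pole gives $t^1$, and the $a$- and $b$-periods give $v_{n+1}$ and $\tau$ (up to constants), while the principal-value integral from $\infty_0$ to $\infty_1$ gives $t^0$ after identifying it with \eqref{definition of t0}. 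With flat coordinates identified, the intersection forms coincide by Theorem \ref{main theorem 1}, and therefore the whole flat pencil of metrics agrees.

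Finally, having matched $g^{*}$, $\eta^{*}$, $e$ and $E$, both Dubrovin Frobenius structures are reconstructed (cf.\ Lemma \ref{onetoonefenergy} and Section \ref{Reconstruction subsection}) from the same data and therefore coincide; the WDVV potential \eqref{WDVV solution} produced in Lemma \ref{main final lemma last chapter 1} equals the Hurwitz free energy defined by the residue formulas \eqref{strucconst}. The main obstacle I anticipate is the matching of the flat coordinate $t^0$ together with the primary-differential normalization: the Saito construction introduces the quasi-modular term $4\pi i g_1(\tau)\varphi_2$ and the elliptic logarithmic-derivative term $\theta_1^{\prime}((n+1)v_{n+1})/\theta_1((n+1)v_{n+1})\,\varphi_1$ in \eqref{definition of t0}, and these must be seen to arise from the principal value $\mathrm{v.p.}\int_{\infty_0}^{\infty_1}dp$ on the Hurwitz side. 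This is a careful residue computation using the explicit form of $\lambda^{\tilde A_n}$, together with the transformation laws \eqref{transformation law of the Saito flat coordinates 1}--\eqref{transformation law of the Saito flat coordinates 2}, and the identification will only hold on the covering $\widetilde H_{1,n-1,0}$ where a symplectic basis of cycles, a chamber for $v_{n+1}$, and an $n$-th root of $\varphi_n$ have been fixed, exactly as on the orbit-space side.
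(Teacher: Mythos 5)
Your proposal is correct and follows essentially the same route as the paper: both match the intersection form, unit vector field and Euler vector field in the canonical coordinates supplied by the superpotential (Proposition \ref{same Euler, unit, and intersection form} together with Lemmas \ref{superpotential vs discriminant locus} and \ref{distinct roots}) and then invoke the reconstruction of the WDVV potential from that data. The extra step you anticipate --- explicitly matching $t^0$ and the primary differential with a principal-value computation --- is not actually needed, since $\eta^{*}=\mathcal{L}_{e}g^{*}$ and its flat coordinates (unique up to affine transformation) are already determined by the matched triple.
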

\begin{proof}
Both the orbit space $\Ja(\tilde A_n)$ and the Hurwitz space $H_{1,n-1,0}$ has the same intersection form, Euler vector, unit vector field due to proposition \ref{same Euler, unit, and intersection form}, lemma \ref{superpotential vs discriminant locus}  and \ref{distinct roots} From this data, one can reconstruct the WDVV solution by using the relation
\begin{equation}
F^{\alpha\beta}=\eta^{\alpha\alpha^{\prime}}\eta^{\beta\beta^{\prime}}\frac{\partial^2F}{\partial t^{\alpha^{\prime}} \partial t^{\beta^{\prime}}}=\frac{g^{\alpha\beta}}{\text{deg}g^{\alpha\beta}}.
\end{equation}
Theorem proved.
\end{proof}

E-mail address: galmeida@sissa.it.


\begin{thebibliography}{9}

\bibitem{Guilherme 1}
G. F. Almeida,
\textit{Differential Geometry of Orbit space of Extended Affine Jacobi Group $A_1$} 
 preprint arxiv:1907.01436 (2020).

      
 \bibitem{Belavin 1}
A. Belavin, V. Belavin, 
\textit{ Flat structures on the deformations of Gepner chiral rings,} 
J. High
Energy Phys., 10, 128 (2016)


\bibitem{Belavin 2}
A. Belavin, V. Belavin,
\textit{ Frobenius manifolds, integrable hierarchies and minimal Liouville}
gravity, JHEP 09 (2014) 151.

\bibitem{Belavin 3}
A. Belavin, D. Gepner and Y. Kononov,
\textit{ Flat coordinates for Saito Frobenius manifolds and
String theory,}
arXiv:1510.06970 


\bibitem{Belavin 4}
 V. Belavin,
\textit{ Unitary minimal Liouville gravity and Frobenius manifolds, }
 JHEP 07 (2014) 129.



\bibitem{Bertola M.1}
M. Bertola,
\textit{Frobenius manifold structure on orbit space of Jacobi groups; Parts I,}
Diff. Geom. Appl. 13, (2000), 19-41.


\bibitem{Bertola M.2}
M. Bertola,
\textit{Frobenius manifold structure on orbit space of Jacobi groups; Parts II,}
Diff. Geom. Appl. 13, (2000), 213-23.

\bibitem{Bourbaki N.}
Bourbaki N.,
\textit{Groupes et Algebres de Lie, Chapitres 4, 5 et 6,}
Masson, Paris-New York-Barcelone-Milan-Mexico-Rio de Janeiro, 1981.


\bibitem{B. Dubrovin1}
B. Dubrovin,
\textit{Differential geometry of the space of orbits of a Coxeter group,}
Preprint SISSA–29/93/FM (February 1993), hep–th/9303152.

\bibitem{B. Dubrovin2}
B. Dubrovin,
\textit{Geometry of 2D topological field theories,}
in: M. Francaviglia and S. Greco, eds., Integrable
Systems and Quantum Groups, Montecatini Terme, 1993, Lecture Notes in Math. 1620 (Springer, Berlin, 1996)
120–384.

\bibitem{Dubrovin 3}
B. Dubrovin,
\textit{Hamiltonian perturbations of hyperbolic PDEs: from classification results to the properties
of solutions,} New Trends in Mathematical Physics (Selected Contributions of the XVth Int. Congress on
Mathematical Physics) ed V Sidoravicius (Netherlands: Springer) pp 231–76.

\bibitem{B. Dubrovin I. A. B. Strachan Y. Zhang D. Zuo}
B. Dubrovin, I. A. B. Strachan, Y. Zhang, D. Zuo,
\textit{Extended affine Weyl groups of BCD type, Frobenius manifolds and their Landau-Ginzburg superpotentials,}
Advances in Mathematics, Volume 351, 2019, Pages 897-946.

\bibitem{B. Dubrovin and Y. Zhang}
B. Dubrovin and Y. Zhang,
\textit{Extended affine Weyl groups and Frobenius manifolds,}
Comp. Math. 111 (1998)
167–219.

\bibitem{B. Dubrovin and Y. Zhang2}
B. Dubrovin and Y. Zhang,
\textit{ Normal forms of hierarchies of integrable PDEs, Frobenius manifolds and Gromov–Witten invariants,}
preprint, available at http://arxiv.org/math.DG/0108160.


\bibitem{M. Cutimanco and V. Shramchenko}
M. Cutimanco, and V. Shramchenko,
\textit{Explicit examples of Hurwitz Frobenius manifolds in genus one,}
 J. Math. Phys. 61, 013501 (2020).

\bibitem{M. Eichler and D. Zagier}
M. Eichler and D. Zagier,
\textit{The Theory of Jacobi Forms,}
(Birkhauser, Boston, 1985).


\bibitem{E. V. Ferapontov M. V. Pavlov L. Xue}
E. V. Ferapontov, and M. V. Pavlov, L. Xue,
\textit{Second-order integrable Lagrangians and WDVV equations,}
arXiv:2007.03768, 2020.

\bibitem{C. Hertling}
C. Hertling,
\textit{Multiplication on the tangent bundle,}
 math.AG/9910116.

\bibitem{M. Kontsevich}
 M. Kontsevich,
\textit{Intersection theory on the moduli space of curves and the matrix
Airy function,}
Comm. Math. Phys. 147 (1992), 1-23.

\bibitem{Krichever I.M.}
Krichever I.M,
\textit{The $\tau$ -function of the universal Whitham hierarchy, matrix models
and topological field theories,}
Comm. Pure Appl. Math. 47 (1994) 437 - 475.


\bibitem{Morrison 1}
E. K. Morrison, and I. A. B. Strachan,
\textit{Modular Frobenius manifolds and their invariant flows,}
International Mathematics Research Notices, Volume 2011, Issue 17, 2011, Pages 3957–3982.

\bibitem{Morrison 2}
E. K. Morrison, and I. A. B. Strachan,
\textit{Polynomial modular Frobenius manifolds,}
Phys. D 241 (2012), 2145–2155, arXiv:1110.4021.


\bibitem{Pavlov 2}
M. V. Pavlov, S. P. Tsarev,,
\textit{Tri-Hamiltonian structures of the Egorov systems,}
 Funct. Anal. and Appl. 37:1 (2003), 32-45.


\bibitem{Romano 1}
S. Romano,
\textit{4-dimensional Frobenius manifolds and Painleve' VI,}
preprint arxiv 1209.3959 (2012).

\bibitem{Romano 2}
S. Romano,
\textit{Frobenius structures on double Hurwitz spaces,}
Int. Math. Res. Notices,
doi:10.1093/imrn/rnt215 (2013), available at 1210.2312.



\bibitem{Saito}
Saito K., Yano T., and Sekeguchi J.,
\textit{On a certain generator system of the ring of invariants of a finite reflection group,}
Comm. in Algebra 8(4) (1980) 373 - 408.


\bibitem{V. Shramchenko}
V. Shramchenko,
\textit{Deformations of Frobenius structures on Hurwitz spaces,}
International Mathematics Research Notices, Volume 2005, Issue 6, 2005, Pages 339–387.

\bibitem{V. Shramchenko 2}
V. Shramchenko,
\textit{Riemann-Hilbert problem associated to Frobenius manifold structures on Hurwitz spaces: irregular singularity,}
 Duke Math. J. 144, no. 1, 1-52 (2008)


\bibitem{Strachan 1}
I. A. B. Strachan,
\textit{ Frobenius submanifolds,}
J. Geom. Phys. 38:3-4 (2001), 285-307.

\bibitem{Strachan 2}
I. A. B. Strachan,
\textit{ Frobenius manifolds: natural submanifolds and induced bi-hamiltonian
structures,}
Diff. Geom. Appl. 20:1 (2004), 67-99.

\bibitem{E.T. Whittaker and G.N. Watson}
E.T. Whittaker and G.N. Watson,
\textit{A Course of Modern Analysis,}
IV ed. (Cambridge University Press, Cambridge, 1980).


\bibitem{K. Wirthmuller}
K. Wirthm{\"u}ller, 
\textit{Root systems and Jacobi forms,}
 Comp. Math. 82 (1992) 293–354.

\bibitem{D. Zuo}
D. Zuo,
\textit{Frobenius manifolds and a new class of extended affine 
Weyl groups of A-type,}
 Lett. Math. Phys. 110 (2020), no. 7.





\end{thebibliography}
\end{document}